\documentclass[a4paper,12pt,bibliography=totocnumbered,titlepage,bookmarksnumbered=true]{scrartcl}
\usepackage[utf8]{inputenc}
\usepackage{amsmath}
\usepackage{amssymb}
\usepackage{amsfonts}
\usepackage{amsthm}
\usepackage{stmaryrd}
\usepackage[pdftex]{graphicx,color}
\usepackage[font=small, labelfont=it]{caption}
\usepackage{mathrsfs}
\usepackage[arrow,matrix,curve]{xy}
\usepackage{bbm}
\usepackage{bigints}
\usepackage{epigraph}
\usepackage[]{hyperref}

\newcommand{\veps}{\varepsilon}

\newcommand{\crita}{crit\mspace{-4mu}\left(\mathcal{A}^H\right)}
\newcommand{\ZN}{\left(\begin{smallmatrix} Z\\ N\end{smallmatrix}\right)}
\newtheorem{defn}{Definition}
\newtheorem{theo}[defn]{Theorem}
\newtheorem*{theo*}{Theorem}

\newtheorem{lemme}[defn]{Lemma}
\newtheorem{prop}[defn]{Proposition}
\newtheorem{cor}[defn]{Corollary}
\newtheorem*{cor*}{Corollary}

\theoremstyle{definition}
\newtheorem*{ex}{Examples}
\newtheorem{dis}[defn]{Discussion}
\newtheorem*{rem}{Remark}
\newtheorem*{proper}{Properties}

\setlength{\parindent}{0pt}

\setlength{\epigraphrule}{0pt}
\setlength{\epigraphwidth}{0.6\textwidth}

\title{Rabinowitz-Floer homology\\ on Brieskorn manifolds}
\subtitle{\Large Dissertation\bigskip\\\normalsize zur Erlangung des akademischen Grades\bigskip\\doctor rerum naturalium\\(Dr. rer. nat.)\bigskip\\ Im Fach: Mathematik\bigskip\\ eingereicht an der\bigskip\\ Mathematisch-Naturwissenschaftlichen Fakultät\\ der Humboldt-Universität zu Berlin\bigskip\\von\bigskip\\\large Dipl.-Math. Alexander Fauck\\\normalsize ~\bigskip\bigskip\\ Präsident der Humboldt-Universität zu Berlin\\ Prof. Dr. J. Olbertz\bigskip\bigskip\\Dekan der Mathematisch-Naturwissenschaftlichen Fakultät\\ Prof. Dr. E. Kulke\bigskip\bigskip\\\begin{flushleft}
   \hspace{1cm}Gutachter:\\\hspace{1cm}\phantom{Tag der mündlichen Prüfung:}{\large1. \normalsize Prof. Dr. Klaus Mohnke}\\\hspace{1cm}\phantom{Tag der mündlichen Prüfung:}{\large2. \normalsize Prof. Dr. Urs Frauenfelder}\\\hspace{1cm}\phantom{Tag der mündlichen Prüfung:}{\large3. \normalsize Prof. Dr. Alexandru Oancea}\bigskip\\\normalsize\hspace{1cm}Tag der mündlichen Prüfung: 7. April 2016                                                                                                                                                                                                                                                                                                                                                                                                                                                                                                                                                                                                                                                                                                                                                                                                                                                                                                                                                                                                                                                                                                                                                                                                                                       \end{flushleft}}
\author{}%Alexander Fauck
\date{}
\begin{document}

\maketitle
\null
\thispagestyle{empty}
\newpage
\pagenumbering{roman}
\null\vfill
 \section*{Selbstständigkeitserklärung}
 Hiermit erkläre ich, dass ich die vorliegende Dissertation selbsständig und nur unter Verwendung der angegebenen Literatur und Hilfsmittel angefertig habe.
\vfill
\newpage
\null\newpage
\null\vfill
 \section*{Acknowledgements}
 I wish to thank my supervisor Klaus Mohnke, who introduced me to the field of symplectic and contact geometry. The discussions with him were at the same time inspiring and encouraging. Without him insisting on clear explanations, some parts of this thesis would never have been written.\medskip\\
 I am also very grateful to all the researchers with whom I had many fruitful discussion. In particular, I thank Urs Frauenfelder, Kai Cieliebak and Alexandru Oancea. Special thanks go to Peter Uebele, for countless conversations on Brieskorn manifolds, symplectic homology and symmetries as well as his helpful remarks about this thesis.\medskip\\
 Finally I thank my family and friends for all their support. Among them, a warm, coffee-smelling thank-you goes to Karoline Köhler, for all the breaks we had together and for correcting some of my linguistic errors.\medskip\\
 Last, but not least I thank my wife Cora for all her love and encouragement. 
 \begin{center} I dedicate to her this thesis.\end{center}
\vfill
\newpage
\null
\newpage
\tableofcontents
\newpage
\pagenumbering{arabic}
\epigraph{\textit{``... as simple and still as mysterious and complicated as a simple mathematical formula that can mean all happiness or can mean the end of the world.''}}{Ernest Hemingway, \textit{A moveable feast}}

\section{Introduction}
\subsection{Motivation}
Contact geometry and topology (as well as its symplectic counterpart) is a relative new mathematical field. It dates essentially back to V. Arnold's seminal book ``Mathematical Methods of Classical Mechanics'',\cite{arnold}, from 1974 based on lectures he gave in the sixties. However, the basic structures and questions of the area arise from Newtonian/Hamiltonian mechanics. It is thus closely related to questions on the behaviour of mechanical systems, such as the whole universe. As an example, let us consider the motion of several planets: The total energy of a system of $n$ points $x_j$ of masses $m_1,...\,,m_n$ (e.g.\ a system of $n$ planets) in $\mathbb{R}^3$ is given by
\[H:=\sum_{j=1}^n\frac{1}{2}||\dot{x}_j||^2-\sum_{1\leq j<k\leq n}\frac{m_j m_k}{||x_j-x_k||}.\]
Considering the phase space $V:=\mathbb{R}^{3n}\times\mathbb{R}^{3n}$ and writing $q_j:=x_j$ and $p_j:=\dot{x}_j$, we find that the time evolution of this system is described by
\[\dot{x}_j=\dot{q}_j=\frac{\partial H}{\partial p_j}\qquad\text{ and }\qquad \ddot{x}_j=\dot{p}_j = -\frac{\partial H}{\partial q_j} = m_j\sum_{k\neq j} m_k\frac{x_k-x_j}{||x_k-x_j||^3},\]
where the second equation is Newton's law of gravity. We cannot solve this equation explicitly (not even for only 3 mass-points, a setup which is known as the 3-body problem). However, it is still possible to show that the system of solutions has certain properties.\\
As is well-known, the total energy of the system is time-independent. This implies that every solution of the above differential equation stays, for all time, in a fixed energy hypersurface $\Sigma:=H^{-1}(E_0)\subset V$. A puzzling question is to what extend can $\Sigma$ tell us something about the dynamical behaviour of the solutions of the equation?\\
At first glance, the prospect of answering this seems hopeless. However, there are many classical examples which relate the topology of a space with the behaviour of a dynamical systems on this space. For example, a theorem by Hopf tells us that on $S^2$ every vector field has at least one zero and hence every ordinary differential equation on $S^2$ has at least one constant solution. It is clear that it should be possible to obtain more information if one takes into account not only $\Sigma$ but more geometric features.\\
One possible additional structure in Hamiltonian mechanics is the contact structure $\xi$ on $\Sigma$ (see Section \ref{sec1.1} for a precise definition) and indeed recent theorems in symplectic geometry show more properties of dynamical systems in the presence of $\xi$. A natural question to ask is in how far $\Sigma$ determines $\xi$, or stated differently: Can the additional information be extracted from $\Sigma$ alone? The purpose of this thesis is to show that $\Sigma$ does not uniquely determine $\xi$. In fact, in every dimension greater then 3, we show the existence of various $\Sigma$ which admit infinitely many (fillable) contact structures $\xi$.\\
Another question asks to what extend $\Sigma$ determines the topological/differentiable structure of the whole phase space $V$. As the latter comes with a symplectic structure $\omega$ which induces the contact structure on $\Sigma$, we could also ask if $(\Sigma,\xi)$ determines $(V,\omega)$. The thesis gives a partial answer: The Main Theorem tells us that often we have that either $\Sigma$ does not determine $\xi$ or $(\Sigma,\xi)$ does not determine $(V,\omega)$.\\
Our main tool to achieve these results is Rabinowitz-Floer homology, a variant of symplectic homology which can be thought of as a Morse homology on the (infinite dimensional) loop space of $V$. It is a machinery to obtain information on the structure of a space with the help of the solutions to a gradient like partial differential equation (see equation (\ref{eq3}) in Section \ref{sec1.4}).\\
The following sections of the introduction first provide precise definitions of the objects  that we use in this thesis. Then, we give a sketch of the construction of Rabinowitz-Floer homology before finally presenting our main results and the structure of the text.\\
We invite the reader to consult Appendix \ref{conventions} for any questions concerning the setup, assumptions, sign and grading conventions used in this thesis.

\subsection{Preliminaries}\label{sec1.1}
A \textbf{\textit{symplectic manifold}} $(V,\omega)$ is a smooth $2n$-dimensional manifold $V$ together with a non-degenerate, closed 2-form $\omega$. This means that $d\omega = 0$ and that
\[ \omega_p^\ast:T_p V\rightarrow T^\ast_p V,\quad X\mapsto \omega_p(X,\cdot) \]
 is an isomorphism for all $p\in V$. Equivalently, we may require that $d\omega=0$ and $\omega^n$ is a volume form.  Such an $\omega$ is then called a \textbf{\textit{symplectic structure}} on $V$. One calls $\omega$ \textbf{\textit{exact}}, if there exists a 1-form $\lambda$, such that $d\lambda=\omega$. If the context is clear, we will also write $(V,\lambda)$ to denote an exact symplectic manifold.\\
A function $H\in C^\infty (V)$ on a symplectic manifold $(V,\omega)$ is called a \textbf{\textit{Hamiltonian}}. We define its \textbf{\textit{Hamiltonian vector field $X_H$}} via
\[ dH= -\iota(X_H)\omega = -\omega(X_H,\cdot)=\omega(\cdot,X_H).\]
A \textbf{\textit{contact manifold}} $\Sigma$ is a smooth $(2n-1)$-dimensional manifold together with a completely non-integrable smooth hyperplane distribution $\xi\subset T\Sigma$. The distribution is called a \textbf{\textit{contact structure}}. It is locally defined as $\xi=\ker\alpha$, where $\alpha$ is a local 1-form satisfying $\alpha\wedge(d\alpha)^{n-1}\neq0$ pointwise. If $\alpha$ is globally defined (which we will always assume), it is called a \textbf{\textit{contact form}}. A global $\alpha$ gives rise to a volume form and $\Sigma$ is then orientable. Once an orientation chosen, we will require that $\alpha\wedge(d\alpha)^{n-1} > 0$.\\
The \textbf{\textit{Reeb vector field $R_\alpha$}} of $\alpha$ is the unique vector field satisfying
\[ \iota(R_\alpha)d\alpha = 0 \quad\quad\text{and}\quad\quad \iota(R_\alpha)\alpha = 1.\]
$R_\alpha$ is transverse to $\xi$ and we have therefore $T\Sigma = \xi\oplus \mathbb{R} R_\alpha$. However, this splitting depends on the chosen contact form $\alpha$, as $R_\alpha$ depends on $\alpha$. Reeb trajectories of $(\Sigma,\alpha)$ are the trajectories of the flow of $R_\alpha$, i.e.\ solutions $v\in C^\infty (\mathbb{R},\Sigma)$ of the equation
\begin{equation} \label{eq1}
 \partial_t v(t) - R_\alpha (v(t)) = 0.
\end{equation}
Reeb orbits are the images of Reeb trajectories.

\begin{dis}\label{dis0}
 For two contact forms $\alpha$ and $\alpha\,'$ which define the same contact structure $\xi$, i.e.\ $\ker \alpha= \xi= \ker \alpha\,'$, we find a function $f$ such that $\alpha\,'= f\cdot \alpha$.
In fact, $f$ is given by $f:=\alpha\,'(R_\alpha)$ and has therefore no zeros. We will henceforth always assume that $\alpha'=f\cdot\alpha$ with $f>0$. This is a minor restriction, since changing $f$ to $-f$ replaces $R_{f\alpha}$ by $R_{-f\alpha}=-R_{f\alpha}$. This leaves the Reeb dynamics basically unchanged -- the same Reeb orbits are run through with the same speed but in the opposite direction.
\end{dis}

 An \textbf{\textit{almost complex structure}} $J$ is a bundle-endomorphism of $\xi$ or $TV$, such that $J^2=-Id$. It is called $\alpha$-compatible if $d\alpha(\,\cdot\,,J\,\cdot\,)$ defines a Riemannian metric on $\xi$. Similarly, one defines $\omega$-compatible $J$. The space of $\alpha$- resp. $\omega$-compatible almost complex structures is non-empty and contractible.\\
 Any pair $(\alpha,J)$ of a contact form $\alpha$ and an $\alpha$-compatible almost complex structure $J$ induces a reduction of the structure group of the tangent bundle $T\Sigma$ to the unitary group $1\times U(n-1)$. This reduction is called an \textit{\textbf{almost contact structure}}. The \textit{\textbf{formal homotopy class}} $[\xi]$ of a contact structure $\xi=\ker \alpha$ is the homotopy class of its almost contact structure. It does not depend on the particular choice of $(\alpha,J)$ and is hence well-defined.
 \begin{ex}~
  \begin{itemize}
   \item $\mathbb{R}^{2n}$ with the 2-form $\omega_{std}:=\sum_{k=1}^n dx_k\wedge dy_k$ is the standard model for a symplectic manifold. A $\omega_{std}$-compatible almost complex structure $J$ is given by $J\left(\begin{smallmatrix}x\\y\end{smallmatrix}\right)=\left(\begin{smallmatrix}-y\\\phantom{-}x\end{smallmatrix}\right)$.\linebreak The unit sphere $S^{2n-1}\subset\mathbb{R}^{2n}$ is a contact manifold with contact form $\alpha$ given by the restriction of the 1-form $\lambda:=\frac{1}{2}\sum_{k=1}^n(x_kdy_k-y_kdx_k)$ to $S^{2n-1}$. The standard contact structure is $\xi_{std}:=\ker\alpha$. The Reeb vector field is $R:=2(-y_1,x_1,...\,,-y_n,x_n)$.
   \item A different contact manifold is $\mathbb{R}^{2n+1}$ with contact form $\alpha=dz+\sum_{k=1}^n x_kdy_k$ and Reeb vector field $R=\partial_z$. A $\alpha$-compatible almost complex structure is again given by $J\left(\begin{smallmatrix}x\\y\end{smallmatrix}\right)=\left(\begin{smallmatrix}-y\\\phantom{-}x\end{smallmatrix}\right)$.
   \item More general examples are obtained as follows. Let $N$ be a smooth $n$-dimensional manifold and let $T^\ast N$ be its cotangent bundle. Let $q_k$ be local coordinates on $N$ and let $p_k$ be the associated cotangent coordinates, i.e.\ if $p\in T_q^\ast N$, then \linebreak$p=\sum_{k=1}^n p_k\cdot dq_k$. Define locally a 1-form $\alpha$ and a 2-form $\omega$ on $T^\ast N$ by
   \[\alpha:=-\sum_{k=1}^n p_kdq_k\qquad\text{ and }\qquad\omega:=\sum_{k=1}^n dq_k\wedge dp_k.\]
   Note that $\alpha$ is a primitive of $\omega$. Both definitions are coordinate-independent and hence $\omega$ gives a symplectic structure on $T^\ast N$.\pagebreak\\
   Fix a Riemannian metric $\langle\cdot,\cdot\rangle$ on $N$. This provides a scalar product on $T_q^\ast N$ for every $q$. The unit cotangent bundle $S^\ast N$ is the submanifold of $T^\ast N$ which consists of points $(q,p)$ with $\langle p,p\rangle_q=1$. The 1-form $\alpha$ is a contact form on $S^\ast N$, as the Liouville vector field 
   \[Y_\alpha:=\sum_{k=1}^n p_k\cdot \partial_{p_k}\]
   is transverse to $S^\ast N$ (see Lemma \ref{Lemma1}). The Reeb flow on $S^\ast N$ is the geodesic flow with respect to $\langle\cdot,\cdot\rangle$.
  \end{itemize}
 \end{ex}

\subsection{Exact fillings for contact manifolds} \label{Liou}
There are two important constructions which link contact and symplectic manifolds: 
\begin{itemize}
 \item for us, the \textbf{\textit{symplectization}} of a contact manifold $(\Sigma, \alpha)$ is $\Sigma\times\mathbb{R}$ endowed with the exact symplectic form $\omega=d(e^r\alpha)$, where $r$ is a coordinate on $\mathbb{R}$;
 \item an \textbf{\textit{exact contact hypersurface}} is a hypersurface $\Sigma\subset V$ of an exact symplectic manifold $(V,\lambda)$, where the pull-back $\alpha := i^\ast\lambda$ by the inclusion gives a contact form on $\Sigma$.
\end{itemize}
\begin{defn}
 Let $(V,\lambda)$ be an exact symplectic manifold. The unique vector field $Y_\lambda$ which satisfies $\iota(Y_\lambda)\omega=\lambda$ is the \textbf{\textit{Liouville vector field}} of $\lambda$.
\end{defn}
\begin{lemme} \label{Lemma1}
 A hypersurface $\Sigma\subset V$ is an exact contact hypersurface of $(V,\lambda)$ if and only if $Y_\lambda$ is transverse to $T\Sigma$ along $\Sigma$.
\end{lemme}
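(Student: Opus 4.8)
The plan is to reduce both implications to a single algebraic identity that relates the contact condition on $\Sigma$ to the transversality of $Y_\lambda$. Write $\dim V = 2n$, let $i\colon \Sigma \hookrightarrow V$ denote the inclusion, and set $\alpha := i^\ast\lambda$. Since pull-back commutes with $d$ and with $\wedge$, we have $d\alpha = i^\ast(d\lambda) = i^\ast\omega$ and hence
\[
\alpha\wedge(d\alpha)^{n-1} = i^\ast\bigl(\lambda\wedge\omega^{n-1}\bigr).
\]
The first step is to rewrite $\lambda\wedge\omega^{n-1}$ using the Liouville field. Because the interior product is an antiderivation of degree $-1$ and $\omega$ has even degree, a short induction gives $\iota(Y_\lambda)\omega^n = n\,(\iota(Y_\lambda)\omega)\wedge\omega^{n-1} = n\,\lambda\wedge\omega^{n-1}$, using the defining relation $\iota(Y_\lambda)\omega = \lambda$. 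Therefore
\[
\alpha\wedge(d\alpha)^{n-1} = \tfrac{1}{n}\, i^\ast\bigl(\iota(Y_\lambda)\omega^n\bigr),
\]
and, $\omega$ being non-degenerate, $\omega^n$ is a volume form on $V$.

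Next I would invoke the elementary linear-algebra fact about contracting a top form with a vector: for $p\in\Sigma$ and any basis $v_1,\dots,v_{2n-1}$ of $T_p\Sigma$,
\[
\bigl(\iota(Y_\lambda)\omega^n\bigr)_p(v_1,\dots,v_{2n-1}) = \omega^n_p\bigl(Y_\lambda(p),v_1,\dots,v_{2n-1}\bigr),
\]
which is non-zero precisely when $\{Y_\lambda(p),v_1,\dots,v_{2n-1}\}$ is a basis of $T_pV$, i.e.\ precisely when $Y_\lambda(p)\notin T_p\Sigma$. As $\Sigma$ has codimension $1$, this is exactly the statement that $Y_\lambda$ is transverse to $T_p\Sigma$. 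Running this equivalence pointwise: if $Y_\lambda$ is transverse to $\Sigma$, then $i^\ast\bigl(\iota(Y_\lambda)\omega^n\bigr)$, and hence $\alpha\wedge(d\alpha)^{n-1}$, is nowhere zero, so $\alpha$ is a contact form and $\Sigma$ is an exact contact hypersurface; conversely, if some $Y_\lambda(p)$ lies in $T_p\Sigma$, then $\alpha\wedge(d\alpha)^{n-1}$ vanishes at $p$, so $\alpha$ fails to be a contact form.

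I do not anticipate a real obstacle: the only point requiring care is the bookkeeping in the identity $\iota(Y_\lambda)\omega^n = n\,\lambda\wedge\omega^{n-1}$ (the sign in the antiderivation rule, together with the induction), after which both directions are immediate from the contraction formula above. As a side remark, the orientation normalisation $\alpha\wedge(d\alpha)^{n-1} > 0$ used elsewhere in the text is automatically compatible with this description once $\Sigma$ is oriented so that $\iota(Y_\lambda)\omega^n$ is positive, since the factor $\tfrac{1}{n}$ is positive.
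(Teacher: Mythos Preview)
Your proof is correct and follows essentially the same route as the paper: both establish the identity $\alpha\wedge(d\alpha)^{n-1} = i^\ast\bigl(\iota(Y_\lambda)\omega^n\bigr)$ (up to a positive constant) and conclude by the linear-algebra fact that the contraction of a volume form restricts nondegenerately to a hyperplane iff the contracting vector is transverse to it. Your version is in fact slightly more careful than the paper's, which suppresses the factor $n$ in the step $\iota(Y_\lambda)\omega\wedge\omega^{n-1} = \tfrac{1}{n}\,\iota(Y_\lambda)\omega^n$; this is harmless for the nonvanishing conclusion, but your bookkeeping is the correct one.
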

\begin{proof}
 The form $\alpha\wedge(d\alpha)^{n-1}$ is non-degenerate if and only if $Y_\lambda\not\in T\Sigma$, since
\begin{equation*}\label{eq1b} 
 \alpha\wedge(d\alpha)^{n-1} = i^\ast\lambda\wedge(di^\ast\lambda)^{n-1} = i^\ast\bigl(\lambda\wedge(d\lambda)^{n-1}\bigr)=i^\ast\bigl(\iota(Y_\lambda)\omega\wedge\omega^{n-1}\bigr)=i^\ast\bigl(\iota(Y_\lambda)\omega^n\bigr).\qedhere
\end{equation*}
\end{proof}
\begin{defn}
 A \textbf{\textit{Liouville domain}} is a compact exact symplectic manifold $(V,\lambda)$ with boundary $\Sigma$, such that the Liouville vector field $Y_\lambda$ points outwards along $\Sigma$.
\end{defn}
It follows from Lemma \ref{Lemma1} that the boundary $\Sigma$ of a Liouville domain $(V,\lambda)$ is an exact contact hypersurface. Moreover, the proof of Lemma \ref{Lemma1} shows that the orientation of $\Sigma$ as the boundary of $V$ coincides with the orientation given by $\alpha\wedge(d\alpha)^{n-1}$.
\begin{dis} \label{dis2}
 We make the following observations for a Liouville vector field $Y_\lambda$, using Cartan's magic formula:
 \begin{align*}
  \mathcal{L}_{Y_\lambda}\omega &= \iota(Y_\lambda)d\omega + d(\iota(Y_\lambda)\omega)=d\lambda = \omega\\
  \mathcal{L}_{Y_\lambda}\lambda &= \iota(Y_\lambda)d\lambda + d(\iota(Y_\lambda)\lambda)=\iota(Y_\lambda)\omega = \lambda
 \end{align*}
since $d\omega=0$ and $\iota(Y_\lambda)\lambda=\omega(Y_\lambda,Y_\lambda)=0$. Hence, the flow $\phi$ of $Y_\lambda$ satisfies 
\[ (\phi^r)^\ast\lambda=e^r\cdot\lambda\qquad\text{ and } \qquad (\phi^r)^\ast\omega=e^r\cdot\omega.\]
If we consider a Liouville domain $(V,\lambda)$, we find that the negative half flow $\phi^r$, \linebreak $r\in(-\infty,0],$ of $Y_\lambda$ is complete as $V$ is compact and $Y_\lambda$ points outwards along $\partial V = \Sigma$, so that $\phi$ can never leave $V$ in negative time. Combining these two facts, we can find in any Liouville domain a collar neighborhood of $\Sigma$ which is symplectomorphic via $\phi$ to $(\Sigma \times(-\infty,0],e^r\cdot\alpha)$, the non-positive symplectization of $(\Sigma,\alpha)$.
\end{dis}
\begin{defn}
 The \textbf{\textit{completion}} $(\hat{V},\hat{\lambda})$ of a Liouville domain $(V,\lambda)$ is obtained by glueing the positive symplectization of $\Sigma=\partial V$ to $V$ along $\Sigma$. In other words, it is the exact symplectic manifold
 \[\hat{V}:= V\cup_\phi (\Sigma\times\mathbb{R}),\qquad \hat{\lambda}:=\begin{cases}\begin{aligned}&\lambda& &\text{on $V$}\\ &e^r\cdot\alpha&&\text{on $\Sigma\times\mathbb{R}$}\end{aligned}\end{cases}\]
 where we identify $\Sigma\times(-\infty,0]$ with the collar of $\Sigma$ in $V$ as described above.
\end{defn}
\begin{ex}~
\begin{itemize}
 \item In a symplectization $(\Sigma\times\mathbb{R},e^r\alpha)$, the Liouville vector field is $\partial_r$ -- the partial derivative with respect to the coordinate on $\mathbb{R}$ -- and its flow $\phi$ is simply given by
\[\phi^t(x,r) = (x,r+t) \qquad \forall r,t\in\mathbb{R}.\] 
 \item The unit ball in $(\mathbb{R}^{2n},\omega_{std})$ is a Liouville domain with contact boundary $(S^{2n-1},\alpha)$. The Liouville vector field is $Y_\lambda=\frac{1}{2}(x_1,y_1,...\,,x_2,y_2)$. Moreover, $(\mathbb{R}^{2n},\omega_{std})$ is the completion of the unit ball.
 \item More generally, the unit disk bundle $D^\ast N$ in $T^\ast N$ (given by pairs $(q,p)$ with $\langle p,p\rangle_q\leq 1$) is a Liouville domain with contact boundary $S^\ast N$ and Liouville vector field $Y_\alpha$. Again, $T^\ast N$ is itself the completion of $D^\ast N$.
\end{itemize}
\end{ex}
\begin{defn} A \textbf{\textit{Liouville isomorphism}} between Liouville domains $(V_1,\lambda_1),\,(V_2,\lambda_2)$ is a diffeomorphism $\varphi: \hat{V}_1\rightarrow \hat{V}_2$ satisfying $\varphi^\ast\hat{\lambda}_2 = \hat{\lambda}_1+dg$ for a compactly supported function $g$.
\end{defn}
\begin{prop}[\cite{Sei}, page 3] \label{LI}
 For any Liouville isomorphism $\varphi: \hat{V}_1\rightarrow \hat{V}_2$ there exists an $R>0$ such that on $\Sigma_1\times[R,\infty)\subset \hat{V}_1$ the map $\varphi$ has the following form:
 \[\varphi(r,x)=(\psi(x),r-f(x)),\]
 where $\psi:\partial V_1 =\Sigma_1\rightarrow \Sigma_2=\partial V_2$ is a contact isomorphism satisfying $\psi^\ast\alpha_2 = e^f\cdot\alpha_1$ for a function $f\in C^\infty(\Sigma_1)$. So near $\infty$, the map $\phi$ is essentially a coordinate change in $r$.
\end{prop}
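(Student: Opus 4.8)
The plan is to show that near infinity $\varphi$ intertwines the two Liouville flows, and then to read off the asserted normal form by integrating. First, applying $d$ to the defining relation $\varphi^\ast\hat\lambda_2=\hat\lambda_1+dg$ gives $\varphi^\ast\hat\omega_2=\hat\omega_1$, so $\varphi$ is a symplectomorphism of the completions. Since $\iota(Y_{\hat\lambda_i})\hat\omega_i=\hat\lambda_i$, and since pull-back under the symplectomorphism $\varphi$ commutes with contraction, the vector field $\varphi_\ast Y_{\hat\lambda_1}$ satisfies $\varphi^\ast\bigl(\iota(\varphi_\ast Y_{\hat\lambda_1})\hat\omega_2\bigr)=\iota(Y_{\hat\lambda_1})\hat\omega_1=\hat\lambda_1$, whereas $\varphi^\ast\bigl(\iota(Y_{\hat\lambda_2})\hat\omega_2\bigr)=\varphi^\ast\hat\lambda_2=\hat\lambda_1+dg$. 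As $\varphi^\ast$ and $\iota(\,\cdot\,)\hat\omega_2$ are injective on vector fields, $\varphi_\ast Y_{\hat\lambda_1}$ and $Y_{\hat\lambda_2}$ are the $\hat\omega_2$-duals of one-forms differing by $d(g\circ\varphi^{-1})$, hence they coincide on $\hat V_2\setminus\varphi(\operatorname{supp}g)$. So outside the compact set $\varphi(\operatorname{supp}g)$, $\varphi$ carries the Liouville flow of $\hat\lambda_1$ to that of $\hat\lambda_2$.

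Next I localise to the cylindrical ends. Writing $\hat V_i=V_i\cup_{\Sigma_i}(\Sigma_i\times[0,\infty))$ with $\hat\lambda_i=e^r\alpha_i$ on $\Sigma_i\times\mathbb R$, the sets $K_i^T:=V_i\cup(\Sigma_i\times[0,T])$ are compact because $\Sigma_i=\partial V_i$ is compact. Choose $R_0$ with $\operatorname{supp}g\subseteq K_1^{R_0}$; then on $\Sigma_1\times(R_0,\infty)$ the Liouville field is $\partial_r$ and $\varphi_\ast\partial_r=Y_{\hat\lambda_2}$ by the first step. Since $\varphi$ is a homeomorphism it is proper, so $\varphi^{-1}(K_2^{R_1})$ is compact; picking $R_1$ and then $R\geq R_0$ with $\varphi^{-1}(K_2^{R_1})\subseteq K_1^{R}$ gives, equivalently, $\varphi\bigl(\Sigma_1\times(R,\infty)\bigr)\subseteq\Sigma_2\times(R_1,\infty)\subset\Sigma_2\times\mathbb R$, where the target Liouville flow is the explicit shift $\phi_2^{t}(y,s)=(y,s+t)$.

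Now on $\Sigma_1\times(R,\infty)$ the integral curves of $\partial_r$ are $t\mapsto(x,r+t)$, $t\geq0$, which stay in this region; their $\varphi$-images are integral curves of $Y_{\hat\lambda_2}$ through $\varphi(x,r)$, so uniqueness of integral curves yields $\varphi(x,r+t)=\phi_2^{t}(\varphi(x,r))$ for all $t\geq0$. Writing $\varphi(x,r)=(\psi_r(x),h_r(x))\in\Sigma_2\times\mathbb R$ for $r>R$, this says $\psi_{r+t}(x)=\psi_r(x)$ and $h_{r+t}(x)=h_r(x)+t$, whence $\psi_r=:\psi$ is independent of $r$ and $h_r(x)=r-f(x)$ with $f(x):=(R{+}1)-h_{R+1}(x)$, i.e.\ $\varphi(x,r)=(\psi(x),r-f(x))$ on $\Sigma_1\times[R{+}1,\infty)$. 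Running the same argument for the Liouville isomorphism $\varphi^{-1}$ gives $\varphi^{-1}(y,s)=(\bar\psi(y),s-\bar f(y))$ for $s$ large; composing the two normal forms on the overlap forces $\bar\psi=\psi^{-1}$, so $\psi\colon\Sigma_1\to\Sigma_2$ is a diffeomorphism. Finally, enlarging $R$ so that $g\equiv0$ on $\Sigma_1\times[R{+}1,\infty)$, the relation $\varphi^\ast\hat\lambda_2=\hat\lambda_1$ there reads, in coordinates,
\[ e^{\,r-f(x)}\,\psi^\ast\alpha_2 \;=\; e^{\,r}\,\alpha_1, \]
so $\psi^\ast\alpha_2=e^f\alpha_1$; in particular $\psi^\ast\xi_2=\ker(e^f\alpha_1)=\xi_1$, so $\psi$ is a contact isomorphism, completing the proof with $R{+}1$ in place of $R$.

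The only genuinely delicate point is the bookkeeping of the second step: one needs the \emph{compact} support of $g$ to obtain the honest equality $\varphi_\ast Y_{\hat\lambda_1}=Y_{\hat\lambda_2}$ on the end (not merely equality modulo the correction term $dg$), and one needs properness of \emph{both} $\varphi$ and $\varphi^{-1}$ to ensure that a sufficiently high end $\Sigma_1\times(R,\infty)$ lands inside the cylindrical end $\Sigma_2\times\mathbb R$, where the target Liouville flow is the explicit translation. Once $\varphi$ is known to restrict to a map between the ends intertwining these translation flows, the normal form and the conformal relation $\psi^\ast\alpha_2=e^f\alpha_1$ are immediate.
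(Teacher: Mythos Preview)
Your proof is correct and follows the same strategy as the paper: choose $R$ beyond $\operatorname{supp}g$, deduce that $\varphi$ intertwines the Liouville flows there, integrate to get the normal form, and read off $\psi^\ast\alpha_2=e^f\alpha_1$ from $\varphi^\ast\hat\lambda_2=\hat\lambda_1$. The only difference is in establishing that $\psi$ is a diffeomorphism: the paper shows directly that $\varphi(\Sigma_1\times\{R\})$ meets each Liouville flow line $\{y\}\times\mathbb{R}\subset\hat V_2$ exactly once (using injectivity of $\varphi$ and compactness of $\varphi(\hat V_1\setminus\Sigma_1\times(R,\infty))$), whereas you run the normal-form argument a second time for $\varphi^{-1}$ and compose; both are equally valid.
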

\begin{proof} 
 As $\varphi^\ast\hat{\lambda}_2=\hat{\lambda}_1+dg$ with $supp(g)$ compact, we can find an $R$ such that $\varphi^\ast\hat{\lambda}_2=\hat{\lambda}_1$ on $\Sigma_1\times[R,\infty)$. This implies $\hat{\omega}_1=d\hat{\lambda}_1=d\varphi^\ast\hat{\lambda}_2=\varphi^\ast(d\hat{\lambda}_2)=\varphi^\ast\hat{\omega}_2$ and hence also $\varphi_\ast Y_{\hat{\lambda}_1}=Y_{\hat{\lambda}_2}$. On $\Sigma_1\times[R,\infty)$, $\varphi$ is therefore compatible with the flows of $Y_{\hat{\lambda}_1}$ resp.\ $Y_{\hat{\lambda}_2}$ in the sense that $\phi^t_{\hat{\lambda}_2}\circ\varphi = \varphi\circ \phi^t_{\hat{\lambda}_1}$.
 This implies that for every $y\in\Sigma_2$ the flow line \linebreak $\{y\}\times\mathbb{R}\subset\hat{V}_2$ with respect to $\phi_{\hat{\lambda}_2}$ is hit at most once by $\varphi(\Sigma_1\times\{R\})$, as $\varphi$ is injective. Since $\varphi(\hat{V}_1\setminus (R,\infty))\subset\hat{V}_2$ is compact, we know that every $\{y\}\times\mathbb{R}$ intersects $\varphi(\Sigma_1\times[R,\infty))$ and by following the flow of $\phi_{\hat{\lambda}_2}$ backwards, we know that $\{y\}\times\mathbb{R}$ even intersects $\varphi(\Sigma_1\times\{R\})$. Therefore, $\varphi(\Sigma\times\{R\})\cap\big(\{y\}\times\mathbb{R}\big)$ contains exactly one element and we may write
 \[\varphi(x,R)=(\psi(x),\tilde{f}(x)),\]
 where $\psi:\Sigma_1\rightarrow \Sigma_2$ is a diffeomorphism and $\tilde{f}\in C^\infty(\Sigma_1)$. The compatibility of $\varphi$ with the two flows shows that
 \[\varphi(x,r)=(\psi(x),\tilde{f}(x)+r-R)\qquad \forall r\geq R.\]
 Note that the 1-form $\lambda_2$ at $\psi(x,\tilde{f}(x))$ is given by $e^{\tilde{f}(x)}\cdot \alpha_2$. Since $\varphi^\ast \hat{\lambda}_2 = \hat{\lambda}_1$ on $\Sigma_1\times[R,\infty)$, we find that
 \[\psi^\ast\left(e^{\tilde{f}(x)}\cdot \alpha_2\right) = e^R\cdot \alpha_1.\]
 Setting $f:=R-\tilde{f}$, we then have $\psi^\ast\alpha_2 = e^f\cdot \alpha_1$ and $ \varphi(x,r)=(\psi(x),r-f(x))$.\qedhere
\end{proof}
Note that, while Liouville isomorphisms preserve the contact structure of the boundary, the contact form may change arbitrarily. More precisely: Consider a Liouville domain $(V,\lambda)$ with contact boundary $(\Sigma,\alpha)$. If $\alpha'=e^f\cdot \alpha$ is another contact form which defines the same contact structure, we may consider the following contact hypersurface in the completion $\hat{V}$:
\[\Sigma' =\{(x,f(x))\,|\,x\in \Sigma\}.\]
Obviously, $\Sigma'$ bounds a compact region $V'\subset \hat{V}$, so that $(V',\lambda':=\hat{\lambda}|_{V'})$ is a Liouville domain, whose completion is also $(\hat{V},\hat{\lambda})$. Hence, $(V,\lambda)$ and $(V',\lambda')$ are Liouville isomorphic with trivial diffeomorphism $\varphi$. This motivates the following definition:
\begin{defn}\label{filling}
 Let $(\Sigma,\xi)$ be a contact manifold. If there exists a Liouville domain $(V,\lambda)$ such that $\partial V=\Sigma$ and $\xi=\ker i^\ast\lambda$, then we call the equivalence class of $(V,\lambda)$ under Liouville isomorphisms an \textbf{\textit{exact contact filling}} of $(\Sigma,\xi)$.
\end{defn}
The discussion above shows that an exact contact filling does not depend on a specific contact form. Therefore, any invariant of exact contact fillings gives  an invariant for contact structures (with the filling). We will later show (Corollary \ref{liouvilleinvariance}), that each exact contact filling of $(\Sigma,\xi)$ possesses a well-defined  Rabinowitz-Floer homology, which is therefore an invariant of the contact structure (together with the filling).

\subsection{Defining Hamiltonians}
The setup in which we define Rabinowitz-Floer homology is the following. Let $(V,\lambda)$ be the completion of a Liouville domain $\tilde{V}$ with contact boundary $M:=\partial\tilde{V}$. Let $\Sigma\subset V$ be an exact contact hypersurface bounding a compact domain $W\subset V$, so that $\Sigma$ is the boundary of the Liouville domain $W$. In particular, we do not require that the completions $\widehat{W}$ and $\widehat{\,\tilde{V}\,}=V$ coincide (nevertheless, $\widehat{W}\subset V$ as symplectic submanifold). 
\begin{defn}
 A \textbf{defining Hamiltonian} for the boundary $\Sigma$ of a Liouville domain $W\subset V$ is a function $H\in C^\infty(V)$, which is constant outside a compact set, whose zero level set $H^{-1}(0)$ equals $\Sigma$ and whose Hamiltonian vector field $X_H$ agrees with the Reeb vector field $R_\alpha$ on $\Sigma$, i.e.\ for the inclusion $i: \Sigma\hookrightarrow V$ holds
 \[\alpha:= i^\ast \lambda \qquad \text{ and }\qquad i_\ast(R_\alpha)=X_H|_\Sigma.\]
 In particular, note that $\Sigma$ is a regular level set of $H$.
\end{defn}
\begin{ex}Let $\beta$ be a smooth monotone cut-off function such that $\beta(x)=\begin{cases}x & x\leq 2\\ 3& x\geq 4\end{cases}$.
 \begin{itemize}
  \item The function $H(p):=\beta\big(||p||^2)-1$ is a defining Hamiltonian for $S^{2n-1}$ in $(\mathbb{R}^{2n},\omega_{std})$.
  \item The function $H(q,p):=\beta\big(\langle p,p\rangle_q\big)-1$ is a defining Hamiltonian for $S^\ast N$ in $T^\ast N$.
 \end{itemize}
\end{ex}
\begin{prop} \label{cor2} For the boundary $\Sigma$ of a Liouville domain $W\subset V$ holds:
 \begin{enumerate}
 \item[i.] The space $\mathscr{H}$ of defining Hamiltonians for $\Sigma$ is non-empty and convex.
 \item[ii.] If $\alpha_0$ and $\alpha_1$ are two contact forms defining the same contact structure on $\Sigma$, then there exists a homotopy of Liouville domains $(W_s,\Sigma_s)\subset(V,\lambda)$ and a corresponding homotopy of defining Hamiltonians $H_s$, such that $\alpha_0=\lambda|_{\Sigma_0}$ and $\alpha_1=\lambda|_{\Sigma_1}$. 
\end{enumerate}
\end{prop}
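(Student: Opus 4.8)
The plan is to reduce everything to one local computation on a symplectization and then transport it into $V$ using the collar structure of Liouville domains. The computation is this: on the symplectization $(\Sigma\times\mathbb{R},d(e^r\alpha))$ of a contact manifold, the function $h(x,r)=e^r$ has Hamiltonian vector field $X_h=R_\alpha$ everywhere. To see it I would write $d(e^r\alpha)=e^r\,dr\wedge\alpha+e^r\,d\alpha$ and $X_h=a\,\partial_r+Y$ with $Y$ tangent to the slices, and read off from $\iota(X_h)\,d(e^r\alpha)=-dh$ that $\alpha(Y)=e^{-r}\partial_rh=1$ and $a\,\alpha+\iota(Y)d\alpha=0$; since $d\alpha$ is non-degenerate on $\ker\alpha$ with kernel $\mathbb{R}R_\alpha$, this forces $Y=R_\alpha$ and $a=0$. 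Precomposing with a monotone $\beta$ that is the identity near $1$ and constant near $0$ and near $+\infty$ (so its two limiting constants are $<1$ and $>1$) then makes $H:=\beta(e^r)-1$ a function which equals $e^r-1$ near $\Sigma\times\{0\}$, is constant of sign $-$ for $r\ll0$ and of sign $+$ for $r\gg0$, and has $H^{-1}(0)=\Sigma\times\{0\}$ — i.e.\ a defining Hamiltonian for that slice.

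For part (i), non-emptiness: by Discussion \ref{dis2} the flow of $Y_\lambda$ identifies a neighbourhood of $\Sigma=\partial W$ in $V$ with a piece $(\Sigma\times(-\delta,\delta),d(e^r\alpha))$ of the symplectization of $(\Sigma,\alpha)$, with $\{r\le0\}$ lying in $W$ and $\{r\ge0\}$ in $\hat W\setminus W\subset V$, while the rest of $W$ is compact and disjoint from $\Sigma$; I would transport the $H$ above through this identification and extend it by the negative constant over the core of $W$ and by the positive constant over the rest of $V$. Regularity of $\Sigma$ is then automatic, since $dH_p=0$ would force $X_H(p)=0$ while $X_H|_\Sigma=R_\alpha\neq0$. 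For convexity I would first note that every $H\in\mathscr{H}$ is negative on $\mathrm{int}\,W$ and positive on $V\setminus W$: in the collar coordinates the conditions $H|_\Sigma\equiv0$ and $X_H|_\Sigma=R_\alpha$ give, by the same computation, $\partial_rH|_{r=0}=1>0$, so $H$ crosses $0$ from $-$ to $+$ transversally at $\Sigma$, and as $H$ vanishes nowhere else the sign is constant on each of the connected sets $\mathrm{int}\,W$ and $V\setminus W$. Given this, for $H_0,H_1\in\mathscr{H}$ and $t\in[0,1]$ the combination $H_t=(1-t)H_0+tH_1$ is constant outside a compact set, satisfies $H_t<0$ on $\mathrm{int}\,W$ and $H_t>0$ on $V\setminus W$, hence $H_t^{-1}(0)=\Sigma$, and $X_{H_t}=(1-t)X_{H_0}+tX_{H_1}=R_\alpha$ on $\Sigma$; thus $H_t\in\mathscr{H}$.

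For part (ii), I would use Discussion \ref{dis0} to write $\alpha_0=e^{g_0}\alpha$ and $\alpha_1=e^{g_1}\alpha$ with $g_0,g_1\in C^\infty(\Sigma)$ and $\alpha=\lambda|_\Sigma$, and interpolate $g_s=(1-s)g_0+sg_1$. Inside the region $\mathcal{C}\cong\Sigma\times\mathbb{R}$ of $\hat W\subset V$ on which $\hat\lambda=e^r\alpha$ (its $\{r\le0\}$ part the collar of $\Sigma$ in $W$, its $\{r\ge0\}$ part $\hat W\setminus\mathrm{int}\,W$) I would take the graph
\[\Sigma_s:=\{(x,g_s(x)):x\in\Sigma\}\subset\mathcal{C}\subset V,\]
which bounds a compact Liouville domain $W_s$ — namely $W$ with the region between $\Sigma$ and the graph of $g_s$ added or removed, exactly as in the construction preceding Definition \ref{filling} — and satisfies $\lambda|_{\Sigma_s}=e^{g_s}\alpha=:\alpha_s$, a contact form for $\ker\alpha$ which equals $\alpha_0,\alpha_1$ at $s=0,1$. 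Since $(x,\tau)\mapsto(x,\tau+g_s(x))$ pulls $e^r\alpha$ back to $e^\tau\alpha_s$, it identifies a neighbourhood of $\Sigma_s$ with a piece of the symplectization of $(\Sigma_s,\alpha_s)$, and the first paragraph then produces the defining Hamiltonian
\[H_s(x,r):=\beta\bigl(e^{\,r-g_s(x)}\bigr)-1\]
on $\mathcal{C}$, extended by the appropriate constants. As $g_s$ is affine in $s$, the pair $(W_s,H_s)$ varies smoothly in $s\in[0,1]$ and gives the required homotopy.

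I expect the only real obstacle to be bookkeeping rather than mathematics: the single genuine computation is the identity $X_{e^r}=R_\alpha$ on a symplectization, and after that one must be careful only about geography — in particular invoking the standing hypothesis $\hat W\subset V$ to guarantee that the graphs $\Sigma_s$, which sit over the compact manifold $\Sigma$ at $r$-heights bounded uniformly in $s$, actually stay inside $V$, and arranging a cut-off $\beta$ that is simultaneously the identity near $1$ and locally constant near $0$ and near $+\infty$ (monotonicity then pins down the two limiting constants).
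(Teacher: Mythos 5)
Your proof is correct and follows the same strategy as the paper's: establish the model computation $X_{e^r}=R_\alpha$ on a symplectization, transport the cut-off Hamiltonian into $V$ through the Liouville collar, deduce convexity from the sign of any defining Hamiltonian on the two sides of $\Sigma$, and realize part (ii) by linearly interpolating the conformal factor and taking graphs in the symplectization. The only cosmetic differences are that the paper places the cut-off inside the exponential ($e^{\rho(r)}-1$ versus your $\beta(e^r)-1$) and normalizes $\alpha_0=\lambda|_\Sigma$ by a WLOG rather than carrying both $g_0$ and $g_1$ symmetrically.
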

\begin{proof}
 Since $(V,\lambda)$ is the completion of a Liouville domain, we find that the Liouville vector field $Y_\lambda$ is complete. This allows us to find a symplectic embedding of the symplectization $i:(\Sigma\times\mathbb{R})\rightarrow V$ with $i(\Sigma\times\{0\})=\Sigma$ (see Discussion \ref{dis2}). Hence, it suffices to construct defining Hamiltonians on $\Sigma\times\mathbb{R}$.
 \begin{itemize}
  \item[\textit{@i.}] Here, we consider the function $\qquad\bigg.\tilde{H}(x,r):=e^{\rho(r)}-1,$\\
  where $\rho$ is a smooth monotone increasing function with $\rho(r)=r$ near 0 and $\rho$ constant outside a compact set. This guarantees that $\tilde{H}^{-1}(0)=\{r=0\}=\Sigma\times\{0\}$. Since $d\tilde{H}=e^rdr$ near $\Sigma\times\{0\}$ and $\iota_{R_\alpha}\omega = \iota_{R_\alpha}\big(d(e^r\alpha)\big)=\iota_{R_\alpha}\big(e^rd\alpha +e^rdr\wedge \alpha\big) = -e^rdr$, the Hamiltonian vector field agrees with $R_\alpha$ on $\Sigma\times\{0\}$.\\
  The defining Hamiltonian $H$ for $\Sigma$ is then obtained from $\tilde{H}\circ i^{-1}$ by extending it as constant on $V\setminus i(\Sigma\times\mathbb{R})$. Hence $\mathscr{H}\neq\varnothing$. In general, $H$ is a defining Hamiltonian for $\Sigma$ if and only if it satisfies the following equation:
  \[\mathcal{L}_{Y_\lambda} H|_\Sigma = dH(Y_\lambda)|_\Sigma=\omega(Y_\lambda,R_\alpha)=\lambda(R_\alpha)=1.\]
  Since $H(\Sigma)=0$, this implies that every defining $H$ is positive on $\Sigma\times\mathbb{R}^+$ and negative on $\Sigma\times\mathbb{R}^-$. Hence we have for $H_1,H_2\in\mathscr{H}$ that 
  \[s\cdot H_1 + (1-s)\cdot H_2 \in \mathscr{H} \qquad \text{ for all }\quad s\in[0,1].\]
  \item[\textit{@ii.}] Without loss of generality, we may assume that $\alpha_0=i^\ast\lambda$, where $i:\Sigma\times\{0\}\hookrightarrow V$ is the embedding mentioned above. Since $\alpha_0$ and $\alpha_1$ define the same contact structure on $\Sigma$, we find a function $f\in C^\infty(\Sigma)$, such that $\alpha_1=e^f\cdot\alpha_0$. Then we modify the above construction as follows:
  \[\tilde{H}_s(x,r):=e^{\rho(r-s\cdot f(x))}-1.\]
  Define $H_s$ again by extending $\tilde{H}_s\circ i^{-1}$ as constant on $V\setminus i(\Sigma\times\mathbb{R})$. The homotopy of Liouville domains is then given by $W_s:=H_s^{-1}\big((\infty,0]\big)$ and $\Sigma_s:=H_s^{-1}(0)$.\qedhere
 \end{itemize}
\end{proof}

\subsection{Rabinowitz-Floer homology}\label{sec1.4}
As above, let $(V,\lambda)$ be the completion of a Liouville domain $\tilde{V}$ with contact boundary $M=\partial\tilde{V}$, let $\Sigma\subset V$ be an exact contact hypersurface bounding a compact Liouville domain $W$ and let $H$ be a defining Hamiltonian for $\Sigma$. 
\begin{defn}
 The \textit{\textbf{spectrum}} $spec(\Sigma,\alpha)$ of a contact manifold $\Sigma$ with contact form $\alpha$ is the set of real numbers $\eta\in\mathbb{R}$ such that the ordinary differential equation $\dot{v}=\eta R_\alpha$ has a 1-periodic solution. We denote with $\mathcal{P}(\alpha)\subset C^\infty(S^1,\Sigma)\times\mathbb{R}$ the set of pairs $(v,\eta)$, such that $\eta\in spec(\Sigma,\alpha)$ and $\dot{v}=\eta R_\alpha$.
\end{defn}
\begin{rem}
 We identify a pair $(v,\eta)\in\mathcal{P}(\alpha)$ with the $\eta$-periodic Reeb orbit $\tilde{v}(t):=v(t/\eta)$. Note that we do not exclude the cases $\eta = 0$ and $\eta<0$. For $\eta < 0$, we have that $v$ is again a Reeb orbit with period $|\eta|$, but run through in the opposite direction. For $\eta=0$, the loop $v$ is just constant. The pairs $(v,0)\in\mathcal{P}(\alpha)$ are hence in one-to-one correspondence to the points of $\Sigma$. 
\end{rem}
Let $\mathscr{L}=C^\infty(S^1,V)$ denote the free (smooth) loop space of $V$. The \textbf{\textit{Rabinowitz action functional}} on $V$ associated to $H$ is given by
\[\mathcal{A}^H:\mathscr{L}\times\mathbb{R}\rightarrow\mathbb{R},\qquad \mathcal{A}^H(v,\eta):= \int^1_0 \Big(\lambda(\dot{v}(t))-\eta H(v(t))\Big)dt.\]
One can think of $\mathcal{A}^H$ as the Lagrange multiplier action functional of classical mechanics. In Section \ref{sec2.1}, we will see that the critical points $(v,\eta)$ of $\mathcal{A}^H$ are characterized by
\begin{equation*}
  0=\dot{v}-\eta X_H\qquad\text{ and }\qquad 0=\int^1_0H(v(t))dt.
\end{equation*}
The first equation implies that $\frac{d}{dt}H(v)=dH(\dot{v})=dH(\eta X_H)=0$, so that $H(v)$ is constant and hence 0 by the second equation. As $H$ is a defining Hamiltonian with $H^{-1}(0)= \Sigma$ and $X_{ H}|_{\scriptscriptstyle \Sigma} = R_\alpha$, we find that these equations are therefore equivalent to
\begin{equation} \label{eq2}
 v(t)\in\Sigma \quad\forall\; t\qquad \text{ and }\qquad \dot{v}=\eta R_\alpha.
\end{equation}
This shows that the set of critical points $\crita$ of $\mathcal{A}^H$ consists of the closed Reeb trajectories $v$ on $\Sigma$ with period $\eta$, or equivalently that $\crita=\mathcal{P}(\alpha)$.
\begin{defn}
 A family of $\omega$-compatible almost complex structures
 \[J: S^1\times\mathbb{R}\rightarrow End(TV),\quad (t,n)\mapsto J_t(\cdot,n)\]
 is called \textbf{\textit{admissible}} (of class $C^\ell$/smooth) if:
 \begin{itemize}
  \item as a map with domain $S^1\times V \times \mathbb{R}$ we have that $J$ is of class $C^\ell$/smooth.
  \item $J$ is $t$- and $n$-independent cylindrical at the unbounded end in $V$. This means that on $M\times[R,\infty)$ for $R>0$ sufficiently large $J$ is of the form
\[J|_{\xi_M}=J_0\qquad\text{ and }\qquad J\frac{\partial}{\partial r}= R_\lambda,\]
where $J_0$ is any compatible almost complex structure on the contact structure $\xi_M$ of $M$ and $R_\lambda$ is the vector field whose restriction to the contact hypersurface $M\times\{r\}$ equals the Reeb vector field. Equivalently, we can require that $d(e^r)\circ J = -\lambda$ on $M\times[R,\infty)$.
\item the family is $C^\ell$-bounded, meaning that $\sup_n ||J_t(\cdot,n)||_{C^\ell}<\infty$ with respect to some background norm $||\cdot||$ on $M$.
 \end{itemize}
\end{defn}
\begin{rem}
 The reason for the dependency of $J$ on the additional parameter $n$, not found in the literature until recently in \cite{BourOan3} and \cite{AbbMer}, is based in the transversality problem for Rabinowitz-Floer homology and will become clear in the proof of the local Transversality Theorem \ref{theotrans}.
\end{rem}
In Section \ref{sec2.1}, formula (\ref{metric}), we will see that such a family of almost complex structures $J$ can be used to define a metric $g$ on $\mathscr{L}\times\mathbb{R}$, which yields a gradient $\nabla\mathcal{A}^H$ for $\mathcal{A}^H$. The explicit formula for $\nabla\mathcal{A}^H$ is given in (\ref{gradienta}). 
\begin{defn} \label{gradtraj} An \textbf{$\mathbf{\mathcal{A}^H}$-gradient trajectory} is a solution $(v,\eta)\in C^\infty (\mathbb{R}\times S^1,V)\times C^\infty(\mathbb{R},\mathbb{R})$ of the Rabinowitz-Floer equation, which is the following partial differential equation:
\begin{align} \label{eq3}
\partial_s(v,\eta)=\nabla\mathcal{A}^H(v,\eta) &&\Leftrightarrow&
 \begin{split}
  \partial_s v +J_t (v,\eta)\bigl(\partial_t v-\eta X_H(v)\bigr) &=0 \\
  \partial_s\eta \;+\; \int^1_0 H\bigl(v(s,t)\bigr) dt &=0.
 \end{split}
\end{align}
\end{defn}
The next two lemmas characterize the $\mathcal{A}^H$-gradient trajectories which connect critical points $(v^\pm,\eta^\pm)\in\crita=\mathcal{P}(\alpha)$.
\begin{lemme} \label{lem2a}
 If $(v,\eta)\in \mathcal{P}(\alpha)$, then $\mathcal{A}^H(v,\eta) = \eta$. Thus $\mathcal{A}^H(\mathcal{P}(\alpha))=spec(\Sigma,\alpha)$.
\end{lemme}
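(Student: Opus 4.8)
The plan is to evaluate the two summands in the definition of $\mathcal{A}^H(v,\eta)=\int_0^1\bigl(\lambda(\dot v(t))-\eta H(v(t))\bigr)\,dt$ separately, using only the defining properties of a pair $(v,\eta)\in\mathcal{P}(\alpha)$, namely that $v$ is a loop in $\Sigma$ and that $\dot v=\eta R_\alpha$. First I would dispose of the Hamiltonian term: since $\Sigma=H^{-1}(0)$ and $v(t)\in\Sigma$ for all $t$, we have $H(v(t))\equiv 0$, so $\int_0^1\eta H(v(t))\,dt=0$. This is immediate and uses nothing beyond the defining-Hamiltonian property $H^{-1}(0)=\Sigma$.

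Next I would handle the $\lambda$-term. The key observation is that $\dot v(t)\in T_{v(t)}\Sigma$ because $v$ takes values in $\Sigma$; hence for the inclusion $i:\Sigma\hookrightarrow V$ one has $\lambda_{v(t)}(\dot v(t))=(i^\ast\lambda)_{v(t)}(\dot v(t))=\alpha_{v(t)}(\dot v(t))$. Substituting $\dot v=\eta R_\alpha$ and using the normalisation $\alpha(R_\alpha)=1$ of the Reeb vector field gives $\lambda(\dot v(t))=\eta$ pointwise in $t$. Integrating over $[0,1]$ then yields $\mathcal{A}^H(v,\eta)=\int_0^1\eta\,dt-0=\eta$, which is the first assertion.

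For the second assertion, I would combine this with the identification $\crita=\mathcal{P}(\alpha)$ already established in the text. On the one hand, for every $(v,\eta)\in\mathcal{P}(\alpha)$ we have just shown $\mathcal{A}^H(v,\eta)=\eta$, and $\eta\in spec(\Sigma,\alpha)$ by definition of $\mathcal{P}(\alpha)$, so $\mathcal{A}^H(\mathcal{P}(\alpha))\subseteq spec(\Sigma,\alpha)$. On the other hand, for any $\eta\in spec(\Sigma,\alpha)$ there exists by definition a loop $v$ with $\dot v=\eta R_\alpha$, i.e.\ $(v,\eta)\in\mathcal{P}(\alpha)$, and then $\mathcal{A}^H(v,\eta)=\eta$ exhibits $\eta$ as a value of $\mathcal{A}^H$ on $\mathcal{P}(\alpha)$; hence $spec(\Sigma,\alpha)\subseteq\mathcal{A}^H(\mathcal{P}(\alpha))$. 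Equality follows.

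I do not expect any genuine obstacle here; the only point requiring a line of care is the pull-back step $\lambda(\dot v)=\alpha(\dot v)$, which relies on $\dot v$ being tangent to $\Sigma$ — true precisely because $v$ is a critical loop and therefore stays on $\Sigma$. Everything else is a direct substitution into the action functional.
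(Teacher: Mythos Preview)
Your proof is correct and follows exactly the same route as the paper's: use $H|_\Sigma=0$ to kill the Hamiltonian term, then substitute $\dot v=\eta R_\alpha$ and $\alpha(R_\alpha)=1$ to evaluate the $\lambda$-term as $\eta$. The paper's version is simply more terse, omitting the explicit pull-back justification and the two-inclusion argument for the spectrum equality that you spelled out.
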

\begin{proof}
 Since $H|_{\Sigma}=0$ and $\dot{v}=\eta R$ and $im(v)\subset\Sigma$ if $(v,\eta)\in\mathcal{P}(\alpha)$, we may calculate
\[ \mathcal{A}^H(v,\eta) = \int^1_0 \Big(\lambda(\dot{v})+\eta H(v)\Big)dt = \int^1_0\lambda (\eta R)dt = \int^1_0\eta\, dt=\eta.\qedhere\]
\end{proof}
\begin{lemme} \label{lem2} If $(v,\eta)$ is a non-stationary $\mathcal{A}^H$-gradient trajectory between critical points $(v^\pm,\eta^\pm)\in\mathcal{P}(\alpha)$, i.e.\ where $\displaystyle\lim_{s\rightarrow \pm\infty} (v,\eta)=(v^\pm,\eta^\pm)$, then $\eta^+>\eta^-$.
\end{lemme}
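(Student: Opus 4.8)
The plan is to exploit the standard fact that the action functional is monotone along its own gradient trajectories, together with the value computation of Lemma \ref{lem2a}. First I would differentiate $\mathcal{A}^H$ along the trajectory. Since $\nabla\mathcal{A}^H$ is the gradient of $\mathcal{A}^H$ with respect to the metric $g$ induced by $J$, the chain rule and the gradient equation (\ref{eq3}) give
\[
\frac{d}{ds}\,\mathcal{A}^H\bigl(v(s),\eta(s)\bigr)=\bigl\langle \nabla\mathcal{A}^H(v,\eta),\,\partial_s(v,\eta)\bigr\rangle_g=\bigl\|\nabla\mathcal{A}^H(v,\eta)\bigr\|_g^2\;\geq\;0 ,
\]
so $s\mapsto\mathcal{A}^H(v(s),\eta(s))$ is non-decreasing.

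Next I would integrate this identity over all of $\mathbb{R}$. Because $(v,\eta)$ converges to the critical points $(v^\pm,\eta^\pm)$ as $s\to\pm\infty$, the continuous function $\mathcal{A}^H(v(s),\eta(s))$ tends to $\mathcal{A}^H(v^\pm,\eta^\pm)$, and by Lemma \ref{lem2a} these limits equal $\eta^\pm$. Hence
\[
\eta^+-\eta^-=\int_{-\infty}^{\infty}\bigl\|\nabla\mathcal{A}^H\bigl(v(s),\eta(s)\bigr)\bigr\|_g^2\,ds\;\geq\;0 ,
\]
i.e.\ $\eta^+-\eta^-$ is exactly the energy of the trajectory. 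To upgrade to a strict inequality, note that if this integral vanished then $\nabla\mathcal{A}^H(v(s),\eta(s))=0$ for every $s$, so by (\ref{eq3}) we would have $\partial_s(v,\eta)\equiv 0$; thus $(v,\eta)$ would be independent of $s$ and hence a stationary trajectory sitting at a single critical point, contradicting the hypothesis. Therefore $\eta^+>\eta^-$.

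The only point that needs care, rather than a genuine obstacle, is the passage to the integral over all of $\mathbb{R}$: one must know that $\mathcal{A}^H(v(s),\eta(s))\to\mathcal{A}^H(v^\pm,\eta^\pm)$ as $s\to\pm\infty$ (continuity of $\mathcal{A}^H$ under the convergence of loops implicit in $\lim_{s\to\pm\infty}(v,\eta)=(v^\pm,\eta^\pm)$) and that the energy integral is finite. Granting the hypotheses of the lemma, both are routine, and the substance of the argument is the one-line monotonicity computation above.
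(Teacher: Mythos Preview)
Your proof is correct and follows essentially the same approach as the paper: both compute $\eta^+-\eta^-$ as the energy $\int_{-\infty}^{\infty}\|\nabla\mathcal{A}^H(v,\eta)\|_g^2\,ds$ via Lemma~\ref{lem2a} and the gradient equation~(\ref{eq3}). You spell out the strict-inequality step and the convergence at the ends slightly more explicitly than the paper does, but the substance is identical.
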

\begin{proof}
With $||\cdot||$ as the norm of the metric $g$ and Lemma \ref{lem2a}, we calculate
\begin{align*}
 \eta_+-\eta_-=\mathcal{A}^H (v^+,\eta^+)-\mathcal{A}^H(v^-,\eta^-)
&=\int^\infty_{-\infty} \frac{d}{ds}\mathcal{A}^H(v,\eta)ds\\
&=\int^\infty_{-\infty} g\bigl(\nabla \mathcal{A}^H(v,\eta),\partial_s(v,\eta)\bigr) ds\\
&\overset{(\ref{eq3})}{=}\int^\infty_{-\infty} ||\nabla\mathcal{A}^H(v,\eta)||^2 ds\\ &> 0.\qedhere
\end{align*}
\end{proof}
\begin{defn}\label{defnenergy}
 The quantity $E(v,\eta):=\int^\infty_{-\infty}||\nabla\mathcal{A}^H(v,\eta)||^2ds\geq 0$ is called the \textbf{\textit{energy}} of the $\mathcal{A}^H$-gradient trajectory $(v,\eta)$. If $\displaystyle\lim_{s\rightarrow\pm\infty}(v,\eta)=(v^\pm,\eta^\pm)\in\mathcal{P}(\alpha)$, then Lemma \ref{lem2} tells us that $E(v,\eta)=\eta^+-\eta^-$.
\end{defn}
Since any pair $(v,\eta)\in\mathcal{P}(\alpha),\,\eta\neq 0$, yields a whole $S^1$-family of points in $\mathcal{P}(\alpha)$ by time shift, $\mathcal{P}(\alpha)$ never consists of isolated points. In order to build a Floer-type homology with basis $\mathcal{P}(\alpha)$, we therefore have to use Morse-Bott techniques. This is why we impose the following non-degeneracy assumption on the Reeb flow $\phi^t$ on $\Sigma$:\bigskip\\\phantom{Here}
\begin{minipage}{12cm}
 \textit{The set $\mathcal{N}^\eta\subset\Sigma$ formed by the $\eta$-periodic Reeb orbits is a closed submanifold for each $\eta\in\mathbb{R}$ and $T_p\,\mathcal{N}^\eta = \ker \left(D_p\phi^\eta - id\right)$ holds for all $p\in\mathcal{N}^\eta$.}
\end{minipage}\hfill(MB)\bigskip\\
Note that we do not assume that the rank $d\lambda|_{\mathcal{N}^\eta}$ is locally constant, as was done in \cite{FraCie}. As far as we see this is not needed. The assumption (MB) is generically satisfied, as shown in \cite{FraCie}, appendix B. Moreover, (MB) implies that $\mathcal{P}(\alpha)$ is a proper submanifold of $\mathscr{L}\times\mathbb{R}$ (see Theorem \ref{theo17}). Note that the components of $\mathcal{P}(\alpha)$ with fixed $\eta\in spec(\Sigma,\alpha)$ correspond to the submanifolds $\mathcal{N}^\eta$ of $\Sigma$ via the map $(v,\eta)\mapsto v(0)$. By abuse of notation, we will write $\mathcal{N}^\eta$ also for the components of $\mathcal{P}(\alpha)$, i.e.\ we consider $\mathcal{N}^\eta$ either as a submanifold of $\mathscr{L}\times\mathbb{R}$ or as a submanifold of $\Sigma$, depending on the context.\medskip\\
There are several approaches to deal with Morse-Bott situations. The one that we will use here, \textit{flows with cascades}, was developed by Urs Frauenfelder, \cite{Fra}, and Frédéric Bourgeois, \cite{Bour}, based on an idea from Piunikhin, Salamon, and Schwarz. It uses flows on the critical manifolds without further perturbation, which makes the computations we have in mind easier. For that, we choose an additional Morse function $h$ and a suitable metric $g_h$ on $\mathcal{P}(\alpha)$ such that the restrictions $h_\eta:=h|_{\mathcal{N}^\eta}$ and $g_\eta:=g_h|_{\mathcal{N}^\eta}$ form a Morse-Smale pair on $\mathcal{N}^\eta$ for every $\eta\in spec(\Sigma,\alpha)$. This is equivalent to $h_\eta$ being a Morse function on $\mathcal{N}^\eta$ for which the stable and unstable manifolds $W^s(p)$ resp.\ $W^u(q)$ of the $\nabla_{g_h} h$-gradient flow intersect transversally for each pair $p,q\in crit(h_\eta)$ .
\begin{defn}\label{Morsetraj}
 An \textbf{h-Morse flow line} $y\in C^\infty \big(\mathbb{R},\crita\big)$ is a solution of \linebreak $\dot{y} = \nabla h(y)$, where $\nabla h$ is the gradient of $h$ with respect to $g_h$.
\end{defn}
\begin{defn}\label{defntrajwithcasc}
 For $c^-,c^+\in crit(h)$ and $m\in\mathbb{N}$, we call a \textbf{trajectory from $c^-$ to $c^+$ with $m$ cascades} a tupel
\begin{equation*}
 (x,t) = \bigl( (x_k)_{1\leq k\leq m},(t_k)_{1\leq k\leq m-1}\bigr),
\end{equation*}
consisting of $\mathcal{A}^H$-gradient trajectories $x_k=(v_k,\eta_k)$ and real numbers $t_k \geq 0$ such that there exist (possibly constant) $h$-Morse flow lines $y_k$, $0\leq k\leq m,$ with
\begin{enumerate}
 \item[i.] \hspace{1cm} $\displaystyle\lim_{s\rightarrow-\infty} y_0(s)=c^-, \mspace{60mu}\lim_{s\rightarrow-\infty} x_1(s)=y_0(0)$,
 \item[ii.] \hspace{1.2cm} $\displaystyle\lim_{s\rightarrow\infty} x_k(s)=y_k(0),\quad\quad \lim_{s\rightarrow-\infty} x_{k+1}(s)=y_k(t_k)$,
 \item[iii.] \hspace{1.1cm} $\displaystyle\lim_{s\rightarrow\infty} x_m(s)=y_m(0),\mspace{40mu}\lim_{s\rightarrow\infty} y_m(s)=c^+$.
\end{enumerate}
 A \textbf{trajectory with \textnormal{\textit 0} cascades from $c^-$ to $c^+$} is an $h$-Morse flow line from $c^-$ to $c^+$. See Figure 1 for an illustration.
\end{defn}
\begin{figure}[ht]
\centering
 \resizebox{15.5cm}{!}{\input{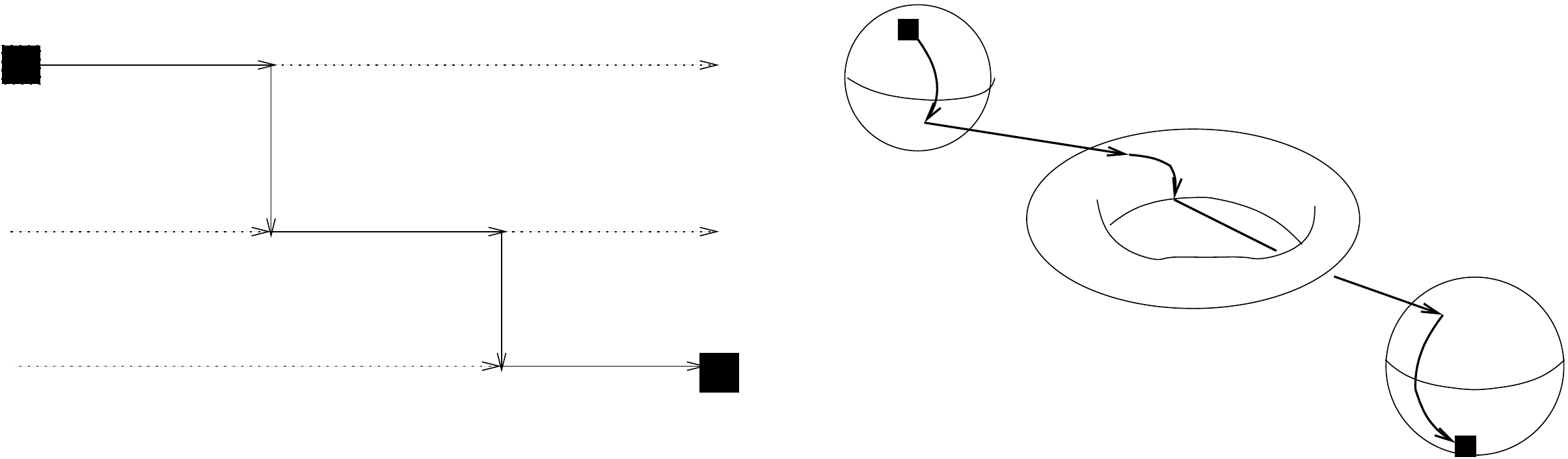_t}} \caption{\label{fig1} A flow line with 2 cascades passing through the critical manifolds $\mathcal{N}^{\eta^-},\,\mathcal{N}^{\eta_1},\,\mathcal{N}^{\eta^+}$.}
\end{figure}
In other words, a trajectory with cascades is an alternating sequence of segments of $h$-Morse flow lines and whole $\mathcal{A}^H$-gradient trajectories. The space of all such trajectories with $m$ cascades from $c^-$ to $c^+$ is denoted by $\widehat{\mathcal{M}}(c^-,c^+,m)$. The moduli space
\[\mathcal{M}(c^-,c^+,m):=\raisebox{.2em}{$\widehat{\mathcal{M}}(c^-,c^+,m)$}\big/\raisebox{-.2em}{$\mathbb{R}^m$}\] is obtained by dividing out the free $\mathbb{R}^m$-action on the $m$ cascades given by time shifts. If $m=0$, we also divide by $\mathbb{R}$ (not by $\mathbb{R}^0\cong 1$), as there is still an $\mathbb{R}$-action by time shift now on the $h$-Morse flow line. In Theorem \ref{theotrans} we show that $\mathcal{M}(c^-,c^+,m)$ is a manifold for generic choices of $J_t(\cdot,n)$. We denote the moduli space of all trajectories with cascades from $c^-$ to $c^+$ by
\[\mathcal{M}(c^-,c^+):= \bigcup_{m\in\mathbb{N}} \mathcal{M}(c^-,c^+,m).\]
 Due to Theorem \ref{theo17}, there are only finitely many non-empty critical manifolds $\mathcal{N}^\eta$ with $\eta\in[\eta^-,\eta^+]$ for each $\eta^\pm\in\mathbb{R}$, so that the above union is in fact finite. Indeed, each cascade reduces the period $\eta$ (cf. Lemma \ref{lem2}) and connects critical manifolds $\mathcal{N}^{\eta_1}$ and $\mathcal{N}^{\eta_2}$, where $\eta^-\leq\eta_1<\eta_2\leq\eta^+$.\pagebreak\\
Theorem \ref{theocompactness2} states that $\mathcal{M}(c^-,c^+)$ still carries the structure of a manifold and Theorem \ref{theoglue} asserts that it is compact. Its zero-dimensional component $\mathcal{M}^0(c^-,c^+)$ is hence a finite set.\medskip\\
Now we define the Rabinowitz-Floer homology. The chain complex $RFC(H,h)$ is given as the $\mathbb{Z}_2$-vector space consisting of formal sums
\[\xi=\sum_{c\,\in crit(h)} \xi_c\cdot c,\]
where the coefficients $\xi_c\in \mathbb{Z}_2$ satisfy the following finiteness condition:
\begin{equation}\label{finite}
 \#\{c\in crit(h)\,|\,\xi_c\neq 0\,\land\,\mathcal{A}^H(c)\geq \kappa\} <\infty\qquad\text{for all $\kappa\in \mathbb{R}$}.
\end{equation}
So $RFC(H,h)$ is a Novikov-completion of the $\mathbb{Z}_2$-vector space generated by the critical points of $h$. Let $\#_2 \mathcal{M}^0(c^-,c^+)\in\mathbb{Z}_2$ denote the cardinality of $\mathcal{M}^0(c^-,c^+)$ modulo 2. The boundary operator $\partial^F$ is then defined to be the (infinite) linear extension of
\begin{equation}\label{eqX}
 \partial^F c^+ = \sum_{c^-\in crit(h)} \#_2 \mathcal{M}^0(c^-,c^+)\cdot c^-,\qquad\qquad c^+\in crit(h).
\end{equation}
To see that $\partial^F$ is well-defined, we have to show that the right hand side still satisfies the finiteness condition (\ref{finite}):\\
Indeed, it follows from Lemma \ref{lem2} that $\partial^F$ reduces the action, i.e.\ $\#_2 \mathcal{M}^0(c^-,c^+)\neq 0$ only if $\mathcal{A}^H(c^-)\leq\mathcal{A}^H(c^+)$. Moreover, we show in Theorem \ref{theo17}, that for any $c^+$ and every $a\in\mathbb{R}$, there are only finitely many $c^-\in crit(h)$ with $a\leq \mathcal{A}^H(c^-)\leq \mathcal{A}^H(c^+)$. Therefore $\partial^F c^+$ satisfies (\ref{finite}) for any $c^+$. In the general case, where $\partial^F\big(\sum \xi_c\cdot c\big):= \sum \xi_c\cdot \partial^F c$, condition (\ref{finite}) follows as both the sum $\sum\xi_c\cdot c$ and $\partial^F c$ satisfy the finiteness condition.\medskip\\
It follows from standard Floer-techniques by considering the 1-dimensional component of $\widehat{\mathcal{M}}(c^-,c^+)$, that $\partial^F\circ\partial^F=0$. Hence, $\partial^F$ is a boundary operator on $RFC(H,h)$. The \textbf{\textit{Rabinowitz-Floer homology}} of $(V,\Sigma)$ with respect to the Hamiltonian $H$ and the Morse-function $h$ is the homology of the chain complex $\big(RFC(H,h),\partial^F\big)$, i.e.\
\[RFH(H,h):=\frac{\ker \partial^F}{\text{im}\; \partial^F}.\]
We prove in Corollary \ref{liouvilleinvariance} that $RFH(H,h)$ only depends on the contact manifold $(\Sigma,\xi)$ and the filling Liouville domain $W$, thus allowing us to write $RFH(W,(\Sigma,\xi))$, where we omit $\xi$ whenever it is clear from the context.\\
In Section \ref{1.5}, we show that $RFH(W,\Sigma)$ can be given a $\mathbb{Z}$-grading under the following assumptions:
\begin{enumerate}
 \item[(A)] \textit{The map $i_\ast : \pi_1(\Sigma)\rightarrow\pi_1(W)$ induced by the inclusion is injective.}
 \item[(B)] \textit{The integral $I_{c_1}: \pi_2(W)\rightarrow \mathbb{Z}$ of the first Chern class $c_1(TW)$ vanishes on spheres.}
\end{enumerate}
For $c=(v,\eta)\in crit(h)\subset \mathcal{P}(\alpha)$, the degree $\mu(c)$ is a half integer given by the formula
\[\mu(c)=\mu_{CZ}(v)+ind_h(c)-\frac{1}{2} dim_c\, \mathcal{N}^\eta +\frac{1}{2},\]
where $ind_h(c)$ is the Morse index of $c$ with respect to $h$, $\mu_{CZ}(v)$ is the (transversal) Conley-Zehnder index of $v$ and $dim_c\,\mathcal{N}^\eta$ is the local dimension of $\mathcal{N}^\eta$ at $c$.

\subsection{Outline of the thesis and main result}
A major part of this thesis is devoted to technical details for the construction of \linebreak Rabinowitz-Floer homology. We do this, since up to now, there is no complete reference for this in the literature. Note that due to the integral term in equation (\ref{eq3}) some delicate adaptations to the standard construction by Floer have to be made.\\
In Section \ref{sec2}, we deal with the transversality problem, i.e.\ we show that $\widehat{\mathcal{M}}(c^-,c^+,m)$ is a manifold for generic $J$. We generalize this to setups where everything is symmetric with respect to a symplectic symmetry of finite order.\\ In Section 3 we present many different properties of the moduli spaces. Due to the vastness of the topic, we do not prove the compactness of $\mathcal{M}(c^-,c^+,m)$. In Section \ref{sec3.1}, we merely provide some estimates which should ensure that the usual compactification due to Gromov works.\\
 In Section \ref{sec.inv}, we show that the Rabinowitz-Floer homology does not depend on the auxiliary choices made for its definition. Even more surprising, we show that $RFH(W,\Sigma)$ actually is invariant under Liouville isomorphisms, thus giving an invariant of the filling by $W$ of $(\Sigma,\xi)$ (in the sense of Definition \ref{filling}). In Proposition \ref{prop16} we then see that under some circumstances $RFH(W,\Sigma)$ is even independent of $W$ and hence yields an invariant of the contact structure.\\
Also in Section \ref{sec.inv}, we show that the action $\mathcal{A}^H$ induces a filtration of the complex $\big(RFC(H,h),\partial^F\big)$. We use this filtration to define truncated homology groups $RFH^{(a,b)}(W,\Sigma)$ and growth rates $\Gamma(W,\Sigma)$. The latter can be used to obtain more information on $RFH(W,\Sigma)$ if it is of infinite dimension. The Sections \ref{1.4} and \ref{1.5} are devoted to the Conley-Zehnder index and the $\mathbb{Z}$-grading of $RFH$.\\
In Section \ref{secalg}, we provide some useful facts about direct and inverse limits. In \ref{secRedFil}, Theorem \ref{reductiontheo}, we show that $RFH(W,\Sigma)$ over field coefficients can be calculated using the singular homology $H_\ast(\mathcal{N}^\eta)$ without knowing an explicit Morse function $h$ on $\mathcal{N}^\eta$ simply by algebraically pretending that we have a perfect Morse function $h$. This  is purely algebraic and as a consequence less intuitive. For a first reading, it can be skipped, as we apply its main result in this thesis solely in situations where we could use standard arguments from spectral sequences.\\ 
In Sections \ref{secsur} and \ref{secsymhom} we introduce symplectic (co)homology and contact surgery (which includes the connected sum construction). In particular, we give a more detailed (and slightly corrected) proof of the fact that symplectic (co)homology is invariant under subcritical surgery, originally due to K. Cieliebak, \cite{Cie}. We make this detour in order to show that Rabinowitz-Floer homology is also invariant under subcritical surgery, which we could not prove directly.\footnote{In a recently published article, \cite{CieOan}, Rem. 9.15, A. Oancea and K. Cieliebak show this invariance within the context of $RFH$.}\\
In Section 7, we introduce the Brieskorn manifolds $\Sigma_a$ and calculate $RFH_\ast(W_\veps,\Sigma_a)$ explicitly for some $\Sigma_a$ with fillings $W_\veps$. In \ref{sec7.3} we then prove our Main Theorem and some corollaries.\\
The appendices deal with technical details which are needed in the text but which are to long or to far of the general discourse. In Appendix \ref{app1}, we give an explicit example of a Morse-Smale pair on the unit cotangent bundle $S^\ast S^n$ which is symmetric with respect to a certain involution. Appendix \ref{appB} recollects facts about convolutions. In Appendix \ref{auttrans}, we show that transversality always holds along constant solutions of (\ref{eq3}). Finally in Appendix \ref{conventions}, we give a short list of all the major assumptions and conventions that we use in this thesis.
\bigskip\\
The main result of this dissertation is the following theorem which states the existence of a rich variety of fillable contact structures or fillings on a differentiable manifold $\Sigma$, with $\dim \Sigma \geq 5$, if it supports at least one fillable contact structure.
\begin{theo*}[Main Theorem]~\\
 Suppose that $\Sigma$ is a differentiable manifold, $\dim \Sigma =2n-1\geq 5$, which supports at least one fillable contact structure with filling for which the conditions (A) and (B) are true. Then $\Sigma$ satisfies at least one of the following alternatives:
 \begin{itemize}
  \item[a)] For every fillable contact structure $\xi$ on $\Sigma$ and any filling $W$ of $(\Sigma,\xi)$, which satisfies (A) and (B), holds true that
  \[\dim_{\mathbb{Z}_2} RFH_\ast(W,(\Sigma,\xi)) =\infty\qquad\qquad\forall\,\ast\in\mathbb{Z}\setminus[-n+1,n].\]
  \item[b)] There is (at least) one contact structure on $\Sigma$  for which there exist infinitely many different fillings.
  \item[c)] There exist infinitely many different fillable contact structures on $\Sigma$.
 \end{itemize}
\end{theo*}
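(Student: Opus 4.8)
The plan is to prove the contrapositive: assuming alternative (a) fails, we deduce that (b) or (c) holds, which gives the trichotomy. So suppose there is a fillable contact structure $\xi_0$ on $\Sigma$ together with a filling $W_0$ satisfying (A) and (B) for which $\dim_{\mathbb{Z}_2} RFH_{k_0}(W_0,(\Sigma,\xi_0))<\infty$ for some degree $k_0\in\mathbb{Z}\setminus[-n+1,n]$. The strategy is to manufacture, out of $W_0$ and a carefully chosen infinite family of Brieskorn manifolds, infinitely many pairwise non-isomorphic exact fillings of $\Sigma$; according to whether they fill the same contact structure or not, we will land in alternative (b) or (c).

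The first step is to recall from Section~7 the Brieskorn manifolds $\Sigma_a$ and their Milnor-fibre fillings. By Brieskorn's description of the links of the singularities $z_0^{a_0}+\dots+z_n^{a_n}=0$, there is an infinite family of exponent tuples $a^{(j)}$ with $\Sigma_{a^{(j)}}$ diffeomorphic to the standard sphere $S^{2n-1}$; write $W_j$ for the associated Milnor fibre. Since $2n-1\geq 5$, i.e.\ $n\geq 3$, each $W_j$ is $(n-1)$-connected, hence $2$-connected, so $\pi_1(W_j)=0=\pi_2(W_j)$ and $W_j$ automatically satisfies (A) and (B). Using the explicit calculation of $RFH_\ast(W_j,\Sigma_{a^{(j)}})$ carried out in Section~7, I would select the family $\{a^{(j)}\}$ so that $\dim_{\mathbb{Z}_2} RFH_{k_0}(W_j,\Sigma_{a^{(j)}})$ is finite but takes infinitely many distinct values as $j$ varies — this is exactly the behaviour that the Brieskorn computations exhibit in every degree outside $[-n+1,n]$, the window $[-n+1,n]$ being precisely the degree range occupied by the contribution of the constant Reeb orbits (the zero-period critical manifold $\mathcal{N}^0\cong\Sigma$), so that every nonzero class outside it is forced by a genuine periodic Reeb orbit whose multiples can be made to proliferate.

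Next I would glue. Since $\Sigma_{a^{(j)}}\cong S^{2n-1}$, the connected sum $\Sigma\#\Sigma_{a^{(j)}}$ is diffeomorphic to $\Sigma$, and it is filled by the boundary connected sum $W_0\,\natural\,W_j$, which inherits (A) and (B) from $W_0$ because $W_j$ is $2$-connected. Now $W_0\,\natural\,W_j$ is obtained from the disjoint union $W_0\sqcup W_j$ by attaching a single Weinstein $1$-handle, and a $1$-handle is subcritical since $1<n$. Hence the invariance of Rabinowitz--Floer homology under subcritical surgery (Section~\ref{secsur}), together with the evident splitting of $RFH$ over connected components, yields an isomorphism of graded $\mathbb{Z}_2$-vector spaces
\[
  RFH_\ast\bigl(W_0\,\natural\,W_j,\,\Sigma\#\Sigma_{a^{(j)}}\bigr)\;\cong\;RFH_\ast\bigl(W_0,(\Sigma,\xi_0)\bigr)\,\oplus\,RFH_\ast\bigl(W_j,\Sigma_{a^{(j)}}\bigr).
\]
In degree $k_0$ the first summand is a fixed finite-dimensional space while the second runs through infinitely many distinct finite dimensions; hence the fillings $W_0\,\natural\,W_j$ of $\Sigma$ have pairwise distinct Rabinowitz--Floer homology and, by Corollary~\ref{liouvilleinvariance}, are pairwise non-isomorphic as exact fillings.

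Finally the dichotomy. If $\Sigma$ admits infinitely many fillable contact structures up to contactomorphism, then alternative (c) holds. Otherwise, by the pigeonhole principle infinitely many of the contact structures $\xi_0\#\xi_{a^{(j)}}$ are contactomorphic to a single contact structure $\xi'$ on $\Sigma$, and the corresponding infinitely many domains $W_0\,\natural\,W_j$ are then pairwise non-isomorphic exact fillings of $(\Sigma,\xi')$ satisfying (A) and (B), which is alternative (b). In both cases (b) or (c) holds, completing the proof. I expect the main obstacle to be the input from Section~7: one must exhibit an infinite family of exponents that is simultaneously forced by Brieskorn's criterion to give $S^{2n-1}$ and is controlled enough that $\dim_{\mathbb{Z}_2} RFH_{k_0}(W_j,\Sigma_{a^{(j)}})$ is finite yet unbounded, for every target degree $k_0\notin[-n+1,n]$; a secondary but real point is to verify that the subcritical-surgery isomorphism for $RFH$ is compatible with the $\mathbb{Z}$-grading, so that the displayed connected-sum formula holds without an uncontrolled degree shift (a uniform shift over the family would be harmless, but one must rule out shifts depending on $j$).
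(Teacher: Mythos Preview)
Your overall architecture is exactly the paper's: assume (a) fails for some $(W_0,\xi_0,k_0)$, build fillings of $\Sigma$ by boundary connected sum with filled contact spheres, invoke invariance of $RFH$ under subcritical surgery to distinguish them, and then split into (b) or (c) by pigeonhole. The gap is in the second step, where the input you expect from Section~7 is not what is actually available.

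Theorem~\ref{theobriesspheres} shows that for $a=(2,2,2,a_3,\dots,a_n)$ with suitable odd $a_i$ one has $\dim_{\mathbb{Z}_2} RFH_\ast(W_\veps,\Sigma_a)=2$ throughout a degree window that can be arranged to contain any prescribed $k_0\notin[-n+1,n]$ by enlarging $a_3$; there is no computation in the paper producing a family of Brieskorn manifolds with pairwise \emph{distinct} finite values of $RFH_{k_0}$, contrary to what you assert. Moreover, Brieskorn's theorem (Theorem~\ref{theoBries}) only guarantees that such $\Sigma_a$ are \emph{homeomorphic} to $S^{2n-1}$, not diffeomorphic. The paper handles both issues at once (Theorem~\ref{theocontactonsphere}): fix a single such $\Sigma_a$ and form the $m$-fold connected sum $\#_m\Sigma_a$; since homotopy spheres form a finite group under $\#$ (Kervaire--Milnor), some $m$ yields the standard smooth $S^{2n-1}$, and by Theorem~\ref{theoRFHinvsur} this sphere has $\dim RFH_{k_0}=2m$. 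One then glues a \emph{varying} number $\ell$ of copies of this fixed standard sphere to $(\Sigma,\xi_0)$, obtaining $\dim RFH_{k_0}=b_{k_0}^\Sigma+2m\ell$. So the maneuver you are missing is not to vary the Brieskorn exponents but to iterate the connected sum with a single Brieskorn sphere; this simultaneously fixes the smooth structure and generates the unbounded sequence of Betti numbers.
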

Note the difference to dimension 3, where according to Eliashberg (see \cite{Geiges}) the only fillable contact structure on $S^3$ is the standard one and where due to Gromov, \cite{Gro}, the only filling for the standard contact structure on $S^3$ is the unit ball $(B^4,\omega_{std})$. In particular, $(S^3,\xi_{std})$ does not satisfy a), b) or c).\\
In Section \ref{sec7.3}, we also prove the following dynamical and contact topological consequences.
\begin{cor*}~
 \begin{itemize}
  \item If $\,\Sigma$ satisfies alternative a) of the Main Theorem, then every fillable contact structure on $\Sigma$ has for any generic contact form simple Reeb trajectories of arbitrary length.
  \item If $\,\Sigma$ satisfies alternative b) but not a) of the Main Theorem, then there is at least one contact structure on $\Sigma$ which has simple closed Reeb trajectories of arbitrary length for every generic contact form.
 \end{itemize}
\end{cor*}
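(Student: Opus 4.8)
The plan is to extract the Reeb-dynamical statements from the size of $RFH$ at the \emph{chain level}. Recall that the generators of $RFC(H,h)$ are the critical points $c=(v_c,\eta_c)$ of $h$ on $\mathcal P(\alpha)$, sitting in action $\mathcal A^H(c)=\eta_c$ (Lemma~\ref{lem2a}) and in degree $\mu(c)=\mu_{CZ}(v_c)+ind_h(c)-\tfrac12\dim_c\mathcal N^{\eta_c}+\tfrac12$, and recall from (MB) and Theorem~\ref{theo17} that every compact interval of periods meets only finitely many of the compact critical manifolds $\mathcal N^\eta$. Hence if $\dim_{\mathbb Z_2}RFH_\ast(W,(\Sigma,\xi))=\infty$ for some degree $\ast$, then already $\dim_{\mathbb Z_2}RFC_\ast=\infty$, so there are infinitely many generators of degree $\ast$ and their periods $|\eta_c|$ are unbounded. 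Writing the underlying Reeb orbit as $v_c=\gamma_c^{k_c}$ for a simple orbit $\gamma_c$ of period $\tau_c=\eta_c/k_c$, it then suffices to show $\sup_c\tau_c=\infty$: a simple closed Reeb orbit of period $\tau$ is an embedded loop, so traversed once it is an embedded --- hence simple --- Reeb trajectory of length comparable to $\tau$ in any fixed metric on $\Sigma$, and unbounded $\tau_c$ then yields simple (closed) Reeb trajectories of arbitrary length.

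For the first bullet I would take $\xi$ fillable with a filling $W$ satisfying (A) and (B), so that $RFH(W,(\Sigma,\xi))$ is $\mathbb Z$-graded and alternative a) applies, and $\alpha$ a generic contact form, so that all its closed Reeb orbits are non-degenerate. Then every $\mathcal N^\eta$ is a finite disjoint union of circles, $\dim_c\mathcal N^{\eta_c}=1$ and $ind_h(c)\in\{0,1\}$, and the grading formula collapses to $\mu(c)=\mu_{CZ}(v_c)+ind_h(c)$. Suppose, for contradiction, that the simple $\alpha$-Reeb orbits had uniformly bounded period $\le T_0$; by non-degeneracy and Theorem~\ref{theo17} there would be only finitely many of them, $\gamma^{(1)},\dots,\gamma^{(N)}$, and every closed orbit would be an iterate $(\gamma^{(j)})^{k}$. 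Invoking the Conley--Zehnder iteration inequality $\lvert\mu_{CZ}(\gamma^{k})-k\,\widehat\mu(\gamma)\rvert\le n-1$: if $\widehat\mu(\gamma^{(j)})\ne0$, then $\lvert\mu_{CZ}((\gamma^{(j)})^{k})\rvert\to\infty$ as $k\to\infty$, so only finitely many iterates of $\gamma^{(j)}$ share any fixed degree; if $\widehat\mu(\gamma^{(j)})=0$, then $\mu_{CZ}((\gamma^{(j)})^{k})\in[-n+1,\,n-1]$ for all $k$, so every generator lying over $(\gamma^{(j)})^{k}$ has degree in $[-n+1,\,n]$. Either way only finitely many generators carry a given degree $\ast\notin[-n+1,n]$, contradicting $\dim_{\mathbb Z_2}RFH_\ast=\infty$, which is exactly what alternative a) asserts. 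Hence $\sup_c\tau_c=\infty$ and $\alpha$ has simple Reeb trajectories of arbitrary length. (Note that this is precisely the computation pinning the excluded window down to $[-n+1,n]$.)

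For the second bullet, $\Sigma$ satisfies b) but not a); let $\xi_0$ carry infinitely many fillings $W_1,W_2,\dots$, which by the Liouville invariance of $RFH$ (Corollary~\ref{liouvilleinvariance}) have pairwise non-isomorphic $RFH(W_i,(\Sigma,\xi_0))$. I would aim to prove that $\xi_0$ itself has simple closed Reeb orbits of arbitrary length for every generic contact form. If some generic $\alpha$ for $\xi_0$ had only finitely many simple closed orbits $\gamma^{(1)},\dots,\gamma^{(N)}$, then --- since the chain groups $RFC_\ast(H,h)$ depend only on $(\Sigma,\alpha,h)$, not on the filling --- the index estimate above bounds $\dim_{\mathbb Z_2}RFC_\ast$ \emph{uniformly in} $\ast$, hence bounds $\dim_{\mathbb Z_2}RFH_\ast(W_i,(\Sigma,\xi_0))$ uniformly in $\ast$ and in $i$; combining this with the failure of a) and the action-filtration / growth-rate machinery of Section~\ref{sec.inv} one should then be able to reduce $\{RFH(W_i,(\Sigma,\xi_0))\}_i$ to finitely many isomorphism types, contradicting b). The hard part of the whole corollary is exactly this last step: converting the filling-\emph{independent} bookkeeping of generators and indices into control of the filling-\emph{dependent} differential $\partial^F$ strong enough to bound the number of distinct homologies --- the first bullet, by contrast, uses only the crude $\dim RFH_\ast\le\dim RFC_\ast$ together with the iteration formula.
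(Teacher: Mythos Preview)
Your first bullet is essentially the paper's argument (Theorem~\ref{theo112} specialized to non-degenerate contact forms): finitely many simple orbits plus the iteration inequality force each fixed degree outside a bounded window to carry only finitely many generators, contradicting alternative~a). That part is fine.

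The second bullet, however, has a real gap, and the route you sketch is not the paper's and is unlikely to close. You take $\xi_0$ to be \emph{any} contact structure with infinitely many distinct fillings and then try to argue that a uniform bound on $\dim_{\mathbb Z_2}RFC_\ast$ (hence on $\dim_{\mathbb Z_2}RFH_\ast(W_i)$) forces only finitely many isomorphism types of $RFH(W_i)$. But a uniform bound on each graded piece does \emph{not} bound the number of graded $\mathbb Z_2$-vector spaces: there are infinitely many degrees, so infinitely many dimension sequences remain possible. The ``action-filtration / growth-rate machinery'' does not help here either, since growth rates are again invariants of the graded pieces and do not cut the possibilities down to a finite list.

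The paper's fix is much simpler and bypasses this entirely: do not use the bare statement of alternative~b), but go back into the \emph{proof} of the Main Theorem. There one sees that when $\Sigma$ satisfies b) but not a), the contact structure in question is the specific $\xi_\infty=\xi\#(\#_m\xi_0)$ with fillings $W_m=W\#(\#_m W_0)$, and for the fixed degree $k$ witnessing ``not~a)'' one has
\[
\dim_{\mathbb Z_2} RFH_k\big(W_m,(\Sigma,\xi_\infty)\big)=b_k^\Sigma+m\cdot b_k^0\;\ge\;2m.
\]
In particular, for every $N$ there is a filling $W_N$ with $\dim_{\mathbb Z_2}RFH_k(W_N,(\Sigma,\xi_\infty))>N$ at this \emph{single} degree $k$. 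Now your first-bullet argument (equivalently Theorem~\ref{theo112}) applies verbatim to $(\Sigma,\xi_\infty)$: if some generic $\alpha$ had only finitely many simple closed orbits, the iteration estimate would bound the number of degree-$k$ generators by a constant independent of the filling, contradicting $\dim_{\mathbb Z_2}RFH_k(W_m)\to\infty$. So the missing idea is precisely to exploit the \emph{unbounded} $RFH_k$ across fillings produced by the connected-sum construction, rather than trying to squeeze a finiteness statement out of the abstract hypothesis~b).
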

\begin{cor*} Every Brieskorn manifold $\Sigma_a$ supports at least 2 non-contactomorphic, exactly fillable contact structures.
\end{cor*}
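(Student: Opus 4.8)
The plan is to exhibit the first fillable contact structure directly, to obtain the second one for free from the Main Theorem when that is possible, and otherwise to build it by an explicit cut-and-paste, distinguishing the two with the Brieskorn computations of Section~7.

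Every Brieskorn manifold $\Sigma_a$, $a=(a_0,\dots,a_n)$, carries its natural Brieskorn contact structure $\xi_a$ inherited from $(\mathbb{C}^{n+1},\lambda_{std})$ as a contact hypersurface in $S^{2n+1}$, and it is exactly filled by the smoothed Brieskorn variety (Milnor fibre) $W_\varepsilon=\{z_0^{a_0}+\dots+z_n^{a_n}=\varepsilon\}\cap\overline{B}^{2n+2}$, a Liouville domain with $\partial W_\varepsilon=\Sigma_a$. First I would check that $(W_\varepsilon,\Sigma_a)$ satisfies (A) and (B): the Milnor fibre has the homotopy type of a wedge of $n$-spheres, so $\pi_1(W_\varepsilon)=0=\pi_2(W_\varepsilon)$ as soon as $n\geq3$, and $\Sigma_a$ is $(n-2)$-connected; hence $i_\ast\colon\pi_1(\Sigma_a)\to\pi_1(W_\varepsilon)$ is injective and $I_{c_1}$ vanishes on $\pi_2(W_\varepsilon)$. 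This also makes $(\Sigma_a,\xi_a)$ an admissible input for the Main Theorem whenever $\dim\Sigma_a=2n-1\geq5$.

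Now apply the Main Theorem to $\Sigma=\Sigma_a$. In alternative c) there are already infinitely many fillable contact structures, hence at least two non-contactomorphic ones, and we are done. In alternatives a) and b) I would construct a second structure directly: choose a fillable contact structure $\xi'$ on the \emph{smoothly standard} sphere $S^{2n-1}$ whose Reeb dynamics are strictly richer than those of $\xi_{std}$ --- for instance an Ustilovsky-type Brieskorn sphere $\Sigma(p,2,\dots,2)$ with $p\equiv\pm1\pmod8$ (in the dimensions where these exist), or one of the Brieskorn spheres treated in Section~7 --- and form the contact connected sum $\xi_a':=\xi_a\,\#\,\xi'$ on $\Sigma_a\,\#\,S^{2n-1}\cong\Sigma_a$. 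This is again exactly fillable, by the boundary connected sum $W_\varepsilon\,\natural\,W'$, and the resulting filling still satisfies (A),(B). Since a connected sum is a subcritical handle attachment, its effect on $RFH$ is controlled by the surgery results of Section~6, and a size invariant of $RFH$ such as the growth rate $\Gamma$ of the action filtration or the mean Euler characteristic picks up a contribution from $\xi'$; as there are infinitely many such $\xi'$ with mutually distinct invariants, one of them produces a value different from that of $\xi_a$, whence $\xi_a'\not\cong\xi_a$.

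Two points will take the real work. Dimension $3$ ($n=2$) lies outside the Main Theorem, and there $\Sigma_a$ is Seifert-fibred; one falls back on the classical theory of fillable (tight) contact structures on such $3$-manifolds. The genuinely hard step, however, is justifying the final non-contactomorphism: since a contactomorphism $\xi_a\cong\xi_a'$ would turn $W_\varepsilon$ into yet another filling of $\xi_a'$, a filling-dependent value of $RFH$ does not suffice, and one must work with an invariant that depends only on $(\Sigma,\xi)$ --- along the lines of Proposition~\ref{prop16} for $\Gamma$, or directly for the mean Euler characteristic --- and actually compute it for Brieskorn manifolds and their connected sums, which is precisely what Section~7 has to deliver. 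Pinning down the existence of a suitable fillable exotic-but-standard $(2n-1)$-sphere in every dimension $\geq5$ is a last loose end.
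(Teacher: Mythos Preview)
Your proposal has a genuine gap, and the paper's argument is considerably simpler than what you outline.

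The key observation you are missing is that the standard contact form $\alpha_a$ on $\Sigma_a$ has \emph{periodic} Reeb flow: every simple closed Reeb trajectory has period at most $\big(\prod_k a_k\big)\cdot\pi/2$ (see the explicit flow in (\ref{eq6})). Moreover $\alpha_a$ satisfies (MB) by Proposition~\ref{prop25}. This single observation, combined with the dynamical Corollary~\ref{lastcor}, does all the work:
\begin{itemize}
\item Alternative a) is \emph{impossible} for $\Sigma_a$. If it held, then by Corollary~\ref{lastcor} every fillable contact structure on $\Sigma_a$ --- in particular $\xi_a$ --- would have simple Reeb orbits of arbitrary length for every generic contact form. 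But $\alpha_a$ is such a form and does not.
\item In alternative b), Corollary~\ref{lastcor} produces a contact structure $\xi$ on $\Sigma_a$ which, for \emph{every} contact form satisfying (MB), admits simple Reeb orbits of arbitrary length. Since $\alpha_a$ satisfies (MB) but has bounded simple-orbit length, $\alpha_a$ cannot define $\xi$; hence $\xi\neq\xi_a$.
\end{itemize}
Thus the distinguishing invariant is purely dynamical --- the existence of arbitrarily long simple Reeb orbits for all (MB) contact forms --- and no further RFH computation, growth rate, or mean Euler characteristic is needed.

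Your connected-sum construction in cases a) and b) is therefore unnecessary, and the hard step you flag (finding a filling-independent invariant to separate $\xi_a$ from $\xi_a\#\xi'$) is precisely what the paper sidesteps. You also worry about the availability of a suitable exotic standard sphere in every dimension; the paper never needs one outside the proof of the Main Theorem itself.
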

\newpage

\section{Transversality}\label{sec2}
The aim of this section is to show that $\widehat{\mathcal{M}}(c^-,c^+,m)$ is a finite dimensional manifold. The Subsections \ref{sec2.1} through \ref{secholprop} provide analytic properties that will be needed in the proof of the fundamental Global Transversality Theorem \ref{theotrans} in  \ref{sec2.5}.\\
Throughout this part, we assume $(V,\lambda)$ to be an exact symplectic manifold that is the completion of a compact Liouville domain $\tilde{V}$ with contact boundary $M$. In particular, the symplectization $M\times[0,\infty)$ embeds into $V$ and $V\setminus(M\times(0,\infty))=\tilde{V}$ is compact. Moreover, we assume that $\Sigma\subset V$ is an exact contact hypersurface bounding a compact Liouville domain $W$ and $H$ a defining Hamiltonian for $\Sigma$.

\subsection{The action functional}\label{sec2.1}
In this subsection, we will analyse more closely the Rabinowitz action functional $\mathcal{A}^H$ that we introduced in \ref{sec1.4}. We calculate its gradient $\nabla\mathcal{A}^H$ and Hessian $\nabla^2\mathcal{A}^H$. We show that $\nabla^2\mathcal{A}^H$ is a Fredholm operator of index zero and use this to show that (MB) implies that $\mathcal{A}^H$ is a Morse-Bott functional, i.e.\ that $\crita$ is a submanifold of $\mathscr{L}\times\mathbb{R}$. In particular, we show that the critical manifolds $\mathcal{N}^\eta$ are isolated in the loop space $\mathscr{L}\times\mathbb{R}$. In classical Morse theory this would follow from the Morse Lemma. However, as $\mathscr{L}\times\mathbb{R}$ is infinite dimensional, there is no analogue of the Morse Lemma. Recall that $\mathcal{A}^H$ is given by
\[\mathcal{A}^H:\mathscr{L}\times\mathbb{R}\rightarrow\mathbb{R},\qquad \mathcal{A}^H(v,\eta):= \int^1_0 \lambda(\dot{v}(t))-\eta H(v(t))dt,\]
where $v$ is a smooth loop on $V$. More generally, we do not only consider smooth loops, but also loops that are of Sobolev-class $W^{k,p},\,k\in\mathbb{N},\,1<p<\infty$, where a map $v: S^1\rightarrow V$ is called $W^{k,p}$ if it is $W^{k,p}$ in every chart. We denote the space of such loops by $W^{k,p}(S^1,V)$ and remark that $W^{k,p}(S^1,V)$ is a Banach manifold while $H^k(S^1,V)=W^{k,2}(S^1,V)$ is even a Hilbert manifold (see \cite{kling}, 2.3 ff.). The tangent space $T_v W^{k,p}(S^1,V)$ at any $v\in W^{k,p}(S^1,V)$ is given by the $W^{k,p}$-vector space of $S^1$-sections of $v^\ast TV$, i.e.\ a tangent vector $\mathfrak{v}$ at $v$ is a 1-periodic $W^{k,p}$-map
\[\mathfrak{v}: S^1\rightarrow TV\qquad\text{ satisfying }\qquad \mathfrak{v}(t)\in T_{v(t)}V.\]
A metric on $W^{k,p}(S^1,V)\times\mathbb{R}$ is obtained via a family of $\omega$-compatible almost complex structures $J_t(\cdot,n)$ depending on $(t,n)\in S^1\times \mathbb{R}$. Given $(v,\eta)\in W^{k,p}(S^1,V)\times\mathbb{R}$ and $(\mathfrak{v}_1,\hat{\eta}_1),(\mathfrak{v}_2,\hat{\eta}_2)\in T_{(v,\eta)}(W^{k,p}(S^1,V)\times\mathbb{R})$ the metric $g$ is defined by
\begin{equation}\label{metric}
 g\left(\begin{pmatrix}\mathfrak{v}_1\\ \hat{\eta}_1 \end{pmatrix}\,,\,\begin{pmatrix}\mathfrak{v}_2\\\hat{\eta}_2\end{pmatrix}\right):=\int^1_0\omega\Big(\mathfrak{v}_1(t)\,,\,J_t\big(v(t),\eta\big)\mathfrak{v}_2(t)\Big)dt+\hat{\eta}_1\cdot\hat{\eta}_2.
\end{equation}
In the following, we are going to calculate the first and second variation of $\mathcal{A}^H$ in the form of gradient $\nabla\mathcal{A}^H$ and Hessian $\nabla^2 \mathcal{A}^H$ with respect to $g$. For that, assume that $(v_s,\eta_s)\subset W^{k,p}\times\mathbb{R},\,k\geq 1,$ is a differentiable 1-parameter family depending on $s\in(-\veps,\veps)$. Write 
\[\frac{d}{dt}v=\dot{v},\qquad \left.\frac{d}{ds}v\,\right|_{s=0}=\mathfrak{v}\quad\text{ and }\quad\left.\frac{d}{ds}\eta\,\right|_{s=0} =\hat{\eta}.\]
Let $\nabla$ denote the Levi-Civita connection of the metric $g_{t,n}=\omega(\cdot,J_{t,n}\cdot)$ for fixed $(t,n)$. Observe that, when considered as a derivation of functions, we have
\[\frac{d}{dt}=\nabla_{\dot{v}}\qquad\text{ and }\qquad \frac{d}{ds}=\nabla_{\mathfrak{v}}.\]
Note furthermore that $[\mathfrak{v},\dot{v}]=0$, as for any function $f\in C^\infty(V)$ holds
\[\nabla_{\mathfrak{v}}\nabla_{\dot{v}}f =\frac{d}{ds}\frac{d}{dt}f(v(s,t))=\frac{d}{dt}\frac{d}{ds}f(v(s,t))=\nabla_{\dot{v}}\nabla_{\mathfrak{v}}f.\]
Additionally, we have $\int^1_0\frac{d}{dt}\lambda(\mathfrak{v}{\scriptstyle(t)})dt=0$, as $\mathfrak{v}$ is 1-periodic. This stated, we calculate the first variation of $\mathcal{A}^H$ in the direction of $(\mathfrak{v},\hat{\eta})$ as
\begin{align*}
 \nabla_{(\mathfrak{v},\hat{\eta})}\mathcal{A}^H(v,\eta)&=\left.\frac{d}{ds}\mathcal{A}^H(v_s,\eta_s)\right|_{s=0}\\
 &=\int^1_0\frac{d}{ds}\lambda(\dot{v})-\hat{\eta} H(v)-\eta dH(\mathfrak{v})\,dt\\
 &=\int^1_0d\lambda(\mathfrak{v},\dot{v})+\frac{d}{dt}\lambda(\mathfrak{v})+\lambda([\mathfrak{v},\dot{v}])-\hat{\eta} H(v)-d\lambda(\mathfrak{v},\eta X_H)\,dt\\
 &=\int^1_0d\lambda(\mathfrak{v},\dot{v}-\eta X_H)-\hat{\eta}H(v)\,dt\\
 &=\int_0^1 \omega(\mathfrak{v},J(-J)(\dot{v}-\eta X_H)\, dt - \hat{\eta}\cdot\int_0^1 H(v)\,dt\\
 &=g\left(\begin{pmatrix}\mathfrak{v}\\\hat{\eta}\end{pmatrix}\,,\,\begin{pmatrix}-J(\dot{v}-\eta X_H)\\-{\textstyle\int^1_0} H(v)\, dt\end{pmatrix}\right).
\end{align*}
The gradient of $\mathcal{A}^H$ with respect to $g$ is hence given by
\begin{equation}\label{gradienta}
 \nabla \mathcal{A}^H=-\begin{pmatrix} J(\dot{v}-\eta X_H)\Big. \\\Big. \int^1_0H(v)dt\end{pmatrix}.
\end{equation}
This shows, that Definition \ref{gradtraj} really defines $\nabla \mathcal{A}^H$-gradient trajectories. As stated before, the critical points $(v,\eta)$ of $\mathcal{A}^H$ are solutions of the equations
\begin{equation*}\begin{aligned}
             &&     0&=\dot{v}-\eta X_H\quad& &\text{ and }&\quad\qquad &0=\textstyle\int^1_0H(v(t))dt\\
             &\Leftrightarrow\qquad& \dot{v}&=\eta R& &\text{ and }& &im(v)\subset\Sigma 
                 \end{aligned}
\end{equation*}
Recall that this implied that $\crita =\mathcal{P}(\alpha)$ and that it followed from Lemma \ref{lem2} that $\mathcal{A}^H(\crita)=spec(\Sigma,\alpha)$. Note in particular, that all critical points are in $\mathscr{L}\times\mathbb{R}$, i.e.\ all critical $v$ are smooth even if we consider $\mathcal{A}^H$ on $W^{k,p}(S^1,V)$.\medskip\\
Next, we are going to calculate the Hessian of $\mathcal{A}^H$ at a critical point $(v,\eta)\in \mathscr{L}\times\mathbb{R}$, where $\nabla\mathcal{A}^H(v,\eta)=0$. Actually, we calculate the self-adjoint operator
\[\nabla^2\mathcal{A}^H_{(v,\eta)} : T_{(v,\eta)}\Big( W^{k,p}(S^1,V)\times\mathbb{R}\Big) \rightarrow T_{(v,\eta)}\Big(W^{k-1,p}(S^1,V)\times\mathbb{R}\Big),\]
which satisfies for $x,y\in T_{(v,\eta)}\big(W^{k,p}(S^1,V)\times\mathbb{R}\big)$ that
\[ Hess\,\mathcal{A}^H(x,y) = \nabla_x\, g(y,\nabla\mathcal{A}^H)= g(y,\nabla^2\mathcal{A}^H x).\pagebreak\]
Note that $g(\nabla_x y, \nabla \mathcal{A}^H)=0$ at a critical point and hence $\nabla^2\mathcal{A}^H_{(v,\eta)}x=\nabla_x\nabla\mathcal{A}^H(v,\eta)$. This implies that $Hess\,\mathcal{A}^H$ and $\nabla^2\mathcal{A}^H$ are symmetric, i.e.\ $g(y,\nabla^2\mathcal{A}^H x)=g(\nabla^2\mathcal{A}^H y, x)$, as shown by the following short calculation
\begin{align*}
 Hess\,\mathcal{A}^H_{(v,\eta)}(x,y)&=&\nabla_x\, g(y,\nabla\mathcal{A}^H_{(v,\eta)})&=&\nabla_x\nabla_y\mathcal{A}^H_{(v,\eta)}&=&\\
 &=&\nabla_y\nabla_x\mathcal{A}^H_{(v,\eta)}+\nabla_{[x,y]}\mathcal{A}^H_{(v,\eta)}&=&\nabla_y\nabla_x\mathcal{A}^H_{(v,\eta)}&=&Hess\,\mathcal{A}^H_{(v,\eta)}(y,x).
\end{align*}
For the calculation of $\nabla^2\mathcal{A}^H$, we consider again a differentiable 1-parameter family $(v_s,\eta_s)\subset W^{k,p}\times\mathbb{R}$ with tangent vectors $(\mathfrak{v},\hat{\eta}):=(\mathfrak{v}_s,\hat{\eta}_s)\in T_{(v_s,\eta_s)}\big(W^{k,p}(S^1,V)\times\mathbb{R}\big)$. Then, we trivialize $(v_s,\eta_s)^\ast T\big(W^{k,p}(S^1,V)\times\mathbb{R}\big)$ over $\mathbb{R}\times S^1$, which allows us to calculate
\[{\textstyle\frac{d}{ds}}\nabla \mathcal{A}^H(v_s,\eta_s)\big|_{s=0}=\nabla_{\mathfrak{v},\hat{\eta}}\nabla \mathcal{A}^H(v_s,\eta_s).\]
In the trivialization we differentiate $\nabla\mathcal{A}^H$ at any point $(v,\eta)$, even non-critical ones!  We use this result later in Section \ref{sec2.5}, where the derivative of $\nabla \mathcal{A}^H$ along $(v_s,\eta_s)$ appears in the linearization of the Rabinowitz-Floer equation (\ref{eq3}). Using (\ref{gradienta}), we calculate
\begin{align*}
\nabla_{(\mathfrak{v},\hat{\eta})}\nabla\mathcal{A}^H(v,\eta)&=\nabla_{(\mathfrak{v},\hat{\eta})}\left(\begin{smallmatrix}-J(\dot{v}-\eta X_H)\\-{\int^1_0} H(v)\, dt\end{smallmatrix}\right)\\
&=\begin{pmatrix}-\nabla_{\mathfrak{v}}\big(J(\dot{v}-\eta X_H)\big)-\hat{\eta}(\partial_n J)(\dot{v}-\eta X_H)+J\hat{\eta} X_H\\-\textstyle{\int^1_0}dH(\mathfrak{v})\,dt\end{pmatrix}\\
 \nabla_{\mathfrak{v}}\big(J(\dot{v}-\eta X_H)\big)&=\big(\nabla_{\mathfrak{v}} J\big)(\dot{v}-\eta X_H)-J(\nabla_{\mathfrak{v}}\dot{v}-\nabla_{\mathfrak{v}}\eta X_H)\\
 &=\big(\nabla_{\mathfrak{v}} J\big)(\dot{v}-\eta X_H)+J\big(\nabla_{\dot{v}}\mathfrak{v} + [\mathfrak{v},\dot{v}]-\nabla_{\eta X_H}\mathfrak{v}-[\mathfrak{v},\eta X_H]\big)\\
 &=\big(\nabla_{\mathfrak{v}} J\big)(\dot{v}-\eta X_H)+J\nabla_{(\dot{v}-\eta X_H)}\mathfrak{v}-J[\mathfrak{v},\eta X_H],
 \end{align*}
where we used again $[\mathfrak{v},\dot{v}]=0$. The derivative of $\nabla\mathcal{A}^H$ along $(v_s,\eta_s)$ is hence given by
\begin{equation}\label{secvar}
 \begin{pmatrix}
  -\Big(\nabla_{\mathfrak{v}}J+\hat{\eta}(\partial_n J)\Big)(\dot{v}-\eta X_H)- J\Big(\nabla_{(\dot{v}-\eta X_H)}\mathfrak{v}-[\mathfrak{v}, \eta X_H]-\hat{\eta} X_H\Big)\\\Big.
  -{\textstyle \int_0^1 dH(\mathfrak{v})dt}
 \end{pmatrix}.
\end{equation}
At a critical point, where $\dot{v}-\eta X_H=0$, it takes the form
\begin{equation}\label{nabla2}
 \nabla^2\mathcal{A}^H_{(v,\eta)}(\mathfrak{v},\hat{\eta})= 
 \begin{pmatrix}
 J\big([\mathfrak{v}, \eta X_H]+\hat{\eta} X_H\big)\Big.\\\Big. -\textstyle \int_0^1 dH(\mathfrak{v})dt
 \end{pmatrix}.
\end{equation}
Note that this expression does not depend on the family $(v_s,\eta_s)$, but only on $(\mathfrak{v},\hat{\eta})$ -- its derivative at $s=0$. Let $\phi$ denote the Reeb flow on $\Sigma$. As $X_H$ coincides with the Reeb field on $\Sigma$, we have that the flow $\phi_H$ of $\eta X_H$ is given by $\phi^t_H=\phi^{\eta t}$. At a critical point, we can hence express the term $-[\mathfrak{v},\eta X_H]=[\eta X_H,\mathfrak{v}]$ with the help of $\phi$ as
\[ [\eta X_H,\mathfrak{v}](t_0)=\mathcal{L}_{\eta X_H}\mathfrak{v}(t_0) = \frac{d}{dt}\left(\phi^{\eta t}\right)^\ast\mathfrak{v}(t_0) = \left.\frac{d}{dt}\left(D\phi^{\eta t}\right)^{-1}\mathfrak{v}(t_0+t) \right|_{t=0}.\]
Note that for $\eta=0$, this becomes the ordinary derivative $\frac{d}{dt}$ on $T_{v(0)}V$. Having this in mind, we could write symbolically $\nabla^2\mathcal{A}^H$ at a critical point as
\[\nabla^2\mathcal{A}^H_{(v,\eta)}(\mathfrak{v},\hat{\eta})= 
 \begin{pmatrix}
 J\big(-\frac{d}{dt}\mathfrak{v}+\hat{\eta} X_H\big)\Big.\\\Big. -\textstyle \int_0^1 dH(\mathfrak{v})dt
 \end{pmatrix}.\pagebreak\]
\begin{lemme} \label{lemker}
 Assume that \emph{(MB)} holds true. Then $\ker \nabla^2\mathcal{A}^H$ consists of pairs $(\mathfrak{v},0)$, where $\mathfrak{v}(t)\in T_{v(t)}\mathcal{N}^\eta$ for all $t$ and $\mathfrak{v}$ is constant  with respect to the Reeb flow $\phi$, i.e.\
 \[\mathfrak{v}(t)= \big(D\phi^{\eta t}\big)\mathfrak{v}(0) \qquad \forall\, t.\]
In particular, $\ker \nabla^2\mathcal{A}^H$ is finite dimensional with $\dim \big(\ker \nabla^2 \mathcal{A}^H_{(v,\eta)}\big)=\dim_{v(0)}\mathcal{N}^\eta$.
\end{lemme}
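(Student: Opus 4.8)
The starting point is formula (\ref{nabla2}): since $J$ is invertible, a pair $(\mathfrak{v},\hat{\eta})$ lies in $\ker\nabla^2\mathcal{A}^H_{(v,\eta)}$ exactly when (i) $[\mathfrak{v},\eta X_H]+\hat{\eta}X_H=0$, equivalently $[\eta X_H,\mathfrak{v}]=\hat{\eta}X_H$, and (ii) $\int_0^1 dH(\mathfrak{v})\,dt=0$. I would analyse (i) first by trivialising $v^\ast TV$ along the orbit with the flow $\phi_H$ of $\eta X_H$, which agrees with the Reeb flow $\phi$ on $\Sigma$, so that $v(t)=\phi_H^t(v(0))$ and $\phi_H^t|_\Sigma=\phi^{\eta t}$. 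Setting $w(t):=(D\phi_H^t)^{-1}\mathfrak{v}(t)\in T_{v(0)}V$, the identity $[\eta X_H,\mathfrak{v}](v(t))=D\phi_H^t\,\dot{w}(t)$ -- which is precisely the ``$\tfrac{d}{dt}\mathfrak{v}$'' appearing in the symbolic form of $\nabla^2\mathcal{A}^H$ after (\ref{nabla2}) -- turns (i) into the constant-coefficient ODE $\dot{w}(t)=\hat{\eta}\,X_H(v(0))=\hat{\eta}\,R(v(0))$. Hence $w(t)=w(0)+t\hat{\eta}R(v(0))$ with $w(0)=\mathfrak{v}(0)$, i.e.\ $\mathfrak{v}(t)=D\phi_H^t\mathfrak{v}(0)+t\hat{\eta}\,X_H(v(t))$.

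Next I would feed this into the Lagrange-multiplier condition (ii). Because $H$ is preserved by its own Hamiltonian flow and $dH(X_H)=\omega(X_H,X_H)=0$, one gets $dH(\mathfrak{v}(t))=dH\big(D\phi_H^t\mathfrak{v}(0)\big)=dH(\mathfrak{v}(0))$, independent of $t$; so (ii) is equivalent to $\mathfrak{v}(0)\in\ker dH_{v(0)}=T_{v(0)}\Sigma$, using that $v(0)$ is a regular point of $H$ with $\Sigma=H^{-1}(0)$. Thus $\mathfrak{v}$ is forced to be tangent to $\Sigma$. Now I exploit $1$-periodicity: $\mathfrak{v}(1)=\mathfrak{v}(0)$ together with $v(1)=v(0)$ gives $(D_{v(0)}\phi^\eta-\mathrm{id})\mathfrak{v}(0)=-\hat{\eta}R(v(0))$ \emph{inside} $T_{v(0)}\Sigma$. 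Applying $\alpha_{v(0)}$ and using the Reeb-flow invariance $(\phi^\eta)^\ast\alpha=\alpha$ (which follows from $\mathcal{L}_R\alpha=\iota_R d\alpha+d\iota_R\alpha=0$), the left-hand side lies in $\ker\alpha_{v(0)}=\xi_{v(0)}$, while the right-hand side has $\alpha$-value $-\hat{\eta}$; therefore $\hat{\eta}=0$.

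With $\hat{\eta}=0$ the formula collapses to $\mathfrak{v}(t)=D\phi^{\eta t}\mathfrak{v}(0)$ -- exactly the asserted invariance under the Reeb flow -- and $1$-periodicity now reads $\mathfrak{v}(0)\in\ker(D_{v(0)}\phi^\eta-\mathrm{id})$, which by (MB) equals $T_{v(0)}\mathcal{N}^\eta$; since $\phi^{\eta t}$ restricts to a diffeomorphism of $\mathcal{N}^\eta$, it follows that $\mathfrak{v}(t)\in T_{v(t)}\mathcal{N}^\eta$ for all $t$. Conversely, any $(\mathfrak{v},0)$ of this form lies in $\ker\nabla^2\mathcal{A}^H$: it is tangent to $\Sigma$, so $dH(\mathfrak{v})\equiv 0$ and (ii) holds; it satisfies $[\mathfrak{v},\eta X_H]=0$ by construction, so (i) holds with $\hat{\eta}=0$; and it is $1$-periodic. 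Finally, evaluation at $t=0$, $(\mathfrak{v},0)\mapsto\mathfrak{v}(0)$, is a linear isomorphism of $\ker\nabla^2\mathcal{A}^H_{(v,\eta)}$ onto $T_{v(0)}\mathcal{N}^\eta$, which yields $\dim\ker\nabla^2\mathcal{A}^H_{(v,\eta)}=\dim_{v(0)}\mathcal{N}^\eta<\infty$.

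The one genuinely delicate point is the order of the argument rather than any single estimate. A priori $\mathfrak{v}(t)$ only takes values in $T_{v(t)}V$, so neither assumption (MB) nor the contact form $\alpha$ can be used directly; the Lagrange-multiplier term (ii) must first be invoked to confine $\mathfrak{v}$ to $T\Sigma$, after which the periodicity constraint read through $\alpha$ kills $\hat{\eta}$, and only then does (MB) identify the remaining freedom with $T_{v(0)}\mathcal{N}^\eta$. Besides getting this sequencing right, the only real bookkeeping is the Reeb-flow trivialisation and the identity $[\eta X_H,\mathfrak{v}](v(t))=D\phi_H^t\dot{w}(t)$ (obtained from the flow-composition rule $D\phi_H^{t_0+t}=D\phi_H^t\circ D\phi_H^{t_0}$) that converts the bracket equation into an ODE; everything else is routine.
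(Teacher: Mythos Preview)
Your proof is correct and follows the same overall architecture as the paper's: trivialise via the Reeb flow, use the integral constraint to force $\mathfrak{v}$ into $T\Sigma$, kill $\hat{\eta}$, and then invoke (MB) through the periodicity condition $\mathfrak{v}(0)\in\ker(D\phi^\eta-\mathrm{id})$. The one genuine difference lies in how you eliminate $\hat{\eta}$: the paper splits into cases, using for $\eta\neq 0$ the computation $0=\int_0^1 dH(\mathfrak{v})\,dt=\int_0^1 d\lambda(\mathfrak{v},X_H)\,dt=\hat{\eta}/\eta$ (via $d\lambda(\mathfrak{v},X_H)=\partial_\mathfrak{v}\lambda(X_H)-\partial_{X_H}\lambda(\mathfrak{v})-\lambda([\mathfrak{v},X_H])$ and $1$-periodicity of $\lambda(\mathfrak{v})$), and a direct ODE argument for $\eta=0$. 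Your route through the periodicity constraint $(D\phi^\eta-\mathrm{id})\mathfrak{v}(0)=-\hat{\eta}R$ and the Reeb-invariance $(\phi^\eta)^\ast\alpha=\alpha$ is slightly cleaner in that it handles both cases at once; the paper's computation, on the other hand, stays entirely on the level of the ambient $1$-form $\lambda$ without needing to restrict to $\Sigma$ first.
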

\begin{proof}
 It follows from (\ref{nabla2}), that $(\mathfrak{v},\hat{\eta})\in\ker\nabla^2\mathcal{A}^H_{(v,\eta)}$ if and only if 
 \[ \text{I.}\quad \left.\frac{d}{dt}\left(D\phi^{\eta t}\right)^{-1}\mathfrak{v}(t_0+t) \right|_{t=0}=\hat{\eta}X_H(v(t_0))\qquad\text{ and }\qquad\text{II.}\quad 0=\int^1_0dH(\mathfrak{v})dt.\]
 The proof has now 3 steps:
 \begin{itemize}
  \item First, we show that $\mathfrak{v}(t)\in T_{v(t)}\Sigma$ for all $t\in S^1$. Note that $H, \,dH$ and $X_H$ are invariant under the flow $\phi^{\eta t}$ of $\eta X_H$. This implies that
  \[dH_{v(t)}\big(\mathfrak{v}(t)\big)=dH_{v(0)}\left(\big(D\phi^{-\eta t}\big)\mathfrak{v}(t)\right)\]
  and hence with I. that
  \begin{align*}
   \frac{d}{dt} dH_{v(t)}\big(\mathfrak{v}(t)\big)\Big|_{t=t_0} &=dH_{v(0)}\left(\frac{d}{dt}\big(D\phi^{-\eta (t+t_0)}\big)\mathfrak{v}(t+t_0)\Big|_{t=0}\right)\\
   &=dH_{v(0)}\Big(D\phi^{-\eta t_0}\big(\hat{\eta}X_H(v(t_0))\big)\Big)=dH_{v(0)}\left(\hat{\eta}X_H(v(0))\right)=0.
  \end{align*}
Thus, we have that $dH(\mathfrak{v}(t))$ is constant. Therefore, equation II. implies that
\[0=\textstyle \int_0^1 dH(\mathfrak{v}(t))dt = \int_0^1 dH(\mathfrak{v}(t_0))dt = dH(\mathfrak{v}(t_0)) \qquad \forall\,t_0\in S^1.\]
As $\Sigma$ is a regular level set of $H$, we have $\ker dH_{v(t)}=T_{v(t)}\Sigma$ and hence that $\mathfrak{v}(t)\in T_{v(t)}\Sigma$ for all $t$.
  \item Next, we show $\hat{\eta}=0$. Assume first that $\eta\neq 0$. We then obtain from equation II. 
\begin{align*}
 0&=\int^1_0 dH(\mathfrak{v})\,dt =\int^1_0 d\lambda(\mathfrak{v},X_H)\,dt\\
 &=\int^1_0 \partial_{\mathfrak{v}} \lambda(X_H)-\partial_{X_H}\lambda(\mathfrak{v})-\lambda\big([\mathfrak{v},X_H]\big)\,dt =\int^1_0 0-\frac{1}{\eta}\cdot\frac{d}{dt}\lambda(\mathfrak{v})+\frac{\hat{\eta}}{\eta}\,dt =\frac{\hat{\eta}}{\eta}.
\end{align*}
Here, we used equation I. and $\frac{d}{dt}=\partial_{\dot{v}}=\eta\partial_{X_H}$ and that $\lambda(\mathfrak{v})$ is 1-periodic and $\partial_{\mathfrak{v}}\lambda(X_H)=0$, as $\lambda(X_H)=1$ is constant throughout $\Sigma$ and $\mathfrak{v}\in T\Sigma$. Hence $\hat{\eta}=0$ if $\eta\neq 0$.\\
Now, if $\eta=0$, then $v$ is constant and equation I. becomes the linear differential equation $\partial_t \mathfrak{v}(t_0)=\hat{\eta} X_H(v(0))$ for a map $\mathfrak{v}: S^1\rightarrow T_{v(0)}V$. The solutions to this problem are of the form $\mathfrak{v}(t)=\mathfrak{v}(0)+t\cdot \hat{\eta}X_H(v(0))$. However, we require $\mathfrak{v}(1)=\mathfrak{v}(0)$ and this holds if and only if $\hat{\eta}=0$, as $X(v(0))\neq 0$.
  \item Finally, we prove the lemma. As $\hat{\eta}=0$, equation I. yields
\begin{align*}
 &&\left.\frac{d}{dt}\left(D\phi^{\eta t}\right)^{-1}\mathfrak{v}(t_0+t) \right|_{t=0} &= 0&&\forall t_0\in \mathbb{R}\\
 &\Bigg.\Leftrightarrow& \left(D\phi^{-\eta t}\right)\mathfrak{v}(t) &= \mathfrak{v}(0)&&\forall t\in \mathbb{R}\\
 &\Leftrightarrow& \mathfrak{v}(t) &= (D\phi^{\eta t})\mathfrak{v}(0)&&\forall t\in \mathbb{R}
\end{align*}
Recall that $\mathfrak{v}$ has to be 1-periodic. However, not every solution of the last equation satisfies this. It holds if and only if $\mathfrak{v}(0)=\mathfrak{v}(1)= D\phi^\eta\mathfrak{v}(0)$, in other words if and only if $\mathfrak{v}(0)\in\ker(D\phi^\eta-id)=T_{v(0)}\mathcal{N}^\eta$, by assumption (MB).\qedhere
 \end{itemize}
\end{proof}
In the following, let $(v,\eta)\in\crita$ and $1<p<\infty$ be fixed and abbreviate
 \[W^{k,p}:=W^{k,p}(v,\eta):= T_{(v,\eta)}\left(W^{k,p}(S^1,V)\times\mathbb{R}\right)\]
 for the $W^{k,p}$-vector fields along $(v,\eta)$. Similarly write $L^p = L^p(v,\eta)$ and $C^\infty := C^\infty (v,\eta)$ for the $L^p$- resp.\ $C^\infty$-vector fields. Let us also abbreviate $K:=K(v,\eta):=\ker\nabla^2\mathcal{A}^H_{(v,\eta)}$. Note that in this terminology the operator $\nabla^2\mathcal{A}^H_{(v,\eta)}$ maps $W^{k+1,p}$ to $W^{k,p}$ as in the term $[X_H,\mathfrak{v}]$ the vector field $\mathfrak{v}$ is differentiated once. Now, for any $k\geq 0$ and $1< p <\infty$, the following two lemmas will show that the cokernel of $\nabla^2\mathcal{A}^H$ equals $K$, thus showing that $\nabla^2\mathcal{A}^H$ is self-adjoint and a Fredholm operator of index 0.
\begin{lemme}\label{lemclosedimage}
Assume that (MB) holds. Then it holds that the image $\nabla^2\mathcal{A}^H\big(W^{k+1,p}\big)$ is closed in $W^{k,p}$ for all $p\geq 1$.
\end{lemme}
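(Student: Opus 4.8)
The plan is to show that $\nabla^2\mathcal{A}^H$ is a compact perturbation of an invertible operator, which automatically forces the image to be closed. Concretely, I would work in the trivialization of $(v,\eta)^\ast TV$ over $S^1$ used to derive formula (\ref{nabla2}), so that $\nabla^2\mathcal{A}^H$ becomes an explicit operator on $W^{k+1,p}(S^1,\mathbb{R}^{2n})\times\mathbb{R}$. Reading off (\ref{nabla2}), its principal part is the first-order operator $(\mathfrak{v},\hat\eta)\mapsto(J\,\tfrac{d}{dt}\mathfrak{v},0)$ — more precisely $-J\tfrac{d}{dt}$ acting on the loop component — and the remaining terms, namely $J(\hat\eta X_H + A(t)\mathfrak{v})$ for the zeroth-order matrix $A(t)$ coming from the connection and the flow, together with the scalar term $-\int_0^1 dH(\mathfrak{v})\,dt$, are all of order zero in $\mathfrak{v}$.

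First I would record that the operator $D_0:=-J\tfrac{d}{dt}\oplus 0$ on $W^{k+1,p}\to W^{k,p}$ is \emph{not} invertible on the nose (its loop part has the constant loops in the kernel, and the $\mathbb{R}$-factor is killed), so I would instead augment it: consider $D_0$ together with the zeroth-order terms that couple $\hat\eta$ back in, i.e. write $\nabla^2\mathcal{A}^H = P + K_0$ where $P$ is a Fredholm operator of index $0$ and $K_0$ is a compact operator $W^{k+1,p}\to W^{k,p}$. The cleanest choice is to take $P=\nabla^2\mathcal{A}^H$ itself after noting the following: the map $W^{k+1,p}\hookrightarrow W^{k,p}$ is compact (Rellich–Kondrachov on $S^1$), so the zeroth-order term $\mathfrak{v}\mapsto J A(t)\mathfrak{v}$ and the scalar term $\mathfrak{v}\mapsto\int_0^1 dH(\mathfrak{v})\,dt$ are both compact as operators $W^{k+1,p}\to W^{k,p}$. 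Hence $\nabla^2\mathcal{A}^H$ differs from the operator $\widetilde P:(\mathfrak{v},\hat\eta)\mapsto\big(-J\tfrac{d}{dt}\mathfrak{v}+J\hat\eta X_H,\;0\big)$ by a compact operator. It therefore suffices to prove $\widetilde P$ has closed image (indeed, that $\widetilde P$ is Fredholm). For that I would diagonalize $-J\tfrac{d}{dt}$ by Fourier series along $S^1$: it is an elliptic constant-coefficient operator on the circle once $J$ is put in standard form, with one-dimensional kernel (the constants annihilated by the $k=0$ mode) and one-dimensional cokernel, so it is Fredholm of index $0$; adding the finite-rank coupling $\hat\eta\mapsto J\hat\eta X_H$ and the trivial $\mathbb{R}$-component changes neither Fredholmness nor (after the standard index count) index $0$. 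Closed image of a Fredholm operator is automatic, and closed image is stable under compact perturbation, so $\nabla^2\mathcal{A}^H\big(W^{k+1,p}\big)$ is closed in $W^{k,p}$.

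The main obstacle I anticipate is \emph{not} the closed-range/Fredholm bookkeeping, which is standard, but rather making the ellipticity of $-J\tfrac{d}{dt}$ rigorous when $J=J_t(v(t),\eta)$ genuinely varies with $t$: one must either gauge $J$ to a constant standard complex structure by a loop of symplectic frames (possible since $(v,\eta)^\ast TV\to S^1$ is trivial and the compatible $J$'s are contractible) and absorb the resulting first-order commutator term into the compact zeroth-order part, or invoke the general theory of elliptic operators on compact manifolds. A secondary subtlety is that for $\eta\ne 0$ one should check the coupling term and the use of (MB) interact correctly so that the index really is $0$ and the kernel is exactly $K$ — but that is precisely Lemma \ref{lemker}, which I may assume, and it pins down $\dim\ker = \dim_{v(0)}\mathcal{N}^\eta$; combined with index $0$ it will give $\dim\mathrm{coker}=\dim K$ as well, which is what the two lemmas together are aiming at.
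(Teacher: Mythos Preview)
Your approach is valid and genuinely different from the paper's. The paper does not invoke Fredholm theory at all for this lemma: instead it works in the flow trivialization $\Phi:v^\ast TV\to[0,1]\times T_{v(0)}V$ and builds an explicit continuous right inverse $U$ for $\nabla^2\mathcal{A}^H$ on its image, essentially by integrating $I\mathfrak{w}+\tilde\eta X$ and correcting with the finite-rank map $E=0\oplus\pi_C\big((D\phi^\eta)^{-1}-\mathbbm{1}\big)^{-1}$ to restore the twisted periodicity condition $D\phi^\eta(\mathfrak v(1))=\mathfrak v(0)$. That construction uses (MB) directly through the invertibility of $(D\phi^\eta)^{-1}-\mathbbm{1}$ on a complement of $T_{v(0)}\mathcal N^\eta$. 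Your route via Rellich--Kondrachov plus ellipticity of $-J\,\tfrac{d}{dt}$ on $S^1$ is the standard ``compact perturbation of an elliptic operator'' argument; it is cleaner, packages Lemmas \ref{lemclosedimage} and \ref{lemcoker} into one Fredholm statement, and in fact shows closed range \emph{without} invoking (MB) at all --- (MB) only enters when you want to pin down $\dim\ker=\dim\mathcal N^\eta$. The paper's explicit $U$, on the other hand, is self-contained and avoids citing any elliptic machinery.

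Two small corrections to your write-up. First, the kernel and cokernel of $-J\,\tfrac{d}{dt}$ on $W^{k+1,p}(S^1,\mathbb R^{2n})\to W^{k,p}(S^1,\mathbb R^{2n})$ are each $2n$-dimensional (all constants), not one-dimensional; this does not affect your argument since you only need Fredholmness and index $0$. Second, ``closed image is stable under compact perturbation'' is false as stated (take the zero operator and perturb by a compact injective operator with dense non-closed range); what you are actually using, and what is true, is that \emph{Fredholmness} is stable under compact perturbation, and Fredholm operators have closed range. Your logic is sound once phrased that way.
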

\begin{proof}~\\
We will show that $\nabla^2\mathcal{A}^H$ has on its image a continuous right inverse, i.e.\ there exists a continuous operator $U:\nabla^2\mathcal{A}^H(W^{k+1,p})\rightarrow W^{k+1,p}$ such that $\nabla^2\mathcal{A}^H\circ U=Id$. Given such a $U$, we prove the lemma as follows: If $(\mathfrak{w}_n,\xi_n)\subset \nabla^2\mathcal{A}^H(W^{k+1,p})$ is a sequence which converges in $W^{k,p}$ to some $(\mathfrak{w},\xi)$, then by continuity of $U$ and completeness of $W^{k+1,p}$ there exists $(\mathfrak{v},\hat{\eta})$ such that $U(\mathfrak{w}_n,\xi_n)\rightarrow(\mathfrak{v},\hat{\eta})$.
Then
\[(\mathfrak{w},\xi)=\lim_{n\rightarrow\infty} (\mathfrak{w}_n,\xi_n)=\lim_{n\rightarrow\infty} \nabla^2\mathcal{A}^H(U(\mathfrak{w}_n,\xi_n))=\nabla^2{A}^H(\mathfrak{v},\eta),\]
by continuity of $\nabla^2\mathcal{A}^H$, which shows that $\nabla^2\mathcal{A}^H(W^{k+1,p})$ is closed in $W^{k,p}$.\medskip\\
 To construct $U$, note that the flow $\phi^{\eta t}$ of $\eta X_H$ provides a trivialization $\Phi$ of $v^\ast TV$
 \begin{align*}
  \Phi : v^\ast TV\rightarrow [0,1]\times T_{v(0)}\qquad\qquad\big(v(t),\xi(t)\big)\mapsto\left(t,\big( D\phi^{\eta t}\big)^{-1}\xi(t)\right).
 \end{align*}
 We can then express $\nabla^2\mathcal{A}^H$ in this coordinates as
 \begin{equation}\label{AHinTriv}
  \nabla^2\mathcal{A}^H(\mathfrak{v},\hat{\eta})=
  -\begin{pmatrix} I\cdot\left(\frac{d}{dt}\mathfrak{v}-\hat{\eta} X\right)\Big.\\\Big. \textstyle \int_0^1 dH_{v(0)}\big(\mathfrak{v}\big)dt\end{pmatrix},
 \end{equation}
where $X(t)=X_H(v(0))$ is a constant vector and $I(t)=\big(D\phi^{\eta t}\big)^{-1} J_{v(t)} \big(D\phi^{\eta t}\big)$ a bounded path of matrices with $I^2=-Id$. Note that $\mathfrak{v}$ is here a map from $[0,1]$ to $T_{v(t_0)}$. In order to guarantee that $\Phi^{-1}\mathfrak{v}$ is 1-periodic, we have to require that $D\phi^\eta\big(\mathfrak{v}(1)\big)=\mathfrak{v}(0)$.\\
Recall that $D\phi^\eta$ restricted to $T_{v(0)}\mathcal{N}^\eta$ is the identity and let $C$ denote any complement to $T_{v(0)}\mathcal{N}^\eta$ in $T_{v(0)}V$. Then $\big((D\phi^\eta)^{-1}-\mathbbm{1}\big)$ restricted to $C$ is invertible. Let $E$ denote the linear map 
\[E: T_{v(0)}V=T_{v(0)}\mathcal{N}^\eta\oplus C\rightarrow T_{v(0)}\mathcal{N}^\eta\oplus C,\qquad E=0\oplus\pi_C\Big(\big(D\phi^\eta\big)^{-1}-\mathbbm{1}\Big)^{-1}.\]
For $(\mathfrak{w},\xi)=\nabla^2\mathcal{A}^H(\mathfrak{v},\hat{\eta})\in\nabla^2\mathcal{A}^H\big(W^{k+1,p}\big)$, we construct $U(\mathfrak{w},\xi)$ using the ansatz
\[\mathfrak{w}=-I\cdot\left(\frac{d}{dt}\mathfrak{v}-\hat{\eta} X\right) \qquad\text{ and }\qquad  \xi = - \int_0^1 dH_{v(0)}\big(\mathfrak{v}\big)dt.\]
Using the symplectic form $\omega=\omega_{v(0)}$ on $T_{v(0)}V$, we solve the first equation for $\hat{\eta}$ to get
\[\widetilde{\eta} := -\omega\Big(IX,\textstyle\int_0^1 I\mathfrak{w}\, dt\Big).\]
Note that $\widetilde{\eta}=\hat{\eta}$. Hence, we can use the fundamental theorem of calculus to solve the first equation for $\mathfrak{v}$ to
\[ \widetilde{\mathfrak{v}}(t):=\textstyle\int_0^t( I\mathfrak{w}+\widetilde{\eta}X) dt + E\big(\textstyle\int_0^1 (I\mathfrak{w}+\widetilde{\eta}X) dt\big).\]
The $E$-term is needed, as we shall see, to ensure that $D\phi^\eta\big(\widetilde{\mathfrak{v}}(1)\big)=\widetilde{\mathfrak{v}}(0)$. Assuming this, we have that 
\[U: \nabla^2\mathcal{A}^H\big(W^{k+1,p}\big)\rightarrow W^{k+1,p},\qquad (\mathfrak{w},\xi)\mapsto (\,\widetilde{\mathfrak{v}},\widetilde{\eta})\]
is a well-defined linear operator. The continuity of $U$ is obvious. To see that $U$ is a right inverse of $\nabla^2\mathcal{A}^H$, i.e.\ $\nabla^2\mathcal{A}^H\circ U=Id$ on $im\big(\nabla^2\mathcal{A}^H\big)$, let $(\mathfrak{w},\xi)=\nabla^2\mathcal{A}^H(\mathfrak{v},\hat{\eta})$ and calculate
\begin{align*}
 \widetilde{\eta} = -\omega\Big(IX,\int_0^1 I \mathfrak{w}\, dt\Big) &= -\omega\Big(IX,\int_0^1 I\circ -I\big(\textstyle\frac{d}{dt}\mathfrak{v}-\hat{\eta}X\big)\, dt\Big)\\
 &= -\omega\Big(IX,\int_0^1 \textstyle\frac{d}{dt}\mathfrak{v}-\hat{\eta}X dt\Big)\\
 &= -\omega\Big(IX,\mathfrak{v}(1)-\mathfrak{v}(0) - \hat{\eta} X\Big) = \hat{\eta},
\end{align*}
where the last line follows as $D\phi^\eta X=X$ so that the $X$-part of $\mathfrak{v}(1)-\mathfrak{v}(0)$ is zero. Moreover
\begin{align*}
  \widetilde{\mathfrak{v}}(t)&=\int_0^t I\circ -I\big(\textstyle\frac{d}{dt}\mathfrak{v}-\hat{\eta}X\big)+\hat{\eta}X\, dt + E\displaystyle\int_0^1 I\circ -I\big(\textstyle\frac{d}{dt}\mathfrak{v}-\hat{\eta}X\big)+\hat{\eta}X\, dt\\
 &= \int_0^t \textstyle\frac{d}{dt}\mathfrak{v}\,dt +E\displaystyle\int_0^1\textstyle\frac{d}{dt}\mathfrak{v}\,dt\\
  &= \mathfrak{v}(t)-\mathfrak{v}(0) + E\big(\mathfrak{v}(1)-\mathfrak{v}(0)\big)\\
  &= \mathfrak{v}(t)-\mathfrak{v}(0) + E\big((D\phi^1)^{-1}\mathfrak{v}(0)-\mathfrak{v}(0)\big)\\
  &= \mathfrak{v}(t)-\mathfrak{v}(0) + E\big((D\phi^1)^{-1}-\mathbbm{1})\mathfrak{v}(0)\big)\\
  &= \mathfrak{v}(t)-\pi_{T_{v(0)}\mathcal{N}^\eta}\big( \mathfrak{v}(0)\big).
\end{align*}
Here, the last line implies that $(\widetilde{\mathfrak{v}},\widetilde{\eta})$ is in fact $(\mathfrak{v},\hat{\eta})$ minus $\big(\pi_{T_{v(0)}\mathcal{N}^\eta}\big( \mathfrak{v}(0)\big),0\big)$, which is an element in the kernel of $\nabla^2\mathcal{A}^H$. This shows that $\nabla^2\mathcal{A}^H(\widetilde{\mathfrak{v}},\widetilde{\eta})=\nabla^2\mathcal{A}^H(\mathfrak{v},\hat{\eta})=(\mathfrak{w},\xi)$, i.e.\ $\nabla^2\mathcal{A}^H\circ U= Id$. It also shows that $D\phi^\eta\big(\widetilde{\mathfrak{v}}(1)\big)=\widetilde{\mathfrak{v}}(0)$, as $D\phi^\eta|_{T\mathcal{N}^\eta}=Id$ and hence
\[D\phi^\eta\big(\widetilde{\mathfrak{v}}(1)\big)=D\phi^\eta\Big(\mathfrak{v}(1)-\pi_{T_{v(0)}\mathcal{N}^\eta}\big(\mathfrak{v}(0)\big)\Big)=\mathfrak{v}(0)-\pi_{T_{v(0)}\mathcal{N}^\eta}\big(\mathfrak{v}(0)\big)=\widetilde{\mathfrak{v}}(0).\qedhere\]
\end{proof}
\begin{lemme} \label{lemcoker}
 Assume that \emph{(MB)} holds. Then 
 \[W^{k,p}=\ker \nabla^2\mathcal{A}^H\oplus\nabla^2\mathcal{A}^H\big(W^{k+1,p}\big).\]
 In particular $\text{\emph{coker}}\,\nabla^2\mathcal{A}^H=\ker\nabla^2\mathcal{A}^H$ and $\nabla^2\mathcal{A}^H$ is a Fredholm operator of index 0.
\end{lemme}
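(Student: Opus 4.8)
The plan is to exploit the two previous lemmas together with the self-adjointness of $\nabla^2\mathcal{A}^H$ already established. First I would note that Lemma \ref{lemclosedimage} gives us that $\nabla^2\mathcal{A}^H(W^{k+1,p})$ is a closed subspace of $W^{k,p}$. Next I would identify the orthogonal complement (in the appropriate duality pairing, or more concretely in $L^2$ for $p=2$ and then by a regularity/density argument in general) of this closed image. The symmetry of $\nabla^2\mathcal{A}^H$ means that an element $(\mathfrak{w},\xi)$ pairs to zero with everything in $\nabla^2\mathcal{A}^H(W^{k+1,p})$ precisely when $\nabla^2\mathcal{A}^H(\mathfrak{w},\xi)=0$, i.e.\ when $(\mathfrak{w},\xi)\in\ker\nabla^2\mathcal{A}^H$; so the complement of the image is $\ker\nabla^2\mathcal{A}^H$, which by Lemma \ref{lemker} is finite dimensional of dimension $\dim_{v(0)}\mathcal{N}^\eta$. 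One subtlety here: a priori the annihilator lives in a dual space, so I would want to use elliptic regularity for the operator $\frac{d}{dt}$ appearing in (\ref{AHinTriv}) to upgrade a weak (distributional) solution of $\nabla^2\mathcal{A}^H(\mathfrak{w},\xi)=0$ to a smooth one, placing it genuinely in $\ker\nabla^2\mathcal{A}^H\subset C^\infty$, independently of $k$ and $p$.

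From closedness of the image plus finite codimension equal to $\dim K$, the decomposition $W^{k,p}=\ker\nabla^2\mathcal{A}^H\oplus\nabla^2\mathcal{A}^H(W^{k+1,p})$ follows once I check that the intersection $\ker\nabla^2\mathcal{A}^H\cap\nabla^2\mathcal{A}^H(W^{k+1,p})$ is trivial. For this I would again use the pairing: any element of the image is $g$-orthogonal to $K$ (by symmetry of $\nabla^2\mathcal{A}^H$), while $K$ is of course not orthogonal to itself unless zero, so the intersection vanishes. Alternatively, the right inverse $U$ constructed in the proof of Lemma \ref{lemclosedimage} shows directly that the image has a complement, and comparing dimensions of complements identifies it with $K$.

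Once the splitting is in place, the final three assertions are immediate bookkeeping: $\mathrm{coker}\,\nabla^2\mathcal{A}^H = W^{k,p}/\nabla^2\mathcal{A}^H(W^{k+1,p}) \cong \ker\nabla^2\mathcal{A}^H$; since both kernel and cokernel are finite dimensional (Lemma \ref{lemker}) and the image is closed (Lemma \ref{lemclosedimage}), $\nabla^2\mathcal{A}^H$ is Fredholm; and $\mathrm{ind}\,\nabla^2\mathcal{A}^H = \dim\ker - \dim\mathrm{coker} = 0$. The main obstacle I anticipate is the regularity step needed to make the duality argument rigorous across all $k\geq 0$ and $1<p<\infty$ — i.e.\ ensuring that the annihilator of the image really consists of honest $W^{k+1,p}$ (indeed smooth) vector fields lying in $\ker\nabla^2\mathcal{A}^H$, rather than merely distributions — which uses the explicit first-order form (\ref{AHinTriv}) of the operator together with the standard bootstrapping for $\frac{d}{dt}$ on the circle. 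Everything else is a formal consequence of the two preceding lemmas.
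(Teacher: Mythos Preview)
Your plan is correct and follows essentially the same route as the paper: use closedness of the image (Lemma \ref{lemclosedimage}), identify the annihilator of the image with $\ker\nabla^2\mathcal{A}^H$ via the symmetry of the Hessian, and handle the regularity issue to ensure the annihilator consists of genuine smooth elements of $K$. The paper even remarks that ``basically, the statement follows from elliptic regularity as $\nabla^2\mathcal{A}^H$ is an elliptic operator of order 1,'' which is exactly your proposed mechanism; it then chooses to carry out this regularity step explicitly by hand (via mollification against $\rho_\delta$ in the trivialization (\ref{AHinTriv})) and splits into the cases $p\geq 2$ versus $1<p<2$ (the latter handled through the $L^q$ annihilator), but these are presentational refinements of the same argument you outline.
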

\begin{proof}
  Let $\perp$ denote the $L^2$-orthogonal complement with respect to the metric $g$. As we integrate over the compact 1-dimensional domain $S^1$, it follows from Rellich's Theorem that $W^{k,p}$ embeds into $L^2$ for $k\geq 1$ and all $p\in(1,\infty)$ or for $k=0$ and $p\geq 2$. Thus it makes sense for these $k$ and $p$ to consider the closed space
  \[\big(\nabla^2\mathcal{A}^H(W^{k+1,p})\big)^\perp\subset L^2.\]
  \textit{\underline{Claim} :} $\big(\nabla^2\mathcal{A}^H(W^{k+1,p})\big)^\perp=\ker \nabla^2\mathcal{A}^H$.\medskip\\
  We prove this claim below, but first let us show how the claim implies the lemma.\\
  For $k\geq 1$ or $k=1$ and $p\geq 2$, we argue as follows. As $\ker\nabla^2\mathcal{A}^H$ consists of smooth elements, we find that
  \[\ker\nabla^2\mathcal{A}^H=\ker\nabla^2\mathcal{A}^H\cap W^{k,p}=\big(\nabla^2\mathcal{A}^H(W^{k+1,p})\big)^\perp\cap W^{k,p}.\]
  As $\nabla^2\mathcal{A}^H(W^{k+1,p})$ is closed in $W^{k,p}$, we hence have that
  \[W^{k,p}=\ker\nabla^2\mathcal{A}^H\oplus\nabla^2\mathcal{A}^H(W^{k+1,p}).\]
  For $k=0$ and $1<p\leq 2$, we use the dual space $(L^p)^\ast=L^q$, where $1/p+1/q=1$ so that $q\geq 2$ and $L^q$ embedds into $L^2$. Then we consider the annihilator $\big(\nabla^2\mathcal{A}^H(W^{1,p})\big)^0\subset L^q$ and find again (by repeating the proof of the claim) that
  \[\big(\nabla^2\mathcal{A}^H(W^{1,p})\big)^0=\ker\nabla^2\mathcal{A}^H\]
  and hence $L^p=\ker \nabla^2\mathcal{A}^H\oplus\nabla^2\mathcal{A}^H(W^{1,p})$.\medskip\\
 \textit{Proof of the claim:}\\
 Basically, the statement follows from elliptic regularity as $\nabla^2\mathcal{A}^H$ is an elliptic operator of order 1. However, we give here for convenience an explicit proof. Recall from the proof of Lemma \ref{lemclosedimage} that the flow $\phi^{\eta t}$ of $\eta X_H$ provides a trivialization of $v^\ast TV\cong [0,1]\times T_{v(0)}V$ under which elements of $W^{k,p}$ become $W^{k,p}$-maps $\mathfrak{v}:[0,1]\rightarrow T_{v(0)}V$ such that $D\phi^\eta\big(\mathfrak{v}(1)\big)=\mathfrak{v}(0)$. We expressed $\nabla^2\mathcal{A}^H$ in this trivialization by (\ref{AHinTriv}), where $I$ is a $t$-dependent matrix such that $I^2=-Id$. As $\omega$ is invariant under the flow $\phi^{\eta t}$, we find that the metric $g$ on $[0,1]\times T_{v(0)}$ is just
 \[g\big((\mathfrak{v}_0,\hat{\eta}_0),(\mathfrak{v}_1,\hat{\eta}_1)\big) = \int_0^1\omega_{v(0)}\big(\mathfrak{v}_0, I\mathfrak{v}_1\big)dt + \hat{\eta}_0\cdot\hat{\eta}_1.\]
 In this framework, we can now describe $\big(\nabla^2\mathcal{A}^H(W^{k+1,p})\big)^\perp$. It is obvious that \linebreak $\ker\nabla^2\mathcal{A}^H\subset \big(\nabla^2\mathcal{A}^H(W^{k+1,p})\big)^\perp$. For the opposite inclusion let $(\mathfrak{v},\hat{\eta})$ be any element in the complement. We then have for all $(\mathfrak{w},\xi)\in W^{k+1,p}$ with $D\phi^\eta\big(\mathfrak{w}(1)\big)=\mathfrak{w}(0)$ that
 \[0=g\left(\begin{pmatrix}\mathfrak{v}\\\hat{\eta}\end{pmatrix},\nabla^2\mathcal{A}^H\begin{pmatrix}\mathfrak{w}\\\xi\end{pmatrix}\right) \overset{(\ref{AHinTriv})}{=} \int_0^1\omega\big(\mathfrak{v},\textstyle \frac{d}{dt}\mathfrak{w}-\xi\cdot X\big)dt+\hat{\eta}\cdot\displaystyle\int_0^1dH(\mathfrak{w})dt.\]
 Considering in particular $\xi=0$ and $\xi=1$, we find that this holds if and only if
 \[0 =\int_0^1\omega\big(\mathfrak{v},\textstyle\frac{d}{dt}\mathfrak{w}\big)dt + \hat{\eta}\cdot\displaystyle\int_0^1\omega\big(\mathfrak{w},X\big)dt\qquad\text{ and }\qquad 0=\int_0^1\omega\big(\mathfrak{v},X\big)dt.\]
 As the Liouville vector field $Y_\lambda$ is also preserved under the flow of $\eta X_H$ (at least on $\Sigma$), we find that $\mathfrak{w}_0(t):=Y_\lambda(v_0)$ satisfies $D\phi^\eta\big(\mathfrak{w}_0(1)\big)=\mathfrak{w}_0(0)$. For this map we have $\frac{d}{dt}\mathfrak{w}_0=0$ and hence we conclude that \[0=\hat{\eta}\cdot\int_0^1\omega(\mathfrak{w}_0,X)dt=\hat{\eta}\cdot\int_0^1\omega(Y_\lambda,X)dt=\hat{\eta}.\]
 Thus, we find that $\mathfrak{v}$ has to satisfy the following two equations:
 \[I.\quad 0=\int_0^1\omega(\mathfrak{v},X)dt\qquad\text{ and }\qquad II.\quad 0=\int_0^1\omega\big(\mathfrak{v},\textstyle\frac{d}{dt}\mathfrak{w}\big)dt,\]
 where $\mathfrak{w}$ may still be any $W^{k+1,p}$-map $\mathfrak{w}:[0,1]\rightarrow T_{v(0)}V$ with $D\phi^{\eta t}\big(\mathfrak{w}(1)\big)=\mathfrak{w}(0)$. The second condition on $\mathfrak{w}$ is automatically satisfied if $\text{supp } \mathfrak{w}\subset(0,1)$. In particular, we can take $\mathfrak{w}=\rho_\delta\ast\mathfrak{u}$, where $\mathfrak{u}$ is any test-function with support in $(\delta,1-\delta)$ and $\rho_\delta$ is a smooth bump function with $\text{supp}\,\rho_\delta\subset(-\delta,\delta),\; \int_{-\infty}^\infty\rho_\delta\,dt=1$ and $\rho_\delta(t)=\rho_\delta(-t)$ (see Appendix \ref{appB}). Then we have from II. and Corollary \ref{corconvolution} that
 \begin{align*}
  0=\int_0^1\omega\big(\mathfrak{v},{\textstyle\frac{d}{dt}}(\rho_\delta\ast\mathfrak{u})\big)dt = \int_0^1\omega\big(\mathfrak{v},({\textstyle\frac{d}{dt}\rho_\delta})\ast\mathfrak{u}\big)dt&=\int_0^1\omega\big(({\textstyle\frac{d}{dt}\rho_\delta})\ast\mathfrak{v},\mathfrak{u}\big)dt\\
  &= \int_0^1\omega\big({\textstyle\frac{d}{dt}}(\rho_\delta\ast\mathfrak{v}),\mathfrak{u}\big)dt.
 \end{align*}
 As this equation holds for any test function $\mathfrak{u}$ with $\text{supp }\mathfrak{u}\subset(\delta,1-\delta)$, we infer that $\rho_\delta\ast\mathfrak{v}$ is constant on $(\delta,1-\delta)$. As $\rho_\delta\ast\mathfrak{v}\rightarrow\mathfrak{v}$ in $L^p$ (see Lemma \ref{lemconvconv}), we conclude that $\mathfrak{v}$ is also constant and hence smooth. From equation I., we then conclude that the $Y_\lambda$-component of $\mathfrak{v}$ has to be zero. Now as $D\phi^\eta\big(\mathfrak{v}(0)\big)=D\phi^\eta\big(\mathfrak{v}(1)\big)=\mathfrak{v}(0)$, we know that $\mathfrak{v}(t)=\mathfrak{v}(0)\in\ker (D\phi^\eta-Id)$ and this implies that $(\mathfrak{v},\xi)=(\mathfrak{v},0)$ lies in $\ker\nabla^2\mathcal{A}^H$.
 \qedhere
\end{proof}

\begin{theo} \label{theo17}
 If (MB) is satisfied, then the following equivalent statements are true:
 \begin{itemize}
  \item The spectrum $spec(\Sigma,\alpha)=\mathcal{A}^H\big(\mathcal{P}(\alpha)\big)$ is closed and discrete.
  \item $\mathcal{P}(\alpha)=\crita$ is a submanifold of $\mathscr{L}\times\mathbb{R}$ with disjoint components $\mathcal{N}^\eta$, i.e.\ $\mathcal{A}^H$ is a Morse-Bott functional.
  \item For any real numbers $-\infty<a<b<\infty$ there are only finitely many $\eta\in[a,b]$ such that $\mathcal{N}^\eta\neq\emptyset$.
 \end{itemize}
\end{theo}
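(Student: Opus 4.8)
\emph{Proof proposal.} The three statements are elementary reformulations of one another once one recalls Lemma~\ref{lem2a} (so $\mathcal{A}^H\equiv\eta$ on each $\mathcal{N}^\eta$) and that $\Sigma=\partial W$ is compact, so each $\mathcal{N}^\eta$ has only finitely many components: a closed discrete subset of $\mathbb{R}$ is exactly one meeting every bounded interval in a finite set, and — given (MB), under which each $\mathcal{N}^\eta$ is a submanifold of $\Sigma$ hence of $\mathscr{L}\times\mathbb{R}$ — this is in turn equivalent to $\crita$ decomposing into the $\mathcal{N}^\eta$ as clopen (hence disjoint-component) pieces. So the substance is to prove one of them, and I would prove the middle one, that $\mathcal{A}^H$ is Morse--Bott.

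Fix $(v_0,\eta_0)\in\crita=\mathcal{P}(\alpha)$ and work in $W^{k+1,p}(S^1,V)\times\mathbb{R}$. The map $-\nabla\mathcal{A}^H$ is $C^1$ from $W^{k+1,p}(S^1,V)\times\mathbb{R}$ to the $W^{k,p}$-vector fields $W^{k,p}$ along $(v,\eta)$, and at the critical point $(v_0,\eta_0)$ its derivative is the operator $\nabla^2\mathcal{A}^H_{(v_0,\eta_0)}$ of~(\ref{nabla2}). By Lemmas~\ref{lemker} and~\ref{lemcoker} this operator is Fredholm of index $0$ with $\ker=\mathrm{coker}=K$, where $K$ is finite dimensional with $\dim K=\dim_{v_0(0)}\mathcal{N}^{\eta_0}$, and with the splitting $W^{k,p}=K\oplus\nabla^2\mathcal{A}^H(W^{k+1,p})$. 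Splitting source and target along $K$ and solving the $\nabla^2\mathcal{A}^H(W^{k+1,p})$-component of $\nabla\mathcal{A}^H=0$ by the implicit function theorem (a Lyapunov--Schmidt reduction) produces a $C^1$ embedding $\Psi$ of a ball $B\subset K$ into $\mathscr{L}\times\mathbb{R}$ through $(v_0,\eta_0)$ whose image contains every critical point near $(v_0,\eta_0)$ and for which $\Psi^{-1}(\crita)=F^{-1}(0)$ for some $C^1$ map $F\colon B\to K$.

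The key point is that $\crita$ already contains an explicit submanifold through $(v_0,\eta_0)$, namely the set of nearby closed Reeb orbits of period exactly $\eta_0$; via $(v,\eta)\mapsto v(0)$ and assumption (MB) this is diffeomorphic to a neighbourhood of $v_0(0)$ in $\mathcal{N}^{\eta_0}$, hence a submanifold of $\mathscr{L}\times\mathbb{R}$ of dimension $\dim_{v_0(0)}\mathcal{N}^{\eta_0}=\dim K=\dim B$. Since it lies in $\crita\cap\operatorname{im}\Psi$, its $\Psi$-preimage is a submanifold of $B$ of dimension $\dim B$ containing $0$, hence an open neighbourhood of $0$ contained in $F^{-1}(0)$; therefore $F$ vanishes identically near $0$, and $\crita$ coincides near $(v_0,\eta_0)$ with that neighbourhood in $\mathcal{N}^{\eta_0}$. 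This is precisely the Morse--Bott property, and it shows each $\mathcal{N}^\eta$ is clopen in $\crita$. Local finiteness of the spectrum then follows from compactness of $\Sigma$: given $\eta_j\to\eta_0$ with $\mathcal{N}^{\eta_j}\ni p_j$, a convergent subsequence $p_j\to p_\infty$ yields critical points $(v_j,\eta_j)\to(v_\infty,\eta_0)$ in $\mathscr{L}\times\mathbb{R}$ with $\eta_j\neq\eta_0$, contradicting that $\mathcal{N}^{\eta_0}$ is open in $\crita$; with the finiteness of the components of each $\mathcal{N}^\eta$ this gives the first and third statements.

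I expect the main obstacle to be the analytic bookkeeping underlying the reduction — checking that $\nabla\mathcal{A}^H$ is genuinely a $C^1$ map between the stated Banach spaces with derivative $\nabla^2\mathcal{A}^H$, and that the reduced solutions are smooth loops, so that the local picture obtained in $W^{k+1,p}$ transfers back to $\mathscr{L}=C^\infty(S^1,V)$; the latter is the elliptic regularity already implicit in Lemmas~\ref{lemker}--\ref{lemcoker}. The one load-bearing geometric input is the dimension count $\dim K=\dim_{v_0(0)}\mathcal{N}^{\eta_0}$ from Lemma~\ref{lemker}, which forces the reduced zero set to be open. As an alternative one could argue entirely finite-dimensionally on a Poincaré section transverse to the Reeb flow at $v_0(0)$, using (MB) to describe the fixed-point set of the return map and the fact that $\int\alpha$ over a smoothly varying family of closed Reeb orbits is constant to conclude that the return time, hence the period, is locally constant along that fixed-point set.
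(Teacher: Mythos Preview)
Your argument is correct and takes a genuinely different route from the paper. You run a Lyapunov--Schmidt reduction: split along $K=\ker\nabla^2\mathcal{A}^H$, invoke the implicit function theorem to reduce to a finite-dimensional obstruction map $F\colon B\to K$, and then use the dimension count $\dim K=\dim\mathcal{N}^{\eta_0}$ from Lemma~\ref{lemker} to see that the known submanifold $\mathcal{N}^{\eta_0}$ already fills an open set in $B$, forcing $F\equiv 0$ near $0$. The paper instead avoids the implicit function theorem entirely: it works with the non-negative function $e(x)=\|\nabla\mathcal{A}^H(x)\|_{L^2}^2$, Taylor-expands it at a critical point, and uses the lower bound $\|\nabla^2\mathcal{A}^H(\mathfrak{v},\hat{\eta})\|_{L^2}^2\ge C\|\pi(\mathfrak{v},\hat{\eta})\|_{H^1}^2$ (coming from invertibility of $\nabla^2\mathcal{A}^H$ on $K^\perp$) together with a submanifold chart for $\mathcal{N}^\eta$ in the Hilbert manifold $H^k(S^1,V)\times\mathbb{R}$ to show $e>0$ on a punctured transversal neighbourhood, so $\mathcal{N}^\eta$ is isolated in $\crita$. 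Your approach is the more standard and conceptually cleaner one, and your observation that the reduced map must vanish because the zero set already has full dimension is a nice shortcut; the price, as you note, is the analytic bookkeeping of checking that $\nabla\mathcal{A}^H$ is genuinely $C^1$ between the right Banach spaces. The paper's approach trades that for a direct quantitative estimate, needing only that $e$ is $C^2$ at critical points with the computed Hessian --- a slightly lighter regularity requirement --- plus Lemma~\ref{lem3} to handle the mismatch between the chart splitting $K\oplus E$ and the orthogonal splitting $K\oplus K^\perp$. Both routes rest on the same Fredholm input from Lemmas~\ref{lemker}--\ref{lemcoker}. Your finite-dimensional Poincar\'e-section alternative would also work and is in some sense the most elementary of the three.
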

\pagebreak
\begin{proof}
The equivalence of the 3 statements is obvious. We therefore only show the first one. That $spec(\Sigma,\alpha)$ is closed can be seen as follows: Suppose that $(\eta_n)\subset spec(\Sigma,\lambda)$ is a sequence with $\lim \eta_n=\eta$. This means that there exists a sequence of Reeb trajectories $(v_n)$ whose periods are $\eta_n$. As $\Sigma$ is compact, we may assume by the Arzela-Ascoli theorem that $v_n$ converges uniformly to a Reeb trajectory $v$, whose period has to be $\eta$. Hence $\eta\in\text{spec}(\Sigma,\lambda)$.\\
The proof of $\text{spec}(\Sigma,\lambda)$ being discrete is more involved. In what follows, it suffices to restrict to the Hilbert spaces $H^k=W^{k,2}$. It follows from Lemma \ref{lemker} and \ref{lemcoker} that $\nabla^2\mathcal{A}^H : H^1\rightarrow L^2$ is a Fredholm operator of index zero. Let again $K=\ker\nabla^2\mathcal{A}^H=\text{coker}\,\nabla^2\mathcal{A}^H$. We abbreviate for a moment the orthogonal complements of $K$ in $H^1$ resp.\ $L^2$ by $S:=K^{\perp_{H^1}}\subset H^1$ and $R:=K^{\perp_{L^2}}\subset L^2$. The restriction $\nabla^2\mathcal{A}^H: S\rightarrow R$ is bijective and due to the Open Mapping Theorem therefore a (continuous) isomorphism. This implies in particular that $\nabla^2\mathcal{A}^H$ is bounded from below, i.e.\ there exists a constant $C>0$, depending continuously on $(v,\eta)$, such that
\[||\nabla^2\mathcal{A}^H(\mathfrak{v},\hat{\eta})||^2_{L^2}\geq C\cdot||(\mathfrak{v},\hat{\eta})||^2_{H^1},\]
for all $(\mathfrak{v},\hat{\eta})\in S$. If we denote by $\pi:H^1\rightarrow S$ the orthogonal projection onto $S$, we may rewrite this more elegantly as
\[||\nabla^2\mathcal{A}^H(\mathfrak{v},\hat{\eta})||^2_{L^2}\geq C\cdot||\pi(\mathfrak{v},\hat{\eta})||^2_{H^1}\]
for all $(\mathfrak{v},\hat{\eta})\in H^1$. As $C$ depends continuously on $(v,\eta)$, it can be chosen globally for all $(v,\eta)\in\mathcal{N}^\eta$, as $\mathcal{N}^\eta\subset\Sigma$ is closed and hence compact. Now consider the function
\[ e: H^k(S^1,V)\times\mathbb{R} \rightarrow \mathbb{R},\qquad e(x)=||\nabla\mathcal{A}^H(x)||^2_{L^2}\]
on the Hilbert manifold $H^k(S^1,V)\times\mathbb{R}$ of pairs $(v,\eta)$ consisting of $H^k$-loops $v$ and real numbers $\eta$.
We find that $e(x)=0$ if and only if $x\in\crita$. Moreover, for $x\in\crita$ and $X,Y\in T_x\big(H^k(S^1,V)\times\mathbb{R}\big)$, one easily calculates that
\[e(x)=0,\qquad D e(x)=0\qquad\text{ and }\qquad D^2 e(x)[X,Y]=2g\big(\nabla^2\mathcal{A}^H(x)X,\nabla^2\mathcal{A}^H(x)Y\big).\]
Let $(v_a,\eta_a)\subset H^k(S^1,V)\times\mathbb{R}$ be a $k$-times differentiable family in $a$ such that $(v_0,\eta_0)\in\mathcal{N}^{\eta_0}\subset\crita$ and $\frac{d}{da}(v_a,\eta_a)|_{a=0}=(\mathfrak{v},\hat{\eta})$. Then, the Taylor formula at $(v_0,\eta_0)$ yields
\[e(v_a,\eta_a)=||\nabla^2\mathcal{A}^H_{(v_0,\eta_0)}(\mathfrak{v},\hat{\eta})||^2_{L^2}\cdot a^2 + O(a^3).\]
Using the above estimate, we find constants $c,d>0$ such that at least for small $a$ we have
\begin{equation} \label{e-norm}
 e(v_a,\eta_a)\geq c\cdot a^2\left(||\pi(\mathfrak{v},\hat{\eta})||^2_{L^2}-d\cdot a\right).
\end{equation}
Note that $c$ and $d$ depend again continuously on $(v,\eta)$ and may therefore be chosen globally on $\mathcal{N}^\eta$. If $(\mathfrak{v},\hat{\eta})\not\in\ker\nabla^2\mathcal{A}^H$, it follows that $e(v_a,\eta_a)>0$ near (not at) $a=0$ and hence that $(v_0,\eta_0)$ is the only critical point of $\mathcal{A}^H$ on $(v_a,\eta_a)$ near $a=0$.\\
We recall that $\mathcal{N}^\eta\subset H^k(S^1,V)\times\mathbb{R}$ is assumed to be a submanifold. Note that $H^k(v,\eta)$ and $K(v,\eta)$ are the tangent spaces to $H^k(S^1,V)\times\mathbb{R}$ resp.\ $\mathcal{N}^\eta$ at points $(v,\eta)\in \mathcal{N}^\eta\subset H^k(S^1,V)\times\mathbb{R}$. According to Theorem 1.3.5 in \cite{kling}, there exists in Hilbert manifolds around each point $(v,\eta)$ in a submanifold $\mathcal{N}^\eta$ a submanifold chart, i.e.\ there exists a closed linear subspace $E\subset H^k(v,\eta)$ such that $H^k(v,\eta)=K(v,\eta)\oplus E$ and open neighborhoods $U\subset H^k(S^1,V)$ around $(v,\eta)$, $V'\subset K(v,\eta), V''\subset E$ each around 0 together with a diffeomorphism
\[\phi: V'\times V''\rightarrow U\qquad\text{ with }\qquad \phi(V'\times\{0\})=\mathcal{N}^\eta\cap U.\]
Fix $y\in V'$. Then any $(\mathfrak{v},\hat{\eta})\in E$ with $||(\mathfrak{v},\hat{\eta})||^2_{L^2}=1$ yields a well-defined path
\[(v_a,\eta_a):=\phi\big(y+a\cdot(\mathfrak{v},\hat{\eta})\big)\]
at least for $|a|<\delta_E$, with $\delta_E$ so small that $B_{\delta_E}(0)\subset V''$. Due to the following Lemma \ref{lem3}, there exists a constant $k>0$, such that $||\pi(\mathfrak{v},\hat{\eta})||^2_{L^2}>k$ for all $(\mathfrak{v},\hat{\eta})\in E$. Set $\veps_E=\min\{\delta_E,k/d\}$, with $d$ being the constant from (\ref{e-norm}). Then we find that the only critical point on $(v_a,\eta_a)$ for $|a|<\veps_E$ is $\phi(y)\in\mathcal{N}^\eta$ at $a=0$. Hence we see for the open set $\phi\big(V'\times B_{\veps_E}(0)\big)\subset H^k(S^1,V)\times\mathbb{R}$ that 
\[\phi\big(V'\times B_{\veps_E}(0)\big)\cap\crita =\mathcal{N}^\eta\cap U.\]
By covering $\mathcal{N}^\eta$ with a finite number of charts $\phi$, we obtain that $\mathcal{N}^\eta\subset\crita$ is isolated.
\end{proof}
\begin{lemme}\label{lem3}
 Let $H$ be a Hilbert space, $K\subset H$ a finite dimensional subspace and $E\subset H$ a closed subspace such that $H=K\oplus E$, i.e.\ E is any closed complement of $K$. Denote by $\pi: H\rightarrow K^\perp$ the orthogonal projection onto the orthogonal complement of $K$. Then there exists a constant $k>0$, depending on $E$, such that for all $x\in E$ holds 
 \[||\pi(x)||^2\geq k\cdot||x||^2.\]
\end{lemme}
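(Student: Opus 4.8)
The plan is to show that the restriction $\pi|_E : E \to K^\perp$ is a bounded linear isomorphism and then invoke the bounded inverse theorem. First I would observe that $\pi|_E$ is injective: since $\ker\pi = K$ and the sum $H = K\oplus E$ is direct, $\ker(\pi|_E) = E\cap K = \{0\}$. Next, $\pi|_E$ is surjective onto $K^\perp$: indeed $\pi(H) = K^\perp$ because $\pi$ is the orthogonal projection onto $K^\perp$, and since $\pi(K) = 0$ and $H = K + E$ we get $\pi(E) = \pi(K+E) = \pi(H) = K^\perp$. Both $E$ (a closed subspace of the Hilbert space $H$) and $K^\perp$ are Banach spaces, and $\pi|_E$ is bounded (of operator norm at most $1$). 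By the open mapping theorem, $\pi|_E$ is an isomorphism, so there is a constant $C>0$ with $\|(\pi|_E)^{-1}y\| \le C\|y\|$ for all $y\in K^\perp$. Applying this with $y = \pi(x)$ for $x\in E$ gives $\|x\| \le C\,\|\pi(x)\|$, i.e. $\|\pi(x)\|^2 \ge k\,\|x\|^2$ with $k := C^{-2}$, a constant depending only on $E$.

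Alternatively, if one prefers to avoid the open mapping theorem, one can argue by contradiction using that $K$ is finite dimensional: if no such $k$ existed there would be $x_n\in E$ with $\|x_n\| = 1$ and $\|\pi(x_n)\|\to 0$. Writing $P_K$ for the orthogonal projection onto $K$, we have $x_n - P_K x_n = \pi(x_n)\to 0$, hence $\|P_K x_n\|\to 1$; since the closed unit ball of the finite-dimensional space $K$ is compact we may pass to a subsequence with $P_K x_n \to y\in K$, $\|y\| = 1$. Then $x_n\to y$, and since $E$ is closed, $y\in E\cap K = \{0\}$, contradicting $\|y\| = 1$.

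There is no genuine obstacle here; the only point that needs a moment's care is that the decomposition $H = K\oplus E$ is merely an algebraic (topological) direct sum and need not be orthogonal, so one cannot identify $\pi|_E$ with an orthogonal projection. This is exactly why surjectivity of $\pi|_E$ is established via $\pi(E) = \pi(H)$ rather than directly, and why the closedness of $E$ is essential in the compactness argument.
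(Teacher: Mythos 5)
Your primary argument (via the open mapping theorem) is correct but takes a genuinely different route from the paper's; your alternative is essentially the paper's proof verbatim.

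The paper argues by contradiction: if $x_n\in E$ had $\|x_n\|=1$ and $\|\pi(x_n)\|\to 0$, then $x_n-\pi(x_n)$ lies in the closed unit ball of the \emph{finite-dimensional} space $K$, so a subsequence converges to some $y\in K$ with $\|y\|=1$; but then $x_n\to y$, and closedness of $E$ forces $y\in E\cap K=\{0\}$, a contradiction. This is exactly your fallback argument (with $P_Kx_n = x_n-\pi(x_n)$), so nothing new there.

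Your main route is different: you observe that $\pi|_E\colon E\to K^\perp$ is a bounded linear bijection between Banach spaces (injective because $\ker(\pi|_E)=E\cap K=\{0\}$, surjective because $\pi(E)=\pi(K+E)=\pi(H)=K^\perp$), and invoke the open mapping theorem to get boundedness of the inverse, whence the estimate. This is clean and slightly more general: it nowhere uses compactness of the unit ball of $K$, only that $K$ is closed so that $K^\perp$ and $\pi$ make sense (automatic for $K$ finite-dimensional). The paper's compactness argument, by contrast, leans essentially on $\dim K<\infty$ and has the advantage of being entirely elementary, avoiding Baire-category machinery -- which is a minor aesthetic point here, since the paper already uses the open mapping theorem a few lines later in the proof of Theorem \ref{theo17} to invert $\nabla^2\mathcal{A}^H$. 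Your caveat about the sum $H=K\oplus E$ not being orthogonal, and the consequent need to get surjectivity of $\pi|_E$ indirectly via $\pi(H)$, is exactly the right point to be careful about.
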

\begin{proof}
 Obviously, it suffice to show the claim for all $x\in E$ with $||x||^2=1$. Assume the contrary. Then there exists a sequence $(x_n)\subset E,\;||x_n||^2=1$ with $\lim ||\pi(x_n)||^2=0$. Note that
 \[||x_n-\pi(x_n)||^2=||x_n||^2-||\pi(x_n)||^2=1-||\pi(x_n)||^2\leq 1,\]
 as $\pi$ is an orthogonal projection. Hence, we see that $x_n-\pi(x_n)\in B_1(0)\subset K$ lies in the unit ball of $K$, which is compact, as $K$ has finite dimension. By considering a subsequence, still denoted by $x_n$, we may assume that $x_n-\pi(x_n)$ converges in $K$ to $y\in K$. Now we have
 \[||x_n-y||\leq ||x_n-\pi(x_n)-y||+||\pi(x_n)||\rightarrow 0.\]
 In other words, $x_n$ converges in $H$ to $y\in K$. As $E$ is closed, this means that $y\in E$. Hence $y\in E\cap K=\{0\}$ and therefore $y=0$, contradicting $||y||=\lim ||x_n||=1$. Thus, the assumption is false and the lemma follows.
\end{proof}

\subsection{Asymptotic estimates}\label{2.4}
In this section, we show the following theorem, whose most important statement is that convergent $\mathcal{A}^H$-gradient flow lines always converge exponentially.
\begin{theo}\label{asympequiv} Let $(v,\eta): \mathbb{R}\times S^1\rightarrow V\times\mathbb{R}$ be a solution of the Rabinowitz-Floer equation (\ref{eq3}). Then the following statements are equivalent:
 \begin{enumerate}
  \item $E(v,\eta)<\infty$ and $(v,\eta)$ stays in a compact region in $V\times\mathbb{R}$.
  \item $|\partial_s(v,\eta)(s,t)|\rightarrow 0$ and $dist.\big((v,\eta)(s,t),\mathcal{N}^{\eta^\pm}\big)\rightarrow 0$ for some $\eta^\pm\in spec(\Sigma,\alpha)$, where both limits are uniform in $t$ for $s\rightarrow \pm\infty$.
  \item There exist constants $\delta,c>0$ and $v^\pm\in\mathcal{N}^{\eta^\pm}$ such that $|\partial_s (v,\eta)(s,t)|\leq c\cdot e^{-\delta|s|}$ and $dist.\left(\left(\begin{smallmatrix}v(t)\\\eta\end{smallmatrix}\right)(s),\left(\begin{smallmatrix}v^\pm(t)\\\eta^\pm\end{smallmatrix}\right)\right)\leq c\cdot e^{-\delta|s|}$ for all $(s,t)\in\mathbb{R}\times S^1$.
 \end{enumerate}
 \end{theo}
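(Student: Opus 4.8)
The plan is to establish the cycle of implications $1\Rightarrow 2\Rightarrow 3\Rightarrow 1$. The implication $3\Rightarrow 1$ is immediate: the bound $|\partial_s(v,\eta)(s,t)|\leq c\,e^{-\delta|s|}$ gives $E(v,\eta)=\int_{-\infty}^\infty\|\partial_s(v,\eta)\|_{L^2}^2\,ds\leq c^2/\delta<\infty$, and the distance estimate in $3$ confines $(v,\eta)$ to a bounded $C^0$-neighbourhood of the compact set $\{(v^\pm(t),\eta^\pm)\}\cup(v,\eta)([-S,S]\times S^1)$ for $S$ large. Since replacing $s\mapsto-s$ turns an $\mathcal{A}^H$-gradient trajectory into one for $-\mathcal{A}^H$ and exchanges the two ends, it suffices throughout to treat the end $s\to+\infty$.

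For $1\Rightarrow 2$: as $(v,\eta)$ stays in a compact region $\mathcal{K}\subset V\times\mathbb{R}$, the Rabinowitz-Floer equation (\ref{eq3}) is, in the $v$-component, a perturbed Cauchy-Riemann equation with coefficients controlled on $\mathcal{K}$ and with the non-local $\eta$-equation only improving regularity; hence elliptic regularity yields uniform $C^k_{\mathrm{loc}}$ bounds on $(v,\eta)$. Since $E(v,\eta)<\infty$, the energy tails $\int_{[s,\infty)\times S^1}\|\partial_s(v,\eta)\|^2$ vanish as $s\to\infty$, which excludes bubbling near $s=+\infty$. A standard contradiction argument now gives $|\partial_s(v,\eta)(s,\cdot)|\to 0$ uniformly: if not, translate a bad sequence $s_n\to\infty$ to $s=0$, extract a $C^\infty_{\mathrm{loc}}$-limit which is an $\mathcal{A}^H$-gradient trajectory of zero energy (by vanishing of energy tails and lower semicontinuity), hence stationary, contradicting $\partial_s\neq 0$ at the base point. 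A second such argument gives $\mathrm{dist}\big((v,\eta)(s,\cdot),\crita\big)\to 0$. Finally, by Lemma \ref{lem2} the function $s\mapsto\mathcal{A}^H(v(s,\cdot),\eta(s))$ is nondecreasing and bounded, so it converges to some $a^+$; by Lemma \ref{lem2a} and Theorem \ref{theo17} the level set $\crita\cap\{\mathcal{A}^H=a^+\}$ equals $\mathcal{N}^{\eta^+}$ with $\eta^+=a^+$, and since the $\mathcal{N}^\eta$ are closed, disjoint, with discrete action values, every limit point of $(v,\eta)(s,\cdot)$ lies in $\mathcal{N}^{\eta^+}$, i.e.\ $\mathrm{dist}\big((v,\eta)(s,\cdot),\mathcal{N}^{\eta^+}\big)\to 0$.

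For $2\Rightarrow 3$, the heart of the matter: near $\mathcal{N}^{\eta^+}$ I would use a tubular-neighbourhood splitting, writing for $s\geq s_0$ that $(v,\eta)(s)=\exp_{\gamma(s)}\big(\zeta(s)\big)$ where $\gamma(s)\in\mathcal{N}^{\eta^+}$ is the nearest point and $\zeta(s)$ lies in the $L^2$-orthogonal complement of $T_{\gamma(s)}\mathcal{N}^{\eta^+}$; by $2$ both $\|\zeta(s)\|_{W^{1,2}}\to 0$ and $\|\partial_s\gamma(s)\|\to 0$. Substituting into (\ref{eq3}) and expanding via (\ref{gradienta}) and (\ref{nabla2}) produces an equation $\partial_s\zeta+A(s)\zeta=r(s)$ in which $A(s)\to A_\infty$, where $A_\infty$ is the restriction of the self-adjoint Fredholm operator $-\nabla^2\mathcal{A}^H_{(\gamma_\infty,\eta^+)}$ to the $L^2$-complement of its kernel $K$; by Lemmas \ref{lemker}, \ref{lemclosedimage}, \ref{lemcoker} and Theorem \ref{theo17} one has $K=T\mathcal{N}^{\eta^+}$ exactly, so $A_\infty$ is invertible with a spectral gap $|\mathrm{spec}(A_\infty)|\geq 2\delta_0>0$, and $\|r(s)\|\leq \mathrm{const}\cdot\|\zeta(s)\|\big(\|\zeta(s)\|+\|\partial_s\gamma(s)\|\big)=o(1)\|\zeta(s)\|$. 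Setting $\phi(s):=\tfrac12\|\zeta(s)\|_{L^2}^2$ and differentiating twice, self-adjointness of $A(s)$ and the spectral gap give $\phi''(s)\geq\big(4\delta_0^2-o(1)\big)\phi(s)\geq\delta_0^2\,\phi(s)$ for $s$ large; since $\phi$ is bounded and $\phi(s)\to 0$, a convexity/maximum-principle comparison with $e^{-\delta_0 s}$ forces $\phi(s)\leq Ce^{-\delta_0 s}$, hence $\|\zeta(s)\|_{L^2}\leq C\,e^{-\delta s}$ for every $\delta<\delta_0$. Interior elliptic estimates for the $s$-dependent equation then upgrade this to $|\partial_s(v,\eta)(s,t)|\leq c\,e^{-\delta s}$ pointwise and in all derivatives; integrating the resulting bound $\|\partial_s\gamma(s)\|\leq c\,e^{-\delta s}$ shows $\gamma(s)\to v^+\in\mathcal{N}^{\eta^+}$ exponentially fast, and combining with the exponential decay of $\zeta$ gives $\mathrm{dist}\big((v,\eta)(s),(v^+,\eta^+)\big)\leq c\,e^{-\delta s}$. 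Running the same argument at $s\to-\infty$ completes $3$.

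The main obstacle is the local normal-form analysis near the Morse-Bott manifold $\mathcal{N}^{\eta^+}$ in step $2\Rightarrow 3$: one must cleanly separate the non-decaying motion along $\mathcal{N}^\eta$ from the transverse part and verify that the transverse part is governed by an operator with a \emph{uniform} spectral gap over the compact manifold $\mathcal{N}^{\eta^+}$ — which is precisely what the self-adjointness, the $\mathrm{index}=0$ Fredholm property, and the identification $\ker\nabla^2\mathcal{A}^H=T\mathcal{N}^\eta$ from Lemmas \ref{lemker}--\ref{lemcoker} and Theorem \ref{theo17} are there to supply. The non-local term $\int_0^1 H(v)\,dt$ requires bookkeeping but causes no genuine difficulty, since it is bounded and regularity-improving and its linearization $-\int_0^1 dH(\mathfrak{v})\,dt$ already appears in (\ref{nabla2}); the extra scalar variable $\eta$ is handled throughout by treating $(v,\eta)$ as a single section of the appropriate bundle. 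The bubbling-exclusion needed in $1\Rightarrow 2$ is routine once finite energy is known to give vanishing energy tails near $s=+\infty$.
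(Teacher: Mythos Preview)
Your proof is correct and follows the paper's overall architecture ($3\Rightarrow 1$ trivial, $1\Rightarrow 2$ via Proposition \ref{asympconv}, $2\Rightarrow 3$ via Proposition \ref{asympexpconv}), but differs in method for $1\Rightarrow 2$. You invoke uniform $C^k_{\mathrm{loc}}$ bounds via elliptic regularity and no-bubbling, then a translate-and-extract contradiction argument; the paper instead develops a bespoke mean-value inequality (Lemmas \ref{lemaprioriest}, \ref{eq3meanvalue}) for $w=\tfrac12|\partial_s(v,\eta)|^2$, adapted to the semi-local Rabinowitz--Floer equation where the subharmonicity bound reads $\Delta w\geq -A-B\bigl(w^2+\int_{S^1}w^2\,dt\bigr)$ rather than the pointwise $\Delta w\geq -A-Bw^2$ of ordinary Floer theory. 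Your softer compactness argument is valid once the compact-image hypothesis bounds $\eta X_H$, but the paper's explicit inequality does double duty: it also supplies the $L^2\to C^0$ upgrade in $2\Rightarrow 3$, which you cover with ``interior elliptic estimates.'' For $2\Rightarrow 3$ the two arguments are essentially identical --- the paper's coordinates from Lemma \ref{lemcoord} and projection $Q$ onto $(\ker\nabla^2\mathcal{A}^H)^\perp$ are your $\gamma,\zeta$ splitting, and the convexity inequality $f''\geq(4c-\varepsilon)f$ with maximum-principle comparison appears verbatim in Proposition \ref{asympexpconv}; the paper additionally runs a second such inequality for $\|\partial_t(Z,N)\|$ to obtain $H^1$ rather than only $L^2$ decay of the transverse part before the pointwise upgrade.
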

\begin{rem}~
\begin{itemize}
 \item The term $E(v,\eta)$ is the energy of $(v,\eta)$ as defined in Definition \ref{defnenergy} by
 \[
 E(v,\eta)=\int_{\mathbb{R}\times S^1} \left|\partial_s \left(\begin{smallmatrix} v\\\eta\end{smallmatrix}\right)(s,t)\right|^2 dt\,ds=\int^\infty_{-\infty}||\nabla\mathcal{A}^H(v,\eta)||^2ds=\int^\infty_{-\infty} \frac{d}{ds} \mathcal{A}^H(v,\eta)\,ds.
\]
 Here, we denote for $(Y,n)\in L^2(S^1,TV)\times L^2(\mathbb{R})$ by $|\left(\begin{smallmatrix}Y\\n\end{smallmatrix}\right)|^2$ the pointwise norm given by $\omega(Y,JY)+n^2$, while $||\left(\begin{smallmatrix}Y\\n\end{smallmatrix}\right)||^2$ is either the $L^2$-norm $\int_0^1 |Y|^2dt + n^2$ over $S^1$ or by abuse of notation the $L^2$-norm $\int_{-\infty}^{+\infty}\big(\int_0^1 |Y|^2dt+ n^2\big)\,ds $ over $\mathbb{R}\times S^1$.
 \item It follows from 3.\ using local coordinates that we have for any convergent $\mathcal{A}^H$-gradient trajectory $(v,\eta)$ and any $p>1$ 
 \begin{align*}
  \big(v(s,t),\eta(s)\big)-\big(v^\pm(t),\eta^\pm\big)&\in W^{1,p}(\mathbb{R};e^{r|s|}dt\,ds)\\
  \partial_s(v,\eta)&\in L^p(\mathbb{R};e^{r|s|}dt\,ds)
 \end{align*}
for $r$ small enough. For the precise definition of these weighted Sobolev spaces, see Definition \ref{defnWeighSob} below.
\end{itemize}
\end{rem}
The proof of Theorem \ref{asympequiv} is given in several steps. It is obvious that 3. implies 1. and the verification is left to the reader. The proof that 1. implies 2. is given in Proposition \ref{asympconv}. It uses many ideas from Salamon, \cite{Sal}. That 2. implies 3. is proved in Proposition \ref{asympexpconv} and is inspired
by a similar proof due to Bourgeois and Oancea in \cite{BourOan1}. 
\begin{prop}\label{asympconv}
 Let $(v,\eta)$ be an $\mathcal{A}^H$-gradient trajectory with $E(v,\eta)<\infty$ and $(v,\eta)$ staying in a compact region $W\subset V\times\mathbb{R}$. Then, $\displaystyle\lim_{s\rightarrow \pm\infty}\partial_s(v,\eta)=0$ and there exists $\eta^\pm\in spec(\Sigma,\alpha)$ with $dist.\Big((v,\eta)(s),\mathcal{N}^{\eta^\pm}\Big)\rightarrow 0$ and all limits are uniform in $t$.
\end{prop}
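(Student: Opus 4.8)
The plan is to follow the standard three-step scheme for Floer trajectories (as in Salamon, \cite{Sal}), adapted to the extra Lagrange-multiplier variable $\eta$. \emph{Step 1: a uniform gradient bound, $\sup_{\mathbb{R}\times S^1}|\partial_s(v,\eta)|<\infty$.} Since $(v,\eta)$ stays in the compact region, the component $\eta$ is bounded and, by the second equation of (\ref{eq3}), $\partial_s\eta=-\int_0^1 H(v(s,t))\,dt$ is bounded as well, so the $\eta$-part is harmless and the content is the bound on $\partial_s v$. If it failed, I would choose $(s_k,t_k)$ with $R_k:=|\partial_s v(s_k,t_k)|\to\infty$ and rescale the first equation of (\ref{eq3}) by $R_k$ around $(s_k,t_k)$; since $X_H$ and $H$ are bounded on the compact region, the usual bubbling analysis would produce in the limit a non-constant finite-energy $J$-holomorphic sphere, or after removal of singularities a non-constant finite-energy $J$-holomorphic plane in $V$, both impossible because $\omega=d\lambda$ is exact. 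Once the $C^0$-bound on $\partial_s v$ is in hand, rewriting (\ref{eq3}) as $\partial_t v=\eta X_H(v)+J\,\partial_s v$ shows all first derivatives of $(v,\eta)$ are uniformly bounded, and interior elliptic estimates for (\ref{eq3}) on unit cylinders $[s-1,s+1]\times S^1$, bootstrapped in the usual way, give a uniform bound on $|\nabla^j(v,\eta)|$ for every $j$.

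\emph{Step 2: uniform decay.} From $E(v,\eta)=\int_{\mathbb{R}\times S^1}|\partial_s(v,\eta)|^2\,dt\,ds<\infty$ one gets $\int_{[s,s+1]\times S^1}|\partial_s(v,\eta)|^2\to 0$ as $s\to\pm\infty$; combined with the uniform bound on $\nabla\partial_s(v,\eta)$ from Step 1 this forces $\sup_t|\partial_s(v,\eta)(s,t)|\to 0$ as $s\to\pm\infty$ (a function that is small in $L^2$ and has bounded gradient is small in $C^0$). In particular $\|\nabla\mathcal{A}^H(v(s,\cdot),\eta(s))\|\to 0$ and $\partial_s\eta\to 0$, the latter giving $\int_0^1 H(v(s,t))\,dt\to 0$; moreover $\tfrac{d}{dt}H(v(s,t))=dH(\partial_t v)=dH(J\,\partial_s v)\to 0$ uniformly (using $dH(X_H)=0$), so $t\mapsto H(v(s,t))$ has uniformly small oscillation and small average, whence $H(v(s,\cdot))\to 0$ uniformly and $v(s,\cdot)$ approaches $\Sigma=H^{-1}(0)$.

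\emph{Step 3: the asymptotic critical manifolds.} The map $s\mapsto\mathcal{A}^H(v(s,\cdot),\eta(s))$ is nondecreasing, because $\tfrac{d}{ds}\mathcal{A}^H=\|\nabla\mathcal{A}^H\|^2\geq 0$ as in the proof of Lemma \ref{lem2}, and it is bounded since $E(v,\eta)<\infty$, hence it converges to limits $a^\pm$ as $s\to\pm\infty$. For any sequence $s_n\to+\infty$, Step 1 together with Arzel\`a-Ascoli (applied also in the $t$-variable through the equation) yields a subsequence along which $(v(s_n,\cdot),\eta(s_n))$ converges in $C^\infty(S^1,V)\times\mathbb{R}$ to some $(v_\infty,\eta_\infty)$; since $\nabla\mathcal{A}^H\to 0$ the limit lies in $\crita=\mathcal{P}(\alpha)$, and Lemma \ref{lem2a} gives $\eta_\infty=\mathcal{A}^H(v_\infty,\eta_\infty)=a^+$, so $\eta_\infty$ is independent of the subsequence. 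Setting $\eta^+:=a^+$ we obtain $\eta^+\in\mathcal{A}^H(\mathcal{P}(\alpha))=spec(\Sigma,\alpha)$; and since $\mathcal{N}^{\eta^+}$ is closed by (MB), the standard ``every subsequence has a sub-subsequence converging to $0$'' argument upgrades this to $\mathrm{dist}((v,\eta)(s),\mathcal{N}^{\eta^+})\to 0$ uniformly in $t$ as $s\to+\infty$. The case $s\to-\infty$ is identical and produces $\eta^-$.

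\emph{Main obstacle.} The only genuinely delicate point is the a priori gradient bound of Step 1: one must carry out the rescaling analysis for the Rabinowitz-Floer equation (\ref{eq3}) and exclude the bubble via exactness of $\omega$, all the while checking that the bounded auxiliary variable $\eta$ (together with $\partial_s\eta$) does not interfere with the rescaled limit or with the subsequent elliptic estimates. Everything downstream — the passage from finite energy to $C^0$-decay, and the compactness argument identifying the limit critical manifolds — is routine.
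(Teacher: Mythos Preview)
Your argument is correct but reaches the uniform decay $|\partial_s(v,\eta)|\to 0$ by a different route from the paper. Instead of your Steps~1--2 (bubbling analysis and elliptic bootstrapping for uniform $C^k$ bounds, followed by the interpolation ``small $L^2$ plus bounded $C^1$ implies small $C^0$''), the paper proves a \emph{semi-local mean value inequality}: it shows that $w=\tfrac12|\partial_s(v,\eta)|^2$ satisfies $\Delta w\ge -A-B\bigl(w^2+\int_{S^1}w^2\,dt\bigr)$ on the compact region (Lemma~\ref{eq3meanvalue}), then adapts the Heinz trick to handle the non-local integral term (Lemma~\ref{lemaprioriest}), obtaining a pointwise estimate $w(s,t)\le \tfrac{Ar^2}{2}+\tfrac{8}{\pi r^2}\int_{B_r(s,t^\ast)}w$ for some $t^\ast\in S^1$, from which finite energy forces $w\to 0$ without ever invoking Gromov compactness or a global $C^1$ bound. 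Your approach is closer to the standard Floer toolkit and yields uniform $C^k$ bounds as a byproduct; the paper's is self-contained and isolates a mean value inequality tailored to the Rabinowitz--Floer equation, of independent interest precisely because the usual pointwise bound $\Delta w\ge -A-Bw^2$ fails here due to the integral in $\partial_s\eta$. For Step~3 you identify $\eta^\pm$ via monotonicity and convergence of the action, whereas the paper uses discreteness of $spec(\Sigma,\alpha)$ (Theorem~\ref{theo17}, which relies on (MB)) together with connectedness of the limit set; your version is slightly cleaner and does not need the spectrum to be discrete.
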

\pagebreak
\begin{proof} We only prove the case $s\rightarrow+\infty$, as $s\rightarrow -\infty$ is completely analogue.
\begin{enumerate}
 \item First, we show that $E(v,\eta)<\infty$ and $(v,\eta)$ staying in a compact set implies that $\displaystyle \lim_{s\rightarrow \infty}\partial_s(v,\eta)=0$ uniformly in $t$. The proof relies on the following a priori estimate:
 \begin{equation}\label{aprioriest}
 \begin{aligned}
  && \int_{B_r(s,t)} \left|\partial_s \left(\begin{smallmatrix} \textstyle v\\\big.\textstyle\eta\end{smallmatrix}\right)\right|^2&<\delta \quad\forall t\in S^1\\
  &\Rightarrow&  \exists\, t^\ast\in S^1 :\forall t\in S^1:\;\left|\partial_s \left(\begin{smallmatrix} \textstyle v\\\big.\textstyle\eta\end{smallmatrix}\right)(s,t)\right|^2&\leq \frac{Ar^2}{2}+\frac{8}{\pi r^2}\int_{\textstyle B_r(s,t^\ast)} \left|\partial_s \left(\begin{smallmatrix} \textstyle v\\\big.\textstyle\eta\end{smallmatrix}\right)\right|^2
  \end{aligned}
 \end{equation}
 for solutions of the Rabinowitz-Floer equation (\ref{eq3}) which stay in a compact region $W\subset V\times\mathbb{R}$. Here, $A>0$ and $\delta>0$ are constants depending on $W$, $\omega$, $J$ and $H$, but not on $(v,\eta)$. This estimate is proven in Lemma \ref{lemaprioriest}  and \ref{eq3meanvalue}. Assuming (\ref{aprioriest}), we show the uniform convergence as follows: As $E(v,\eta)$ is finite and the energy density $\left|\partial_s \left(\begin{smallmatrix} v\\\eta\end{smallmatrix}\right)\right|^2$ always non-negative, we can choose for any $\veps$ with $0<\veps<\sqrt{\delta}$ an $s_0>0$ so large such that
 \[\int_{s_0}^\infty \int_0^1 \left|\partial_s \left(\begin{smallmatrix} \textstyle v\\\big.\textstyle\eta\end{smallmatrix}\right)\right|^2\; dt\,ds\leq \veps^2.\]
 Then we may apply (\ref{aprioriest}) with $r=\sqrt{\veps}$ to obtain $|\partial_s \left(\begin{smallmatrix} v\\\eta\end{smallmatrix}\right)(s,t)|^2\leq \left(A/2+8/\pi\right)\veps$ for $s\geq s_0+\sqrt{\veps}$. This shows that $\partial_s\left(\begin{smallmatrix}v\\\eta\end{smallmatrix}\right)$ converges to zero uniformly as $s\rightarrow \infty$.
 \item It remains to show that $(v,\eta)(s)$ lies uniformly arbitrarily close to $\crita$ as $s\rightarrow\infty$. Recall that
 \[\partial_s \begin{pmatrix} v\\\eta\end{pmatrix}(s,t)=-\begin{pmatrix}J(\partial_t v - \eta X_H\\\int_0^1 H(v)dt\end{pmatrix}.\]
 As $\partial_s \left(\begin{smallmatrix} v\\\eta\end{smallmatrix}\right)\rightarrow 0$ uniformly, we find in particular that $\partial_t v-\eta X_H(v)$ converges uniformly to zero. As $(v,\eta)$ stays in the compact region $W$, we find that $v$ and $\eta$ stay in compact regions and that $\partial_t v$ is bounded. We may hence apply the Arzela-Ascoli Theorem which shows that we can extract from any sequence $s_k\rightarrow \infty$ a subsequence (still denoted $s_k$), such that $(v,\eta)(s_k)$ converges uniformly to some $(v^+,\eta^+)\in\crita$. All possible values for $\eta^+$ have to lie in $spec(\Sigma,\alpha)=\mathcal{A}^H(\crita)$. As $spec(\Sigma,\alpha)$ is closed and discrete, we conclude that all $\eta(s_k)$ converge to the same $\eta^+$, which shows that $\eta(s)\rightarrow \eta^+$. A similar argument shows that for every sequence $(s_k)$ such that $v(s_k)$ converges holds that the limit is a point on $\mathcal{N}^{\eta^+}$. Using some auxiliary metric on $V$, we find that this implies that $dist.\big(v,\eta)(s,t),\mathcal{N}^{\eta^+}\big)\rightarrow 0$ uniformly in $t$.\qedhere
\end{enumerate}
\end{proof}
To complete the proof of Proposition \ref{asympconv}, it remains to show (\ref{aprioriest}). We will do so by applying the following general a priori estimate (\ref{genapioriest}) to $w=|\partial_s\left(\begin{smallmatrix}v\\ \eta\end{smallmatrix}\right)|^2$. The estimate (\ref{genapioriest}) is a variation on a similar estimate by Dietmar Salamon (see \cite{Sal}, Prop.\ 1.21). The main difference is that in Rabinowitz-Floer theory the Laplacian $\Delta w$ is not bounded from below by a pointwise constraint $-A-Bw^2$, but by $-A-B(w^2+\int_{S^1} w^2)$, which involves the values of $w$ on a whole circle. This is due to the fact that the Rabinowitz-Floer equation is only semi-local. The $t^\ast$-shifted center of integration on the right side of our estimate (\ref{genapioriest}) pays tribute to this.
\begin{lemme} \label{lemaprioriest}
 Assume that $w:\mathbb{R}^2\rightarrow \mathbb{R}$ is a $C^2$-function satisfying
 \begin{itemize}
  \item $w\geq 0$.
  \item $w(s,t)=w(s,t+T)$ for some $T> 0$ and all $(s,t)\in\mathbb{R}^2$.
  \item $\Delta w \geq -A - B\big(w^2+\frac{1}{T}\int_0^T w^2 dt\big)$,\medskip\\ where $\Delta w=\partial_s^2 w + \partial_t^2 w$ is the Laplacian and $A,B\geq 0$ are constants.
  \item $\displaystyle \int_{B_r(0,t)} w \leq \frac{\pi}{32 B}\qquad\forall\, t\in [0,T]$,\medskip\\
  where $B_r(0,t)\subset\mathbb{R}^2$ is the closed ball with radius $r$ around the point $(0,t)$.
 \end{itemize}
 Then there exists $t^\ast\in[0,T]$ such that for all $t$ holds
 \begin{align}\label{genapioriest}
  w(0,t)\leq \frac{Ar^2}{2}+\frac{8}{\pi r^2}\int_{\textstyle B_r(0,t^\ast)} w.
 \end{align}
\end{lemme}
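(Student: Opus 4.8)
The plan is to adapt Salamon's mean-value estimate for subsolutions of Poisson-type inequalities (\cite{Sal}, Prop.~1.21) to the present semi-local setting, the two new features being the additive constant $A$ and the non-local term $\tfrac1T\int_0^T w^2\,dt$. The argument rests on the elementary \emph{sub-mean-value inequality}: if $f\ge 0$ is $C^2$ on a disk $B_\rho(z)\subset\mathbb{R}^2$ and $\Delta f\ge -c$ for a \emph{constant} $c\ge 0$, then $f+\tfrac c4|x-z|^2$ is subharmonic, so by the mean-value property $f(z)\le\tfrac1{\pi\rho^2}\int_{B_\rho(z)}f+\tfrac{c\rho^2}{8}$. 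The whole difficulty is that here the right-hand side of $\Delta w\ge -A-B\big(w^2+\tfrac1T\int_0^T w^2\big)$ is not a constant but grows with $w$ itself, so one must first gain a priori control of $w$ on a smaller disk, which is done by a Heinz-type rescaling trick.

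First I would fix $t_0\in[0,T]$ maximising $t\mapsto w(0,t)$ and $t^\ast\in[0,T]$ maximising $t\mapsto\int_{B_r(0,t)}w$, and observe that it suffices to bound $\mu:=w(0,t_0)$ by $\tfrac{Ar^2}{2}+\tfrac8{\pi r^2}\int_{B_r(0,t^\ast)}w$; the point of choosing $t^\ast$ this way is that any integral of $w$ over a subdisk of any $B_r(0,t')$ is then automatically $\le\int_{B_r(0,t^\ast)}w$. To exploit the non-local term I would run the Heinz trick over \emph{strips} rather than disks: set $G(\rho):=(r-\rho)^2\sup\{w(s,t):|s|\le\rho\}$ for $\rho\in[0,r]$ (the sup is over the compact set $\{|s|\le\rho\}\times[0,T]$ by $T$-periodicity), which is continuous with $G(0)=r^2\mu$ and $G(r)=0$; let $\rho_0$ be a maximiser, $c_0:=\sup\{w(s,t):|s|\le\rho_0\}$ attained at $z_0=(s_0,\tau_0)$, and $\sigma:=\tfrac{r-\rho_0}2$. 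Maximality of $G$ at $\rho_0$ gives $w\le 4c_0$ on the whole strip $\{|s|\le\tfrac{r+\rho_0}2\}$, hence in particular on the disk $B_\sigma(z_0)$, and crucially also $\tfrac1T\int_0^Tw^2(s,\tau)\,d\tau\le 16c_0^2$ for every $s$ with $|s|\le\tfrac{r+\rho_0}2$. Therefore $\Delta w\ge -A-32Bc_0^2$ on $B_\sigma(z_0)$, a genuine constant bound.

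Now apply the sub-mean-value inequality on $B_\sigma(z_0)$: since $|s_0|+\sigma\le\tfrac{r+\rho_0}2<r$ one has $B_\sigma(z_0)\subset B_r(0,\tau_0)$, so $\int_{B_\sigma(z_0)}w\le\int_{B_r(0,t^\ast)}w=:I$, and thus, writing $Z:=c_0\sigma^2$,
\[
Z\;\le\;\tfrac1\pi I+\tfrac{A\sigma^4}{8}+4BZ^2 .
\]
Since $G(\rho_0)\ge G(0)$ gives $\mu\le\tfrac4{r^2}Z$, the claim reduces to showing $Z\le\tfrac{Ar^4}{8}+\tfrac{2}{\pi}I$, and since $w(0,t)\le\mu$ for every $t$ by the choice of $t_0$ this would be exactly the asserted inequality with the single $t^\ast$. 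The smallness hypothesis $I\le\tfrac\pi{32B}$ makes the quadratic inequality above solvable for $Z$ in the expected range; tracking the constants here — i.e.\ absorbing the $4BZ^2$ term and checking that the two contributions land below $\tfrac{Ar^4}{8}$ respectively $\tfrac2\pi I$ — is the one genuinely delicate bookkeeping step, and is where the precise threshold $\tfrac\pi{32B}$ (rather than a larger one) is used.

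The main obstacle, as indicated, is purely the closing of the bootstrap: the non-local term and the additive $A$ are handled cheaply (strips for the former, the crude bound $\sigma\le r/2$ for the latter), but getting the quadratic absorption to respect the constants $\tfrac{Ar^2}2$ and $\tfrac8{\pi r^2}$ requires care. To squeeze out the right numerology one may want to treat separately the trivial case $\mu\le\tfrac{Ar^2}2$, and to replace the single application of the sub-mean-value inequality on $B_\sigma(z_0)$ by its average over radii $\sigma'\in(0,\sigma)$, which improves the numerical factor in front of $\int_{B_\sigma(z_0)}w$.
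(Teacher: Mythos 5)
Your overall architecture — sub-mean-value inequality, Heinz rescaling, strips to control the non-local term, and a single $t^\ast$ maximising $t\mapsto\int_{B_r(0,t)}w$ — is sound and matches the paper's argument; your strip formulation of $G$ is in fact slightly cleaner than the paper's $f(\rho,t)=(1-\rho)^2\max_{B_\rho(0,t)}w$, since it produces the bound $w\le 4c_0$ on $\{|s|\le\tfrac{r+\rho_0}{2}\}$ without the extra $t$-dependent bookkeeping. The reduction to showing $Z\le\tfrac{Ar^4}{8}+\tfrac{2}{\pi}I$ is also correct.

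However, the closing step as you describe it does not work, and it is not mere bookkeeping. Your inequality $(\star)$, namely $Z\le\tfrac1\pi I+\tfrac{A\sigma^4}{8}+4BZ^2$, is a single application of the sub-mean-value inequality at $\rho'=\sigma$. This is a one-sided quadratic inequality, and for $A$ large it is \emph{vacuous}: writing $D:=\tfrac1\pi I+\tfrac{A\sigma^4}{8}$, the inequality $4BZ^2-Z+D\ge 0$ has a negative discriminant as soon as $16BD>1$, which happens whenever $2BA\sigma^4>\tfrac12$ (and $\sigma\le r/2$ gives no upper bound on this if $A$ is large). In that case $(\star)$ holds for \emph{all} $Z\ge 0$ and yields nothing — the hypothesis $I\le\tfrac{\pi}{32B}$ does not ``make the quadratic solvable in the expected range.'' Your suggested remedy, averaging the sub-mean-value inequality over radii $\sigma'\in(0,\sigma)$, does not repair this either: the coefficient $\tfrac1{\pi\sigma'^2}$ in front of the integral blows up as $\sigma'\to 0$, so averaging cannot improve the constant you are worried about.

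What is actually needed is the freedom to evaluate the sub-mean-value inequality at a \emph{specific} radius $\rho\le\sigma$ chosen as a function of $c_0$, namely $\rho^2=\tfrac{1}{8Bc_0}$, which makes the quadratic term equal exactly $\tfrac{c_0}{2}$. In the non-trivial case $c_0>Ar^2/8$ (equivalent to your $\mu>\tfrac{Ar^2}{2}$), assuming $\sigma^2\ge\tfrac1{8Bc_0}$ and plugging $\rho^2=\tfrac1{8Bc_0}$ into the inequality and cancelling $c_0$ produces $\tfrac12-\sigma^2\le\tfrac{8B}{\pi}I$; since $\sigma\le r/2$ this forces $I\ge\tfrac{\pi}{32B}\cdot r^2$, contradicting the hypothesis (after the $r=1$, $B=1$ normalisation the paper performs). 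This contradiction proves $4Bc_0\sigma^2<\tfrac12$, and only \emph{then} — armed with this a priori smallness — can you apply the inequality at $\rho=\sigma$, absorb $4BZ^2\le\tfrac12 Z$, and land at $Z\le\tfrac{Ar^4}{8}+\tfrac2\pi I$. So the missing ingredient is not a numerical refinement but a genuine sub-case analysis driven by a carefully chosen radius; without it the quadratic term cannot be absorbed.
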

\begin{proof}
 Following \cite{DuffSal}, the proof is devided into five steps.\medskip\\
 \underline{Step 1}: \textit{The lemma holds (even stronger) with $B=0$, i.e.\ if $\Delta w\geq -A$ then}
 \[w(s_0,t_0)\leq \frac{Ar^2}{8}+\frac{1}{\pi r^2}\int_{B_r(s_0,t_0)} w \qquad \forall\, (s_0,t_0)\in\mathbb{R}^2.\]
 This is the mean value inequality for the subharmonic function
 \[\widetilde{w}(s,t)=w(s,t)+A\frac{(s-s_0)^2+(t-t_0)^2}{4}.\]
 For completeness, we give the following proof from \cite{DuffSal}. Assume without loss of generality that $s_0=t_0=0$. Then by the divergence theorem, we have
 \[0\leq\frac{1}{\rho}\int_{B_\rho(0)}\Delta\widetilde{w} =\frac{1}{\rho}\int_{\partial B_\rho(0)}\frac{\partial\widetilde{w}}{\partial\nu} = \int_0^{2\pi}\frac{d}{d\rho}\widetilde{w}(\rho e^{i\theta})d\theta =\frac{d}{d\rho}\left(\frac{1}{\rho}\int_{\partial B_\rho(0)}\widetilde{w}\right),\]
 where $\nu$ is the outer normal vector field. Hence, we have for $0<\rho<r$
 \[\frac{1}{2\pi\rho}\int_{\partial B_\rho(0)}\widetilde{w}\leq \frac{1}{2\pi r}\int_{\partial B_r(0)}\widetilde{w}.\]
 The term on the left converges to $\widetilde{w}(0)$ as $\rho$ tends to zero. Hence
 \[2\pi r\widetilde{w}(0)\leq\int_{\partial B_r(0)}\widetilde{w}.\]
 Integrating this inequality from $0$ to $r$ gives the mean value inequality for $\widetilde{w}$. Hence
 \[ w(0)=\widetilde{w}(0)\leq \frac{1}{\pi r^2}\int_{B_r (0)} \widetilde{w} = \frac{Ar^2}{8}+\frac{1}{\pi r^2}\int_{B_r(0)}w.\]
 \underline{Step 2}: \textit{It suffices to prove the lemma for $r=1$}\medskip\\
 Suppose that $w$ satisfies the assumptions and define $\widetilde{w}$ and $\widetilde{A},\widetilde{B}$ and $\widetilde{T}$ by
 \[\widetilde{w}(s,t):=w(rs,rt),\quad \widetilde{A}:=A\cdot r^2,\quad \widetilde{B}:=B\cdot r^2\quad\text{ and }\quad\widetilde{T}:=\frac{1}{r} T.\]
 Then $\widetilde{w}$ is positive, $\widetilde{T}$-periodic and we have
 \begin{align*}
  &&\Delta\widetilde{w} &= r^2\Delta w\geq -r^2 A -r^2B\cdot\big(w^2+\textstyle\frac{1}{T}\int_0^T w^2 \,dt\big)\\
  &&&=-\widetilde{A}-\widetilde{B}\big(\widetilde{w}^2+\textstyle\frac{1}{\widetilde{T}}\int_0^{\widetilde{T}}\widetilde{w}^2 \,dt\big)\\
  &\text{and}& \int_{B_1(0,t)} \widetilde{w} &= \frac{1}{r^2}\int_{B_r(0,rt)} w \leq \frac{\pi}{32\widetilde{B}} \quad\forall\,t.
 \end{align*}
Hence, assuming the lemma for $r=1$, we obtain
\[w(0,t)=\widetilde{w}(0,{\textstyle\frac{1}{r}}t)\leq \frac{\widetilde{A}}{2}+\frac{8}{\pi}\int_{B_1(0,t^\ast)} \widetilde{w} = \frac{Ar^2}{2}+\frac{8}{\pi r^2}\int_{B_r(0,rt^\ast)} w.\]
\underline{Step 3}: \textit{It suffices to prove the lemma for $B=1$.}\medskip\\
Suppose that $w$ satisfies the assumptions and define $\widetilde{w}$ and $\widetilde{A}$ by
\[\widetilde{w}(s,t):=B\cdot w(s,t),\quad \widetilde{A}=B\cdot A.\]
Then $\widetilde{w}$ is still positive and $T$ periodic. Moreover
\[\Delta\widetilde{w}\geq -\widetilde{A} -\big(\widetilde{w}^2+\textstyle\frac{1}{T}\int_0^T\widetilde{w}^2\,dt\big) \qquad\text{ and }\qquad \int_{B_1(0,t)}\widetilde{w}\leq \frac{\pi}{32}\quad\forall\, t.\]
Hence assuming the lemma for $B=1$, we obtain
\[w(0,t)={\textstyle\frac{1}{B}}\widetilde{w}(0,t)\leq \frac{\frac{1}{B} \widetilde{A}}{2}+\frac{8}{\pi}\int_{B_1(0,t^\ast)}{\textstyle\frac{1}{B}}\widetilde{w} = \frac{A}{2}+\frac{8}{\pi}\int_{B_1(0,t^\ast)} w.\]
\underline{Step 4}: \textit{The Heinz Trick: Assume $B=r=1$ and define $f: [0,1]\times[0,T]\rightarrow\mathbb{R}$ by
\[f(\rho,t)=(1-\rho)^2\cdot\max_{B_\rho(0,t)} w.\]
As $f$ is non-negative, continuous and $f(1,t)=0$, there exists $\rho^\ast\in[0,1)$ and $t^\ast\in[0,T]$ and $c>0$ and $z^\ast\in B_{\rho^\ast}(0,t^\ast)$ such that
\[f(\rho^\ast,t^\ast)=\max_{\rho,\, t} f(\rho,t)=(1-\rho^\ast)^2\cdot c,\qquad c=w(z^\ast)=\max_{B_{\rho^\ast}(0,t^\ast)} w.\]
Write $\veps=\frac{1-\rho^\ast}{2}$. For $0\leq \rho\leq \veps$ then holds that}
\[w(z^\ast)=c\leq\frac{A\rho^2}{8} + 4c^2\rho^2 + \frac{1}{\rho^2\pi}\int_{\textstyle B_1(0,t^\ast)} w.\tag{$\ast$}\]
To see this, note that $\rho^\ast+\veps=\frac{1+\rho^\ast}{2}<1$. Since $B_\veps(z)\subset B_{\rho^\ast+\veps}(0,t)$ for all $z=(s,t)\in[-\rho^\ast,\rho^\ast]\times[0,T]$, it holds for these $z$ that
\begin{align*}
 &&\max_{B_\veps(z)}w&\leq \max_{B_{\rho^\ast+\veps}(0,t)} w = \frac{f(\rho^\ast+\veps,t)}{(1-\rho^\ast-\veps)^2} = \frac{4f(\rho^\ast+\veps,t)}{(1-\rho^\ast)^2} \leq \frac{4f(\rho^\ast,t^\ast)}{(1-\rho^\ast)^2}= 4c.\tag{$\ast\ast$}\\
 &\Rightarrow& \Delta w &\geq - A - w^2 -{\textstyle{\frac{1}{T}}}\int_0^T w^2 dt\geq -A-16c^2-\textstyle \frac{T}{T} 16c^2 = -A-32c^2.
\end{align*}
Now, $(\ast)$ follows from step 1 at $w(z^\ast)=c$ with $r=\rho\leq\veps$ and $A$ replaced by $A+32c^2$. (Note that $B_\rho(z^\ast)\subset B_1(0,t^\ast)$!)\medskip\\
\underline{Step 5}: \textit{The lemma holds for $r=1$ and $B=1$.}\medskip\\
If $c\leq A/8$, then $w(0,t)\leq 4c\leq A/2$ by $(\ast\ast)$ and this implies the lemma. Hence we may assume that $c\geq A/8$. We prove that this implies $4c\cdot \veps^2\leq \frac{1}{2}$. Suppose otherwise that $\veps^2\geq \frac{1}{8c}$. Then in $(\ast)$, we can choose $\rho=\sqrt{\frac{1}{8c}}\leq \veps$ and obtain
\begin{align*}
 && c&\leq \frac{A}{8}\cdot\frac{1}{8c}+4c^2\cdot\frac{1}{8c}+\frac{8c}{\pi}\int_{B_1(0,t^\ast)} w\\
 && &\leq c\cdot\veps^2+\frac{c}{2}+\frac{8c}{\pi}\int_{B_1(0,t^\ast)} w\\
 &\Leftrightarrow & \frac{c\cdot\left(\frac{1}{2}-\veps^2\right)\pi}{8c}&\leq \int_{B_1(0,t^\ast)} w\\
 &\Rightarrow& \frac{\pi}{32} &\leq \int_{B_1(0,t^\ast)} w,
\end{align*}
where we used that $\veps=\frac{1-\rho^\ast}{2}\leq\frac{1}{2}$. But the last inequality is a contradiction to the fourth assumption. Hence $4c\cdot\veps^2\leq\frac{1}{2}$. Now consider $(\ast)$ with $\rho=\veps$
\begin{align*}
 && c&\leq \frac{A\veps^2}{8} + 4c^2\veps^2 + \frac{1}{\veps^2\pi}\int_{B_1(0,t^\ast)} w& &\leq \frac{A}{32} + \frac{c}{2}+\frac{1}{\veps^2\pi}\int_{B_1(0,t^\ast)} w\\
 &\Rightarrow& \frac{c}{2}&\leq \frac{A}{32}\cdot\frac{1}{4\veps^2} + \frac{1}{\veps^2\pi}\int_{B_1(0,t^\ast)} w\\
 &\Rightarrow& 4c\veps^2&\leq \frac{A}{16}+\frac{8}{\pi}\int_{B_1(0,t^\ast)} w& &\leq \frac{A}{2}+\frac{8}{\pi}\int_{B_1(0,t^\ast)} w.
\end{align*}
\[\text{Hence}\qquad w(0,t)=f(0,t)\leq f(\rho^\ast,t^\ast)=(1-\rho^\ast)^2\cdot c = 4\veps^2\cdot c\leq \frac{A}{2}+\frac{8}{\pi}\int_{B_1(0,t^\ast)} w.\qedhere\]
\end{proof}

\begin{lemme} \label{eq3meanvalue}
 Assume that $(v,\eta)\in C^\infty(\mathbb{R}\times S^1,V)\times C^\infty(\mathbb{R},\mathbb{R})$ is a solution of the Rabinowitz-Floer equation (\ref{eq3}) which stays in a compact region $W\subset V\times\mathbb{R}$. Then there exist constants $A,\delta>0$ depending on $W$ and $\omega,H, J$ such that
 \begin{equation*}
 \begin{aligned}
  && \int_{B_r(s,t)} \left|\partial_s \left(\begin{smallmatrix} \textstyle v\\\big.\textstyle\eta\end{smallmatrix}\right)\right|^2&<\delta \quad\forall t\in S^1\\
  &\text{implies }&  \exists\; t^\ast\in S^1\;:\;\left|\partial_s \left(\begin{smallmatrix} \textstyle v\\\big.\textstyle\eta\end{smallmatrix}\right)(s,t)\right|^2&\leq \frac{Ar^2}{2}+\frac{8}{\pi r^2}\int_{\textstyle B_r(s,t^\ast)} \left|\partial_s \left(\begin{smallmatrix} \textstyle v\\\big.\textstyle\eta\end{smallmatrix}\right)\right|^2 \quad\forall t\in S^1.
  \end{aligned}
 \end{equation*}
\end{lemme}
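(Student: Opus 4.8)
The plan is to reduce the statement to Lemma \ref{lemaprioriest}. Set $w(s,t):=\bigl|\partial_s(v,\eta)(s,t)\bigr|^2=|\partial_s v(s,t)|^2+(\partial_s\eta(s))^2$, with $|\cdot|$ the norm of $g_{t,\eta}=\omega(\cdot,J_{t,\eta}\cdot)$. Since $(v,\eta)$ is a smooth solution of (\ref{eq3}), $w$ is smooth (hence $C^2$), non-negative, and $1$-periodic in $t$ because $v(s,\cdot)$ is a loop. The heart of the matter is the differential inequality
\[
\Delta w\ \geq\ -A-B\Bigl(w^2+\int_0^1 w^2\,dt\Bigr)
\]
for constants $A,B\geq 0$ depending only on $W,\omega,H,J$. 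Granting it, Lemma \ref{lemaprioriest} applies with $T=1$: its fourth hypothesis $\int_{B_r(s,t)}w\le\frac{\pi}{32B}$ is exactly our hypothesis with $\delta:=\frac{\pi}{32B}$, and its conclusion is precisely the desired estimate after translating in $s$ so that the ball is centred at $s=0$ (legitimate, since (\ref{eq3}) is $s$-translation invariant).

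For the inequality I would write $\xi:=\partial_s v$, $b:=\partial_s\eta$, so $w=|\xi|^2+b^2$, and differentiate the two components of (\ref{eq3}) in $s$: the first yields a linearised equation of the form $\nabla_s\xi+J\nabla_t\xi=\Theta$, where $\Theta$ gathers the undifferentiated terms $bJX_H$, $\eta J\nabla_\xi X_H$, $-(b\,\partial_n J+\nabla_\xi J)(\partial_t v-\eta X_H)$, all bounded on the compact region $W$ by $C(1+w)$; the second yields $\partial_s b=-\int_0^1 dH(\xi)\,dt$. The term $\Delta|\xi|^2$ is then handled by the standard Bochner/Weitzenböck computation on $W$: writing $\tfrac12\Delta|\xi|^2=|\nabla_s\xi|^2+|\nabla_t\xi|^2+g(\xi,\nabla_s\nabla_s\xi+\nabla_t\nabla_t\xi)$ up to metric-derivative terms bounded on $W$, re-expressing the second covariant derivatives via the linearised equation, commuting $\nabla_s$ and $\nabla_t$ (bounded curvature on $W$), and absorbing cross-terms $|\nabla\xi|\,|\xi|$ into $\tfrac12|\nabla\xi|^2+C|\xi|^2$ by Young, one gets $\Delta|\xi|^2\geq -A_1-B_1 w^2$ with no $\int_0^1(\cdot)\,dt$ term, since $b$ enters the $\xi$-equation pointwise.

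The non-local term arises solely from $\Delta(b^2)$. As $b$ depends only on $s$, $\Delta(b^2)=2(\partial_s b)^2+2b\,\partial_s^2 b\geq 2b\,\partial_s^2 b$, where $\partial_s^2 b=-\int_0^1\bigl(\nabla dH(\xi,\xi)+dH(\nabla_s\xi)\bigr)\,dt$. Now $\nabla dH(\xi,\xi)$ is bounded by $C|\xi|^2\le Cw$, but $\nabla_s\xi$ is a true first derivative of $\partial_s v$ and is \emph{not} controlled pointwise by $w$ --- this is the one genuinely new point. The device is to substitute $\nabla_s\xi=-J\nabla_t\xi+\Theta$ from the linearised equation and use $dH=\omega(\cdot,X_H)$ together with $\omega(JY,X_H)=-g(Y,X_H)$ to rewrite $dH(-J\nabla_t\xi)=g(\nabla_t\xi,X_H)$, and then integrate by parts in $t$ (valid by $1$-periodicity): the total $t$-derivative integrates to zero and leaves $-\int_0^1 g(\xi,\nabla_{\partial_t v}X_H)\,dt$ plus terms bounded on $W$, with $\nabla_{\partial_t v}X_H$ itself bounded on $W$ by $C(1+|\xi|)$. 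Combined with the obvious bound on $\int_0^1 dH(\Theta)\,dt$, this gives $|\partial_s^2 b|\leq C(1+\int_0^1 w\,dt)$, hence
\[
2b\,\partial_s^2 b\ \geq\ -b^2-(\partial_s^2 b)^2\ \geq\ -w-C\Bigl(1+\int_0^1 w\,dt\Bigr)^2\ \geq\ -A_2-B_2\Bigl(w^2+\int_0^1 w^2\,dt\Bigr),
\]
using $w\leq 1+w^2$ and $\bigl(\int_0^1 w\,dt\bigr)^2\leq\int_0^1 w^2\,dt$. Adding the two bounds and absorbing the leftover linear term into $\tfrac12(1+w^2)$ gives the required inequality.

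The main obstacle is precisely this estimate of $\partial_s^2\eta$. In ordinary Floer theory one has $\Delta w\geq -A-Bw^2$ pointwise, but here $\partial_s^2\eta$ unavoidably involves $\nabla_s(\partial_s v)$, a quantity not controlled pointwise by $w$; taming it forces an integration by parts on the $t$-circle, and the resulting bound is non-local in $t$, which is exactly why Lemma \ref{lemaprioriest} had to be formulated with the circle-averaged term $\tfrac1T\int_0^T w^2$ and the shifted centre $t^\ast$. The remainder is routine Weitzenböck bookkeeping over the compact region $W$.
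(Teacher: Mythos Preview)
Your argument is correct and follows the paper's approach: establish $\Delta w\ge -A-B\bigl(w^2+\int_0^1 w^2\,dt\bigr)$ by a Weitzenb\"ock computation over the compact region and then invoke Lemma~\ref{lemaprioriest}. One small imprecision: your claim that $\Delta|\xi|^2$ carries no non-local term because ``$b$ enters the $\xi$-equation pointwise'' overlooks that $\nabla_s\Theta$ involves $\partial_s b=-\int_0^1 dH(\xi)\,dt$; the paper encounters the same term and handles it by absorbing $|\xi|\,|\partial_s b|$ via Young into the positive $(\partial_s b)^2$ coming from $\Delta(b^2)$, which you discarded but could equally retain. For the $\eta$-part the paper organizes the integration by parts slightly differently---adding $0=\int_0^1\partial_t\bigl(dH(\dot v)\bigr)\,dt$ so as to replace $dH(\nabla_s\xi)$ by $dH(\nabla_{\mathfrak v}\mathfrak v+\nabla_{\dot v}\dot v)$, which is zeroth order by the identity $(\ast)$ already derived for the $\xi$-part---but your substitute-then-integrate-by-parts route is equivalent.
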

\begin{rem}
 The Maximum Principle, Proposition \ref{maxprinc}, and the a priori estimates on $\eta$, Corollary \ref{cor3.7}, show that $W$ only depends on $E(v,\eta)$ (and on $H_s$ in the homotopy case).
\end{rem}
\begin{proof}
 As announced, we prove the statement by applying Lemma \ref{lemaprioriest} to the function
 \[w: \mathbb{R}\times S^1 \rightarrow \mathbb{R}\qquad w(s,t):=\frac{1}{2}\left|\partial_s\left(\begin{smallmatrix}\textstyle v\\\textstyle\big.\eta\end{smallmatrix}\right)(s,t)\right|^2.\] As $w\geq 0$ and $w$ being 1-periodic is obvious, we only have to show that there exist constants $A,B>0$ such that
 \[\Delta w \geq -A -B(w^2+\textstyle \int_0^1 w^2 dt).\]
 The constant $\delta$ is then given by $\pi/16B$. This estimate is somewhat technical but straightforward. The trick is that we can use the equation $\partial_s v = -J(\partial_t-\eta X_H)$ to rewrite the second order terms in $\Delta w$ as first order derivatives of $J$ and $X_H$. Let us abbreviate $\mathfrak{v}:=\partial_s v,\,\dot{v}:=\partial_t v,\, \dot{\eta}=\partial_s\eta$ and $X:=X_H$. Then, the Rabinowitz-Floer equation (\ref{eq3}) translates to
 \[\mathfrak{v} = -J(\dot{v}-\eta X)\quad\Leftrightarrow\quad \dot{v} =J\mathfrak{v} + \eta X,\qquad \dot{\eta}=\textstyle \int_0^1 H(v) dt.\]
 As $\frac{d}{ds}\frac{d}{dt}f(v)=\frac{d}{dt}\frac{d}{ds}f(v)$ for every function $f$, we have as in Section \ref{sec2.1} that, $[\mathfrak{v},\dot{v}]=0$ and hence $\nabla_\mathfrak{v}\dot{v}=\nabla_{\dot{v}}\mathfrak{v}$, where $\nabla$ denotes the Levi-Civita connection of the metric $g=\omega(\cdot,J\cdot)$.\\
 Now, $w$ is given by $w(s,t)=\frac{1}{2}\big(|\mathfrak{v}|^2+\dot{\eta}^2\big)=\frac{1}{2}\big(g(\mathfrak{v},\mathfrak{v})+\dot{\eta}^2\big)$ and the Laplacian satisfies
 \[ \Delta w = |\nabla_\mathfrak{v} \mathfrak{v}|^2+(\partial_s\dot{\eta})^2+|\nabla_{\dot{v}}\mathfrak{v}|^2+\kappa.\]
 Here, $\kappa$ is an error term given by
 \begin{align*}
  \kappa &= g(\mathfrak{v},\nabla_{\mathfrak{v}}\nabla_{\mathfrak{v}} \mathfrak{v}) +g(\mathfrak{v},\nabla_{\dot{v}}\nabla_{\dot{v}} \mathfrak{v}) +\dot{\eta}\cdot(\partial_s\partial_s\dot{\eta}).
 \end{align*}
 In the following, we estimate the components of $\kappa$. First we have
 \begin{align*}
  \nabla_{\mathfrak{v}} \mathfrak{v} &= -\nabla_{\mathfrak{v}}\big(J(\dot{v}-\eta X)\big) = -(\nabla_{\mathfrak{v}}J)(\dot{v}-\eta X)-J(\nabla_{\mathfrak{v}}\dot{v}-\dot{\eta}X-\eta\nabla_{\mathfrak{v}}X)\\
  \nabla_{\dot{v}} \dot{v} &= \nabla_{\dot{v}}\big(J\mathfrak{v}+\eta X\big) = (\nabla_{\dot{v}} J)\mathfrak{v} + J(\nabla_{\dot{v}}\mathfrak{v}) + \eta \nabla_{\dot{v}}X\\
  \nabla_{\mathfrak{v}}\dot{v} &= \nabla_{\mathfrak{v}}\big(J\mathfrak{v}+\eta X\big) = (\nabla_{\mathfrak{v}} J)\mathfrak{v} + J(\nabla_{\mathfrak{v}}\mathfrak{v}) + \dot{\eta} X + \eta\nabla_{\mathfrak{v}}X
 \end{align*}
which implies with $\nabla_\mathfrak{v}\dot{v}=\nabla_{\dot{v}}\mathfrak{v}$ that
\[\nabla_{\mathfrak{v}} \mathfrak{v} +\nabla_{\dot{v}} \dot{v} = (\nabla_{\dot{v}} J)\mathfrak{v} -(\nabla_{\mathfrak{v}} J)(\dot{v}-\eta X) + J(\dot{\eta} X + \eta\nabla_{\mathfrak{v}} X) + \eta\nabla_{\dot{v}} X\tag{$\ast$}\]
which implies
\begin{align*}
 \nabla_{\mathfrak{v}}\big(\nabla_{\mathfrak{v}} \mathfrak{v} +\nabla_{\dot{v}}\dot{v}\big) &=\phantom{+\,} (\nabla_{\dot{v}} J)\nabla_{\mathfrak{v}} \mathfrak{v} - (\nabla_{\mathfrak{v}} J)(\nabla_{\mathfrak{v}} \dot{v} -\dot{\eta} X - \eta \nabla_{\mathfrak{v}} X)\\
 &\phantom{\,=\,}+ (\nabla_{\mathfrak{v}}\nabla_{\dot{v}} J)\mathfrak{v} - (\nabla_{\mathfrak{v}}\nabla_{\mathfrak{v}} J)(\dot{v}-\eta X)\\
 &\phantom{\,=\,}+ (\nabla_{\mathfrak{v}} J)(\dot{\eta}X + \eta\nabla_{\mathfrak{v}} X) + J\big((\partial_s \dot{\eta}) X + \dot{\eta}\nabla_{\mathfrak{v}} X + \dot{\eta}\nabla_{\mathfrak{v}} X + \eta\nabla_{\mathfrak{v}}\nabla_{\mathfrak{v}}X\big)\\
 &\phantom{\,=\,}+ \dot{\eta}\nabla_{\dot{v}} X + \eta\nabla_{\mathfrak{v}}\nabla_{\dot{v}} X.
\end{align*}
Additionally, we have
\begin{align*}
 \nabla_{\mathfrak{v}}\nabla_{\mathfrak{v}} \mathfrak{v} +\nabla_{\dot{v}}\nabla_{\dot{v}} \mathfrak{v} &= \nabla_{\mathfrak{v}}\big(\nabla_{\mathfrak{v}}\mathfrak{v} + \nabla_{\dot{v}} \dot{v}\big) + \nabla_{\dot{v}} \nabla_{\mathfrak{v}} \dot{v} -\nabla_{\mathfrak{v}}\nabla_{\dot{v}}\dot{v} =\nabla_{\mathfrak{v}}\big(\nabla_{\mathfrak{v}}\mathfrak{v} + \nabla_{\dot{v}} \dot{v}\big) + R(\dot{v},\mathfrak{v};\dot{v}),
\end{align*}
where $R$ is the curvature tensor of $\nabla$. Note that $im(v,\eta)$ lying in the compact region $W$ guarantees that the number $\eta$ and the tensor fields $J, \nabla J, \nabla\nabla J, X, \nabla X$ and $R$ are bounded. Moreover, we have for any tensor $T$ the estimate 
\[|\nabla_\mathfrak{v}\nabla_\mathfrak{v} T|\leq||\nabla\nabla T||\cdot|\mathfrak{v}|^2+||\nabla T||\cdot|\nabla_\mathfrak{v}\mathfrak{v}|.\]
Using this, the expression for $\nabla_{\mathfrak{v}}\dot{v}$ calculated above and the estimate $|\dot{v}|\leq||J||\cdot |\mathfrak{v}|+|\eta X|$, we find that there exists a constant $\tilde{C}>0$ (depending on $H,\omega,J$ on $W$) such that
\begin{align*}
 |\nabla_{\mathfrak{v}}\nabla_{\mathfrak{v}} \mathfrak{v} +\nabla_{\dot{v}}\nabla_{\dot{v}}\mathfrak{v}|\leq \tilde{C}\Big(|\mathfrak{v}|\big(|\nabla_{\mathfrak{v}}\mathfrak{v}|+|\nabla_{\dot{v}}\mathfrak{v}|+|\dot{\eta}|\big)+|\nabla_{\mathfrak{v}}\mathfrak{v}|+ |\mathfrak{v}|^3+|\mathfrak{v}|^2 + |\mathfrak{v}|+|(\partial_s \dot{\eta})|\Big).
\end{align*}
Using Young's inequality, we thus find constants $C,D>0$ such that
\begin{align*}
 &\phantom{\geq\;} g\big(\mathfrak{v},\nabla_{\mathfrak{v}}\nabla_{\mathfrak{v}} \mathfrak{v} +\nabla_{\dot{v}}\nabla_{\dot{v}}\mathfrak{v}\big)\\
 &\geq - \tilde{C}\Big(\big(|\mathfrak{v}|^2+|\mathfrak{v}|\big)|\nabla_{\mathfrak{v}}\mathfrak{v}|+ |\mathfrak{v}|^2\cdot|\nabla_{\dot{v}}\mathfrak{v}|+|\mathfrak{v}|^4+|\mathfrak{v}|^3 + |\mathfrak{v}|^2\cdot|\dot{\eta}| +|\mathfrak{v}|\cdot|(\partial_s \dot{\eta})|\Big)\\
 &\geq - D -C\left|\left(\begin{smallmatrix}\textstyle\big. \mathfrak{v}\\\textstyle\dot{\eta}\end{smallmatrix}\right)\right|^4 - \frac{1}{2}\Big|\nabla_{\mathfrak{v}}\mathfrak{v}\Big|^2-\frac{1}{2}\Big|\nabla_{\dot{v}}\mathfrak{v}\Big|^2.\tag{$\ast\ast$}
\end{align*}
Next, we estimate the $\dot{\eta}$ part of $\kappa$. We have
\begin{align*}
 \partial_s\partial_s\, \dot{\eta} &= \partial_s\partial_s\int_0^1 H(v)\,dt =\partial_s\int_0^1 dH(\mathfrak{v})\,dt =\int_0^1 (\nabla_{\mathfrak{v}} dH)(\mathfrak{v})+dH(\nabla_{\mathfrak{v}}\mathfrak{v})\,dt.
\end{align*}
As $dH(\dot{v})$ is a 1-periodic function, we find
\[ 0 = dH(\dot{v})(1)-dH(\dot{v})(0)=\int_0^1\frac{d}{dt} dH(\dot{v})\,dt = \int_0^1(\nabla_{\dot{v}} dH)(\dot{v}) + dH(\nabla_{\dot{v}}\dot{v})\,dt\]
which implies with $(\ast)$ that
\begin{align*}
 \partial_s\partial_s\,\dot{\eta} &= \int_0^1 (\nabla_{\mathfrak{v}} dH)(\mathfrak{v})+dH(\nabla_{\mathfrak{v}}\mathfrak{v})+(\nabla_{\dot{v}} dH)(\dot{v}) + dH(\nabla_{\dot{v}}\dot{v})\,dt \\
 &=\int_0^1\left(\begin{aligned} &\phantom{+\,} (\nabla_{\mathfrak{v}} dH)(\mathfrak{v})+(\nabla_{\dot{v}} dH)(\dot{v})\\&+dH\Big((\nabla_{\dot{v}} J)\mathfrak{v} -(\nabla_{\mathfrak{v}} J)(\dot{v}-\eta X) + J(\dot{\eta} X + \eta\nabla_{\mathfrak{v}} X) + \eta\nabla_{\dot{v}} X\Big)\end{aligned}\right)\,dt
\end{align*}
and hence there is another constant $\tilde{C}'>0$ such that
\[|\dot{\eta}\cdot\partial_s\partial_s\dot{\eta}|\leq |\dot{\eta}|\int_0^1 \tilde{C}'\big(1+|\mathfrak{v}|+|\mathfrak{v}|^2+|\dot{\eta}|\big)dt.\]
Using Jensen's inequality for integrals and Young's inequality again, we find constants $C',D'>0$ such that
\[\dot{\eta}\cdot\partial_s\partial_s\dot{\eta}\geq - D'-C'\left(\left|\left(\begin{smallmatrix}\textstyle\big. \mathfrak{v}\\\textstyle\dot{\eta}\end{smallmatrix}\right)\right|^4+\int_0^1\left|\left(\begin{smallmatrix}\textstyle\big. \mathfrak{v}\\\textstyle\dot{\eta}\end{smallmatrix}\right)\right|^4\,dt\right).\tag{$\ast\ast\ast$}\]
Combining $(\ast\ast)$ and $(\ast\ast\ast$) and using the fact that $\left|\left(\begin{smallmatrix}\mathfrak{v}\\\dot{\eta}\end{smallmatrix}\right)\right|^2=\left|\partial_s\left(\begin{smallmatrix}v\\\eta\end{smallmatrix}\right)\right|^2=w$, we find constants $A,B>0$ such that
\[\Delta w = |\nabla_\mathfrak{v} \mathfrak{v}|^2+(\partial_s\dot{\eta})^2+|\nabla_{\dot{v}}\mathfrak{v}|^2+\kappa\geq -A-B\big(w^2+\textstyle\int_0^1 w^2\,dt\big).\]
\end{proof}
In the remainder of this subsection, we prove the second part of Theorem \ref{asympequiv}, namely that $|\partial_s(v,\eta)(s,t)|$ converges exponentially to 0 and that $(v,\eta)$ converges exponentially to some $(v^\pm,\eta^\pm)\in\crita$. We start by describing the structure of the manifold $V$ near points $v^\pm\in\mathcal{N}^{\eta^\pm}$ more explicitly.
\begin{lemme}[cf. \cite{BoElHoWyZe}, Lem.\ A.1]\label{lemcoord}
 Assume that $\mathcal{A}^H$ satisfies (MB). Let $\mathcal{N}^\eta\subset\Sigma\subset V$ denote the submanifold of $V$ which is covered by $\eta$-periodic orbits of the Reeb field $R$. Let $v$ be a non-constant $\eta$-periodic Reeb trajectory. Then
 \begin{itemize}
  \item[a)] if $\eta\neq 0$ is the minimal period of $v$, there exists a tubular neighborhood $U\subset V$ of $im(v)$ such that $U\cap\mathcal{N}^\eta$ is invariant under the flow of $\eta R$ and one finds coordinates
  \begin{align*}
  & \big(\vartheta,z_1,\dots,z_k,z_{k+1},\dots, z_{2n-1}\big)\in S^1\times\mathbb{R}^{2n-1}, \quad k=\dim\mathcal{N}^\eta -1\bigg.\\
  \text{such that }\quad& U\cap\mathcal{N}^\eta=\{z_{k+1}=\dots=z_{2n-1}=0\},\qquad U\cap\Sigma=\{ z_{2n-1}=0\}\\
   \text{and }\quad& \eta R|_{\mathcal{N}^\eta} = \frac{\partial}{\partial \vartheta},\qquad\qquad Y_\lambda|_U=\frac{\partial}{\partial z_{2n-1}}.
  \end{align*}
  \item[b)] if $v$ is an $m$-multiple of a trajectory $v_0$ of minimal period $\frac{\eta}{m}\neq 0$, there exists a tubular neighborhood $\tilde{U}$ of $im(v_0)$ such that its $m$-fold cover $U$ together with all the structures induced by the covering map $\pi: U \rightarrow \tilde{U}$ from  corresponding objects on $\tilde{U}$ satisfies the properties of part a).
 \end{itemize}
\end{lemme}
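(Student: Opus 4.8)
\emph{Plan.} I would assemble the chart in three stages: a tubular neighbourhood of the orbit \emph{inside} $\Sigma$ built from the Reeb flow, a submanifold chart flattening $\mathcal{N}^\eta$ on a transverse slice, and finally an extension off $\Sigma$ by the Liouville flow. This is the scheme of \cite{BoElHoWyZe}, Lem.~A.1.

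\emph{Part a).} Since $\eta\neq 0$ is the minimal period, $\gamma:=\mathrm{im}(v)\subset\Sigma$ is an embedded circle. Pick a codimension-one disk $D\subset\Sigma$ through $v(0)$ with $T_{v(0)}\Sigma=T_{v(0)}D\oplus\mathbb{R}R(v(0))$ and consider $\Psi(\vartheta,z):=\phi^{\eta\vartheta}(z)$ with $\vartheta\in S^1=\mathbb{R}/\mathbb{Z}$. By transversality of $D$ (and because each $\phi^{\eta\vartheta}$ is a diffeomorphism) $\Psi$ is an immersion along $S^1\times\{v(0)\}$, and it is injective on $S^1\times\{v(0)\}=\gamma$ by minimality of $\eta$; hence, after shrinking $D$ to a small subdisk $D'$, it restricts to a diffeomorphism onto an open neighbourhood $U_\Sigma$ of $\gamma$ in $\Sigma$. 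By construction $\Psi_\ast\partial_\vartheta=\eta R$ on $U_\Sigma$, and the Reeb flow acts on $U_\Sigma$ by translation in $\vartheta$, so $U_\Sigma$ is Reeb-invariant. Assumption (MB) makes $\mathcal{N}^\eta$ a closed submanifold; since $\mathcal{N}^\eta=\mathrm{Fix}(\phi^\eta)$ is Reeb-invariant, $\Psi^{-1}(\mathcal{N}^\eta)=S^1\times G$ with $G:=\mathcal{N}^\eta\cap D'$, and $\dim G=\dim\mathcal{N}^\eta-1=:k$. Choosing coordinates $(z_1,\dots,z_{2n-2})$ on $D'$ with $G=\{z_{k+1}=\dots=z_{2n-2}=0\}$, the coordinates $(\vartheta,z_1,\dots,z_{2n-2})$ on $U_\Sigma$ satisfy $U_\Sigma\cap\mathcal{N}^\eta=\{z_{k+1}=\dots=z_{2n-2}=0\}$ and $\partial_\vartheta=\eta R$ along $\mathcal{N}^\eta$.

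\emph{Extension off $\Sigma$.} Because $\Sigma$ is an exact contact hypersurface, $Y_\lambda$ is transverse to $\Sigma$ (Lemma~\ref{Lemma1}), so for $U_\Sigma$ small the Liouville flow $\psi$ gives a diffeomorphism $(\sigma,z_{2n-1})\mapsto\psi^{z_{2n-1}}(\sigma)$ from $U_\Sigma\times(-\varepsilon,\varepsilon)$ onto a neighbourhood $U$ of $\gamma$ in $V$. In the coordinates $(\vartheta,z_1,\dots,z_{2n-1})\in S^1\times\mathbb{R}^{2n-1}$ one has $\partial_{z_{2n-1}}=Y_\lambda$ and $U\cap\Sigma=\{z_{2n-1}=0\}$; since $\mathcal{N}^\eta\subset\Sigma$, also $U\cap\mathcal{N}^\eta=\{z_{k+1}=\dots=z_{2n-1}=0\}$, which is invariant under the flow of $\eta R$, while $\eta R|_{\mathcal{N}^\eta}=\partial_\vartheta$ is inherited from the previous step. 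This proves a). For part b), if $v$ is the $m$-fold cover of a simple orbit $v_0$ of minimal period $\eta/m$, I run the same construction centred at $v_0$ with the Reeb tube $\Psi_0(\vartheta,z)=\phi^{(\eta/m)\vartheta}(z)$ (an embedding near $\gamma_0:=\mathrm{im}(v_0)$ because $\eta/m$ is minimal for $v_0$), flattening the \emph{same} submanifold $\mathcal{N}^\eta$ (still Reeb-invariant, so $\mathcal{N}^\eta\cap U_{0,\Sigma}=\Psi_0(S^1\times(\mathcal{N}^\eta\cap D'))$). This yields a tubular neighbourhood $\tilde U$ of $\gamma_0$ in $V$ with coordinates $(\tilde\vartheta,z_1,\dots,z_{2n-1})$ meeting every conclusion of a) except that $\tfrac{\eta}{m}R|_{\mathcal{N}^\eta}=\partial_{\tilde\vartheta}$. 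Now let $\pi\colon U\to\tilde U$ be the $m$-fold cover wrapping the $\tilde\vartheta$-circle $m$ times and equal to the identity in the $z$-coordinates, and equip $U$ with the pulled-back $\Sigma$, $\mathcal{N}^\eta$, $R$ and $Y_\lambda$. Writing $(\vartheta,z)$ for the lifted coordinates, so that $\pi(\vartheta,z)=(m\vartheta,z)$, one has $\partial_\vartheta=m\,\pi^\ast\partial_{\tilde\vartheta}$, hence $\eta\,\pi^\ast R=\partial_\vartheta$ on $\pi^{-1}(\mathcal{N}^\eta)$, while $\pi^\ast Y_\lambda=\partial_{z_{2n-1}}$ and the flatness of $\Sigma$ and of $\mathcal{N}^\eta$ is untouched by $\pi$; so $U$ with these induced structures satisfies the conclusions of a).

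\emph{Main difficulty.} The only genuinely non-formal point is the injectivity of the Reeb-tube maps $\Psi$ and $\Psi_0$ near the respective orbits: this is exactly where minimality of the period enters, via the elementary fact that an immersion which is injective on a compact set is injective on a whole neighbourhood of it. Everything else is routine bookkeeping of flows and submanifold charts, together with the customary successive shrinking of $D'$ and of $\varepsilon$ to keep all the maps diffeomorphisms onto their images.
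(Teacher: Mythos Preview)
There is a genuine gap in your construction of the Reeb tube. You define $\Psi(\vartheta,z)=\phi^{\eta\vartheta}(z)$ with $\vartheta\in S^1=\mathbb{R}/\mathbb{Z}$, but for this to descend to the quotient you need $\phi^\eta(z)=z$ for every $z\in D'$. That holds only for $z\in\mathcal{N}^\eta\cap D'$; off $\mathcal{N}^\eta$ the time-$\eta$ Reeb map is by definition not the identity. Concretely, on an irrational ellipsoid with a short orbit $\gamma$ of period $\eta$, the map $\phi^\eta$ rotates the transverse disk non-trivially, so your $\Psi$ is only defined for $\vartheta\in\mathbb{R}$, not $S^1$, and the image of $[0,1)\times D'$ does not close up smoothly to a tubular neighbourhood with an $S^1$-factor. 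Your subsequent claim that $\Psi_\ast\partial_\vartheta=\eta R$ on all of $U_\Sigma$ would in particular force $\phi^\eta=\mathrm{id}$ on $U_\Sigma$, which is false in general.

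The paper avoids this by building the $S^1$-coordinate \emph{inside} $\mathcal{N}^\eta$, where the flow of $\eta R$ genuinely gives a free $S^1$-action, so the Slice Theorem applies and yields $(\vartheta,z_1,\dots,z_k)$ with $\partial_\vartheta=\eta R$ there. Only afterwards does one extend to $\Sigma$ via a tubular neighbourhood of $\mathcal{N}^\eta$ in $\Sigma$, picking up normal coordinates $z_{k+1},\dots,z_{2n-2}$ (global because the normal bundle is trivial over a set homotopy equivalent to $S^1$). The weaker conclusion $\eta R|_{\mathcal{N}^\eta}=\partial_\vartheta$ in the statement is exactly what this delivers; the stronger identity you assert cannot hold. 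Your Liouville-flow extension to $V$ and your treatment of part~b) are fine once the chart on $\Sigma$ is built this way.
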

\begin{rem} If $\eta=0$, i.e.\ if $v$ is constant and $\mathcal{N}^0=\Sigma$, we can trivially find such coordinates, but there is no distinct direction $\vartheta$ and thus $\vartheta=z_0\in\mathbb{R}$ instead of $S^1$.
\end{rem}
\pagebreak
\begin{proof} Let $N_XY$ denote the normal bundle of a submanifold $Y\subset X$.
\begin{itemize}
 \item As $\Sigma\subset V$ is a compact codimension 1 submanifold and the Liouville vector field $Y_\lambda$ transverse to $\Sigma$, we can find a tubular neighborhood of $\Sigma$ given by an embedding 
 \[\pi_V : \Sigma\times(-\veps,\veps)\hookrightarrow V,\qquad (p,z_{2n-1})\mapsto \pi_V(p,z_{2n-1})\] with $\pi_\ast(\partial_{\textstyle z_{2n-1}})=Y_\lambda$ and $\pi(\Sigma\times\{0\})=\Sigma$. Note that $Y_\lambda$ trivializes $N_V\Sigma\cong\Sigma\times\mathbb{R}$.
 \item As $\mathcal{N}^\eta\subset\Sigma$ is a compact submanifold, we can find a tubular neighborhood
 \[\tilde{U}\subset N_{\Sigma}\mathcal{N}^\eta,\qquad \pi_\Sigma : \tilde{U}\hookrightarrow \Sigma,\qquad (q,z_{k+1},...\,,z_{2n-2})\mapsto\pi_\Sigma(q,z_{k+1},...\,,z_{2n-2}),\]
 such that $\pi_\Sigma(\mathcal{N}^\eta\times\{0\})=\mathcal{N}^\eta$. Here, $z_{k+1},...\,,z_{2n-2}$ are coordinates in the normal direction, which are well-defined only locally.
 \item In case a), the compact Lie group $S^1$ acts freely on $\mathcal{N}^\eta$ near $v$ by the flow of $\eta R$. Hence there exists by the Slice Theorem (see \cite{silva}, Thm.\ 23.5) a tubular neighborhood of $im(v)$
 \[\pi_\mathcal{N} : S^1\times (-\veps,\veps)^k\hookrightarrow \mathcal{N}^\eta,\qquad (\vartheta, z_1,...\,,z_k)\mapsto\pi_{\mathcal{N}}(\vartheta, z_1,...\,,z_k),\]
 such that $\pi_\mathcal{N}(S^1\times\{0\})=im(v)$ and $(\pi_\mathcal{N})_\ast(\partial_\vartheta)=R$. Note that $N_{\mathcal{N}^\eta}\big(im(v)\big)$ is a trivial bundle over $S^1$ as it is orientable.
 \item Combining $\pi_V,\,\pi_\Sigma$ and $\pi_\mathcal{N}$ then gives the desired tubular neighborhood $U$ and the coordinates. Note that $N_\Sigma\mathcal{N}^\eta$ is trivial over $im(\pi_\mathcal{N})$, as the latter is homotopy equivalent to $S^1$. This ensures that the coordinates $z_{k+1},...\,,z_{2n-2}$ are globally defined on $U$.
 \item In case b), construct a tubular neighborhood $U$ of $v_0$ as in case a). Taking its $m$-fold covering then clearly satisfies the lemma.\qedhere
\end{itemize}
\end{proof}
For the following, we choose a finite cover of $\mathcal{N}^{\eta^\pm}$ by neighborhoods $U_j$ as in Lemma \ref{lemcoord}. We remark that for the asymptotic estimates in $U_j\cap\mathcal{N}^{\eta^\pm}$ it is irrelevant whether we are in a neighborhood of $im(v^\pm,\eta^\pm)$ or on a covering. Let us abbreviate
\[z_{in}:=(\vartheta,z_1,\dots,z_k),\qquad z_{out}:=(z_{k+1},\dots,z_{2n-1}),\qquad z:=(\vartheta,z_1,\dots,z_{2n-1})\]
and let $n\in\mathbb{R}$ be a coordinate for the $\eta$-direction such that $n=0$ corresponds to $\eta^\pm$. Using the coordinates from Lemma \ref{lemcoord}, we can express the Rabinowitz-Floer equation (\ref{eq3}) near $(v^\pm,\eta^\pm)$ as
\begin{equation}\label{eqfloerloc}\begin{aligned}
 \partial_s Z + J_t(v,\eta)\left(\partial_t Z + \frac{\partial}{\partial\vartheta}-(N+\eta^\pm)\cdot X_H(v)\right) &= 0\\
 \partial_s N + \int^1_0 H(v) dt &=0,
 \end{aligned}
\end{equation}
where $Z(s,t):=\big(\vartheta\circ v(s,t)-t, z\circ v(s,t)\big)$ and $N(s):=\eta(s)-\eta^\pm$ are chosen such that $(Z,N)\rightarrow(0,0)$ (which will become clear at the end of this paragraph). Let $Z_{in},\,Z_{out}$ resp.\ $Z_{2n-1}$ denote the $z_{in}$- resp.\ $z_{out}$- resp.\ $z_{2n-1}$-part of $Z$.\pagebreak\\
 As $\eta^\pm X_H=\frac{\partial}{\partial\vartheta}$ on $\{z_{out}=0\}$ and $H\equiv 0$ on $\{z_{2n-1}=0\}$ we will see in the Lemma below that equation (\ref{eqfloerloc}) can be rewritten as
\begin{equation}\label{eqfloerloc2}
 \begin{aligned}
  \partial_s Z + J_t(v,\eta)\left[\partial_t Z + S(v,\eta)\cdot\begin{pmatrix}Z_{out}\\ N\phantom{ou}\end{pmatrix}\right] &= 0\\
  \partial_s N + \int^1_0 \big(h(v)\cdot Z_{2n-1}\big)\, dt &= 0
 \end{aligned}
\end{equation}
for some functions $S$ and $h$ on $S^1\times\mathbb{R}^{2n}$ resp.\ $S^1\times\mathbb{R}^{2n-1}$ with values in $2n\times(2n-k)$-matrices resp.\ $\mathbb{R}$. Using $S$ and $h$, we define an $s$-dependent operator $A(s)$ by
\begin{equation}\begin{aligned}\label{secvar2}
A(s) &: W^{k+1,p}(S^1,\mathbb{R}^{2n})\times\mathbb{R}\rightarrow W^{k,p}(S^1,\mathbb{R}^{2n})\times\mathbb{R}\\
 A(s)\begin{pmatrix}z_{in}(t)\\ z_{out}(t)\\ n\end{pmatrix} &= \begin{pmatrix} J_t((v,\eta)(s))\left[\displaystyle \frac{d}{dt}\begin{pmatrix} z_{in}(t)\\ z_{out}(t)\\ n\end{pmatrix} + S\big((v,\eta)(s)\big)\begin{pmatrix} z_{out}(t)\\ n\end{pmatrix}\right]\\ \bigg.\int_0^1 \big(h(v(s))\cdot z_{2n-1}(t)\big) dt\end{pmatrix}.
 \end{aligned}
\end{equation}
Note that we can write more precisely $A(s)=A\big((Z,N)(s,t)\big)$ as we have actually a family of operators depending on $t$ and points in $V\times\mathbb{R}$.
\begin{lemme}
 Equation (\ref{eqfloerloc2}) holds true. Moreover, for $(Z,N)$ holds that 
 \[\Big|\Big|A(s)+\nabla^2\mathcal{A}^H_{(Z_{in},0,0)}\Big|\Big|\rightarrow 0\qquad \text{ as }s\rightarrow \pm\infty.\]
\end{lemme}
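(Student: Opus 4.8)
The statement has two parts, and the plan treats them in turn. For the first assertion --- that (\ref{eqfloerloc}) may be rewritten as (\ref{eqfloerloc2}) --- I would apply Hadamard's lemma twice in the coordinates of Lemma \ref{lemcoord}. There $\mathcal{N}^{\eta^\pm}\cap U=\{z_{out}=0\}$, $\Sigma\cap U=\{z_{2n-1}=0\}$, and $\eta^\pm R|_{\mathcal{N}^{\eta^\pm}}=\partial_\vartheta$; since $H$ is a defining Hamiltonian, $X_H=R$ on $\Sigma\supset\mathcal{N}^{\eta^\pm}$, so the vector field $\partial_\vartheta-\eta^\pm X_H$ vanishes identically on $\{z_{out}=0\}$. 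Hadamard's lemma then produces a smooth matrix-valued $\tilde S$ on $U$ with $\partial_\vartheta-\eta^\pm X_H=\tilde S(\cdot)\,z_{out}$, whence $\partial_\vartheta-(N+\eta^\pm)X_H=S(\cdot)\binom{z_{out}}{N}$ with $S:=[\,\tilde S\mid -X_H\,]$, turning the first line of (\ref{eqfloerloc}) into that of (\ref{eqfloerloc2}). Likewise $H$ vanishes on $\{z_{2n-1}=0\}$, so $H=h(\cdot)\,z_{2n-1}$ with $h$ smooth, which yields the second line. Boundedness of $\tilde S$, $S$, $h$ is automatic since $\overline U$ is compact.

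For the operator estimate the first, purely algebraic step is the following identification: for a loop $q$ near $\mathcal{N}^{\eta^\pm}$ written in these coordinates let $A(q)$ denote the operator of (\ref{secvar2}) with the coefficients $J_t$, $S$, $h$ frozen at $q$; I claim that when $q$ is a critical point lying in $\mathcal{N}^{\eta^\pm}$ one has $A(q)=-\nabla^2\mathcal{A}^H_q$, the Hessian being given by (\ref{nabla2}). Indeed, at such a $q$ the computations of Section \ref{sec2.1}, combined with the identifications above, give $[\mathfrak{v},\eta^\pm X_H]_q=-\tfrac{d}{dt}\mathfrak{v}-\tilde S(q)\,\mathfrak{v}_{out}$ (evaluate the bracket on $\{z_{out}=0\}$), $\hat\eta X_H(q)=-S(q)\binom{0}{\hat\eta}$, and $dH_q(\mathfrak{v})=h(q)\,\mathfrak{v}_{2n-1}$ because $H=h\,z_{2n-1}$; substituting these into (\ref{nabla2}) reproduces $-A(q)$ term by term. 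This bookkeeping --- getting the signs and the block structure of $S$ right --- is the step I expect to require the most care, but it involves no analysis.

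Now let $s\to\pm\infty$. By Proposition \ref{asympconv} one has $\partial_s(v,\eta)(s,\cdot)\to0$ and $\mathrm{dist}\big((v,\eta)(s,\cdot),\mathcal{N}^{\eta^\pm}\big)\to0$ uniformly in $t$; in the chart this reads $Z_{out}(s,\cdot)\to0$, $N(s)\to0$, and, from $\partial_s v=-J(\partial_t v-\eta X_H)$ together with $\eta X_H(v)\to\partial_\vartheta$ near $\mathcal{N}^{\eta^\pm}$, also $\partial_t Z(s,\cdot)\to0$, hence $\partial_t Z_{in}(s,\cdot)\to0$; the last quantity is precisely $(\dot v-\eta^\pm X_H)$ evaluated along $q_s:=(Z_{in}(s,\cdot),0,0)$. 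Differentiating the elliptic equation (\ref{eqfloerloc2}) in $t$ and applying interior elliptic estimates on the cylinders $[s-1,s+1]\times S^1$ upgrades all these convergences to $C^k(S^1)$.

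Finally I would decompose
\[
A(s)+\nabla^2\mathcal{A}^H_{q_s}=\big(A(s)-A(q_s)\big)+\big(A(q_s)+\nabla^2\mathcal{A}^H_{q_s}\big).
\]
The first bracket differs only through its coefficients, which are $J_t,S,h$ along $(v,\eta)(s,\cdot)$ minus the same along $q_s$; since $(v,\eta)(s,t)-q_s(t)=(0,Z_{out}(s,t),N(s))\to0$ in $C^k(S^1)$ and $J,S,h$ are smooth on the compact chart, its norm as an operator $W^{k+1,p}\to W^{k,p}$ tends to $0$. The second bracket equals the difference between the general Hessian formula (\ref{secvar}) and the critical-point formula (\ref{nabla2}), both evaluated at $q_s$ (by the identification of the second paragraph), and every term of this difference carries a factor $(\dot v-\eta^\pm X_H)|_{q_s}=\partial_t Z_{in}(s,\cdot)\to0$ in $C^k(S^1)$, so it too tends to $0$ in operator norm. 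The triangle inequality then gives $\|A(s)+\nabla^2\mathcal{A}^H_{q_s}\|\to0$. The genuine obstacle is the algebraic identification of the second paragraph; once the elliptic bootstrapping is granted, the limiting argument is routine.
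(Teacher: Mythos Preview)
Your proof is correct and follows essentially the same route as the paper: Hadamard's lemma for the rewrite (\ref{eqfloerloc2}), then an algebraic identification of the operator $A$ with $-\nabla^2\mathcal{A}^H$ along $\{z_{out}=0,n=0\}$, and finally coefficient convergence from $(Z_{out},N)\to 0$.

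There is one place where you do more work than necessary. In the paper's reading, $\nabla^2\mathcal{A}^H_{(Z_{in},0,0)}$ simply \emph{means} the operator $-A_{(Z_{in},0,0)}$ --- the algebraic identity $A_{(z_{in},0,0)}=-\nabla^2\mathcal{A}^H_{(z_{in},0,0)}$ that you and the paper both verify at genuine critical points (constant $z_{in}$) is a pointwise identity of coefficient functions, so it extends verbatim to non-constant $Z_{in}(s,\cdot)$. Under this interpretation your second bracket $A(q_s)+\nabla^2\mathcal{A}^H_{q_s}$ is identically zero, and the whole argument reduces to your first bracket $A(s)-A(q_s)\to 0$ via $(Z_{out},N)\to 0$. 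Your worry that $q_s$ is not a critical point, and hence that the difference between (\ref{secvar}) and (\ref{nabla2}) enters, stems from reading $\nabla^2\mathcal{A}^H_{q_s}$ as the genuine Hessian; the paper sidesteps this by treating the symbol as shorthand for the coefficient form. Your handling is not wrong --- the extra term you isolate is indeed controlled by $\partial_t Z_{in}\to 0$ --- but it is superfluous once the notational convention is fixed. On the other hand, your explicit invocation of elliptic bootstrapping to upgrade the convergence to $C^k(S^1)$ (needed for operator-norm convergence between higher Sobolev spaces) is a detail the paper glosses over, so your account is in that respect more complete.
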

\begin{rem}
 The operators $A(s)$ are of course only defined in $U_j$. The last statement is hence to be understood as follows: No matter in which $U_j$ the image of $(v,\eta)(s)$ lies, the operators $A(s)$ become close to the operator $-\nabla^2\mathcal{A}^H$.
\end{rem}
\begin{proof}
 Equation (\ref{eqfloerloc2}) follows basically from Haddamard's Lemma, which states for a function $f:\mathbb{R}^{n}\rightarrow\mathbb{R}$ with $f|_{\mathbb{R}^k\times\{0\}}\equiv 0$ that
 \begin{align*} &f(x)=\sum_{j=k+1}^{n} g_j(x)\cdot x_j,\quad\text{ where }\quad g_j(x):=\int_0^1\frac{\partial f}{\partial x_j}\big(x_1,...\,,x_k,rx_{k+1},...\,,rx_n\big)\,dr.\\
 &\text{In particular, note that }\quad g_j(x_1,...\,,x_k,0)=\frac{\partial f}{\partial x_j}\big(x_1,...\,,x_k,0\big). \tag{$\ast$}
 \end{align*}
 The role of $f$ for $S$ is played by the vector valued function $\mathbb{S}:=\frac{\partial}{\partial \vartheta}-(n+\eta^\pm)\cdot X_H$ and for $h$ by the function $H$. The functions $S$ and $h$ are then explicitly given by
 \begin{itemize}
  \item $\displaystyle h(z):=\int_0^1 \frac{\partial H}{\partial z_{2n-1}}\Big(\vartheta,z_1,...\,,z_{2n-2},r\cdot z_{2n-1}\Big) dr,$\\
  where only the $z_{2n-1}$-derivative contributes, as $H\equiv 0$ on $\{z_{2n-1}=0\}$.
  \item $\displaystyle S_{ij}(z,n):= \int^1_0 \frac{\partial \mathbb{S}^i}{\partial z_{k+j}}\Big(z_{in},r z_{out},r n\Big)dr = -\int^1_0 \frac{\partial\, (n+\eta^\pm)\big(X_H\big)^i}{\partial z_{k+j}} \Big(z_{in},r z_{out},r n\Big)dr$\\
  where $\big(X_H\big)^i$ is the $i$-th component of $X_H$ and $\partial z_{2n}=\partial n$. Note again that only the last $2n-k$ derivatives contribute, as $\mathbb{S}\equiv 0$ on $\{z_{out}=0, n=0\}$\pagebreak.
 \end{itemize}
We prove the formula only for $h$, as the proof for $S$ is entirely similar. Let $z\in S^1\times\mathbb{R}^{2n-1}$ be arbitrary. As $H\equiv 0$ on $\{z_{2n-1}=0\}$, we have $H(\vartheta,z_1,...\,,z_{2n-2},0)=0$. Let \linebreak $g(r):= H(\vartheta,z_1,...\,,z_{2n-2},r\cdot z_{2n-1})$. Then we have
\begin{align*}
 && g'(r)&=\frac{\partial H}{\partial z_{2n-1}}\Big(\vartheta, z_1,...\,,z_{2n-2},r\cdot z_{2n-1}\Big)\cdot z_{2n-1}\\
 &\Rightarrow& H(z)=g(1)-g(0) =\int_0^1 g'(r)dr &= \int_0^1 \frac{\partial H}{\partial z_{2n-1}}\Big(\vartheta, z_1,...\,,z_{2n-2},r\cdot z_{2n-1}\Big)dr\cdot z_{2n-1}\\
 &&&= h(z)\cdot z_{2n-1}.
\end{align*}
For the asymptotic behaviour of $A(s)$, we first note that on $\{z_{out}=0,n=0\}$ we have due to $(\ast)$ that $h$ and $S$ are given by
\[\begin{aligned}
   h(z_{in},0)&=\frac{\partial H}{\partial z_{2n-1}}\big(z_{in},0\big)\\
   S_{ij}(z_{in},0,0)&= -\frac{\partial\, (n+\eta^\pm)\big(X_H\big)^i}{\partial z_{k+j}} \Big(z_{in},0,0\Big).
  \end{aligned}\]
  On the other hand, we recall that $\nabla^2\mathcal{A}^H_{(v,\eta)}(\mathfrak{v},\hat{\eta})$ at a critical point $(v,\eta)\in\crita$ was defined by taking a differentiable 1-parameter family $(v_a,\eta_a)$ with $(v_0,\eta_0)=(v,\eta)$ and $\frac{d}{da}(v_a,\eta_a)\big|_{a=0}=(\mathfrak{v},\hat{\eta})$ and setting $\nabla^2\mathcal{A}^H_{(v,\eta)}(\mathfrak{v},\hat{\eta}):=\frac{d}{da}\nabla\mathcal{A}^H(v_a,\eta_a)\big|_{a=0}$.\\
  Recall that $\nabla\mathcal{A}^H(v_a,\eta_a)=-\big(J(\dot{v}_a-\eta_a X(v_a))\,;\,\int_0^1 H(v_a)dt\big)$. In our local coordinates, any map of the form $(v_0,\eta_0): S^1\rightarrow S^1\times\mathbb{R}^{2n},\; t\mapsto (t,z_1,...\,,z_k,0,...\,,0,0)$ for $z_1,...\,,z_k$ fixed corresponds to an element in $\crita$. So if we take an arbitrary family $(v_a,\eta_a)$ with $(v_0,\eta_0)=(z_{in},0,0)$ and $\frac{d}{da}(v_a,\eta_a)\big|_{a=0}=(\mathfrak{v},\hat{\eta})$, we get
  \begin{align*}
   \nabla^2\mathcal{A}^H_{(z_{in},0,0)}(\mathfrak{v},\hat{\eta}) &=-\frac{d}{da}\left.\begin{pmatrix}J\big(\frac{d}{dt}v_a(t)-(\eta_a+\eta^\pm) X_H(v_a(t))\big)\\\int_0^1H(v_a(t))dt\end{pmatrix}\right|_{a=0}\\
   &=-\begin{pmatrix}J\big(\frac{d}{dt}\mathfrak{v}(t)-\sum_{j=0}^{2n}\frac{\partial(n+\eta^\pm X_H)^i}{\partial z_j}(z_{in},0,0)[\mathfrak{v}^j]\big)\\ \int_0^1\sum_{j=0}^{2n-1}\frac{H}{\partial z_j}(z_{in},0)[\mathfrak{v}^j] dt\end{pmatrix}\\
   &=-\begin{pmatrix} J\big(\frac{d}{dt}\mathfrak{v}(t)-\sum_{j=k+1}^{2n}\frac{\partial(n+\eta^\pm X_H)^i}{\partial z_j}(z_{in},0,0)[\mathfrak{v}^j]\big)\\ \int_0^1\frac{H}{\partial z_{2n-1}}(z_{in},0)\cdot\mathfrak{v}^{2n-1} dt\end{pmatrix},
  \end{align*}
  where the last line follows as $(n+\eta^\pm)X_H=\frac{\partial}{\partial\vartheta}$ is constant on $\{z_{out}=0,n=0\}$ and as $H$ is constant on $\{z_{2n-1}=0\}$. Note that no derivatives of $J$ appear as the corresponding term is zero due to $\frac{d}{dt}v_0(t)-(0+\eta^\pm) X_H(v_0)=0$ for $(v_0,\eta^\pm)\in\crita$.\\
  These calculations show that on $\{z_{out}=0, n=0\}$ the operator $-\nabla^2\mathcal{A}^H_{(z_{in},0,0)}$ has the same form as the point depending operator
  \[A_{(z_{in},0,0)}\left(\begin{smallmatrix}X_{in}\\X_{out}\\X_n\end{smallmatrix}\right):=\begin{pmatrix} J\big(\frac{d}{dt}X + S(z_{in},0,0)\left(\begin{smallmatrix}X_{out}\\X_n\end{smallmatrix}\right)\big)\\ \int_0^1 h(z_{in},0)\cdot X_{2n-1} dt\end{pmatrix}.\]
  Recall that Proposition \ref{asympconv} implies for a solution $(Z,N)$ of (\ref{eqfloerloc}) that $(Z_{out},N)\rightarrow(0,0)$ uniformly in $t$ as $s\rightarrow \pm\infty$. This shows 
  \[\big|\big| A(s)+\nabla^2\mathcal{A}^H_{(Z_{in},0,0)}\big|\big|=\big|\big|A_{(Z_{in},Z_{out},N)}-\big(-\nabla^2\mathcal{A}^H_{(Z_{in},0,0)}\big)\big|\big|\rightarrow 0\qquad \text{uniformly in $t$.}\qedhere\]
\end{proof}
Let us write for the moment $\nabla^2\mathcal{A}^H(z_{in}):=\nabla^2\mathcal{A}^H_{(z_{in},0,0)}$. We know by Lemma \ref{lemker} that the kernels of these operators have finite dimension $k+1=\dim\mathcal{N}^{\eta^\pm}$. Note that the coordinates $z_{in}$ are by Lemma \ref{lemcoord} coordinates on $\mathcal{N}^{\eta^\pm}$ and invariant under the flow of $\eta^\pm R$. We hence obtain that $\ker \nabla^2\mathcal{A}^H(z_{in})$ is for any $z_{in}$ spanned by the following constant $S^1$-families of vectors corresponding to vectors spanning $T\mathcal{N}^{\eta^\pm}$:
\[e_0(t)=(1,0,...\,,0),\quad e_1(t)=(0,1,0,...\,,0),\quad ...\,,\quad e_k(t)=(\underbrace{0,...\,,0}_{k-times},1,0,...\,,0),\]
 Recall that we have by Lemma \ref{lemcoker} an orthogonal splitting $H^k\times\mathbb{R}=\ker \nabla^2\mathcal{A}^H(z_{in})\oplus im\, \nabla^2\mathcal{A}^H(z_{in})$. We denote by $Q$ the orthogonal projection onto $im\, \nabla^2\mathcal{A}^H(z_{in})=\linebreak\big(\ker \nabla^2\mathcal{A}^H(z_{in})\big)^\perp$. Note that $Q$ does not depend on $z_{in}$, as $H^k\times\mathbb{R}$ and $\ker \nabla^2\mathcal{A}^H(z_{in})$ do not depend on $z_{in}$. Moreover, $\nabla^2\mathcal{A}^H(z_{in})$ restricted to $im(Q)=im\,\nabla^2\mathcal{A}^H(z_{in})$ is continuously invertible and we have the formulas \label{Qformulas}
\begin{itemize}
 \item $\partial_t Q = \partial_t$, as $\ker Q = \ker \nabla^2\mathcal{A}^H(z_{in})$ consists of constant vectors,
 \item $A(s)=A(s)Q$, as $A(s)|_{\ker \nabla^2\mathcal{A}^H(z_{in})}\equiv 0$ (see (\ref{secvar2})),
 \item $\big(\partial_t A(s)\big)=\big(\partial_t A(s)\big)Q$, as for all vector fields $X$ we have\medskip\\
 $\begin{aligned}\partial_t(A(s)X)=\partial_t(A(s)QX)&\Rightarrow\big(\partial_t A(s)\big)X+A(s)\partial_t X=\big(\partial_t A(s)\big) QX +A(s)\underbrace{\partial_t QX}_{=\partial_t X}\\ &\Rightarrow \big(\partial_t A(s)\big) X=\big(\partial_t A(s)\big)QX,\end{aligned}$
 \item $\partial_sQ = Q \partial_s$, as $Q$ is $s$-independent (as $Q$ does not depend on $Z_{in}$),
 \item $\big(\partial_s A(s)\big)=\big(\partial_s A(s)\big)Q$, as for all vector fields $X$ we have\medskip\\
 $\begin{aligned} \partial_s(A(s)X)=\partial_s(A(s)QX)&\Rightarrow (\partial_s A(s))X+A(s)\partial_s X=(\partial_sA(s))QX+A(s)\partial_s QX\\
 &\phantom{\Rightarrow (\partial_s A(s))X+A(s)\partial_s X}\;=(\partial_sA(s))QX+\underbrace{A(s)Q}_{=A(s)}\partial_s X\\ &\Rightarrow(\partial_s A(s))X=(\partial_s A(s))QX,\end{aligned}$
 \item $\partial_s\ZN + A(s)\ZN =0$, by the Rabinowitz-Floer equation (\ref{eqfloerloc2}),
 \item $Q\ZN(s)\rightarrow 0$ for $s\rightarrow \pm\infty$, as $(Z_{out},N)\rightarrow 0$ and $\partial_tv\rightarrow \eta X_H$, which implies in particular that $Z_{in}$ becomes close to being constant with respect to the flow of $\eta X_H$. In our coordinates, this means that $Z$ becomes close to maps of the form
 \[t\mapsto (t,z_1,...\,,z_k,0,...\,,0),\qquad z_1,...\,,z_k \text{ fixed},\]
 which lie all in the kernel of $Q$.
\end{itemize}
The following Proposition \ref{asympexpconv} will prove the remaining part of Theorem \ref{asympequiv}. In order to give the statement precisely, let us make the following definition.
\begin{defn}\label{defnWeighSob}
 Fix a smooth cut-off function $\beta$ such that $\beta(s)=1$ for $s\geq 1$ and $\beta(s)=-1$ for $s\leq -1$ and define for $\delta>0$ the \textbf{exponential weight function $\gamma_\delta$} by
 \[\gamma_\delta: \mathbb{R}\rightarrow\mathbb{R},\qquad \gamma_\delta(s)= e^{\delta\cdot\beta(s)s}.\]
 Let $I\subset \mathbb{R}$ be an unbounded interval. For $\Omega=I$ or $\Omega=I\times S^1$ let $||\cdot||_{k,p,\delta}$ be the following norm for locally  $p$-integrable functions $f:\Omega\rightarrow \mathbb{R}$ with weak derivatives up to order $k$
 \[||f||_{k,p,\delta}:=\sum_{|i|=0}^k ||\gamma_\delta\cdot \partial_i f||_p,\]
 where $i$ denotes a multi-index of the (possibly) two variables $s$ and $t$. The \textbf{$\gamma_\delta$-weighted Sobolev space $W^{k,p}_\delta(\Omega)$} is then defined by
 \[ W^{k,p}_\delta(\Omega):=\left\{\left. f\in W^{k,p}(\Omega)\,\right|\, ||f||_{k,p,\delta}<\infty\right\} = \left\{\left. f\in W^{k,p}(\Omega)\,\right|\,\gamma_\delta\cdot f\in W^{k,p}(\Omega)\right\}.\]
 For $k=0$, we also write $L^p_\delta(\Omega):= W^{0,p}_\delta(\Omega)$.
\end{defn}
\begin{prop}[cf. \cite{BourOan1} A.1]\label{asympexpconv}
 Let $\eta^\pm$ be fixed. There exist constants $C,\rho>0$ such that for all $\mathcal{A}^H$-gradient trajectories $(v,\eta)$ with $\displaystyle \lim_{s\rightarrow \pm\infty} \partial_s(v,\eta) = 0$ and $dist.\big((v,\eta)(s),\mathcal{N}^{\eta^\pm}\big) \rightarrow 0$ holds
 \begin{align*}
  \left|\partial_s \left(\begin{smallmatrix}\textstyle v\big.\\ \textstyle \eta\end{smallmatrix}\right)(s,t)\right| =
  \left|\partial_s \left(\begin{smallmatrix}\textstyle Z\big.\\ \textstyle N\end{smallmatrix}\right)(s,t)\right| \leq C\cdot e^{-\rho|s|}
 \end{align*}
for $|s|\geq s_0>0$ sufficiently large. This implies in particular that $(v,\eta)$ actually converges to some $(v^\pm,\eta^\pm)\in\crita$ and for coordinates $(\vartheta,z)$ around $v^\pm$ as in Lemma \ref{lemcoord} holds
 \begin{align*}
  \vartheta\circ v(s,t)-t&\in W^{1,p}_\delta\big((-\infty,-s_0]\times S^1,\mathbb{R}\big)\\
  z\circ v(s,t)&\in W^{1,p}_\delta\big((-\infty,-s_0]\times S^1,\mathbb{R}^{2n-1}\big)\\
  \eta-\eta^- &\in W^{1,p}_\delta\big((-\infty,-s_0],\mathbb{R}\big)\phantom{\lim_{s}}\\\phantom{\sum^n}
  \vartheta\circ v(s,t)-t&\in W^{1,p}_\delta\big([s_0,\infty)\times S^1,\mathbb{R}\big)\\
  z\circ v(s,t)&\in W^{1,p}_\delta\big([s_0,\infty)\times S^1,\mathbb{R}^{2n-1}\big)\\
  \eta-\eta^+ &\in W^{1,p}_\delta\big([s_0,\infty),\mathbb{R}\big)
 \end{align*}
for some $s_0$ sufficiently large and $\delta/p <\rho$.
\end{prop}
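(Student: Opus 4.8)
The plan is to run the classical Floer exponential-decay scheme, adapted to the semi-local Rabinowitz equation by means of the coordinates of Lemma~\ref{lemcoord} and the linearised form (\ref{eqfloerloc2}). It suffices to treat the limit $s\to+\infty$; the case $s\to-\infty$ is symmetric. First I would work in a neighbourhood $U_j$ of $\mathcal{N}^{\eta^+}$ and use $\ZN$, the operators $A(s)=A\big((Z,N)(s,\cdot)\big)$, and the identities listed just before this Proposition: $\partial_s\ZN+A(s)\ZN=0$, $A(s)=A(s)Q$, $\partial_sQ=Q\partial_s$, $\big(\partial_sA(s)\big)=\big(\partial_sA(s)\big)Q$, $Q\ZN(s)\to 0$, together with $\big\|A(s)+\nabla^2\mathcal{A}^H(Z_{in})\big\|\to 0$. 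Here $Q$ is the orthogonal projection onto $\big(\ker\nabla^2\mathcal{A}^H\big)^{\perp}$, and --- crucially --- $Q$ and $\ker\nabla^2\mathcal{A}^H$ do not depend on $z_{in}$.

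By Lemmas~\ref{lemker} and~\ref{lemcoker}, each $\nabla^2\mathcal{A}^H(z_{in})$ is self-adjoint and Fredholm of index $0$ with kernel $\ker Q$; its restriction to $im\,Q$ is therefore a topological isomorphism, which gives a uniform spectral gap: there is $\delta_0>0$, independent of $z_{in}\in\mathcal{N}^{\eta^+}$ by compactness, with $\big\|\nabla^2\mathcal{A}^H(z_{in})\mathfrak{v}\big\|_{L^2}\ge\delta_0\|\mathfrak{v}\|_{L^2}$ for all $\mathfrak{v}\in im\,Q$. Since $\nabla^2\mathcal{A}^H(z_{in})$ preserves $im\,Q$ (being self-adjoint with kernel $(im\,Q)^{\perp}$), the same bound holds, with $\delta_0$ replaced by $\delta_0-o(1)$, for $QA(s)Q$ once $s$ is large; in particular $\|\partial_s\ZN(s)\|_{L^2}=\|A(s)Q\ZN(s)\|_{L^2}$ and $\|Q\ZN(s)\|_{L^2}$ are comparable, so it suffices to prove exponential decay of $g(s):=\tfrac12\|Q\ZN(s)\|_{L^2}^2$. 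Putting $\xi:=Q\ZN$, so that $\partial_s\xi=-QA(s)\xi$, one differentiates twice to get
\[
g''(s)=\|\partial_s\xi\|^2-\big\langle\xi,\,Q(\partial_sA(s))\xi\big\rangle-\big\langle\xi,\,QA(s)\partial_s\xi\big\rangle .
\]
The spectral gap gives $\|\partial_s\xi\|^2\ge(\delta_0-o(1))^2\|\xi\|^2$; the middle term is bounded by $\|\partial_sA(s)\|\,\|\xi\|^2=o(1)\,g(s)$, since $\partial_s(v,\eta)\to 0$ forces $\partial_sA(s)\to 0$; and, using that $A(s)$ is self-adjoint and preserves $im\,Q$ up to an $o(1)$ error, the last term equals $\|\partial_s\xi\|^2+o(1)\,g(s)$. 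Collecting terms, $g''(s)\ge\delta^2 g(s)$ on $[s_0,\infty)$ for a fixed $\delta>0$, once $s_0$ is large enough that these errors are dominated.

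An elementary comparison argument --- a non-negative $C^2$ function with $g(s)\to 0$ and $g''\ge\delta^2 g$ on $[s_0,\infty)$ satisfies $g(s)\le g(s_0)e^{-\delta(s-s_0)}$, by applying the maximum principle to $g(s)-g(s_0)e^{-\delta(s-s_0)}$ --- then yields $\|Q\ZN(s)\|_{L^2}\le C e^{-\rho|s|}$ and hence $\|\partial_s(v,\eta)(s,\cdot)\|_{L^2(S^1)}\le C e^{-\rho|s|}$, with $\rho$ depending only on $\eta^\pm$. Passing to the pointwise estimate $|\partial_s(v,\eta)(s,t)|\le C e^{-\rho|s|}$ is a standard elliptic bootstrap: on the balls $B_1(s,t)$ the interior a~priori estimate of Section~\ref{2.4} (Lemma~\ref{eq3meanvalue}) applies, and near a critical manifold its inhomogeneous constant is itself controlled by $\sup|\partial_s(v,\eta)|^2$, so that $\sup_{B_{1/2}(s,t)}|\partial_s(v,\eta)|^2\le C\int_{B_1(s,t)}|\partial_s(v,\eta)|^2$, whose right-hand side decays like $e^{-2\rho|s|}$ (any $\rho<\delta_0$ works). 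Finally, exponential decay of $\partial_s(v,\eta)$ makes $s\mapsto(v,\eta)(s,\cdot)$ Cauchy as $s\to\pm\infty$, so it converges to some $(v^\pm,\eta^\pm)\in\crita=\mathcal{P}(\alpha)$ with $dist.\big((v,\eta)(s),(v^\pm,\eta^\pm)\big)\le C e^{-\rho|s|}$; expressing this, together with the ensuing $O(e^{-\rho|s|})$ bounds for $\partial_sZ$ and --- via (\ref{eqfloerloc}) and the Floer equation --- for $\partial_tZ$, in the coordinates $(\vartheta,z)$ and multiplying by $\gamma_\delta$ gives the asserted membership in $W^{1,p}_\delta$ as soon as $\delta/p<\rho$.

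The step I expect to be the real obstacle is the differential inequality: $\nabla^2\mathcal{A}^H$ is only an operator-norm limit of $-A(s)$, $A(s)$ is neither exactly self-adjoint nor exactly compatible with the splitting $\ker\nabla^2\mathcal{A}^H\oplus im\,Q$, and the spectral gap has to be uniform in the base point. It is precisely to make $Q$ (and $\ker\nabla^2\mathcal{A}^H$) independent of the point on $\mathcal{N}^{\eta^\pm}$ --- so that $\partial_sQ=Q\partial_s$ and $A(s)=A(s)Q$ hold exactly --- that the adapted coordinates of Lemma~\ref{lemcoord} were introduced; the care needed is then to choose $s_0$ large enough that every $o(1)$ error is genuinely beaten by $\delta_0$.
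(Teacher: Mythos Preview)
Your convexity argument for the $L^2$-decay of $\|Q\ZN(s)\|$ is essentially the paper's, and your handling of the $o(1)$ errors (self-adjointness defect of $A(s)$, decay of $\partial_sA(s)$) is correct in spirit. The gap is the sentence ``$\|\partial_s\ZN(s)\|_{L^2}=\|A(s)Q\ZN(s)\|_{L^2}$ and $\|Q\ZN(s)\|_{L^2}$ are comparable''. You use this in the direction $\|A(s)Q\ZN\|_{L^2}\le C\|Q\ZN\|_{L^2}$, which is false: $A(s)$ contains $J\partial_t$ and is only bounded $H^1\to L^2$, so one can only write $\|\partial_s\ZN\|_{L^2}\le \|A(s)\|_{H^1\to L^2}\,\|Q\ZN\|_{H^1}$. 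Exponential $L^2$-decay of $Q\ZN$ alone does not give exponential $L^2$-decay of $\partial_s\ZN$; you need the full $H^1$-norm of $Q\ZN$ to decay.

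This is exactly the extra work the paper carries out and you have skipped. After the $L^2$-estimate on $Q\ZN$, the paper runs a second convexity argument for $f(s)=\tfrac12\|\partial_t\ZN(s)\|^2$, using that $A(s)$ restricted to $im(\partial_t)$ is again invertible (since $im(\partial_t)\cap\ker\nabla^2\mathcal{A}^H=0$). The resulting inequality is inhomogeneous, $f''\ge 4\tilde\rho^{\,2}f - Ke^{-2\rho s}$, with the forcing term coming from the already-established decay of $\|Q\ZN\|$; this is handled by comparison with $g(s)=\big(\tfrac{sK}{4\tilde\rho}+\text{const.}\big)e^{-2\tilde\rho s}$ and then absorbing the linear prefactor into a slightly smaller exponent. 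Only after both pieces does one have $\|Q\ZN\|_{H^1}\le\tilde C e^{-\rho s}$ and hence $\|\partial_s\ZN\|_{L^2}\le\tilde C e^{-\rho s}$.

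A smaller point: your bootstrap ``$\sup_{B_{1/2}}|\partial_s(v,\eta)|^2\le C\int_{B_1}|\partial_s(v,\eta)|^2$'' is not what Lemma~\ref{eq3meanvalue} gives---the inhomogeneous constant $A$ there is fixed by the geometry, not by $\sup|\partial_s(v,\eta)|^2$. The paper instead takes $r=e^{-\rho s/2}$ in the mean-value inequality, so that both terms $Ar^2/2$ and $r^{-2}\int_{B_r}|\partial_s\ZN|^2$ decay like $e^{-\rho s}$; you should do the same.
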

\begin{proof}
 It suffices to make the proof for $s\rightarrow+\infty$, the other case  being entirely similar. Write as above $Z(s,t)=\big(\vartheta\circ v(s,t)-t,z\circ v(s,t)\big)$ and $N(s)=\eta(s)-\eta^+$ for coordinates $(\vartheta,z)$ on some open sets $U_j$ covering $\mathcal{N}^{\eta^+}$ as in Lemma \ref{lemcoord}. We will show below, that there exist constants $\rho,\tilde{C}>0$ such that for $s\geq s_0$ sufficiently large and on any $U_j$ holds
 \[ \left|\left|Q \left(\begin{smallmatrix}Z\\ N\end{smallmatrix}\right)(s)\right|\right|_1\leq \tilde{C}\cdot e^{-\rho s}\tag{$\ast$}\]
for the $H^1$-norm $||\cdot||_1$. As $0=\partial_s\left(\begin{smallmatrix}Z\\N\end{smallmatrix}\right) + A(s)\left(\begin{smallmatrix}Z\\N\end{smallmatrix}\right) = \partial_s\left(\begin{smallmatrix}Z\\N\end{smallmatrix}\right) + A(s)Q\left(\begin{smallmatrix}Z\\N\end{smallmatrix}\right)$, this yields
\[||\partial_s\left(\begin{smallmatrix}Z\\N\end{smallmatrix}\right)(s)||^2_0
\leq||A(s)||^2\cdot\left|\left|Q \left(\begin{smallmatrix}Z\\ N\end{smallmatrix}\right)(s)\right|\right|_1^2
\leq \tilde{C}\cdot e^{-2\rho s},\]
 for a bigger constant $\tilde{C}$ and $s\geq s_0$. As $(Z,N)$ satisfies the partial differential equation (\ref{eqfloerloc}), which is similar to (\ref{eq3}), it satisfies also the mean value inequality from Lemma \ref{eq3meanvalue}, i.e.\ there exist constants $A,\delta>0$ such that
 \begin{align*}
  && \int_{B_r(s,t)} \left|\partial_s \left(\begin{smallmatrix} Z\\N\end{smallmatrix}\right)\right|^2&<\delta \quad\forall t\in S^1\\
  &\text{implies }&  \exists\; t^\ast\in S^1\;:\;\left|\partial_s \left(\begin{smallmatrix} Z\\N\end{smallmatrix}\right)(s,t)\right|^2&\leq \frac{Ar^2}{2}+\frac{8}{\pi r^2}\int_{\textstyle B_r(s,t^\ast)} \left|\partial_s \left(\begin{smallmatrix} Z\\N\end{smallmatrix}\right)\right|^2 \quad\forall t\in S^1.
  \end{align*}
 If necessary, increase $s_0$ such that $\tilde{C}\cdot e^{-2\rho(s_0-1)}\leq \delta$ holds and set $r=e^{-\rho s/2}$ for $s\geq s_0$. Then $s-r\geq s-1\geq s_0-1$ so that the assumption of the mean value inequality is satisfied and we get
 \begin{align*}
  && \left|\partial_s \left(\begin{smallmatrix} Z\\N\end{smallmatrix}\right)(s,t)\right|^2&\leq \frac{A}{2}e^{-\rho s} + \frac{8\tilde{C}}{\pi e^{-\rho s}}\cdot e^{-2\rho (s-1)}=\left(\frac{A}{2}+\frac{8\tilde{C}e^{2\rho}}{\pi}\right)e^{-\rho s}\\
  &\Rightarrow& \left|\partial_s \left(\begin{smallmatrix} Z\\N\end{smallmatrix}\right)(s,t)\right| &\leq \tilde{C}\cdot e^{-\rho s},
 \end{align*}
 for $\tilde{C}$ even bigger. Now, we can show that $(v,\eta)$ truly converges. For any interval $[s_1,s_2]$ such that $(v,\eta)\big([s_1,s_2]\big)\subset U_j$ for a fixed $j$, we obtain by integration
 \begin{align*}
  \left|\begin{pmatrix}Z\\N\end{pmatrix}(s_2,t)-\begin{pmatrix}Z\\N\end{pmatrix}(s_1,t)\right|=\left|\int_{s_1}^{s_2} \partial_{\mathfrak{s}}\begin{pmatrix}Z\\ N\end{pmatrix}(\mathfrak{s},t)\,d\mathfrak{s}\right|&\leq\int_{s_1}^{s_2} \tilde{C}\cdot e^{-\rho\mathfrak{s}}\,d\mathfrak{s}\\
  &= \tilde{C}\cdot\frac{1}{\rho}\cdot e^{-\rho s_1}\big(1-e^{-\rho (s_2-s_1)}\big)\\
  &\leq \tilde{C}\cdot\frac{1}{\rho}\cdot e^{-\rho s_1}.
 \end{align*}
 As the $U_j$ form a finite open cover of the compact set $\mathcal{N}^{\eta^+}$, we find that the maximal distance  $dist.(p,\partial U_j)$ over all $U_j$ for any $p\in\mathcal{N}^{\eta^+}$ is bounded from below. So for $s_1$ large enough, we know from the above estimate that $(v,\eta)(s)$ stays in one $U_j$ for all $s\geq s_1$. Moreover, $(v,\eta)(s)$ is a Cauchy sequence, which implies that it converges to some $(v^+,\eta^+)\in U_j\cap\mathcal{N}^{\eta^+}$. Taking once more with Lemma \ref{lemcoord} a neighborhood $U$ around $(v^+,\eta^+)$ such that in these coordinates $\lim\ZN=0$, we finally obtain
 \[\left|\begin{pmatrix}Z\\N\end{pmatrix}(s,t)\right|=\left|\int_s^\infty \partial_{\mathfrak{s}}\begin{pmatrix}Z\\ N\end{pmatrix}(\mathfrak{s},t)\,d\mathfrak{s}\right|\leq\int^\infty_s \tilde{C}\cdot e^{-\rho\mathfrak{s}}\,d\mathfrak{s} = \tilde{C}\cdot\frac{1}{\rho}\cdot e^{-\rho s},\]
 which proves the proposition with $C:=\tilde{C}/\rho$.\pagebreak\\
 It remains to show the exponential $H^1$-estimate $(\ast)$ on $Q\ZN$. In the following, we abbreviate $||\cdot||:=||\cdot||_0$ for  the $L^2$-norm, write $||\cdot||_k$ for the $H^k$-norm and $\langle\cdot,\cdot\rangle=g(\cdot,\cdot)$ for the scalar product on $L^2$. We first show that $||Q\ZN||$ is exponentially bounded. For that define 
\[f(s):=\frac{1}{2}\left|\left|\Big.Q\ZN(s)\right|\right|^2.\]
We show below the existence of a constant $c$ such that $f''$ satisfies for $s$ sufficiently large
\[f''(s)\geq 4(c-2\veps)\cdot f(s),\tag{$\ast\ast$}\]
where $\veps>0$ is an arbitrarily small constant (for $s$ sufficiently large). Set $\rho:=\sqrt{c-2\veps}$, such that $f''\geq 4\rho^2\cdot f$.
For $s_0$ large, define furthermore $g(s):=f(s_0)e^{-2\rho(s-s_0)}$. Then
\[g''=4\rho^2 g,\qquad (f-g)''\geq 4\rho^2(f-g),\qquad (f-g)(s_0)=0\quad\text{ and }\quad \lim_{s\rightarrow\infty} f(s)-g(s)=0.\]
The last statement holds as $g(s)\rightarrow 0$ and $f(s)\rightarrow 0$. To see this for $f$, recall that $Q\ZN\rightarrow 0$.
Then it follows that $f-g\leq 0$ on $[s_0,\infty)$, as it cannot have a strictly positive maximum. Therefore we obtain an exponential bound as
\[\left|\left|Q \left(\begin{smallmatrix}Z\\ N\end{smallmatrix}\right)(s)\right|\right|=f(s)\leq g(s)= \left|\left|Q \left(\begin{smallmatrix}Z\\ N\end{smallmatrix}\right)(s_0)\right|\right|e^{-\rho(s-s_0)}.\]
To show $(\ast\ast)$, consider the operator $A(s)$ and recall that  $||A(s)+\nabla^2\mathcal{A}^H(Z_{in})||\rightarrow 0$. As all $\nabla^2\mathcal{A}^H(Z_{in})$ restricted to $im (Q)$ are continuously invertible, we find for $s$ sufficiently large that the operators $A(s)$ and $Q A(s)$ are also invertible when restricted to $im(Q)$. Hence, there exists for such $s$ constants $c(s)>0$ such that for all $\left(\begin{smallmatrix} z\\ n\end{smallmatrix}\right)\in H^k(S^1,\mathbb{R}^{2n})\times\mathbb{R}$ holds
 \[ \left|\left|\Big.A(s)Q \left(\begin{smallmatrix}z\\n\end{smallmatrix}\right)\right|\right|^2_{k-1}\geq \left|\left|\Big.Q A(s)Q \left(\begin{smallmatrix}z\\n\end{smallmatrix}\right)\right|\right|^2_{k-1}\geq c(s)\cdot \left|\left|\Big.Q \left(\begin{smallmatrix}z\\n\end{smallmatrix}\right)\right|\right|^2_k\geq c(s)\cdot \left|\left|\Big.Q \left(\begin{smallmatrix}z\\n\end{smallmatrix}\right)\right|\right|^2_{k-1}.\]
 Note that $c(s)$ can be chosen arbitrarily close to the smallest non-zero square of an eigenvalue of $\nabla^2\mathcal{A}^H(Z_{in})$ (while increasing $s$). As $\nabla^2\mathcal{A}^H(z_{in})$ depends continuously on $z_{in}\in\mathcal{N}^{\eta^+}$ and $\dim \big(\ker \nabla^2\mathcal{A}^H(z_{in})\big)=\dim\mathcal{N}^{\eta^+}$ is constant on the compact set $\mathcal{N}^{\eta^+}$, we can choose one $c$ independent of $s$ for which the above estimate holds for all $s$ sufficiently large. Note that $c$ can in particular be chosen independent from $U_j$.\\
 The estimate $(\ast\ast)$ now follows from simple but tedious estimates. The main idea is to replace $\partial_s\ZN$ by $-A(s)\ZN$ via the Rabinowitz-Floer equation and then to apply the above estimate. In order to make the calculations more readable, we will from now on write $A$ instead of $A(s)$. Using the formulas for $Q$ (see page \pageref{Qformulas}), we calculate for $f$
\begin{align*}
 f''(s)&=||\partial_s Q\ZN||^2+\langle Q\ZN,\partial_s^2 Q\ZN\rangle +\mu\\
 &=||QAQ\ZN||^2-\langle Q\ZN,\partial_s QA\ZN\rangle + \mu\\
 &= ||QAQ\ZN||^2-\langle Q\ZN,Q(\partial_s A)Q\ZN-QA^2Q\ZN\rangle+\mu\\
 &\begin{aligned}=||QAQ\ZN||^2&-\langle Q\ZN,Q(\partial_sA)Q\ZN\rangle\\
 &+ \langle(A^\ast-A)Q\ZN,AQ\ZN\rangle+||AQ\ZN||^2+\mu\end{aligned}\\
 &\geq2c||Q\ZN||^2_1-||\partial_sA||\cdot||Q\ZN||^2_1-||A^\ast-A||\cdot||A||\cdot||Q\ZN||^2_1+\mu\\
 &\geq 2(c-\veps)||Q\ZN||^2_1+\mu.
\end{align*}
Here, $A^\ast$ denotes the adjoint of $A$ and we used the facts that $||A^\ast-A||\rightarrow 0$ (as the limit operators $\nabla^2\mathcal{A}^H(Z_{in})$ are selfadjoint), that $||\partial_s A||\rightarrow 0$ (as $\partial_s A\ZN=(DA)[\partial_s\ZN]$ and $\partial_s\ZN\rightarrow 0$) and that $||A||$ is uniformly bounded.\\
With $\mu$ we summarized all terms of $f''$ which involve derivatives of the metric $g$. Note that we cannot avoid this, as $J$ and hence $g$ depends on the point $\ZN(s)$. The estimate of $\mu$ is similar and goes as follows:
\begin{align*}
 \mu &= {\textstyle\frac{1}{2}}\big(\partial_s^2g\big)\big(Q\ZN,Q\ZN\big)+\big(\partial_s g\big)\big( Q\ZN,\partial_s Q\ZN\big)\\
 &={\textstyle\frac{1}{2}}\big(\partial_s(Dg)[\partial_s\ZN]\big)\big(Q\ZN,Q\ZN\big)+\big(Dg[\partial_s\ZN]\big)\big(Q\ZN,QAQ\ZN\big)\\
 &\begin{aligned}=&\phantom{+}{\textstyle\frac{1}{2}}\big(D^2g[\partial_s\ZN,\partial_s\ZN]+Dg[(\partial_s A)Q\ZN+A^2Q\ZN]\big)\big(Q\ZN,Q\ZN\big)\\
 &+\,\big(Dg[\partial_s\ZN]\big)\big(Q\ZN,QAQ\ZN\big)
  \end{aligned}\\
 &\begin{aligned}\geq &-{\textstyle\frac{1}{2}}||D^2 g||\cdot ||\partial_s \ZN||^2\cdot ||Q\ZN||^2_1-{\textstyle\frac{1}{2}}||Dg||\big(||\partial_s A||+||A^2||\big)||Q\ZN||_1\cdot ||Q\ZN||^2_1\\
  &-||Dg||\cdot ||QA||\cdot ||\partial_s\ZN||\cdot||Q\ZN||^2_1 
  \end{aligned}\\
 &\geq-2\veps||Q\ZN||^2_1.
\end{align*}
Here, we write $Dg$ for the total differential of $g$, which is in coordinates well-defined. The last line follows then for $s$ sufficiently large, as $||Q\ZN||_1,||\partial_s\ZN||\rightarrow 0$, while all operator norms are uniformly bounded. Combining the two estimates, we obtain
\[f''(s)\geq 2(c-2\veps)||Q\ZN||^2_1\geq 2(c-2\veps)||Q\ZN||^2=4(c-2\veps)f(s).\]
To complete the proof of $(\ast)$, we also have to show that $||\partial_t Q\ZN||=||\partial_t \ZN||$ is exponentially bounded. We define again a function
\[f(s):=\frac{1}{2}\Big|\Big|\partial_t(s) \ZN \Big|\Big|^2\]
and show the existence of another constant $\tilde{c}>0$ such that $f''$ satisfies for $s$ sufficiently large
\[f''(s)\geq 4(\tilde{c}-3\veps)\cdot f(s) - Ke^{-2\rho s},\tag{$\ast\ast\ast$}\]
where $\veps>0$ is again arbitrarily small, $K$ is some constant and $\rho$ is as above. By choosing $\tilde{\rho}\leq\min\{\sqrt{\tilde{c}-3\veps},\rho\}$, we then get $f''\geq 4\tilde{\rho}^2\cdot f -K e^{-2\tilde{\rho}s}$. For $s_0$ sufficiently large, define 
\[g(s):=\left(\frac{sK}{4\tilde{\rho}}-\frac{s_0K}{4\tilde{\rho}}+f(s_0)e^{2\tilde{\rho}s_0}\right)e^{-2\tilde{\rho}s}.\]
Then we have $g''=4\tilde{\rho}^2 g-Ke^{-2\tilde{\rho}s},\;g(s_0)=f(s_0)$ and $\displaystyle \lim_{s\rightarrow \infty} g(s)=0$ and hence
\[ (f-g)''\geq 4\tilde{\rho}^2(f-g),\quad(f-g)(s_0)=0\quad\text{ and }\quad \lim_{s\rightarrow \infty} f(s)-g(s)=0.\]
Then it follows again that $f-g\leq0$ on $[s_0,\infty)$, as it cannot have  a strictly positive maximum. Therefore, we obtain
\[||\partial_t\ZN||\leq \left(\frac{sK}{4\tilde{\rho}}-\frac{s_0K}{4\tilde{\rho}}+f(s_0)e^{2\tilde{\rho}s_0}\right)e^{-2\tilde{\rho}s}\leq \tilde{C}(s+1)e^{-2\tilde{\rho}s}\]
for some large constant $\tilde{C}$.\pagebreak\\
As $(s+1)e^{-\delta s}\rightarrow 0$ for $s\rightarrow \infty$ and any $\delta>0$, we get by decreasing $\tilde{\rho}$ and increasing $s_0$ further, that for all $s\geq s_0$ holds
\[||\partial_t\ZN||\leq \tilde{C}\cdot e^{-\tilde{\rho}s}.\]
To show $(\ast\ast\ast)$, we consider the operator $\partial_t$ on $H^1$. Its image $im(\partial_t)$ is closed in $L^2$, as it has a right inverse by integration. Moreover, $im(\partial_t)\cap \ker \nabla^2\mathcal{A}^H(z_{in})={0}$ for all $z_{in}$, as integrating a constant vector field $x(t)=x_0$ over $S^1$ yields $X(t)=tx_0$, which is 1-periodic only if $x_0=0$. As $\nabla^2\mathcal{A}^H(z_{in})(im(\partial_t))$ is also closed, we find that $\nabla^2\mathcal{A}^H(z_{in})$ restricted to $im(\partial_t)$ is a bijective operator between Banach spaces and hence continuously invertible. It follows that $A(s)$ for $s$ sufficiently large is also continuously invertible over $im(\partial_t)$, which gives us for $s$ sufficiently large a constant $\tilde{c}$ such that for all $\left(\begin{smallmatrix} z\\ n\end{smallmatrix}\right)\in H^k(S^1,\mathbb{R}^{2n})\times\mathbb{R}$ holds
\[||A(s)\partial_t\left(\begin{smallmatrix} z\\n\end{smallmatrix}\right)||^2_{k-1}\geq \tilde{c}\cdot||\partial_t\left(\begin{smallmatrix} z\\n\end{smallmatrix}\right)||^2_{k-1}.\]
 Note that we can choose $\tilde{c}$ globally, as all $\nabla^2\mathcal{A}^H(z_{in})$ restricted to $im(\partial_t)$ are injective. For the proof of $(\ast\ast\ast)$, we use estimates similar to whose for $||Q\ZN||$
\begin{align*}
 f''(s)&=||\partial_s\partial_t\ZN||^2+\langle\partial_t\ZN,\partial_s^2\partial_t\ZN\rangle + \mu\\ &= ||\partial_t\partial_s\ZN||^2+\langle \partial_t,\partial_t\partial_s^2\ZN\rangle + \mu\\ &=||\partial_t A\ZN||^2-\langle \partial_t\ZN,\partial_t(\partial_s A)\ZN\rangle + \langle\partial_t\ZN,\partial_t A^2\ZN\rangle + \mu\\
 &=||A\partial_t\ZN||^2-\langle\partial_t\ZN,(\partial_s A)\partial_t\ZN\rangle + \langle\partial_t\ZN,A^2\partial_t\ZN\rangle+\mu+\kappa,
\end{align*}
where $\mu$ contains again all terms with derivatives of the metric and $\kappa$ is an error term which we give explicitly below. Using similar estimates as for $||Q\ZN||$, we find that
\[f''(s)\geq 2(\tilde{c}-2\veps)||\partial_t\ZN||^2+\kappa,\]
with $\veps>0$ arbitrarily small and $s$ sufficiently large. The error term $\kappa$ is given by
\begin{align*}
   \begin{aligned}\kappa\\\phantom{.}\end{aligned} &\begin{aligned}=&\phantom{+}\;||(\partial_t A)\ZN||^2+ 2\langle A\partial_t\ZN,(\partial_t A)\ZN\rangle\\
            &-\langle\partial_t\ZN,(\partial_t\partial_s A)\ZN\rangle + \langle\partial_t\ZN,(\partial_t A^2)\ZN\rangle
           \end{aligned}\\
 &\begin{aligned}\geq &-2||A||\cdot||\partial_t\ZN||\cdot||\partial_t A||\cdot||Q\ZN||\\
       &-||\partial_t\ZN||\cdot||\partial_t\partial_sA||\cdot||Q\ZN||-||\partial_t\ZN||\cdot ||\partial_t A^2||\cdot||Q\ZN||
      \end{aligned}\\
 &\geq -2\veps||\partial_t\ZN||^2-K\cdot||Q\ZN||^2.        
\end{align*}
Here, $K$ is some large constant depending on $\veps$. The last line follows from Young's inequality and the fact that all operator norms are bounded. Using the exponential bound for $||Q\ZN||$, we hence get
\[f''(s)\geq 2(\tilde{c}-3\veps)||\partial_t\ZN||^2_1-K\cdot e^{-2\rho s}\geq 4(\tilde{c}-3\veps)f(s)-K\cdot e^{-2\rho s}.\qedhere\]
\end{proof}
\begin{rem}
By making similar estimates for the higher $t$-derivatives of $Q\ZN$, we could get an exponential bound on the $H^k$-norm of $\partial_s\ZN$. The pointwise estimates would then follow from Sobolev's embedding theorem.
\end{rem}

\newpage

\subsection{Unique continuation and injective points}\label{secholprop}
In this section, we present certain properties of solutions of the Rabinowitz-Floer equation (\ref{eq3}) which they have in common with holomorphic curves -- in particular unique continuation and the existence of so called regular or injective points. These properties for Floer/symplectic homology were first shown by Floer, Hofer, Salamon in \cite{FlHoSa}. Recently, Bourgeois, Oancea, \cite{BourOan3}, and Abbondandolo, Merry, \cite{AbbMer}, generalized them to Rabinowitz-Floer homology. I extend them here to situations, where we have on $V$ a symplectic symmetry of finite order.\medskip\\
That solutions of (\ref{eq3}) can be to some extend considered as holomorphic curves is due to the Carleman similarity principle (see \cite{FlHoSa}, Thm.\ 2.2, or \cite{DuffSal}, section 2.3). It states that every sufficiently regular solution $u$ with $u(0)=0$ of a partial differential equation on the disc $B_\veps(0)\subset \mathbb{C}$ of the form
\[\partial_s u(z) + J(z)\partial_t u(z) + C(z)u(z)=0\]
 is conjugated to a holomorphic curve. Here, $J,C$ are supposed to be $\mathbb{R}$-linear, sufficiently differentiable and $J^2=-Id$. Note that even solutions of the ``normal'' Floer equation (\ref{eqast}) do not satisfy this equation, as $X_H(p)\neq 0$ in general for any $p\in V$. However, the difference of two solutions $u, v$ of (\ref{eqast}) satisfies this equation, provided that $u$ and $v$ agree to infinite order at a point $p$. It follows then from the unique continuation for holomorphic maps that $u$ and $v$ coincide on an open neighborhood of $p$ and thus everywhere.\\
 Unique continuation can also be obtained from Aronszajn's Theorem (see \cite{DuffSal}, 2.3), which states that every (sufficiently regular) function $u$ on $B_\veps(0)$ is equal to 0 if it vanishes to infinite order at 0 and satisfies point-wise almost everywhere
 \[|\Delta u|\leq c\cdot\left(|u|+|\partial_s u|+ |\partial_t u|\right).\]
 Unfortunately both results, Carleman similarity principle and Aronszajn's Theorem, do not apply to solutions of the Rabinowitz-Floer equation, as it involves an integral and is hence not completely local. However, Bourgeois and Oancea generalized in \cite{BourOan3} Aronszajn's Theorem to situations like this and used it then to prove the following two Theorems \ref{uniquecont} and \ref{uniquecontpt}.\medskip\\
 To state the theorems, we write $I_h(s):=(s-h,s+h)\subset\mathbb{R}$ and $V_h(s,t):=(s-h,s+h)\times \linebreak (t-h,t+h)\subset\mathbb{R}\times S^1$ for $h>0$.
 \begin{theo}[\textbf{Unique Continuation}, cf. \cite{BourOan3}, Prop.\ 3.5]\label{uniquecont}~\\
  Let $v_i: I_h(s)\times S^1\rightarrow V,\; \eta_i: I_h(s)\rightarrow\mathbb{R},\; i=0,1,$ be two smooth functions satisfying the ($s$-dependent) Rabinowitz-Floer equation, i.e.\
  \begin{equation*}
  \left.\begin{aligned}
   \partial_s v + J_t(v,\eta)\big(\partial_t v -\eta X_H(v)\big)&=0\\
   \partial_s \eta + \textstyle\int_{S^1} H(v)dt &= 0
   \end{aligned}\right\}(\ref{eq3})\;\,\text{or}\quad \left.\begin{aligned}
   \partial_s v + J_t(v,\eta)\big(\partial_t v -\eta X_{H_s}(v)\big)&=0\\
   \partial_s \eta + \textstyle\int_{S^1} H_s(v)dt &= 0
   \end{aligned}\right\}(\ref{eqRabFlHom}).
  \end{equation*}
If $(v_0,\eta_0)$ and $(v_1,\eta_1)$ coincide at a point $p\in I_h(s)\times S^1$ to infinite order, then they coincide on $I_h(s)\times S^1$. In particular, this applies then $(v_0,\eta_0)$ and $(v_1,\eta_1)$ agree on some open set $U\subset I_h(s)\times S^1$.
 \end{theo}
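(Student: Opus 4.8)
The plan is to reduce the statement to a unique–continuation result of Aronszajn type for a \emph{semi-local} elliptic differential inequality, following \cite{BourOan3}. First I would note that the set
\[
Z:=\bigl\{\,p\in I_h(s)\times S^1 \ \big|\ (v_0,\eta_0)\text{ and }(v_1,\eta_1)\text{ agree to infinite order at }p\,\bigr\}
\]
is closed by definition, and $I_h(s)\times S^1$ is connected, so it suffices to prove that $Z$ is open. Fix $p\in Z$, choose a chart $U\subset V$ around $v_0(p)$ in which both loops take values for arguments near $p$, and set $w:=v_1-v_0$ (read in these coordinates) and $\zeta:=\eta_1-\eta_0$, extended as a $t$-independent function. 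The goal becomes: $(w,\zeta)$ vanishes on a neighbourhood of $p$.

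Next I would derive the (in)equalities satisfied by $(w,\zeta)$. Subtracting the two copies of the $v$-equation and using Hadamard's Lemma (as in the computation around~(\ref{eqfloerloc2})) to rewrite the differences $J_t(v_1,\eta_1)-J_t(v_0,\eta_0)$ and $\eta_1 X_H(v_1)-\eta_0 X_H(v_0)$ as bounded linear operators applied to $(w,\zeta)$, one gets a pointwise bound
\[
\bigl|\,\partial_s w + J_t(v_0,\eta_0)\,\partial_t w\,\bigr|\ \leq\ c\,\bigl(|w|+|\zeta|\bigr),
\]
with $c$ depending only on $U$, $H$, $J$ and on $C^2$-bounds of the two (smooth) solutions. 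Applying the conjugate Cauchy–Riemann operator and using $J^2=-\mathrm{Id}$ upgrades this to $|\Delta w|\leq c\bigl(|w|+|\zeta|+|\partial_s w|+|\partial_t w|+|\partial_s\zeta|\bigr)$. Subtracting the two copies of the $\eta$-equation, and using that $\zeta$ is $t$-independent, gives $\partial_t\zeta=0$ and $\partial_s\zeta=-\int_{S^1}\bigl(H(v_1)-H(v_0)\bigr)\,dt=-\int_{S^1} G\cdot w\,dt$ for a bounded $G$, hence $|\Delta\zeta|=|\partial_s^2\zeta|\leq c\int_{S^1}\bigl(|w|+|\partial_s w|\bigr)\,dt$. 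Writing $u:=(w,\zeta)$, these combine to a semi-local inequality of the shape
\[
|\Delta u(s',t')|\ \leq\ c\Bigl(|u(s',t')|+|\nabla u(s',t')|+\int_{S^1}\bigl(|u(s',\tau)|+|\nabla u(s',\tau)|\bigr)\,d\tau\Bigr).
\]

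Finally I would invoke the generalization of Aronszajn's theorem to such semi-local inequalities proved in \cite{BourOan3}: since $u$ and all of its derivatives vanish at $p$, it follows that $u\equiv 0$ on a neighbourhood of $p$, so $p$ is an interior point of $Z$. By connectedness, $Z=I_h(s)\times S^1$, i.e.\ $(v_0,\eta_0)\equiv(v_1,\eta_1)$, and in particular the two solutions agree on any open subset. For the homotopy equation~(\ref{eqRabFlHom}) the argument is verbatim the same: the extra term $\partial_s H_{s'}$ arising when one differentiates is evaluated at the same point for both solutions and hence cancels in the difference.

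I expect the main obstacle to be precisely the non-locality of the $\eta$-equation: the classical Carleman similarity principle and the classical Aronszajn theorem apply only to genuinely local operators, so one must either quote the generalized Aronszajn theorem of \cite{BourOan3} or reprove it, carefully tracking how the circle-averaged terms $\int_{S^1}(\cdots)\,dt'$ interact with the weighted Carleman-type estimates in the iteration. The rest — the Hadamard expansion of the nonlinear differences and the passage from the first-order Cauchy–Riemann inequality to a second-order Laplacian inequality — is routine bookkeeping.
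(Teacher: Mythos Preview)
Your proposal is correct and follows exactly the approach the paper itself points to: the paper does not give its own proof of Theorem~\ref{uniquecont} but explains that the classical Carleman similarity principle and Aronszajn's theorem fail due to the non-local integral term, and then cites the generalized Aronszajn theorem of \cite{BourOan3}, Prop.~3.5, which is precisely what you invoke after deriving the semi-local Laplacian inequality for the difference $(w,\zeta)$. One small remark on phrasing: the ``in particular'' clause in the theorem is the implication \emph{agreement on an open set $\Rightarrow$ agreement to infinite order $\Rightarrow$ agreement everywhere}, not the other way around as your last sentence suggests.
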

\begin{theo}[cf. \cite{BourOan3}, Lem.\ 4.5, or \cite{FlHoSa}, Lem.\ 4.2]\label{uniquecontpt}~\\
Suppose that $U_i=(v_i,\eta_i),\, i=0,1$, are smooth functions on $I_{h_0}(s)\times S^1,\, h_0>0,$ satisfying the Rabinowitz-Floer equation (\ref{eq3}). Assume that
\begin{equation*}
 \begin{aligned}
  U_0(s_0,t_0)=U_1(s_0,t_0),\qquad\qquad
  \partial_s v_0(s_0,t_0)\neq 0,\qquad\qquad \partial U_1(s_0,t_0)\neq 0.
 \end{aligned}
\end{equation*}
Assume also that for any $0<h'\leq h_0$ there exists $0<h\leq h_0$ such that
for any $(s,t)\in V_h(s_0,t_0)$ there exists $(s',t)\in V_{h'}(s_0,t_0)$ such that $U_0(s,t)=U_1(s',t)$. Then $U_0=U_1$.
\end{theo}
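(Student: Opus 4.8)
The plan is to reduce everything, via the Unique Continuation Theorem~\ref{uniquecont}, to the statement that $U_0$ and $U_1$ coincide on some nonempty open subset of $I_{h_0}(s)\times S^1$: once that is known, they agree to infinite order at some point and hence, by Theorem~\ref{uniquecont} again, on all of $I_{h_0}(s)\times S^1$. So the whole task is to produce, near $(s_0,t_0)$, a local reparametrization $\sigma(s,t)$ with $U_0(s,t)=U_1\big(\sigma(s,t),t\big)$ and to show that $\sigma$ is the identity. I would first carry this out assuming $\partial_s v_1(s_0,t_0)\neq 0$, and then argue that this assumption is harmless.

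\emph{The immersed case.} Suppose $\partial_s v_1(s_0,t_0)\neq 0$. Then $s\mapsto v_1(s,t)$ is an embedding for $(s,t)$ in a small box around $(s_0,t_0)$; applying the hypothesis with a sufficiently small $h'$ (and the corresponding $h$) produces a unique $\sigma(s,t)$ near $s_0$ with $U_0(s,t)=U_1(\sigma(s,t),t)$, so that $\sigma(s_0,t_0)=s_0$, $\eta_0(s)=\eta_1(\sigma(s,t))$, and $\sigma$ is smooth by the implicit function theorem. I would then differentiate $v_0(s,t)=v_1(\sigma(s,t),t)$ in $s$ and in $t$ and substitute the Rabinowitz-Floer equation~(\ref{eq3}) for both trajectories. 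Using $J^{-1}=-J$ and the fact that $J_t$, $X_H$ and $\eta$ agree at the identified points, one has $\partial_t v_i=J\,\partial_s v_i+\eta X_H(v_i)$ for $i=0,1$; inserting $\partial_s v_0=\sigma_s\,\partial_s v_1$ into the $t$-derivative relation and cancelling common terms collapses the whole system to the pointwise identity
\[(\sigma_s-1)\,J\,\partial_s v_1 \;=\; \sigma_t\,\partial_s v_1 .\]
Since $J$ has no real eigenvalue, $\partial_s v_1$ and $J\,\partial_s v_1$ are $\mathbb{R}$-linearly independent wherever $\partial_s v_1\neq 0$, which forces $\sigma_s\equiv 1$ and $\sigma_t\equiv 0$; with $\sigma(s_0,t_0)=s_0$ this gives $\sigma\equiv\mathrm{id}$, hence $U_0=U_1$ on a neighbourhood of $(s_0,t_0)$, and Theorem~\ref{uniquecont} closes this case.

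\emph{Reducing to the immersed case.} I would next rule out $\partial_s v_1(s_0,t_0)=0$. If $\partial_s\eta_1(s_0)\neq 0$, then $\eta_1$ is locally invertible near $s_0$ and the same uniqueness argument (in the $\eta$-coordinate) forces $\sigma=\eta_1^{-1}\circ\eta_0$, a function of $s$ alone; differentiating $v_0(s,t)=v_1(\sigma(s),t)$ in $s$ at $(s_0,t_0)$ then gives $\partial_s v_0(s_0,t_0)=\sigma'(s_0)\,\partial_s v_1(s_0,t_0)=0$, contradicting the hypothesis. The only remaining case is $\partial_s v_1(s_0,t_0)=\partial_s\eta_1(s_0)=0$, in which case $\partial_t v_1(s_0,t_0)=\eta_1(s_0)X_H(v_1(s_0,t_0))\neq 0$ since $\partial U_1(s_0,t_0)\neq 0$. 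To exclude this I would invoke the similarity-principle analysis underlying Theorem~\ref{uniquecont}: $\partial_s v_1$ satisfies a Cauchy-Riemann-type equation along $v_1$, so either $v_1$ is $t$-independent near $(s_0,t_0)$ --- which, via the hypothesis, again forces $\partial_s v_0(s_0,t_0)=0$ --- or the zeros of $\partial_s v_1$ are isolated. In the latter case $v_1(\cdot,t)$ is still locally injective near $s_0$ (the even-order ``fold'' being incompatible with the two-sided embedded arc $v_0(\cdot,t_0)$ lying inside its image), so one still obtains a continuous $\sigma$ near $(s_0,t_0)$ with $\sigma(s_0,t_0)=s_0$, smooth off $(s_0,t_0)$; running the immersed-case computation at nearby points where $\partial_s v_1(\sigma)\neq 0$ gives $U_0(s,t)=U_1(s+c,t)$ on a small open set, this identity propagates to the whole common domain by the $s$-translation invariance of~(\ref{eq3}) and Theorem~\ref{uniquecont}, and $U_0(s_0,t_0)=U_1(s_0,t_0)$ together with local injectivity of $v_1(\cdot,t_0)$ near $s_0$ forces $c=0$.

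The hard part is precisely this last sub-case. Because the Rabinowitz-Floer equation~(\ref{eq3}) contains the nonlocal term $\int_0^1 H(v)\,dt$, a direct Carleman argument is unavailable and one must route everything through the Aronszajn-type unique continuation of Theorem~\ref{uniquecont} (in the form generalised by Bourgeois--Oancea); moreover one has to control the local geometry of $v_1$ at a zero of $\partial_s v_1$ --- where the linearised equation for $\partial_s v_1$ carries an inhomogeneity proportional to $\partial_s\eta_1$ --- and then pin down the translation constant $c$.
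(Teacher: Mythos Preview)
The paper does not prove this statement itself; it is quoted from \cite{BourOan3} and \cite{FlHoSa}. Your argument in the immersed case, where $\partial_s v_1(s_0,t_0)\neq 0$, is correct and is exactly the classical Floer--Hofer--Salamon reparametrization argument: write $v_0(s,t)=v_1(\sigma(s,t),t)$, feed both sides into (\ref{eq3}), and use that $\partial_s v_1$ and $J\partial_s v_1$ are $\mathbb{R}$-independent to force $\sigma_s=1$, $\sigma_t=0$. This is how the cited references proceed, and it is all that is used in this paper.

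Your reduction to the immersed case, however, has a real gap in the final sub-case $\partial_s v_1(s_0,t_0)=\partial_s\eta_1(s_0)=0$. The similarity principle you invoke for $w=\partial_s v_1$ does not apply as stated: differentiating the first line of (\ref{eq3}) in $s$ produces an \emph{inhomogeneous} term $-J\,(\partial_s\eta_1)\,X_H$, and the Carleman/Aronszajn machinery gives isolated zeros only for solutions of homogeneous Cauchy--Riemann-type equations. You note this yourself, but do not resolve it; the subsequent ``even-order fold'' and local injectivity claims are likewise not made rigorous.

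This extra case is almost certainly not needed. In \cite{FlHoSa}, Lem.~4.2, the hypothesis is that \emph{both} curves have $\partial_s\neq 0$ at the point, and in the only application of Theorem~\ref{uniquecontpt} in this paper --- the end of the proof of Proposition~\ref{regpts} --- condition $(\ast\ast)$ there guarantees $\partial_s W(\bar{s}',\bar{t})\neq 0$, i.e.\ $\partial_s v_1(s_0,t_0)\neq 0$. So either ``$\partial U_1(s_0,t_0)\neq 0$'' is a transcription slip for ``$\partial_s U_1(s_0,t_0)\neq 0$'', or the statement is recorded in more generality than is actually used or cleanly available; in either reading your immersed-case computation already suffices for the paper's purposes.
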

For the following generalized results on injective points, let us consider the following situation: Suppose we have on $V$ a smooth (exact) symplectic symmetry $\sigma$ of finite order, i.e.\ $\sigma : V\rightarrow V$ is a diffeomorphism such that $\sigma^k=Id$ for some $k\in\mathbb{N}$ and $\sigma^\ast\lambda=\lambda$. Moreover, suppose that $H$ and $J$ are $\sigma$-invariant, i.e.\  $H(\sigma(p))=H(p)$ for all $p\in V$ and $\sigma^\ast J=J$. Note that this implies that for any solution $(v,\eta)$ of (\ref{eq3}) we have that $(\sigma\circ v,\eta)$ is also a solution of (\ref{eq3}). Let $V_{fix}:=\{p\in V\,|\,\sigma(p)=p\}$ denote the fixed point set of $\sigma$.
\begin{lemme}\label{notfixdense} Let $U=(v,\eta)$ be a solution of the Rabinowitz-Floer equation (\ref{eq3}). Suppose that $im(v)\not\subset V_{fix}$. Then, the following set is open and dense in $\mathbb{R}\times S^1$:
 \[F(U):=\{(s,t)\in\mathbb{R}\times S^1\,|\,v(s,t)\not\in V_{fix}\}.\]
\end{lemme}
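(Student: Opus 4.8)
The plan is to show openness and denseness separately, with denseness being the substantive part. Openness of $F(U)$ is immediate: $V_{fix}$ is a closed subset of $V$ (it is the fixed-point set of the continuous map $\sigma$), hence $V\setminus V_{fix}$ is open, and $F(U)=v^{-1}(V\setminus V_{fix})$ is open by continuity of $v$. For denseness, I would argue by contradiction: suppose $F(U)$ is not dense. Then its complement $Z:=\{(s,t)\,|\,v(s,t)\in V_{fix}\}$, which is closed, has nonempty interior, so there is an open set $\Omega\subset\mathbb{R}\times S^1$ on which $v$ takes values entirely in $V_{fix}$.

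The key step is to leverage the unique continuation theorem (Theorem \ref{uniquecont}). Since $H$ and $J$ are $\sigma$-invariant, the pair $(\sigma\circ v,\eta)$ is again a solution of the Rabinowitz-Floer equation (\ref{eq3}) on $\mathbb{R}\times S^1$, as noted in the text preceding the lemma. On the open set $\Omega$ we have $v=\sigma\circ v$ pointwise, since $v(\Omega)\subset V_{fix}$; in particular the two solutions $(v,\eta)$ and $(\sigma\circ v,\eta)$ agree on $\Omega$, hence agree to infinite order at any point $p\in\Omega$. Applying Theorem \ref{uniquecont} on a small $I_h(s)\times S^1$ around such a point, $(v,\eta)$ and $(\sigma\circ v,\eta)$ coincide on that whole cylinder slice; a standard connectedness argument (the set where $v=\sigma\circ v$ to infinite order is open by unique continuation and closed by continuity, and nonempty) then propagates the coincidence to all of $\mathbb{R}\times S^1$. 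Thus $v=\sigma\circ v$ everywhere, i.e.\ $im(v)\subset V_{fix}$, contradicting the hypothesis. Therefore $F(U)$ is dense.

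One technical point to check carefully: Theorem \ref{uniquecont} as stated concerns two solutions that coincide \emph{at a point to infinite order}, and it concludes they coincide on the local cylinder $I_h(s)\times S^1$. To run the connectedness argument on the noncompact base $\mathbb{R}\times S^1$ I would note that the set $G:=\{s_0\in\mathbb{R}\mid (v,\eta)(s_0,\cdot)=(\sigma\circ v,\eta)(s_0,\cdot)\text{ and all }s\text{-derivatives agree}\}$ is the projection of the coincidence-to-infinite-order locus; it is nonempty (it contains the $s$-coordinates of points of $\Omega$), open (by Theorem \ref{uniquecont}, coincidence to infinite order at one point forces coincidence on a neighborhood in $s$), and closed (by smoothness and continuity of all derivatives), hence $G=\mathbb{R}$.

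The main obstacle I anticipate is purely bookkeeping: making sure that ``coincide to infinite order at $p$'' is genuinely satisfied — it is, because the two solutions are literally equal on the open set $\Omega$, so every partial derivative of every order agrees at interior points of $\Omega$ — and then correctly invoking Theorem \ref{uniquecont} together with the connectedness/propagation argument to globalize. No estimates or hard analysis are needed beyond citing the unique continuation result; the content is organizational.
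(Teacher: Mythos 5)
Your proof is correct and follows essentially the same approach as the paper: observe that $\sigma$-invariance of $H$ and $J$ makes $(\sigma\circ v,\eta)$ a second solution of the Rabinowitz-Floer equation, note that $v=\sigma\circ v$ on any open set where $v$ takes values in $V_{fix}$, and then invoke Theorem \ref{uniquecont} to propagate this coincidence to all of $\mathbb{R}\times S^1$. The one place you go beyond the paper is the globalization step: Theorem \ref{uniquecont} is stated on a bounded strip $I_h(s)\times S^1$, and the paper simply asserts coincidence ``everywhere,'' whereas you spell out the open-and-closed argument for the set $G$ of $s$-values at which the two solutions agree to infinite order; this is a genuine and welcome piece of bookkeeping. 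Your openness argument (preimage of the open set $V\setminus V_{fix}$ under the continuous map $v$) is equivalent to the paper's reformulation via the positivity of $dist(v,\sigma\circ v)$, just phrased more directly.
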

\begin{proof}
It is easy to see that $F(U)$ is open in $\mathbb{R}\times S^1$, as we may write it  equivalently as
\[F(U):=\big\{(s,t)\in\mathbb{R}\times S^1 \,\big|\,dist.\big(v(s,t),(\sigma\circ v)(s,t)\big)>0\big\},\]
where the condition $dist.(\cdot,\cdot)>0$ is obviously open. To show that $F(U)$ is dense, we suppose the contrary. Then there exists an open set $W\subset\mathbb{R}\times S^1$ such that $v(W)\subset V_{fix}$. It follows that $(v,\eta)$ and $(\sigma\circ v,\eta)$ coincide on $W$ and hence by unique continuation that $(v,\eta)=(\sigma\circ v,\eta)$ everywhere. But this implies that $im(v)\subset V_{fix}$, a contradiction to the premise of the lemma.
\end{proof}
\begin{lemme}\label{Lemnoshift} Suppose that $(v,\eta)$ is a solution of (\ref{eq3}) with $\displaystyle\lim_{s\rightarrow\pm\infty}(v,\eta)=(v^\pm,\eta^\pm)\in\crita$. If $(\partial_s v,\partial_s \eta)\not\equiv (0,0)$, then there is no constant $s_0\in\mathbb{R}\setminus\{0\}$ such that $(v,\eta)$ is an $s_0$-shift of itself, i.e.\ it cannot hold for every $(s,t)\in\mathbb{R}\times S^1$ that
 \[\big(v(s+s_0,t),\eta(s+s_0)\big)=\big(v(s,t),\eta(s)\big).\]
 In the $\sigma$-symmetric case, there is also no constant $s_0\in\mathbb{R}\setminus\{0\}$ such that $(\sigma\circ v,\eta)$ is an $s_0$-shift of $(v,\eta)$. If $im(v)\not\subset V_{fix}$, then $s_0=0$ is also impossible.
\end{lemme}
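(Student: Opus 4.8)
The plan is to reduce all three assertions to the single fact that a genuinely shift-periodic trajectory between critical points must be stationary. First I would dispose of the non-symmetric claim. Suppose, for a contradiction, that $\big(v(s+s_0,t),\eta(s+s_0)\big)=\big(v(s,t),\eta(s)\big)$ for all $(s,t)$ with some $s_0\neq 0$. Iterating this identity in the variable $s$ gives $\big(v(s+ns_0,t),\eta(s+ns_0)\big)=\big(v(s,t),\eta(s)\big)$ for every $n\in\mathbb Z$. Now fix $(s,t)$ and push $n\to+\infty$ if $s_0>0$, or $n\to-\infty$ if $s_0<0$; in either case $s+ns_0\to+\infty$, so the assumed convergence $\lim_{s\to+\infty}(v,\eta)=(v^+,\eta^+)$ forces $\big(v(s,t),\eta(s)\big)=\big(v^+(t),\eta^+\big)$. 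Since $(s,t)$ was arbitrary, $v$ is independent of $s$ and $\eta$ is constant, i.e.\ $(\partial_s v,\partial_s\eta)\equiv(0,0)$, contradicting the hypothesis. (Alternatively one could send $n$ to both $\pm\infty$, deduce $\eta^+=\eta^-$, and quote Lemma \ref{lem2}; but the argument above uses only the existence of the two limits.)

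For the $\sigma$-symmetric statement with $s_0\neq 0$, the extra ingredient is precisely that $\sigma$ has \emph{finite} order: if $\sigma^k=Id$ and $\big(\sigma(v(s+s_0,t)),\eta(s+s_0)\big)=\big(v(s,t),\eta(s)\big)$, then iterating the first component shows $\sigma^{j}\big(v(s+js_0,t)\big)=v(s,t)$ for all $j$, and taking $j=k$ gives $v(s+ks_0,t)=v(s,t)$; the second component iterates to $\eta(s+ks_0)=\eta(s)$. Hence $(v,\eta)$ is a $(ks_0)$-shift of itself with $ks_0\neq 0$, and the non-symmetric case applies verbatim.

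The remaining case $s_0=0$ in the symmetric setting is immediate: the shift identity then reads $\sigma\big(v(s,t)\big)=v(s,t)$ for every $(s,t)$, that is $im(v)\subset V_{fix}$, which is exactly what the hypothesis $im(v)\not\subset V_{fix}$ excludes.

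There is no real analytic difficulty here; unlike the preceding lemmas in this subsection, this one needs neither unique continuation nor the Carleman similarity principle, only the asymptotic convergence already built into the hypotheses. The only thing to be careful about is the bookkeeping -- that the iteration can always be driven to $s\to+\infty$ irrespective of the sign of $s_0$, and that in the symmetric case the relation $\sigma^k=Id$ collapses the problem onto the non-symmetric one.
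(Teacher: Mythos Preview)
Your proof is correct and follows essentially the same strategy as the paper: iterate the shift relation, pass to the asymptotic limit to force the trajectory to be stationary, and in the symmetric case use $\sigma^k=Id$ to reduce to the non-symmetric statement. The only cosmetic difference is that the paper sends the iteration index to both $\pm\infty$ simultaneously whereas you drive it to $+\infty$ (with the appropriate sign), but this changes nothing.
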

\begin{proof}
 If there were such a constant $s_0\neq 0$, then $(v,\eta)$ would be $s_0$-periodic and hence
 \begin{align*}
  \big(v(s,t),\eta(s)\big) &= \big(v(s+k\cdot s_0,t),\eta(s+k\cdot s_0)\big)& &\forall\, k\in\mathbb{Z},\,\forall\,s,t\in\mathbb{R}\times S^1.\\
  &= \lim_{k\rightarrow \pm \infty} \big(v(s+k\cdot s_0,t),\eta(s+k\cdot s_0)\big)\\
  &= \big(v^\pm(t),\eta^\pm\big). 
 \end{align*}
This implies that $\partial_s (v,\eta)\equiv (0,0)$ -- a contradiction to our assumption.\pagebreak\\
 If there were such a constant $s_0$ in the $\sigma$-symmetric case, then we find by applying $\sigma$ iteratively $k$ times that $(v,\eta)$ would be $k\cdot s_0$ periodic. By repeating the same arguments as before, we find again a contradiction. If $im(\sigma)\not\subset V_{fix}$, then we have $\sigma\circ v\neq v$, which shows that $s_0=0$ is also impossible.
\end{proof}
Following \cite{FlHoSa} and \cite{BourOan3}, we define for a solution $U=(v,\eta)$ of (\ref{eq3}) with $\displaystyle\lim_{s\rightarrow\pm\infty} (v,\eta)=(v^\pm,\eta^\pm)\in\crita$ the set of regular points as
\[\mathcal{R}(U):=\left\{(s,t)\in\mathbb{R}\times S^1\;\left|\;\begin{aligned}\partial_s\big(v(s,t),\eta(s)\big)&\neq (0,0)\\ (v(s,t),\eta(s))&\neq (v^\pm(t),\eta^\pm)\\ (v(s,t),\eta(s))&\neq \big(v(s',t),\eta(s')\big), \forall s'\in\mathbb{R}\setminus\{s\}\end{aligned}\right.\right\}.\]
Together with Peter Uebele, \cite{Ueb}, we define in the $\sigma$-symmetric case when $im(v)\not\subset V_{fix}$ the set of symmetric regular  points by
\[\mathcal{S}_\sigma(U):=\left\{(s,t)\in\mathbb{R}\times S^1\;\left|\;\begin{aligned}\partial_s\big(v(s,t),\eta(s)\big)&\neq (0,0)\\ (v(s,t),\eta(s))&\neq (\sigma\circ v^\pm(t),\eta^\pm)\\ (v(s,t),\eta(s))&\neq \big(\sigma\circ v(s',t),\eta(s')\big),\forall s'\in\mathbb{R}\end{aligned}\right.\right\}.\]
Note that we require in particular for $(s,t)\in \mathcal{S}_\sigma(U)$ that $v(s,t)\neq (\sigma\circ v)(s,t)$.
\begin{prop}[cf. \cite{FlHoSa}, Thm.\ 4.3, or \cite{BourOan3}, Prop.\ 4.3]\label{regpts}~\\
 Assume that $\partial_s U\not\equiv 0$. Then, the set $\mathcal{R}(U)$ is open and dense in the non-empty open set $\{(s,t)\in \mathbb{R}\times S^1\,|\,\partial_s v(s,t)\neq 0\}$. In the symmetric case, the set $\mathcal{S}_\sigma(U)$ is likewise open and dense in the same set.
\end{prop}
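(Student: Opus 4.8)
The plan is to follow the argument of Floer--Hofer--Salamon and its Rabinowitz-Floer adaptation by Bourgeois--Oancea, using only the unique-continuation statements Theorems \ref{uniquecont} and \ref{uniquecontpt} together with Lemmas \ref{notfixdense} and \ref{Lemnoshift} as analytic input. Set $\Omega:=\{(s,t)\in\mathbb{R}\times S^1\mid\partial_s v(s,t)\neq 0\}$, which is clearly open; I would first record that it is non-empty. Indeed, if $\partial_s v\equiv 0$ then the Rabinowitz-Floer equation gives $\partial_t v=\eta X_H(v)$, whence $\tfrac{d}{dt}H(v)=0$; since $(v,\eta)(s)\to(v^\pm,\eta^\pm)\in\crita$ and $H\equiv 0$ on $\Sigma$, the constant value of $H(v)$ is $0$, so $\partial_s\eta=-\int_0^1 H(v)\,dt=0$ and $\partial_s U\equiv 0$, contrary to hypothesis. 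In the symmetric case one applies the same remark to the solution $(\sigma\circ v,\eta)$, together with Lemma \ref{notfixdense}, which guarantees that $\{v\neq\sigma\circ v\}$ is dense once $im(v)\not\subset V_{fix}$, so that $\Omega$ meets this region.

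For openness of $\mathcal{R}(U)$ only the third (injectivity) condition needs attention, the first two being manifestly open. If $(s_n,t_n)\to(s_0,t_0)\in\mathcal{R}(U)$ with $(s_n,t_n)\notin\mathcal{R}(U)$, then for large $n$ the first two conditions still hold at $(s_n,t_n)$, so there are $s_n'\neq s_n$ with $U(s_n',t_n)=U(s_n,t_n)$. The $s_n'$ are bounded --- otherwise the asymptotic convergence $U(s,\cdot)\to(v^\pm,\eta^\pm)$ would force $U(s_0,t_0)=(v^\pm(t_0),\eta^\pm)$ --- so, after passing to a subsequence, $s_n'\to s_0'$, and $s_0'\neq s_0$ contradicts injectivity at $(s_0,t_0)$. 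Hence $s_0'=s_0$. But $\partial_s v(s_0,t_0)\neq 0$, so $s\mapsto v(s,t)$ is injective for $s$ in a fixed interval around $s_0$, uniformly for $t$ near $t_0$; since $s_n,s_n'$ both converge to $s_0$ and $v(s_n,t_n)=v(s_n',t_n)$, this forces $s_n=s_n'$ for large $n$, a contradiction. The symmetric set $\mathcal{S}_\sigma(U)$ is handled the same way, using that $(\sigma\circ v,\eta)$ is again a solution with limits in $\crita$ and that, by Lemma \ref{notfixdense}, it suffices to work inside the open dense set $\{v\neq\sigma\circ v\}$.

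For density I would argue by contradiction: suppose there is a nonempty open $W\subseteq\Omega$ with $W\cap\mathcal{R}(U)=\varnothing$. On $W$, where $\partial_s v\neq 0$, each point fails $\mathcal{R}(U)$ either because $(v,\eta)(s,t)=(v^\pm(t),\eta^\pm)$ or because it has a return partner $s'\neq s$ with $U(s',t)=U(s,t)$. The first kind of point forms a closed subset of $W$ with empty interior: if it had interior, Theorem \ref{uniquecont} applied to $U$ and the constant solution $(s,t)\mapsto(v^\pm(t),\eta^\pm)$ would give $U\equiv(v^\pm,\eta^\pm)$, contradicting $\partial_s U\not\equiv 0$. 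So after shrinking $W$ every point of $W$ has a return partner. Using the local injectivity of $s\mapsto v(s,t)$ at points of $\Omega$ one controls the partner map near a suitably chosen base point $(s_0,t_0)\in W$ and then verifies the uniform-overlap hypothesis of Theorem \ref{uniquecontpt} for $U_0=U$ and $U_1=U(\,\cdot\,+c,\,\cdot\,)$ with offset $c:=s'(s_0,t_0)-s_0\neq 0$. That theorem yields $U(s,t)=U(s+c,t)$ near $(s_0,t_0)$, hence on all of $\mathbb{R}\times S^1$ by Theorem \ref{uniquecont} (both sides being solutions agreeing on an open set), so $U$ coincides with a nontrivial $s$-shift of itself --- impossible by Lemma \ref{Lemnoshift}. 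For $\mathcal{S}_\sigma(U)$ one repeats this with $U_1:=\sigma\circ U(\,\cdot\,+c,\,\cdot\,)$ and concludes via the $\sigma$-version of Lemma \ref{Lemnoshift}, which additionally excludes $c=0$ when $im(v)\not\subset V_{fix}$.

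The main obstacle is the step indicated in the density argument: converting the mere presence of return partners into the uniform-overlap hypothesis of Theorem \ref{uniquecontpt}. This requires choosing the base point so that the partner of nearby points itself lies in $\Omega$ (so the partner is locally unique and depends continuously on the point), and checking that the points of $W$ for which this fails --- essentially those whose partner lands in $\{\partial_s v=0\}$ --- do not fill an open subset of $W$; the latter again reduces to unique continuation, comparing $U$ with the $s$-independent and the $v$-constant/$\eta$-affine solutions. Everything else is routine given the unique-continuation machinery already established.
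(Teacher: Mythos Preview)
Your overall architecture is correct and matches the paper: openness via a sequence-compactness argument, density by contradiction feeding into Theorem~\ref{uniquecontpt} and then Lemma~\ref{Lemnoshift}. Non-emptiness of $\Omega$, openness of $\mathcal{R}(U)$, and the reduction away from the asymptotic set $\{U(s,t)=(v^\pm(t),\eta^\pm)\}$ are essentially as in the paper.

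The density argument, however, has two real gaps at the step you yourself flag as the main obstacle. First, you treat the partner as a function $s'(s_0,t_0)$ and invoke Theorem~\ref{uniquecontpt} with a single offset $c$; but a point may have several return partners, and nothing you write selects one coherently. The paper first shows (after arranging regularity of partners, see below) that there are only \emph{finitely} many partners $s_1,\dots,s_N$ of $(s_0,t_0)$, then proves that partners of nearby points localise near the $s_j$, and uses Baire on the closed sets $\Sigma_j=\{(s,t)\in\overline{V}_\delta(s_0,t_0):\exists\,s'\in\overline{V}_r(s_j,t_0),\ U(s,t)=W(s',t)\}$ to find one with nonempty interior; only then does Theorem~\ref{uniquecontpt} apply, with offset determined by that particular $j$. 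Second, the regularity step itself---arranging that every partner $s'$ of the base point has $\partial_s W(s',t_0)\neq 0$---is not obtained via unique continuation as you suggest. The paper uses Sard's theorem: take a tubular first-coordinate projection $pr_1$ along the embedded arc $s\mapsto U(s,t_0)$ and note that $f:=pr_1\circ W(\cdot,t_0)$ sends $C(W)_{t_0}$ into its critical-value set, which is nowhere dense in $I_h(s_0)$; nudging $s_0$ along the $s$-axis outside that set forces all its partners to be regular. Your proposed reduction (``comparing $U$ with the $s$-independent and the $v$-constant/$\eta$-affine solutions'') would at best show $C(W)$ has empty interior, but does not prevent the partner correspondence from collapsing an open set into $C(W)$; establishing openness of that correspondence is precisely what the Sard/Baire machinery delivers.
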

\begin{proof}~\\
\underline{1) All conditions are open}
\begin{itemize}
 \item The set $\{(s,t)\in \mathbb{R}\times S^1\,|\,\partial_s v(s,t)\neq 0\}$ is clearly open. If it were empty, then $\partial_s v\equiv 0$ and $v(s,\cdot)=v^\pm(\cdot)\in\Sigma=H^{-1}(0)$ for all $s$. Then, the second equation of (\ref{eq3}) implies that $\partial_s \eta\equiv 0$ and hence $\partial_s U\equiv 0$, which contradicts the assumption of the proposition.
 \item The first and second condition for $\mathcal{R}(U)$ resp.\ $\mathcal{S}_\sigma(U)$ are clearly open. We need to show that the third condition for $\mathcal{R}(U)$ is open as well. Arguing by contradiction, we find a point $(s_0,t_0)\in\mathcal{R}(U)$, a sequence $(s^\nu,t^\nu)\rightarrow(s_0,t_0)$ and a sequence $s'^\nu\neq s^\nu$ such that $U(s'^\nu,t^\nu)=U(s^\nu,t^\nu)$. As $\partial_s U(s_0,t_0)\neq 0$, we can find $h>0$ such that $U(\cdot,t_0)$ is an embedding on $I_h(s_0)$ and $U(\cdot,t^\nu)$ is an embedding on $I_h(s^\nu)$ for $\nu$ large enough. Thus we can assume without loss of generality that $s'^\nu$ is bounded away from $s_0$ (otherwise $s'^\nu\in I_h(s^\nu)$ for $\nu$ large enough, a contradiction). Since $U$ converges at $\pm\infty$ to $(v^\pm,\eta^\pm)$ and $U(s_0,t_0)\neq(v^\pm(t_0),\eta^\pm)$ by assumption, we infer the existence of some $T>0$ such that $s'^\nu\in[-T,T]$ for all $\nu$. We can therefore extract a convergent subsequence, still denoted by $s'^\nu$, such that $s'^\nu\rightarrow s_0'\neq s_0$. Then $U(s_0',t_0)=U(s_0,t_0)$, which contradicts the assumption $(s_0,t_0)\in\mathcal{R}(U)$.
 \item For $\mathcal{S}_\sigma(U)$, the third condition can be written as
 \[dist\big(v(s,t),\eta(s)\big),(\sigma\circ v)(\mathbb{R},t),\eta(\mathbb{R})\big)>0,\]
 since $(v(s,t),\eta(s))\neq(v^\pm(t),\eta^\pm)$. This is clearly an open condition.
\end{itemize}
\underline{2) Density}\medskip\\
 It suffices to show for every $(s_0,t_0)\in\mathbb{R}\times S^1$ with $\partial_s v(s_0,t_0)\neq 0$ that it can be approximated by a sequence of points $(s^\nu,t^\nu)\in\mathcal{R}(U)$ resp.\ $\in\mathcal{S}_\sigma(U)$. As $im(\sigma)\not\subset V_{fix}$, we know by Lemma \ref{notfixdense} that any $(s_0,t_0)\in\mathbb{R}\times S^1$ can be approximated by a sequence $(s^\nu,t^\nu)$ satisfying $v(s^\nu,t^\nu)\not\in V_{fix}$. Hence we may assume without loss of generality that $(v_0,t_0)\not\in V_{fix}$. Thus, any sequence $(s^\nu,t^\nu)\rightarrow(s_0,t_0)$ satisfies $v(s^\nu,t^\nu)\not\in V_{fix}$ for $\nu$ large enough. Hence, we may assume for $S_\sigma(U)$ that $v(s,t)\neq(\sigma\circ v)(s,t)$. The remaining conditions for $\mathcal{S}_\sigma(U)$ are very similar to those of $\mathcal{R}(U)$ and the proof of the density of $\mathcal{R}(U)$ and $\mathcal{S}_\sigma(U)$ is similar as well. In order to give them both at the same time, we will write $W(s',t)=(w(s',t),\eta(s'))$, where $W$ and $w$ are either $U$ and $v$ or $\sigma\circ U$ and $\sigma\circ v$.\\
 As $\partial_s v(s_0,t_0)\neq 0$, we may choose $h>0$ so small that $\partial_s v\neq 0$ on $V_h(s_0,t_0)$ and $I_h(s_0)\rightarrow V, s\mapsto v(s,t)$ is an embedding for all $t\in I_h(t_0)$. Then $I_h(s_0)\rightarrow V\times\mathbb{R}, s\mapsto U(s,t)$ is a fortiori also an embedding for all $t\in I_h(t_0)$. Thus, every point $(s,t)\in V_h(s_0,t_0)$ can be approximated by a sequence $(s^\nu,t^\nu)$ satisfying $U(s^\nu,t^\nu)\neq(v^\pm(t^\nu),\eta^\pm)$ and $U(s^\nu,t^\nu)\neq(w^\pm(t^\nu),\eta^\pm)$. Hence we can assume without loss of generality that
 \[\forall (s,t)\in V_h(s_0,t_0)\; : \; U(s,t)\neq (v^\pm(t),\eta^\pm),(w^\pm(t),\eta^\pm).\tag{$\ast$}\]
 Let us denote $C(W):=\{(s,t)\in\mathbb{R}\times S^1\,|\,\partial_s W(s,t)=0\}$. As $\partial_s U\not\equiv 0$ this set has empty interior by unique continuation.\medskip\\
 \textbf{\textit{Claim:}} \textit{$(s_0,t_0)$ can be approximated by a sequence $(s^\nu, t_0)$ such that for all $\nu$ and all $s'\in\mathbb{R}\setminus\{s^\nu\}$ with $U(s^\nu,t_0)=W(s',t_0)$ holds that $(s',t_0)\not\in C(W)$.}\medskip\\
 Assuming the claim, we can suppose without loss of generality that for each $s'\in\mathbb{R}$ with $U(s_0,t_0)=W(s',t_0)$ holds that $(s', t_0)\not\in C(W)$. Moreover, after further diminishing $h>0$, we can assume without loss of generality that
 \[\forall (s,t)\in V_h(s_0,t_0), \forall s'\in\mathbb{R} \,:\, U(s,t)=W(s',t)\;\,\Rightarrow\;\,(s',t)\not\in C(W).\tag{$\ast\ast$}\]
 Indeed, if this would fail for all $h>0$, we could find a sequence $(s^\nu,t^\nu)\rightarrow(s_0,t_0)$ and a sequence $s'^\nu$ such that $(s'^\nu,t^\nu)\in C(W)$ and $U(s^\nu,t^\nu)=W(s'^\nu,t^\nu)$. Due to $\lim_{s\rightarrow \pm\infty}W(s,t) = (w^\pm(t),\eta^\pm)$ uniformly, we deduce from $(\ast)$ the existence of a constant $T>0$ such that $|s'^\nu|\leq T$. Thus, up to a subsequence, we have $s'^\nu\rightarrow s'\in [-T,T],\; t^\nu\rightarrow t_0$ and $U(s_0,t_0)=W(s',t_0)$ with $(s',t_0)\in C(W)$ contradicting our last assumption on $(s_0,t_0)$ obtained by the claim.\medskip\\
 \textit{\textbf{Proof of the claim:}} Let us choose a neighborhood $\mathcal{V}$ of $U(I_h(s_0),t_0)$ in $V\times\mathbb{R}$ of the form $I_h(s_0)\times \mathbb{R}^{2n}$. This is possible, as $s\mapsto v(s,t_0)$ is an embedding. Let $pr_1$ denote the projection to the first coordinate. Consider the function $f:=pr_1\circ W(\cdot,t_0)$ with 
 \[f: dom(f):= W(\cdot,t_0)^{-1}(\mathcal{V})\rightarrow I_h(s_0).\]
 Write $C(W)_{t_0}:=\{s\in\mathbb{R}\,|\,(s,t_0)\in C(W)\}$. Then $f\big(C(W)_{t_0}\cap dom(f)\big)$ is contained in the critical values of $f$. This is a nowhere dense set in $I_h(s_0)$ by Sard's Theorem and the claim follows.\medskip\\
 Now assume by contradiction the existence of a point $(s_0,t_0)$ satisfying $\partial_s v(s_0,t_0)\neq 0$ and $(\ast)$ and $(\ast\ast)$, which cannot be approximated by points in $\mathcal{R}(U)$ resp.\ $\mathcal{S}_\sigma(U)$. Then there exists an $0<\veps< h$ such that
 \[\forall\, (s,t)\in V_\veps(s_0,t_0)\;\; \exists\, s'\neq s\; : \; U(s,t)=W(s',t). \tag{$\ast\ast\ast$}\]
 As above, we find a constant $T>0$ such that $|s'|\leq T$ for all $s'$. This implies that for any $(s,t)\in V_\veps(s_0,t_0)$, there is only a finite number of values $s'\in\mathbb{R}$ such that $U(s,t)=W(s',t)$. If not, we could find an accumulation point $s'\in[-T,T]$ where $U(s,t)=W(s',t)$ and $\partial_s W(s',t)=0$, a contradiction with $(\ast\ast)$. Let $s_1,...\,, s_N\in [-T,T]$ be the points such that $U(s_0,t_0)=W(s_j,t_0), \, j=1,...\,,N$.\\
 Now we claim that for any $r>0$ there exists $\delta>0$ such that
 \[\forall\, (s,t)\in V_\delta(s_0,t_0)\;\exists\,(s',t)\in \bigcup_{j=1}^N V_r(s_j,t_0)\;\; :\;\; U(s,t)=W(s',t).\]
 If this would fail, we could find $r>0$ and a sequence $(s^\nu,t^\nu)\rightarrow (s_0,t_0)$ such that for all $\nu$ and for all $(s', t^\nu)\in \bigcup_{j=1}^N V_r(s_j,t_0)$ we have $U(s^\nu,t^\nu)\neq W(s',t^\nu)$. On the other hand by $(\ast\ast\ast)$, there exists $s'^\nu\in[-T,T]$ such that $U(s^\nu,t^\nu)=W(s'^\nu,t^\nu)$ and now in particular $|s'^\nu-s_j|\geq r$ for all $j=1,...\,,N$. Up to a subsequence we have $s'^\nu\rightarrow s'$ and $t^\nu\rightarrow t_0$ with $U(s_0,t_0)=W(s',t_0)$ and $s'\neq s_j,\; j=1,..,N$, a contradiction. Define
 \[\Sigma_j:=\left\{(s,t)\in \overline{V}_\delta(s_0,t_0)\, \left|\,\exists(s',t)\in \overline{V}_r(s_j,t_0)\,:\, U(s,t)=W(s',t)\right.\right\}.\]
 Then $\Sigma_j$ is closed and $\overline{V}_\delta(s_0,t_0)=\Sigma_1\cup...\cup\Sigma_N$. It follows from Baire's Theorem that one of the $\Sigma_j$, say $\Sigma_1$, has non-empty interior. Let $(\bar{s},\bar{t})\in int(\Sigma_1)$ and denote by $(\bar{s}',\bar{t})$ a preimage $W^{-1}(U(\bar{s},\bar{t}))$ in $V_r(s_1,t_0)$. Let $0<r_1<r$ be such that $V_{r_1}(\bar{s}',\bar{t})\subset V_r(s_1,t_0)$ and $0<\delta_1<\delta$ be such that $V_{\delta_1}(\bar{s},\bar{t})\subset\Sigma_1$ and such that for all $(s,t)\in V_{\delta_1}(\bar{s},\bar{t})$ there exists $(s',t)\in V_{r_1}(\bar{s}',\bar{t})$ with $U(s,t)=W(s',t)$. It follows from our construction that for all $0<h'\leq r_1$ there exists $0<h\leq \delta_1$ such that for all $(s,t)\in V_h(\bar{s},\bar{t}), $ there exists $(s',t)\in V_{h'}(\bar{s}',\bar{t})$ such that $U(s,t)=W(s',t)$. We can therefore apply Theorem \ref{uniquecontpt} with $(s_0,t_0)=(\bar{s},\bar{t}),\; U_0=U,\, U_1=W(\cdot+\bar{s}'-\bar{s})$ and $h_0=r_1$, to obtain $U_0=U_1$. This implies that $W$ is an $(\bar{s}'-\bar{s})$-shift of $U$, $\bar{s}'-\bar{s}\neq 0$, which contradicts Lemma \ref{Lemnoshift}.
\end{proof}
\begin{rem}
 Write $v(\pm\infty,t):=v^\pm(t)$ and $\overline{\mathbb{R}}:=\mathbb{R}\cup\{\pm\infty\}$. By applying Proposition \ref{regpts} to the symplectic symmetries $\sigma^1,\sigma^2,...\,,\sigma^k=id$, we find for a solution $(v,\eta)$ of (\ref{eq3}) with $im(v)\not\subset V_{fix}(\sigma^l)$ for $l=1,...\,,k-1$ that the set
 \[\mathcal{S}(U)=\left\{(s,t)\in\mathbb{R}\times S^1\;\left|\;\begin{aligned}(\partial_s v(s,t),\partial_s \eta(s))&\neq (0,0)&&\\
 (v(s,t),\eta(s))&\not\in \big(\sigma^l\circ v(\overline{\mathbb{R}},t),\eta(\overline{\mathbb{R}})\big),& l&=1,...\,,k\\
 \text{except}\quad (v(s,t),\eta(s))&=\big(\sigma^k\circ v(s,t),\eta(s)\big)&&\end{aligned}\right.\right\}\]
\end{rem}
is open and dense in $\{(s,t)\in\mathbb{R}\times S^1\,|\,\partial_s v(s,t)\neq 0\}$. This follows from Baire's Theorem as the above set is the finite intersection of the open and dense sets $\mathcal{R}(U)$ and $\mathcal{S}_\sigma(U),\mathcal{S}_{\sigma^2}(U),...\,,\mathcal{S}_{\sigma^{k-1}}(U)$.

\subsection{Transversality}\label{sec2.5}
In this subsection, we show that $\widehat{\mathcal{M}}(c^-,c^+,m)$ is a manifold for generic choices of $J$. We do so by describing this space as the zero-set of a Fredholm section $\mathcal{F}$ in a suitable Banach bundle $\mathcal{E}\rightarrow\mathcal{B}$. Then, we generalize this result to situations with a symplectic symmetry $\sigma$ of finite order and show again that $\widehat{\mathcal{M}}(c^-,c^+,m)$ is a manifold, now for generic choices of $J$ in the space of $\sigma$-symmetric almost complex structures.\bigskip\\
Suppose we have chosen a Morse function $h$ and a metric $g_h$ on $\crita$ such that $(h,g_h)$ is Morse-Smale. Let $\phi$ denote the gradient flow of $h$ with respect to $g_h$. It defines an $\mathbb{R}$-family of diffeomorphisms $T_h(t)\in \text{\textit{Diff}}\big(\crita\big)$ by
\[T_h(t)(x):=\phi^t(x).\]
A trajectory with $m$ cascades from $c^-$ to $c^+$, $c^\pm\in\text{crit}(h)$ is by Definition \ref{defntrajwithcasc} a tupel
\[(x,t)=\big((x_k)_{1\leq k\leq m},\,(t_k)_{1\leq k\leq m-1}\big),\]
where $x_k=(v_k,\eta_k)$ are non-constant $\mathcal{A}^H$-gradient trajectories and $t_k\geq 0$ non-negative real numbers satisfying some asymptotic and connectedness conditions. Using $T_h$, we can rewrite these conditions as follows:
\begin{align*}
 &(\text{\textbf{\textit{Asymptotics}}})& &\forall\, k\,\exists\, x_k^\pm\in \crita, s.t. \lim_{s\rightarrow \pm\infty} x_k = x_k^\pm\\ 
 &(\text{\textbf{\textit{Connectedness}}})& &\lim_{t\rightarrow -\infty} T_h(t)(x_1^-)=c^-,\; \lim_{t\rightarrow \infty} T_h(t)(x_m^+) = c^+,\; T_h(t_k)(x_k^+) = x_{k+1}^-.
\end{align*}
Note that the map $t\mapsto T_h(t)(x_k^+), t\in[0,t_k],$ is exactly the $h$-gradient trajectory $y_k$ of the original definition. We call a trajectory with $m$ cascades \textit{\textbf{stable}}, if for all $t_k$ holds $t_k>0$. In particular, trajectories with 0 or 1 cascade are always stable. We denote the space of stable trajectories with $m$ cascades from $c^-$ to $c^+$ by $\widehat{\mathcal{M}}_s(c^-,c^+,m)$ and the corresponding moduli space of unparametrized trajectories by $\mathcal{M}_s(c^-,c^+,m)$.\bigskip\\
The dimension of $\widehat{\mathcal{M}}_s(c^-,c^+,m)$ is given in terms of Morse- and Conley-Zehnder indices. In order to define the Conley-Zehnder index as simple as possible, we assume throughout this section that the following map between fundamental groups induced by inclusion is injective
\[ i_\ast : \pi_1(\Sigma)\rightarrow \pi_1(V).\]
 Moreover, we consider only contractible closed Reeb orbits. Alternatively, we could assume that $\Sigma$ is simply connected, i.e.\ $\pi_1(\Sigma)=0$. Under these assumptions, we can choose for any closed Reeb orbit $v\in\mathcal{P}(\alpha)=\crita$ a map $\bar{v}\in C^\infty(D,\Sigma)$ from the unit disc $D=\big\{z\in\mathbb{C}\,\big|\,|z|\leq 1\big\}$ to $\Sigma$ such that $\bar{v}(e^{2\pi it})=v(t)$. We call such a $\bar{v}$ a \textbf{\textit{capping}} for $v$. Given a capping, we define in Section \ref{1.4} the (transversal) Conley-Zehnder index $\mu(v,\bar{v})$ of the pair $(v,\bar{v})$.\\
 Now we are ready to state the following fundamental theorem.\pagebreak
\begin{theo}[Global Transversality Theorem]\label{theotrans}~\\
Given a Morse-Smale pair $(h,g_h)$ on $\crita$, there exists a set of second category $\mathcal{J}_{reg}$ of admissible families of smooth almost complex structures $J_t(\cdot,n)$ such that for every $J\in \mathcal{J}_{reg}$ the space of stable $\mathcal{A}^H$-trajectories $\widehat{\mathcal{M}}_s(c^-,c^+,m)$ has the structure of a finite dimensional manifold for every $c^-,c^+\in crit(h)$ and $m\in\mathbb{N}$. Its local dimension at a trajectory with cascades $(v,t)$ is given by
\begin{align*}
 \dim_{(v,t)}\,\widehat{\mathcal{M}}_s(c^-,c^+,m) =&\phantom{\,+\,}\Big(\mu_{CZ}(c^+,\bar{c}^+)+ind_h(c^+)-\frac{1}{2}\dim_{c^+}(\crita)\Big)\\
 &-\Big(\mu_{CZ}(c^-,\bar{c}^-)+ind_h(c^-)-\frac{1}{2}\dim_{c^-}(\crita)\Big)\\
 &+m-1+\sum_{k=1}^m 2c_1(\bar{v}_k^-\#v_k\#\bar{v}_k^+)\\
 =&\phantom{\,+\,} \mu(c^+,\bar{c}^+)-\mu(c^-,\bar{c}^-)+m-1+\sum_{k=1}^m 2c_1(\bar{v}_k^-\#v_k\#\bar{v}_k^+),\\
 \text{where }\qquad \mu(c,\bar{c}):=&\phantom{\,+\,} \mu_{CZ}(c,\bar{c})+ind_h(c)-\frac{1}{2}\dim_{c}(\crita)+\frac{1}{2}.
\end{align*}
Here, $\bar{v}_k^\pm$ are cappings for $v^\pm_k=\displaystyle \lim_{s\rightarrow \pm\infty} v_k$ and $\bar{c}^\pm$ are cappings for $c^\pm$ such that \linebreak $\bar{v}_k^+=\bar{v}_{k+1}^-\#\mathscr{C}_{k,k+1}$ where $\mathscr{C}_{k,k+1}: S^1\times[0,t_k]\rightarrow\mathcal{N}^{\eta_k}$ is the cylinder given by \linebreak $t\mapsto T_h(t)(v_k^+)$. Moreover $\bar{c}=\bar{v}_1\#\mathscr{C}_{c^-,1}$ and $\bar{v}_m^+=\bar{c}^+\#\mathscr{C}_{m,c^+}$, where $\mathscr{C}_{c^-,1}$ and $\mathscr{C}_{m,c^+}$ are the analogue cylinders at the both ends. Finally $\bar{v}_k^-\#v_k\#\bar{v}_k^+$ is the sphere obtained by capping the cylinder $v_k$ with $v_k^\pm$, $c_1$ is the first Chern-class of $TV$ and $\dim_{c^\pm}(\crita)$ is the local dimension of $\crita$ near $c^\pm$.
\end{theo}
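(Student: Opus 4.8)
The strategy is to exhibit $\widehat{\mathcal{M}}_s(c^-,c^+,m)$ as the transverse zero set of a Fredholm section over a Banach manifold and then run a Sard--Smale argument in the $J$-variable. Fix $p>2$ and a small weight $\delta>0$ with $\delta/p$ below the exponential rate $\rho$ of Proposition~\ref{asympexpconv}. For each admissible choice of critical manifolds $\mathcal{N}^{\eta_k^\pm}$, $1\le k\le m$, let $\mathcal{B}_k$ be the Banach manifold of maps $(v_k,\eta_k):\mathbb{R}\times S^1\to V\times\mathbb{R}$ of local Sobolev class $W^{1,p}$ which converge at $s\to\pm\infty$ to points $(v_k^\pm,\eta_k^\pm)\in\mathcal{N}^{\eta_k^\pm}$, the difference lying in the weighted space $W^{1,p}_\delta$ of Definition~\ref{defnWeighSob}; crucially, the asymptotic limits $v_k^\pm$ are \emph{free} parameters. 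Set $\mathcal{B}:=\big(\prod_{k=1}^m\mathcal{B}_k\big)\times(0,\infty)^{m-1}$, a manifold without boundary precisely because we restrict to \emph{stable} trajectories. Over $\mathcal{B}$ form the Banach bundle $\mathcal{E}$ with fibre $\bigoplus_k\big(L^p_\delta(\mathbb{R}\times S^1,v_k^\ast TV)\oplus L^p_\delta(\mathbb{R})\big)$, and let
\[\mathcal{F}\big((v_k,\eta_k)_k,(t_k)_k\big):=\big(\partial_s(v_k,\eta_k)-\nabla\mathcal{A}^H(v_k,\eta_k)\big)_{1\le k\le m}.\]
Its zero set consists of configurations all of whose cascades solve the Rabinowitz--Floer equation; imposing the asymptotic and connectedness conditions of Definition~\ref{defntrajwithcasc} amounts to intersecting $\mathcal{F}^{-1}(0)$ with the fibre product of the evaluation maps $\mathrm{ev}_k^\pm:(v_k,\eta_k)\mapsto v_k^\pm\in\mathcal{N}^{\eta_k^\pm}$ against the unstable and stable manifolds $W^u(c^-),W^s(c^+)$ of the Morse--Smale pair $(h,g_h)$ and against the Morse-flow identifications $T_h(t_k)(v_k^+)=v_{k+1}^-$. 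Absorbing these finite-codimension conditions into the section in the usual way, $\widehat{\mathcal{M}}_s(c^-,c^+,m)$ becomes $\mathcal{F}^{-1}(0)$ for an enlarged section.

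The second step is Fredholm theory and the index count. Along each cascade the vertical differential of $\mathcal{F}$ is the linearized Rabinowitz--Floer operator whose asymptotic operators, by the computations of Section~\ref{sec2.1}, are $\pm\nabla^2\mathcal{A}^H$ at the ends; Lemmas~\ref{lemker}, \ref{lemclosedimage} and \ref{lemcoker} identify these as self-adjoint Fredholm operators of index $0$ whose kernels have dimension $\dim\mathcal{N}^{\eta_k^\pm}$, so the cascade operator with free Morse--Bott ends is Fredholm by the standard spectral-flow argument, with index $\mu_{CZ}(v_k^-,\bar v_k^-)-\mu_{CZ}(v_k^+,\bar v_k^+)+2c_1(\bar v_k^-\#v_k\#\bar v_k^+)+\tfrac12\dim\mathcal{N}^{\eta_k^-}+\tfrac12\dim\mathcal{N}^{\eta_k^+}$, the last two terms coming from letting the ends move in the critical manifolds. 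Adding the $m-1$ dimensions of $(0,\infty)^{m-1}$, subtracting the codimensions of $W^s(c^+),W^u(c^-)$ and of the Morse-flow matchings, and using $\dim W^u(c^\pm)=ind_h(c^\pm)$, the additivity of $\mu_{CZ}$ under concatenation with the cylinders $\mathscr{C}_{k,k+1},\mathscr{C}_{c^-,1},\mathscr{C}_{m,c^+}$, and the capping conventions of Section~\ref{1.4}, one recovers the stated formula. This step is essentially bookkeeping with the index conventions.

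The heart of the matter is transversality, i.e.\ surjectivity of the full linearization once $J$ is also allowed to vary. I would introduce the Banach manifold of admissible $C^\ell$-families $J_t(\cdot,n)$, form the universal moduli space $\{(\,\cdot\,,J):\mathcal{F}_J(\cdot)=0\}$, prove its linearization is onto, conclude that the universal space is a Banach manifold, take $\mathcal{J}_{reg}$ to be the regular values of its projection to $J$ (Sard--Smale), and pass from $C^\ell$ to smooth $J$ by Taubes' argument. Onto-ness is the delicate point, because the $\eta$-equation $\partial_s\eta+\int_0^1H(v)\,dt=0$ contains no $J$: one must first show that an element $(\zeta,\ell)$ in the $L^q$-cokernel of the cascade linearization has $\ell\equiv 0$ (using that the $\eta$-component of the equation is determined by $v$ together with the unique continuation Theorems~\ref{uniquecont}--\ref{uniquecontpt}), and then eliminate $\zeta$ by choosing the variation $\delta J$ supported near a regular point of the cascade, which exists and is dense by Proposition~\ref{regpts}; the dependence of $J$ on the extra parameter $n$ (i.e.\ on $\eta$) is exactly what supplies enough independent perturbation directions to do this simultaneously for all $m$ cascades and to distinguish a cascade from its $\mathbb{R}$-reparametrizations. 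In the symmetric case one restricts to $\sigma$-invariant $J$ and repeats the argument at a point of the symmetric regular set $\mathcal{S}_\sigma(U)$, dense by Proposition~\ref{regpts} whenever $im(v_k)\not\subset V_{fix}$, the remaining case being handled by the automatic transversality of Appendix~\ref{auttrans}; there one uses that the $\sigma$-orbit of $\delta J$ does not interfere with the value to be corrected. The main obstacle throughout is this surjectivity step: reconciling the semi-local nature of the Rabinowitz--Floer equation with a perturbation of $J$ localized near an injective point, while in the Morse--Bott/cascades setting simultaneously keeping all intermediate evaluation maps transverse to the broken Morse flow.
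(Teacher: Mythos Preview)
Your overall architecture matches the paper's: Banach manifold of cascade configurations with free Morse--Bott ends, Fredholm section, universal moduli space in the $J$-variable, Sard--Smale, Taubes' trick for passing to smooth $J$. The paper packages the matching conditions slightly differently---it defines an evaluation map $\psi:\mathcal{U}^\ell_m\to C_0\times(C_1)^2\times\cdots\times(C_{m-1})^2\times C_m$ and proves transversality to a diagonal-type submanifold $A_m(c^-,c^+)$ by showing that $ev^-\times ev^+$ is a \emph{submersion} on each single-cascade universal space $\mathcal{U}^\ell(C_{k-1},C_k)$---but this is equivalent to your ``absorb the finite-codimension conditions into the section'' approach.

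Two concrete corrections. First, in the surjectivity argument you eliminate the cokernel element $(\zeta,\ell)$ in the wrong order, and your proposed mechanism for $\ell$ is not sound. The $J$-variation contributes only the term $Y(\partial_t v-\eta X_H)$ and hence pairs only against $\zeta$; the paper therefore first shows $\zeta\equiv 0$ on an open set $\Omega\subset\mathcal{R}(U)$ away from $\operatorname{crit}H$ by localizing $\delta J$ there. Only with $\zeta=0$ on $\Omega$ does the pairing of $D_{(v,\eta)}(\mathfrak v,0)$ against $(0,\hat\mu)$ reduce to $-\int_\Omega\hat\mu\cdot dH(\mathfrak v)$, forcing $\hat\mu=0$ on $\Omega$; unique continuation for the adjoint then finishes. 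Your appeal to ``the $\eta$-component is determined by $v$'' does not by itself kill $\ell$ for a cokernel element. Second, under the paper's positive-gradient convention (Definition~\ref{Morsetraj}) one has $\dim W^u(c)=\dim C-\operatorname{ind}_h(c)$, not $\operatorname{ind}_h(c)$, and the single-cascade index is $\mu_{CZ}(v^+)-\mu_{CZ}(v^-)+\cdots$ (Theorem~\ref{theoloctrans}), not the reverse; your intermediate bookkeeping would not reproduce the stated formula without compensating sign errors. Finally, the $n$-dependence of $J$ is already used in the \emph{single}-cascade Local Transversality Theorem to separate points with equal $v(s,t)$ but different $\eta(s)$ when localizing $\delta J$; the simultaneous handling of all $m$ cascades comes from the submersion property of $ev^\pm$, not from the $n$-parameter.
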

\begin{rem}~
\begin{itemize}
 \item The condition $\bar{v}_k^+=\bar{v}_{k+1}^-\#\mathscr{C}_{k,k+1}$ guarantees $\mu_{CZ}(v_k^+,\bar{v}_k^+)=\mu_{CZ}(v_{k+1}^-,\bar{v}_{k+1}^-)$.
 \item For $m=0$, the dimension formula is not quite right. In this case $m-1$ has to be replaced by zero, so that the dimension is given by $ind_h(c^+)-ind_h(c^-)$ just as in ordinary Morse theory.
\end{itemize} 
\end{rem}
The proof of Theorem \ref{theotrans} will make use of Fredholm theory and the implicit function theorem on infinite-dimensional Banach manifolds (see \cite{DuffSal}, app. A). We start by describing the analytic setup that we need.\\
Fix connected components $C^\pm$ of $\crita$, fix $J$ and choose $\delta:=\delta(C^-,C^+)>0$ such that $\delta/p <\rho$, where $\rho$ is the constant for $(C^-,C^+)$ from Proposition \ref{asympexpconv}. We define
\[\mathcal{B}:=\mathcal{B}(C^-,C^+):=\mathcal{B}^{1,p}_\delta(C^-,C^+),\qquad (v,\eta)\in \mathcal{B},\; v:\mathbb{R}\times S^1\rightarrow V,\;\eta: \mathbb{R}\rightarrow\mathbb{R},\]
to be the Banach manifold of maps $v$ which are locally in $W^{1,p}$, converge at both ends in $C^\pm$ and are in the weighted Sobolev spaces $W^{1,p}_\delta$ (see Definition \ref{defnWeighSob}), i.e.\
\begin{enumerate}
 \item $(v,\eta)$ converges uniformly as $s\rightarrow \pm\infty$ to $(v^\pm,\eta^\pm)\in C^\pm$
 \item there exist tubular neighborhoods $U^\pm$ of $v^\pm$ together with smooth parametrizations $\psi^\pm : U^\pm\times\mathbb{R}\rightarrow S^1\times\mathbb{R}^{2n-1}\times \mathbb{R}$ such that for $s_0>0$ sufficiently large holds
 \begin{align*}
  &\psi^\pm\circ (v^\pm(t),\eta^\pm) = (t,0)\\
  &\psi^+\circ (v,\eta)-\psi^+\circ (v^+,\eta^+)\in W^{1,p}_\delta\big([s_0,\infty)\times S^1\big)\times W^{1,p}_\delta\big([s_0,\infty)\big)\\
  &\psi^-\circ(v,\eta)-\psi^-\circ (v^-,\eta^-)\in W^{1,p}_\delta\big((-\infty,-s_0]\times S^1\big)\times W^{1,p}_\delta\big((-\infty,-s_0]\big).
 \end{align*}
\end{enumerate}
Note that $\delta/p<\rho$ implies together with the asymptotic estimates of Section \ref{2.4} that any $\mathcal{A}^H$-gradient trajectory $(v,\eta)$ which converges at the ends to $C^\pm$ is a point in $\mathcal{B}$. Using the exponential map with respect to any metric on $V$, it is not difficult to see that $\mathcal{B}$ is a Banach manifold whose differentiable structure does not depend on this metric.  Observe that there are two natural smooth evaluation maps 
\[ev^\pm:\mathcal{B}\rightarrow C^\pm,\qquad\qquad ev^\pm(v,\eta)=(v^\pm,\eta^\pm).\]
Let $(v,\eta)\in\mathcal{B}$. The tangent space $T_{(v,\eta)}\mathcal{B}$ at $(v,\eta)$ can be identified with tuples
\[(\mathfrak{v},\hat{\eta})=(\mathfrak{v}_0,\hat{\eta},\mathfrak{v}^-,\mathfrak{v}^+)\in W^{1,p}_\delta(\mathbb{R}\times S^1, v^\ast TV)\oplus W^{1,p}_\delta(\mathbb{R},\mathbb{R})\oplus T_{(v^-,\eta^-)} C^-\oplus T_{(v^+,\eta^+)} C^+.\label{nonnatural}\]
The first two summands are naturally given as $\ker d(ev^-)\cap \ker d(ev^+)$, while the identification of the complement with $T_{(v^-,\eta^-)} C^-\oplus T_{(v^+,\eta^+)} C^+$ depends on the choice of submanifold charts for $C^\pm$.\\
Let $\mathcal{E}$ be the Banach bundle over $\mathcal{B}$ whose fiber at $(v,\eta)\in\mathcal{B}$ is given by
\[\mathcal{E}_{(v,\eta)}:= L^p_\delta(\mathbb{R}\times S^1,v^\ast TV)\times L^p_\delta(\mathbb{R},\mathbb{R}).\]
Consider the $J$-depending section
\[\mathcal{F}:\mathcal{B}\rightarrow\mathcal{E},\qquad (v,\eta)\mapsto\big(\partial_s(v,\eta)-\nabla\mathcal{A}^H(v,\eta)\big)=\begin{pmatrix}\partial_s v + J_t(v,\eta)\big(\partial_t v- \eta X_H(v)\big)\\\partial_s \eta  +\int_0^1H(v)dt\end{pmatrix}.\]
Note that $\mathcal{F}(v,\eta)=0$ means that $(v,\eta)$ satisfies the Rabinowitz-Floer equation (\ref{eq3}) for an $\mathcal{A}^H$-gradient trajectory. Thus, we find that $\mathcal{F}^{-1}(0)$ is exactly the space of all $\mathcal{A}^H$-gradient trajectories which converge at both ends in $C^\pm$. Let us denote by $\widehat{\mathcal{M}}(C^-,C^+)$ the space of all $\mathcal{A}^H$-gradient trajectories $(v,\eta)$ with asymptotics $(v^\pm,\eta^\pm)$ on $C^\pm$. Then we have apparently that $\widehat{\mathcal{M}}(C^-,C^+)=\mathcal{F}^{-1}(0)$.
\begin{theo}[Local Transversality Theorem]\label{theoloctrans}~\\
 There exists a set $\mathcal{J}_{reg}$ of second category of admissible families of almost complex structures $J_t(\cdot, n)$, such that for all $J\in \mathcal{J}_{reg}$ holds that the space $\widehat{M}(C^-, C^+)$ is a manifold for any connected components $C^\pm\subset\crita$. Its local dimension at $(v,\eta)\in \widehat{M}(C^-, C^+)$ is given by
 \[\dim_{(v,\eta)}\,\widehat{M}(C^-, C^+) = \mu_{CZ}(v^+,\bar{v}^+)-\mu_{CZ}(v^-,\bar{v}^-) +2c_1(\bar{v}^-\#v\#\bar{v}^+) + \frac{\dim\, C^- + \dim\, C^+}{2},\]
 where $\bar{v}^-, \bar{v}^+$ are cappings for $v^\pm = \displaystyle \lim_{s\rightarrow \pm \infty} v$.
\end{theo}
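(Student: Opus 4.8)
I would follow the standard scheme of Floer--Hofer--Salamon, adapted to the semilocal equation (\ref{eq3}) as in Bourgeois--Oancea and Abbondandolo--Merry. One views $\widehat{M}(C^-,C^+)=\mathcal{F}^{-1}(0)$ as the zero set of the smooth section $\mathcal{F}$ of the Banach bundle $\mathcal{E}\to\mathcal{B}$ described above, and must show: (i) at every zero $(v,\eta)$ the vertical differential $D\mathcal{F}_{(v,\eta)}\colon T_{(v,\eta)}\mathcal{B}\to\mathcal{E}_{(v,\eta)}$ is Fredholm of the asserted index; (ii) for a generic admissible $J$ it is moreover surjective. The implicit function theorem on Banach manifolds then gives that $\widehat{M}(C^-,C^+)$ is a manifold with $\dim_{(v,\eta)}\widehat{M}(C^-,C^+)=\mathrm{ind}\,D\mathcal{F}_{(v,\eta)}$.

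\textbf{Fredholm property and index.} First I would compute $D\mathcal{F}_{(v,\eta)}$ using formula (\ref{secvar}) for the variation of $\nabla\mathcal{A}^H$ along a family: in a unitary trivialization of $v^\ast TV$ it takes the form $\partial_s+\widetilde{A}(s)$ plus a zeroth-order term, where $\widetilde{A}(s)$ converges as $s\to\pm\infty$ to the Hessians $-\nabla^2\mathcal{A}^H_{(v^\pm,\eta^\pm)}$ analysed in Lemmas \ref{lemker}--\ref{lemcoker}. These asymptotic operators are self-adjoint with finite-dimensional kernel of dimension $\dim_{v^\pm}\mathcal{N}^{\eta^\pm}$, so $0$ lies in their discrete spectrum. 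Conjugating the weighted part $W^{1,p}_\delta$ of $T_{(v,\eta)}\mathcal{B}$ by $\gamma_\delta$ replaces the limit operators by $-\nabla^2\mathcal{A}^H_{(v^\pm,\eta^\pm)}\mp\delta\cdot\mathrm{id}$, which are invertible as soon as $0<\delta/p<\rho$ with $\rho$ as in Proposition \ref{asympexpconv}; the standard theory of operators $\partial_s+A(s)$ with invertible limits (Robbin--Salamon spectral flow) then shows $D\mathcal{F}_{(v,\eta)}$ restricted to the weighted part is Fredholm, with index the difference of Conley--Zehnder indices of the two ends plus $2c_1$ of the glued sphere. Re-incorporating the finite-dimensional summands $T_{(v^\pm,\eta^\pm)}C^\pm$ via the evaluation maps $ev^\pm$ enlarges the domain and adds $\tfrac12(\dim C^-+\dim C^+)$, the ``half'' reflecting that the weight $e^{-\delta|s|}$ only accounts for half of each asymptotic kernel. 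Writing out the Conley--Zehnder contribution in terms of cappings $\bar v^\pm$ and using additivity of $\mu_{CZ}$ and $c_1$ under concatenation of cylinders yields exactly the stated formula, which is then also visibly capping-independent.

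\textbf{Transversality.} The core is surjectivity for generic $J$. I would pass to the universal moduli space $\widehat{\mathcal{M}}(C^-,C^+;\mathcal{J}^\ell)$ over a Banach manifold $\mathcal{J}^\ell$ of admissible $C^\ell$-families $J_t(\cdot,n)$, and prove that the universal vertical differential, i.e.\ $D\mathcal{F}_{(v,\eta)}$ together with the linearization $Y\mapsto(DY)(v,\eta)(\partial_t v-\eta X_H)$ in the $J$-direction, is onto at every zero. If $(\mathfrak{w},\xi)$ in the appropriate dual space annihilates the image, pairing with the $D\mathcal{F}$-part makes it a cokernel element of $D\mathcal{F}$, hence smooth by elliptic regularity and, unless identically zero, nonvanishing on a dense open set; pairing with the $J$-variations gives $\int\langle\mathfrak{w},(DY)(v,\eta)(\partial_t v-\eta X_H)\rangle=0$ for all admissible $Y$. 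Here the dependence of $J$ on the extra parameter $n$ is essential: at a point $(s_0,t_0)$ of the regular set $\mathcal{R}(U)$ --- dense in $\{\partial_s v\neq0\}$ by Proposition \ref{regpts} --- the triple $(t_0,v(s_0,t_0),\eta(s_0))\in S^1\times V\times\mathbb{R}$ is hit by the trajectory at no other parameter, so a variation $Y$ supported near it perturbs the equation only near $(s_0,t_0)$; since $\partial_t v-\eta X_H=-J\,\partial_s v\neq0$ there, a suitable $Y$ forces $\mathfrak{w}(s_0,t_0)=0$. Density of $\mathcal{R}(U)$ and continuity give $\mathfrak{w}\equiv0$ on $\{\partial_s v\neq0\}$, hence $\mathfrak{w}\equiv0$ everywhere by unique continuation (Theorem \ref{uniquecont}), and the adjoint of the second, integral equation then forces $\xi\equiv0$. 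Thus the universal moduli space is a Banach manifold, the projection to $\mathcal{J}^\ell$ is Fredholm of the index computed above, and by the Sard--Smale theorem its regular values $\mathcal{J}^\ell_{\mathrm{reg}}$ form a second-category set; a Taubes-type argument (intersecting the nested $\mathcal{J}^\ell_{\mathrm{reg}}$, or using a Floer $C^\veps$-space) produces the second-category set $\mathcal{J}_{reg}$ of smooth admissible $J$.

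\textbf{Main obstacle.} The delicate point is the transversality step, specifically the role of the parameter $n$: because (\ref{eq3}) carries the Hamiltonian term $\eta X_H$ and the nonlocal term $\int_0^1 H(v)\,dt$, the naive notion of an injective point of $v$ in $V$ is insufficient and one genuinely needs injective points in $S^1\times V\times\mathbb{R}$, together with the separate argument eliminating the cokernel component $\xi$. This is exactly what the unique-continuation and injective-point results of Section \ref{secholprop} were set up to supply, and checking that the localized perturbation $Y$ can be chosen admissible --- cylindrical at infinity, $C^\ell$-bounded, $n$-dependent --- while still making the pairing nonzero is where the care is needed.
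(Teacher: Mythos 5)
Your overall strategy matches the paper's: describe $\widehat{\mathcal{M}}(C^-,C^+)$ as $\mathcal{F}^{-1}(0)$, conjugate the weighted linearized operator to a translation-invariant form with invertible limits $\mp\delta$-perturbed by $\nabla^2\mathcal{A}^H_{(v^\pm,\eta^\pm)}$, invoke Robbin--Salamon spectral flow for the Fredholm index, pass to a universal moduli space over a Banach manifold $\mathcal{J}^\ell$ of $C^\ell$-families, show the universal vertical differential has dense range by an annihilator argument combined with the regular-point set of Proposition \ref{regpts}, and finish with Sard--Smale plus Taubes' trick. This is faithful to the structure of the argument in the text.

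There is, however, a genuine gap in your treatment of the Lagrange-multiplier component of the annihilator. You conclude that the $v^\ast TV$-component $\mathfrak{w}$ vanishes using $J$-perturbations supported near injective points of $U=(v,\eta)$, and then assert that ``the adjoint of the second, integral equation then forces $\xi\equiv 0$.'' That sentence is not an argument, and the $J$-perturbations cannot do this work: the variation of $\mathcal{F}$ in the $J$-direction only enters the $v$-component of the equation, so pairing with $Y$ gives no information whatsoever about $\xi$. In the paper, after $\zeta\equiv 0$ on the regular set, one switches to perturbations of the loop component $\mathfrak{v}\in T_{(v,\eta)}\mathcal{B}$ supported in that set, so that the first annihilation identity reduces to $\int_\Omega\hat\mu(s)\,dH_{v(s,t)}[\mathfrak{v}(s,t)]\,ds\,dt = 0$. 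To extract $\hat\mu\equiv 0$ from this one must be able to choose $\mathfrak{v}$ with $dH(\mathfrak{v})$ of a definite sign, which requires $v(s,t)\notin\mathrm{crit}(H)$. This is exactly why the paper works not with $\mathcal{R}(U)$ itself but with the smaller open dense set $\Omega=\{(s,t)\in\mathcal{R}(U)\mid v(s,t)\notin\mathrm{crit}(H)\}$, which is nonempty because $\Sigma=H^{-1}(0)$ is a regular level set and the asymptotics of $v$ lie on $\Sigma$. Your proposal never imposes the non-criticality condition on the injective points, and omitting it leaves the $\mathbb{R}$-component of the cokernel unaccounted for; this is precisely the step that is new in the Rabinowitz setting compared to classical Floer theory, where there is no multiplier $\eta$ and no integral term in the equation.

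A secondary but real inaccuracy: you claim $\mathfrak{w}\equiv 0$ everywhere by unique continuation before addressing $\xi$. Unique continuation (Theorem \ref{uniquecont}) applies to the pair $(\zeta,\hat\mu)$ as a solution of the formal adjoint equation, not to $\zeta$ alone; in the paper one first shows both $\zeta$ and $\hat\mu$ vanish on $\Omega$, and only then invokes unique continuation to spread the joint vanishing. Finally, your dimension-count heuristic (``the weight accounts for half of each asymptotic kernel'') is a reasonable way to remember the $\tfrac12(\dim C^-+\dim C^+)$ term, but if you want it to be a proof you should instead compute the index contribution of extending the domain by the cut-off translates $\beta\cdot\xi_0$, $(1-\beta)\cdot\xi_0$ with $\xi_0\in T_{(v^\pm,\eta^\pm)}C^\pm$, as is done in the reference to \cite{FraCie} Sec.~4.1 cited in the paper.
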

\begin{proof}
We will show for all $J\in\mathcal{J}_{reg}$ that $\mathcal{F}$ and the zero section in $\mathcal{E}$ intersect transversally. Then it follows (as in the finite dimensional case) from the implicit function theorem that $\mathcal{F}^{-1}(0)$ is a manifold. The proof is split in 3 parts
 \begin{enumerate}
  \item First, we show that the vertical differential $D_{(v,\eta)}$ of $\mathcal{F}$ at $(v,\eta)\in\mathcal{F}^{-1}(0)$ given by
  \begin{align*}
   D_{(v,\eta)}:&=D^V\mathcal{F}(v,\eta) : T_{(v,\eta)}\mathcal{B}\rightarrow \mathcal{E}_{(v,\eta)}\\
   D_{(v,\eta)}:&\phantom{\,\rightarrow\,} W^{1,p}_{\delta}(\mathbb{R}\times S^1, v^\ast TV)\times W^{1,p}_{\delta}(\mathbb{R},\mathbb{R})\times T_{v^-}C^-\times T_{v^+} C^+ \\
   &\rightarrow L^{1,p}_{\delta}(\mathbb{R}\times S^1, v^\ast TV)\times L^{1,p}_{\delta}(\mathbb{R},\mathbb{R})
  \end{align*}
is a Fredholm operator of index
\[ind\big(D_{(v,\eta)}\big)=\mu_{CZ}(v^+,\bar{v}^+)-\mu_{CZ}(v^-,\bar{v}^-)+2c_1(\bar{v}^-\#v\#\bar{v}^+)+\frac{\dim\,C^-+\dim\, C^+}{2}.\]
\begin{proof}~\\
We consider the restriction $\bar{D}_{(v,\eta)}$ of $D_{(v,\eta)}$ to $W^{1,p}_\delta(\mathbb{R}\times S^1,v^\ast TV)\times W^{1,p}_\delta(\mathbb{R},\mathbb{R})$. Then $\bar{D}_{(v,\eta)}$ is the linearization (the first variation) of the differential operator in the\linebreak Rabinowitz-Floer equation (\ref{eq3}). Hence it may be written as
\[\bar{D}_{(v,\eta)} = \partial_s + A(s),\]
where $A^\pm:=\displaystyle \lim_{s\rightarrow \pm\infty} A(s) = \nabla^2\mathcal{A}^H(v^\pm,\eta^\pm)$ and $A(s)\left(\begin{smallmatrix}\mathfrak{v}\\\hat{\eta}\end{smallmatrix}\right)$ is given in (\ref{secvar}) as
\[ A(s)\left(\begin{smallmatrix}\mathfrak{v}\\\hat{\eta}\end{smallmatrix}\right)=\begin{pmatrix}
  -\Big(\nabla_{\mathfrak{v}}J+\hat{\eta}(\partial_n J)\Big)(\dot{v}-\eta X_H)- J\Big(\nabla_{(\dot{v}-\eta X_H)}\mathfrak{v}-\eta[\mathfrak{v}, X_H]-\hat{\eta} X_H\Big)\\\Big.
  -{\textstyle \int_0^1 dH(\mathfrak{v})dt}
 \end{pmatrix}\]
Let $\gamma_\delta(s)=e^{\delta\cdot\beta(s)\cdot s}$ be the weight functions from the definition of $W^{1,p}_\delta$. They define a continuous isomorphism
\[\phi: W^{1,p}_\delta\rightarrow W^{1,p},\quad f\mapsto \gamma_\delta \cdot f.\]
Let $\tilde{D}_{(v,\eta)}$ be the conjugated operator between the unweighted Sobolev spaces, i.e.\
\begin{align*}
 &&\tilde{D}_{(v,\eta)}: W^{1,p}(\mathbb{R}\times S^1,v^\ast TV)\times W^{1,p}(\mathbb{R},\mathbb{R})&\rightarrow L^p(\mathbb{R}\times S^1,v^\ast TV)\times L^p(\mathbb{R},\mathbb{R})\\
 &\text{with}& \tilde{D}_{(v,\eta)} &= \phi\bar{D}_{(v,\eta)}\phi^{-1}.
\end{align*}
As $\tilde{D}_{(v,\eta)}$ and $\bar{D}_{(v,\eta)}$ are conjugated, they are simultaneously Fredholm and if so, they have the same index. For $\zeta=(\mathfrak{v}_0,\hat{\eta})\in W^{1,p}(\mathbb{R}\times S^1,v^\ast TV)\times W^{1,p}(\mathbb{R},\mathbb{R})$ we calculate
\begin{align*}
 \tilde{D}_{(v,\eta)}(\zeta) = \phi\bar{D}_{(v,\eta)}\phi^{-1}(\zeta) &= \phi\bar{D}_{(v,\eta)}(\gamma_{-\delta}\cdot \zeta)\\
  &= \phi\big(\partial_s(\gamma_{-\delta}\cdot\zeta)+ A(s)(\gamma_{-\delta}\cdot \zeta)\big)\\
  &= \gamma_\delta\Big(\gamma_{-\delta} (\partial_s\zeta)+\gamma_{-\delta}\big(\delta\beta(s)+\delta\partial_s\beta(s)s\big)\zeta+\gamma_{-\delta}\cdot A(s)\zeta\Big)\\
  &= \partial_s\zeta+\Big[A(s)+\delta\big(\beta(s)+\partial_s\beta(s)s\big)\cdot Id\Big]\zeta.
\end{align*}
Hence, $\tilde{D}_{(v,\eta)}$ is given by $\tilde{D}_{(v,\eta)} = \partial_s +  B(s)$, where $B(s)$ is the operator \linebreak $B(s) = A(s) + \delta\big(\beta(s)+\partial_s\beta(s)s\big)Id$. Then $B^\pm := \displaystyle \lim_{s\rightarrow \pm\infty} B(s) = A^\pm \pm \delta\cdot Id$.\pagebreak\\
As $A^\pm=\nabla^2\mathcal{A}^H(v^\pm,\eta^\pm)$, they are symmetric and so are $B^\pm$. Moreover, for $|\delta|$ smaller than the absolute value of the smallest non-zero eigenvalue of $A^\pm$, it follows that the $B^\pm$ are invertible. If $p=2$, it is shown in \cite{RoSa2} that $\tilde{D}$ is then a Fredholm operator. For general $p$, the ideas in \cite{Sal} can be used to give a proof. The index formula for $D_{(v,\eta)}$ is proved in \cite{FraCie}, Sec.\ 4.1.
\end{proof}
\item Let $\mathcal{J}^\ell:=\mathcal{J}^\ell(\rho)$ be the open subset in the Banach manifold of admissible \linebreak 1-periodic families of complex structures $J_t(\cdot, n)$ of class $C^\ell$ for which we may choose $\rho$ as the constant from Proposition \ref{asympexpconv}\footnote{Although $\rho$ is arbitrary close to the absolute smallest non-zero eigenvalue of $\nabla^2\mathcal{A}^H$, it cannot be chosen globally for all $J$, as $\nabla^2\mathcal{A}^H$ depends on the metric $g$, which itself depends on $J$.}. We consider the universal section 
\[\mathcal{F}:\mathcal{J}^\ell\times\mathcal{B}\rightarrow\mathcal{E},\qquad \mathcal{F}(J,(v,\eta)) = \begin{pmatrix} \partial_s v + J_t(v,\eta)\big(\partial_t v - \eta X_H(v)\big)\\ \partial_s\eta + \int_0^1H(v)dt\end{pmatrix}\]
and we prove that the universal moduli space
\[\mathcal{U}^\ell:=\mathcal{U}^\ell(C^-,C^+):=\left\{\left.\big(J,(v,\eta)\big)\in\mathcal{J}^\ell\times\mathcal{B}\,\right|\,\mathcal{F}(J,(v,\eta)) = 0\right\}\]
is a separable Banach manifold of class $C^\ell$, as $\mathcal{F}$ intersects the zero section transversally. For that, it suffices to show that the vertical differential
\begin{align*}
 && &D_{(J,(v,\eta))} : T_J\mathcal{J}^\ell\times T_{(v,\eta)}\mathcal{B}\rightarrow \mathcal{E}_{(v,\eta)}\\
 &\text{given by}& &D_{(J,(v,\eta))}\Big(Y,(\mathfrak{v},\hat{\eta})\Big) = D_{(v,\eta)}(\mathfrak{v},\hat{\eta})+\begin{pmatrix} Y_t(v,\eta)\cdot(\partial_t v - \eta X_H)\\ 0\end{pmatrix}
\end{align*}
is surjective for every $(J,(v,\eta))\in\mathcal{U}^\ell$. The tangent space $T_J\mathcal{J}^\ell$ consists of matrix valued maps $Y: S^1\times\mathbb{R}\rightarrow End(TV)$ of class $C^\ell$ satisfying the conditions
\begin{enumerate}
 \item[(1)] $\omega(Yv,w)+\omega(v,Yw) = 0 \qquad \forall\, v,w\in TV$
 \item[(2)] $J_t(p,n)Y_t(p,n)+Y_t(p,n)J_t(p,n) = 0 \qquad \forall\, p\in V, n\in\mathbb{R}, t\in S^1$
 \item[(3)] $Y_t(p,n) = \left(\begin{smallmatrix} \textstyle Y_\xi & 0 \\ 0 & 0 \end{smallmatrix}\right)$ on the cylindrical end of $V$, i.e.\ for $p\in[R,\infty)\times M$. Here, $Y_\xi$ is independent of $n$ and $t$ and the block structure is with respect to the splitting $\xi\oplus span(R_\lambda,Y_\lambda)$
 \item[(4)] $\displaystyle\sup_{n\in\mathbb{R}} ||Y_t(\cdot, n)||_{C^\ell}<\infty$.
\end{enumerate}
We have shown in 1.) that $D_{(v,\eta)} = D_{(J,(v,\eta))}|_{0\times T_{(v,\eta)}\mathcal{B}}$ is a Fredholm operator and has hence a finite dimensional cokernel. Therefore, $D_{J,(v,\eta)}$ has a finite dimensional cokernel as well and thus a closed range. Hence, for surjectivity it only remains to prove that the range is dense.\\
For that, consider $q$ with $1/p+1/q=1$. The dual space $\big(\mathcal{E}_{(v,\eta)}\big)^\ast$ is then given by
\[\big(\mathcal{E}_{(v,\eta)}\big)^\ast = L^q_{-\delta}\big(\mathbb{R}\times S^1,v^\ast TV\big)\times L^q_{-\delta}\big(\mathbb{R},\mathbb{R}\big).\]
To show the density of the range, it suffices to show that any $(\zeta,\hat{\mu})\in\big(\mathcal{E}_{(v,\eta)}\big)^\ast$ which annihilates the range is identically zero, i.e.\ we show that $(\zeta,\hat{\mu})=0$, if for all $J\in Y\in T_J\mathcal{J}^\ell$, $\mathfrak{v}\in W^{1,p}_\delta(\mathbb{R}\times S^1,v^\ast TV)$ and $\hat{\eta}\in W^{1,p}_\delta(\mathbb{R},\mathbb{R})$ holds that
\begin{align*}
 \mspace{-13mu}\int_{\mathbb{R}\times S^1}\left\langle D_{J,(v,\eta)}(Y,\mathfrak{v},\hat{\eta}),(\zeta,\hat{\mu})\right\rangle ds\,dt = 0 \Leftrightarrow  \begin{cases}\int\left\langle D_{(v,\eta)}(\mathfrak{v},\hat{\eta}),(\zeta,\hat{\mu})\right\rangle ds\,dt &= 0\\
 \int\left\langle Y(\partial_t v- \eta X_H),\zeta\right\rangle ds\,dt &=0.
 \end{cases}\tag{$\ast$}\label{eqasttrans}
\end{align*}
The first equation implies that $(\zeta,\hat{\mu})$ is a weak solution of $D_{(v,\eta)}^\ast (\zeta,\hat{\mu}) = 0$, where $D_{(v,\eta)}^\ast$ is the formal adjoint of $D_{(v,\eta})$ which is of the same form as $D_{(v,\eta)}$. As $D_{(v,\eta)}$ is elliptic, so is $D_{(v,\eta)}^\ast$ and it follows from elliptic regularity that hence $(\zeta,\hat{\mu})$ is of class $C^\ell$ and has the unique continuation property (in the sense of Theorem \ref{uniquecont}).\\ 
As $(v,\eta)$ is non-constant, we find that $\partial_s v\neq 0$. In Proposition \ref{regpts}, we showed that the set $\mathcal{R}(U)$ of regular points of $U=(v,\eta)$ is open and dense in the open non-empty set $\{(s,t)\in\mathbb{R}\times S^1\,|\,\partial_s(v,\eta)\neq 0\}$. Recall that $\mathcal{R}(U)$ is given by
\[\mathcal{R}(U):=\left\{(s,t)\in\mathbb{R}\times S^1\;\left|\;\begin{aligned}(\partial_s v(s,t),\partial_s \eta(s))&\neq (0,0)\\ (v(s,t),\eta(s))&\neq (v^\pm(t),\eta^\pm)\\ (v(s,t),\eta(s))&\neq \big(v(s',t),\eta(s')\big), \forall s'\in\mathbb{R}\setminus\{s\}\end{aligned}\right.\right\}.\]
Now, we consider the following open subset of $\mathcal{R}(U)$:
\[\Omega:=\left\{(s,t)\in\mathcal{R}(U)\,\left|\, \Big. v(s,t)\not\in crit(H)\right.\right\}.\]
As $\Sigma=H^{-1}(0)$ is a regular level set of $H$ and $\displaystyle\lim_{s\rightarrow \pm\infty}v(s,\cdot)\in\Sigma$, it follows that $\Omega$ is also a non-empty open set. We will show that $(\zeta,\hat{\mu})$ vanishes identically on $\Omega$ and hence by the unique continuation property everywhere. Note that $H$ is constant on $[R,\infty)\times M$ and hence $[R,\infty)\times M\subset crit H$. Thus $v(\Omega)\subset V\setminus\big([R,\infty)\times M\big)$ and hence we have all freedom to perturb $J$ on $\Omega$, i.e.\ we do not have to pay attention to condition (3).\bigskip\\
First, we prove that $\zeta\equiv 0$ on $\Omega$. Suppose by contradiction that there exists $(s_0,t_0)\in\Omega$ such that $\zeta(s_0,t_0)\neq 0$. Set $p:=v(s_0,t_0)$ and $n:=\eta(s_0)$ and choose a linear map $Y_p: T_pV\rightarrow T_pV$ such that at the point $(t_0,p,n)$ condition (1) and (2) for $Y\in T_J\mathcal{J}$ are satisfied and
\[\omega_p\big(Y_pJ\partial_s v(s_0,t_0),\zeta(s_0,t_0)\big)>0.\]
See for instance \cite{DuffSal}, Lem.\ 3.2.2, for an explicit construction of such a $Y_p$. Now choose an element $\widetilde{Y}\in T_J \mathcal{J}^\ell$ such that $\widetilde{Y}_{t_0}(p,n)=Y_p$. As $(s_0,t_0)$ belongs to $\mathcal{R}(U)$, one can choose a smooth cutoff function $\beta: S^1\times V\times \mathbb{R}\rightarrow [0,1]$ supported near $\big(t_0,v(s_0,t_0),\eta(s_0)\big) = (t_0,p,n)$ such that for $Y:=\beta\cdot \widetilde{Y}\in T_J\mathcal{J}^\ell$ we have
\[\int_{\mathbb{R}\times S^1}\big\langle YJ\partial_s v,\zeta\big\rangle ds\,dt>0.\]
This contradicts the second equation in $(\ast)$ and hence $\zeta$ has to vanish on $\Omega$.\pagebreak\\
Secondly, we prove that also $\hat{\mu}\equiv 0$ on $\Omega$. Suppose that $\mathfrak{v}\in W^{1,p}_\delta(\mathbb{R}\times S^1,v^\ast TV)$ is supported in $\Omega$. By the first step, we may assume that $\zeta \equiv 0$ on $\Omega$ and this implies
\begin{align*}
 \int_{\mathbb{R}\times S^1}\Big\langle D_{(v,\eta)}(\mathfrak{v},0),(\zeta,\hat{\mu})\Big\rangle ds\,dt &=\phantom{-} \int_{\Omega}\Big\langle D_{(v,\eta)}(\mathfrak{v},0),(0,\hat{\mu})\Big\rangle ds\,dt\\
  &\overset{(\ref{secvar})}{=} -\int_{\Omega}\hat{\mu}(s)\cdot dH\big(v(s,t)\big)\big[\mathfrak{v}(s,t)\big] \, ds\,dt.
\end{align*}
Now if there exists $(s_0,t_0)\in \Omega$ such that $\hat{\mu}(s_0,t_0)\neq 0$, then we can find a vector $\mathfrak{v}_p\in T_pV$ satisfying $dH_p(\mathfrak{v}_p)<0$, as $v^{-1}(crit H)\cap\Omega = \emptyset$. Using the above equation, we find that a suitable extension $\mathfrak{v}$ of $\mathfrak{v}_p$ gives 
\[\int_{\mathbb{R}\times S^1}\Big\langle D_{(v,\eta)}(\mathfrak{v},0),(0,\hat{\mu})\Big\rangle ds\,dt>0.\]
Thus, $\hat{\mu}$ also has to vanish on $\Omega$. Now, it follows from the unique continuation property that $(\zeta,\hat{\mu})=0$ everywhere. Therefore, $D_{J,(v,\eta)}$ is surjective for all $(J,v,\eta)\in\mathcal{U}^\ell$. Thus, $\mathcal{F}$ is transversal to the zero section in $\mathcal{E}$ and it follows from the implicit function theorem on Banach spaces due to Smale (see for example \cite{DuffSal}, App.\ A) that $\mathcal{U}^\ell =\mathcal{F}^{-1}(0)$ is a separable $C^\ell$-Banach manifold.
\item We prove the theorem. Consider the projection
\[\pi: \mathcal{U}^\ell\rightarrow\mathcal{J}^\ell,\qquad (J,v,\eta)\mapsto J.\]
Its differential at a point $(J,v,\eta)$ is just the projection
\[d\pi (J,v,\eta) : T_{J,(v,\eta)}\,\mathcal{U}^\ell\rightarrow T_J\mathcal{J}^\ell,\qquad (Y,\xi,\hat{\eta})\mapsto Y.\]
The kernel of $d\pi(J,v,\eta)$ is $0\times (\ker D_{(v,\eta)})$ as $T_{J,(v,\eta)}\,\mathcal{U}^\ell=\ker D_{J,(v,\eta)}$ and $D_{J,(v,\eta)}$ restricted to $0\times T\mathcal{B}$ is $D_{(v,\eta)}$. Moreover, it follows from linear algebra that $im(d\pi)$ has the same (finite) codimension as $im(D_{(v,\eta)})$. Indeed, if $Y\subset X$ is a subspace and $f:X\rightarrow Z$ a linear map, then there exists a natural ismorphism 
\[\raisebox{.2em}{$f(X)$}\left/\raisebox{-.2em}{$f(Y)$}\right.\cong\raisebox{.2em}{$X$}\left/\raisebox{-.2em}{$(\ker f + Y)$}\right..\]
Applied to our situation, we have on one hand
\begin{align*}
 \text{coker}\,d\pi = \raisebox{.2em}{$T\mathcal{J}^\ell$}\left/\raisebox{-.2em}{$d\pi\big(T\,\mathcal{U}^\ell\big)$}\right.&=\raisebox{.2em}{$d\pi\big(T\mathcal{J}^\ell\times T\mathcal{B}\big)$}\left/\raisebox{-.2em}{$d\pi\big(\ker D_{J,(v,\eta)}\big)$}\right.\\
  &\cong \raisebox{.2em}{$T\mathcal{J}^\ell\times T\mathcal{B}$}\left/\raisebox{-.2em}{$\big(0\times T\mathcal{B}+\ker D_{J,(v,\eta)}\big)$}\right.,
 \end{align*}
 and as $D_{J,(v,\eta)}:T\mathcal{J}^\ell\times T\mathcal{B}\rightarrow \mathcal{E}$ is surjective, we have on the other hand
 \begin{align*}
  \text{coker}\,D_{(v,\eta)} = \raisebox{.2em}{$T\mathcal{E}$}\left/\raisebox{-.2em}{$D_{(v,\eta)}\big(T\mathcal{B}\big)$}\right. &=\raisebox{.2em}{$D_{J,(v,\eta)}\big(T\mathcal{J}^\ell\times T\mathcal{B}\big)$}\left/\raisebox{-.2em}{$D_{J,(v,\eta)}\big(0\times T\mathcal{B}\big)$}\right.\\
  &\cong\raisebox{.2em}{$T\mathcal{J}^\ell\times T\mathcal{B}$}\left/\raisebox{-.2em}{$\big(0\times T\mathcal{B}+\ker D_{J,(v,\eta)}\big)$}\right..
\end{align*}
Thus, $d\pi(J,v,\eta)$ is a Fredholm operator of the same index as $D_{(v,\eta)}$. In particular, $\pi$ is a Fredholm map and it follows from the Sard-Smale theorem for $\ell$ sufficiently large, that the set $\mathcal{J}^{l,C^-,C^+}_{reg}$ of regular values of $\pi$ is a set of second category.\pagebreak\\
Note that $J\in \mathcal{J}^{l,C^-,C^+}_{reg}$ is a regular value of $\pi$ exactly if $D_{(v,\eta)}$ is surjective for every $(v,\eta)\in\mathcal{F}^{-1}(0)$. In other words, for every $J\in\mathcal{J}^{l,C^-,C^+}_{reg}$ is $\widehat{\mathcal{M}}(C^-,C^+)=\pi^{-1}(J)$ a manifold of dimension $ind(D_{(v,\eta)})$, again by the implicit function theorem.\\
It is possible to show that the set $\mathcal{J}_{reg}^{C^-,C^+}(\rho):=\mathcal{J}^{l,C^-,C^+}_{reg}\cap \mathcal{J}^\infty(\rho)$ of smooth regular $J$ is of second category in $\mathcal{J}(\rho)$ with respect to the $C^\infty$-topology via a standard trick due to Taubes (see \cite{DuffSal}, Thm.\ 3.1.6 or \cite{Fra}, Thm.\ A.13 for details).\\
We obtain a set of second category within all admissible almost complex structures by the union
\[\mathcal{J}_{reg}^{C^-,C^+}:=\bigcup_{\rho>0}\mathcal{J}_{reg}^{C^-,C^+}(\rho).\]
 The countable intersection 
\[\mathcal{J}_{reg}:=\bigcap_{C^\pm\subset\, crit(\mathcal{A}^H)} \mathcal{J}_{reg}^{C^-,C^+}\]
 is still a set of second category in the set of all admissible almost complex structures and has the property that for all $J\in \mathcal{J}_{reg}$ and all connected components \linebreak $C^-,C^+\subset\crita$ holds that $\widehat{\mathcal{M}}(C^-,C^+)$ is a finite dimensional manifold. \qedhere
 \end{enumerate}
\end{proof}
Now we can give the proof of the Global Transversality Theorem \ref{theotrans}.
\begin{proof}~
 \begin{enumerate}
  \item[0.] If there are 0 cascades, the theorem follows from ordinary Morse homology, as $(h,g)$ is a Morse-Smale pair. Hence we may assume that $m\geq 1$.
  \item Let $(x,t)\in\widehat{\mathcal{M}}_s(c^-,c^+,m)$ be an arbitrary stable trajectory with $m$ cascades \linebreak $x_k=(v_k,\eta_k)$ passing through connected components $C_k\subset \crita, 0\leq k\leq m$, i.e.\
  \[\begin{aligned}\lim_{s\rightarrow -\infty} x_k&=x_k^-=(v_k^-,\eta_k^-)\in C_{k-1}\\ \lim_{s\rightarrow +\infty} x_k &= x_k^+=(v_k^+,\eta_k^+)\in C_k\end{aligned}\qquad 1\leq k\leq m.\]
  In terms of Theorem \ref{theoloctrans}, we may write this as $x_k=(v_k,\eta_k)\in\widehat{\mathcal{M}}(C_{k-1},C_k)$. Let $\mathcal{U}^\ell(C_{k-1},C_k)$ denote the universal moduli space of $\mathcal{A}^H$-gradient trajectories between $C_{k-1}$ and $C_k$ from the proof of Theorem \ref{theoloctrans}, i.e.\
  \begin{align*}
   &&\mathcal{U}^\ell(C_{k-1},C_k)&:=\left\{(J,(v,\eta))\in \mathcal{J}^\ell\times\mathcal{B}(C_{k-1},C_k)\,\left|\,\mathcal{F}(J,(v,\eta))=0\Big.\right.\right\}
  \end{align*}
  and consider the $C^\ell$-Banach manifold
  \[\widetilde{\mathcal{U}}:=\mathcal{U}^\ell(C_0,C_1)\times...\times\mathcal{U}^\ell(C_{m-1},C_m)\times\big(\mathbb{R}^+\big)^{m-1}.\]
  We define the universal moduli space $\mathcal{U}^\ell_m:=\mathcal{U}^\ell_m(C_0,...\,,C_m)$ for trajectories with $m$ cascades passing through $C_0,...\,,C_m$ to be the sub-Banach manifold of $\widetilde{\mathcal{U}}$, where each factor has the same almost complex structure $J$, i.e.\ $\mathcal{U}^\ell_m$ consists of tuples
  \[\bigg((J,x_1),...\,,(J,x_m),t_1,...\,,t_{m-1}\bigg),\qquad (J,x_k)\in\mathcal{U}(C_{k-1},C_k),\;t_k\in\mathbb{R}^+,\]
  with $J$ fixed for $1\leq k\leq m$.\pagebreak\\
  Recall that we have for each $k$ the evaluation maps
  \begin{align*}
   ev^-: \widehat{\mathcal{M}}(C_{k-1},C_k)&\rightarrow C_{k-1},& x_k&\mapsto x_k^-\\
   ev^+: \widehat{\mathcal{M}}(C_{k-1},C_k)&\rightarrow C_k,& x_k&\mapsto x_k^+.
  \end{align*}
  Moreover, recall that we denote by $T_h(t_k)$ the time $t_k$ gradient flow of $h$ on $\crita$ and that we have for $(x,t)\in\widehat{\mathcal{M}}_s(c^-,c^+,m)$ the asymptotic and connectedness conditions
  \begin{align*}
    T_h(t_k)(\underbrace{x_k^+}_{ev^+(x_k)})=\underbrace{x_{k+1}^-}_{ev^-(x_{k+1})},\,1\leq k\leq m-1,
   \qquad&\text{and}& x_1^- = ev^-(x_1)&\in W^u(c^-)\\
   &\text{and}& x_m^+ = ev^+(x_m)&\in W^s(c^+).
  \end{align*}
  Here, $W^u(c^-)$ and $W^s(c^+)$ are the unstable/stable manifolds of $c^\pm$ with respect to the Morse-Smale pair $(h,g_h)$ on $C_0$ resp.\ $C_m$. Write for the moment $T_h^{t_k}$ instead of $T_h(t_k)$ and consider the following map 
  \begin{gather*}
   \begin{aligned}\psi:\qquad\quad \mathcal{U}^\ell_m(C_0,C_1,...\,,C_m)&\rightarrow C_0\times (C_1)^2\times...\times (C_{m-1})^2\times C_m\\
   \big(J,x_1,...\,,x_m,t_1,...\,,t_{m-1}\big)&\mapsto\end{aligned}\\
   \Big(ev^-(x_1),T_h^{t_1}\big(ev^+(x_1)\big),...\,,ev^-(x_{m-1}),T_h^{t_{m-1}}\big(ev^+(x_{m-1})\big),ev^-(x_m),ev^+(x_m)\Big).
  \end{gather*}
  Consider in the target space the submanifold $A_m(c^-,c^+)$ consisting of tuples 
  \begin{align*}
   (q_0,p_1,q_1,...\,,p_{m-1},q_{m-1},p_m)\quad\text{ such that }&\quad p_k=q_k\in C_k,\\ \text{ and }&\quad q_0\in W^u(c^-),\,p_m\in W^s(c^+).
  \end{align*}
   We show below that $\psi$ is transverse to $A_m(c^-,c^+)$. This implies by the implicit function theorem that $\psi^{-1}\big(A_m(c^-,c^+)\big)$ is a submanifold of $\mathcal{U}^\ell_m$ of codimension
   \begin{align*}
    \text{codim}\,\psi^{-1}\big(A_m(c^-,c^+)\big)&=\text{codim}\,W^u(c^-)+\sum_{k=1}^{m-1} \dim C_k + \text{codim}\,W^s(c^+)\\
    &=\sum_{k=1}^{m-1}\dim C_k + ind_h(c^-)+\big(\dim C_m-ind_h(c^+)\big).
   \end{align*}
  Note that $\dim W^u(c^-)=\dim C^--ind_h(c^-)$, as we use the positive gradient flow on $C^\pm$ (see Definition \ref{Morsetraj}). Having this result, we consider again the projection
  \[\pi:\psi^{-1}\big(A_m(c^-,c^+)\big)\rightarrow\mathcal{J}^\ell.\]
  This is now a Fredholm map of exactly the required index, as
  \begin{align*}
   ind\, \pi &= \phantom{+}\sum_{k=1}^m ind\, D_{v_k} - \text{codim}\,\psi^{-1}\big(A_m(c^-,c^+)\big)+\dim \big(\mathbb{R}^+\big)^{m-1}\\
   &= \phantom{+}\mu_{CZ}(c^+,\bar{c}^+)-\mu_{CZ}(c^-,\bar{c}^-) + \sum_{k=1}^m 2c_1(\bar{v}_k^- \#v_k\#\bar{v}_k^+)+m-1\\
   &\phantom{=\,}+\frac{1}{2}\dim C_m +\frac{1}{2}\dim C_0 - \Big(\dim C_m - ind_h(c^+)\Big) - ind_h(c^-).
  \end{align*}
Using the Sard-Smale theorem, we find for $\ell$ sufficiently large that the set \linebreak $\mathcal{J}^\ell_{reg}(C_0,...\,,C_m)$ of regular values of $\pi$ is a set of second category in $\mathcal{J}^\ell$. Using Taubes trick, taking the union over $\rho>0$ and countable intersections over tuples $(C_0,...\,,C_m)\subset\crita^m$ and $m\in\mathbb{N}$, we get a set of second category $\mathcal{J}_{reg}$ such that for all $J\in\mathcal{J}_{reg}$, all $m\in\mathbb{N}$ and all tuples $(C_0,...\,,C_m)$ holds that $\widehat{\mathcal{M}}_s(c^-,c^+,m)=\pi^{-1}(J)\subset\psi^{-1}\big(A_m(c^-,c^+)\big)$ is a finite dimensional manifold of dimension $ind\,\pi$ which proves the theorem.
  \item To show that $\psi$ is transverse to $A_m(c^-,c^+,J)$ it suffices to show for any $k$ that $0\times TC_{k-1}\times TC_k\times 0$ lies in the image of
  \[d\psi = d(ev^-_1)\times\big(dT_h(t_k)\circ d(ev_1^+)\big)\times...\times d(ev_m^+).\]
  As $dT_h(t_k)$ is an isomorphism and as $\mathcal{U}^\ell(C_{k-1},C_k)$ embeds naturally into $\mathcal{U}^\ell_m$, it suffices to show that $ev^-\times ev^+: \mathcal{U}^\ell(C^-,C^+)\rightarrow C^-\times C^+$ are submersions for each pair of connected components $C^\pm\subset\crita$. Let $(J,(v,\eta))\in\mathcal{U}^\ell(C^-,C^+)$ and let $\zeta\in T_{ev^-(v,\eta)} C^-\times T_{ev^+(v,\eta)} C^+$ be arbitrary. We have to show that there exists $(Y,\mathfrak{v},\hat{\eta})\in\ker D_{J,(v,\eta)}$ such that
  \[d(ev^-)\times d(ev^+)\big(Y,\mathfrak{v},\hat{\eta}\big) = \zeta.\]
  Surely, $ev^-\times ev^+:\mathcal{B}(C^-,C^+)\rightarrow C^-\times C^+$ is a submersion. Hence, we may choose some arbitrary $(\mathfrak{v}_0,\hat{\eta}_0)\in  T_{(v,\eta)}\mathcal{B}(C^-,C^+)$ such that
  \[d(ev^-)\times d(ev^+)\big(0,\mathfrak{v}_0,\hat{\eta}_0\big) = \zeta.\]
  In the proof of Theorem \ref{theoloctrans}, (2.), we showed that $D_{J,(v,\eta)}$ as an operator with domain 
  \[T_J\mathcal{J}^\ell\oplus W^{1,p}_\delta(\mathbb{R}\times S^1,v^\ast TV)\oplus W^{1,p}_\delta(\mathbb{R},\mathbb{R})=\ker d(ev^-)\cap \ker d(ev^+)\] 
  is surjective, i.e.\ we did not use the $T_{(v^-,\eta^-)}C^-\oplus T_{(v^+,\eta^+)}C^+$ part of $T_{(v,\eta)}\mathcal{B}(C^-,C^+)$. Hence there exists $(Y_1,\mathfrak{v}_1,\hat{\eta}_1)\in T_J\mathcal{J}^\ell\oplus T_{(v,\eta)}\mathcal{B}(C^-,C^+)$ such that
  \[D_{J,(v,\eta)}(Y_1,\mathfrak{v}_1,\hat{\eta}_1)=D_{J,(v,\eta)}(0,\mathfrak{v}_0,\hat{\eta}_0)\qquad\text{and}\qquad d(ev^\pm)_{J,(v,\eta)}(Y_1,\xi_1,\hat{\eta}_1)=0.\]
  Now set $(Y,\mathfrak{v},\hat{\eta}):=(0-Y_1,\mathfrak{v}_0-\mathfrak{v}_1,\hat{\eta}_0-\hat{\eta}_1)$. Then, $(Y,\mathfrak{v},\hat{\eta})$ lies in the kernel of $D_{J,(v,\eta)}$ and satisfies
  \[d(ev^-)\times d(ev^+)_{J,(v,\eta)}\big(Y,\mathfrak{v},\hat{\eta}\big)=\zeta.\qedhere\]
 \end{enumerate}
\end{proof}
\begin{rem}~
 \begin{itemize}
  \item There is a similar version of the Global Transversality Theorem if we consider homotopies, i.e.\ if $H, J, h$ and $g_h$ depend on $s$. In this case however, we can no longer assume that all $\mathcal{A}^H$-gradient trajectories are non-constant. That transversality along constant trajectories holds automatically is shown in Appendix \ref{auttrans}.
  \item To show that $\widehat{\mathcal{M}}(c^-,c^+,m)$ is a manifold, we still have to deal with the unstable trajectories, i.e.\ whose where $t_k=0$ for at least one $k$. Here, one uses a gluing argument that gives $\widehat{\mathcal{M}}(c^-,c^+,m)$ the structure of a manifold with corners (Theorem \ref{theocompactness2}). For details, see \cite{Fra}, appendix A, discussion after Cor.\ A.15.
 \end{itemize}
\end{rem}
Next, we want to extend the Transversality Theorems \ref{theotrans} and \ref{theoloctrans} to situations where we have a smooth symplectic symmetry $\sigma$ on $V$ such that $\sigma^k=Id$ for some finite $k$. Recall that $\sigma$ is a symplectic symmetry if it is a diffeomorphism of $V$ such that $\sigma^\ast\lambda=\lambda$. We assume that the Hamiltonian $H$, the Morse function $h$ and the metric $g$ on $\crita$ are chosen $\sigma$-invariant, i.e.\ $H(\sigma(p))=H(p)$ for all $p\in V$, $h(\sigma(p))=h(p)$ for all $p\in\crita$ and $\sigma^\ast g=g$. Moreover, we assume that the following set is a symplectic submanifold of $V$:
\[V_{fix}:=\Big\{p\in V\,\Big|\,\sigma^l(p)=p \text{ for some }l, 1\leq l\leq k-1\Big\}=\bigcup_{1\leq l\leq k-1} V_{fix}(\sigma^l).\]
An admissible almost complex structure $J$ is called $\sigma$-symmetric if $\sigma^\ast J=J$. We denote the space of smooth $\sigma$-symmetric admissible almost complex structures by $\mathcal{J}^{sym}$.
\begin{theo}[Local Transversality Theorem with symmetry]\label{theosymtransloc}~\\
 Assume that there exists a set of second category $\mathcal{J}^{fix}_{reg}$ of admissable almost complex structures on $V_{fix}$ such that for every $J\in\mathcal{J}^{fix}_{reg}$ the moduli space $\widehat{\mathcal{M}}(C^-,C^+)|_{V_{fix}}$ of solutions of the Rabinowitz-Floer equation (\ref{eq3}) on $V_{fix}$ is empty. Then there exists a set of second category $\mathcal{J}^{sym}_{reg}\subset\mathcal{J}^{sym}$ such that for all $J\in\mathcal{J}^{sym}_{reg}$ and all $C^\pm\subset\crita$ holds that $\widehat{\mathcal{M}}(C^-,C^+)$ is a manifold of the same dimension as in Theorem \ref{theoloctrans}.
\end{theo}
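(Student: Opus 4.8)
The plan is to mimic the proof of the Local Transversality Theorem \ref{theoloctrans}, replacing the Banach manifold $\mathcal{J}^\ell$ of $C^\ell$-admissible almost complex structures by its $\sigma$-invariant subspace $\mathcal{J}^{\ell,sym}$, and to identify the single place where the loss of perturbation freedom must be compensated — the surjectivity of the universal vertical differential — which I would handle using the symmetric regular points constructed in Section \ref{secholprop}.

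First I would dispose of trajectories contained in the fixed locus. Since $\sigma^\ast J=J$ forces $J$ to preserve the $(+1)$-eigenbundle $TV_{fix}(\sigma^l)$ of $d\sigma^l$, and $H,h,g$ are $\sigma$-invariant, any solution $(v,\eta)$ of (\ref{eq3}) with $im(v)\subset V_{fix}$ is a solution of the Rabinowitz-Floer equation on $V_{fix}$ for the restricted data. A standard averaging and extension argument shows that the restriction map $\mathcal{J}^{sym}(V)\to\mathcal{J}(V_{fix})$ admits local sections, so the set of $J\in\mathcal{J}^{sym}$ with $J|_{V_{fix}}\in\mathcal{J}^{fix}_{reg}$ is residual; for such $J$ the space $\widehat{\mathcal{M}}(C^-,C^+)|_{V_{fix}}$ is empty, hence every $(v,\eta)\in\widehat{\mathcal{M}}(C^-,C^+)$ satisfies $im(v)\not\subset V_{fix}(\sigma^l)$ for $l=1,\dots,k-1$.

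Next I would set up the universal moduli space $\mathcal{U}^{\ell,sym}:=\{(J,(v,\eta))\in\mathcal{J}^{\ell,sym}\times\mathcal{B}\mid\mathcal{F}(J,(v,\eta))=0\}$ and prove that the vertical differential $D_{(J,(v,\eta))}$ is onto at every point of it. As in Theorem \ref{theoloctrans}, $D_{(v,\eta)}$ is Fredholm, so the range of $D_{(J,(v,\eta))}$ is closed, and it suffices to show that an element $(\zeta,\hat{\mu})\in(\mathcal{E}_{(v,\eta)})^\ast$ annihilating the range vanishes: elliptic regularity makes $(\zeta,\hat{\mu})$ smooth with the unique continuation property, and the $\hat{\eta}$-component of the equation kills $\hat{\mu}$ exactly as before, so only killing $\zeta$ is new. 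By the Remark following Proposition \ref{regpts}, since $im(v)\not\subset V_{fix}(\sigma^l)$ for all $l$, the set $\mathcal{S}(U)$ of common regular points for $\sigma,\sigma^2,\dots,\sigma^k$ is open and dense in $\{\partial_s v\neq 0\}$, and its intersection $\Omega$ with $\{v(s,t)\notin crit(H)\}$ is non-empty and open. At a point $(s_0,t_0)\in\Omega$ with $\zeta(s_0,t_0)\neq 0$ the orbit $\{p,\sigma(p),\dots,\sigma^{k-1}(p)\}$ of $p:=v(s_0,t_0)$ is free, since $v(s_0,t_0)\neq\sigma^l(v(s_0,t_0))$, and $v(\cdot,t_0)$ extended over $\overline{\mathbb{R}}$ misses each $\sigma^l(p)$ with $l\neq 0$, hence stays a definite distance away from them. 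I would then choose $Y_p$ as in \cite{DuffSal}, Lem.\ 3.2.2, with $\omega_p(Y_pJ\partial_s v,\zeta)>0$ at $(s_0,t_0)$, spread it $\langle\sigma\rangle$-equivariantly over the pairwise disjoint neighborhoods of the orbit points, and cut it off in $S^1\times V\times\mathbb{R}$ near the $(t_0,\sigma^l(p),\eta(s_0))$; by the distance estimate only the $l=0$ piece contributes to $\int\langle Y(\partial_t v-\eta X_H),\zeta\rangle$, giving the same contradiction as in the unsymmetric case. Thus $\zeta\equiv 0$ on $\Omega$ and, by unique continuation, everywhere, so $D_{(J,(v,\eta))}$ is onto and $\mathcal{U}^{\ell,sym}$ is a separable $C^\ell$-Banach manifold.

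Finally I would proceed exactly as in the last step of Theorem \ref{theoloctrans}: the projection $\pi:\mathcal{U}^{\ell,sym}\to\mathcal{J}^{\ell,sym}$ is Fredholm of index $ind(D_{(v,\eta)})$, Sard–Smale produces a residual set $\mathcal{J}^{\ell,sym}_{reg}(\rho)$ of regular values, Taubes' trick promotes it to a residual subset of the smooth $\sigma$-symmetric almost complex structures, and the union over $\rho>0$, the countable intersection over pairs $C^\pm\subset\crita$, and the residual set from the first step together yield $\mathcal{J}^{sym}_{reg}$, for which every $\widehat{\mathcal{M}}(C^-,C^+)=\pi^{-1}(J)$ is a manifold of the same local dimension as in Theorem \ref{theoloctrans}. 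The main obstacle is the perturbation step: a $\sigma$-invariant perturbation of $J$ is necessarily spread over the whole $\langle\sigma\rangle$-orbit of the chosen point, and one must ensure it still produces a net nonzero pairing with $\zeta$ — this is precisely what freeness of the action at points of $\mathcal{S}(U)$ and the non-recurrence encoded in $\mathcal{S}_{\sigma^l}(U)$ provide, and it is why the hypothesis that $V_{fix}$ carries no solutions cannot be removed.
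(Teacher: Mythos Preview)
Your approach is essentially the paper's, but there are two gaps worth flagging.

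First, your universal moduli space $\mathcal{U}^{\ell,sym}$ is defined over \emph{all} of $\mathcal{J}^{\ell,sym}$, yet your surjectivity argument invokes $\mathcal{S}(U)$, which requires $im(v)\not\subset V_{fix}(\sigma^l)$ for each $l=1,\dots,k-1$. The residual set from your first step only guarantees this for \emph{specific} $J$, not for an arbitrary $(J,(v,\eta))\in\mathcal{U}^{\ell,sym}$; for generic $J$ there may well be solutions living entirely in $V_{fix}$, and at those points you have not established surjectivity, so you have not shown that $\mathcal{U}^{\ell,sym}$ is a Banach manifold and cannot run Sard--Smale on it. The paper fixes this by working instead with the \emph{open} subset $\mathcal{U}^\ell_\sigma=\{(J,(v,\eta)):\mathcal{F}=0,\ im(v)\not\subset V_{fix}\}$ of $\mathcal{F}^{-1}(0)$, proving that \emph{this} is a Banach manifold, applying Sard--Smale to the projection from it, and only afterwards intersecting with the first-step residual set (via Lemma~\ref{lemZZ}) to conclude that $\pi^{-1}(J)$ coincides with the full $\widehat{\mathcal{M}}(C^-,C^+)$.

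Second, your claim that ``$v(\cdot,t_0)$ extended over $\overline{\mathbb{R}}$ misses each $\sigma^l(p)$'' is stronger than what $\mathcal{S}(U)$ actually gives: the condition $(v(s_0,t_0),\eta(s_0))\notin(\sigma^l\circ v(\overline{\mathbb{R}},t_0),\eta(\overline{\mathbb{R}}))$ permits $v(s',t_0)=\sigma^l(p)$ as long as $\eta(s')\neq\eta(s_0)$, and it says nothing about $v(s',t')$ for $t'\neq t_0$. The averaged perturbation $Y=\sum_l(\sigma^l)^\ast\widetilde{Y}$ is supported near the points $(t_0,\sigma^l(p),n)$ in $S^1\times V\times\mathbb{R}$, and to ensure only the $l=0$ piece contributes one must shrink the cutoff in the $S^1$- or $\mathbb{R}$-direction as well, not just in $V$; this is precisely where the $n$-dependence of $J$ earns its keep. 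The paper makes this explicit by a dichotomy: either $\sigma^l(p)\notin im(v)$ (shrink in $V$), or $\sigma^l(p)=v(s_1,t_1)$ with necessarily $t_1\neq t_0$ or $\eta(s_1)\neq n$ (shrink in $S^1$ or $\mathbb{R}$).
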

The original idea of the following proof is due to Peter Uebele, who showed a similar result for symplectic homology in \cite{Ueb}.
\begin{proof}
 We proceed as in the proof of Theorem \ref{theoloctrans}. Part (1.) remains completely unchanged, as the proof that $D_{(v,\eta)}$ is a Fredholm operator of the required index does not depend on the chosen almost complex structure.\\
 In Part (2.), we now want to prove that the following universal moduli space is a $C^\ell$-Banach manifold:
 \[\mathcal{U}^\ell_\sigma:=\left\{\big(J,(v,\eta)\big)\in\mathcal{J}^{\ell,sym}\times \mathcal{B}\,\left|\,\mathcal{F}\big(J,(v,\eta)\big)=0, im(v)\not\subset V_{fix}\Big.\right.\right\}.\]
 Note that $\mathcal{U}^\ell_\sigma$ is in general not $\mathcal{F}^{-1}(0)$, as a priori we cannot exclude $im(v)\subset V_{fix}$. However, we clearly have that $\mathcal{U}^\ell_\sigma$ is an open subset of $\mathcal{F}^{-1}(0)$. Hence it suffices again to prove that $D_{J,(v,\eta)}$ is onto for every $(J,(v,\eta))\in \mathcal{U}^\ell_\sigma$. As $D_{(v,\eta)}$ is Fredholm, $D_{J,(v,\eta)}$ has a closed range and it still suffices to show that its range is dense.\\
 Now, the tangent space to $\mathcal{J}^{\ell,sym}$ consists of matrix valued functions $Y:S^1\times \mathbb{R}\rightarrow End(TV)$ such that $Y\in T_J\mathcal{J}^\ell$ and $Y$ is symmetric, i.e.\ $\sigma^\ast Y = Y$. Moreover, recall from the end of Section \ref{secholprop} the definition of regular and symmetric regular points $\mathcal{S}(U)$ of a solution $U=(v,\eta)$ of (\ref{eq3}) with $im(v)\not\subset V_{fix}$:\pagebreak
 \[\mathcal{S}(U)=\left\{(s,t)\in\mathbb{R}\times S^1\;\left|\;\begin{aligned}(\partial_s v(s,t),\partial_s \eta(s))&\neq (0,0)&&\\
 (v(s,t),\eta(s))&\not\in \big(\sigma^l\circ v(\overline{\mathbb{R}},t),\eta(\overline{\mathbb{R}})\big),& l&=1,...\,,k\\
 \text{except}\;\, (v(s,t),\eta(s))&=(\sigma^k\circ v(s,t),\eta(s))&&\end{aligned}\right.\right\}.\]
 This set is open and dense in the open and non-empty set $\{(s,t)\in\mathbb{R}\times S^1\,|\,\partial_s v\neq 0\}$ by Proposition \ref{regpts}. Moreover, by Lemma \ref{notfixdense}, we know that the set $F(U)$ of points $(s,t)$ with $(v,\eta)(s,t)\not\in V_{fix}$ is also open and dense in $\mathbb{R}\times S^1$. Thus we find that the set
 \[\Omega := \left\{(s,t)\in S_\sigma(U)\cap F(U)\,\left|\,v(s,t)\not\in crit(H)\Big.\right.\right\}\]
 is non-empty and open in $\mathbb{R}\times S^1$. As before, we show that every $(\zeta,\hat{\mu})\in\big(\mathcal{E}_{(v,\eta)}\big)^\ast$ with
 \[\int_{\mathbb{R}\times S^1}\left\langle D_{(v,\eta)}(\xi,\hat{\eta}),(\zeta,\hat{\mu})\Big.\right\rangle ds\,dt=0 \;\text{ and }\;\int_{\mathbb{R}\times S^1}\left\langle Y(\partial_t v-\eta X_H),\zeta\Big.\right\rangle ds\,dt =0\]
 for all $(Y,\xi,\hat{\eta})\in T_J\mathcal{J}^{l,sym}\times T_{(v,\eta)}\mathcal{B}$ has to be zero on $\Omega$ and therefore everywhere by unique continuation. Assume that there exists $(s_0,t_0)\in \Omega$ with $\zeta(s_0,t_0)\neq 0$. Set \linebreak $p:=v(s_0,t_0)\in V$ and $n:=\eta(s_0)\in\mathbb{R}$ and choose as before a linear map $Y_p:T_pM\rightarrow T_pM$ such that $Y_p\in T_J(\mathcal{J}^\ell)_{t_0}(p,n)$ and $\big\langle Y_pJ\partial_s v(s_0,t_0),\zeta(s_0,t_0)\big\rangle>0$. Using a cutoff function $\beta: S^1\times V\times\mathbb{R}\rightarrow \mathbb{R}$ supported near $(t_0,p,n)$ construct $\widetilde{Y}\in T_J \mathcal{J}^\ell$ supported near $(t_0,p,n)$ such that 
 \[\int_{\mathbb{R}\times S^1} \Big\langle \widetilde{Y}J\partial_s v,\zeta\Big\rangle ds\, dt>0.\tag{$\ast$}\]
 Note that $\widetilde{Y}\not\in T_J\mathcal{J}^{l,sym}$ in general, as it is not symmetric. Thus, we set
 \[Y:=\widetilde{Y}+\sigma^\ast \widetilde{Y}+(\sigma^2)^\ast\widetilde{Y}+...+(\sigma^{k-1})^\ast\widetilde{Y}.\]
 Note that $Y$ is supported near the $k$ points $(t_0,\sigma^l(p),n),\, 0\leq l\leq k-1$. For each $1\leq l\leq k-1$ we have two cases:
 \begin{itemize}
  \item $\sigma^l(p)\not\in im(v)$. Then $(\ast)$ still holds with $Y$ instead of $\widetilde{Y}$ provided that $supp\,\beta$ is small enough in the $V$-direction.
  \item $\sigma^l(p)\in im(v)$. Then $(\sigma^l\circ v)(s_0,t_0)=v(s_1,t_1)$ for some $(s_1,t_1)\in\mathbb{R}\times S^1$. As $(s_0,t_0)\in \mathcal{S}(U)$ this implies either $\eta(s_1)\neq \eta(s_0)=n$ or that $t_1\neq t_0$. Hence $(\ast)$ still holds for $Y$ instead of $\widetilde{Y}$ provided that $supp\,\beta$ is small enough in the $\mathbb{R}$- resp.\ $S^1$-direction.
 \end{itemize}
 This shows that $\zeta\equiv 0$ on $\Omega$. The proof that also $\hat{\mu}\equiv 0$ stays unchanged as it involves no perturbations on $J$. This shows that $\mathcal{U}^\ell_\sigma$ is a $C^\ell$-Banach manifold.\\
Part (3.) of the proof works again mostly unchanged: Considering as before the projection $\pi_\sigma: \mathcal{U}^\ell_\sigma\rightarrow \mathcal{J}^{\ell,sym}$ we show again that $d\pi_\sigma$ is Fredholm of the same index as $D_{(v,\eta)}$ and that for $\ell$ large enough the set of regular values $\mathcal{J}^{\ell,sym}_{reg}\subset\mathcal{J}^{\ell,sym}$ of $\pi$ is of second category. Using Taubes trick, we then obtain that the set $\widetilde{\mathcal{J}}^{sym}_{reg}:=\mathcal{J}^{\ell,sym}_{reg}\cap\mathcal{J}^{\infty,sym}$ is of second category in $\mathcal{J}^{sym}$.\pagebreak\\
Attention: Note that even though $\pi^{-1}(J)$ is a manifold for $J\in\widetilde{\mathcal{J}}^{sym}_{reg}$ of the required dimension it might not be true that $\pi^{-1}(J)=\widehat{\mathcal{M}}(C^-,C^+)$ as $\mathcal{U}^\ell_\sigma$ only contains $(v,\eta)$ with $im(v)\not\subset V_{fix}$. However, we assumed for all $J\in\mathcal{J}^{fix}_{reg}$ that the space $\widehat{\mathcal{M}}(C^-,C^+)|_{V_{fix}}$ is empty. Let $\mathcal{J}^{fix}$ denote the space of all smooth admissible almost complex structures on $V_{fix}$ and recall that $\mathcal{J}^{fix}_{reg}\subset\mathcal{J}^{fix}$ is of second category. Note that the restriction of any $J\in\mathcal{J}^{sym}$ to $V_{fix}$ gives an element in $\mathcal{J}^{fix}$. Hence we may consider the map
\[\varphi: \mathcal{J}^{sym}\rightarrow\mathcal{J}^{fix},\qquad J\mapsto J|_{V_{fix}}.\]
Note that $\varphi$ is continuous and open, as we can construct locally around any $J\in\mathcal{J}^{sym}$ a continuous map $\psi:\mathcal{J}^{fix}\rightarrow \mathcal{J}^{sym}$ such that $\varphi\circ\psi=Id$ and $\psi(J|_{V_{fix}})=J$. The Lemma \ref{lemZZ} shows then that $\varphi^{-1}\big(\mathcal{J}^{fix}_{reg}\big)\subset \mathcal{J}^{sym}$ is also a set of second category in $\mathcal{J}^{sym}$. Thus
\[\mathcal{J}^{sym}_{reg}:=\widetilde{\mathcal{J}}^{sym}_{reg}\cap\varphi^{-1}\big(\mathcal{J}^{fix}_{reg}\big)\]
is of second category and we conclude that for $J\in\mathcal{J}^{sym}_{reg}$ there is no solution $(v,\eta)$ of (\ref{eq3}) lying entirely in $V_{fix}$ and hence that $\widehat{\mathcal{M}}(C^-,C^+)=\pi^{-1}(J)$.
\end{proof}
\begin{lemme}\label{lemZZ}
 Let $\varphi: X\rightarrow Y$ be an open, continuous map between Baire spaces $X,Y$ and let $A=\underset{k\in\mathbb{N}}{\bigcap} U_k$ be a countable intersection of open and dense sets $U_k\subset Y$, i.e.\ a set of second category in $Y$. Then $\varphi^{-1}(A)$ is of second category in $X$.
\end{lemme}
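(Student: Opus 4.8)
The plan is to unwind the definitions and verify that $\varphi^{-1}$ carries open dense sets to open dense sets. Write $A=\bigcap_{k\in\mathbb{N}}U_k$ with each $U_k\subset Y$ open and dense. Since $\varphi$ is continuous, each preimage $\varphi^{-1}(U_k)\subset X$ is open, and since
\[\varphi^{-1}(A)=\varphi^{-1}\Big(\bigcap_{k\in\mathbb{N}}U_k\Big)=\bigcap_{k\in\mathbb{N}}\varphi^{-1}(U_k),\]
it suffices to show that each $\varphi^{-1}(U_k)$ is dense in $X$; then $\varphi^{-1}(A)$ is exhibited as a countable intersection of open dense subsets of $X$, which is precisely what ``of second category'' means in the sense used here.

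For the density of $\varphi^{-1}(U_k)$, fix $k$ and let $W\subset X$ be an arbitrary non-empty open set; I must produce a point of $W\cap\varphi^{-1}(U_k)$. Because $\varphi$ is an \emph{open} map, $\varphi(W)\subset Y$ is open, and it is non-empty since $W\neq\emptyset$. As $U_k$ is dense in $Y$, the intersection $\varphi(W)\cap U_k$ is non-empty; choose $y$ in it. Since $y\in\varphi(W)$, there is some $x\in W$ with $\varphi(x)=y\in U_k$, that is, $x\in W\cap\varphi^{-1}(U_k)$. Hence $\varphi^{-1}(U_k)$ meets every non-empty open subset of $X$ and is therefore dense, which completes the argument.

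The proof is entirely formal, and I do not expect any real obstacle: the one place the hypotheses are genuinely needed is the density step, where openness of $\varphi$ (rather than, say, surjectivity or injectivity) is exactly what turns $W$ into a non-empty open target set $\varphi(W)$ on which density of $U_k$ can be invoked, and continuity of $\varphi$ is what keeps the preimages open. The lemma is a soft point-set-topological fact, invoked above only to transport the genericity statement for almost complex structures on $V_{fix}$ back to $\mathcal{J}^{sym}$ through the restriction map $\varphi\colon J\mapsto J|_{V_{fix}}$, which was checked there to be continuous and open.
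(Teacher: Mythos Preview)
Your proof is correct and follows essentially the same approach as the paper: reduce to showing that the preimage of an open dense set is open dense, use continuity for openness, and use openness of $\varphi$ to push a nonempty open $W$ forward to a nonempty open $\varphi(W)$ where density of $U_k$ applies.
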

\begin{proof}
 As $\varphi^{-1}\big(\bigcap U_k\big) = \bigcap \varphi^{-1}\big(U_k\big)$, it suffices to show that the preimage $\varphi^{-1}(U)$ of an open and dense set $U\subset Y$ is open and dense in $X$. That $\varphi^{-1}(U)$ is open follows immediately from the fact that $\varphi$ is continuous.\\
 In order to show that $\varphi^{-1}(U)$ is dense, it suffices to show that for every open $V\subset X$ holds that $V\cap\varphi^{-1}(U)\neq\emptyset$. As $\varphi$ is an open map, $\varphi(V)$ is open in $Y$. By density of $U$, we have that $U\cap\varphi(V)\neq\emptyset$ and hence that $\varphi^{-1}(U)\cap V \neq\emptyset$.
\end{proof}\bigskip
In order to give an appropriate version of the Global Transversality Theorem in a symmetric setup, let us make the following definition. Let $C^\pm\subset \crita$ be any two connected components and let $c^\pm\in C^\pm \cap crit(h)$. We denote by $\widehat{\mathcal{M}}(C^-,c^+)$ the space of all $\mathcal{A}^H$ gradient trajectories $(v,\eta)$ such that
\[\lim_{s\rightarrow -\infty}(v,\eta)=(v^-,\eta^-)\in C^-\quad\text{ and }\quad \lim_{s\rightarrow +\infty}(v,\eta)=(v^+,\eta^+)\in W^s(c^+)\subset C^+.\]
Similarly, we define $\widehat{\mathcal{M}}(c^-,C^+)$ as the space of $\mathcal{A}^H$-gradient trajectories $(v,\eta)$ such that
\[\lim_{s\rightarrow -\infty}(v,\eta)=(v^-,\eta^-)\in W^u(c^-)\subset C^-\quad\text{ and }\quad \lim_{s\rightarrow +\infty}(v,\eta)=(v^+,\eta^+)\in C^+.\]
Note that both spaces are for all $J\in\mathcal{J}^{reg}$ manifolds of local dimensions
\begin{align}
 \dim_{(v,\eta)} \widehat{\mathcal{M}}(c^-,C^+) &= ind\, D_{v} - \dim W^u(c^-) \label{dimcN}\\
 &= \mu_{CZ}(v^+,\bar{v}^+)-\mu_{CZ}(v^-,\bar{v}^-) &&+ \frac{\dim C^+ + \dim C^-}{2} - ind_h(c^-)\notag
 \end{align}
 \begin{align}
 \dim_{(v,\eta)} \widehat{\mathcal{M}}(C^-,c^+) &= ind\, D_{v} - \dim W^s(c^+) \label{dimNc}\\
 &= \mu_{CZ}(v^+,\bar{v}^+)-\mu_{CZ}(v^-,\bar{v}^-) &&+ \frac{\dim C^+ + \dim C^-}{2} \notag\\
 &&&- \Big.\big(\dim C^+ - ind_h(c^+)\big)\notag\\
  &=\mu_{CZ}(v^+,\bar{v}^+)-\mu_{CZ}(v^-,\bar{v}^-) &&+ \frac{\dim C^- - \dim C^+}{2} + ind_h(c^+)\notag.
\end{align}
This follows as in the proof of Theorem \ref{theotrans}. Indeed, consider the map
\[\psi: \mathcal{U}^\ell(C^-,C^+)\rightarrow C^-\times C^+,\qquad \big(J,(v,\eta)\big)\mapsto\big(ev^-(v,\eta),ev^+(v,\eta)\big).\]
Then we see that $\psi^{-1}\big(C^-\times W^s(c^+)\big)$ resp.\
$\psi^{-1}\big(W^u(c^-)\times C^+\big)$ are submanifolds of $\mathcal{U}^\ell(C^-,C^+)$. Restricting the projection $\pi:\mathcal{U}^\ell(C^-,C^+)\rightarrow \mathcal{J}^\ell$ to these submanifolds, we then get that $\widehat{\mathcal{M}}(C^-,c^+)$ resp.\ $\widehat{\mathcal{M}}(c^-,C^+)$ are preimages of regular values of $\pi$.
\begin{theo}[Global Transversality Theorem with symmetry]\label{theotranssym}~\\
 Let $c^\pm = (v^\pm,\eta^\pm)\in crit(h)$. Assume that there exists a set of second category $\mathcal{J}^{fix}_{reg}$ of admissable almost complex structures on $V_{fix}$ such that for all $J\in J^{fix}_{reg}$ and all  \linebreak $\eta^-<\eta_0<\eta_1<\eta^+$ holds that all moduli spaces $\widehat{\mathcal{M}}(\mathcal{N}^{\eta_0},\mathcal{N}^{\eta_1})|_{V_{fix}}$ are empty. If $c^-$ or $c^+$ lie in $V_{fix}$, assume that for all $J\in J^{fix}_{reg}$ and all $\eta^-<\eta_0<\eta^+$ holds that $\widehat{\mathcal{M}}(c^-,\mathcal{N}^{\eta_0})|_{V_{fix}}$ resp.\ $\widehat{\mathcal{M}}(\mathcal{N}^{\eta_0},c^+)|_{V_{fix}}$ are empty. If $c^-$ and $c^+$ are both in $V_{fix}$ assume that $\widehat{\mathcal{M}}(c^-,c^+,1)|_{V_{fix}}$ is empty for all $J\in J^{fix}_{reg}$. Then there exists a set of second category $\mathcal{J}^{sym}_{reg}$ of admissable almost complex structures on $V$ such that for all $J\in\mathcal{J}^{sym}_{reg}$ and all $c^\pm\in crit(h),\,m\in\mathbb{N}$ holds that $\widehat{\mathcal{M}}(c^-,c^+,m)$ is a manifold of the same dimension as in Theorem \ref{theotrans}.
\end{theo}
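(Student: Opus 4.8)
The plan is to combine the three ingredients already assembled: the Local Transversality Theorem with symmetry (Theorem \ref{theosymtransloc}), the non-symmetric Global Transversality Theorem (Theorem \ref{theotrans}) and its proof strategy, and the dimension computations \eqref{dimcN}--\eqref{dimNc} for the truncated moduli spaces $\widehat{\mathcal{M}}(c^-,C^+)$ and $\widehat{\mathcal{M}}(C^-,c^+)$. The overall scheme mirrors part (1.)--(3.) of the proof of Theorem \ref{theotrans}, but every universal moduli space of $\mathcal{A}^H$-gradient trajectories between connected components $C_{k-1},C_k$ is replaced by the $\sigma$-symmetric version $\mathcal{U}^\ell_\sigma(C_{k-1},C_k)$ from the proof of Theorem \ref{theosymtransloc}, i.e.\ the open subset of $\mathcal{F}^{-1}(0)$ consisting of pairs $(J,(v,\eta))$ with $J\in\mathcal{J}^{\ell,sym}$ and $im(v)\not\subset V_{fix}$.

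First I would set up, exactly as in step (1.) of Theorem \ref{theotrans}, the $C^\ell$-Banach manifold $\widetilde{\mathcal{U}}_\sigma := \mathcal{U}^\ell_\sigma(C_0,C_1)\times\dots\times\mathcal{U}^\ell_\sigma(C_{m-1},C_m)\times(\mathbb{R}^+)^{m-1}$ and cut out the diagonal sub-Banach-manifold $\mathcal{U}^\ell_{\sigma,m}$ where all factors carry the same $J\in\mathcal{J}^{\ell,sym}$; this is legitimate because Theorem \ref{theosymtransloc}, part (2.), already establishes that each $\mathcal{U}^\ell_\sigma(C_{k-1},C_k)$ is a $C^\ell$-Banach manifold (the Fredholm/index computation in part (1.) of Theorem \ref{theoloctrans} is insensitive to the choice of $J$). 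Then I would introduce the evaluation-and-flow map $\psi$ into $C_0\times(C_1)^2\times\dots\times(C_{m-1})^2\times C_m$ and the submanifold $A_m(c^-,c^+)$ of tuples $(q_0,p_1,q_1,\dots,p_m)$ with $p_k=q_k$, $q_0\in W^u(c^-)$, $p_m\in W^s(c^+)$, precisely as in step (1.) of the proof of Theorem \ref{theotrans}. The transversality of $\psi$ to $A_m(c^-,c^+)$ reduces, via the isomorphism property of $dT_h(t_k)$ and the natural embedding of each $\mathcal{U}^\ell_\sigma(C_{k-1},C_k)$ into $\mathcal{U}^\ell_{\sigma,m}$, to showing that $ev^-\times ev^+\colon \mathcal{U}^\ell_\sigma(C^-,C^+)\to C^-\times C^+$ is a submersion. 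This last point is the key place where symmetry enters: the argument of step (1.), part (3.) of Theorem \ref{theotrans} goes through verbatim because the proof of Theorem \ref{theosymtransloc}, part (2.), shows that $D_{J,(v,\eta)}$ restricted to $T_J\mathcal{J}^{\ell,sym}\oplus\ker d(ev^-)\cap\ker d(ev^+)$ is already surjective — so I can absorb an arbitrary prescribed boundary variation $(\mathfrak{v}_0,\hat\eta_0)$ by correcting with an element of $T_J\mathcal{J}^{\ell,sym}\oplus T_{(v,\eta)}\mathcal{B}$ landing in the kernel of $D_{J,(v,\eta)}$. Having transversality, the implicit function theorem gives that $\psi^{-1}(A_m(c^-,c^+))\subset\mathcal{U}^\ell_{\sigma,m}$ is a submanifold of the codimension computed in Theorem \ref{theotrans}, and the projection $\pi_\sigma$ to $\mathcal{J}^{\ell,sym}$ is Fredholm of index $ind\,\pi$, i.e.\ the dimension claimed.

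Next I would run the Sard--Smale argument: for $\ell$ large, the regular values $\mathcal{J}^{\ell,sym}_{reg}(C_0,\dots,C_m)$ of $\pi_\sigma$ form a set of second category in $\mathcal{J}^{\ell,sym}$; Taubes' trick upgrades this to a second-category subset of the smooth $\sigma$-symmetric structures; then I take the union over $\rho>0$ and the countable intersection over all tuples $(C_0,\dots,C_m)$ and all $m$ to obtain $\widetilde{\mathcal{J}}^{sym}_{reg}\subset\mathcal{J}^{sym}$. As in the proof of Theorem \ref{theosymtransloc} I must be careful that for $J\in\widetilde{\mathcal{J}}^{sym}_{reg}$ the fibre $\pi_\sigma^{-1}(J)$ is only the space of trajectories with cascades none of whose components have image in $V_{fix}$; to rule out the stray components I invoke the hypothesis on $\mathcal{J}^{fix}_{reg}$ together with Lemma \ref{lemZZ} applied to the restriction map $\varphi\colon\mathcal{J}^{sym}\to\mathcal{J}^{fix}$, setting $\mathcal{J}^{sym}_{reg}:=\widetilde{\mathcal{J}}^{sym}_{reg}\cap\varphi^{-1}(\mathcal{J}^{fix}_{reg})$. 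Here the three cases in the statement correspond exactly to the three types of ``bad'' cascade one must exclude: an interior cascade between two critical manifolds $\mathcal{N}^{\eta_0},\mathcal{N}^{\eta_1}$, a cascade emanating from or ending at $c^\pm\in V_{fix}$ (needing emptiness of $\widehat{\mathcal{M}}(c^-,\mathcal{N}^{\eta_0})|_{V_{fix}}$ resp.\ $\widehat{\mathcal{M}}(\mathcal{N}^{\eta_0},c^+)|_{V_{fix}}$), and — when $c^-,c^+$ are both in $V_{fix}$ — a single cascade from $c^-$ to $c^+$ inside $V_{fix}$, whence the hypothesis on $\widehat{\mathcal{M}}(c^-,c^+,1)|_{V_{fix}}$; the truncated moduli spaces $\widehat{\mathcal{M}}(c^-,C^+)$, $\widehat{\mathcal{M}}(C^-,c^+)$ with the dimension formulae \eqref{dimcN}--\eqref{dimNc} are what actually controls the count of such bad cascades, since a bad cascade together with the Morse flow line attaching it contributes via those spaces.

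The main obstacle I expect is bookkeeping the fixed-locus exclusions rather than any new analysis: one has to check that \emph{every} cascade in a trajectory counted by $\widehat{\mathcal{M}}(c^-,c^+,m)$ which has image in $V_{fix}$ is captured by one of the listed moduli spaces over $V_{fix}$ (interior, boundary, or single), and that a cascade with image meeting but not contained in $V_{fix}$ is already handled by $\mathcal{U}^\ell_\sigma$ — this is where the fact that $V_{fix}$ is a \emph{symplectic} submanifold is used, so that an $\mathcal{A}^H$-gradient trajectory touching $V_{fix}$ but not lying in it has an injective point off $V_{fix}$ (Lemma \ref{notfixdense} and Proposition \ref{regpts}). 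Everything else — the Fredholm index formula, the $C^\ell$ Banach-manifold structure, the Sard--Smale/Taubes packaging, the gluing of unstable configurations — is either identical to the non-symmetric proof of Theorem \ref{theotrans} or already carried out in Theorem \ref{theosymtransloc}, so I would present those steps by reference and concentrate the written proof on the diagonal universal moduli space $\mathcal{U}^\ell_{\sigma,m}$, the submersivity of $ev^-\times ev^+$ in the symmetric category, and the case analysis excluding cascades in $V_{fix}$.
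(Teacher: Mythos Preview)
Your proposal is correct and follows essentially the same route as the paper: replace each $\mathcal{U}^\ell(C_{k-1},C_k)$ by its symmetric counterpart $\mathcal{U}^\ell_\sigma(C_{k-1},C_k)$, rerun the $\psi$-transversality/Sard--Smale argument of Theorem~\ref{theotrans} verbatim, and then intersect with $\varphi^{-1}(\mathcal{J}^{fix}_{reg})$ via Lemma~\ref{lemZZ} to exclude cascades lying in $V_{fix}$. The only point you leave as ``bookkeeping'' that the paper makes explicit is the mechanism linking the hypotheses to the endpoints: because $g_h$ is $\sigma$-invariant, the $h$-gradient flow preserves $V_{fix}$, so if $c^-\notin V_{fix}$ then $W^u(c^-)\cap V_{fix}=\emptyset$ and the first cascade cannot lie in $V_{fix}$, while if $c^-\in V_{fix}$ and the first cascade does lie in $V_{fix}$ then the connecting Morse segment forces it into $\widehat{\mathcal{M}}(c^-,\mathcal{N}^{\eta_0})|_{V_{fix}}$ (and symmetrically at $c^+$); the dimension formulae \eqref{dimcN}--\eqref{dimNc} are not needed in the proof itself but only later to \emph{verify} the emptiness hypotheses in concrete examples.
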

\begin{proof}
 Using the same arguments as in the proof of Theorem \ref{theotrans}, it is obvious that we can find a set of second category $\mathcal{J}^{sym}_{reg}$ such that for every $J\in\mathcal{J}^{sym}_{reg}$ the space of all trajectories with cascades from $c^-$ to $c^+$, where all cascades are not contained in $V_{fix}$, is a manifold of the required dimension. Simply replace in the proof $\mathcal{U}^\ell(C_{k-1},C_k)$ by $\mathcal{U}^\ell_\sigma(C_{k-1},C_k)$, use Theorem \ref{theosymtransloc} instead of \ref{theoloctrans} and proceed otherwise in the same way.\\
 It remains to verify that these trajectories with cascades are the only ones, i.e.\ we have to make sure that no cascade that is part of a trajectory between $c^-$ and $c^+$ is contained in $V_{fix}$. It clearly suffice to assume for every pair $\eta_0,\eta_1\in[\eta^-,\eta^+]$ that the space $\widehat{\mathcal{M}}(\mathcal{N}^{\eta_0},\mathcal{N}^{\eta_1})|_{V_{fix}}$ of all gradient trajectories between $\mathcal{N}^{\eta_0}$ and $\mathcal{N}^{\eta_1}$ lying entirely in $V_{fix}$ is empty. At the two ends however, we can weaken this assumption:
 \begin{itemize}
  \item If $c^-$ or $c^+$ is not in $V_{fix}$, then it follows from the $\sigma$-symmetry of $g$ that \linebreak $W^u(c^-)\cap V_{fix}=\emptyset$ resp.\ $W^s(c^+)\cap V_{fix}=\emptyset$. Indeed, a point $p\in W^{s/u}(c^\pm)\cap V_{fix}$ would imply that $T_h(t)(p)\in V_{fix}$ for all $t$ and hence that $\displaystyle \lim_{s\rightarrow \pm\infty} T_h(t)(p)=c^\pm\in V_{fix}$. We know therefore that the first/last cascade cannot be  contained in $V_{fix}$ as one of its ends is not in $V_{fix}$.
  \item If $c^-$ or $c^+$ lie in $V_{fix}$ as well as the first/last cascade $x_1$ resp.\ $x_m$ then we get from the symmetry of $g$ that $x_1\in\widehat{\mathcal{M}}(c^-,\mathcal{N}^{\eta_0})|_{V_{fix}}$ resp.\ $x_m\in\widehat{\mathcal{M}}(\mathcal{N}^{\eta_0},c^+)|_{V_{fix}}$ as
  \[T_h(t)\big(ev^-(x_1)\big)\in V_{fix}\quad \text{ resp.\ } \quad T_h(t)\big(ev^+(x_m)\big)\in V_{fix} \quad\text{ for all $t$}.\]
  However, our assumptions guarantee that this cannot happen.
  \item If $c^-$ and $c^+$ lie in $V_{fix}$ and we consider only trajectories with one cascade, which lies in $V_{fix}$, then the same arguments show that we actually consider the space $\widehat{\mathcal{M}}(c^-,c^+,1)|_{V_{fix}}$ which we also assume to be empty.\qedhere
 \end{itemize}
\end{proof}
\begin{cor}\label{symhom}
 Let $\sigma : V\rightarrow V$ be a symplectic symmetry of order $k$ such that the set $V_{fix}=V\setminus\{p\,|\, \sigma^l(p)\neq p, 0<l<k\}$ is a symplectic submanifold. Let $H, h$ and $g_h$ be $\sigma$-symmetric and suppose that $c^\pm=(v^\pm,\eta^\pm)\in crit (h)\cap V_{fix}$. Moreover assume that the assumptions of Theorem \ref{theotranssym} are satisfied. Then it holds for all $J\in \mathcal{J}^{sym}_{reg}$ that the cardinality of the zero-dimensional component of $\mathcal{M}(c^-,c^+)$ is divisible by $k$.
\end{cor}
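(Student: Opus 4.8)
The plan is to realize the cyclic group $\mathbb{Z}/k\mathbb{Z}=\langle\sigma\rangle$ as a group of automorphisms of the finite set $\mathcal{M}^0(c^-,c^+)$, to check that this action is free, and then to conclude that $\#\mathcal{M}^0(c^-,c^+)$ is a multiple of $k$ because it is a disjoint union of orbits of size $k$.

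First I would set up the action. Since $\sigma$ is a symplectic symmetry with $\sigma^\ast\lambda=\lambda$ and all the auxiliary data are $\sigma$-symmetric ($H\circ\sigma=H$, $h\circ\sigma=h$, $\sigma^\ast g_h=g_h$, and $\sigma^\ast J=J$ for $J\in\mathcal{J}^{sym}_{reg}\subset\mathcal{J}^{sym}$), the map $(v,\eta)\mapsto(\sigma\circ v,\eta)$ carries $\mathcal{A}^H$-gradient trajectories to $\mathcal{A}^H$-gradient trajectories and $h$-Morse flow lines to $h$-Morse flow lines. Applied cascade by cascade (and to the connecting Morse pieces $y_k$), it defines an action of $\langle\sigma\rangle$ on $\widehat{\mathcal{M}}(c^-,c^+,m)$ for each $m$; it commutes with the $\mathbb{R}^m$-reparametrisation, hence descends to $\mathcal{M}(c^-,c^+,m)$ and to $\mathcal{M}(c^-,c^+)=\bigcup_m\mathcal{M}(c^-,c^+,m)$. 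As $c^\pm$ are fixed by $\sigma$, the asymptotic and connectedness conditions at the two ends are preserved, and since $\sigma$ is a diffeomorphism (with inverse $\sigma^{k-1}$) it preserves the local dimension, hence restricts to an action on the $0$-dimensional stratum $\mathcal{M}^0(c^-,c^+)$, which is finite by the compactness result Theorem \ref{theoglue}.

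The heart of the argument is freeness, and here the earlier results do the work. Suppose $[(x,t)]\in\mathcal{M}^0(c^-,c^+)$ with cascades $x_k=(v_k,\eta_k)$ satisfies $\sigma^l\cdot[(x,t)]=[(x,t)]$ for some $0<l<k$. If $m\geq1$, unwinding the $\mathbb{R}^m$-equivalence shows that for each $k$ the trajectory $(\sigma^l\circ v_k,\eta_k)$ is an $s_k$-shift of $(v_k,\eta_k)$ for some $s_k\in\mathbb{R}$. But by Theorem \ref{theotranssym} (more precisely, by the construction of $\mathcal{J}^{sym}_{reg}$ in its proof and in the proof of Theorem \ref{theosymtransloc}), for $J\in\mathcal{J}^{sym}_{reg}$ no cascade of a trajectory between $c^-$ and $c^+$ lies in $V_{fix}$; in particular $im(v_k)\not\subset V_{fix}(\sigma^l)$. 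Since $v_k$ is a non-constant $\mathcal{A}^H$-gradient trajectory with limits in $\crita$, Lemma \ref{Lemnoshift}, applied to the symplectic symmetry $\sigma^l$ (which is again of finite order, with $H$ and $J$ invariant), forbids $(\sigma^l\circ v_k,\eta_k)$ from being an $s_k$-shift of $(v_k,\eta_k)$ for any $s_k\in\mathbb{R}$ — not even for $s_k=0$. This contradiction settles $m\geq1$. The case $m=0$ (a single non-constant $h$-Morse flow line $y$ from $c^-$ to $c^+$, hence on a fixed $\mathcal{N}^\eta$ with $\eta^-=\eta=\eta^+$) is either vacuous — e.g.\ when $\eta^-<\eta^+$, which is the situation in our applications — or handled by the same input: if $\sigma^l\circ y$ were a reparametrisation of $y$, then since $y$ has distinct limits it would equal $y$, so $im(y)\subset V_{fix}$, which is excluded once one imposes (as part of, or in addition to, the hypotheses of Theorem \ref{theotranssym}) that there is no $\sigma^l$-invariant rigid Morse flow line inside $V_{fix}$. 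Thus the action is free, every orbit has exactly $k$ points, and $\#\mathcal{M}^0(c^-,c^+)\equiv 0\pmod k$.

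The main obstacle is precisely this freeness step: showing that no rigid trajectory with cascades can be invariant under a non-trivial power of $\sigma$. It rests on two pillars — the conclusion of Theorem \ref{theotranssym} that for generic symmetric $J$ every surviving cascade avoids $V_{fix}$, and the shift-rigidity of non-constant gradient trajectories from Lemma \ref{Lemnoshift} — and the only genuinely delicate bookkeeping is the degenerate $0$-cascade case, which must be disposed of by hand as above.
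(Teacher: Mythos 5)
Your proof is correct and follows the same route as the paper's (realize $\mathbb{Z}/k\mathbb{Z}$ as a group action on $\mathcal{M}^0(c^-,c^+)$ via $[(x,t)]\mapsto[(\sigma\circ x,t)]$, show the action is free, conclude divisibility by $k$), but you spell out two points that the paper's brief proof leaves implicit. First, you correctly observe that freeness is a statement about the \emph{unparametrized} moduli space, so $\sigma^l\cdot[(x,t)]=[(x,t)]$ only gives that $\sigma^l\circ v_j$ is an $s_j$-shift of $v_j$, and you invoke Lemma \ref{Lemnoshift} (applied to the symmetry $\sigma^l$) together with the fact that, for $J\in\mathcal{J}^{sym}_{reg}$, no cascade lies in $V_{fix}$, to rule out a shift for every $s_j\in\mathbb{R}$ including $s_j=0$ — the paper asserts ``all following flow lines with cascades are pairwise different'' without citing that lemma. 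Second, you flag the $m=0$ case as not covered by the hypotheses of Theorem \ref{theotranssym} (which concern only cascades, not pure $h$-Morse flow lines inside $V_{fix}\cap\mathcal{N}^\eta$); this is a genuine sharpening — in the only place the corollary is used (Theorem \ref{theobriesspheres}), the $m=0$ contributions are killed separately by the perfect-Morse-function argument, so the statement as applied is fine, but as stated the corollary implicitly needs either $\eta^-<\eta^+$ or an additional assumption on Morse flow lines in $V_{fix}$, exactly as you note.
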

\begin{proof}
 By Theorem \ref{theotranssym}, we know for $J\in\mathcal{J}^{sym}_{reg}$ that $\mathcal{M}(c^-,c^+)$ is a manifold. Its zero-dimensional component is by Theorem \ref{theo3.6} compact and hence a finite set. Since $H,J,h$ and $g_h$ are $\sigma$-symmetric and $c^\pm\in V_{fix}$, it follows for any flow line with cascades $(x,t)$ from $c^-$ to $c^+$ that $(\sigma\circ x,t)$ is also a flow line with cascades from $c^-$ to $c^+$. Since no flow line lies in $V_{fix}$, all following flow lines with cascades are pairwise different:
 \[(\sigma^0\circ x, t),\,(\sigma^1\circ x, t),...\,,(\sigma^{k-1}\circ x, t).\]
 This implies that $\#\mathcal{M}(c^-,c^+)$ is divisible by $k$.
\end{proof}

\newpage
\phantom{..}
\newpage

\section{Compactness and additional properties}
\subsection{Compactness}\label{sec3.1}
This subsection is (with a few changes) taken from \cite{FraCie}. We repeat it here for completeness and to give a generalization of Corollary 3.8, \cite{FraCie}, at the end of this subsection. The main object is to show that the moduli spaces of $\mathcal{A}^{H_s}$-gradient trajectories for Hamiltonians $H$ or homotopies $H_s$ are compact. As the latter case includes the first, all proofs will be given for homotopies.\\
We still assume that $(V,\lambda)$ is the completion of a compact Liouville domain $\tilde{V}$ with contact boundary $M$, that $\Sigma\subset V$ is a contact hypersurface bounding a compact Liouville domain $W$ and that $H$ is a defining Hamiltonian for $\Sigma$. We denote by $M\times\mathbb{R}$ the symplectization of $M$ lying in $V$.\bigskip\\
A homotopy of defining Hamiltonians is for us a smooth family $(H_s)\subset C^\infty(V)$ with $-\infty<s_-\leq s\leq s_+<+\infty$, where all $H_s$ are defining Hamiltonians of exact contact hypersurfaces $\Sigma_s:=H_s^{-1}(0)$. We fix once and for all a smooth monotone cutoff function $\beta$ satisfying $\beta(s)=s_-$ for $s\leq s_-$ and $\beta(s)=s_+$ for $s\geq s_+$. Using $\beta$, we extend the homotopy to $\mathbb{R}$ as
\[s\mapsto H_{\beta(s)},\quad s\in\mathbb{R},\]
which is everywhere smooth and constant outside $[s_-,s_+]$. We set
\[H_-:=H_s, \quad s\leq s_-\qquad\text{ and }\qquad H_+:=H_s,\quad s\geq s_+.\]
To such a homotopy, we associate the following non-negative quantities:
\[||H||_\infty :=\max_{x\in V,\, s\in[s_-,s_+]} |H_s(x)|,\qquad ||\dot{H}||_1 =\max_{x\in V} \int^{s_+}_{s_-}\left|{\textstyle\frac{d}{ds}}H_s(x)\right|ds.\]
We define analog quantities for $H_{\beta(s)},\, s\in\mathbb{R}$, which we can estimate by
\begin{equation}
 ||H_\beta||_\infty = ||H||_\infty,\qquad ||\dot{H}_\beta||_1\leq ||\beta'||_\infty\cdot||\dot{H}||_1.
\end{equation}
By abuse of language, we say that an $\mathcal{A}^{H_s}$-gradient trajectory for a homotopy $H_s$ is a solution of the $s$-dependent Rabinowitz-Floer equation
\begin{equation} \label{eqRabFlHom}
 \begin{split}
  \partial_s v +J_t (v,\eta)\bigl(\partial_t v-\eta X_{H_s}(v)\bigr) &=0 \bigg.\\
  \partial_s\eta \;+\; \int^1_0 H_{\beta(s)}\bigl(v(s,t)\bigr) dt &=0,
 \end{split}
\end{equation}
where $X_{H_s}$ is the Hamiltonian vector field of $H_{\beta(s)}$. Note that this equation is not the gradient flow equation of $\mathcal{A}^{H_s}$! However, we continue to write by abuse of notation
\[\nabla \mathcal{A}^{H_s}(v,\eta):=\begin{pmatrix}-J_t(v,\eta)\big(\partial_t v-\eta X_{H_s}(v)\big)\\-\int^1_0 H_{\beta(s)}\big(v(s,t)\big) dt\end{pmatrix}.\]
The following theorem implies that the moduli space of such trajectories connecting critical points  $(v^\pm,\eta^\pm)\in crit\mspace{-4mu}\left(\mathcal{A}^{H_\pm}\right)$ is compact for sufficiently slow homotopies $H_s$.
\begin{theo}\label{theo3.6}
 Let $c$ and $\veps$ be given by the Proposition \ref{prop3.4} below and suppose that $H_s, s\in[s_-,s_+],$ is a smooth family of defining Hamiltonians satisfying the inequality
 \begin{equation}\label{eq16}
  \left(c+\frac{||H||_\infty}{\veps}\right)||\beta'||_\infty||\dot{H}||_1=:d<1.
 \end{equation}
Assume furthermore that $w^\nu=(v^\nu,\eta^\nu)\in C^\infty(\mathbb{R}\times S^1,V)\times C^\infty(\mathbb{R},\mathbb{R})$ is a sequence of $\mathcal{A}^{H_s}$-gradient trajectories for which there exist  $a,b\in\mathbb{R}$ such that
\begin{equation}\label{eq16a}
 \lim_{s\rightarrow -\infty}\mathcal{A}^{H_s}\big(w^\nu(s)\big)\geq a\quad\text{ and }\quad \lim_{s\rightarrow+\infty}\mathcal{A}^{H_s}\big(w^\nu(s)\big)\leq b\qquad \forall \;\nu\in\mathbb{N}.
\end{equation}
Then there exists a subsequence $(w^{\nu_j})$ of $(w^\nu)$ and a  $\mathcal{A}^{H_s}$-gradient trajectory $w$ such that $w^{\nu_j}$ converges in the $C^\infty_{loc}(\mathbb{R}\times S^1,V)\times C^\infty_{loc}(\mathbb{R},\mathbb{R})$-topology to $w$.
\end{theo}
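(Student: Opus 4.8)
The statement is the standard Floer-type compactness theorem adapted to the Rabinowitz setting, and I would prove it following the scheme of \cite{FraCie}. The plan is to obtain, from the hypotheses \eqref{eq16} and \eqref{eq16a}, uniform $C^0$-bounds on the sequence $w^\nu = (v^\nu,\eta^\nu)$, then uniform $C^\infty_{loc}$-bounds via elliptic bootstrapping, and finally extract a $C^\infty_{loc}$-convergent subsequence by Arzel\`a--Ascoli together with a diagonal argument. The limit then automatically solves \eqref{eqRabFlHom} because the equation is a closed condition under $C^\infty_{loc}$-convergence.

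The key steps, in order, are as follows. First, I would use the slow-homotopy condition \eqref{eq16} together with the energy identity for homotopies to bound the energy $E(w^\nu) = \int_{\mathbb{R}\times S^1}|\partial_s w^\nu|^2$ uniformly in $\nu$; the point is that for a homotopy the action is no longer monotone along a trajectory, but the defect is controlled by $\|\dot H_\beta\|_1$, and condition \eqref{eq16}, namely $d<1$, is exactly what is needed so that \eqref{eq16a} yields a uniform energy bound (this is where Proposition \ref{prop3.4} enters, providing the constants $c,\veps$). Second, I would bound the Lagrange multiplier $\eta^\nu$ uniformly: here one invokes the a priori estimates on $\eta$ (cf.\ the remark after Lemma \ref{eq3meanvalue}, i.e.\ Corollary \ref{cor3.7}), which bound $|\eta^\nu(s)|$ in terms of $\|H\|_\infty$, $\|\dot H\|_1$ and the energy, hence uniformly. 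Third, with $\eta^\nu$ bounded, the Maximum Principle (Proposition \ref{maxprinc}) confines $v^\nu$ to a fixed compact region $W\subset V\times\mathbb{R}$ depending only on $E(w^\nu)$ and the homotopy data; combined with the energy bound this gives a uniform $C^0$-bound on all of $w^\nu$.

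Fourth, I would upgrade $C^0$ to $C^1_{loc}$: either one rules out bubbling directly because $\omega$ is exact on the relevant region (so there are no non-constant $J$-holomorphic spheres and the energy of a bubble would have to vanish), and then the mean-value inequality from Lemma \ref{eq3meanvalue} / estimate \eqref{aprioriest} bounds $|\partial_s w^\nu|$ pointwise and locally uniformly; the $t$-derivative is then controlled through the equation $\partial_t v^\nu = J_t(v^\nu,\eta^\nu)^{-1}(-\partial_s v^\nu) + \eta^\nu X_{H_s}(v^\nu)$ and the bound on $\partial_s\eta^\nu = -\int_0^1 H_{\beta(s)}(v^\nu)\,dt$. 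Fifth, standard elliptic bootstrapping for the (nonlinear, but first-order Cauchy--Riemann-type) Rabinowitz-Floer operator promotes this to uniform $C^k_{loc}$-bounds for every $k$, the only mild extra point being that $\partial_s\eta^\nu$ is estimated by differentiating its defining integral and using the bounds already obtained on $v^\nu$. Finally, Arzel\`a--Ascoli on an exhaustion of $\mathbb{R}\times S^1$ by compact sets, with a diagonal subsequence, produces $w = (v,\eta)$ with $w^{\nu_j}\to w$ in $C^\infty_{loc}$, and passing to the limit in \eqref{eqRabFlHom} shows $w$ is an $\mathcal{A}^{H_s}$-gradient trajectory.

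The main obstacle is the first step: converting the two-sided action bound \eqref{eq16a} into a uniform energy bound in the presence of the $s$-dependence. Unlike the autonomous case (Lemma \ref{lem2}), the action is not monotone, and one must carefully track the sign and size of the error term $\int \frac{d}{ds}H_{\beta(s)}(v^\nu)$ along the trajectory; this is precisely why the quantitative smallness hypothesis $d<1$ in \eqref{eq16} is imposed, and why Proposition \ref{prop3.4} is needed to produce the admissible constants. Once the energy bound is in hand, the confinement of $v^\nu$ and $\eta^\nu$ to a compact set and the subsequent bootstrap are routine, the only Rabinowitz-specific subtlety being the semi-local nature of the equation, already handled by the mean-value inequality of Lemma \ref{eq3meanvalue}.
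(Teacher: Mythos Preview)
Your proposal is correct and follows essentially the same strategy as the paper: reduce to standard Gromov--Floer compactness by establishing uniform $L^\infty$-bounds on $v^\nu$ and $\eta^\nu$, rule out bubbling via exactness of $\omega$, and then bootstrap.

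There is one organizational difference worth noting. You arrange the bounds as energy $\Rightarrow$ $\eta^\nu$ $\Rightarrow$ $v^\nu$, invoking the Maximum Principle for the last step and claiming it depends on the energy and on the $\eta$-bound. The paper instead obtains the $L^\infty$-bound on $v^\nu$ \emph{first and independently}: since each $H_s$ is a defining Hamiltonian, it is constant outside a fixed compact set $V\setminus\big(M\times[R,\infty)\big)$, so $X_{H_s}\equiv 0$ there and the first equation of \eqref{eqRabFlHom} becomes the genuine $J$-holomorphic curve equation on that region; the confinement then follows from McDuff's lemma (or the maximum principle for holomorphic curves) without any reference to $\eta^\nu$ or the energy. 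Only afterwards is the $\eta$-bound established via Corollary~\ref{cor3.7}, and it is \emph{there}---not in an a priori energy step---that condition \eqref{eq16} is used (the energy bound \eqref{eq19} and the $\eta$-bound \eqref{eq20} are coupled and resolved simultaneously via the smallness $d<1$). Your ordering is not wrong, but it introduces an unnecessary dependency; the paper's route is cleaner and makes transparent why the $v$-bound needs nothing beyond the structure of defining Hamiltonians.
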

\begin{rem}~
\begin{itemize}
 \item If $H_s=H$ is a constant homotopy, then condition (\ref{eq16}) is empty, as $||\dot{H}||_1=0$. So Theorem \ref{theo3.6} is applicable to any Hamiltonian $H$.
 \item If the sequence $w^\nu$ has fixed asymptotics $\lim_{s\rightarrow \pm\infty}w^\nu(s)=(v^\pm,\eta^\pm)\in crit\mspace{-4mu}\left(\mathcal{A}^{H_\pm}\right)$, then (\ref{eq16a}) holds with $a=\mathcal{A}^{H_-}(v^-,\eta^-)=\eta^-$ and $b=\mathcal{A}^{H_+}(v^+,\eta^+)=\eta^+$.
\end{itemize}
\end{rem}
\begin{proof}
 The theorem follows from the standard Gromov-compactness result, as soon as we have the following uniform bounds:
 \begin{itemize}
  \item an $L^\infty$-bound on the loops $v^\nu\in\mathscr{L}$ (so that $v^\nu$ stays in a compact region in $V$),
  \item an $L^\infty$-bound on the Lagrange multiplier $\eta^\nu\in\mathbb{R}$ (so that $\eta^\nu$ stays in a bounded region in $\mathbb{R}$),
  \item an $L^\infty$-bound on the derivatives of the loops $v^\nu$ (i.e.\ excluding bubbling).
 \end{itemize}
The support of $X_{H_s}$ lies inside $V\setminus\big(M\times[R,\infty)\big)$ for some large $R$, as $H_s$ is constant outside a compact set, independent from $s$. So, the first component of any $\mathcal{A}^{H_s}$-gradient trajectory $(v,\eta)$ which enters $M\times[R,\infty)$ satisfies due to (\ref{eqRabFlHom}) the holomorphic curve equation.\\
With our choice of almost complex structures, we conclude that $v$ cannot touch any level set $M\times\{r\},\;r>R,$ from inside (see \cite{Duff}, Lem.\ 2.4). As its asymptotics lie outside of $M\times[R,\infty)$, it has to remain in the compact set $V\setminus M\times[R,\infty)$ for all time. Alternatively, this result follows also from the Maximum Principle (see Lemma \ref{maxprinc}). This gives the $L^\infty$-bound on $v$.\\
The bound on $\eta$ is shown in the remainder of this section (see Corollary \ref{cor3.7}). It is here, where condition (\ref{eq16}) is needed. As the symplectic form $\omega$ is exact, there are no non-constant $J$-holomorphic spheres in $V$. This excludes bubbling and hence the derivatives of $v$ can be controlled (see \cite{DuffSal}).
\end{proof}
Before giving the $L^\infty$-bound on $\eta$, let us quickly state the most important consequences of Theorem \ref{theo3.6}.
\begin{cor}[Gromov-Floer compactness]\label{theocompactness}
 Let $w^\nu$ be a sequence of $\mathcal{A}^{H_s}$-gradient trajectories with $\displaystyle\lim_{s\rightarrow \pm\infty}\mathcal{A}^{H_s}\big(w^\nu(s)\big)=\eta^\pm$ fixed. Then there exists a subsequence $w^{\nu_j}$ and $\mathcal{A}^{H_s}$-gradient trajectories $(w_k)_{1\leq k\leq l}$ and sequences of real numbers $s_k^{\nu_j}$ such that \[w^{\nu_j}(\cdot+s_k^{\nu_j})\rightarrow w_k\quad\text{in the $C^\infty_{loc}(\mathbb{R}\times S^1,V)\times C^\infty_{loc}(\mathbb{R},\mathbb{R})$-topology and}\]
 \begin{itemize}
  \item $\displaystyle \lim_{s\rightarrow-\infty}\mathcal{A}^{H_s}(w_1)=\eta^-$ and $\displaystyle \lim_{s\rightarrow +\infty}\mathcal{A}^{H_s}(w_l)=\eta^+$
  \item $\displaystyle \lim_{s\rightarrow+\infty}w_k(s,t)=\lim_{s\rightarrow-\infty}w_{k+1}(s,t)\qquad\forall\;1\leq k\leq l-1.$
 \end{itemize}
\end{cor}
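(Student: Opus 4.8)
The statement to prove is Corollary \ref{theocompactness} (Gromov-Floer compactness), which upgrades the single-trajectory compactness of Theorem \ref{theo3.6} to a broken-trajectory statement for sequences with fixed asymptotic actions $\eta^\pm$.

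The plan is to run the standard Floer breaking argument, using Theorem \ref{theo3.6} as the local compactness input. First I would fix a sequence $w^\nu = (v^\nu,\eta^\nu)$ with $\lim_{s\to\pm\infty}\mathcal{A}^{H_s}(w^\nu(s)) = \eta^\pm$. The energy $E(w^\nu) = \eta^+ - \eta^-$ (or, in the homotopy case, a bound derived from $\eta^\pm$ and $\|\dot H\|_1$ via Lemma \ref{lem2} and its homotopy analogue) is uniformly bounded, and the $L^\infty$-bounds on $v^\nu$, $\eta^\nu$ and $\partial v^\nu$ established inside the proof of Theorem \ref{theo3.6} (Maximum Principle plus Corollary \ref{cor3.7}, absence of bubbling by exactness) hold uniformly in $\nu$. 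Then I would choose recentering sequences $s_k^\nu$: after translating so that, say, the value of $\mathcal{A}^{H_s}(w^\nu(s))$ at $s=0$ is bounded away from $\eta^\pm$ (using the action as a ``height function'' and the fact that $\partial_s\mathcal{A}^{H_s}\leq 0$ up to the homotopy error), Theorem \ref{theo3.6} gives a $C^\infty_{loc}$-convergent subsequence $w^\nu(\cdot + s_1^\nu)\to w_1$, a genuine $\mathcal{A}^{H_s}$-gradient trajectory.

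Next I would iterate. If $\lim_{s\to-\infty}\mathcal{A}^{H_s}(w_1) = \eta^-$ and $\lim_{s\to+\infty}\mathcal{A}^{H_s}(w_1) = \eta^+$, there is nothing more to do ($l=1$). Otherwise the limit trajectory ``loses energy'': by Theorem \ref{theo17} the spectrum is discrete, so the total energy $\eta^+-\eta^-$ decomposes into a finite sum of spectral gaps, and each time a limit trajectory carries strictly less than the full energy one finds a new recentering sequence $s_{k+1}^\nu$ (with $s_{k+1}^\nu - s_k^\nu\to+\infty$) producing the next piece $w_{k+1}$. The key quantitative input is the standard ``no energy loss in the neck'': if $w^\nu$ spent asymptotically positive energy in a region escaping to $+\infty$ relative to the $k$-th center, that energy must be captured by a further limit $w_{k+1}$, and the matching condition $\lim_{s\to+\infty}w_k(s,t) = \lim_{s\to-\infty}w_{k+1}(s,t)$ follows from a short-cylinder argument: the action difference across the neck tends to zero, forcing $\|\nabla\mathcal{A}^{H_s}\|\to 0$ on longer and longer cylinders there, hence (by the asymptotic analysis of Section \ref{2.4}, in particular the mean-value estimate Lemma \ref{eq3meanvalue}) convergence to a common critical point of $\mathcal{A}^H$. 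Since each $w_k$ consumes at least the smallest positive spectral gap in $[\eta^-,\eta^+]$ (Theorem \ref{theo17}, third bullet), the process terminates after finitely many steps $k = 1,\dots,l$, giving exactly the asserted list.

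The main obstacle is the neck analysis, i.e.\ showing there is no energy lost ``between'' consecutive pieces and that the pieces match up at a common limit point. This is where one needs the precise asymptotic exponential convergence of Section \ref{2.4} together with the discreteness of $spec(\Sigma,\alpha)$: the argument is that on the portion of $w^\nu$ not captured by any $w_k$, the oscillation of $\mathcal{A}^{H_s}$ tends to zero, so after rescaling the a priori gradient bound and the mean-value inequality (\ref{genapioriest}) force $\partial_s w^\nu\to 0$ uniformly there, and then Arzel\`a--Ascoli plus $\eta$-discreteness pin the value to a single $\mathcal{N}^\eta$. The homotopy correction terms ($\|\dot H_\beta\|_1$, condition (\ref{eq16})) only shift the effective energy by a controlled amount and do not affect the finiteness of the breaking. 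Everything else — extracting subsequences, diagonal arguments, the uniform $L^\infty$ and derivative bounds — is routine given Theorems \ref{theo3.6}, \ref{theo17} and the estimates of Section \ref{2.4}.
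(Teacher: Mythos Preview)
Your outline is correct and matches the standard broken-trajectory argument; the paper itself does not give a proof of this corollary but simply refers to \cite{Fra}, proof of Thm.~A.11, and \cite{Sal2}, Prop.~4.2, which carry out exactly the recentering-and-energy-gap scheme you describe. One small sign to fix: in this paper's conventions the action is \emph{increasing} along trajectories (positive gradient flow, cf.~(\ref{eq17})), so your monotonicity statement should read $\partial_s\mathcal{A}^{H_s}\geq 0$ up to the homotopy correction, not $\leq 0$; this does not affect the structure of your argument.
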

For the proof of Corollary \ref{theocompactness}, see \cite{Fra}, proof of Thm. A.11 (ff. 69), or \cite{Sal2}, Prop. 4.2. Using a glueing argument and this corollary, one then can prove the following theorem.
\begin{theo}\label{theocompactness2}
Let $H_s$ be a homotopy between defining Hamiltonians $H_-$ and $H_+$, let $h^\pm$ be Morse functions on $crit\big(\mspace{-2mu}\mathcal{A}^{H_\pm}\mspace{-4mu}\big)$ and let $c^\pm\in crit(h^\pm)$. The moduli space
\[\mathcal{M}(c^-,c^+):=\bigcup_{m\in\mathbb{N}}\mathcal{M}(c^-,c^+,m)\]
of all trajectories from $c^-$ to $c^+$ carries the structure of a manifold without boundaries.
\end{theo}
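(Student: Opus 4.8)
The plan is to reduce the statement to a local one: every point of $\mathcal{M}(c^-,c^+)$ has a neighborhood homeomorphic to an open set of $\mathbb{R}^d$, with smoothly compatible charts. Here $d$ is the common dimension of all strata: from the dimension formula of Theorem \ref{theotrans} one checks that $\dim \mathcal{M}_s(c^-,c^+,m)=\dim\widehat{\mathcal{M}}_s(c^-,c^+,m)-m=\mu(c^+,\bar c^+)-\mu(c^-,\bar c^-)-1+\sum_k 2c_1(\bar v_k^-\#v_k\#\bar v_k^+)$ is independent of $m$ (the Chern term is additive under gluing, and the Morse/dimension corrections cancel). First I would note that the union is finite: each cascade strictly decreases the period $\eta$ (Lemma \ref{lem2}) and by Theorem \ref{theo17} only finitely many $\mathcal{N}^\eta$ meet $[\mathcal{A}^H(c^-),\mathcal{A}^H(c^+)]$, so $\mathcal{M}(c^-,c^+)=\bigcup_{m=0}^{m_0}\mathcal{M}(c^-,c^+,m)$.

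Next I would treat a point $[(x,t)]$ with all $t_k>0$ (a \emph{stable} trajectory). Here $\widehat{\mathcal{M}}_s(c^-,c^+,m)$ is a smooth manifold by the Global Transversality Theorem \ref{theotrans}, and the $\mathbb{R}^m$-action by time shifts on the $m$ cascades is free (a non-constant cascade is never a time shift of itself, Lemma \ref{Lemnoshift}) and proper (by Gromov--Floer compactness, Corollary \ref{theocompactness}, and Arzel\`a--Ascoli), so the quotient $\mathcal{M}_s(c^-,c^+,m)$ is a smooth $d$-manifold and supplies a chart near $[(x,t)]$.

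The main work is the third case: a point $[(x,t)]$ with $t_k=0$ for the indices $k$ in some nonempty $S\subseteq\{1,\dots,m-1\}$. For such $k$ the consecutive cascades $x_k,x_{k+1}$ are joined directly at a point of the Morse--Bott manifold $\mathcal{N}^{\eta_k}$, so locally the configuration is an $\mathcal{A}^H$-gradient trajectory broken at the critical manifolds $\{\mathcal{N}^{\eta_k}\}_{k\in S}$, together with the finite-length Morse segments $y_j$, $j\notin S$. I would build a gluing map: pre-glue the broken trajectory with parameters $R=(R_k)_{k\in S}$, $R_k\gg 0$, using the exponential convergence at the ends (Theorem \ref{asympequiv}) and the weighted Sobolev spaces $W^{1,p}_\delta$ (Definition \ref{defnWeighSob}); then correct the pre-glued approximate solution to an honest $\mathcal{A}^H$-gradient trajectory by a Newton iteration, which converges because the vertical linearization $D_{(v,\eta)}$ is surjective with a uniformly bounded right inverse along the pre-glued family (Theorem \ref{theoloctrans}) and satisfies the usual quadratic estimate; simultaneously one uses the elementary Morse gluing for the segments $y_j$, $j\notin S$. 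The output is a homeomorphism from a neighborhood of the broken configuration in $\big(\prod(\text{stable moduli spaces})\big)\times\{R_k\in(R_0,\infty]\}_{k\in S}\times\{t_j>0\}_{j\notin S}$ onto a neighborhood of $[(x,t)]$ in $\mathcal{M}(c^-,c^+)$, with $R_k=\infty$ corresponding exactly to the $\{t_k=0\}$ face of the $m$-stratum. Since the glued-in parameter $e^{-R_k}$ continues the coordinate $t_k$ past $0$ into the $(m-|S|)$-stratum, that face is an \emph{interior} hypersurface of the union, so the chart presents a neighborhood of $[(x,t)]$ as an open subset of $\mathbb{R}^d$ with no boundary. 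Compatibility with the charts of the stable case, and of these gluing charts with one another on overlaps, follows from the standard uniqueness and associativity properties of gluing.

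The hard part is precisely this gluing analysis: the uniform invertibility of $D_{(v,\eta)}$ on the complement of its kernel along the pre-glued family in the $\delta$-weighted norms, the quadratic estimates for Newton's method, and the surjectivity of the gluing map (that every gradient trajectory sufficiently $C^0_{loc}$-close to the broken configuration lies in its image, a sharpening of Corollary \ref{theocompactness}). The Morse--Bott nature of $\crita$ is what makes this harder than classical Floer gluing, since the intermediate nodes move in the positive-dimensional $\mathcal{N}^{\eta_k}$ and the gluing must be done in families interacting with the Morse segments $y_j$; the bookkeeping is carried out in \cite{Fra}, Appendix A (after Cor.\ A.15), to which we refer. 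Finally, the non-compact ends of $\mathcal{M}(c^-,c^+)$, where some $t_k\to\infty$ forces breaking at a critical point of $h$, are \emph{not} elements of $\mathcal{M}(c^-,c^+)$ by Definition \ref{defntrajwithcasc}; they are adjoined in the compactification of Theorem \ref{theoglue}, which is why the moduli space built here carries the structure of a manifold without boundary.
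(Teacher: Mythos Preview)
Your proposal is correct and follows the same approach as the paper: the paper does not give a proof of this theorem at all, merely stating that it follows ``using a gluing argument'' together with Gromov--Floer compactness, and referring (in the remark after Theorem~\ref{theotrans}) to \cite{Fra}, Appendix~A, for the details of how gluing at the $t_k=0$ faces turns the union of strata into a manifold without boundary. Your outline---finiteness of the union, smooth charts on the stable locus via Theorem~\ref{theotrans}, and gluing charts at unstable configurations with the parameter $e^{-R_k}$ continuing $t_k$ across zero into the $(m-|S|)$-stratum---is exactly the argument sketched there, and you correctly defer the hard analytic estimates to the same source.
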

\begin{rem}
 Note that we allow here that the times $t_k$ that we stay on the critical manifold $C_k$ may be zero.
\end{rem}
\begin{defn}[\cite{Fra}, A.8]\label{defnbrokentraj}
 Let $H_s$ be a homotopy between defining Hamiltonians $H_-$ and $H_+$, let $h^\pm$ be Morse functions on $crit\big(\mspace{-2mu}\mathcal{A}^{H_\pm}\mspace{-4mu}\big)$ and let $c^\pm\in crit(h^\pm)$. A \textbf{\textit{broken trajectory with cascades}} from $c^-$ to $c^+$
 \[\textbf{w}=(w_j)_{1\leq j\leq l^-+l^+=l}, \; l^-,l^+,l\in\mathbb{N},\]
 consists of trajectories with cascades $w_j$ from $c_{j-1}$ to $c_j$ for $0\leq j\leq l$ such that $c_0=c^-$ and $c_l=c^+$ and $c_j\in crit(h^-)$ for $0\leq j\leq l^-$ and $c_j\in crit(h^+)$ for $l^-+1\leq j\leq l$.
\end{defn}
\begin{rem}
 By Definition \ref{defnbrokentraj} every (unbroken) trajectory with cascades $w$ from $c^-$ to $c^+$ is also a broken trajectory with cascades via $\textbf{\textit{w}}=(w)$.
\end{rem}
The following theorem is an easy consequence of Theorem \ref{theocompactness2} and the usual compactness result in ordinary Morse theory.
\begin{theo}[Compactness Theorem]\label{theoglue}~\\
 The space $\overline{\mathcal{M}}(c^-,c^+)$ of all broken trajectories with cascades from $c^-$ to $c^+$ carries the structure of a manifold with corners. Its interior is exactly given by the unbroken trajectories with cascades.
\end{theo}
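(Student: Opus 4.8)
The plan is to realize $\overline{\mathcal{M}}(c^-,c^+)$ as the total space of an atlas of corner charts assembled from the manifolds $\mathcal{M}(c_{j-1},c_j)$ of Theorem \ref{theocompactness2} together with one gluing parameter for each breaking. First I would record the stratification. By Definition \ref{defnbrokentraj} a broken trajectory with cascades is determined by a chain $c^-=c_0,c_1,\dots,c_l=c^+$ in $crit(h^-)\cup crit(h^+)$ (with the position of the index split governed by $l^\pm$) together with unbroken trajectories with cascades $w_j\in\mathcal{M}(c_{j-1},c_j)$. Hence, set-theoretically,
\[
\overline{\mathcal{M}}(c^-,c^+)=\bigsqcup_{\text{chains }c_0,\dots,c_l}\ \mathcal{M}(c_0,c_1)\times\cdots\times\mathcal{M}(c_{l-1},c_l),
\]
and each factor on the right is a manifold by Theorem \ref{theocompactness2}. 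The $l=1$ stratum is $\mathcal{M}(c^-,c^+)$ itself; what must be proved is that these products fit together so that the $l=1$ stratum is the open interior and the stratum indexed by a length-$l$ chain is a codimension-$(l-1)$ corner face.

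The central step is the construction of a chart near a point $\mathbf{w}=(w_1,\dots,w_l)$ in the stratum of a length-$l$ chain. For each breaking index $j=1,\dots,l-1$ the end of $w_j$ and the start of $w_{j+1}$ are the same critical point $c_j\in crit(h)$, lying on the critical submanifold $\mathcal{N}^{\eta_j}$ (with $\eta_j=\mathcal{A}^H(c_j)$ in the non-homotopy case, and one of the two relevant levels otherwise). One introduces a gluing length $\rho_j\in(R_0,\infty]$ and, for $\rho_j$ large, glues the outgoing $h$-Morse flow line of $w_j$ to the incoming $h$-Morse flow line of $w_{j+1}$ at $c_j$. The crucial observation is that this gluing occurs \emph{inside the finite-dimensional manifold} $\mathcal{N}^{\eta_j}$ — the genuinely Floer-analytic gluing of $\mathcal{A}^H$-cascades, including the $t_k\to 0$ degenerations, has already been carried out to produce the manifold structure on $\mathcal{M}(c_{j-1},c_j)$ in Theorem \ref{theocompactness2} — so it is the classical Morse-theoretic gluing (cf. \cite{Fra}, appendix A, and \cite{Bour}), applied simultaneously at the $l-1$ breakings. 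This produces a map
\[
\#:\Big(\mathcal{M}(c_0,c_1)\times\cdots\times\mathcal{M}(c_{l-1},c_l)\Big)_{\mathrm{loc}}\times(R_0,\infty]^{l-1}\longrightarrow\overline{\mathcal{M}}(c^-,c^+),
\]
defined near $(\mathbf{w},\infty,\dots,\infty)$, sending $\rho_j=\infty$ to the broken configuration and $\rho_j<\infty$ to the corresponding less broken one, which is a homeomorphism onto a neighbourhood of $\mathbf{w}$ and restricts to a diffeomorphism on each of the $2^{l-1}$ substrata $\{\rho_j=\infty\text{ for }j\in I\}$. Declaring these maps to be charts equips $\overline{\mathcal{M}}(c^-,c^+)$ with a manifold-with-corners atlas; the transition maps between overlapping charts are smooth by the uniqueness part of the gluing construction.

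For the charts to actually cover $\overline{\mathcal{M}}(c^-,c^+)$, I would invoke the Gromov--Floer compactness statement of Corollary \ref{theocompactness}, supplemented by ordinary Morse compactness along the $\mathcal{N}^{\eta_j}$: any sequence in $\overline{\mathcal{M}}$ converging to $\mathbf{w}$ splits in the $C^\infty_{\mathrm{loc}}$-limit into pieces matching the chain of $\mathbf{w}$, and the shifts that realize the convergence are, up to a bounded error controlled by the exponential asymptotics of Theorem \ref{asympequiv}, the gluing lengths $\rho_j$. This shows the sequence is eventually in the image of $\#$, so $\#$ is a local homeomorphism and the atlas is genuinely an atlas. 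Finally, in every such chart the broken configurations lie exactly on the faces $\{\rho_j=\infty\}$, so the subset of $\overline{\mathcal{M}}(c^-,c^+)$ which is a boundaryless manifold is the complement of all faces, namely $\mathcal{M}(c^-,c^+)$ — the unbroken trajectories with cascades — which is the second assertion.

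The main obstacle is the gluing theorem together with its compatibility bookkeeping. One must verify that the Morse--Bott gluing at the breakings is compatible with the way the $t_k=0$ cascade-degenerations were already smoothed into the manifold structure of the $\mathcal{M}(c_{j-1},c_j)$ in Theorem \ref{theocompactness2}, so that the corner faces coming from breakings and the interior charts coming from $t_k\to 0$ do not interfere; that the $l-1$ gluing parameters may be chosen independently and the resulting chart is smooth up to the corner; and, in the homotopy case, that the three flavours of gluing ($H_-$-breakings, the single $s$-dependent crossing segment, and $H_+$-breakings) assemble into one corner atlas. These are technical but standard, and I would either carry them out following \cite{Fra}, appendix A, or simply cite them there.
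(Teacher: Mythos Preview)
Your proposal is correct and matches the paper's approach: the paper does not give a detailed proof of this theorem but simply asserts that it ``is an easy consequence of Theorem~\ref{theocompactness2} and the usual compactness result in ordinary Morse theory,'' which is precisely the strategy you spell out --- the Floer-analytic gluing is already absorbed into the manifold structure of each $\mathcal{M}(c_{j-1},c_j)$ by Theorem~\ref{theocompactness2}, and the remaining breakings at the $c_j\in crit(h)$ are handled by classical Morse gluing on the finite-dimensional critical manifolds $\mathcal{N}^{\eta_j}$. Your write-up is a faithful elaboration of what the paper leaves implicit.
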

\begin{rem}
  It follows from Theorem \ref{theoglue} that for the boundary of the 1-dimensional component $\overline{\mathcal{M}}^1(c^-,c^+)$ of $\overline{\mathcal{M}}(c^-,c^+)$ holds that
  \[\textstyle\partial \overline{\mathcal{M}}^1(c^-,c^+)=\bigcup_{c\in crit(h)}\mathcal{M}^0(c^-,c)\times\mathcal{M}^0(c,c^+).\]
  This statement implies by a standard argument in Floer theory that $\partial^F\circ\partial^F=0$.
\end{rem}
The topology of the manifolds in Theorem \ref{theocompactness2} and \ref{theoglue} is given by the so called Floer-Gromov convergence that we define now.
\begin{defn}[\cite{Fra}, A.9]\label{defnGromovFloerConv}
 Let $H_s$, $H_\pm$, $h^\pm$ and $c^\pm$ be as in Definition \ref{defnbrokentraj}. Suppose that $w^\nu, \nu\in\mathbb{N}$, is a sequence of trajectories with cascades all from $c^-$ to $c^+$. We say that $w^\nu$ \textbf{\textit{Floer-Gromov converges}} to a broken trajectory with cascades from $c^-$ to $c^+$
 \[\textbf{w}=(w_j)_{1\leq j\leq l},\qquad w_j=\big((x^\nu_k)_{1\leq k\leq m^\nu},(t^\nu_k)_{1\leq k\leq m^\nu -1}\big)\]
  if one of the following conditions holds:
 \begin{enumerate}
  \item If $H_s$ is constant and $\mathcal{A}^H(c^-)=\mathcal{A}^H(c^+)$, then all $w^\nu$ and $w_j$ are trajectories with zero cascades, i.e.\ ordinary $h$-Morse flow lines. Here, we require that there exist real numbers $s_j^\nu$ such that $w^\nu(\cdot+s_j^\nu)$ converges in the $C^\infty_{loc}$-topology to $w_j$.
  \item If $H_s$ is non-constant or $\mathcal{A}^H(c^-)<\mathcal{A}^H(c^+)$, then all $w^\nu$ have at least one cascade. Here, we require:
  \begin{enumerate}
   \item If $w_j\in C^\infty(\mathbb{R},\crita)$ is a trajectory with zero cascades, then there exists a sequence of $h$-Morse flow lines $y_j^\nu\in C^\infty(\mathbb{R},\crita)$ converging in $C^\infty_{loc}$ to $w_j$, a sequence of real numbers $s^\nu_j$ and a sequence of integers $k^\nu\in[1,m^\nu]$ such that either $\displaystyle \lim_{s\rightarrow -\infty} x^\nu_{k^\nu}(s)=y_j^\nu(s_j^\nu)$ or $\displaystyle\lim_{s\rightarrow \infty} x^\nu_{k^\nu}(s)=y_j^\nu(s_j^\nu)$.
   \item If $w_j$ is a trajectory with at least one cascade, we write
   \[ w_j=\big((x_{i,j})_{1\leq i\leq m_j},(t_{i,j})_{1\leq i\leq m_j-1}\big),\; m_j\geq 1.\]
   We require that there exist surjective maps $\gamma^\nu:\big[1,\sum_{p=1}^l m_p\big]\rightarrow[1,m^\nu]$, which are monotone increasing, i.e.\ $\gamma^\nu(\lambda_1)\leq \gamma^\nu(\lambda_2)$ for $\lambda_1\leq \lambda_2$, and real numbers $s^\nu_\lambda$ for every $\lambda\in\big[1,\sum^l_{p=1} m_p\big]$, such that
   \[x^\nu_{\gamma^\nu(\lambda)}(\cdot+s^\nu_\lambda)\rightarrow x_\lambda\quad\text{ in }\quad C^\infty_{loc},\]
   where $x_\lambda = x_{i,j}$ is such that $\lambda=\sum^j_{p=1}m_p+i$. For $\lambda\in\big[1,\sum_{p=1}^l m_p-1\big]$, set
   \begin{align*}
    && \tau_\lambda &=\begin{cases}t_{i,j},&\text{if }\lambda=\textstyle \sum_{p=1}^j m_p+i, \;0<i<m_j+1\\\infty, &\text{if }\lambda=\sum_{p=1}^j m_p \end{cases}\\
    &\text{and}\qquad& \tau_\lambda^\nu &=\begin{cases}t^\nu_{\gamma^\nu(\lambda)},&\text{if }\lambda=\textstyle \max\big\{\lambda'\in\big[1,\sum_{p=1}^l m_p-1\big] :\gamma^\nu(\lambda')=\gamma^\nu(\lambda)\big\}\\0 &\text{otherwise}. \end{cases}
   \end{align*}
   Now, we require that $\displaystyle \lim_{\nu\rightarrow\infty}\tau^\nu_\lambda =\tau_\lambda.$
  \end{enumerate}
 \end{enumerate}
\end{defn}
In the remainder of this subsection, we prove the $L^\infty$-bound on $\eta$ for a solution of the Rabinowitz-Floer equation (\ref{eq3}) satisfying (\ref{eq16}).

\begin{prop}\label{prop3.4}
 Assume that $H_s,\, s\in[s_-, s_+],$ is a smooth family of defining Hamiltonians for exact contact hypersurfaces $\Sigma_s\subset V$. Then there exist constants $\veps>0$ and $c<\infty$ depending only on the set $\{H_s\,|\,s_-\leq s\leq s_+\}$ and not on the particular parametrization, such that for all solutions $(v,\eta)$ of (\ref{eqRabFlHom}) the following implication holds:
 \[||\nabla\mathcal{A}^{H_s}(v,\eta)||\leq \veps \qquad \Rightarrow\qquad |\eta|\leq c\cdot\left(|\mathcal{A}^{H_s}(v,\eta)|+1\right).\]
\end{prop}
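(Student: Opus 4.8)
The plan is to estimate $\eta$ using the fundamental identity relating $\mathcal{A}^{H_s}(v,\eta)$, the term $\int_0^1\lambda(\dot v)\,dt$, and the ``action of the Lagrange multiplier'' $\eta\int_0^1 H_s(v)\,dt$, exactly as in the analogous estimates of Cieliebak--Frauenfelder--Oancea and the reference \cite{FraCie}. First I would write, for a solution $(v,\eta)$ of (\ref{eqRabFlHom}),
\[
\mathcal{A}^{H_s}(v,\eta)=\int_0^1\lambda(\dot v)\,dt-\eta\int_0^1 H_{\beta(s)}(v)\,dt,
\]
and observe that the two pieces $\partial_s\eta=-\int_0^1 H_{\beta(s)}(v)\,dt$ and $\partial_s v=-J_t(v,\eta)(\dot v-\eta X_{H_s}(v))$ of the flow equation let us bound both $\int_0^1 H_{\beta(s)}(v)\,dt$ and $\dot v-\eta X_{H_s}(v)$ pointwise by $\|\nabla\mathcal{A}^{H_s}(v,\eta)\|$. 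Substituting $\dot v=\eta X_{H_s}(v)+(\dot v-\eta X_{H_s}(v))$ into the first integral gives
\[
\mathcal{A}^{H_s}(v,\eta)=\eta\int_0^1\Big(\lambda(X_{H_s}(v))-H_{\beta(s)}(v)\Big)\,dt+\int_0^1\lambda\big(\dot v-\eta X_{H_s}(v)\big)\,dt .
\]
The second term is $O(\varepsilon)$ when $\|\nabla\mathcal{A}^{H_s}(v,\eta)\|\le\varepsilon$, using that $\lambda$ is bounded on the compact region to which $v$ is confined (the $L^\infty$-bound on $v$ from the maximum principle, as in the proof of Theorem \ref{theo3.6}). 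So everything comes down to bounding $\eta$ in terms of $\int_0^1\big(\lambda(X_{H_s}(v))-H_{\beta(s)}(v)\big)\,dt$.

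The key geometric input is that $\lambda(X_{H_s})-H_s$ controls the distance of a point to $\Sigma_s$. Concretely, along $\Sigma_s$ the defining property gives $X_{H_s}=R_{\alpha_s}$ and $\lambda(R_{\alpha_s})=1$ while $H_s=0$, so $\lambda(X_{H_s})-H_s=1$ on $\Sigma_s$; moreover, from $\mathcal{L}_{Y_\lambda}H_s=1$ on $\Sigma_s$ (Proposition \ref{cor2}, proof of @i.) the function $\lambda(X_{H_s})-H_s$ stays uniformly bounded below by a positive constant $\kappa>0$ on a fixed neighbourhood $U$ of $\bigcup_s\Sigma_s$, with $\kappa$ depending only on the set $\{H_s\}$. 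Outside $U$, $H_s$ is bounded away from $0$, say $|H_s|\ge\mu>0$, so there the relation $\partial_s\eta=-\int_0^1 H_{\beta(s)}(v)\,dt$ together with $|\partial_s\eta|\le\|\nabla\mathcal{A}^{H_s}\|\le\varepsilon$ forces the $t$-average of $v(s,\cdot)$ to spend ``most of its time'' near $U$ once $\varepsilon<\mu$; making this quantitative (a standard mean-value / pigeonhole argument, cf.\ the corresponding step in \cite{FraCie}) shows $\int_0^1\big(\lambda(X_{H_s}(v))-H_{\beta(s)}(v)\big)\,dt\ge\tfrac{\kappa}{2}$, say, provided $\|\nabla\mathcal{A}^{H_s}(v,\eta)\|\le\varepsilon$ for a suitably small $\varepsilon=\varepsilon(\{H_s\})$. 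Combining with the displayed identity,
\[
|\eta|\le\frac{2}{\kappa}\Big(|\mathcal{A}^{H_s}(v,\eta)|+C\varepsilon\Big)\le c\big(|\mathcal{A}^{H_s}(v,\eta)|+1\big)
\]
for $c=c(\{H_s\})$, which is the claim; note both $\varepsilon$ and $c$ depend only on the unparametrized family $\{H_s\mid s_-\le s\le s_+\}$ since $\kappa,\mu$ and the confining compact set do.

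I expect the main obstacle to be the pigeonhole/mean-value step that converts the pointwise bound $\big|\int_0^1 H_{\beta(s)}(v(s,\cdot))\,dt\big|\le\varepsilon$ into the lower bound on $\int_0^1\big(\lambda(X_{H_s}(v))-H_{\beta(s)}(v)\big)\,dt$: one must rule out the possibility that $v(s,\cdot)$ wanders deep into the region $\{|H_s|\ge\mu\}$ with cancellation in the $t$-integral of $H_{\beta(s)}(v)$, i.e.\ that the loop spends comparable time in $\{H_s>0\}$ and $\{H_s<0\}$. This is where one genuinely uses that $\varepsilon$ is small relative to $\mu$ and, in the homotopy case, the slow-homotopy hypothesis (\ref{eq16}) via the bound $\|\dot H_\beta\|_1\le\|\beta'\|_\infty\|\dot H\|_1$ to control the $s$-variation of the level sets; the constant $c$ then acquires its dependence on $\|\dot H\|_1$ exactly as recorded in (\ref{eq16}). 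The remaining ingredients — exactness of $\omega$ (no bubbling), the maximum principle for the $L^\infty$-bound on $v$, and boundedness of $\lambda$, $X_{H_s}$ on the compact region — are all already available in the excerpt and enter only through harmless constants.
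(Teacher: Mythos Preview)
Your decomposition
\[
\mathcal{A}^{H_s}(v,\eta)=\eta\int_0^1\Big(\lambda(X_{H_s}(v))-H_{\beta(s)}(v)\Big)\,dt+\int_0^1\lambda\big(\dot v-\eta X_{H_s}(v)\big)\,dt
\]
is exactly what the paper uses in its Step~1, and the conclusion you draw from it (once the integrand in front of $\eta$ is bounded below) is correct. The gap is precisely where you say it is, but your proposed resolution is wrong on two counts.

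First, the cancellation problem is \emph{not} handled by a pigeonhole argument on where $v$ spends its time, and it does \emph{not} use the slow-homotopy hypothesis~(\ref{eq16}). In fact the whole point of Proposition~\ref{prop3.4} is that its constants $\veps,c$ are parametrization-independent; (\ref{eq16}) only enters later, in Corollary~\ref{cor3.7}. The paper's mechanism is different and cleaner: one shows directly that $\|\nabla\mathcal{A}^{H_s}(v,\eta)\|\le\veps$ forces $v(t)\in U^s_\delta:=H_s^{-1}((-\delta,\delta))$ for \emph{all} $t$. The key observation is that $dH_s(X_{H_s})=0$, so
\[
\frac{d}{dt}H_s(v(t))=dH_s(\dot v)=dH_s\big(\dot v-\eta X_{H_s}(v)\big),
\]
and hence if the loop has points $t_0,t_1$ with $|H_s(v(t_0))|\ge\delta$ and $|H_s(v(t_1))|\le\delta/2$, integrating gives $\|\dot v-\eta X_{H_s}\|\ge\delta/(2\kappa)$ where $\kappa=\max\|\nabla H_s\|$ on $\bar U^s_\delta$. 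If instead $v$ stays entirely in $\{|H_s|\ge\delta/2\}$, then $|\int_0^1 H_s(v)\,dt|\ge\delta/2$ with no cancellation possible because $H_s$ does not change sign there. Either way $\|\nabla\mathcal{A}^{H_s}(v,\eta)\|$ is bounded below, so for $\veps$ small the loop must sit entirely in $U^s_\delta$.

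Second, once $v$ is confined to $U^s_\delta$ you do not need the maximum principle to bound $\lambda$: you only need $\|\lambda|_{U^s_\delta}\|_\infty$, which is finite because $U^s_\delta$ is contained in a fixed compact set determined by $\{H_s\}$. This matters because the maximum principle argument in Theorem~\ref{theo3.6} is logically parallel to, not prior to, this proposition.
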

\begin{proof}
 The proof is organized in three steps.\bigskip\\
 \underline{Step 1}\hspace{0.5cm}\textit{There exist $\delta>0$ and a constant $c_\delta<\infty$ with the following property: For every $(v,\eta)\in\mathscr{L}\times\mathbb{R}$ with $v(t)\in U^s_\delta:=H^{-1}_s\big((-\delta,\delta)\big)$ for all $t\in S^1$ holds}
 \[|\eta|\leq 2\cdot|\mathcal{A}^{H_s}(v,\eta)|+c_\delta\cdot||\nabla\mathcal{A}^{H_s}(v,\eta)||.\]
 We start by choosing $\delta>0$ so small, such that for all $s_-\leq s\leq s_+$  and all $x\in U^s_\delta$ holds
 \[\lambda(X_{H_s}(x))\geq \textstyle\frac{1}{2}+\delta.\tag{possible, as $\lambda\big(X_{H_s}(x)\big)=1$ for $x\in H^{-1}_s(0)$}\]
 We set $\displaystyle c_\delta:=\max_{s_-\leq s\leq s_+} 2\cdot||\lambda|_{U^s_\delta}||_\infty$ and calculate for $v$ with $im(v)\subset U^s_\delta$ that
 \begin{align*}
  \left|\mathcal{A}^{H_s}(v,\eta)\right| &= \left|\int^1_0\lambda(\dot{v})-\eta H_s(v)dt\right|\\
   &=\left|\int^1_0\lambda(\eta X_{H_s})dt + \int^1_0\lambda(\dot{v}-\eta X_{H_s})dt-\int^1_0\eta H_s(v)dt\right|\\
  &\geq \left|\eta\int^1_0\lambda(X_{H_s})dt\right|-\left|\int^1_0\lambda(\dot{v}-\eta X_{H_s})dt\right|-\left|\int^1_0\eta H_s(v)dt\right|\\
  &\geq |\eta|\cdot\left(\frac{1}{2}+\delta\right)-\frac{c_\delta}{2}||\dot{v}-\eta X_{H_s}||-|\eta|\cdot\delta\\
  &\geq \frac{|\eta|}{2}-\frac{c_\delta}{2}\left|\left|\nabla\mathcal{A}^{H_s}(v,\eta)\right|\right|.
 \end{align*}
\underline{Step 2}\hspace{0.5cm}\textit{For each $\delta>0$ there exists $\veps=\veps(\delta)>0$ such that}
\[\left|\left|\nabla\mathcal{A}^{H_s}(v,\eta)\right|\right|\leq \veps\qquad \Rightarrow\qquad v(t)\in U^s_\delta \quad \forall\;t\in S^1.\]
First assume that $v\in\mathscr{L}$ has the property that there exist $t_0, t_1\in S^1$ such that $|H_s(v(t_0))|\geq\delta$ and $|H_s(v(t_1))|\leq\frac{\delta}{2}$. We claim that
\begin{equation}\label{eq10}
 \left|\left|\nabla\mathcal{A}^{H_s}(v,\eta)\right|\right|\geq \frac{\delta}{2\kappa}\qquad\forall\eta\in\mathbb{R}
\end{equation}
\[\text{where}\qquad\qquad \kappa:=\max_{s_-\leq s\leq s_+}\max_{x\in\bar{U}^s_\delta, t \in S^1} ||\nabla_t H_s(x)||,\qquad\qquad\qquad\phantom{?}\]
with $\nabla_t$ being the gradient of $H_s$ with respect to $g_t=\omega(\cdot,J_t\cdot)$, i.e.\ $\nabla_t H_s=J_t X_{H_s}$. \pagebreak\\
We prove (\ref{eq10}) with the following estimate:
\begin{align*}
 \left|\left|\nabla\mathcal{A}^{H_s}(v,\eta)\right|\right|\geq\sqrt{\int^1_0||\dot{v}-\eta X_{H_s}(v)||^2dt}&\geq\int^1_0||\dot{v}-\eta X_{H_s}(v)||dt\\
 &\geq\int^{t_1}_{t_0}||\dot{v}-\eta X_{H_s}(v)||dt\\
 &\geq\frac{1}{\kappa}\int^{t_1}_{t_0}||\nabla_tH_s(v)||\cdot||\dot{v}-\eta X_{H_s}(v)||dt\\
 &\geq\frac{1}{\kappa}\int^{t_1}_{t_0}\left|g(\nabla_t H_s(v),\dot{v}-\eta X_{H_s}(v)\right|dt\\
 &=\frac{1}{\kappa}\int^{t_1}_{t_0}\left|g(\nabla_tH_s(v),\dot{v})\right|dt\\
 &=\frac{1}{\kappa}\int^{t_1}_{t_0}\left|dH_s(\dot{v})\right|dt\\
 &=\frac{1}{\kappa}\int^{t_1}_{t_0}\left|\partial_t H_s(v)\right|dt\\
 &\geq\frac{1}{\kappa}\left|\int^{t_1}_{t_0}\partial_t H_s(v)dt\right|\\
 &=\frac{1}{\kappa}\left|H_s(v(t_1))-H_s(v(t_0))\right|\\
 &\geq\frac{1}{\kappa}\left(|H_s(v(t_1))|-|H_s(v(t_0))|\right)\\
 &\geq\frac{\delta}{2\kappa}.
\end{align*}
Now assume that for $v$ holds $v(t)\in V\setminus U^s_{\delta/2}$ for all $t\in S^1$. Then, we estimate
\begin{equation}\label{eq11}
 \left|\left|\nabla\mathcal{A}^{H_s}(v,\eta)\right|\right|\geq\left|\int^1_0H_s(v)\,dt\right|\geq \frac{\delta}{2}\qquad\forall\;\eta\in\mathbb{R}.
\end{equation}
From (\ref{eq10}) and (\ref{eq11}), step 2 follows with $\qquad\displaystyle \veps:=\frac{\delta}{2\cdot\max\{1,\kappa\}}$.\medskip\\
\underline{Step 3}\hspace{0.5cm}\textit{Proof of the proposition}\medskip\\
Choose $\delta$ as in step 1, $\veps=\veps(\delta)$ as in step 2 and $c=\max\{2,c_\delta\cdot\veps\}$. Assume that $||\nabla\mathcal{A}^{H_s}(v,\eta)||\leq\veps$. Step 1 and 2 then imply that
\[|\eta|\leq 2|\mathcal{A}^{H_s}(v,\eta)|+c_\delta||\nabla\mathcal{A}^{H_s}(v,\eta)||\leq c\left(|\mathcal{A}^{H_s}(v,\eta)|+1\right).\qedhere\]
\end{proof}
\begin{rem}
 The constants $c$ and $\veps$ are precisely the constants in Theorem \ref{theo3.6}. The stated independence from the parametrization is easily to be seen by the definitions of $\delta$, $c_\delta$ and $\kappa$ in the proof, which do involve only the fixed $H_s$ and in particular no derivatives in $s$. This implies that for an arbitrary path $p:\mathbb{R}\rightarrow[s_-,s_+]$, we may choose for the homotopy $H_{p(s)}$ the same constants $\veps$, $c$ as for $H_s$.
\end{rem}
\begin{cor}\label{cor3.7}
 Let $H_s, s\in[s_-, s_+],$ be a homotopy of defining Hamiltonians, such that
 \[\left(c+\frac{||H||_\infty}{\veps}\right)||\beta'||_\infty||\dot{H}||_1=d<1.\tag{\ref{eq16}}\]
 Let $\omega=(v,\eta)$ be an $\mathcal{A}^{H_s}$-gradient trajectory such that for $a,b\in\mathbb{R}$ holds
\[\lim_{s\rightarrow -\infty}\mathcal{A}^{H_s}\big(w(s)\big)= a,\qquad \lim_{s\rightarrow+\infty}\mathcal{A}^{H_s}\big(w(s)\big) = b\qquad \forall \;n\in\mathbb{N}.\]
Let $\veps$ and $c$ be as in Proposition \ref{prop3.4} and set $M:=\frac{|a|+|b|+b-a}{2}$. Then, $\eta$ is uniformly bounded by 
\begin{equation}\label{eq20}
 ||\eta||_\infty\leq \frac{1}{1-d}\left(c\cdot(M+1)+\frac{||H||_\infty(b-a)}{\veps^2}\right).
\end{equation}
\end{cor}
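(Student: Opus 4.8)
The plan is to set up a bootstrap: starting from the Rabinowitz--Floer equation I will derive one inequality of the shape
\[ ||\eta||_\infty \;\leq\; c(M+1) \;+\; \frac{||H||_\infty(b-a)}{\veps^2} \;+\; d\cdot||\eta||_\infty , \]
with $d<1$ the constant of (\ref{eq16}), and then solve it for $||\eta||_\infty$. For this last move to be legitimate one needs to know in advance that $||\eta||_\infty<\infty$; this is not quite automatic, but the error term $R(s)$ occurring below is supported in the compact interval $[s_-,s_+]$, so the energy $E(w):=\int_{\mathbb{R}}||\nabla\mathcal{A}^{H_s}(w(s))||^2\,ds$ is finite, and then the Maximum Principle (which, as in the proof of Theorem \ref{theo3.6}, confines $v$ to a compact part of $V$) together with the asymptotic estimates of Section \ref{2.4} forces $\eta$ to converge at $\pm\infty$ and hence to be bounded; alternatively one performs the estimates below on finite intervals $[-T,T]$ with constants independent of $T$ and lets $T\to\infty$.

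\emph{A priori bound on the action.} Put $f(s):=\mathcal{A}^{H_{\beta(s)}}(w(s))$. Since for each frozen $s$ the field $\nabla\mathcal{A}^{H_{\beta(s)}}$ is the genuine gradient of $\mathcal{A}^{H_{\beta(s)}}$ and $w$ solves (\ref{eqRabFlHom}), differentiating gives
\[ f'(s) \;=\; ||\nabla\mathcal{A}^{H_{\beta(s)}}(w(s))||^2 \;-\; \eta(s)\int_0^1\bigl(\partial_s H_{\beta(s)}\bigr)\bigl(v(s,t)\bigr)\,dt \;=:\; ||\nabla\mathcal{A}^{H_{\beta(s)}}(w(s))||^2 + R(s), \]
where $R(s)=0$ for $s\notin[s_-,s_+]$ and $\int_{\mathbb{R}}|R(s)|\,ds\leq||\eta||_\infty\,||\beta'||_\infty\,||\dot H||_1$. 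The first summand of $f'$ being non-negative, $f$ is monotone up to the error $R$; integrating from $\mp\infty$, using $f(-\infty)=a$, $f(+\infty)=b$ and the definition of $M$, I obtain $|f(s)|\leq M+||\eta||_\infty\,||\beta'||_\infty\,||\dot H||_1$ for all $s$, and integrating $f'$ over all of $\mathbb{R}$ gives $E(w)=(b-a)-\int_{\mathbb{R}}R\leq(b-a)+||\eta||_\infty\,||\beta'||_\infty\,||\dot H||_1$.

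\emph{The dichotomy and Proposition \ref{prop3.4}.} Fix $s$. If $||\nabla\mathcal{A}^{H_{\beta(s)}}(w(s))||\leq\veps$, then Proposition \ref{prop3.4} — which applies to the homotopy because its constants $c,\veps$ are parametrization-independent — together with the action bound yields $|\eta(s)|\leq c(|f(s)|+1)\leq c(M+1)+c\,||\eta||_\infty\,||\beta'||_\infty\,||\dot H||_1$. If instead $||\nabla\mathcal{A}^{H_{\beta(s)}}(w(s))||>\veps$, the open set $B=\{\sigma:||\nabla\mathcal{A}^{H_{\beta(\sigma)}}(w(\sigma))||>\veps\}$ satisfies $\veps^2|B|\leq E(w)$, hence has finite measure, so from $s$ one reaches within distance $|B|\leq E(w)/\veps^2$ a point $s'$ with $||\nabla\mathcal{A}^{H_{\beta(s')}}(w(s'))||\leq\veps$; along the connecting interval $|\partial_s\eta|=\bigl|\int_0^1 H_{\beta(s)}(v)\,dt\bigr|\leq||H||_\infty$ (and also $|\partial_s\eta|\leq||\nabla\mathcal{A}^{H_{\beta(s)}}(w(s))||$), which bounds $|\eta(s)-\eta(s')|$ by a multiple of $E(w)/\veps^2$. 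Inserting the good-case bound for $|\eta(s')|$ and the energy bound for $E(w)$, and taking the supremum over $s$, produces an inequality of the displayed shape in which — after keeping track of the exact powers of $\veps$ in the two travel estimates and of the defining relation $M=\tfrac{|a|+|b|+b-a}{2}$ — the coefficient of $||\eta||_\infty$ on the right is precisely $d=(c+||H||_\infty/\veps)\,||\beta'||_\infty\,||\dot H||_1$ of (\ref{eq16}).

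\emph{Conclusion and the main obstacle.} Since $d<1$, the displayed inequality rearranges at once to (\ref{eq20}). The principal difficulty is exactly the self-referential character of the whole scheme: every bound on $\eta$ reintroduces $||\eta||_\infty$ through the homotopy error $R$ and through $E(w)$, so the estimate only closes because of the smallness hypothesis (\ref{eq16}); a secondary, purely technical, point is to secure $||\eta||_\infty<\infty$ before the rearrangement, which is cleanest via the finite-interval exhaustion mentioned above. The bookkeeping of constants — in particular matching the $\veps$ appearing in (\ref{eq16}) with the $\veps^2$ appearing in (\ref{eq20}) — requires care but becomes routine once the three steps are assembled.
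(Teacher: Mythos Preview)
Your proposal is correct and follows essentially the same route as the paper: derive the action bound $|\mathcal{A}^{H_s}(w(s))|\leq M+||\eta||_\infty||\beta'||_\infty||\dot H||_1$ and the energy bound $E(w)\leq(b-a)+||\eta||_\infty||\beta'||_\infty||\dot H||_1$ from the formula for $f'(s)$, then use the travel-time estimate $\tau(s)\leq E(w)/\veps^2$ together with $|\partial_s\eta|\leq||H||_\infty$ and Proposition~\ref{prop3.4} at the endpoint to close the bootstrap. Your explicit discussion of why $||\eta||_\infty<\infty$ is known before rearranging (finite-interval exhaustion, or convergence of $\eta$ at the ends) is a point the paper leaves implicit, and your flag about the $\veps$ versus $\veps^2$ bookkeeping is well taken---indeed the paper's own computation produces $(c+||H||_\infty/\veps^2)$ in the coefficient while~(\ref{eq16}) is stated with $\veps$.
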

\begin{proof}~\\
 \underline{Step 1}\textit{ We estimate the action $\mathcal{A}^{H_s}(w)$ and the energy $E(w)$ by
 \begin{align}
  E(w)&\leq b-a+||\eta||_\infty||\beta'||_\infty||\dot{H}||_1\label{eq19},\\
  \left|\mathcal{A}^{H_s}\big(w(s)\big)\right|&\leq \mspace{10mu}M \mspace{10mu}+||\eta||_\infty||\beta'||_\infty||\dot{H}||_1.\label{eq18}
 \end{align}}
At first, we calculate the $s$-derivative of the action $\mathcal{A}^{H_s}$ as
\begin{equation}\label{eq17}
 \frac{d}{ds}\mathcal{A}^{H_{\beta(s)}}(v,\eta)(s)=\left|\left|\nabla\mathcal{A}^{H_s}(v,\eta)(s)\right|\right|^2-\eta(s)\cdot\beta'(s)\int^1_0\left(\frac{d}{ds}H\right)_{\beta(s)}(v)\,dt.
\end{equation}
Then we have
\begin{align*}
 E(w)&=\int^\infty_{-\infty}\left|\left|\nabla\mathcal{A}^{H_s}(v,\eta)(s)\right|\right|^2ds\\
 &\mspace{-5mu}\overset{(\ref{eq17})}{=}\int^\infty_{-\infty}\frac{d}{ds}\mathcal{A}^{H_{\beta(s)}}(v,\eta)(s)ds+\int^\infty_{-\infty}\eta(s)\cdot\beta'(s)\int^1_0\left(\frac{d}{ds}H\right)_{\beta(s)}(v)\,dt\,ds\\
 &\leq b-a + \int^1_0\int^{s_+}_{s_-}\eta(s)\cdot\beta'(s)\cdot\left(\frac{d}{ds}H\right)_{\beta(s)}(v) \,ds\, dt\\
 &\leq b-a + ||\eta||_\infty||\beta'||_\infty||\dot{H}||_1,
\end{align*}
where we used that $\frac{d}{ds}H_s\neq 0$ only for $s_-\leq s\leq s_+$. Next, we estimate the action by
\begin{align*}
 &(a)&\left|\mathcal{A}^{H_s}(w(\sigma))\right|&=\left|\int^\sigma_{-\infty}\frac{d}{ds}\mathcal{A}^{H_s}(w(s))\,ds + \lim_{s\rightarrow -\infty}\mathcal{A}^{H_s}\big(w(s)\big)\right|\\
 &&&\mspace{-5mu}\overset{(\ref{eq17})}{\leq}|a|+\int^\sigma_{-\infty}\left|\left|\nabla\mathcal{A}^{H_s}(v,\eta)(s)\right|\right|^2ds+\int^\sigma_{-\infty}||\eta||_\infty||\beta'||_\infty\int^1_0\left|\frac{d}{ds} H\right| \mspace{-1mu}dt\mspace{1mu} ds\\
 &(b)& \left|\mathcal{A}^{H_s}(w(\sigma))\right|&=\left|\int^\infty_\sigma\frac{d}{ds}\mathcal{A}^{H_s}(w(s))-\lim_{s\rightarrow\infty}\mathcal{A}^{H_s}\big(w(s)\big)\right|\\
 &&&\leq|b|+\int^\infty_\sigma\left|\left|\nabla\mathcal{A}^{H_s}(v,\eta)(s)\right|\right|^2ds+\int^\infty_\sigma||\eta||_\infty||\beta'||_\infty\int^1_0\left|\frac{d}{ds} H\right| \mspace{-1mu}dt\mspace{1mu} ds\\
 &\overset{(a,b)}{\Rightarrow}& \left|\mathcal{A}^{H_s}(w(\sigma))\right|&\leq\frac{1}{2}\Big(|a|+|b|+E(w)+||\eta||_\infty||\beta'||_\infty||\dot{H}||_1\Big)\\
 &\overset{(\ref{eq19})}{\Rightarrow}&\left|\mathcal{A}^{H_s}(w(\sigma))\right|&\leq M+||\eta||_\infty||\beta'||_\infty||\dot{H}||_1.
\end{align*}
\underline{Step 2} \textit{ We prove that $\eta$ is bounded.}\medskip\\
For $s\in\mathbb{R}$ and $\veps$ as in Proposition \ref{prop3.4}, let $\tau(s)\geq0$ be defined by 
\[\tau(s):=\inf\left\{\tau\geq 0\;\;\Big|\;\; \big|\big|\nabla\mathcal{A}^{H_s}(w(s+\tau))\big|\big|<\veps\right.\Big\}.\]
Then we have
\begin{equation}\label{eq12}
 \tau(\sigma)=\int^{\sigma+\tau(\sigma)}_\sigma 1\, ds\leq\int^{\sigma+\tau(\sigma)}_\sigma\frac{1}{\veps^2}\left|\left|\nabla\mathcal{A}^{H_s}(w(s+\tau))\right|\right|^2ds\leq \frac{E(w)}{\veps^2}.
\end{equation}
Using Proposition \ref{prop3.4}, (\ref{eq19}),(\ref{eq18}) and (\ref{eq12}), we can now calculate
\begin{align*}
 |\eta(\sigma)|&\leq|\eta(\sigma+\tau(\sigma))|+\int^{\sigma+\tau(\sigma)}_\sigma\left|\frac{d}{ds}\eta(s)\right|ds\\
 &\leq c\cdot\left(\left|\mathcal{A}^{H_{\sigma+\tau(\sigma)}}(v,\eta)\right|+1\right)+\tau(\sigma)\cdot||H||_\infty\\
 &\leq c\cdot\left(M+1\right)+c\cdot||\eta||_\infty||\beta'||_\infty||\dot{H}||_1+\frac{E(w)\cdot||H||_\infty}{\veps^2}\\
 &\leq c\cdot(M+1)+\frac{||H||_\infty(b-a)}{\veps^2}+\left(c+\frac{||H||_\infty}{\veps^2}\right)||\eta||_\infty||\beta'||_\infty||\dot{H}||_1.
\end{align*}
As $\sigma$ is arbitrary, we obtain
\[||\eta||_\infty\leq c\cdot(M+1)+\frac{||H||_\infty(b-a)}{\veps^2}+\left(c+\frac{||H||_\infty}{\veps^2}\right)||\eta||_\infty||\beta'||_\infty||\dot{H}||_1.\]
Using the assumption (\ref{eq16}), we conclude that
\[ ||\eta||_\infty\leq \frac{1}{1-d}\left(c\cdot(M+1)+\frac{||H||_\infty(b-a)}{\veps^2}\right).\qedhere\]
\end{proof}
\begin{cor}\label{cor3.8}
 Fix $k>1$ and assume that the constant $d$ from (\ref{eq16}) is so small that $d(k+1)<1$. Let $(v^\pm,\eta^\pm)$ be critical points of $\mathcal{A}^{H_\pm}$ and suppose that there exists an $\mathcal{A}^{H_s}$-gradient trajectory $(v,\eta)$ with $\displaystyle\lim_{s\rightarrow\pm\infty}(v,\eta)=(v^\pm,\eta^\pm)$. Then:
 \begin{itemize}
  \item[(a)]
  \begin{align}
   &\text{If for $(v^-,\eta^-)$ holds }& \mathcal{A}^{H_-}(v^-,\eta^-)&\geq \frac{dk}{1-d(k+1)}>0,\label{eq21}\\
   &\text{then it holds for $(v^+,\eta^+)$ that }& \quad\mathcal{A}^{H_+}(v^+,\eta^+) &\geq \left(1- \frac{1}{k}\right)\cdot\mathcal{A}^{H_-}(v^-,\eta^-)>0.\notag
  \end{align} 
  \item[(b)] 
  \begin{align}
   &\text{If for $(v^+,\eta^+)$ holds }& \mathcal{A}^{H_+}(v^+,\eta^+)&\leq \frac{-dk}{1-d(k+1)}<0,\label{eq22}\\
   &\text{then it holds for $(v^-,\eta^-)$ that }& \mathcal{A}^{H_-}(v^-,\eta^-)&\leq \left(1-\frac{1}{k}\right)\cdot\mathcal{A}^{H_+}(v^+,\eta^+)<0.\notag
  \end{align}
 \end{itemize}
\end{cor}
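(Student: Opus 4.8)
The plan is to feed the fixed asymptotics of $(v,\eta)$ into Corollary~\ref{cor3.7} and then exploit that the energy is non-negative. First I would note that, since $k>1$ and $d(k+1)<1$ force $d<1$, Corollary~\ref{cor3.7} is applicable; moreover, because $H_s$ coincides with the $s$-independent Hamiltonian $H_-$ for $s\le s_-$ and $(v,\eta)(s)\to(v^-,\eta^-)$ there, Lemma~\ref{lem2a} and continuity of $\mathcal{A}^{H_-}$ give $\lim_{s\to-\infty}\mathcal{A}^{H_s}(v,\eta)(s)=\mathcal{A}^{H_-}(v^-,\eta^-)=\eta^-=:a$, and symmetrically $\lim_{s\to+\infty}\mathcal{A}^{H_s}(v,\eta)(s)=\eta^+=:b$. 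Thus the hypotheses of Corollary~\ref{cor3.7} hold with these $a,b$ and with $M=\tfrac12\big(|a|+|b|+b-a\big)$, so (\ref{eq19}) combined with $E(v,\eta)\ge 0$ yields the single estimate on which everything rests,
\[a-b\;\le\;\|\eta\|_\infty\,\|\beta'\|_\infty\,\|\dot{H}\|_1 .\]

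For part (a), assume $a\ge \frac{dk}{1-d(k+1)}>0$, the denominator being positive since $d(k+1)<1$. If $b\ge a$ then already $b\ge a>\big(1-\tfrac1k\big)a>0$, so I may assume $b<a$. In that regime $b-a<0$, so the term $\frac{\|H\|_\infty(b-a)}{\veps^2}$ in (\ref{eq20}) is non-positive, and $a>0$ forces $M=\max\{b,0\}<a$; hence (\ref{eq20}) gives $\|\eta\|_\infty\le \frac{c(a+1)}{1-d}$. Substituting this into the displayed inequality together with $c\,\|\beta'\|_\infty\|\dot{H}\|_1\le d$ (immediate from (\ref{eq16})) produces $a-b\le \frac{(a+1)d}{1-d}$. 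A one-line manipulation shows that $\frac{(a+1)d}{1-d}\le \frac{a}{k}$ is equivalent to $a\,\big(1-d(k+1)\big)\ge kd$, i.e.\ to the hypothesis (\ref{eq21}); therefore $a-b\le a/k$, that is $b\ge\big(1-\tfrac1k\big)a>0$. Part (b) follows by the mirror-image computation: isolate the trivial case $a\le b$; when $a>b$ and $b<0$ one has $b-a<0$ and $M=\max\{-a,0\}\le|b|$, so (\ref{eq20}) gives $\|\eta\|_\infty\le\frac{c(|b|+1)}{1-d}$; and from $a-b\le\frac{(|b|+1)d}{1-d}$ and (\ref{eq22}) one gets $a-b\le|b|/k$, i.e.\ $a\le\big(1-\tfrac1k\big)b<0$.

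The argument is essentially bookkeeping once Corollary~\ref{cor3.7} is in hand, but it is worth saying why it closes up, and this is the step I would be most careful with. The bound (\ref{eq20}) for $\|\eta\|_\infty$ itself depends on $b$ (through $M$), so the estimate $a-b\le\|\eta\|_\infty\|\beta'\|_\infty\|\dot{H}\|_1$ is genuinely self-referential and, with no extra input, would be vacuous. The resolution is that the self-reference is harmless precisely where it must be: if $b\ge a$ the conclusion is immediate, and if $b<a$ then $M$ is controlled by $a$ alone (resp.\ by $|b|$ in part (b)), the $b$-dependence of (\ref{eq20}) drops out, and the recursion resolves. Verifying that the resulting scalar inequality $\frac{(a+1)d}{1-d}\le\frac ak$ collapses exactly to (\ref{eq21}) — in particular that $1-d(k+1)>0$ is what permits clearing the denominator in the correct direction — is the only place where a sign error could creep in.
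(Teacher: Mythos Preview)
Your proof is correct and uses the same ingredients as the paper's: the energy bound (\ref{eq19}), the $\|\eta\|_\infty$ estimate (\ref{eq20}) from Corollary~\ref{cor3.7}, and the definition (\ref{eq16}) of $d$. The only difference is organizational: the paper splits part (a) into the case $|b|\le a$ (handled directly) and the case $b<-a$ (ruled out by contradiction using $M=0$), whereas you split into $b\ge a$ (trivial) and $b<a$, observing that in the latter case $M=\max\{b,0\}\le a$ holds uniformly, which lets you treat $-a\le b<a$ and $b<-a$ in one stroke and avoid the contradiction argument. Your case division is a bit cleaner; the algebra is otherwise identical (the paper's intermediate step $\|\eta\|_\infty\le\frac{c}{kd}\,a$ is precisely your inequality $\frac{(a+1)d}{1-d}\le\frac{a}{k}$ rewritten).
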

\begin{proof}
 We only prove $(a)$, case $(b)$ being completely analog. We first assume that the absolute value of the action at the positive asymptotic satisfies the inequality
 \begin{equation}\label{eq23}
  \left|\mathcal{A}^{H_+}(v^+,\eta^+)\right|\leq \mathcal{A}^{H_-}(v^-,\eta^-).
 \end{equation}
Using the notation from the proof of Corollary \ref{cor3.7}, we write $a:=\mathcal{A}^{H_-}(v^-,\eta^-)$ and $b:=\mathcal{A}^{H_+}(v^+,\eta^+)$ and deduce 
\begin{align*}
 &&b-a&=\mathcal{A}^{H_+}(v^+,\eta^+)-\mathcal{A}^{H_-}(v^-,\eta^-)&\leq&\, 0 \\
 &\text{ and }\phantom{hihihihi}& M&=\frac{|a|+|b|+b-a}{2}&\leq&\, |b|\leq\mathcal{A}^{H_-}(v^-,\eta^-).\phantom{hihihihi}
\end{align*}
Hence, we get from (\ref{eq20}) and (\ref{eq21}) the inequality
\begin{align*}
 ||\eta||_\infty\leq\frac{c}{1-d}(M+1)&\leq\frac{c}{1-d}\left(\mathcal{A}^{H_-}(v^-,\eta^-)+1\right)\\
 &\leq\frac{c}{1-d}\left(1+\frac{1-d(k+1)}{dk}\right)\mathcal{A}^{H_-}(v^-,\eta^-)=\frac{c}{kd}\cdot\mathcal{A}^{H_-}(v^-,\eta^-).
\end{align*}
This implies together with (\ref{eq16}) and (\ref{eq19}):
\begin{align*}
 \mathcal{A}^{H_+}(v^+,\eta^+)&\overset{(\ref{eq19})}{\geq}\mathcal{A}^{H_-}(v^-,\eta^-)-||\eta||_\infty||\beta'||_\infty||\dot{H}||_1\\
 &\;\geq\left(1-\frac{c}{kd}\cdot||\beta'||_\infty||\dot{H}||_1\right)\mathcal{A}^{H_-}(v^-,\eta^-)\overset{(\ref{eq16})}{\geq}\left(1-\frac{1}{k}\right)\mathcal{A}^{H_-}(v^-,\eta^-).
\end{align*}
This is the statement of the corollary under the additional assumption (\ref{eq23}). Now assume that (\ref{eq23}) does not hold. To prove the corollary, it suffices to exclude the case
\begin{equation}\label{eq24}
 b=\mathcal{A}^{H_+}(v^+,\eta^+)<-\mathcal{A}^{H_-}(v^-,\eta^-)=-a<0.
\end{equation}
We assume by contradiction that (\ref{eq24}) holds. Then we obtain:
\[b-a<0\qquad\text{ and }\qquad M=0.\]
In particular, we get from (\ref{eq20}) that $\displaystyle ||\eta||_\infty\leq\frac{c}{1-d}$. Hence, we can estimate
\begin{align*}
 \mathcal{A}^{H_-}(v^-,\eta^-)&\overset{(\ref{eq19})}{\leq}\mathcal{A}^{H_+}(v^+,\eta^+)+||\eta||_\infty||\beta'||_\infty||\dot{H}||_1\\
 &\overset{(\ref{eq24})}{\leq}-\mathcal{A}^{H_-}(v^-,\eta^-)+\frac{c}{1-d}\cdot||\beta'||_\infty||\dot{H}||_1\\
 &\overset{(\ref{eq16})}{\leq}-\mathcal{A}^{H_-}(v^-,\eta^-)+\frac{d}{1-d}\\
 &\overset{k>1}{\leq}-\mathcal{A}^{H_-}(v^-,\eta^-)+\frac{dk}{1-d(k+1)}\\
 &\overset{(\ref{eq21})}{\leq}0.
\end{align*}
But this implies that $\mathcal{A}^{H_-}(v^-,\eta^-)\leq0$, which contradicts assumption (\ref{eq21}). Hence, (\ref{eq24}) has to be wrong and Corollary \ref{cor3.8} follows.
\end{proof}
\newpage
\subsection{Invariance and action filtration}\label{sec.inv}
The following theorems in this section will show that the Rabinowitz-Floer homology does not depend on any auxiliary structures. To be more precise: It will turn out, that $RFH(H,h)$ only depends on the exact contact filling $W$ of $(\Sigma,\xi)$. As mentioned in the introduction, we can hence write $RFH(W,\Sigma)$ instead.\\
Additionally, we define the Rabinowitz-Floer homology $RFH^{(a,b)}(W,\Sigma)$ for an action window $(a,b)$. While $RFH^{(a,b)}(W,\Sigma)$ does depend on the contact form on $\Sigma$, we will show that $RFH^{(0^\pm,\infty)}(W,\Sigma)$ and $RFH^{(-\infty,0^\pm)}(W,\Sigma)$ are in fact invariant under Liouville isomorphisms and do therefore only depend on the exact contact filling $W$. The same holds true for the growth-rates $\Gamma^\pm(W,\Sigma)$, defined later in this section.\medskip\\ We start with the most basic invariance theorem:
\begin{theo}[\textbf{Frauenfelder \& Cieliebak,\cite{FraCie}}]\label{theo1}~\\
 $RFH(H,h)$ is independent of the almost complex structure $J$, the Morse function $h$ and the metric $g_h$. Moreover, if $H_s, s\in[s_-, s_+],$ is a homotopy of defining Hamiltonians of contact hypersurfaces $\Sigma_s\subset V$, then $RFH(H_-,h_-)$ and $RFH(H_+,h_+)$ are canonically isomorphic.
\end{theo}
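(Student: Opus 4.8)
The plan is to prove Theorem \ref{theo1} by the standard Floer-homological continuation argument, adapted to the Morse--Bott-with-cascades setup and to the semi-local Rabinowitz--Floer equation. I will split the proof into two parts: first the independence of $RFH(H,h)$ from the auxiliary data $(J,h,g_h)$ for a \emph{fixed} defining Hamiltonian $H$, and then the invariance under a homotopy of defining Hamiltonians $H_s$. Both rely on the same mechanism — counting solutions of the $s$-dependent Rabinowitz--Floer equation (\ref{eqRabFlHom}) to build chain maps, and counting solutions of a homotopy-of-homotopies to build chain homotopies.

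\textbf{Step 1: Continuation maps.} Given two choices of auxiliary data, I would interpolate between them by an $s$-dependent family: for the first part take a constant $H_s\equiv H$ but let $J_s,h_s,g_{h,s}$ vary with $s$, equal to the respective choices for $s\ll 0$ and $s\gg 0$; for the second part take the given homotopy $H_s$ (extended to $\mathbb{R}$ via the cutoff $\beta$ as in Section \ref{sec3.1}), choosing the homotopy slow enough that (\ref{eq16}) holds, so that Corollary \ref{cor3.7} provides the uniform $\eta$-bound. One then defines $\Phi: RFC(H_-,h_-)\to RFC(H_+,h_+)$ by
\[
\Phi(c^+) = \sum_{c^-} \#_2\mathcal{M}^0_s(c^-,c^+)\cdot c^-,
\]
where $\mathcal{M}^0_s(c^-,c^+)$ is the zero-dimensional moduli space of $s$-dependent trajectories with cascades from $c^-\in\mathrm{crit}(h_-)$ to $c^+\in\mathrm{crit}(h_+)$. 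To see that $\Phi$ is well-defined one needs: (i) transversality for the $s$-dependent equation — this is the homotopy version of Theorem \ref{theotrans}, mentioned in the first Remark after its proof, with the extra input of Appendix \ref{auttrans} that transversality holds automatically along constant cascades (which can now occur since the $H_s$-equation is not a genuine gradient flow); (ii) compactness of $\mathcal{M}^0_s$ — this is exactly Theorem \ref{theo3.6} together with the Gromov--Floer compactness Corollary \ref{theocompactness} and its consequences \ref{theocompactness2}, \ref{theoglue}; (iii) the Novikov finiteness condition (\ref{finite}) for $\Phi(c^+)$ — here the action estimates of Corollary \ref{cor3.8} are the key tool, replacing the naive ``$\partial^F$ reduces the action'' argument, since along a homotopy the action is only controlled up to the error term $\|\eta\|_\infty\|\beta'\|_\infty\|\dot H\|_1$. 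Counting the boundary points of the one-dimensional component $\overline{\mathcal{M}}^1_s(c^-,c^+)$ and invoking Theorem \ref{theoglue} gives $\partial^F_+\circ\Phi = \Phi\circ\partial^F_-$, i.e.\ $\Phi$ is a chain map.

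\textbf{Step 2: Chain homotopy and inverse.} To show $\Phi$ is a quasi-isomorphism I would run the usual two-parameter argument: given two homotopies $H^0_s, H^1_s$ of auxiliary data (in particular, the constant homotopy and the concatenation of a homotopy with its reverse), choose a homotopy of homotopies $H^\lambda_s$, $\lambda\in[0,1]$, and count the zero-dimensional component of the corresponding parametrized moduli space to define an operator $K$ with $\partial^F K + K\partial^F = \Phi^1 - \Phi^0$. Applying this with $\Phi^0 = \mathrm{id}$ (constant homotopy, whose only rigid solutions are constant cascades, giving the identity) and $\Phi^1 = \Psi\circ\Phi$, where $\Psi$ is the continuation map in the reverse direction, shows $\Psi\circ\Phi\simeq\mathrm{id}$, and symmetrically $\Phi\circ\Psi\simeq\mathrm{id}$. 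Hence $\Phi$ induces an isomorphism on homology. For the first part of the theorem, where $H$ is fixed, the homotopy can be taken through defining Hamiltonians constantly equal to $H$, so condition (\ref{eq16}) is vacuous ($\|\dot H\|_1 = 0$) and the action is genuinely preserved, which also shows the isomorphism respects the action filtration in that case. Naturality of these maps under further homotopies — which gives the ``canonical'' in the statement — follows from the same chain-homotopy argument applied to homotopies of homotopies.

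\textbf{Main obstacle.} The genuinely delicate point, and the one I would spend the most care on, is the interplay between the action error term in a non-constant homotopy and the Novikov completion: one must ensure that for each $c^+$ the set of $c^-$ with $\mathcal{M}^0_s(c^-,c^+)\neq\emptyset$ still satisfies (\ref{finite}), even though individual continuation trajectories may \emph{increase} the action by a bounded amount. This is precisely why Corollary \ref{cor3.8} is formulated with the parameter $k$ and the smallness assumption $d(k+1)<1$: it shows that above a fixed action threshold the action is still essentially non-decreasing along continuation trajectories (up to the factor $1-1/k$), which combined with the finiteness of critical manifolds in any action window (Theorem \ref{theo17}) gives the required finiteness. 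A secondary technical nuisance is bookkeeping the cappings and grading shifts so that the chain maps are filtered/graded correctly, but this is routine given the index formula in Theorem \ref{theotrans}. Everything else — transversality, compactness, gluing — is quoted wholesale from the Sections \ref{sec2} and \ref{sec3.1} and their homotopy analogues.
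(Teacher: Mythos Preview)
Your approach is essentially the paper's: continuation maps from counting $s$-dependent trajectories with cascades, well-definedness via the action estimates of Corollary \ref{cor3.8}, chain-map property from the boundary of the one-dimensional moduli space, and invertibility via a homotopy of homotopies. The paper's proof (given only for the homotopy part) is organized the same way.

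One point you gloss over: you write ``choosing the homotopy slow enough that (\ref{eq16}) holds'', but the homotopy $H_s$ is \emph{given}, and the constants $c,\veps,\|H\|_\infty$ in (\ref{eq16}) depend on the image set $\{H_s\}$, not on the parametrization, so you cannot simply reparametrize to achieve smallness. The paper handles this (its Step~2) by cutting the homotopy into $N$ pieces $H^{N,j}_s := H_{(j+s)/N}$, each of which has $\|\dot H^{N,j}\|_\infty \le \tfrac{1}{N}\|\dot H\|_\infty$; for $N$ large each piece satisfies the strong smallness condition (\ref{eq27}), so Step~1 applies to give isomorphisms $\Phi_j$, and the final isomorphism is the composite $\Phi_{N-1}\circ\cdots\circ\Phi_0$. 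Your sketch would go through once you insert this decomposition. (Also, your displayed map has a direction typo: you declare $\Phi:RFC(H_-,h_-)\to RFC(H_+,h_+)$ but then define $\Phi(c^+)$; the paper's map goes $RFC(H_+,h_+)\to RFC(H_-,h_-)$.)
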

\begin{proof}
 The proof uses the usual arguments in Floer theory (see \cite{Fra}, Thm.\ A17). We show only the invariance of $RFH(H_s)$ under homotopies of defining Hamiltonians, as it involves some non-standard technical difficulties due to compactness. However, our proof should enable the reader to prove the invariance for $J,\,h$ and $g_h$ in the same manner by considering generic homotopies $J_s$, $h_s$ and $g_{h_s}$.\medskip\\
 \underline{Step 0}\\
 Without loss of generality, we assume that $s_-=0$ and $s_+=1$. If not, replace $H_s$ by $\tilde{H}_s:=H_{s_-+(s_+-s_-)s)}$.\medskip\\
 \underline{Step 1}\\
 Let $\veps>0$ and $c>0$ be the constants for the homotopy $H_s$ given by Proposition \ref{prop3.4}. At first assume that $H_s$ satisfies the inequality
 \begin{equation}\label{eq27}
  \left( c+\frac{||H||_\infty}{\veps}\right)\cdot||\beta'||_\infty||\dot{H}||_\infty\leq\frac{1}{8}.
 \end{equation}
Here, $H_s$ is extended to $\mathbb{R}$ as $H_s:=H_{\beta(s)}$ as discussed before. The norm $||\dot{H}||_\infty$ is defined as $||\dot{H}||_\infty:=\max |\frac{d}{ds}H_s(x)|$. Note that $||\dot{H}||_1\leq ||\dot{H}||_\infty\cdot(s_+-s_-)=||\dot{H}||_\infty$ here, so that (\ref{eq27}) implies (\ref{eq16}) with $d\leq\frac{1}{8}$. Write again $\displaystyle\lim_{s\rightarrow -\infty} H_s=H_0=:H_-$ and $\displaystyle\lim_{s\rightarrow +\infty} H_s=H_1=:H_+$ and pick Morse-functions $h_\pm$ on the critical manifolds $crit\mspace{-4mu}\left(\mathcal{A}^{H_\pm}\right)$.\\
For two critical points $c^\pm\in crit(h_\pm)$ we consider the moduli space $\widehat{\mathcal{M}}(c^-,c^+)$ of trajectories with cascades, where exactly one cascade consists of an $\mathcal{A}^{H_s}$-gradient trajectory, while all other cascades are either $\mathcal{A}^{H_-}$- or $\mathcal{A}^{H_+}$-gradient trajectories. It follows from a parametric version of the Global Transversality Theorem \ref{theotrans} that $\widehat{\mathcal{M}}(c^-,c^+)$ is a manifold and it follows from the Compactness Theorem \ref{theo3.6} that its zero-dimensional component $\widehat{\mathcal{M}}^0(c^-,c^+)$ is a finite set.\pagebreak\\
We define a linear map $\phi:RFC(H_+,h_+)\rightarrow RFC(H_-,h_-)$ as the linear extension of
\begin{equation}\label{connect}
 \phi(c^+):=\sum_{c^-\in crit(h_-)} \#_2 \widehat{\mathcal{M}}^0(c^-,c^+)\cdot c^-,\qquad c^+\in crit(h_+).
\end{equation}
\underline{\textit{Claim}}: The sum on the right hand side satisfies the finiteness condition (\ref{finite}).\\
\underline{\textit{Proof}}: Condition (\ref{eq27}) on $H_s$, i.e. $d=\frac{1}{8}$, guarantees that Corollary \ref{cor3.8} can be applied with $k=2$. Hence $ \widehat{\mathcal{M}}^0(c^-,c^+)\neq\emptyset$ implies 
\begin{itemize}
 \item $\mathcal{A}^{H_-}(c^-)\leq\max\big\{2\mathcal{A}^{H_+}(c^+),\frac{2d}{1-3d}\big\} $ if $\mathcal{A}^{H_+}(c^+)>-\frac{2d}{1-3d}$
 \item $\mathcal{A}^{H_-}(c^-)\leq\frac{1}{2}\mathcal{A}^{H_+}(c^+)$ \hspace{2.2cm} otherwise.
\end{itemize}
 The action of all $c^-$ on the right hand side is therefore bounded from above in terms of $\mathcal{A}^{H_+}(c^+)$. Condition (\ref{finite}) follows therefore from the fact that the action spectrum of $\mathcal{A}^{H_-}$ is closed and discrete (Theorem \ref{theo17}).\hfill $\square$\medskip\\
Using again the Compactness Theorem \ref{theo3.6}, it follows by standard arguments in Floer theory (i.e.\ glueing, see \cite{Sal} and \cite{Fra}) that
\[\partial^-\circ\phi=\phi\circ\partial^+\]
so that $\phi$ induces a well-defined homomorphism on Floer homologies
\[\Phi: RFH(H_+,h_+)\rightarrow RFH(H_-,h_-).\]
The inverse homotopy $\bar{H}_s:=H_{1-s}$ yields a homomorphism
\[\Psi: RFH(H_-,h_-)\rightarrow RFH(H_+,h_+).\]
For $R\geq 1$, we define the concatenation of $H_s$ and $\bar{H}_s$ by the formula
\[ K_s:=H_s\#_R\bar{H}_s=\begin{cases}H_{s+R}\quad s\leq0\\\bar{H}_{s-R}\quad s\geq 0 \end{cases}\]
which yields a homotopy $K_s$ from $H_-$ via $H_+$ back to $H_-$. Note that $K_s$ is non-constant only for $-R\leq s\leq 1-R$ and $R-1\leq s\leq R$. From (\ref{eq27}) and Proposition \ref{prop3.4} follows 
\[\left(c+\frac{||K||_\infty}{\veps^2}\right)\cdot||\beta'||_\infty||\dot{K}||_1<\frac{1}{4}.\]
Note that we can use the same constants $\veps$ and $c$ as for $H_s$ and $\bar{H}_s$ as they depend by Proposition \ref{prop3.4} only on the set $\{H_s\}=\{K_s\}$. Using again the Corollaries \ref{cor3.7} and \ref{cor3.8} and the standard gluing argument, we see that the composition
\[\Phi\circ\Psi: RFH(H_-,h_-)\rightarrow RFH(H_-,h_-)\]
is given by counting gradient flow lines of $\mathcal{A}^{K_s}$ (see again \cite{Sal}).\pagebreak\\
Now, for $r\in[0,1]$ consider the homotopies of homotopies
\[H^r_s:=H_{r\cdot s}\qquad\text{ and }\qquad \bar{H}^r_s:=\bar{H}_{r\cdot s}\qquad\text{ and }\qquad K^r_s=H^r_s\#_r\bar{H}^r_s.\]
Then for each $r\in[0,1]$, the following estimate still continues to hold:
\[\left(c+\frac{||K^r||_\infty}{\veps^2}\right)\cdot||\beta'||_\infty||\dot{K^r}||_1<\frac{1}{4}.\]
Moreover, we have that $K^0_s=H_-$ does not depend on $s$ any more and therefore induces the identity on $RFH(H_-,h_-)$. It follows that
\begin{align*}
 && \Phi\circ\Psi &= \text{identity on }\; RFH(H_-,h_-)\\
 &\text{and similarly}& \Psi\circ\Phi &= \text{identity on }\; RFH(H_+,h_+).
\end{align*}
Thus, $\Phi$ is an isomorphism between $RFH(H_+,h_+)$ and $RFH(H_-,h_-)$. This finishes the proof under the additional assumption (\ref{eq27}).\bigskip\\
\underline{Step 2}\\
Now consider a general homotopy $H_s$, so that (\ref{eq27}) is not necessarily satisfied. Then we define for any $N\in\mathbb{N}$ and $0\leq j\leq N-1$ the slower homotopies
\begin{equation}\label{eq28}
 H_s^{N,j}:=H_{(j+s)/N}\quad\text{ for }\; 0\leq s\leq 1\qquad\text{ and }\qquad H_s^{N,j}:=H_{\beta(s)}^{N,j}\quad\text{ for }\; s\in\mathbb{R}.
\end{equation}
We see that $||H^{N,j}||_\infty\leq ||H||_\infty$ and $||\dot{H}^{N,j}||_\infty\leq\frac{1}{N}||\dot{H}||_\infty$. Hence, we may choose $N\in\mathbb{N}$ so large, such that for each $0\leq j\leq N-1$ holds
\begin{equation*}
  \left( c+\frac{||H^{N,j}||_\infty}{\veps}\right)\cdot||\beta'||_\infty||\dot{H}^{N,j}||_\infty\leq\frac{1}{8}.
 \end{equation*}
 Write $H_j:=H^{N,j}_0=H^{N,j-1}_1$ for the ends of the slow homotopies and choose Morse functions $h_j$ for $crit\mspace{-4mu}\left(\mathcal{A}^{H_j}\right)$. Then, step 1 yields for $0\leq j\leq N-1$ isomorphisms 
 \[\Phi_j : RFH(H_j,h_j)\rightarrow RFH(H_{j+1},h_{j+1}).\]
 The composition of these isomorphisms then shows that $RFH(H_0,h_0)=RFH(H_-,h_-)$ and $RFH(H_N,h_N)=RFH(H_+,h_+)$ are isomorphic.
\end{proof}
The action $\mathcal{A}^H$ induces an $\mathbb{R}$-filtration on the chain complex $RFC(H,h)$. This allows us to define for $-\infty\leq a\leq b\leq \infty,\,a,b\not\in spec(\Sigma,\alpha)$ the following truncated chain complexes\footnote{For variations on this definition, including $a,b\in spec(\Sigma,\alpha)$ see Section \ref{sectrunc}.}:
\begin{align*}
  RFC^{<b}(H,h)&:=\bigg\{\underset{crit(h)}{\textstyle\sum} \xi_c\cdot c\;\bigg|\;\xi_c=0 \text{ if }\mathcal{A}^H(c)\geq b\bigg\},\\
  RFC^{(a,b)}(H,h)&:=\raisebox{.2em}{$RFC^{<b}(H,h)$}\left/\raisebox{-.2em}{$RFC^{<a}(H,h)$}\right..
\end{align*}
Note that $RFC^{<\infty}(H,h)$ is the original chain complex $RFC(H,h)$.\pagebreak\\
For $a\leq b\leq c$, we have the following natural short exact sequence of chain complexes:
\begin{align}\label{eq23a}
 0\rightarrow RFC^{(a,b)}(H,h)\overset{i}{\hookrightarrow} RFC^{(a,c)}(H,h)\overset{\pi}{\twoheadrightarrow} RFC^{(b,c)}(H,h)\rightarrow 0.
\end{align}
As $\partial^F$ reduces the action, it descends to the truncated chain complexes, yielding the truncated homology groups
\[RFH^{<b}(H,h)\qquad\text{ and }\qquad RFH^{(a,b)}(H,h).\]
For $\veps>0$ smaller then the smallest period of a closed Reeb orbit on $\Sigma$ we define
\begin{align*}
 RFH^{(0^\pm,\infty)}(H,h)&:=RFH^{(\pm\veps,\infty)}(H,h)\\
 RFH^{(-\infty,0^\pm)}(H,h)&:=RFH^{(-\infty,\pm\veps)}(H,h).
\end{align*}
The short exact sequence (\ref{eq23a}) gives the following long exact sequence in homology:
\begin{align}\label{eq25a}
 \rightarrow RFH^{(b,c)}(H,h)\rightarrow RFH^{(a,b)}(H,h)\overset{i_\ast}{\rightarrow} RFH^{(a,c)}(H,h)\overset{\pi_\ast}{\rightarrow} RFH^{(b,c)}(H,h)\rightarrow
\end{align}
If there is no closed Reeb orbit with period in $(a,b)$, then this long exact sequence shows that $RFH^{(b,c)}(H,h)$ and $RFH^{(a,c)}(H,h)$ are isomorphic. This proves that the above definition of $RFH^{(0^\pm,\infty)}(H,h)$ and $RFH^{(-\infty,0^\pm)}(H,h)$ is independent from $\veps$, if $\veps$ is small enough.\\
The maps $\pi_\ast$ and $i_\ast$ give $RFH^{(a,b)}$ the structure of a bidirected system. In Theorem \ref{theolimits}, we show that 
\begin{align*}
 && RFH^{<b}(H,h)&\cong\lim_{\underset{a}{\longleftarrow}} RFH^{(a,b)}(H,h) \text{ as }a\rightarrow-\infty\\
 &\text{and}& RFH(H,h)&\cong\lim_{\underset{b}{\longrightarrow}}RFH^{<b}(H,h)\;\;\text{ as }b\rightarrow \infty.
\end{align*}
The truncated homology groups are independent of $J$, $h$ and $g_h$. However, they do depend on the chosen contact form and are therefore in general not invariant under homotopies $H_s$, except for the groups $RFH^{(0^\pm,\infty)}(H,h)$ and $RFH^{(-\infty,0^\pm)}(H,h)$, as we shall see below. For an arbitrary action window we have only the following result.
\begin{cor}\label{cormain1}
 Let $a,b\in(\mathbb{R}\cup\{-\infty,\infty\})\setminus\text{spec }(\Sigma,\lambda)$. If $H_+$ and $H_-$ are two defining Hamiltonians for $\Sigma$, then $RFH^{(a,b)}(H_+,h_+)$ and $RFH^{(a,b)}(H_-,h_-)$ are isomorphic.
\end{cor}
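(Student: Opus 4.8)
The key observation is that $H_+$ and $H_-$ are defining Hamiltonians for the \emph{same} exact contact hypersurface $\Sigma$, hence for the same contact form $\alpha=i^\ast\lambda$. Therefore $crit(\mathcal{A}^{H_+})=\mathcal{P}(\alpha)=crit(\mathcal{A}^{H_-})$, the spectrum $spec(\Sigma,\alpha)$ is unchanged, and by Lemma \ref{lem2a} the action of a critical point $c=(v,\eta)\in\mathcal{P}(\alpha)$ equals $\eta$ no matter which defining Hamiltonian is used. Since the truncated groups do not depend on $J$, $h$ and $g_h$ (as noted just above), I would first replace $h_\pm$ by one common Morse--Smale pair $(h,g_h)$ on $\mathcal{P}(\alpha)$; then $RFC^{(a,b)}(H_+,h)$ and $RFC^{(a,b)}(H_-,h)$ have literally the same generators with the same actions, and only the differentials differ. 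The plan is then to rerun the proof of Theorem \ref{theo1} and check that the continuation quasi-isomorphism it produces is filtered.

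By Proposition \ref{cor2}(i) the linear path $H_s:=(1-s)H_-+sH_+$, $s\in[0,1]$, consists of defining Hamiltonians for $\Sigma$, so $\Sigma_s\equiv\Sigma$ along the homotopy. I would reparametrise it by a slowly varying cut-off $\beta$ supported on a long interval: this leaves the set $\{H_s\}$ — hence the constants $\varepsilon,c$ of Proposition \ref{prop3.4} — unchanged, while shrinking $\|\beta'\|_\infty\|\dot H\|_1$, and thus the constant $d$ of (\ref{eq16}), to be as small as desired. Fix $k$ large and then the homotopy slow enough that $d(k+1)<1$ and $\tfrac{dk}{1-d(k+1)}$ is smaller than the distance from $\{a,b\}$ to $spec(\Sigma,\alpha)$ and than $\min\{|a|,|b|\}$ — possible because $a,b\notin spec$ and, by Theorem \ref{theo17}, $spec$ is closed and discrete; do the same simultaneously for the inverse homotopy $\bar H_s$ and for the homotopies-of-homotopies $K^r_s$ used in the proof of Theorem \ref{theo1}.

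Now let $\phi\colon RFC(H_+,h)\to RFC(H_-,h)$ be the continuation map (\ref{connect}). A trajectory with cascades contributing to $\phi$ contains exactly one $\mathcal{A}^{H_s}$-cascade, with negative asymptotic $(v^-,\eta^-)\in crit(\mathcal{A}^{H_-})$ and positive asymptotic $(v^+,\eta^+)\in crit(\mathcal{A}^{H_+})$; here $\eta^\pm\in spec(\Sigma,\alpha)$ and $\mathcal{A}^{H_\mp}(v^\pm,\eta^\pm)=\eta^\pm$. Corollary \ref{cor3.8} then bounds $\eta^-$ above in terms of $\eta^+$: if $\eta^+<b$ one gets either $\eta^-\le\tfrac{dk}{1-d(k+1)}$ or $\eta^-\le\tfrac{k}{k-1}\eta^+$, and by the choice of $k$ and $d$ both alternatives give $\eta^-<b$; the analogous argument with $a$ in place of $b$ (using part (b) of Corollary \ref{cor3.8} for negative actions) gives $\eta^-<a$ whenever $\eta^+<a$. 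Hence $\phi$ maps $RFC^{<b}(H_+)$ into $RFC^{<b}(H_-)$ and $RFC^{<a}(H_+)$ into $RFC^{<a}(H_-)$, so it descends to a chain map $RFC^{(a,b)}(H_+,h)\to RFC^{(a,b)}(H_-,h)$. The same applies to the reverse map $\psi$ and to the chain homotopies built from $K^r_s$; therefore $\phi$ and $\psi$ descend to mutually inverse quasi-isomorphisms on the truncated complexes, giving $RFH^{(a,b)}(H_+,h)\cong RFH^{(a,b)}(H_-,h)$, and combining with the $h$-independence of the truncated groups proves the corollary. The compactness needed for $\phi$, $\psi$ and the chain homotopies is precisely Theorem \ref{theo3.6}, whose hypothesis (\ref{eq16}) holds by the slowness of the homotopy.

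The step I expect to be the main obstacle is exactly this filtered control: the thresholds in Corollary \ref{cor3.8} are strictly positive, so its estimate degenerates for asymptotics with action near $0\in spec(\Sigma,\alpha)$. The resolution is that these thresholds tend to $0$ as the homotopy is slowed, and $a,b\notin spec$, so a short separate case-check — as sketched in the previous paragraph — handles a neighbourhood of $0$; everything else is the standard filtered-continuation bookkeeping.
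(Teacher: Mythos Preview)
Your proposal is correct and follows the same strategy as the paper: use the convexity of defining Hamiltonians for the fixed hypersurface $\Sigma$ (so that the critical set and action spectrum are constant along the homotopy), make the homotopy sufficiently slow, and then invoke Corollary~\ref{cor3.8} to verify that the continuation chain maps and the chain homotopies from $K^r_s$ respect the action filtration at the non-spectral levels $a$ and $b$. The only cosmetic difference is that the paper subdivides the homotopy into $N$ short pieces $H^{N,j}_s$ and composes the resulting $N$ truncated continuation isomorphisms, whereas you slow a single homotopy by stretching $\beta$; both devices make the constant $d$ in (\ref{eq16}) as small as desired (the constants $c,\veps$ of Proposition~\ref{prop3.4} being parametrization-independent), and the paper's condition $(\ast)$ on $k$ is precisely the quantitative form of your ``fix $k$ large''.
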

\begin{proof}
The proof is based on the following idea. If the action spectrum $\mathcal{A}^{H_s}$ is fixed, we can split the homotopy $H_s$ in slower homotopies $H_s^{N,j}$ which allow us to deduce from Corollary \ref{cor3.8} that no $\mathcal{A}^{H_s^{N,j}}$-gradient trajectory can cross the action boundaries $a$ and $b$.\medskip\\
 To start, recall that the space of defining Hamiltonians for $\Sigma$ is convex (Proposition \ref{cor2}). Hence, we can find a homotopy $H_s,\; 0\leq s\leq 1,$ between $H_-$ and $H_+$, where all $H_s$ are defining Hamiltonians for $\Sigma$. For $N\in\mathbb{N}$, we split $H_s$ as in (\ref{eq28}) into the slower homotopies
 \begin{align*}
 H_s^{N,j}&:=H_{(j+s)/N}\quad\text{ for }\; 0\leq s\leq 1&&\text{ and }\qquad H_s^{N,j}:=H_{\beta(s)}^{N,j}\quad\text{ for }\; s\in\mathbb{R}.
\end{align*}
Write again $H_j=H^{N,j}_0=H^{N,j-1}_1$ for the ends of these homotopies.
Note that $crit\mspace{-4mu}\left(\mathcal{A}^{H_j}\right)$ does not depend on $H_j$ but only on $\Sigma$ as it is for every $j$ the set of all closed Reeb-orbits on $\Sigma$. It follows that the action spectrum is $spec\,(\Sigma,\lambda)$ for all $j$. \pagebreak\\ As $a,b\not\in spec\,(\Sigma,\lambda)$, we can choose $k>1$ so large such that for any $w\in crit\mspace{-4mu}\left(\mathcal{A}^{H_j}\right)=\crita$ with $a<\mathcal{A}^{H_j}(w)<b$ holds
\[a<\frac{k-1}{k}\cdot\mathcal{A}^{H_j}(w)<b\qquad\text{ and }\qquad a<\frac{k}{k-1}\cdot\mathcal{A}^{H_j}(w)<b\qquad\forall\;\, 0\leq j\leq N.\tag{$\ast$}\]
Then we may choose $N$ so large, such that 
\[d:=\left(c+\frac{||H^{N,j}||_\infty}{\veps^2}\right)\cdot||\beta'||_\infty\cdot||\dot{H}^{N,j}||_\infty\]
is so small, that $kd/(1-kd-d)$ is smaller then the minimal period of a closed Reeb orbit on $\Sigma$. Consider $(v^+,\eta^+)\in RFC(H_{j+1}),\;\,(v^-,\eta^-)\in RFC(H_j)$ and assume that there exists an $\mathcal{A}^{H^{N,j}_s}$-gradient trajectory connecting them. Assume further that $\mathcal{A}^{H_{j+1}}(v^+,\eta^+)<b$. Corollary \ref{cor3.8} then implies that
\begin{itemize}
 \item if $\mathcal{A}^{H_{j+1}}(v^+,\eta^+)>0$, then $\mathcal{A}^{H_{j}}(v^-,\eta^-)\leq \frac{k}{k-1}\cdot\mathcal{A}^{H_{j+1}}(v^+,\eta^+)$ and due to $(\ast)$ therefore $\mathcal{A}_{H_{j}}(v^-,\eta^-)<b$,
 \item if $\mathcal{A}^{H_{j+1}}(v^+,\eta^+)<0$, then $\mathcal{A}^{H_j}(v^-,\eta^-)\leq \frac{k-1}{k}\cdot\mathcal{A}^{H_{j+1}}(v^+,\eta^+)$ and again with $(\ast)$ that $\mathcal{A}^{H_{j}}(v^-,\eta^-)<b$,
 \item if $\mathcal{A}^{H_{j+1}}(v^+,\eta^+)=0$, then $\mathcal{A}^{H_j}(v^-,\eta^-)\leq0= \mathcal{A}^{H_{j+1}}(v^+,\eta^+)<b$, as otherwise Corollary \ref{cor3.8}(a) would imply that $\mathcal{A}^{H_{j+1}}(v^+,\eta^+)$ is positive.
\end{itemize}
We obtain an analog result for $a$ instead of $b$. Let $\phi^j: RFC(H_{j+1})\rightarrow RFC(H_j)$ be defined as in (\ref{connect}) by counting solutions of the $s$-dependent Rabinowitz-Floer equation. The above estimates then show that the $\phi^j$ descend to well-defined maps
\[\phi^j : RFC^{(a,b)}(H_{j+1})\rightarrow RFC^{(a,b)}(H_j),\]
which induce, as in the untruncated case, isomorphisms in homology
\[\Phi^j: RFH^{(a,b)}(H_{j+1})\rightarrow RFH^{(a,b)}(H_j).\]
The composition of the $\Phi^j$ then yields $RFH^{(a,b)}(H_+)\cong RFH^{(a,b)}(H_-)$.
\end{proof}
In the definition of the Rabinowitz-Floer homology we assumed that the ambient manifold $V$ is the completion of a Liouville domain. For a Liouville domain $W$ with $\partial W=\Sigma$, we can always set $V:=\hat{W}$. The following theorem shows that the Rabinowitz-Floer homology actually only ``sees'' this completion. This means that we get the same homology if we take larger ambient manifolds $V\supset\hat{W}$.
\begin{theo}[\textbf{Cieliebak, Frauenfelder \& Oancea,\cite{FraCieOan}}]\label{theo2}~\\
 The Rabinowitz-Floer homology $RFH(W,\Sigma)$ does not depend on the ambient manifold $(V,\lambda)$, but only on the compact Liouville domain $(W,\lambda|_W)$ bounded by $\Sigma$.
\end{theo}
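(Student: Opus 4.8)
The plan is to show that enlarging the ambient manifold does not change the Rabinowitz-Floer homology by reducing everything to the previously established invariance under homotopies of defining Hamiltonians (Theorem \ref{theo1}) together with a ``stretching'' or ``cut-off'' argument in the symplectization. So suppose $\Sigma = \partial W$ bounds the compact Liouville domain $W$, and suppose we have two ambient completions $(V_1,\lambda_1)$ and $(V_2,\lambda_2)$, both of which are completions of some Liouville domains containing $W$ as an exact symplectic submanifold (with $\widehat W$ embedded in each). The first step is to observe that both $V_1$ and $V_2$ contain a neighborhood symplectomorphic to a piece of the symplectization $(\Sigma\times(-\infty,T),e^r\alpha)$ for $T$ large, and that all critical points of $\mathcal{A}^H$ — being closed Reeb orbits on $\Sigma$ — together with their connecting $\mathcal{A}^H$-gradient trajectories (by the Maximum Principle, Lemma \ref{maxprinc}, and the confinement argument used in the proof of Theorem \ref{theo3.6}) are contained in a fixed compact region that depends only on $W$ and the action window considered, not on $V_i$. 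Hence the entire Floer data — generators, differential, moduli spaces — is supported in a region common to $V_1$ and $V_2$.

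Next I would make this precise as follows. Choose in $V_i$ a defining Hamiltonian $H_i$ for $\Sigma$ which, outside a compact neighborhood of $W$, is a function only of the symplectization coordinate $r$ and is eventually constant; we may take $H_1$ and $H_2$ to \emph{agree} on the common region $\Sigma\times(-\infty,T)\subset V_1\cap V_2$ (using the explicit construction of defining Hamiltonians from Proposition \ref{cor2}, which only involves the symplectization). Similarly choose the almost complex structures $J_i$ to agree on this common region and to be cylindrical at the respective unbounded ends. Then the chain complexes $RFC(H_1,h)$ and $RFC(H_2,h)$ have literally the same generators (the closed Reeb orbits on $\Sigma$, with the same Morse function $h$ on the critical manifolds $\mathcal{N}^\eta\subset\Sigma$), and I claim they have the same differential: any $\mathcal{A}^{H_i}$-gradient trajectory connecting two such generators stays, by the confinement/no-escape argument, inside the common region where $H_1=H_2$ and $J_1=J_2$, hence is simultaneously a trajectory for both sets of data. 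The counts $\#_2\mathcal{M}^0(c^-,c^+)$ therefore coincide, so $RFC(H_1,h) = RFC(H_2,h)$ as chain complexes and $RFH(H_1,h)=RFH(H_2,h)$.

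Finally, to conclude for \emph{arbitrary} admissible choices (not just the specially adapted $H_i$ and $J_i$), I would invoke Theorem \ref{theo1}: within each fixed ambient manifold $V_i$, the Rabinowitz-Floer homology is independent of the defining Hamiltonian, the almost complex structure, the Morse function and the metric — in particular $RFH(H_i,h_i)\cong RFH(H_i^{\mathrm{adapted}},h)$ for the adapted data chosen above, via a homotopy of defining Hamiltonians for $\Sigma$ inside $V_i$ (such a homotopy exists because the space of defining Hamiltonians is convex, Proposition \ref{cor2}(i), and likewise for $J$). Chaining these isomorphisms, $RFH(W,\Sigma)$ computed in $V_1$ is isomorphic to the adapted version, which equals the adapted version computed in $V_2$, which is isomorphic to $RFH(W,\Sigma)$ computed in $V_2$. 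The main obstacle I anticipate is the confinement step — verifying carefully that no $\mathcal{A}^{H_i}$-gradient trajectory between the relevant generators escapes into the part of $V_i$ not shared with the other manifold. This requires combining the maximum-principle argument (which keeps $v$ from touching level sets $M_i\times\{r\}$ from inside at the cylindrical end) with the $\eta$-bounds of Corollary \ref{cor3.7} to pin down a uniform compact set; one must also make sure the adapted Hamiltonians $H_1,H_2$ can genuinely be arranged to coincide on a region large enough to contain this compact set, which is where the freedom in Proposition \ref{cor2} and the fact that the Reeb dynamics live entirely on $\Sigma$ are used.
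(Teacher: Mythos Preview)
Your overall strategy is the same as the paper's---show that all the Floer data lives in a region common to any two ambients $V_1,V_2$, so the chain complexes literally coincide---but the confinement step you sketch has a genuine gap. The results you invoke (Lemma~\ref{maxprinc} and the no-escape argument in the proof of Theorem~\ref{theo3.6}) control trajectories only near the cylindrical end $M_i\times[R,\infty)$ of $V_i$: they show $v$ stays in the compact set $V_i\setminus(M_i\times[R,\infty))$. That compact set depends on $V_i$ and in general contains points \emph{outside} $\hat W$ (for instance, any extra topology of $\tilde V_i$ attached beyond $W$). So you cannot conclude that trajectories remain in a region common to $V_1$ and $V_2$; your final sentence, ``make the common region large enough to contain this compact set'', is exactly what fails when $V_1\setminus\hat W\neq\emptyset$.

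What is needed is a confinement argument in the symplectization of $\Sigma$, not of $M_i$. The paper does this via Gromov's Monotonicity Lemma: choose $J$ cylindrical on an annulus $i(\Sigma\times[\log 2R,\log 4R])$ (where $H$ is already constant, so the Floer equation is genuinely holomorphic there). Monotonicity then gives a uniform area lower bound $R\varepsilon$ for any holomorphic piece that meets $\Sigma\times\{\log 3R\}$ and exits the annulus; hence any $\mathcal{A}^H$-trajectory crossing this level has energy $\eta^+-\eta^-\geq R\varepsilon$. For a fixed action window $(a,b)$ one takes $R>(b-a)/\varepsilon$, so all relevant trajectories stay in $W\cup i(\Sigma\times(-\infty,\log 2R))\subset\hat W$, a set independent of the ambient $V$. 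This gives $RFH^{(a,b)}(V,\Sigma)$ computed entirely inside $\hat W$, and then one passes to the full $RFH$ via the limits of Theorem~\ref{theolimits}. Your idea could be salvaged by instead choosing $J$ cylindrical with respect to the $\Sigma$-coordinates on $i(\Sigma\times[c,\infty))$ and applying the McDuff no-touching lemma \emph{there}; but that is not what Lemma~\ref{maxprinc} or Theorem~\ref{theo3.6} provide, and you would still have to check compatibility of this choice with the admissibility condition at the $M_i$-end.
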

\begin{proof}
 As $(V,\lambda)$ is the completion of a Liouville domain, the Liouville vector field $X_\lambda$ is complete. Its flow defines therefore a symplectic embedding $i:\Sigma\times\mathbb{R}\hookrightarrow V$ of the symplectization of $(\Sigma,\alpha)$ such that $i^\ast\lambda=e^r\cdot\alpha$ (see Discussion \ref{dis2}).\pagebreak\\ Pick a cylindrical almost complex structure $J_\Sigma$ on $\Sigma\times\mathbb{R}$. By Gromov's Monotonicity Lemma (see \cite{Sik}, Prop.\ 4.3.1 and \cite{Oan2}, Lem.\ 1), there exists an $\veps>0$ such that $J_\Sigma$-holomorphic curves in $\Sigma\times\mathbb{R}$ which meet the level $\Sigma\times\{\log 3\}$ and exit $\Sigma\times[\log 2,\log 4]$ have symplectic area at least $\veps$. Rescaling by $R>1$, it follows that $J_\Sigma$-holomorphic curves which meet the level $\Sigma\times\{\log 3R\}$ and exit the set $\Sigma\times[\log 2R,\log 4R]$ have symplectic area at least $R\veps$.\\
 Now fix a defining Hamiltonian $H$ for $\Sigma$. Then there exists a constant $c>0$ such that $H$ is constant outside $W\cup i\big(\Sigma\times(-\infty,c)\big)$. For any $\log R>c$ pick an admissible almost complex structures $J$ on $(V,\lambda)$ such that $i^\ast J=J_\Sigma$ over $\Sigma\times[\log 2R,\log 4R]$. Assume that $(v,\eta)$ is an $\mathcal{A}^H$-gradient trajectory with asymptotics $(v^\pm,\eta^\pm)\in\crita$ such that $v$ meets the level $i(\Sigma\times\{\log 3R\})$. As the $v^\pm$ are contained in $\Sigma=i(\Sigma\times\{0\})$, $v$ exits the set $i(\Sigma\times[\log 2R,\log 4R])$. Let $U\subset \mathbb{R}\times S^1$ be a connected component of $v^{-1}\big(i(\Sigma\times[\log 2R,\log 4R])\big)$. As $X_H$ vanishes over $i(\Sigma\times[\log 2R,\log 4R])$, it follows that $v$ has over $U$ a symplectic area of at least $R\veps$. This allows us to estimate
 \[\mathcal{A}^H(v^+,\eta^+)-\mathcal{A}^H(v^-,\eta^-)=\int^\infty_{-\infty}\left|\left|\nabla\mathcal{A}^H(v,\eta)(s)\right|\right|^2ds\geq\int_U\left|\frac{d}{ds}v\right|^2dsdt=\int_Uv^\ast d\lambda\geq R\veps.\]
 Thus, $v$ can leave $W\cup i(\sigma\times(-\infty,\log 2R))$ only if the action difference of its asymptotics is greater or equal to $R\veps$. By choosing $R$ large enough, we find hence that the moduli space $\widehat{M}(c^-,c^+)$ involves only $\mathcal{A}^H$-gradient trajectories which are contained in the completion $(\hat{W},\hat{\lambda})$. This shows that $RFH^{(a,b)}(V,\Sigma)$ can be computed using only the completion $(\hat{W},\hat{\lambda})$ and is therefore independent from the ambient manifold.\\
 Since $\displaystyle RFH(V,\Sigma)=\lim_{b\rightarrow \infty}\lim_{ -\infty\leftarrow a}RFH^{(a,b)}(V,\Sigma)$ by Theorem \ref{theolimits}, the independence carries over to the full Rabinowitz-Floer homology.
\end{proof}
\begin{cor}\label{liouvilleinvariance}
 The Rabinowitz-Floer homology $RFH(W,\Sigma)$ is invariant under Liouville isomorphisms. It is thus an invariant of the exact contact filling $(W,\Sigma,\xi)$.
 \end{cor}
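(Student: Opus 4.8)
\textbf{Proof proposal for Corollary \ref{liouvilleinvariance}.}

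The plan is to combine Theorem \ref{theo2} with Proposition \ref{LI}, which describes the normal form of a Liouville isomorphism near infinity. Let $\varphi:\hat W_1\to\hat W_2$ be a Liouville isomorphism between two fillings of $(\Sigma,\xi)$, so that $\varphi^\ast\hat\lambda_2=\hat\lambda_1+dg$ with $g$ compactly supported. By Proposition \ref{LI} there is an $R>0$ such that on $\Sigma_1\times[R,\infty)\subset\hat V_1$ we have $\varphi(r,x)=(\psi(x),r-f(x))$ with $\psi:\Sigma_1\to\Sigma_2$ a contactomorphism satisfying $\psi^\ast\alpha_2=e^f\alpha_1$. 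First I would observe that by Theorem \ref{theo2} the Rabinowitz-Floer homology of a filling $W$ can be computed inside \emph{any} ambient completion $(V,\lambda)$ containing $\widehat W$ as a symplectic submanifold; in particular we are free to enlarge the ambient manifold. So the idea is to realize both $W_1$ and a $\varphi$-image of $W_1$ as Liouville domains sitting inside one common completion, namely $\hat V_2=\hat W_2$.

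Concretely, inside $(\hat V_2,\hat\lambda_2)$ consider on one hand the hypersurface $\Sigma_2=\partial W_2$ with its contact form $\alpha_2$, and on the other hand the image $\varphi(\Sigma_1\times\{R\})$. By the normal form of Proposition \ref{LI}, $\varphi(\Sigma_1\times\{R\})$ is a graph $\Sigma_2'=\{(x,F(x))\mid x\in\Sigma_2\}$ over $\Sigma_2$ in the symplectization coordinates, hence an exact contact hypersurface in $\hat V_2$ bounding a compact Liouville domain $W_2'$ whose completion is again $(\hat V_2,\hat\lambda_2)$, and whose induced contact form is $e^{F}\cdot\alpha_2$, defining the same contact structure $\xi$. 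The second step is then the observation, already made in the paragraph preceding Definition \ref{filling}, that $(W_2,\lambda_2)$ and $(W_2',\lambda_2')$ are Liouville isomorphic via the \emph{trivial} diffeomorphism of $\hat V_2$, and that they share all auxiliary data up to the graph reparametrization; together with Proposition \ref{cor2}(ii) and the homotopy-invariance Theorem \ref{theo1}, this gives a canonical isomorphism $RFH(W_2,\Sigma_2)\cong RFH(W_2',\Sigma_2')$. Finally, $\varphi$ restricts to a symplectomorphism $\hat W_1\to\hat W_2$ intertwining $\hat\lambda_1$ and $\hat\lambda_2$ near infinity and differing by $dg$ globally, so it carries a defining Hamiltonian $H_1$ for $\Sigma_1$, an admissible $J_1$, and a Morse--Smale pair $(h_1,g_{h_1})$ to the corresponding admissible data for $\Sigma_2'$ in $\hat V_2$; because the Rabinowitz action functional satisfies $\mathcal{A}^{H_1}(v,\eta)=\mathcal{A}^{H_1\circ\varphi^{-1}}(\varphi\circ v,\eta)+\int_0^1 dg(\dot v)\,dt=\mathcal{A}^{H_1\circ\varphi^{-1}}(\varphi\circ v,\eta)$ (the $dg$-term integrates to zero over the loop), the $\mathcal{A}^{H_1}$-gradient trajectories, their moduli spaces, and the boundary operator $\partial^F$ are carried bijectively to those on the $\Sigma_2'$-side. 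Hence $RFH(H_1,h_1)\cong RFH(H_1\circ\varphi^{-1},h_1\circ\varphi^{-1})=RFH(W_2',\Sigma_2')\cong RFH(W_2,\Sigma_2)$.

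The main obstacle I expect is bookkeeping rather than conceptual: one must check that $\varphi$, which is only a diffeomorphism of the completions and not a strict symplectomorphism on the compact part (it changes $\lambda$ by $dg$), genuinely intertwines all the Floer data. The cleanest way to handle this is to first deform $\varphi$ near infinity so that it becomes the identity there — possible since $g$ is compactly supported, so $\varphi^\ast\hat\lambda_2=\hat\lambda_1$ already on $\Sigma_1\times[R,\infty)$ — and then invoke Theorem \ref{theo1} to absorb the change of the primitive by $dg$ on the compact part into a homotopy of defining Hamiltonians and almost complex structures; the $dg$-term drops out of $\mathcal{A}^H$ on loops, and the admissibility of the transported $J$ at the cylindrical end is exactly guaranteed by the normal form $\varphi(r,x)=(\psi(x),r-f(x))$, which is a reparametrization in $r$ composed with a contactomorphism and hence preserves the cylindrical structure. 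Once these compatibilities are recorded, the isomorphism is immediate, and since $RFH(H,h)$ was already shown to be independent of $H,h,J,g_h$ (Theorems \ref{theo1} and \ref{theo2}), the resulting isomorphism depends only on the Liouville-isomorphism class of the filling, proving that $RFH$ is an invariant of the exact contact filling $(W,\Sigma,\xi)$.
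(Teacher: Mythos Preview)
Your proposal is correct and follows essentially the same route as the paper: first reduce to graph-hypersurface deformations inside a fixed completion (handled via Proposition \ref{cor2}(ii) and Theorem \ref{theo1}), then transport all Floer data through the Liouville isomorphism $\varphi$ using the normal form from Proposition \ref{LI}. Your explicit treatment of the $dg$-term in the action functional (which vanishes on loops) is a point the paper leaves implicit by simply noting that $\varphi$ is a global symplectomorphism; otherwise the arguments match.
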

\begin{proof}
 At first, we consider only the trivial Liouville isomorphism of $(W,\lambda)$ with itself. Fix any defining Hamiltonian $H_0$ for $\Sigma$ and a Morse function $h_0$ for $crit\mspace{-4mu}\left(\mathcal{A}^{H_0}\right)$. Let $f\in C^\infty(\Sigma)$ be an arbitrary smooth function and consider the exact contact hypersurface $\Sigma^f:=\{(y,f(y))\,|\,y\in\Sigma \}$ in $\hat{W}$. Fix a defining Hamiltonian $H_1$ for $\Sigma^f$ and a Morse function $h_1$ for $crit\mspace{-4mu}\left(\mathcal{A}^{H_1}\right)$.\\
 Due to Proposition \ref{cor2}, there exists a homotopy of defining Hamiltonians $H_s,\; 0\leq s\leq 1$ between $H_0$ and $H_1$. It follows from Theorem \ref{theo1} that $RFH(H_0,h_0)$ and $RFH(H_1,h_1)$ are isomorphic. Thus, the Rabinowitz-Floer homology is invariant under trivial Liouville isomorphisms.\bigskip\\
 Now consider any Liouville isomorphism $\varphi:(W_0,\Sigma_0,\lambda_0)\rightarrow(W_1,\Sigma_1,\lambda_1)$. Fix any defining Hamiltonians $H_0,H_1$ for $\Sigma_0,\Sigma_1$ and Morse functions $h_0,h_1$ for $crit\mspace{-4mu}\left(\mathcal{A}^{H_0}\right)$ and $crit\mspace{-4mu}\left(\mathcal{A}^{H_1}\right)$. It follows from Proposition \ref{LI} that there exists an $R>0$ such that on $\Sigma_0\times[R,\infty)$ holds $\varphi^\ast\hat{\lambda}_2=\hat{\lambda}_1$ and $\varphi$ is of the form
 \[\varphi(y,r)=\big(\psi(y),r-f(y)\big),\]
 where $\psi:\Sigma_0\rightarrow\Sigma_1$ is a contact isomorphism satisfying $\psi^\ast\alpha_1=e^f\cdot\alpha_0$ for $f\in C^\infty(\Sigma_0)$. The image $\varphi(\Sigma_0\times\{R\})\subset\hat{W}_1$ is hence the hypersurface $\Sigma^{R-f}_1$.\pagebreak\\ Pick a defining Hamiltonian $H$ for $\Sigma^{R-f}_1$ and a Morse function $h$ for $\crita$. It follows from the discussion above, that $RFH(H,h)$ and $RFH(H_1,h_1)$ are isomorphic. Let $\varphi^\ast H:=H\circ\varphi$ and $\varphi^\ast h=h\circ\varphi$ be the pullbacks. As $\varphi$ is a global symplectomorphism, we find that $\varphi^\ast H$ is a defining Hamiltonian for $\Sigma_0\times\{R\}$.\\
 For any generic almost complex structure $J_1$ on $\hat{W}_1$, we choose the almost complex structure $J_0$ on $\hat{W}_0$ to be
 \[J_0=\varphi^\ast J_1:=D\varphi^{-1}\circ J_1 \circ D\varphi.\]
 Analogously, let $g_0 =\varphi^\ast g_1$, be the pullback of a generic metric on $crit\mspace{-4mu}\left(\mathcal{A}^{H_1}\right)$. Then, we find that $\varphi^\ast h$  is a Morse function on $crit\mspace{-4mu}\left(\mathcal{A}^{\varphi^\ast H}\right)$ and all trajectories with cascades of $(\varphi^\ast H,\varphi^\ast h)$ are in one-to-one correspondence to the trajectories with cascades of $(H,h)$. Therefore, we have that $RFH(H,h)$ and $RFH(\varphi^\ast H,\varphi^\ast h)$ are isomorphic.\\
 A discussion similar to the one above shows that $RFH(H_0,h_0)$ and $RFH(\varphi^\ast H,\varphi^\ast h)$ are also isomorphic. Combining the 3 isomorphisms gives $RFH(H_0,h_0)\cong RFH(H_1,h_1)$.
\end{proof}
\begin{defn}\label{defngrowth}
 Let $(W,\Sigma,\lambda)$ be a Liouville domain and let $f:\mathbb{R}\rightarrow\mathbb{R}$ be a strictly increasing function with $\displaystyle\lim_{x\rightarrow \infty} f(x)=\infty$. Hence, $f$ is invertible and $\displaystyle\lim_{x\rightarrow \infty} f^{-1}(x)=\infty$.\linebreak  For $a>0$, let $d^+(\Sigma,a)$ be the $\mathbb{Z}_2$-dimension of the image $i_\ast\big(RFH^{(0,a)}(W,\Sigma)\big)$ in \linebreak $RFH^{(0,\infty)}(W,\Sigma)$ and let $d^-(\Sigma,a)$ be the $\mathbb{Z}_2$-dimension of  $\pi_\ast\big(RFH^{(-\infty,0)}(W,\Sigma)\big)$ in \linebreak $RFH^{(-a,0)}(W,\Sigma)$. Clearly $d^+(\Sigma,a)$ and $d^-(\Sigma,a)$ are increasing functions in $a$. We define the positive/negative \textbf{\textit{growth rates}} of class $f$ of a Liouville domain $(W,\Sigma,\lambda)$ by
 \[\Gamma^\pm(W,\Sigma,f):=\varlimsup_{a\rightarrow \infty}\frac{\big(f^{-1}\circ\log\big)\big(d^\pm(\Sigma,a)\big)}{\log(a)}\in\{-\infty\}\cup[0,\infty].\]
\end{defn}
\begin{rem}
 For $f=id$, we say that $\Gamma^\pm(W,\Sigma,id)$ is the polynomial growth, for $f=\log$ the logarithmic, for $f=e^x$ the exponential growth.
\end{rem}
\begin{cor}\label{XX}
 Let $(W,\Sigma,\lambda)$ be a Liouville domain. The following growth rates and truncated groups are invariant under Liouville isomorphisms
 \[RFH^{(-\infty,0^\pm)}(W,\Sigma),\quad RFH^{(0^\pm,\infty)}(W,\Sigma)\quad\text{ and }\quad\Gamma^\pm(W,\Sigma,f).\]
\end{cor}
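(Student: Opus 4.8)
\textbf{Proof plan for Corollary \ref{XX}.}
The plan is to deduce the invariance of the three objects from the invariance of the full Rabinowitz-Floer homology under Liouville isomorphisms, which is Corollary \ref{liouvilleinvariance}, together with the fact that a Liouville isomorphism near infinity is a rescaling in the $r$-coordinate by Proposition \ref{LI}. First I would recall the setup from Corollary \ref{liouvilleinvariance}: given a Liouville isomorphism $\varphi:(W_0,\Sigma_0,\lambda_0)\rightarrow(W_1,\Sigma_1,\lambda_1)$, one reduces to two cases, a trivial Liouville isomorphism of $(W,\lambda)$ with itself (which changes the contact form $\alpha$ to $e^f\alpha$ via a homotopy of defining Hamiltonians), and a ``genuine'' isomorphism $\varphi$ which, on $\Sigma_0\times[R,\infty)$, has the form $\varphi(y,r)=(\psi(y),r-f(y))$ with $\psi^\ast\alpha_1=e^f\alpha_0$. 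The point is that in the second case $\varphi$ pulls back defining Hamiltonians to defining Hamiltonians and induces a \emph{bijection on trajectories with cascades that preserves the action}, since $\mathcal{A}^{\varphi^\ast H}(\varphi^{-1}(v),\eta)=\mathcal{A}^H(v,\eta)$ (the action functional is built from $\lambda$, which is preserved, and from $H$, which is pulled back); hence the chain-level action filtration is preserved and the induced isomorphism on homology respects the filtration, so the truncated groups $RFH^{(a,b)}$ are all carried over isomorphically under a genuine $\varphi$.

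The remaining work is therefore the trivial-isomorphism case, i.e.\ to show that $RFH^{(-\infty,0^\pm)}$, $RFH^{(0^\pm,\infty)}$ and $\Gamma^\pm$ do not change when we pass from a contact form $\alpha_0$ to $\alpha_1=e^f\alpha_0$ on the same $\Sigma$ inside the same completion. Here the action spectrum $spec(\Sigma,\alpha)$ does change, so I cannot simply invoke Corollary \ref{cormain1}. Instead I would argue as follows. The critical points with $\eta=0$ correspond to the points of $\Sigma$ and all have action $0$ for both $\alpha_0$ and $\alpha_1$ (by Lemma \ref{lem2a}, $\mathcal{A}^H(v,0)=0$). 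For $\varepsilon$ smaller than the minimal Reeb period of \emph{both} contact forms, the long exact sequence (\ref{eq25a}) shows that $RFH^{(\pm\varepsilon,\infty)}$ and $RFH^{(-\infty,\pm\varepsilon)}$ are independent of the particular small $\varepsilon$ chosen. Choosing the homotopy $H_s$ of defining Hamiltonians between a defining Hamiltonian for $(\Sigma,\alpha_0)$ and one for $(\Sigma,\alpha_1)$ as in Proposition \ref{cor2}(ii) — equivalently, homotoping through the hypersurfaces $\Sigma_s=\{(x,sf(x))\}$ — one checks, using the continuation maps of Theorem \ref{theo1} together with the action estimate of Corollary \ref{cor3.8}, that no $\mathcal{A}^{H_s^{N,j}}$-gradient trajectory of a sufficiently slow subhomotopy can cross the action level $0$ in the ``wrong'' direction: a trajectory with positive asymptotic action stays at positive action, one with negative asymptotic action stays negative, and one ending at action $0$ must start at action $\le 0$ (this is exactly the trichotomy used in the proof of Corollary \ref{cormain1}, now applied to the single action wall at $0$ rather than to $a$ and $b$). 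Hence the continuation maps descend to the truncated complexes $RFC^{(0^\pm,\infty)}$ and $RFC^{(-\infty,0^\pm)}$ and induce isomorphisms there, so these truncated homologies are invariant under the trivial isomorphism as well.

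Finally, for the growth rates $\Gamma^\pm(W,\Sigma,f)$ I would observe that they are defined purely in terms of the maps $i_\ast:RFH^{(0,a)}\to RFH^{(0,\infty)}$ and $\pi_\ast:RFH^{(-\infty,0)}\to RFH^{(-a,0)}$ and the $\mathbb{Z}_2$-dimensions of their images. Under a genuine Liouville isomorphism $\varphi$, the action-preserving chain isomorphism identifies the whole bidirected system $\{RFH^{(a,b)}\}$ with all its structure maps $i_\ast$, $\pi_\ast$, so $d^\pm(\Sigma,a)$ is literally unchanged and thus $\Gamma^\pm$ is unchanged. Under the trivial isomorphism the situation is slightly more delicate because the action of a closed Reeb orbit $v$ scales from $\mathcal{A}^{H_0}(v,\eta)$ to $\mathcal{A}^{H_1}(v,\eta)$ roughly by the range of $e^f$, so the finite action window $(0,a)$ is not preserved on the nose; however $e^f$ is bounded above and below by positive constants $m\le e^f\le M$ on the compact $\Sigma$, so there are inclusions $RFC^{(0,ma)}_{\alpha_1}\subset RFC^{(0,a)}_{\alpha_0}\subset RFC^{(0,Ma)}_{\alpha_1}$ of the truncated complexes (after identifying generators), which sandwich $d^\pm_{\alpha_0}(\Sigma,a)$ between $d^\pm_{\alpha_1}(\Sigma,ma)$ and $d^\pm_{\alpha_1}(\Sigma,Ma)$; since replacing $a$ by a constant multiple of $a$ does not affect the limit superior defining $\Gamma^\pm$ (because $\log(ca)/\log a\to 1$ and $f^{-1}\circ\log$ is monotone), we get $\Gamma^\pm_{\alpha_0}=\Gamma^\pm_{\alpha_1}$. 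The main obstacle in this argument is the bookkeeping in the trivial-isomorphism case — making precise that the continuation maps of the slow homotopy $H^{N,j}_s$ really do descend to $RFC^{(0^\pm,\infty)}$ and $RFC^{(-\infty,0^\pm)}$, i.e.\ establishing the action trichotomy at the single wall $0$ from Corollary \ref{cor3.8} uniformly along the homotopy; the sandwich estimate for the growth rates is then a routine consequence once the truncated homologies are known to be invariant.
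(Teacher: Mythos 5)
Your reduction to a ``genuine'' isomorphism (action-preserving pullback chain isomorphism) plus two ``trivial'' isomorphisms (contact-form deformation on a fixed $\Sigma$) matches the structure of Corollary \ref{liouvilleinvariance}, and your treatment of the truncated groups in the trivial case — split into slow subhomotopies, apply Corollary \ref{cor3.8} to get the one-sided behaviour at the action wall $0$, deduce that the continuation maps descend — is the same argument the paper gives. Note, though, that your ``trichotomy'' is stated loosely: what Corollary \ref{cor3.8} actually gives is that an action window $(0^\pm,a)$ at the positive end maps into the \emph{stretched} window $(0^\pm,Ca)$ at the negative end with $C=\tfrac{k}{k-1}>1$, plus the one-sided inequality $\mathcal{A}^{H_+}(c^+)\le 0\Rightarrow\mathcal{A}^{H_-}(c^-)\le 0$; there is no statement that a trajectory ``with positive asymptotic action stays positive'' without the stretching factor, and this stretching is exactly why the finite-window truncated groups are \emph{not} invariant while $RFH^{(0^\pm,\infty)}$ and $RFH^{(-\infty,0^\pm)}$ (where $a=\infty$) are.

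For the growth rates, however, there is a genuine gap. You claim chain-level inclusions
\[RFC^{(0,ma)}_{\alpha_1}\subset RFC^{(0,a)}_{\alpha_0}\subset RFC^{(0,Ma)}_{\alpha_1}\]
``after identifying generators.'' No such identification exists: $RFC(H_0)$ is generated by the closed Reeb orbits of $\alpha_0$ and $RFC(H_1)$ by those of $\alpha_1=e^f\alpha_0$, and changing a contact form within a contact structure changes the Reeb dynamics — the two generating sets are different loops in $\Sigma$, not rescalings of one another. So the inclusions are not defined, and the sandwich on $d^\pm$ cannot be established at the chain level. What you need instead is precisely the content of the ladder diagram in the paper's Step~2: use the continuation maps $\Phi^\pm,\Psi^\pm$ (which carry $RFH^{(0,a)}(H_1)\to RFH^{(0,Da)}(H_0)$ for $D=C^N$) together with the fact that their compositions are the internal truncation maps $i_\ast$, $\pi_\ast$, to derive the monotone chain $d^\pm(H_0,a)\le d^\pm(H_1,Da)\le d^\pm(H_0,D^2a)\le\dots$. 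Once you have that, your final observation — that multiplying $a$ by a fixed constant does not change the $\varliminf$ defining $\Gamma^\pm$ — is correct and essentially the same computation as the paper's ($\log(Da)/\log a\to 1$, or equivalently $\log D/f^{-1}(\log d^\pm)\to 0$). The paper also handles the degenerate case where $d^\pm$ stays bounded ($\Gamma^\pm=0$ or $-\infty$), which your plan silently omits and should be mentioned.
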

\begin{rem}
 As mentioned above, the truncated Rabinowitz-Floer groups $RFH^{(a,b)}(W,\Sigma)$ are in general not invariant under Liouville isomorphisms. This is easy to see, simply rescale $\Sigma$, i.e.\ consider in the symplectization $\Sigma\times\{R\}\hookrightarrow \hat{W}$ for $R\neq 1$.
\end{rem}
\begin{proof}~\\
\underline{Step 1:} Invariance of $RFH^{(-\infty,0^\pm)}(W,\Sigma)$ and $RFH^{(0^\pm,\infty)}(W,\Sigma)$\medskip\\
 Following the same arguments as the previous corollary, it suffices to show invariance under homotopies of defining Hamiltonians. Let $H_s,\;0\leq s\leq 1,$ be a homotopy of Hamiltonians which are defining for exact contact hypersurfaces $\Sigma_s:=H^{-1}_s(0)$. For any $N\in\mathbb{N}$, we may split $H_s$ again as is (\ref{eq28}) into slower homotopies
 \begin{align*}
 H_s^{N,j}&:=H_{(j+s)/N}\quad\text{ for }\; 0\leq s\leq 1&&\text{ and }\qquad H_s^{N,j}:=H_{\beta(s)}^{N,j}\quad\text{ for }\; s\in\mathbb{R}
\end{align*}
and we write as before $ H_j=H^{N,j}_0=H^{N,j-1}_1$ for the ends of the homotopies $H^{N,j}_s$.\pagebreak\\ Fix an $k>1$. Then we can choose $N$ so large, such that
\[d:=\left(c+\frac{||H^{N,j}||_\infty}{\veps^2}\right)\cdot||\beta'||_\infty\cdot||\dot{H}^{N,j}||_\infty\]
becomes so small that $kd/(1-kd-d)$ is smaller then the smallest (positive) minimal period of a closed Reeb orbit on any $\Sigma_s$.\\
Note that we cannot require that closed Reeb orbits stay in a fixed action window as in $(\ast)$ in Corollary \ref{cormain1}, since $spec(\Sigma_s,\lambda)$ is no longer independent from $s$, except for the common spectral value $0$.\\
Let $(v^+,\eta^+)\in RFC(H_{j+1}),\;\,(v^-,\eta^-)\in RFC(H_j)$ and assume that there exists an $\mathcal{A}^{H^{N,j}_s}$-gradient trajectory connecting them. For any $a>0$ we find that our choice of $kd/(1-kd-d)$ together with Corollary \ref{cor3.8} implies that if
\begin{itemize}
 \item[(1)] $\mathcal{A}^{H_+}(v^+,\eta^+)<a$, \hspace{1.55cm} then $\mathcal{A}^{H_-}(v^-,\eta^-)<\frac{k}{k-1}\cdot a$,
 \item[(2)] $\mathcal{A}^{H_+}(v^+,\eta^+)<-\frac{k}{k-1}\cdot a$, \hspace{0.3cm} then $\mathcal{A}^{H_-}(v^-,\eta^-)<-a$,
 \item[(3)] $\mathcal{A}^{H_+}(v^+,\eta^+)\leq 0$,\hspace{1.75cm} then $\mathcal{A}^{H_-}(v^-,\eta^-)\leq 0$.
\end{itemize}
Let $\phi^j: RFC(H_{j+1})\rightarrow RFC(H_j)$ be defined as in (\ref{connect}) by counting solutions of the $s$-dependent Rabinowitz-Floer equation. Abbreviating $C:=\frac{k}{k-1}$, the statements (1)-(3) show that the $\phi^j$ descend to well-defined maps
\begin{align*}
 \mspace{160mu}\phi^j&:& RFC^{(0^\pm,a)}(H_{j+1})&\rightarrow RFC^{(0^\pm,C a)}(H_j)\\
 \phi^j&:& RFC^{(-C a,0^\pm)}(H_{j+1})&\rightarrow RFC^{(-a,0^\pm)}(H_j),\mspace{160mu}
\end{align*}
which then induce maps in homology
\begin{equation}\label{0}\begin{aligned}
 \mspace{100mu}\Phi^j&:& RFH^{(0^\pm,a)}(H_{j+1})&\rightarrow RFH^{(0^\pm,C a)}(H_j)\mspace{100mu}\\
 \Phi^j&:& RFH^{(-C a,0^\pm)}(H_{j+1})&\rightarrow RFH^{(-a,0^\pm)}(H_j).
\end{aligned}
\end{equation}
Considering the inverse homotopy $\overline{H}_s:=H_{1-s}$ yields maps
\begin{equation}\label{00}\begin{aligned}
 \mspace{100mu}\Psi^j&:& RFH^{(0^\pm,a)}(H_j)&\rightarrow RFH^{(0^\pm,C a)}(H_{j+1})\mspace{100mu}\\
 \Psi^j&:& RFH^{(-C a,0^\pm)}(H_j)&\rightarrow RFH^{(-a,0^\pm)}(H_{j+1}).
\end{aligned}
\end{equation}
The compositions maps
\begin{equation*}
 \begin{aligned}
  \mspace{110mu}\Psi^j\circ\Phi^j&:& RFH^{(0^\pm,a)}(H_{j+1})&\rightarrow RFH^{(0^\pm,C^2\cdot a)}(H_{j+1})\mspace{110mu}\\
  \Psi^j\circ\Phi^j&:& RFH^{(-C^2a,0^\pm)}(H_{j+1})&\rightarrow RFH^{(-a,0^\pm)}(H_{j+1})
 \end{aligned}
\end{equation*}
are just the truncation maps from the long exact sequence (\ref{eq25a}) induced by the inclusion $RFC^{(0^\pm,a)}\hookrightarrow RFC^{(0^\pm,C^2\cdot a)}$ and the projection $RFC^{(-C^2\cdot a,0^\pm)}\twoheadrightarrow RFC^{(-a,0^\pm)}$. This holds true as the untruncated maps $\Psi^j\circ\Phi^j$ are isomorphisms. With $a=\infty$, we find that
\begin{equation*}\begin{aligned}
 \mspace{100mu}\Phi^j&:& RFH^{(0^\pm,\infty)}(H_{j+1})&\rightarrow RFH^{(0^\pm,\infty)}(H_j)\mspace{100mu}\\
 \Phi^j&:& RFH^{(-\infty,0^\pm)}(H_{j+1})&\rightarrow RFH^{(-\infty,0^\pm)}(H_j)
\end{aligned}
\end{equation*}
are isomorphisms, as $\Phi^j\circ\Psi^j$ and $\Psi^j\circ\Phi^j$ are isomorphisms. Combining all $\Phi^j$ yields $RFH^{(0^\pm,\infty)}(H_0)\cong RFH^{(0^\pm,\infty)}(H_1)$ and $RFH^{(-\infty,0^\pm)}(H_0)\cong RFH^{(-\infty,0^\pm)}(H_1)$.\pagebreak\\
\underline{Step 2:} Invariance of $\Gamma^\pm(W,\Sigma,f)$\medskip\\
For any $a<\infty$, we find that the composition of all $\Phi^j$ resp.\ all $\Psi^j$ yields maps
\begin{align*}
 \mspace{170mu}\Phi^+ &:& RFH^{(0,a)}(H_1)&\rightarrow RFH^{(0,D\cdot a)}(H_0)\mspace{170mu}\\
 \Phi^- &:& RFH^{(-D\cdot a,0)}(H_1)&\rightarrow RFH^{(-a,0)}(H_0)\\
 \Psi^+ &:& RFH^{(0,a)}(H_0)&\rightarrow RFH^{(0,D\cdot a)}(H_1)\\
 \Psi^- &:& RFH^{(-D\cdot a,0)}(H_0)&\rightarrow RFH^{(-a,0)}(H_1),
\end{align*}
with $D:=C^N=(\frac{k}{k-1})^N$. Their compositions yield again natural truncation maps:
\begin{align*}
 \mspace{120mu}\Psi^+\circ\Phi^+&:& RFH^{(0,a)}(H_1)&\rightarrow RFH^{(0,D^2\cdot a)}(H_1),\mspace{120mu}\\
 \Psi^-\circ\Phi^-&:& RFH^{(-D^2\cdot a,0)}(H_1)&\rightarrow RFH^{(-a,0)}(H_1).
\end{align*}
 Therefore, we get the following ladder-shaped diagrams:
\begin{align*}
 \begin{xy} \xymatrix{
  \dots &\dots\\
  RFH^{(0,D^4\cdot a)}(H_0)\ar[u]\ar[r]& RFH^{(0,D^5\cdot a)}(H_1)\ar[ul]\ar[u]\\
  RFH^{(0,D^2\cdot a)}(H_0)\ar[u]\ar[r]& RFH^{(0,D^3\cdot a)}(H_1)\ar[ul]\ar[u]\\
  RFH^{(0,\,a)}(H_0)\ar[u]\ar[r]& RFH^{(0,D a)}(H_1)\ar[ul]\ar[u]}
  \end{xy}\mspace{30mu}
  \begin{xy}
  \xymatrix{
  \dots\ar[d]\ar[dr] &\dots\ar[d]\\
  RFH^{(-D^4\cdot a,0)}(H_0)\ar[d]\ar[dr]& \ar[l]RFH^{(-D^5\cdot a,0)}(H_1)\ar[d]\\
  RFH^{(-D^2\cdot a,0)}(H_0)\ar[d]\ar[dr]&\ar[l] RFH^{(-D^3\cdot a,0)}(H_1)\ar[d]\\
  RFH^{(-a,0)}(H_0)& \ar[l]RFH^{(-D a,0)}(H_1).}
 \end{xy}
\end{align*}
They imply the following chain of inequalities:
\[d^\pm(H_0,a)\leq d^\pm(H_1,Da)\leq d^\pm(H_0,D^2 a)\leq\dots\]
Now assume that $RFH^{(0,\infty)}(H_0)$ and $RFH^{(-\infty,0)}(H_0)$ are infinite dimensional. As these are obtained by direct/inverse limits, this implies that $d^\pm(H_0,a)\rightarrow \infty$ as $a\rightarrow\infty$, which yields in particular
\[\lim_{a\rightarrow\infty}\frac{\log(D)}{f^{-1}(\log(d^\pm(H_0,a)))}=0.\]
With this result, we obtain
\begin{align*}
 \frac{1}{\Gamma^\pm(H_0,f)}=\varliminf_{a\rightarrow\infty}\frac{\log(a)}{f^{-1}(\log(d^\pm(H_0,a)))}&=\varliminf_{a\rightarrow\infty}\frac{\log(D a)}{f^{-1}(\log(d^\pm(H_0,a)))}\\
 &\geq \varliminf_{a\rightarrow\infty}\frac{\log(D a)}{f^{-1}(\log(d^\pm(H_1,Da)))} =\frac{1}{\Gamma^\pm(H_1,f)}.
\end{align*}
The same argument works in the opposite direction, so that we get altogether \linebreak $\Gamma^\pm(H_0,f)=\Gamma^\pm(H_1,f)$. In the remaining case, where $RFH^{(0,\infty)}(H_0)$ or $RFH^{(-\infty,0)}(H_0)$ are finite dimensional, the growth rate is either zero, if $0< \dim RFH^{(0,\infty)}(H_0)<\infty$, or $-\infty$, if $0= \dim RFH^{(0,\infty)}(H_0)$.
\end{proof}

\subsection{The Conley-Zehnder index}\label{1.4}
To obtain more information about its structure, we endow the Rabinowitz-Floer homology in the next section with a $\mathbb{Z}$-grading via the Conley-Zehnder index $\mu_{CZ}$, just as in regular Floer homology. To define $\mu_{CZ}$, let $Sp(2n)$ denote the group of $2n\times 2n$ symplectic matrices. In \cite{CoZeh}, Conley and Zehnder introduced a Maslov type index for paths $\Psi:[0,1]\rightarrow Sp(2n)$. Their index assigns an integer $\mu_{CZ}(\Psi)$ to every  path $\Psi$, provided that $\Psi(0)=\mathbbm{1}$ and $\det(\mathbbm{1}-\Psi(1))\neq 0$. Later, Robbin and Salamon gave in \cite{RoSa} a different definition, thus extending $\mu_{CZ}$ to arbitrary paths. It goes as follows:\\
Any smooth path $\Psi:[a,b]\rightarrow Sp(2n)$ can be expressed as a solution of an ordinary differential equation
\[\dot{\Psi}(t)=J_0 S(t)\Psi(t),\qquad \Psi(a)\in Sp(2n),\]
where $t\mapsto S(t)=S(t)^T$ is a smooth path of symmetric matrices and $J_0$ the standard almost complex structure. A time $t\in[a,b]$ is called a crossing if $\det(\mathbbm{1}-\Psi(t))=0$. The crossing form at a crossing $t$ is a quadratic form $\Gamma(\Psi,t)$ defined for $\xi_0\in\ker(\mathbbm{1}-\Psi(t))$ by the  formula\footnote{Actually, $(\ast)$ cannot be found in \cite{RoSa}. However, it follows from their Rem.\ 5.4 together with their Thm.\ 1.1(2) applied to the Lagrangian frame $(Id, \Psi)^T$ for $Graph(\Psi)$.}
\[\Gamma(\Psi,t)\xi_0=\langle\xi_0,S(t)\xi_0\rangle\tag{$\ast$}.\]
A crossing $t$ is called regular, if $\Gamma(\Psi,t)$ is non-degenerate. Regular crossings are isolated. For a path $\Psi$ with only regular crossings, the Conley-Zehnder index is defined by
\begin{align}\label{muCZdef}
 \mu_{CZ}(\Psi;a,b):=\frac{1}{2}\text{sign}\,\Gamma(\Psi,a)+\sum_{a<t<b}\text{sign}\,\Gamma(\Psi,t)+\frac{1}{2}\text{sign}\,\Gamma(\Psi,b),
\end{align}
where the sum runs over all crossings $t\in(a,b)$. Here, $\text{sign}\,A$ denotes the signature of $A$, i.e.\ the number of positive eigenvalues minus the number of negative eigenvalues. Note that $\Gamma(\Psi,a)=0$ or $\Gamma(\Psi,b)=0$ if $a$ or $b$ are not crossings. To ease notation, we will often omit one or both boundaries if they are clear from the context.\\
The index $\mu_{CZ}$ has (among others) the following properties:
\[\renewcommand{\arraystretch}{1.5}\begin{tabular}{lp{12cm}}
 \textbf{(Naturality)} & For any path $\Phi:[a,b]\rightarrow Sp(2n)$ holds $\mu_{CZ}(\Phi\Psi\Phi^{-1})=\mu_{CZ}(\Psi)$\\
 \textbf{(Homotopy)}& $\mu_{CZ}(\Psi_s)$ is constant in $s$ for any homotopy $\Psi_s$ with fixed endpoints\\
 \textbf{(Product)}& If $Sp(2n)\oplus Sp(2n')$ is identified with a subgroup of $Sp(2(n+n'))$ in the obvious way, then $\mu_{CZ}(\Psi\oplus\Psi')=\mu_{CZ}(\Psi)+\mu_{CZ}(\Psi').$
\end{tabular}\]
The homotopy property allows us to define $\mu_{CZ}(\Psi;a,b)$ also for paths with non-regular crossings, provided its ends $a$ and $b$ are regular crossings or no crossings. Just perturb $\Psi$ through a homotopy $\Psi_s,\, 0\leq s\leq 1$ to a path $\Psi_1$ with only regular crossings and set $\mu_{CZ}(\Psi;a,b):=\mu_{CZ}(\Psi_1;a,b)$.\\
Next, we calculate the indices $\mu_{CZ}(\Psi)$ of some explicit paths.
\begin{lemme}\label{expliciteCZ}
 Let $\Psi_1,\Psi_2,\Psi_3:[0,T]\rightarrow Sp(2)$ be the following paths:
 \[\Psi_1(t)=e^{it},\quad \Psi_2(t)=e^{-it},\quad \Psi_3(t)=\begin{pmatrix}e^{\rho(t)}&0\\0&e^{-\rho(t)}\end{pmatrix},\;\rho\in C^1(\mathbb{R}).\]
 Then, their Conley-Zehnder indices are given as follows:
 \begin{align*}
  \mu_{CZ}(\Psi_1)&=\left\lfloor\frac{T}{2\pi}\right\rfloor+\left\lceil\frac{T}{2\pi}\right\rceil\;\;\;=\begin{cases}\frac{T}{\pi} & \text{if }T\in2\pi\mathbb{Z}\\ 2\left\lfloor\frac{T}{2\pi}\right\rfloor+1 & \text{otherwise,}\end{cases}\\
  \mu_{CZ}(\Psi_2)&=\left\lfloor\frac{-T}{2\pi}\right\rfloor+\left\lceil\frac{-T}{2\pi}\right\rceil=-\mu_{CZ}(\Psi_1),\\
  \mu_{CZ}(\Psi_3)&=0.
 \end{align*}
\end{lemme}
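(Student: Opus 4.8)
The plan is to compute each Conley--Zehnder index directly from the Robbin--Salamon definition \eqref{muCZdef}, using the crossing form $(\ast)$. For each of the three paths I would first bring it into the normal form $\dot\Psi(t)=J_0 S(t)\Psi(t)$, read off $S(t)$, then locate the crossings $t\in[0,T]$ (the times where $\det(\mathbbm 1-\Psi(t))=0$) and evaluate the signature of $\Gamma(\Psi,t)=\langle\,\cdot\,,S(t)\,\cdot\,\rangle$ restricted to $\ker(\mathbbm 1-\Psi(t))$.

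For $\Psi_1(t)=e^{it}=\left(\begin{smallmatrix}\cos t&-\sin t\\ \sin t&\cos t\end{smallmatrix}\right)$ one gets $S(t)\equiv\mathbbm 1$, so every crossing form is positive definite on its kernel. Crossings occur exactly when $t\in 2\pi\mathbb Z$, where $\ker(\mathbbm 1-\Psi(t))=\mathbb R^2$, so each interior crossing contributes $\mathrm{sign}\,\mathbbm 1=2$ and each endpoint crossing contributes $\tfrac12\cdot 2=1$. Counting: if $T\notin 2\pi\mathbb Z$ the only crossings in $[0,T]$ are $t=0$ (endpoint, weight $1$) and the interior ones $2\pi,4\pi,\dots,2\pi\lfloor T/2\pi\rfloor$ (each weight $2$), giving $1+2\lfloor T/2\pi\rfloor$; if $T\in 2\pi\mathbb Z$ both $0$ and $T$ are endpoints (weight $1$ each) and there are $\tfrac{T}{2\pi}-1$ interior crossings, giving $2+2(\tfrac{T}{2\pi}-1)=\tfrac{T}{\pi}$. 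Both cases are packaged by $\lfloor T/2\pi\rfloor+\lceil T/2\pi\rceil$. For $\Psi_2=e^{-it}$ the cleanest route is to invoke (Naturality): conjugating by $\left(\begin{smallmatrix}1&0\\0&-1\end{smallmatrix}\right)$ sends $e^{it}$ to $e^{-it}$ while reversing the sign of $S$, hence reversing every signature; so $\mu_{CZ}(\Psi_2)=-\mu_{CZ}(\Psi_1)$, and $-\big(\lfloor T/2\pi\rfloor+\lceil T/2\pi\rceil\big)=\lfloor -T/2\pi\rfloor+\lceil -T/2\pi\rceil$. For $\Psi_3(t)=\mathrm{diag}(e^{\rho(t)},e^{-\rho(t)})$ one computes $\det(\mathbbm 1-\Psi_3(t))=(1-e^{\rho(t)})(1-e^{-\rho(t)})$, which vanishes only where $\rho(t)=0$; there $\ker(\mathbbm 1-\Psi_3(t))=\mathbb R^2$, but $\dot\Psi_3(t)\Psi_3(t)^{-1}=\mathrm{diag}(\dot\rho(t),-\dot\rho(t))$, so $J_0S(t)=\mathrm{diag}(\dot\rho(t),-\dot\rho(t))$ and $S(t)=\mathrm{diag}(\dot\rho(t),\dot\rho(t))$ after multiplying by $-J_0=\left(\begin{smallmatrix}0&1\\-1&0\end{smallmatrix}\right)$; wait --- more carefully, $S(t)=-J_0\,\dot\Psi_3\Psi_3^{-1}$, which is $\left(\begin{smallmatrix}0&-\dot\rho\\-\dot\rho&0\end{smallmatrix}\right)$, an off-diagonal matrix with eigenvalues $\pm\dot\rho(t)$, hence signature $0$ at every crossing. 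Therefore $\mu_{CZ}(\Psi_3)=0$, and if $\rho$ has no zeros there are no crossings and the index is trivially $0$.

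The main obstacle is bookkeeping at the endpoints and at non-regular crossings rather than anything conceptual. Specifically: (i) one must be careful whether $0$ and $T$ are themselves crossings, since they carry half-weight, and the two displayed formulas for $\mu_{CZ}(\Psi_1)$ are really just the two cases $T\in 2\pi\mathbb Z$ versus $T\notin 2\pi\mathbb Z$; (ii) for $\Psi_3$, if $\rho$ vanishes on an interval or at a point where $\dot\rho=0$, the crossing is non-regular and one must invoke the (Homotopy) property to perturb $\rho$ to a path with only transverse (regular) zeros of $\rho$ before applying \eqref{muCZdef} --- but since every regular crossing contributes signature $0$, the perturbed index is still $0$. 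I would also double-check the sign convention relating $S(t)$ to $\dot\Psi\Psi^{-1}$ against \cite{RoSa} so that the crossing form $(\ast)$ comes out with the orientation used in the paper; once that is pinned down the three computations are short.
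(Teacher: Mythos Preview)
Your proposal is correct and matches the paper's approach: compute $S(t)$ for each path via $\dot\Psi=J_0 S\Psi$, locate the crossings, and sum signatures with half-weights at the endpoints. One small caution on $\Psi_2$: the matrix $\mathrm{diag}(1,-1)$ is anti-symplectic rather than symplectic, so the (Naturality) axiom as stated does not apply---but your actual argument, that this conjugation flips the sign of $S$ and hence of every signature, is exactly the paper's direct computation $S_2=-\mathbbm{1}$.
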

\begin{proof}
 We note that $\Psi_1(t)$ is represented by the real matrix
 \[\Psi_1(t)=\begin{pmatrix}\cos t& -\sin t\\ \sin t&\phantom{-}\cos t\end{pmatrix}\quad\text{ with }\quad i\, \widehat{=}\begin{pmatrix}0&-1\\1&\phantom{-}0\end{pmatrix}=J_0.\]
 Hence we calculate that
 \[\dot{\Psi}_1(t)=\begin{pmatrix}-\sin t & -\cos t\\\phantom{-}\cos t& -\sin t\end{pmatrix}=J_0\circ S_1(t)\circ \Psi_1(t),\]
 where $S_1(t)=\binom{1 \; 0}{0\;1}$. The crossings are exactly the set $2\pi\mathbb{Z}\cap[0,T]$ and the crossing form is always $\Gamma(\Psi_1,t)=\binom{1\;0}{0\;1}$ having signature 2. Hence
 \[\mu_{CZ}(\Psi_1)=\begin{cases}1+2\left\lfloor\frac{T}{2\pi}\right\rfloor&\text{if } T\not\in2\pi\mathbb{Z}\\1+2\left\lfloor\frac{T}{2\pi}\right\rfloor-1=\frac{T}{\pi}&\text{if } T\in2\pi\mathbb{Z}.\end{cases}\]
 The formula for $\Psi_2$ is completely analog with $S_2(t)=-\binom{1\;0}{0\;1}$. For $\Psi_3$ we calculate
 \[\dot{\Psi}_3(t)=\begin{pmatrix}\dot{\rho}(t)e^{\rho(t)} &0\\0&-\dot{\rho}(t)e^{-\rho(t)}\end{pmatrix}=\begin{pmatrix}0&-1\\1&0\end{pmatrix}\begin{pmatrix}0&-\dot{\rho}(t)\\-\dot{\rho}(t)&0\end{pmatrix}\begin{pmatrix}e^{\rho(t)}&0\\0&e^{-\rho(t)}\end{pmatrix}\]
 and hence $S_3(t)=-\big(\begin{smallmatrix}0&\dot{\rho}(t)\\\dot{\rho}(t)&0\end{smallmatrix}\big)$. But this matrix has signature 0 as its eigenvalues are $\pm\dot{\rho}(t)$. It follows that $\text{sign}\,\Gamma(\Psi_3,t)=0$ for every crossing $t$ and thus $\mu_{CZ}(\Psi_3)=0$.
\end{proof}
In \cite{SaZeh}, Salamon and Zehnder introduced yet another approach to $\mu_{CZ}$. They showed that there is a continuous extension $\rho:Sp(2n)\rightarrow S^1$ of the determinant map \linebreak $\det:U(n)=Sp(2n)\cap O(2n)\rightarrow S^1$, which is unique when one requires some additional properties. Moreover, they showed that the space $Sp(2n)^\ast$ of symplectic matrices not having 1 as eigenvalue has two connected components which are semi-simple connected in $Sp(2n)$.\\
Now, any path $\Psi:[0,T]\rightarrow Sp(2n)$ with $\Psi(0)=\mathbbm{1}$ and $\Psi(T)\in Sp(2n)^\ast$ admits a homotopic unique extension $\Psi:[0,T+1]\rightarrow Sp(2n)$ such that $\Psi|_{[T,T+1]}$ connects $\Psi(T)$ in $Sp(2n)^\ast$ to one of the matrices $W^+=-\mathbbm{1}$ or $W^-=diag(2,-1,...\,,-1,1/2,-1,...\,,-1)$. To define $\mu_{CZ}(\Psi)$, choose a lift $\alpha:[0,T+1]\rightarrow \mathbb{R}$ of $\rho\circ\Psi:[0,T+1]\rightarrow S^1$ and set
\[\mu_{CZ}(\Psi;0,T)=\mu_{CZ}(\Psi,T):=\frac{\alpha(T+1)-\alpha(0)}{\pi}.\]
That both definitions of $\mu_{CZ}(\Psi)$ coincide for paths $\Psi$ with $\Psi(0)=\mathbbm{1}$ and $\Psi(T)\in Sp(2n)^\ast$ is shown in \cite{RoSa}. The advantage of the second approach is that it allows us to define the mean index $\Delta(\Psi,T)$ of a path $\Psi:[0,T]\rightarrow Sp(2n)$ as
\[\Delta(\Psi,T):=\frac{\alpha(T)-\alpha(0)}{\pi}.\]
Note that $\Delta(\Psi,T)$ is in general not an integer and that the definition of $\Delta(\Psi,T)$ does not require that $\Psi(T)\in Sp(2n)^\ast$.\\
The mean index allows us to estimate $\mu_{CZ}$ for iterated paths.
\begin{lemme}[\textbf{Iterations formula}]\label{lemitformula}~ Assume that $\Psi:[0,T]\rightarrow Sp(2n),\;\Psi(0)=\mathbbm{1}$ is an iterated path, i.e.\ it holds that $S(t+\tau)=S(t)$ for some $\tau\in\mathbb{R}$ and $\dot{\Psi}(t)=J_0 S(t)\Psi(t)$ as above. Equivalently, we could require that
 \[\Psi(k\tau+t)=\Psi(t)\Psi(\tau)^k,\qquad\text{ for any }k\in\mathbb{Z}.\]
 Under these conditions, we have that
 \[\mu_{CZ}(\Psi,k\tau)=k\cdot\Delta(\Psi,\tau)+R\qquad\text{ with }|R|\leq 2n.\]
\end{lemme}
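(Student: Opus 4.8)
The plan is to work entirely with the Salamon--Zehnder description of $\mu_{CZ}$ via a lift $\alpha$ of $\rho\circ\Psi$, and to exploit the multiplicative structure $\Psi(k\tau+t)=\Psi(t)\Psi(\tau)^k$ to relate $\alpha$ at the times $k\tau$ to $\alpha$ at $\tau$. First I would record the two basic facts I need. One: for the continuous extension $\rho:Sp(2n)\to S^1$ of $\det$ one has $\rho(AB)=\rho(A)\rho(B)$ when $A$ or $B$ lies in $U(n)$, and in general $\rho$ is homotopy-theoretically multiplicative enough that $\rho(\Psi(\tau)^k)=\rho(\Psi(\tau))^k$; this gives a lift $\tilde\alpha$ of $\rho\circ\Psi$ along the ``iterated'' part of the path satisfying $\tilde\alpha(k\tau)=k\cdot\big(\alpha(\tau)-\alpha(0)\big)+\alpha(0)$, so that $\Delta(\Psi,k\tau)=k\,\Delta(\Psi,\tau)$ exactly. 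Two: the definition of $\mu_{CZ}$ differs from $\pi\Delta$ only by the contribution of the ``capping'' homotopy from $\Psi(T)$ to one of the fixed matrices $W^{\pm}$ inside $Sp(2n)^{\ast}$ (when $\Psi(T)\in Sp(2n)^{\ast}$), i.e.
\[
\mu_{CZ}(\Psi,T)=\frac{\alpha(T+1)-\alpha(0)}{\pi},\qquad \Delta(\Psi,T)=\frac{\alpha(T)-\alpha(0)}{\pi},
\]
so $\mu_{CZ}(\Psi,T)-\Delta(\Psi,T)=\dfrac{\alpha(T+1)-\alpha(T)}{\pi}$, and this last quantity is the lift-increment of $\rho$ along a path in the (at most) two-component space $Sp(2n)^{\ast}$ joining $\Psi(T)$ to $W^{\pm}$.

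The heart of the argument is then a uniform bound on this difference, independent of $k$ and of the path. I would argue as follows: $\rho(W^{+})=\rho(-\mathbbm 1)=(-1)^{n}$ and $\rho(W^{-})=(-1)^{n-1}$ are fixed points on $S^1$; and the image $\rho\big(Sp(2n)^{\ast}\big)\subset S^1$ is contained in $S^1\setminus\{1\}$ only in a loose sense — more precisely, each of the two connected components of $Sp(2n)^{\ast}$ maps under $\rho$ into $S^1$ in a way that its path-lift increment over any path staying in that component is controlled. The clean way to make this precise is via the Robbin--Salamon crossing-form definition (\ref{muCZdef}): one perturbs $\Psi$ slightly near $T$ so that $T$ becomes a regular crossing or a non-crossing, and one uses the general inequality $|\mu_{CZ}(\Psi;0,T)-\Delta(\Psi;0,T)|\le n$ for any symplectic path starting at $\mathbbm 1$ (this is the standard comparison between the Conley--Zehnder index and the mean index — it follows from the fact that $\mu_{CZ}$ counts signed crossings weighted by signatures in $[-n,n]$ while $\Delta$ is the continuous ``rotation number'' which cannot differ from the signed count by more than the maximal signature contribution $n$). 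Applying this once to $\Psi$ on $[0,k\tau]$ directly would not suffice because it does not isolate the $k$-dependence; instead I apply $\Delta(\Psi,k\tau)=k\Delta(\Psi,\tau)$ from the first paragraph and then estimate
\[
\big|\mu_{CZ}(\Psi,k\tau)-k\Delta(\Psi,\tau)\big|
=\big|\mu_{CZ}(\Psi,k\tau)-\Delta(\Psi,k\tau)\big|\le n\le 2n,
\]
which is exactly the claimed bound with $R:=\mu_{CZ}(\Psi,k\tau)-k\Delta(\Psi,\tau)$.

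So the logical skeleton is: (i) show $\Delta$ is exactly additive under iteration using multiplicativity of $\rho$; (ii) invoke (or prove) the universal bound $|\mu_{CZ}-\Delta|\le n$ for paths starting at $\mathbbm 1$; (iii) combine. The main obstacle I anticipate is step (ii) together with the subtle point in step (i) that $\rho$ is only genuinely multiplicative on the unitary part, so the identity $\Delta(\Psi,k\tau)=k\Delta(\Psi,\tau)$ needs the homotopy invariance of the lift increment rather than a naive $\rho(AB)=\rho(A)\rho(B)$: one should deform $\Psi(\tau)$ within $Sp(2n)$ to its ``unitary part'' (polar decomposition / the deformation retract $Sp(2n)\simeq U(n)$) and check that the lift increment of $\rho$ over $[0,\tau]$ is unchanged, then iterate. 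Once additivity of $\Delta$ is secured, the bound on $R$ is immediate from the standard mean-index estimate, and I would simply cite \cite{SaZeh} (and \cite{RoSa}) for that estimate rather than reprove it. The constant $2n$ in the statement is not sharp (one gets $n$), but I would keep $2n$ as stated since it is all that is needed downstream.
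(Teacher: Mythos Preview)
Your approach is correct and matches the paper's: both steps---additivity of $\Delta$ under iteration, and the bound $|\mu_{CZ}-\Delta|$ from \cite{SaZeh}, Lem.~3.4---are exactly what the paper uses. The only difference is that the paper makes the degenerate-endpoint case explicit via the crossing form, writing $\mu_{CZ}(\Psi,k\tau)=\Delta(\Psi,k\tau)+\lim_{t\searrow 0}r(\Psi,k\tau+t)-\tfrac12\,\mathrm{sign}\,\Gamma(\Psi,k\tau)$ and bounding each of the last two terms by $n$; this is why $2n=n+n$ appears rather than the sharper $n$ you claim, and it is precisely the ``perturb near $T$'' step you allude to made rigorous.
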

\begin{proof}
 Without loss of generality, we assume that $\Psi$ has only regular crossings and that $(k\tau,k\tau+t]$ contains no crossings for $t$ small enough. We define for $\Psi(T)\in Sp(2n)^\ast$
 \[r(\Psi,T):=\mu_{CZ}(\Psi,T)-\Delta(\Psi,T).\]
 Note that $r$ is continuous on $(k\tau,k\tau+t]$, as $\mu_{CZ}(\Psi,T)$ is constant for $T\in(k\tau,k\tau+t]$ and $\Delta(\Psi,T)$ is continuous in $T$. Moreover, it follows from (\ref{muCZdef}) that
 \begin{align*}
  \mu_{CZ}(\Psi,k\tau)&=\mu_{CZ}(\Psi,k\tau+t)-\frac{1}{2}\Gamma(\Psi,k\tau)\\
  &=\Delta(\Psi,k\tau+t)+r(\Psi,k\tau+t)-\frac{1}{2}\Gamma(\Psi,k\tau)\\
  &=\Delta(\Psi,k\tau)+\lim_{t\searrow 0} r(\Psi,k\tau+t)-\frac{1}{2}\Gamma(\Psi,k\tau).
 \end{align*}
It is shown in \cite{SaZeh}, Lem.\ 3.4, that $|r(\Psi,k\tau+t)|<n$. That $\Delta(\Psi,k\tau)=k\cdot\Delta(\Psi,\tau)$ follows from the fact that $\Delta(\Psi,k\tau)$ is the winding number of the path $\rho\circ\Psi|_{[0,k\tau]}$, which is the $k$-fold iteration of the path $\rho\circ\Psi|_{[0,\tau]}$. As $\Gamma(\Psi,k\tau)$ is the signature of an $m\times m$ matrix with $m\leq 2n$, we have also $|\Gamma(\Psi,k\tau)|\leq 2n$. Hence, it follows that
\begin{align*}
 \mu_{CZ}(\Psi,k\tau)&=k\cdot\Delta(\Psi,\tau)+R,\\
 \text{where }\qquad\qquad|R|&=\left|\lim_{t\searrow 0}r(\Psi,k\tau+t)-\frac{1}{2}\Gamma(\Psi,k\tau)\right|\leq n+n=2n.\qedhere
\end{align*}
\end{proof}

\subsection{A $\mathbb{Z}$-grading for $RFH$}\label{1.5}
Now, we associate to each contractible periodic Reeb trajectory $v$ a Conley-Zehnder index $\mu_{CZ}(v)$, which allows us to define a $\mathbb{Z}$-grading for Rabinowitz-Floer homology. For simplicity, let us make the following assumptions:
\begin{enumerate}
 \item[(A)] \textit{The map $i_\ast : \pi_1(\Sigma)\rightarrow\pi_1(W)$ induced by the inclusion is injective.}
 \item[(B)] \textit{The integral $I_{c_1}: \pi_2(W)\rightarrow \mathbb{Z}$ of the first Chern class $c_1(TW)$ vanishes on spheres.}
\end{enumerate}
\begin{rem}~
 \begin{itemize}
  \item Assumption (A) is automatically satisfied if $\Sigma$ is simply connected.
  \item One can associate a Conley-Zehnder index to every closed Reeb trajectory $v$, but it will depend on the trivialization of $v^\ast \xi$. In order to fix the trivialization, we restrict ourself to contractible trajectories. These have, due to (B), a unique Conley-Zehnder index (see below). Note that the two ends of a solution of the Rabinowitz-Floer equation are either both contractible or not. Hence, the contractible closed Reeb trajectories generate a subcomplex of $RFC(W,\Sigma)$. Its homology is by abuse of notation also denoted by $RFH(W,\Sigma)$. If $\Sigma$ is simply connected this version of Rabinowitz-Floer homology coincides with the original one.
 \end{itemize}
\end{rem}
To obtain the (transversal) Conley-Zehnder index of a closed contractible Reeb trajectory $v$ choose a map $u$ from the unit disc $D\subset \mathbb{C}$ to $\Sigma$ such that $u(e^{2\pi it})=v(t)$. The existence of such maps is guaranteed by assumption (A). Now choose a symplectic trivialization $\Phi: D\times\mathbb{R}^{2n-2}\rightarrow u^\ast\xi$ of the pullback bundle $(u^\ast\xi,u^\ast d\alpha)$. Such trivializations exist and are homotopically unique as $D$ is contractible. The linearization of the Reeb flow $\psi^t$ along $v$ with respect to $\Phi$ defines a path $\Psi$ in the group $Sp(2n-2)$ starting at $\mathbbm{1}$ by
\[\Psi(t):=\Phi(v(t))^{-1}\circ d\psi^t(v(0))\circ\Phi(v(0)).\]
 The Conley-Zehnder index of this path is the (transverse) Conley-Zehnder index $\mu_{CZ}(v)$. It is independent from the choice of $u$ due to assumption (B). Indeed, if we choose another disc $u': D\rightarrow \Sigma$ with $u'|_{\partial D}=v$ and another trivialization $\Phi'$ of $u'^\ast\xi$, then we can glue $u$ and $u'$ together to a map $w:S^2\rightarrow \Sigma$.\\
 Now, $(w^\ast\xi,w^\ast d\alpha)$ is trivial. Indeed, $\int w^\ast c_1(\xi)=\int w^\ast c_1(TV)=0$ and hence $[w^\ast c_1(\xi)]=0\in H^2(S^2)$, as the complement of $\xi$ in $TV$ is trivialized by the Reeb and Liouville vector field. Therefore, there exists a trivialization $\Phi''$ of $w^\ast\xi$ and both $\Phi$ and $\Phi'$ are homotopic to $\Phi''$ along $v$ (see \cite[Lem.5.2]{SaZeh}). \medskip\\
 The transversal Conley-Zehnder index $\mu_{CZ}$ allows us to grade $RFH$ as follows.
\begin{prop}[\textbf{Frauenfelder \& Cieliebak, \cite{FraCie}}] \label{Prop13}
 If $\pi_1(\Sigma)\rightarrow\pi_1(V)$ is injective and $I_{c_1}$ vanishes, then we have a $\mathbb{Z}$-grading of the Rabinowitz-Floer homology $RFH(\Sigma,V)$, which is independent of $V$ and given by the index
\[\mu(c) :=\mu_{CZ}(c)+ind_h(c)-\frac{1}{2}\dim_c\left(crit(\mathcal{A}^H)\right)+\frac{1}{2},\]
where $c\in crit(h)$ and $\dim_c\left(\crita \right)$ is the real dimension of the connected component of $\crita$ which contains $c$.\pagebreak
\end{prop}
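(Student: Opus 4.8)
The plan is to prove Proposition \ref{Prop13} in two stages: first establish that the integer-valued quantity $\mu(c)$ is well-defined on $\text{crit}(h)$ (i.e.\ that $\mu_{CZ}(c)$ does not depend on the choice of capping), and second that $\mu$ descends to a grading on homology, meaning $\partial^F$ drops $\mu$ by exactly $1$. The independence of $\mu_{CZ}(c)$ from the capping is exactly the argument already sketched in the paragraph preceding the proposition: given two cappings $u,u'$ of $v$ with trivializations $\Phi,\Phi'$, glue them along $v$ to a sphere $w\colon S^2\to\Sigma$; since $c_1(\xi)=c_1(TV)|_\Sigma$ (the symplectic complement of $\xi$ being spanned by the Reeb and Liouville fields, hence trivial) and $I_{c_1}$ vanishes on spheres by (B), the bundle $w^\ast\xi$ is trivial, so $\Phi$ and $\Phi'$ are homotopic rel $v$ through a trivialization $\Phi''$; by the Naturality and Homotopy properties of $\mu_{CZ}$ from Section \ref{1.4}, the resulting Conley--Zehnder indices agree. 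Assumption (A), via the injectivity $\pi_1(\Sigma)\hookrightarrow\pi_1(V)$, is what guarantees that a contractible Reeb orbit in $V$ is already contractible in $\Sigma$, so that a capping $u$ with image in $\Sigma$ exists in the first place; it also ensures that the subcomplex generated by contractible orbits is genuinely the right object (both ends of a Floer trajectory are simultaneously contractible or not).

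Next I would check that the formula is the correct normalization, i.e.\ that for a trajectory with cascades $(x,t)$ counted in $\partial^F$ one has $\mu(c^+)-\mu(c^-)=1$ precisely on the zero-dimensional moduli space $\mathcal{M}^0(c^-,c^+)$. Here I invoke the dimension formula from the Global Transversality Theorem \ref{theotrans}:
\begin{align*}
\dim_{(v,t)}\widehat{\mathcal{M}}_s(c^-,c^+,m)=\mu(c^+,\bar c^+)-\mu(c^-,\bar c^-)+m-1+\sum_{k=1}^m 2c_1(\bar v_k^-\#v_k\#\bar v_k^+),
\end{align*}
where the capping convention forces $\mu_{CZ}(v_k^+,\bar v_k^+)=\mu_{CZ}(v_{k+1}^-,\bar v_{k+1}^-)$ along the chain, so the individual $\mu(c^\pm,\bar c^\pm)$ assemble telescopically into $\mu(c^+)-\mu(c^-)$ once one observes — again using (B) — that each sphere contribution $2c_1(\bar v_k^-\#v_k\#\bar v_k^+)$ vanishes, since $c_1(TW)$ evaluates to zero on spheres. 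Passing to the unparametrized moduli space $\mathcal{M}(c^-,c^+,m)$ quotients by the $\mathbb{R}^m$-action (or $\mathbb{R}$ when $m=0$), lowering the dimension by $m$ (resp.\ $1$); so $\mathcal{M}^0(c^-,c^+)\neq\emptyset$ forces $\mu(c^+)-\mu(c^-)=1$ in all cases, including $m=0$ where the $m-1$ term is replaced by $0$ as noted in the remark after Theorem \ref{theotrans}. Consequently $\partial^F$ maps $RFC_k$ into $RFC_{k-1}$, and $RFH$ inherits the $\mathbb{Z}$-grading.

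Finally I would address the independence of the grading from the ambient manifold $V$. This follows from Theorem \ref{theo2} (the homology depends only on the Liouville domain $W$) together with the observation that all ingredients of $\mu(c)$ — the transversal Conley--Zehnder index, the Morse index $ind_h(c)$, and the local dimension $\dim_c\,\text{crit}(\mathcal{A}^H)$ — are intrinsic to $\Sigma$ and $W$: the Conley--Zehnder index is computed from the linearized Reeb flow on $\xi\subset T\Sigma$ with a capping disc that, by (A), can be taken inside $\Sigma$, hence does not see $V$ beyond $W$; and the well-definedness already established uses only $c_1(TW)$. One must also check compatibility with the invariance isomorphisms of Theorem \ref{theo1}: a homotopy $H_s$ of defining Hamiltonians for a fixed $\Sigma$ fixes $\text{crit}(\mathcal{A}^{H_s})$ and the Reeb dynamics, so the continuation maps preserve $\mu_{CZ}$, and the chosen Morse data can be transported to match, giving a grading-preserving isomorphism.

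The main obstacle is the bookkeeping in the second stage: verifying that the per-cascade capping conventions $\bar v_k^+=\bar v_{k+1}^-\#\mathscr{C}_{k,k+1}$, $\bar c^-=\bar v_1\#\mathscr{C}_{c^-,1}$, $\bar v_m^+=\bar c^+\#\mathscr{C}_{m,c^+}$ are exactly what make the alternating sum of local Conley--Zehnder indices and sphere-Chern-number terms collapse to $\mu(c^+)-\mu(c^-)$, and that the $+\frac12$ shift together with the $-\frac12\dim_c$ correction produces an \emph{integer} that changes by $1$ rather than a half-integer — this requires carefully tracking the Morse-index contributions from the cascade junctions $y_k$ and the parity of $\dim_c\,\mathcal{N}^\eta$. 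The Conley--Zehnder computations themselves (e.g.\ Lemma \ref{expliciteCZ}) and the Fredholm index formula are already available, so this reduces to an index-additivity argument, but it is the step where a sign or normalization error is easiest to make.
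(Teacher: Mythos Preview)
Your proposal is correct and follows essentially the same route as the paper: invoke the dimension formula from Theorem \ref{theotrans}, use assumption (B) to kill the Chern-number terms $2c_1(\bar v_k^-\#v_k\#\bar v_k^+)$, pass to the unparametrized moduli space, and conclude that $\partial^F$ lowers $\mu$ by exactly one. The paper's proof is terser and omits the well-definedness of $\mu_{CZ}$ (handled in the paragraph before the statement) and the independence-from-$V$ discussion, but your additions are accurate and the core argument is the same.
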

\begin{proof}
 Recall that the boundary operator $\partial^F$ counted points in the zero-dimensional manifolds $\mathcal{M}(c^-,c^+,m)$, which where given as quotients $\raisebox{.2em}{$\widehat{\mathcal{M}}(c^-,c^+,m)$}\left/\raisebox{-.2em}{$\mathbb{R}^m$}\right.$ if $m\neq0$ and $\raisebox{.2em}{$\widehat{\mathcal{M}}(c^-,c^+,0)$}\left/\raisebox{-.2em}{$\mathbb{R}$}\right.$ if $m=0$. The Global Transversality Theorem \ref{theotrans} gave the following dimension formula near a flow line $(v,\eta)$ with $m$ cascades by
 \begin{align*}
 \dim_{(v,t)}\,\widehat{\mathcal{M}}(c^-,c^+,m) \overset{m\neq 0}{=}&\phantom{\,+\,}\Big(\mu_{CZ}(c^+,\bar{c}^+)+ind_h(c^+)-\frac{1}{2}\dim_{c^+}(\crita)\Big)\\
 &-\Big(\mu_{CZ}(c^-,\bar{c}^-)+ind_h(c^-)-\frac{1}{2}\dim_{c^-}(\crita)\Big)\\
 &+m-1+\sum_{k=1}^m 2c_1(\bar{v}_k^-\#v_k\#\bar{v}_k^+)\\
 \dim_{(v,t)}\,\widehat{\mathcal{M}}(c^-,c^+,m) \overset{m= 0}{=}&\phantom{\,+\,}ind_h(c^+)-ind_h(c^-).
\end{align*}
Using condition (B), we get a dimension formula for the moduli space by
\begin{align*}
 \dim_{(v,t)}\,\mathcal{M}(c^-,c^+,m) =&\phantom{\,+\,}\Big(\mu_{CZ}(c^+,\bar{c}^+)+ind_h(c^+)-\frac{1}{2}\dim_{c^+}(\crita)\Big)\\
 &-\Big(\mu_{CZ}(c^-,\bar{c}^-)+ind_h(c^-)-\frac{1}{2}\dim_{c^-}(\crita)\Big)-1\\
 =&\;\mu(c^+)-\mu(c^-)-1.
\end{align*}
Note that the formula holds also for $m=0$, as then $\mu_{CZ}(c^+,\bar{c}^+)=\mu_{CZ}(c^-,\bar{c}^-)$ and $\dim_{c^+}\left(\crita \right)=\dim_{c^-}\left(\crita \right)$, since $(v,\eta)$ is simply a Morse trajectory on one connected component of $\crita$. The moduli space is hence zero-dimensional if and only if $\mu(c^+)-\mu(c^-)=1$. It follows that $\partial^F$ reduces the index $\mu$ exactly by 1 so that $\mu$ provides a well-defined grading for the homology.
\end{proof}
\begin{dis}\label{grading}
 The term $\frac{1}{2}$ in the definition of $\mu$, which does not appear in \cite{FraCie}, has no influence on the relative grading given by the other terms. It normalizes $\mu$ such that it takes values in $\mathbb{Z}$ and fits with the grading of symplectic (co)homology (see \cite{FraCieOan}). Note that this convention differs also from the one used previously by the author in \cite{Fauck1}, where $\frac{1}{2} \dim \Sigma$ was added instead of $\frac{1}{2}$. So all indices in this work are shifted by $-\frac{1}{2}\dim \Sigma+\frac{1}{2}=-n+1=-(n-2)-1$ in comparison to the indices in \cite{Fauck1}.
\end{dis}
The grading $\mu$ allows us to define more refined invariants for contact structure:
\begin{defn}
 Let $c_k(W):=\dim_2 RFH_k(W,\Sigma)$ denote the $k^{\text{th}}$ Betti-number of the \linebreak Rabinowitz-Floer homology of the filling $W$ of $\Sigma$. We define
 \[C_k(\Sigma):=\{c_k(W) \,|\, W \text{ Liouville domain, } \partial W=\Sigma\}\subset[0,\infty]\]
 to be the set of all Betti numbers $c_k(W)$ for varying fillings $W$ of $\Sigma$.
\end{defn}
 Observe that $C_k(\Sigma)$ is (trivially) independent from $W$. It is therefore an invariant of $(\Sigma,\xi)$, as $RFH_k(W,\Sigma)$ was apart from $\xi$ only dependent on $W$. The most simplest case is, when $|C_k(\Sigma)|=1$, i.e.\ if the group $RFH_k(W,\Sigma)$ does not depend on $W$ at all and is itself an invariant of the contact structure.
\begin{prop} \label{prop16}
 Assume that $(W,\Sigma)$ is a Liouville domain satisfying (A) and (B). Then $RFH_k(W,\Sigma)$ is independent of $W$, if $\Sigma$ admits a contact form for which the closed contractible Reeb orbits are Morse-Bott (MB) and for all $c_\ast=(v_\ast,\eta_\ast)\in RFC_\ast(W,\Sigma)$ with $\ast\in\{k-1,k,k+1\}$ holds that $\eta_{k-1}\geq\eta_k\geq\eta_{k+1}$.
\end{prop}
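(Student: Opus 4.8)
The plan is to compute $RFH_k(W,\Sigma)$ from the chain complex $RFC(W,\Sigma)$ using the action filtration together with the $\mathbb{Z}$-grading $\mu$, and to observe that under the hypotheses the relevant pieces of the complex are forced to split off and become purely topological. First I would recall from Proposition \ref{Prop13} that for $c=(v,\eta)\in crit(h)$ the degree is $\mu(c)=\mu_{CZ}(v)+ind_h(c)-\tfrac12\dim_c\mathcal{N}^\eta+\tfrac12$ and from Theorem \ref{theotrans} that the boundary operator $\partial^F$ drops $\mu$ by exactly $1$; moreover by Lemma \ref{lem2} every cascade strictly decreases the period $\eta$, so $\partial^F$ maps a generator with period $\eta^+$ only to generators with period $\eta^-\le\eta^+$, and if $\eta^-=\eta^+$ the contribution is purely an $h$-Morse differential inside a single critical manifold $\mathcal{N}^{\eta}$ (the $m=0$ case).

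The key step is then the following. Write $RFC_\ast$ for the degree-$\ast$ part. The differential $\partial^F$ restricted to $RFC_{k+1}\to RFC_k\to RFC_{k-1}$ decomposes, with respect to the period, into a part that lowers $\eta$ strictly and a part that preserves $\eta$. The hypothesis $\eta_{k-1}\ge\eta_k\ge\eta_{k+1}$ says precisely that every generator of degree $k-1$ has period at least that of every generator of degree $k$, which in turn has period at least that of every generator of degree $k+1$. Consequently the period-lowering components of $\partial^F$ from $RFC_{k+1}$ to $RFC_k$ and from $RFC_k$ to $RFC_{k-1}$ vanish (they would have to increase, or at best preserve, the period, contradicting strict decrease unless the period is constant). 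So on the three-term complex $RFC_{k+1}\to RFC_k\to RFC_{k-1}$ the differential is entirely the period-preserving, i.e. $h$-Morse, piece. By the Morse-Bott/cascade formalism — or more cleanly by invoking Theorem \ref{reductiontheo}, the reduction theorem over field coefficients that lets one replace $h$ on each $\mathcal{N}^\eta$ by a formal perfect Morse function — the $h$-Morse part contributes exactly the singular homology $H_\ast(\mathcal{N}^\eta;\mathbb{Z}_2)$ of the critical manifolds, suitably shifted by $\mu_{CZ}(v)-\tfrac12\dim\mathcal{N}^\eta+\tfrac12$. Thus $RFH_k(W,\Sigma)$ is computed as a direct sum, over the closed Reeb orbits $v$ (up to $S^1$-reparametrization) whose period-shifted degree hits $k$, of pieces of $H_\ast(\mathcal{N}^{\eta};\mathbb{Z}_2)$. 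Each such piece depends only on $\Sigma$, its chosen contact form, and the transversal Conley-Zehnder indices of its Reeb orbits — data intrinsic to $(\Sigma,\xi)$ and not to the filling $W$.

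Finally I would assemble the independence statement. By Corollary \ref{liouvilleinvariance}, $RFH(W,\Sigma)$ with its grading is already an invariant of the exact contact filling; what remains is to remove the dependence on $W$ in degree $k$. The computation above exhibits $RFH_k(W,\Sigma)$ as a quantity built solely from the Morse-Bott data of the contact form on $\Sigma$ (the critical manifolds $\mathcal{N}^\eta$, their singular homology, and the Conley-Zehnder indices), with no moduli spaces of non-constant $\mathcal{A}^H$-gradient trajectories entering — because all such trajectories would produce a strictly period-decreasing component of $\partial^F$ landing in degree $k-1$ from $k$, or in $k$ from $k+1$, and these have been shown to be zero. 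Since two fillings $W_1,W_2$ of the same $(\Sigma,\xi)$ can both be equipped with the same contact form and the same Morse-Bott data on $\Sigma$, and the degree-$k$ homology only sees that data, $RFH_k(W_1,\Sigma)\cong RFH_k(W_2,\Sigma)$.

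The main obstacle, and the step I would be most careful about, is verifying rigorously that the inequality on the periods genuinely kills \emph{all} the period-strictly-decreasing components of $\partial^F$ entering and leaving degree $k$, including the subtle point that a cascade connecting $\mathcal{N}^{\eta^+}$ to $\mathcal{N}^{\eta^-}$ with $\eta^+>\eta^-$ could still, via the Morse indices $ind_h$ and the dimension shifts $-\tfrac12\dim\mathcal{N}^\eta$, land in the right degree. One must check that the degree bookkeeping $\mu(c^-)=\mu(c^+)-1$ together with $\eta^-<\eta^+$ is incompatible with $\mu(c^+)\in\{k,k+1\}$ and $\mu(c^-)\in\{k-1,k\}$ under the hypothesis, which requires tracking how $\mu_{CZ}$ and the Morse-index/dimension terms interact across distinct critical manifolds; this is where the hypothesis $\eta_{k-1}\ge\eta_k\ge\eta_{k+1}$ must be used exactly, not approximately. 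A secondary technical point is justifying the use of Theorem \ref{reductiontheo} (or a direct spectral-sequence argument) to identify the period-preserving homology with $H_\ast(\mathcal{N}^\eta;\mathbb{Z}_2)$ and confirming the index normalization matches, which the excerpt flags as permissible in situations reducible to standard spectral-sequence reasoning — and here, with the differential entirely internal to fixed period levels, that reduction is exactly of that standard type.
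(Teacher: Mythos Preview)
Your core argument is correct and matches the paper's: the hypothesis $\eta_{k-1}\geq\eta_k\geq\eta_{k+1}$ combined with the fact that cascades strictly decrease period (Lemma~\ref{lem2}) forces the differentials $\partial^F_k:RFC_{k+1}\to RFC_k$ and $\partial^F_{k-1}:RFC_k\to RFC_{k-1}$ to consist entirely of $h$-Morse flow lines on the critical manifolds $\mathcal{N}^\eta\subset\Sigma$, hence to be independent of the filling $W$.

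However, you overcomplicate the argument in two ways. First, your invocation of Theorem~\ref{reductiontheo} to rewrite $RFH_k$ in terms of the singular homology of the $\mathcal{N}^\eta$ is unnecessary: the paper simply observes that since the chain groups $RFC_\ast$ depend only on $(\Sigma,\alpha,h)$ and the two relevant differentials are pure $h$-Morse differentials on $\crita\subset\Sigma$, the quotient $\ker\partial^F_{k-1}/\operatorname{im}\partial^F_k$ is manifestly independent of $W$ --- no computation of what this group actually is, and no reduction to a perfect Morse function, is needed. Second, your worry about ``degree bookkeeping'' is misplaced: the hypothesis is stated for \emph{all} generators in degrees $k-1,k,k+1$, so for any $c^+\in RFC_{k+1}$ and $c^-\in RFC_k$ appearing in $\partial^F c^+$ you have directly $\eta^-\geq\eta^+$ from the hypothesis and $\eta^-\leq\eta^+$ from Lemma~\ref{lem2}, giving equality with no need to track how $\mu_{CZ}$ or dimension shifts behave across different critical manifolds.
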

\begin{proof}
 At first, observe that $RFH(W,\Sigma)$ does not depend on the particular contact form. Moreover, the grading $\mu$ of the chain complex and the chain complex itself do not depend on $W$. The chain groups $RFC_k(W,\Sigma)$ are hence independent of $W$. Now consider $(v_\ast,\eta_\ast)\in RFC_\ast(W,\Sigma)$ with $\ast\in\{k-1,k,k+1\}$.\\
If $\eta_{k-1}\geq\eta_k\geq\eta_{k+1}$, the Lemmas \ref{lem2a} and \ref{lem2} tell us that if there are flow lines with cascades from $c_{k-1}$ to $c_k$ or from $c_k$ to $c_{k+1}$ then they must have zero cascades, i.e.\ they are $h$-Morse flow lines on $C_k$, as each cascade would reduce the action. Since $h$-Morse flow lines are independent of the filling $W$, so are the boundary operators 
\[\partial^F_\ast: RFC_{\ast+1}(W,\Sigma)\rightarrow RFC_{\ast}(W,\Sigma),\quad\ast\in\{k-1,k\}\]
and hence the quotient
\[RFH_k(W,\Sigma)=\frac{\ker \partial^F_{k-1}}{\text{im}\;\partial^F_{k\phantom{\mp}}\;}.\qedhere\]
\end{proof}
In \cite[Cor.1.15]{FraCieOan}, Cieliebak, Frauenfelder and Oancea proved another criterion under which $RFH_k(W,\Sigma)$ does not depend on $W$, i.e.\ where $|C_k(\Sigma)|=1$ for all $k$.
\begin{theo}\label{theoinvar}
 Let $(W,\Sigma)$ be a Liouville domain, $\dim W=2n$, satisfying assumption (A) and (B). $RFH_k(W,\Sigma)$ is independent of $W$ for all $k$ if $\Sigma$ admits a contact form for which all contractible closed Reeb orbits $v$ are Morse-Bott (MB) and satisfy
 \[\mu_{CZ}(v)>3-n.\]
\end{theo}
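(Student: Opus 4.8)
The statement is Corollary~1.15 of \cite{FraCieOan}; I sketch the argument in the present framework. The starting point is the observation, already used in the proof of Proposition~\ref{prop16}, that the $\mathbb{Z}$-graded chain group $RFC_\ast(H,h)$ is independent of the filling: it is assembled from the Reeb dynamics on $(\Sigma,\alpha)$ — which we have fixed to be Morse--Bott via (MB) — the transversal Conley--Zehnder indices of the closed orbits, and an auxiliary Morse function $h$ on $\crita=\mathcal{P}(\alpha)$, none of which involves $W$. Moreover the part of $\partial^F$ counting trajectories with zero cascades is purely the Morse differential on the manifolds $\mathcal{N}^\eta$ and is likewise $W$-independent; by Lemma~\ref{lem2} every cascade strictly lowers the period, so all of the $W$-dependence of $RFH_\ast(W,\Sigma)$ is carried by moduli spaces of trajectories having at least one cascade, i.e.\ by the ``non-topological'' part of the complex.

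The plan is then to invoke the long exact sequence of Cieliebak--Frauenfelder--Oancea relating $RFH_\ast(W,\Sigma)$ to the symplectic homology $SH_\ast(W)$ and the symplectic cohomology $SH^\ast(W)$ of the filling (symplectic (co)homology being recalled in Section~\ref{secsymhom}), and to show that under the hypothesis the filling-dependent groups in that sequence drop out. Concretely, $SH_\ast(W)$ and $SH^\ast(W)$ each sit in an exact sequence involving the singular (co)homology of the pair $(W,\Sigma)$ and the ``positive'' parts $SH^+_\ast(W)$, $SH_+^\ast(W)$ generated by non-constant orbits, which after a small time-perturbation are graded by the transversal Conley--Zehnder indices of the closed Reeb orbits — computed in the Morse--Bott setting via Theorem~\ref{reductiontheo}, so that no explicit $h$ is needed. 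The bound $\mu_{CZ}(v)>3-n$, valid for \emph{all} contractible closed Reeb orbits (in particular all iterates), is precisely the threshold forcing $SH^+_\ast(W)=SH_+^\ast(W)=0$ in the band of degrees that feeds into the CFO sequence; there $SH_\ast(W)$ and $SH^\ast(W)$ reduce to the singular (co)homology of $(W,\Sigma)$ up to a degree shift. Substituting these into the CFO exact sequence and comparing with the long exact sequence of the pair $(W,\Sigma)$ in singular (co)homology, the groups $H_\ast(W)$ and $H_\ast(W,\Sigma)$ on either side cancel against the connecting maps, and what survives is a group built only out of $H_\ast(\Sigma;\mathbb{Z}_2)$. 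Since this is manifestly independent of $W$, so is $RFH_k(W,\Sigma)$ for every $k$.

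The main obstacle is the bookkeeping of the three gradings. One must keep the transversal Conley--Zehnder index, the fibre Morse index on the $\mathcal{N}^\eta$, and the symplectic-(co)homology degree mutually consistent — this is exactly why the extra $+\tfrac12$ was built into $\mu$ in Discussion~\ref{grading}, so that $\mu$ matches the symplectic-(co)homology grading — and then verify that $3-n$ (rather than some nearby constant) is the correct cutoff making the positive symplectic (co)homology vanish in the relevant range; a careless index shift changes the claimed band and breaks the cancellation. A secondary point is the Morse--Bott nature of the generators: the orbits come in families, so the vanishing of $SH^+$ and the exactness of the CFO sequence must be run through the cascade formalism of Sections~\ref{sec2}--\ref{sec3.1} and the algebraic reduction of Theorem~\ref{reductiontheo}. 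Granting the CFO exact sequence together with these degree computations, the cancellation — and hence the conclusion — is formal.
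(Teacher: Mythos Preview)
Your approach diverges fundamentally from the paper's (and from the actual argument in \cite{FraCieOan}), and it contains a genuine gap.

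The paper's proof is \emph{geometric}: one stretches the neck along $\Sigma$ and invokes SFT-type compactness. A Rabinowitz--Floer cylinder that enters the filling would, in the neck-stretched limit, break off a holomorphic plane in $\widehat{W}$ asymptotic to some closed Reeb orbit $v$. The Fredholm index of such a plane is $\mu_{CZ}(v)+n-3$, and the hypothesis $\mu_{CZ}(v)>3-n$ makes this strictly positive; since the original cylinder was rigid, this forces the remaining part of the building to have negative expected dimension, contradicting transversality. Hence no such breaking occurs, every $\mathcal{A}^H$-gradient trajectory lies entirely in the symplectization of $\Sigma$, and the whole chain complex $\big(RFC(H,h),\partial^F\big)$ --- not just the generators --- is independent of $W$.

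Your algebraic route via the CFO long exact sequence does not work. The key claim, that $\mu_{CZ}(v)>3-n$ ``forces $SH^+_\ast(W)=SH_+^\ast(W)=0$ in the band of degrees that feeds into the CFO sequence,'' is false: the hypothesis is only a \emph{lower} bound on Conley--Zehnder indices, so it constrains from below the degrees in which the positive symplectic (co)homology can have generators, but gives no vanishing. The CFO sequence runs through \emph{all} degrees, and $SH^+_\ast(W)$ is typically nonzero in infinitely many of them (and, more to the point, still depends on $W$ through its differential). Consequently the ``cancellation leaving only $H_\ast(\Sigma;\mathbb{Z}_2)$'' cannot happen; indeed, were that conclusion correct, $RFH_k(W,\Sigma)$ would vanish for $|k|$ large, contradicting the explicit computations later in the paper (e.g.\ Theorem~\ref{theobriesspheres}, where $RFH_k\cong(\mathbb{Z}_2)^2$ for arbitrarily large $|k|$). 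The threshold $3-n$ is not a degree cutoff in the exact sequence --- it is precisely the index obstruction to holomorphic planes in the filling.
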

\begin{proof}\textit{(Sketch)}\\
 The theorem is shown by proving that all $\mathcal{A}^H$-gradient trajectories lie entirely in the symplectization of $\Sigma$ and are hence independent of the filling. This follows from a Gromov compactness argument, as trajectories leaving the symplectization would lead to the bubbling-off of holomorphic planes. The latter cannot happen due to the condition $\mu_{CZ}(v)>3-n$.
\end{proof}

We finish the section with a short discussion of what happens with (A) and (B) under handle attachment. Let $(W,\Sigma)$ be a Liouville domain satisfying (A) and (B). Assume that $(W',\Sigma')$ is obtained from $(W,\Sigma)$ by attaching a $k$-handle $H^{2n}_k$ as described in Section \ref{secsur}. In general, one cannot (topologically) expect that (A) still holds for $(W',\Sigma')$ as is shown by attaching a 1-handle to the unit ball in $\mathbb{R}^3$. The result is diffeomorphic to the full 2-torus, which violates (A). However, we have the following lemma.
\begin{lemme}\label{lembehaviourA} Assume that $(W,\Sigma)$ satisfies (A), $\dim W=2n\geq 4$ and that $(W',\Sigma')$ is obtained by attaching a $k$-handle $H^{2n}_k$, $k\leq n-2$. Then $(W',\Sigma')$ satisfies (A) if
 \begin{enumerate}
  \item $k\geq 3$ or
  \item $k=1$, $W$ has 2 components $(W_1,\Sigma_1),\;(W_2,\Sigma_2)$ and $H^{2n}_k$ is glued to $W$ so that $\Sigma'$ is the connected sum $\Sigma_1\#\Sigma_2$ resp. $W'$ is the boundary connected sum $W_1\#W_2$.\pagebreak
 \end{enumerate}
\end{lemme}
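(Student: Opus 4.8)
The statement is about the behaviour of condition (A) --- injectivity of $i_\ast:\pi_1(\Sigma')\to\pi_1(W')$ --- under attachment of a subcritical handle $H^{2n}_k$ with $k\le n-2$. The plan is to analyse $\pi_1$ of the handle attachment via van Kampen's theorem, treating the two cases separately. Throughout I would use that for $\dim W=2n\ge 4$ and $k\le n-2\le n-1$, attaching $H^{2n}_k$ to $W$ along an isotropic sphere $S^{k-1}\subset\Sigma$ with a trivialised normal bundle changes $W$ up to homotopy by attaching a $k$-cell, while $\Sigma'=\partial W'$ is obtained from $\Sigma$ by surgery on that $S^{k-1}$.

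\textbf{Case $k\ge 3$.} Here the attaching sphere $S^{k-1}$ has dimension $\ge 2$, so the surgery on $\Sigma$ does not change $\pi_1$: we have $\pi_1(\Sigma')\cong\pi_1(\Sigma)$, because both removing a tubular neighbourhood $S^{k-1}\times D^{2n-k}$ (which has simply connected complement-boundary part since $2n-k\ge n+2\ge 3$... more precisely $2n-k-1\ge 3$) and gluing in $D^k\times S^{2n-k-1}$ affect $\pi_1$ only through cells of dimension $\ge 2$ or $\ge 3$. Likewise $\pi_1(W')\cong\pi_1(W)$ since attaching a $k$-cell with $k\ge 3$ does not change $\pi_1$. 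Under these identifications the map $i_\ast$ for $(W',\Sigma')$ is identified with the one for $(W,\Sigma)$ (one should check the identifications are compatible with the inclusions, which follows from naturality of van Kampen applied to the diagram $\Sigma\setminus\nu(S^{k-1})\hookrightarrow W,\,\Sigma',\,W'$), hence remains injective. So here there is essentially nothing to prove beyond bookkeeping.

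\textbf{Case $k=1$, boundary connected sum.} Now $W=W_1\sqcup W_2$ with each $(W_i,\Sigma_i)$ connected and satisfying (A), and the $1$-handle is attached joining a boundary point of $W_1$ to one of $W_2$, so $W'=W_1\natural W_2$ and $\Sigma'=\Sigma_1\#\Sigma_2$. By van Kampen, $\pi_1(W')\cong\pi_1(W_1)\ast\pi_1(W_2)$ and, since $\dim\Sigma_i=2n-1\ge 3$, also $\pi_1(\Sigma')=\pi_1(\Sigma_1\#\Sigma_2)\cong\pi_1(\Sigma_1)\ast\pi_1(\Sigma_2)$ (the connected-sum sphere $S^{2n-2}$ is simply connected as $2n-2\ge 2$, so van Kampen gives the free product). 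Under these isomorphisms the inclusion-induced map is $i_\ast=(i_1)_\ast\ast(i_2)_\ast:\pi_1(\Sigma_1)\ast\pi_1(\Sigma_2)\to\pi_1(W_1)\ast\pi_1(W_2)$, the free product of the two maps. A free product of injective homomorphisms of groups is injective (normal-form / reduced-word argument in the free product), so $i_\ast$ is injective and (A) holds for $(W',\Sigma')$.

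\textbf{Main obstacle.} The routine part is van Kampen; the delicate point I expect to spend effort on is verifying that the attaching sphere of a subcritical handle in a Liouville handle attachment is genuinely unknotted/standard enough that the topological picture above (surgery on $\Sigma$, $k$-cell on $W$, with the inclusion respecting the decomposition) is correct --- in particular that for $k=1$ the contact connected sum really does produce $\Sigma_1\#\Sigma_2$ as a smooth manifold and $W_1\natural W_2$ as the filling, with matching $\pi_1$-decompositions. This is where one must invoke the description of handle attachment from Section~\ref{secsur} and the hypothesis $k\le n-2$ (which guarantees the handle is subcritical and the belt sphere $S^{2n-k-1}$ has dimension $\ge n+1\ge 3$, keeping the relevant complements simply connected). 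Once that geometric input is in place, the group theory is immediate. I would also remark that the hypothesis $\dim W\ge 4$ is used precisely to ensure $2n-2\ge 2$ so that the connected-sum sphere in $\Sigma'$ is simply connected.
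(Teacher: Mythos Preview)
Your proposal is correct and follows essentially the same route as the paper: van Kampen plus the observation that attaching $k$-cells with $k\ge 3$ leaves $\pi_1$ unchanged (Case 1), and the free product of injective maps is injective (Case 2). The only presentational difference is that in Case 1 the paper makes the compatibility check you flag more concrete by introducing the intermediate space $X=\Sigma\cup_{S^{k-1}\times D^{2n-k}}(D^k\times D^{2n-k})$ (the trace of the surgery), into which both $\Sigma$ and $\Sigma'$ include inducing $\pi_1$-isomorphisms, and which itself includes into $W'$; this gives the commutative square directly rather than via ``naturality of van Kampen''.
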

\begin{proof}~
  \begin{enumerate}
   \item[@1.] The first part of this proof follows Milnor, \cite[Lem.2]{Mil3}. Set $l=2n-k$ and note that $k<l$. Let $X$ denote the space which is obtained by gluing the handle $H^{2n}_k\cong D^k\times D^l$ to $\Sigma$. It is formed from the topological sum $\Sigma+(D^k\times D^l)$ by identifying $S^{k-1}\times D^l$ with the attaching region in $\Sigma$. The subset $\Sigma\cup(D^k\times 0)$ is a deformation retract of $X$. This subset is formed from $\Sigma$ by attaching a $k$-cell. It follows thus that the map $\pi_1(\Sigma)\rightarrow \pi_1(X)$ induced by inclusion is an isomorphism as $k\geq 3$. But $\Sigma'$ is also embedded topologically in $X$ and similar arguments show that $\pi_1(\Sigma')\rightarrow\pi_1(X)$ is also an isomorphism as $l> k\geq 3$.\\
   Analogously, we consider the space $Y$ which is obtained by gluing the handle $H^{2n}_k$ to $W$. Note that $Y=W'$ and the same reasonings as for $\Sigma$ shows that $\pi_1(W)\rightarrow \pi_1(W')$ is also an isomorphism. Combining all these spaces, we obtain the following diagram:
   \begin{align*}
    \begin{xy}
     \xymatrix{\pi_1(\Sigma)\ar[d]^{(a)}\ar[r] & \pi_1(X) \ar[d]^{(b)} & \pi_1(\Sigma')\ar[d]^{(c)}\ar[l]\\
     \pi_1(W)\ar[r] & \pi_1(W')\ar@{=}[r] & \pi_1(W')}
    \end{xy}.
   \end{align*}
   As all the maps are induced by inclusions, the above diagram commutes. Moreover, all horizontal maps are isomorphisms. As $(a)$ is injective by assumption, it follows from the commutativity of the left square that $(b)$ is also injective. Then it follows from the commutativity of the right square that $(c)$ is also injective.
   \item[@2.] Without loss of generality we assume that $W=W_1\cup W_2$ and $\Sigma=\Sigma_1\cup \Sigma_2$, such that $W'=W_1\#W_2$ and $\Sigma'=\Sigma_1\#\Sigma_2$. Now we apply the van Kampen Theorem in $\Sigma'$ to the two sets $\Sigma_1'\cong\Sigma_1\setminus\{pt.\}$ and $\Sigma_2'\cong\Sigma_2\setminus\{pt.\}$ coming from $\Sigma_1$ and $\Sigma_2$ plus the connecting cylinder. This gives a map $\pi_1(\Sigma_1\setminus\{pt.\})\ast\pi_1(\Sigma_2\setminus\{pt.\})\rightarrow \pi_1(\Sigma_1\#\Sigma_2)$ induced by inclusion.\\ Analogously, we obtain a map $\pi_1(W_1\setminus\{pt.\})\ast\pi_1(W_2\setminus\{pt.\})\rightarrow \pi_1(W_1\#W_2)$. As the intersections $\Sigma_1'\cap\Sigma_2'\cong D^1\times S^{2n-2}$ and $W'_1\cap W_2'\cong D^1\times D^{2n-1}$ are simply connected (as $n\geq 2$), both maps are isomorphisms. Moreover, they fit into the following diagram:
   \begin{align*}
    \begin{xy}
     \xymatrix{\pi_1(\Sigma_1)\ast\pi_1(\Sigma_2)\ar[d]^{(a)} & \pi_1(\Sigma_1\setminus\{pt.\})\ast\pi_1(\Sigma_2\setminus\{pt.\})\ar[d]^{(b)}\ar[l]\ar[r] &\pi_1(\Sigma_1\#\Sigma_2)\ar[d]^{(c)}\\
     \pi_1(W_1)\ast\pi_1(W_2) & \pi_1(W_1\setminus\{pt.\})\ast\pi_1(W_2\setminus\{pt.\})\ar[l]\ar[r] &\pi(W_1\# W_2)}
    \end{xy}.
   \end{align*}
   Again all the maps are induced by inclusion, so this diagram also commutes. The two horizontal maps on the left are isomorphisms as $\dim\Sigma,\dim W\geq 3$. Therefore, all horizontal maps are isomorphisms. As $(a)$ is injective by assumption, it follows as above that $(c)$ is also injective.\qedhere
  \end{enumerate}
\end{proof}
A similar result holds for the behaviour of (B) under handle attachment.
\begin{lemme}\label{lembehaviourB}
 Let $(W,\Sigma)$ be as above, satisfying (B). Let $(W',\Sigma')$ be obtained by attaching a $k$-handle. Then $(W',\Sigma')$ also satisfies (B) if $k=1$ or $k\geq 3$.
\end{lemme}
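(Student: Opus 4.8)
\textbf{Proof proposal for Lemma \ref{lembehaviourB}.}

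The plan is to mimic the topological argument used for assumption (A) in Lemma \ref{lembehaviourA}, part 1, but now tracking the first Chern class instead of the fundamental group. First I would introduce the same intermediate space $X$ obtained by gluing the handle $H^{2n}_k \cong D^k \times D^l$ (with $l = 2n-k$) onto $\Sigma$, and the space $Y = W'$ obtained by gluing $H^{2n}_k$ onto $W$. The key homotopy-theoretic input is that $W$ is a deformation retract of $W' = Y$: indeed $W'$ is built from $W$ by attaching a $k$-cell (up to homotopy, $D^k \times D^l$ retracts onto $D^k \times 0$), and the co-core disc $0 \times D^l$ has dimension $l = 2n-k \geq n+1 > 2$ when $k \leq n-2$, so attaching it changes neither $\pi_1$ nor $H_2$ in the relevant range. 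Hence the inclusion $W \hookrightarrow W'$ induces an isomorphism on $\pi_2$ (after the $k$-cell attachment, $\pi_2$ can change only if $k \leq 2$; but for $k=1$ the attached cell is a $1$-cell, which does not affect $\pi_2$ either — a $1$-cell only affects $\pi_1$ — and for $k \geq 3$ it is harmless as well). This is exactly why the cases $k=2$ must be excluded.

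Next I would relate $c_1$ across the inclusion. The tangent bundle $TW'$ restricts over $W$ to $TW$, since $W \subset W'$ is open and the symplectic (hence almost complex) structure on $W'$ restricts to the one on $W$. Therefore for any class $A \in \pi_2(W')$, represented by a sphere, I may homotope the representing map into $W$ (using that $\pi_2(W) \to \pi_2(W')$ is surjective — in fact an isomorphism), obtaining a sphere $f: S^2 \to W$ with $[f] \mapsto A$. Then
\[
I_{c_1(TW')}(A) = \int_{S^2} f^\ast c_1(TW') = \int_{S^2} f^\ast c_1(TW) = I_{c_1(TW)}([f]) = 0
\]
by assumption (B) for $(W,\Sigma)$. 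Since every element of $\pi_2(W')$ arises this way, $I_{c_1(TW')}$ vanishes on all spheres, which is (B) for $(W',\Sigma')$.

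The main obstacle I anticipate is being careful about the two distinct roles of the cases $k=1$ and $k \geq 3$, and making sure the surjectivity $\pi_2(W) \twoheadrightarrow \pi_2(W')$ is correctly justified: for $k \geq 3$ the attached cell has dimension $\geq 3$ so it can create new $\pi_2$ — but new $\pi_2$ classes coming from the attached $k$-cell with $k \geq 3$ are null-homotopic in $W'$ after filling, hence contribute trivially; the cleaner route is to observe that $W'$ deformation retracts onto $W \cup (\text{$k$-cell})$ and that attaching a cell of dimension $\neq 2$ does not enlarge $\pi_2$ (for $k \geq 3$ it may, so instead one argues that the generator of the new $\pi_2$ bounds in $W'$ and thus its $c_1$-evaluation is zero anyway, or equivalently that $H_2(W';\mathbb{Z}) \to H_2(W';\mathbb{Z})$ sees no new classes carrying nonzero $c_1$). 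To keep the argument uniform I would phrase it homologically: the inclusion induces a surjection $H_2(W;\mathbb{Z}) \to H_2(W';\mathbb{Z})$ when $k=1$ (cell attachment in degree $1$ cannot kill or create $H_2$) and when $k \geq 3$ the Mayer–Vietoris sequence for $W' = W \cup H^{2n}_k$ shows $H_2(W') \cong H_2(W) \oplus (\text{possibly } H_2 \text{ of the cocore}) = H_2(W)$ since $l = 2n-k \geq 3$; in the excluded case $k=2$ a new $H_2$ class appears (the core $S^2$) on which $c_1$ need not vanish. With $H_2$ controlled, the naturality of $c_1$ under the open inclusion finishes the proof. I expect the whole argument to be short once the cell-attachment bookkeeping is set up correctly.
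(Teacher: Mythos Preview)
Your overall strategy---homotope spheres from $W'$ into $W$ and use naturality of $c_1$---is exactly the paper's approach. The execution for $k\geq 3$ is essentially right, though you have the cell-attachment effect backwards: attaching a $k$-cell with $k\geq 3$ does \emph{not} create new $\pi_2$; the pair $(W',W)$ is $(k-1)$-connected, so $\pi_2(W)\to\pi_2(W')$ is surjective (an isomorphism for $k\geq 4$). The paper phrases this geometrically: by dimension, a sphere $w:S^2\to W'$ may be assumed to miss an interior point $q$ of the core disc $D^k$, and $W'\setminus\{q\}$ retracts onto $W$.

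The $k=1$ case has a genuine gap. Your claim that ``a 1-cell only affects $\pi_1$'' is false: attaching a 1-cell can enlarge $\pi_2$ through the change in the universal cover. For instance $\pi_2(S^2\vee S^1)\cong\bigoplus_{\mathbb{Z}}\mathbb{Z}$, and the inclusion $\pi_2(S^2)\hookrightarrow\pi_2(S^2\vee S^1)$ hits only one summand. Your homological fallback does not repair this: knowing that $H_2(W)\to H_2(W')$ is an isomorphism does not ensure that a \emph{spherical} class in $H_2(W')$ pulls back to a \emph{spherical} class in $H_2(W)$, and condition (B) only controls $c_1$ on spherical classes. The paper closes this gap by a direct geometric splitting: given $w:S^2\to W'$, compose with the retraction onto $W\cup D^1$ and split the resulting sphere into finitely many spheres $\tilde w_i$, none of which covers all of the arc $D^1$; each $\tilde w_i$ can then be homotoped into $W$, and one concludes by additivity $\int_{S^2}w^\ast c_1=\sum_i\int_{S^2}w_i^\ast c_1(TW)=0$. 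Equivalently, one may argue that $\pi_2(W')$ is generated as a $\mathbb{Z}[\pi_1(W')]$-module by the image of $\pi_2(W)$, and $I_{c_1}$ is additive and $\pi_1$-invariant.
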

\begin{proof}
 Assume that $I_{c_1}:\pi_2(W)\rightarrow\mathbb{Z}$ vanishes. As $W'$ is obtained from $W$ by attaching a handle, $W\subset W'$ is an open subset and $c_1(TW')|_V=c_1(TW)$. As in the proof above, there is a retraction $\rho:W'=W\cup H^{2n}_k \rightarrow W\cup D^k$, where $\partial D^k$ is identified with an embedded sphere $S^{k-1}$ in $\Sigma$. Let $w:S^2\rightarrow W'$ be any smooth sphere.
 \begin{itemize}
  \item If $k\geq 3$, we may assume that there is a point $q$ in the interior of the attached disc $D^k$ such that $q\not\in im(\rho\circ w)$. Note that there exists a retraction $R:W'\setminus\{q\}\rightarrow W$ and that $R\circ w$ is well-defined. It follows that $w$ and $R\circ w$ are homotopic and hence
 \[\int_{S^2} w^\ast c_1(TW')=\int_{S^2} (R\circ w)^\ast c_1(TW')=\int_{S^2} (R\circ w)^\ast c_1(TW)=0.\]
  \item If $k=1$, then $D^1$ is a line. Hence we may split $\rho\circ w$ into a finite number of spheres $\tilde{w}_i$ such that no $\tilde{w}_i$ passes over the line. It may well hit its centre but $D^1\nsubseteq image(\tilde{w}_i)$. Now, we may homotope each $\tilde{w}_i$ into $W$ to obtain spheres $w_i: S^2\rightarrow W$. Then
 \begin{align*}
  \int_{S^2} w^\ast c_1(TW')=\int_{S^2} (\rho\circ w)^\ast c_1(TW')&=\sum_i \int_{S^2} \tilde{w}_i^\ast c_1(TW')\\
  &=\sum_i \int_{S^2} w_i^\ast c_1(TW)=0.\qedhere
 \end{align*}
 \end{itemize}
\end{proof}

\newpage
\section{Some algebra for Floer theory}\label{secalg}
\subsection{Direct and inverse limits}
In the previous sections, we gave a description of Rabinowitz-Floer homology with the help of direct and inverse limits. Moreover, the construction of symplectic (co)homology in Section \ref{secsymhom} will also need these limits. In this section, we recall the definition of these algebraic limits and prove some of their properties. Our discourse is based on the fundamental books \cite{bourbaki2} and \cite{eilenberg}.
Throughout this section let $R$ denote a unitary ring.
\begin{defn}
 A relation $\alpha\leq\beta$ on a set $M$ is called a \textbf{quasi order} if it is reflexive and transitive. A \textbf{directed set} $(M,\leq)$ is a quasi ordered set such that for each pair $\alpha,\beta\in M$ there exists a $\gamma\in M$ with $a\leq\gamma$ and $\beta\leq \gamma$. A directed set $M'$ is a \textbf{subset} of a directed set $(M,\leq)$ if $M'\subset M$ and the quasi order on $M'$ is the restriction of $\leq$ to $M'\times M'$. A subset $M'$ is \textbf{cofinal} in $M$ if for each $\alpha\in M$ exists a $\beta\in M'$ such that $\alpha\leq \beta$.
\end{defn}
\begin{defn}~
 \begin{itemize}
  \item A \textbf{direct system} $(X,\iota)$ of $R$-modules  over a directed set $M$ is a function which attaches to each $\alpha\in M$ an $R$-module $X^\alpha$ and to each pair $\alpha\leq\beta$ an $R$-linear map
  \[\iota^{\beta\alpha}:X^\alpha\rightarrow X^\beta\]
  such that $\qquad\qquad\displaystyle \iota^{\alpha\alpha}=id \qquad\qquad\text{and}\qquad\qquad \iota^{\gamma\alpha}=\iota^{\gamma\beta}\iota^{\beta\alpha}\qquad \forall \alpha\leq \beta\leq \gamma.$
  \item An \textbf{inverse system} $(X,\pi)$ of $R$-modules over a directed set $M$ is a function which attaches to each $\alpha\in M$ an $R$-module $X_\alpha$ and to each pair $\alpha\leq \beta$ an $R$-linear map
  \[\pi_{\alpha\beta}:X_\beta\rightarrow X_\alpha\]
  such that $\displaystyle\qquad\qquad\pi_{\alpha\alpha}=id\quad\qquad\text{and}\quad\qquad \pi_{\alpha\gamma}=\pi_{\alpha\beta}\pi_{\beta\gamma}\qquad\forall \alpha\leq\beta\leq\gamma.$
 \end{itemize}
\end{defn}
\begin{defn}~
 \begin{itemize}
  \item Let $(X,\iota)$ and $(Y,j)$ be direct systems over $M$ resp.\ $N$. An \textbf{$R$-homomorphism} $\Phi:(X,\iota)\rightarrow(Y,j)$ consists of an order preserving map $\phi:M\rightarrow N,\,\phi(\alpha)=\alpha'$ and $R$-linear maps $\phi^\alpha:X^\alpha\rightarrow Y^{\alpha'}$ for each $\alpha\in M$ such that for $\alpha\leq \beta$ the following diagram commutes
  \[
  \begin{xy}
  \xymatrix{
  X^\alpha \ar[r]^{\iota^{\beta\alpha}} \ar[d]_{\phi^\alpha} & X^\beta \ar[d]^{\phi^\beta}\\
  Y^{\alpha'}\ar[r]^{j^{\beta'\alpha'}} & Y^{\beta'}}
 \end{xy}.
 \]
 \item Let $(X,\pi)$ and $(Y,p)$ be inverse systems over $M$ resp.\ $N$. An \textbf{$R$-homomorphism} $\Phi:(X,\pi)\rightarrow(Y,p)$ consists of an order preserving map $\phi: N\rightarrow M,\, \phi(\alpha')=\alpha$ and $R$-linear maps $\phi_{\alpha'}: X_\alpha\rightarrow Y_{\alpha'}$ for each $\alpha'\in N$ such that for $\alpha'\leq \beta'$ the following diagram commutes
 \[
  \begin{xy}
  \xymatrix{
  X_\alpha \ar[d]_{\phi_{\alpha'}} & X_\beta \ar[l]_{\pi_{\alpha\beta}} \ar[d]^{\phi_{\beta'}}\\
  Y_{\alpha'} & Y_{\beta'} \ar[l]_{\pi_{\alpha'\beta'}}}
 \end{xy}.
 \]
 \end{itemize}
\end{defn}
\begin{defn} Let $(X,\pi)$ be an inverse system of $R$-modules. The \textbf{inverse limit} $X_\infty$ of $(X,\pi)$ is the following sub-$R$-module of the product $\Pi X_\alpha$:
  \[X_\infty:=\left\{x=(x_\alpha)\in\Pi X_\alpha\;\Big|\,\pi_{\alpha\beta}(x_\beta)=x_\alpha\;\forall\,\alpha\leq\beta\right\}.\]
 \end{defn}
Note that we have for each $\alpha\in M$ a projection 
\[\pi_\alpha: X_\infty\rightarrow X_\alpha,\, \pi_\alpha(x)=x_\alpha \qquad \text{ satisfying }\qquad\pi_\alpha=\pi_{\alpha\beta}\pi_\beta\quad \forall \alpha\leq \beta.\]
These maps allow us to describe $X_\infty$ alternatively by the following universal property:\\
For each $R$-module $Y$ with a family of $R$-linear maps $\tau_\alpha: Y\rightarrow X_\alpha$ which satisfy $\tau_\alpha=\pi_{\alpha\beta}\tau_\beta$ for all $\alpha\leq \beta$ there exists a unique $R$-linear map $\tau: Y\rightarrow X_\infty$ such that for any $\alpha\in M$ the following diagram commutes:
\[
  \begin{xy}
  \xymatrix{
  Y \ar@{.>}[rr]^\tau \ar[rd]_{\tau_\alpha} && X_\infty \ar[ld]^{\pi_\alpha}\\
  &X_\alpha &}
 \end{xy}.
 \]
 Any $R$-homomorphism $\Phi:(X,\pi)\rightarrow(Y,p)$ between two inverse systems induces an $R$-linear map $\phi_\infty:X_\infty\rightarrow Y_\infty$ via $\phi_\infty((x_\alpha)_{\alpha\in M})=(\phi_{\alpha'}(x_\alpha))_{\alpha'\in N}$. We say that $\phi_\infty$ is the inverse limit of the $\phi_\alpha$.
 \begin{defn}
  Let $(X,\iota)$ be a direct system of $R$-modules. Let $\bigoplus X^\alpha$ denote the direct sum and let $Q\subset\bigoplus X^\alpha$ be the submodule generated by all elements of the from\linebreak $\iota^{\beta\alpha}(x^\alpha)-x^\alpha$ for any $a\leq \beta$. The \textbf{direct limit} of $(X,\iota)$ is the quotient module 
  \[X^\infty:=\raisebox{.2em}{$\bigoplus X^\alpha$}\left/\raisebox{-.2em}{$Q$}\right..\]
 \end{defn}
Note that the inclusions $X^\alpha\subset\bigoplus X^\alpha$ induce for each $\alpha\in M$ an $R$-linear map
\[\iota^\alpha: X^\alpha\rightarrow X^\infty,\,\iota^\alpha(x^\alpha)=[x^\alpha]\quad\text{ satisfying }\quad \iota^\alpha=\iota^\beta\iota^{\beta\alpha}\quad\forall\;\alpha\leq\beta.\]
Again, there is an alternative description of $X^\infty$ by a universal property: For each $R$-module $Y$ with a family of $R$-linear maps $\tau^\alpha: X^\alpha\rightarrow Y$ satisfying $\tau^\alpha=\tau^\beta\iota^{\beta\alpha}$ for all $\alpha\leq\beta$ there exists a unique $R$-linear map $\tau: X^\infty\rightarrow Y$ such that for any $\alpha\in M$ the following diagram commutes 
\[
  \begin{xy}
  \xymatrix{
  Y   && X^\infty \ar@{.>}[ll]_\tau \\
  &X_\alpha \ar[ru]_{\iota^\alpha} \ar[lu]^{\tau^\alpha}&}
 \end{xy}.
 \]
 Any $R$-homomorphism $\Phi:(X,\pi)\rightarrow(Y,p)$ between direct systems induces an $R$-linear map $\phi^\infty: X^\infty\rightarrow Y$ by $\phi^\infty\big(\big[\sum x^\alpha\big]\big)=\sum \phi^\alpha(x^\alpha)$. We call $\phi^\infty$ the direct limit of the $\phi^\alpha$.\pagebreak
 \begin{theo}[\textbf{\cite{eilenberg}, Thm.\ 5.4.; \cite{bourbaki2}, §6, no3, prop.\ 4}]~\label{theoindilimits}
  \begin{itemize}
   \item The direct limit is an exact functor, that is if
   \[(A,\iota)\overset{\phi}{\longrightarrow}(B,\iota)\overset{\psi}{\longrightarrow}(C,\iota)\]
   is an exact sequence of direct systems, the following limit sequence is also exact
   \[A^\infty\overset{\phi^\infty}{\longrightarrow}B^\infty\overset{\psi^\infty}{\longrightarrow}C^\infty.\]
   \item The inverse limit is a left exact functor, that is if
   \[0\longrightarrow(A,\pi)\overset{\phi}{\longrightarrow}(B,\pi)\overset{\psi}{\longrightarrow}(C,\pi)\]
   is an exact sequence of inverse systems, the following limit sequence is also exact
   \[0\longrightarrow A_\infty\overset{\phi_\infty}{\longrightarrow}B_\infty\overset{\psi_\infty}{\longrightarrow}C_\infty.\]
  \end{itemize}
 \end{theo}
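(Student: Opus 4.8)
\textbf{Proof plan for Theorem \ref{theoindilimits}.}

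The plan is to prove both statements by direct computation with the explicit models for the two limits, using the universal properties only as bookkeeping. For the direct limit we must show that exactness of $(A,\iota)\xrightarrow{\phi}(B,\iota)\xrightarrow{\psi}(C,\iota)$, meaning $\text{im}\,\phi^\alpha=\ker\psi^\alpha$ for every index $\alpha$, passes to $A^\infty\xrightarrow{\phi^\infty}B^\infty\xrightarrow{\psi^\infty}C^\infty$. First I would record the standard description of elements of $X^\infty=\bigoplus X^\alpha/Q$: every class is represented by a single $\iota^\alpha(x^\alpha)$ for some $\alpha$ (using directedness to merge finitely many summands), and $\iota^\alpha(x^\alpha)=0$ in $X^\infty$ if and only if there is $\beta\geq\alpha$ with $\iota^{\beta\alpha}(x^\alpha)=0$ in $X^\beta$. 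The inclusion $\text{im}\,\phi^\infty\subseteq\ker\psi^\infty$ is immediate from $\psi^\infty\circ\phi^\infty=(\psi\circ\phi)^\infty=0$. For the reverse inclusion, take $\xi\in\ker\psi^\infty$, write $\xi=\iota^\alpha(b^\alpha)$; then $\psi^\infty(\xi)=\iota^{\alpha'}(\psi^\alpha(b^\alpha))=0$, so by the vanishing criterion there is $\beta\geq\alpha$ with $\psi^\beta(\iota^{\beta\alpha}b^\alpha)=0$, i.e.\ $\iota^{\beta\alpha}b^\alpha\in\ker\psi^\beta=\text{im}\,\phi^\beta$; choosing a preimage $a^\beta\in A^\beta$ gives $\phi^\infty(\iota^\beta(a^\beta))=\iota^\beta(\iota^{\beta\alpha}b^\alpha)=\iota^\alpha(b^\alpha)=\xi$. (Here one is slightly sloppy about the index map of the homomorphism $\Phi$; since in all applications the systems are over the same $M$ with $\phi=\text{id}$, I would simply state the theorem in that case, which is what the later sections use.)

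For the inverse limit, I would use the model $X_\infty=\{(x_\alpha)\in\prod X_\alpha\mid\pi_{\alpha\beta}x_\beta=x_\alpha\}$ with projections $\pi_\alpha$. Left exactness amounts to two things: $\phi_\infty$ is injective, and $\text{im}\,\phi_\infty=\ker\psi_\infty$. Injectivity of $\phi_\infty$ follows componentwise: if $\phi_\infty(a)=(\phi_\alpha a_\alpha)_\alpha=0$ then each $\phi_\alpha a_\alpha=0$, and each $\phi_\alpha$ is injective by exactness at $A^\alpha$ (the $0\to A^\alpha$ on the left), hence $a=0$. The inclusion $\text{im}\,\phi_\infty\subseteq\ker\psi_\infty$ is again $\psi_\infty\phi_\infty=0$. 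For the reverse inclusion, take $b=(b_\alpha)\in\ker\psi_\infty$, so $\psi_\alpha b_\alpha=0$ for every $\alpha$, hence $b_\alpha\in\ker\psi_\alpha=\text{im}\,\phi_\alpha$; injectivity of $\phi_\alpha$ gives a \emph{unique} $a_\alpha\in A_\alpha$ with $\phi_\alpha a_\alpha=b_\alpha$. The only nontrivial point is to check that $(a_\alpha)$ lies in $A_\infty$, i.e.\ $\pi^A_{\alpha\beta}a_\beta=a_\alpha$: apply $\phi_\alpha$ to both sides and use the commuting square $\phi_\alpha\pi^A_{\alpha\beta}=\pi^B_{\alpha\beta}\phi_\beta$ together with $\pi^B_{\alpha\beta}b_\beta=b_\alpha$, then invoke uniqueness of the preimage under the injective $\phi_\alpha$. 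This yields $a\in A_\infty$ with $\phi_\infty(a)=b$.

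I do not expect a genuine obstacle here; the content is entirely formal. The only place that requires a little care is the representative/vanishing lemma for the direct limit (merging finitely many summands via directedness and the characterization of the zero class), which I would state and prove as a preliminary; everything else is diagram-chasing. I would also remark, as is standard, why the inverse limit is only \emph{left} exact — surjectivity of $\psi_\infty$ can fail because a coherent family of preimages $c_\alpha=\psi_\alpha b_\alpha$ need not lift to a coherent family of $b_\alpha$'s, the obstruction being measured by $\varprojlim^1$ of the kernel system — but since the theorem as stated (and as used later, e.g.\ in Theorem \ref{theolimits} and Corollary \ref{XX}) only claims left exactness, no further work is needed.
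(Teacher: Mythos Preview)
Your proposal is correct and follows the standard textbook argument. Note that the paper itself gives no proof of this theorem at all --- it simply cites \cite{eilenberg} and \cite{bourbaki2} and moves on --- so your proof is essentially the one found in those references.
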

\begin{rem}
 The inverse limit is in general not an exact functor (see \cite{bourbaki2} for an example). However, it is exact if $R$ is a field and all $R$-modules $A_\alpha,B_\alpha,C_\alpha$ have finite dimension.
\end{rem}
\begin{theo}[\textbf{\cite{eilenberg}, Thm.\ 4.13/Cor.\ 4.14, Thm 3.15/Cor 3.16}]\label{cofinal}
 Let $(X,\pi),(X,\iota)$ be an inverse/direct system over the directed set $M_X$ and let $M_Y\subset M_X$ be a cofinal subset. Let $(Y,\pi),(Y,\iota)$ be the restricted systems over $M_Y$, i.e.\ $Y_\alpha=X_\alpha$ resp.\ $Y^\alpha=X^\alpha$ for all $\alpha\in M_Y\subset M_X$. Then the limits of the systems coincide, i.e.\
 \[\lim_{\longleftarrow} X_\alpha = \lim_{\longleftarrow} Y_\alpha \qquad\qquad\text{ and }\qquad\qquad \lim_{\longrightarrow} X^\alpha=\lim_{\longrightarrow} Y^\alpha.\]
\end{theo}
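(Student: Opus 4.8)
The plan is to handle the inverse and the direct case separately, in each case exhibiting an explicit $R$-linear isomorphism between the limit formed over $M_X$ and the limit formed over the cofinal subset $M_Y$, and to verify well-definedness using cofinality together with the directedness of $M_Y$.

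For the inverse limit, I would first note that restriction of families gives a canonical $R$-linear map $r\colon X_\infty\to Y_\infty$, $(x_\alpha)_{\alpha\in M_X}\mapsto(x_\alpha)_{\alpha\in M_Y}$, which is well-defined because the compatibility relations defining $Y_\infty$ form a subset of those defining $X_\infty$. Injectivity of $r$ is immediate: if $x_\alpha=0$ for all $\alpha\in M_Y$ and $\alpha\in M_X$ is arbitrary, pick $\beta\in M_Y$ with $\alpha\le\beta$ (cofinality) and conclude $x_\alpha=\pi_{\alpha\beta}(x_\beta)=0$. For surjectivity, given $(y_\alpha)_{\alpha\in M_Y}\in Y_\infty$, define for each $\alpha\in M_X$ an element $x_\alpha:=\pi_{\alpha\beta}(y_\beta)$, where $\beta\in M_Y$ is chosen with $\alpha\le\beta$; independence of the choice of $\beta$ follows by taking a common upper bound $\gamma\in M_Y$ of two candidates $\beta_1,\beta_2$ and using the transitivity identity $\pi_{\alpha\beta_i}\pi_{\beta_i\gamma}=\pi_{\alpha\gamma}$. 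One then checks $(x_\alpha)_{\alpha\in M_X}\in X_\infty$ (again via common upper bounds in $M_Y$) and that $r$ sends this family to $(y_\alpha)$, using $\pi_{\alpha\alpha}=\mathrm{id}$ for $\alpha\in M_Y$.

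For the direct limit, the cleanest route is the universal property already recorded in the excerpt. The maps $\iota^\alpha\colon X^\alpha\to X^\infty$ with $\alpha\in M_Y$ form a compatible family over $M_Y$, so there is a unique $R$-linear $f\colon Y^\infty\to X^\infty$. Conversely, for $\alpha\in M_X$ choose $\beta\in M_Y$ with $\alpha\le\beta$ and set $\tau^\alpha:=\iota_Y^{\beta}\circ\iota^{\beta\alpha}\colon X^\alpha\to Y^\infty$, where $\iota_Y^{\beta}$ denotes the structure map of the restricted system into $Y^\infty$; independence of $\beta$ and the compatibility $\tau^\alpha=\tau^{\alpha'}\iota^{\alpha'\alpha}$ follow once more from common upper bounds in $M_Y$ and the identity $\iota^{\gamma\alpha}=\iota^{\gamma\beta}\iota^{\beta\alpha}$, so the universal property of $X^\infty$ yields $g\colon X^\infty\to Y^\infty$. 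That $f$ and $g$ are mutually inverse is checked on the generating images $\iota^\alpha(x^\alpha)$ and $\iota_Y^{\beta}(x^\beta)$. Alternatively one argues directly with representatives: surjectivity holds because every class in $X^\infty$ has a representative that can be pushed forward into some $X^\beta$ with $\beta\in M_Y$, and injectivity holds because if two elements of $\bigoplus_{M_Y}X^\alpha$ become identified in $X^\infty$, the relation witnessing this already takes place over some $\gamma\in M_X$, which may be replaced by an upper bound lying in $M_Y$.

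Apart from routine bookkeeping, the only place where care is genuinely needed is the well-definedness of the constructed maps $g$ and of the section of $r$: there one must combine cofinality (to obtain some $\beta\in M_Y$ above a given $\alpha\in M_X$) with directedness of $M_Y$ (to obtain a common upper bound of two such $\beta$'s) and then invoke the cocycle identities $\pi_{\alpha\gamma}=\pi_{\alpha\beta}\pi_{\beta\gamma}$ and $\iota^{\gamma\alpha}=\iota^{\gamma\beta}\iota^{\beta\alpha}$. I expect no serious obstacle beyond keeping the indices straight; the statement is essentially formal, and the quasi-order (rather than partial order) on $M_X$ causes no difficulty since all arguments go through upper bounds rather than suprema.
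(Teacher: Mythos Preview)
Your proposal is correct. For the inverse limit your argument is essentially identical to the paper's: the paper also uses the restriction map and constructs its inverse by $x_\alpha:=\pi_{\alpha\beta}(y_\beta)$ for some $\beta\in M_Y$ with $\alpha\le\beta$, checking well-definedness via directedness of $M_Y$.

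For the direct limit there is a mild difference in presentation. The paper works directly with the explicit quotient description $\bigoplus X^\alpha/Q_X$: it defines $\phi\colon Y^\infty\to X^\infty$ as the obvious composite $\bigoplus Y^\alpha/Q_Y\hookrightarrow\bigoplus X^\alpha/Q_Y\twoheadrightarrow\bigoplus X^\alpha/Q_X$, and constructs the inverse $\psi$ by pushing any finite representative $\sum x^{\alpha_i}$ up to a common $\beta\in M_Y$ via the $\iota^{\beta\alpha_i}$. Your primary route via the universal property is cleaner conceptually (no representatives to manage, well-definedness of $g$ is automatic once the $\tau^\alpha$ are shown to be compatible), while the paper's explicit approach has the small advantage of making the isomorphism concrete on elements. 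Your ``alternatively'' paragraph is exactly the paper's argument, so you have both in hand.
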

\begin{proof}~
 \begin{itemize}
  \item[a)]\underline{inverse limits}: Consider the $R$-linear map  
  \[ \phi: \lim_{\longleftarrow}X_\alpha \rightarrow \lim_{\longleftarrow} Y_\alpha,\qquad(x_\alpha)_{\alpha\in M_X}\mapsto (x_\alpha)_{\alpha\in M_Y},\]
  which deletes in the family $(x_\alpha)$ all elements with $\alpha\not\in M_Y$. Define a map
  \begin{align*}
  \psi:\lim_{\longleftarrow} Y_\alpha&\rightarrow \lim_{\longleftarrow} X_\alpha,\\
   (y_\alpha)_{\alpha\in M_Y}&\mapsto(x_\alpha)_{\alpha\in M_X},\qquad x_\alpha:=\begin{cases}y_\alpha & \text{if }\alpha\in M_Y\\ \pi_{\alpha\beta}(y_\beta) &\text{if }\alpha\not\in M_Y,\,\beta\in M_Y,\,\beta\geq \alpha. \end{cases}
  \end{align*}
  As $M_Y\subset M_X$ is cofinal, there exists for any $\alpha\in M_X$ a $\beta$ such that $\alpha\leq \beta$. As $(Y,\pi)$ is an inverse system over the directed set $M_Y$, the definition of $x_\alpha$ does not depend on the particular choice of $\beta$. The map $\psi$ is hence well-defined. Obviously, $\psi$ is also $R$-linear and satisfies $\phi\circ\psi=id|_{X_\infty}$ and $\psi\circ \phi=id|_{Y_\infty}$. In particular, $\phi$ is an isomorphism and the two limits do therefore coincide.
  \item[b)]\underline{direct limits}: Consider the following composition of inclusion and projection:
  \[\lim_{\longrightarrow} Y^\alpha= \raisebox{.2em}{$\bigoplus Y^\alpha$}\left/\raisebox{-.2em}{$Q_Y$}\right.\hookrightarrow\raisebox{.2em}{$\bigoplus X^\alpha$}\left/\raisebox{-.2em}{$Q_Y$}\right.\twoheadrightarrow \raisebox{.2em}{$\bigoplus X^\alpha$}\left/\raisebox{-.2em}{$Q_X$}\right.=\lim_{\longrightarrow} X^\alpha.\]
  It yields an $R$-linear map ${\displaystyle\phi:\lim_{\longrightarrow} Y^\alpha\rightarrow \lim_{\longrightarrow} X^\alpha,}\;\big[\sum y^\alpha\big]_{Q_Y}\mapsto\big[\sum y^\alpha\big]_{Q_X}$.\\
  Define a map $\displaystyle\psi : \lim_{\longrightarrow} X^\alpha\rightarrow\lim_{\longrightarrow} Y^\alpha$ as follows. Any class $\displaystyle\xi\in\lim_{\longrightarrow} X^\alpha$ has a representative $\big[\sum_{i=1}^n x^{\alpha_i}\big]_{Q_X}$ with $x^{\alpha_i}\in X^{\alpha_i}$. As $M_Y\subset M_X$ is cofinal, we may choose $\beta\in M_Y$ such that $\beta\geq \alpha_i$ for all $i$. Then, $\xi$ is also represented by $\big[\sum_{i=1}^n \iota^{\beta\alpha_i}(x^{\alpha_i})\big]_{Q_X}$. Now we set
  \[\textstyle\psi(\xi)=\psi\left(\big[\sum_{i=1}^n x^{\alpha_i}\big]_{Q_X}\right) =\Big[\sum_{i=1}^n \iota^{\beta\alpha_i}(x^{\alpha_i})\Big]_{Q_Y}.\]
  It is not to difficult to see that this definition does not depend on the particular choice of $\beta$ nor on the choice of the representatives for $\xi$. The map $\psi$ is hence well-defined. Obviously, $\psi$ is also an $R$-linear map and satisfies $\phi\circ\psi=id|_{Y^\infty}$ and $\psi\circ\phi=id|_{X^\infty}$. In particular, $\phi$ is an isomorphism and the limits do coincide.\qedhere
 \end{itemize}
\end{proof}
\begin{rem}
 Note that any direct/inverse limit of $R$-homomorphisms does also only depend on cofinal subsets.
\end{rem}
\begin{defn}
 A \textbf{bidirect system} $(X,\pi,\iota)$ of $R$-modules over two directed sets $M$ and $N$ consists of a function which attaches to each pair $(\alpha,\beta)\in M\times N$ an $R$-module $X^\beta_\alpha$, to each triple $(\alpha_1,\alpha_2,\beta)\in M^2\times N$ with $\alpha_1\leq\alpha_2$ an $R$-homomorphism
 \[\pi^\beta_{\alpha_1\alpha_2}: X^\beta_{\alpha_2}\rightarrow X^\beta_{\alpha_1},\]
 to each triple $(\alpha,\beta_1,\beta_2)\in M\times N^2$ with $\beta_1\leq \beta_2$ an $R$-homomorphism
 \[\iota^{\beta_2\beta_1}_{\alpha}: X^{\beta_1}_\alpha\rightarrow X^{\beta_2}_\alpha\]
 such that for each fixed $\beta$ the tuple $(X^\beta,\pi^\beta)$ is an inverse system over $M$ and for each fixed $\alpha$ the tuple $(X_\alpha,\iota_\alpha)$ is a direct system over $N$. Moreover, for each quadruple $\alpha_1\leq\alpha_2,\,\beta_1\leq\beta_2$ the following diagram has to commute:
 \[\begin{xy}\xymatrix{
    \text{\Large $X$}^{\beta_1}_{\alpha_2} \ar[rr]^{\text{\normalsize $\pi$}^{\beta_1}_{\alpha_1\alpha_2}} \ar[d]_{\text{\normalsize $\iota$}^{\beta_2\beta_1}_{\alpha_2}} && \text{\Large $X$}^{\beta_1}_{\alpha_1} \ar[d]^{\text{\normalsize $\iota$}^{\beta_2\beta_1}_{\alpha_1}}\\
    \text{\Large $X$}^{\beta_2}_{\alpha_2} \ar[rr]^{\text{\normalsize $\pi$}^{\beta_2}_{\alpha_1\alpha_2}} && \text{\Large $X$}^{\beta_2}_{\alpha_1}
   }\end{xy}.\]
\end{defn}
It follows that $(X_\infty,\iota_\infty)$ is a direct system over $N$ and that $(X^\infty,\pi^\infty)$ is an inverse system over $M$. Note that in general the limits of these two systems are not equal. In other words direct and inverse limit do not commute, i.e.\ 
   \[\lim_{\longrightarrow}\lim_{\longleftarrow} X \neq \lim_{\longleftarrow}\lim_{\longrightarrow} X.\]
\subsection{Abstract Floer theory}
In this section, we present an axiomatic model for Morse and Floer theory over possibly non-compact manifolds. We follow \cite{FraCie2}, who claim that it all goes back to a suggestion made by D. Salamon. The Main Theorem \ref{theolimits} states that the two basic approaches in this setup - working over a Novikov completion or taking limits - yields the same homology, i.e.\ that $\displaystyle FH\cong \lim_{\longrightarrow}\lim_{\longleftarrow} FH^{[a,b]}$. Our proof of this crucial theorem is less abstract (compared to \cite{FraCie2}, though perhaps tedious) and completely selfcontained.
\begin{defn}
 A Floer triple over a ring $R$ is a tuple $\mathcal{F}=(C,f,m)$
 consisting of a set $C$, a function $f: C\rightarrow \mathbb{R}$ and a function $m:C\times C\rightarrow R$ such that the following conditions hold.
 \begin{enumerate}
  \item[i.] The set $C^b_a:=\{c\in C\,|\,a\leq f(c)\leq b\}$ is finite for each $-\infty<a\leq b<\infty$.
  \item[ii.] If $m(c_1,c_2)\neq 0$ then $f(c_1)\leq f(c_2)$.
  \item[iii.] For all $c_1,c_3\in C$ holds $\displaystyle\sum_{c_2\in C}m(c_1,c_2)\cdot m(c_2,c_3)=0$.
 \end{enumerate}
\end{defn}
\begin{rem}~
 \begin{itemize}
  \item Assertions \textit{i.} and \textit{ii.} assure that the sum in \textit{iii.} is actually finite.
  \item We call $f$ the action function, call the elements of $C$ critical points of action $f(c)$ and call $C^b_a$ the set of critical points in the action window $[a,b]$. The value $m(c_1,c_2)$ represents the count (with signs) of Floer cylinders with asymptotics $c_1$ and $c_2$.
  \item The condition $f(c_1)\leq f(c_2)$ in assertion \textit{ii,} reflects the fact that we also include Morse-Bott theories. For pure Morse theories replace ``$\leq$'' by ``$<$''.
  \item Assertion \textit{i.} implies that the action spectrum $spec(f):=f(C)\subset \mathbb{R}$ is closed and discrete. We can hence find a monotone injective map from $f(C)$ to $\mathbb{Z}$. With the help of this map we will henceforth assume that in fact
  \[f(C)\subset \mathbb{Z}.\]
  This is solely for notational reasons and does not effect the generality of our theorems. Under this assumption, the first assertion takes the following form
  \textit{
  \begin{itemize}
   \item[i'.] For every $a\in\mathbb{Z}$, the set $C^a_a=f^{-1}(a)=\{c\in C\,|\,f(c)=a\}$ is finite.
  \end{itemize}}
  \item Note the changed notation for the action window in this chapter. In all other chapters we write $C^{(a,b)}$ instead of $C^b_a$. In particular, the lower index denotes the lower end of the action window and not the grading with respect to the boundary operator, which plays only a minor role in this chapter.
  \item Assertion \textit{iii.} guarantees that the formula $dc_0:=\sum_{c\in C} m(c,c_0)\cdot c$ will define a boundary operator.
 \end{itemize}
\end{rem}
\begin{defn}~
 For $a\leq b\in \mathbb{Z}$ let $FC^b_a:=C^b_a\otimes R$ be the free $R$-module generated by $C^b_a$. For any $a\leq c\leq b$ let
 \[\pi^b_{ca}:FC^b_a\longrightarrow FC^b_c\cong \raisebox{.2em}{$FC^b_a$}\left/\raisebox{-.2em}{$FC^{c-1}_a$}\right.\qquad\text{ and }\qquad \iota^{bc}_a: FC^c_a\longrightarrow FC^b_a\cong FC^c_a\oplus FC^b_{c+1}\]
 be the natural projection resp.\ inclusion.
\end{defn}
Obviously, $(FC^b_a,\pi,\iota)$ is a bidirect system over $\mathbb{Z}\times\mathbb{Z}$, where the quasi order in the upper index is ``$\leq$'', while it is ``$\geq$'' on the lower one.
\begin{defn}
 We set $\quad\displaystyle FC^b:=\lim_{\underset{a}{\longleftarrow}} FC^b_a\qquad\text{ and }\qquad FC:=\lim_{\underset{b}{\longrightarrow}} FC^b=\lim_{\underset{b}{\longrightarrow}}\lim_{\underset{a}{\longleftarrow}}FC^b_a.$
\end{defn}
Note that $s\in FC^b$ can be interpreted as a formal sum 
$\quad \displaystyle s=\sum_{a\leq b}s_a,\; s_a\in FC^a_a.$\medskip\\
Then, $s\in FC$ is also interpreted as a formal sum $\displaystyle \quad s=\sum_{a\in\mathbb{Z}}s_a,\; s_a\in FC^a_a$.\medskip\\
As $FC$ is obtained by a direct sum, we find for the second sum that there exists a $b\in\mathbb{Z}$ such that $s_a=0$ for all $a>b$. Therefore, we write the elements of $FC$ also as $s=\sum_{a\leq b}s_a$. Note that the maps $\pi$ and $\iota$ extend to the limits and we have in particular
\[\pi^b_c\,:\, FC^b\rightarrow FC^b_c,\qquad \sum_{a\leq b} s_a\mapsto \sum_{a=c}^b s_a.\]
We call the summands $s_a$ in the representation of $s\in FC$ the \textbf{\textit{$a$-th coefficient}} of $s$ and we use for the $A$-th coefficient of $s$ the notation 
\[K_A(s)=K_A\Big(\sum_{a\leq b}s_a\Big):=s_A.\]
Note that $K_a:FC\rightarrow FC^a_a$ as a map is well-defined and linear. The following important fact is an easy consequence of the definition of $FC$.
\begin{lemme}
 Any sequence of coefficients $(s_a)_{a\in\mathbb{Z}}$ with $s_a=0$ for all $a>b$ with some $b\in\mathbb{Z}$ determines uniquely an element in $FC$.
\end{lemme}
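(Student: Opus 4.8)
The plan is to unwind the definitions of $FC$ as an iterated limit and to produce the claimed element of $FC$ from its prescribed coefficients, then check uniqueness by linearity of the coefficient maps $K_a$. First I would fix a sequence $(s_a)_{a\in\mathbb{Z}}$ with $s_a\in FC^a_a$ and $s_a=0$ for all $a>b$. Recall that $FC=\lim_{\longrightarrow b}FC^b$ with $FC^b=\lim_{\longleftarrow a}FC^b_a$, so to name an element of $FC$ it suffices to name an element of some $FC^b$ and pass it to the direct limit via the canonical map $\iota^b\colon FC^b\to FC$.

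Next I would construct the required element of $FC^b$. By definition of the inverse limit, an element of $FC^b=\lim_{\longleftarrow a}FC^b_a$ is a family $(t_a)_{a\le b}$ with $t_a\in FC^b_a$ which is compatible under the projections $\pi^b_{ca}$, i.e.\ $\pi^b_{ca}(t_a)=t_c$ for $a\le c\le b$. I would set $t_a:=\iota^{b}_{\phantom{b}}$ applied to the finite sum $\sum_{a\le j\le b}s_j$, viewed inside $FC^b_a\cong\bigoplus_{a\le j\le b}FC^j_j$; concretely $t_a=\sum_{j=a}^b s_j$. Since each $C^b_a$ is finite (condition \textit{i.}) these are genuine elements of the free modules $FC^b_a$, and the compatibility $\pi^b_{ca}(t_a)=t_c$ is immediate because $\pi^b_{ca}$ just forgets the components with index $<c$, which are precisely the summands $s_a,\dots,s_{c-1}$. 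Hence $(t_a)_{a\le b}$ defines $t\in FC^b$, and $s:=\iota^b(t)\in FC$ is an element whose $a$-th coefficient $K_a(s)$ is, by the description of $\pi^b_a\colon FC^b\to FC^a_a$ given just above, exactly $s_a$ for every $a$.

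For uniqueness, suppose $s,s'\in FC$ both have $K_a(s)=K_a(s')=s_a$ for all $a\in\mathbb{Z}$. Both lie in the image of some common $FC^{b'}$ (take $b'$ large enough to dominate the truncation levels of both), and an element of $FC$ is determined by its images $\pi^{b'}_c(s)\in FC^{b'}_c$ for all $c\le b'$, since $FC^{b'}=\lim_{\longleftarrow}FC^{b'}_c$. But $\pi^{b'}_c(s)=\sum_{j=c}^{b'}K_j(s)$, which depends only on the coefficients $K_j(s)$; hence $\pi^{b'}_c(s)=\pi^{b'}_c(s')$ for all $c$, so $s=s'$ in $FC^{b'}$ and therefore in $FC$.

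I do not expect a serious obstacle here; the only point requiring a little care is keeping the two limit directions straight — that the lower index $a\to-\infty$ is an inverse limit (projections onto smaller action windows) while the upper index $b\to\infty$ is a direct limit (inclusions into larger windows) — and checking that the compatibility conditions for the inverse system are satisfied by the partial sums $\sum_{j=a}^b s_j$. The finiteness condition \textit{i.}\ is what guarantees the $s_a$ live in free $R$-modules, and the eventual vanishing $s_a=0$ for $a>b$ is exactly what lets the family descend from $FC^b$ into the direct limit $FC$; both hypotheses are used and neither is superfluous.
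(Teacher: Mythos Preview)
Your proposal is correct and is precisely the detailed unpacking of what the paper leaves implicit: the paper states only that the lemma is ``an easy consequence of the definition of $FC$'' and gives no further argument, so you have faithfully filled in the existence and uniqueness steps by tracing through the inverse-then-direct limit construction. There is nothing to compare; your proof is the natural (and essentially only) one.
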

 Now let us take a strictly decreasing sequence $(s^b)_{b\leq b_0}\subset FC,\,s^b=\sum_{a\leq b}s^b_a\in FC^b$. We associate to the formal Laurent series $\sum_{b\leq b_0} s^b$ a value in $FC$ by the well-known formula
\[\sum_{b\leq b_0} s^b:=\sum_{a\leq b_0}\sum_{b=a}^{b_0} s^b_a=\sum_{a\leq b_0}\sum_{b=a}^{b_0} K_a(s^b).\]
Observe that the inner sum is finite thus determining an element $s_a\in FC^a_a$. The value of the infinite sum $\sum s^b$ is then the unique element in $FC$ represented by the sequence of coefficients $(s_a)_{a\leq b_0}$. The well-known features of this infinite sum are summarized in the following lemma.\pagebreak
\begin{lemme}\label{lemX}
 For $b_0\in\mathbb{Z}$ and $b\leq b_0$ let $s^b, t^b \in FC^b$. Then
 \begin{itemize}
  \item[(a)] $\displaystyle\sum_{b\leq b_0} s^b=s^{b_0}+s^{b_0-1}+\dots+s^{b_0-k+1}+\sum_{b\leq b_0-k} s^b$\hfill(associativity)
  \item[(b)] $\displaystyle \sum_{b\leq b_0}(s^b+t^b)=\sum_{b\leq b_0}s^b\;+\sum_{b\leq b_0} t^b$\hfill (countable associativity)
  \item[(c)] $\displaystyle s^{b_0}=\sum_{b\leq b_0}(s^b-s^{b-1})$ \hfill (telescope sum)
  \item[(d)] If $d:FC\rightarrow FC$ is a homomorphism with $d(FC^b)\subset FC^b$, then \[d\Big(\sum_{b\leq b_0} s^b\Big)=\sum_{b\leq b_0}d(s^b).\tag{\textit{countable linearity}}\]
 \end{itemize}
\end{lemme}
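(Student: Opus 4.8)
All four assertions are formal consequences of the definition of the infinite sum $\sum_{b\le b_0} s^b := \sum_{a\le b_0} \sum_{b=a}^{b_0} K_a(s^b)$ together with the fact (the preceding Lemma) that an element of $FC$ is uniquely determined by its sequence of coefficients $(s_a)_{a\in\mathbb{Z}}$, which must vanish for $a$ large. So the uniform strategy is: for each identity, compute the $A$-th coefficient $K_A$ of both sides and check they agree for every $A\in\mathbb{Z}$; since $K_A$ is well-defined and linear on $FC$ and the coefficient sequences involved are bounded above, equality of all coefficients gives equality in $FC$.

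First I would record the key compatibility: $K_A\big(\sum_{b\le b_0} s^b\big) = \sum_{b=A}^{b_0} K_A(s^b)$, which is immediate from the definition (only the $a=A$ term of the outer sum contributes, and the inner sum is finite). For (a), the right-hand side is a finite sum of genuine elements of $FC$ plus $\sum_{b\le b_0-k} s^b$; applying $K_A$ and using linearity of $K_A$ on finite sums gives $\sum_{b=b_0-k+1}^{b_0} K_A(s^b) + \sum_{b=A}^{b_0-k} K_A(s^b)$ when $A\le b_0-k$, and just the first (finite) piece when $A> b_0-k$, which in both cases equals $\sum_{b=A}^{b_0} K_A(s^b) = K_A\big(\sum_{b\le b_0}s^b\big)$. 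For (b), $K_A$ of the left side is $\sum_{b=A}^{b_0} K_A(s^b+t^b) = \sum_{b=A}^{b_0}K_A(s^b) + \sum_{b=A}^{b_0}K_A(t^b)$, which is $K_A$ of the right side; here one also checks the coefficient sequence of $s^b+t^b$ is still eventually zero above, so the infinite sum on the left is defined. For (c), with $u^b := s^b - s^{b-1}$ one has $u^b\in FC^b$ (since $s^{b-1}\in FC^{b-1}\subset FC^b$), and $\sum_{b=A}^{b_0} K_A(u^b) = \sum_{b=A}^{b_0}\big(K_A(s^b)-K_A(s^{b-1})\big)$ telescopes to $K_A(s^{b_0}) - K_A(s^{A-1}) = K_A(s^{b_0})$, because $s^{A-1}\in FC^{A-1}$ has vanishing $A$-th coefficient. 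For (d), since $d$ preserves the filtration $FC^b$ it commutes with the projections $\pi^b_c$ in the sense that $d$ restricts to $FC^b$; writing $\sum_{b\le b_0}s^b$ via its coefficient sequence $(s_a)$, one has $s^{b_0} = \pi^{b_0}(\sum_b s^b) + (\text{tail in }FC^{\le b_0-1})$ — more cleanly, use (c): $\sum_{b\le b_0}d(s^b) = \sum_{b\le b_0} d(s^b-s^{b-1}) + \lim$-type bookkeeping; the honest route is again coefficient-wise, noting that $d$ is continuous for the filtration topology so $K_A\big(d(\sum s^b)\big)$ depends only on $\pi^{b_0}_A(\sum s^b)$, a finite combination of the $s^b$, on which $d$ is ordinary-linear.

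I expect (d) to be the one subtle point: ``$d$ commutes with the infinite sum'' is really the statement that $d$ is continuous with respect to the inverse-limit (Novikov) topology, and this uses the hypothesis $d(FC^b)\subset FC^b$ in an essential way — without it the partial sums' images need not converge. The clean way to phrase it is: fix $A$; the $A$-th coefficient of $d(s)$ for any $s\in FC$ equals the $A$-th coefficient of $d(\pi^A(s))$ by the filtration hypothesis applied to $s - \pi^A(s) \in FC^{A-1}$ (whose image under $d$ lies in $FC^{A-1}$, hence has zero $A$-th coefficient); then $\pi^A\big(\sum_{b\le b_0} s^b\big) = \sum_{b=A}^{b_0}\pi^A(s^b)$ is a finite sum, so $K_A\big(d(\sum s^b)\big) = K_A\big(d(\sum_{b=A}^{b_0}\pi^A(s^b))\big) = \sum_{b=A}^{b_0} K_A(d(s^b)) = K_A\big(\sum_{b\le b_0} d(s^b)\big)$, the last step using that $d(s^b)\in FC^b$ so the right-hand infinite sum is defined. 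The rest is routine bookkeeping with coefficient sequences and requires no further ideas.
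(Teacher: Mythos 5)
Your proposal is correct and follows essentially the same strategy as the paper: compute the $A$-th coefficient $K_A$ of both sides, use the key identity $K_A\bigl(\sum_{b\le b_0}s^b\bigr)=\sum_{b=A}^{b_0}K_A(s^b)$, and check equality for all $A$. The only minor variation is in (c), where you telescope directly at the level of coefficients, whereas the paper first derives (c) from (a) and (b) at the level of $FC$; both are one-line arguments and amount to the same cancellation. Your identification of the crucial point in (d) — that $d(FC^{A-1})\subset FC^{A-1}$ forces the tail $d\bigl(\sum_{b\le A-1}s^b\bigr)$ to have vanishing $A$-th coefficient — is exactly the step $(\ast)$ in the paper's proof. (Your invocation of ``$\pi^A(s^b)$'' in the final display of (d) is a slight abuse of notation, since $\pi$ maps to a quotient rather than a subspace, but the intended splitting $\sum_{b\le b_0}s^b = \sum_{b=A}^{b_0}s^b + \sum_{b\le A-1}s^b$ is clear and correct.)
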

\begin{proof} We calculate
\begin{align*}
 \text{(a) }  \mspace{150mu} K_A\Big(\sum_{b\leq b_0} s^b\Big) &= K_A\Big(\sum_{a\leq b_0}\sum_{b=a}^{b_0} s^b_a\Big) = \sum_{b=A}^{b_0}s^b_A\\
 K_A\Big(s^{b_0}+\dots+s^{b_0-k+1}+\sum_{b\leq b_0-k}s^b\Big) &= K_A\big(s^{b_0}\big)+\dots+K_A\big(s^{b_0-k+1}\big)+K_A\Big(\sum_{b\leq b_0-k}s^b\Big)\\
 &=s^{b_0}_A+\dots+s^{b_0-k+1}_A+\sum_{b=A}^{b_0-k}s^b_A=\sum_{b=A}^{b_0}s^b_A
\end{align*}
So all coefficients of the left side of equation (a) coincide with all coefficients of the right side. Hence they represent the same element in $FC$. The proof of the subsequent assertions follows the same scheme.
\begin{align*}
 &\text{(b)}& 
 K_A\Big(\sum_{b\leq b_0}(s^b+t^b)\Big) &= K_A\Big(\sum_{a\leq b_0}\sum_{b=a}^{b_0} (s^b_a+t^b_a)\Big) \qquad= \sum_{b=A}^{b_0}(s^b_A+t^b_A)\\
 && K_A\Big(\sum_{b\leq b_0} s^b\, + \sum_{b\leq b_0} t^b\Big) &= K_A\Big(\sum_{b\leq b_0} s^b\Big) + K_A\Big(\sum_{b\leq b_0} t^b\Big) = \sum_{b=A}^{b_0} s_A^b + \sum_{b=A}^{b_0}t^b_A\\
&\text{(c)}& \Bigg.
 \sum_{b\leq b_0}(s^b-s^{b-1})&\overset{(b)}{=}\sum_{b\leq b_0} s^b-\sum_{b\leq b_0-1}s^b \overset{(a)}{=} s^{b_0}+\sum_{b_0-1}s^b-\sum_{b_0-1}s^b=s^{b_0}\\
 &\text{(d)}&  \Bigg.
 K_A\Big[ d\Big(\sum_{b\leq b_0}s^b\Big)\Big] &\overset{(a)}{=} K_A\Big[d\Big(s^{b_0}+\dots+s^{A}+\sum_{b\leq A-1} s^b\Big)\Big]\\
 &&&=K_A\Big[d(s^{b_0})+\dots+d(s^A)+d\Big(\sum_{b\leq A} s^b\Big)\Big]\\
 &&&\overset{(\ast)}{=}K_A\big(d(s^{b_0})\big)+\dots+K_A\big(d(s^A)\big) \qquad=\sum_{b=A}^{b_0} K_A\big(d(s^b)\big)
\end{align*}
where $(\ast)$ holds as $d\big(FC^{A-1})\subset FC^{A-1}$ and hence $K_A\big(d\big(\sum_{b\leq A-1} s^b\big)\big)=0$.\pagebreak\\
On the other hand $\displaystyle K_A\Big(\sum_{b\leq b_0} d(s^b)\Big) = K_A\Big(\sum_{a\leq b_0}\sum_{b=a}^{b_0} K_a\big(d(s^b)\big)\Big)
 =\sum_{b=A}^{b_0} K_A\big(d(s^b)\big)$.\qedhere 
\end{proof}
 We define a boundary operator $d$ on $FC^b_a$ as the linear extension of
 \[dx:=\sum_{y\in C^b_a}m(y,x)\cdot y, \qquad x\in C^b_a.\]
 Note that assertion iii. implies that $d^2=0$, i.e.\ that $d$ really is a boundary operator. As $d$ does not increase action (due to ii.), we find that $d$ satisfies
\begin{align}\label{eqdpiiota}
 \pi^b_{ca}\circ d=d\circ \pi^b_{ca}\qquad\text{ and }\qquad \iota^{bc}_a\circ d=d\circ \iota^{bc}_a\qquad\forall\; a\leq c\leq b.
\end{align}
Therefore, $d$ induces maps on $FC^b$ and $FC$, which we will also denote with $d$. Explicitly, they are given as the infinite linear extension of
\[dx :=\sum_{y\in C} m(y,x)\cdot y,\qquad x\in C.\]
Note that ii. and iii. imply that $d^2=0$ and that (\ref{eqdpiiota}) still holds, now including $a=-\infty$ and $b=\infty$. Note that we may express the boundary operator on $FC^b_a$ as $\pi^b_a\circ d$, where $d$ is the operator on $FC$. We denote the resulting homologies by
\[ FH^b_a,\quad FH^b\quad\text{ and }\quad FH.\]
The elements of these homologies are denoted by $[s]^b_a,[s]^b$ and $[s]$, where we suppress the upper index whenever it is clear from the context.\\
As a consequence of (\ref{eqdpiiota}), we find that $\pi$ and $\iota$ descend to linear maps in homology, still denoted by $\pi$ and $\iota$:
\[\pi^b_{ca}\,:\, FH^b_a\rightarrow FH^b_c\qquad \iota^{bc}_a\,:\, FH^c_a\rightarrow FH^b_a\qquad\forall\, -\infty\leq a\leq c\leq b\leq \infty.\]
As $(FC^b_a,\pi,\iota)$ resp.\ $(FC^b,\iota)$ are bidirect resp.\ direct systems, it follows that $(FH^b_a,\pi,\iota)$ resp.\ $(FH^b,\iota)$ are bidirect resp.\ direct systems as well. Observe that we have in particular for every fixed $b_0$ and any $a\leq b_0$ the following map
\[\pi^{b_0}_a\,:\, FH^{b_0}\rightarrow FH^{b_0}_a,\qquad [s]\mapsto\Big[\sum_{b=a}^{b_0}s_b\Big]_a, \quad\text{ where }\quad s=\sum_{b\leq b_0}s_b\in FC^{b_0},\]
which satisfies $\pi^{b_0}_c=\pi^{b_0}_{ca}\pi^{b_0}_a$ for every $a\leq c$. Hence, there exists by the universal property a unique $R$-linear map
\[\phi\,:\, FH^{b_0}\rightarrow \lim_{\underset{a}{\longleftarrow}}FH^{b_0}_a,\qquad [s]\mapsto\bigg(\Big[\sum_{b=a}^{b_0}s_b\Big]_a\bigg)_{a\leq b_0}.\]
Analogously, we have for every $b\in\mathbb{Z}$ a map
\[\iota^b\,:\, FH^b\rightarrow FH,\]
which satisfies $\iota^a=\iota^b\iota^{ba}$ for every $a\leq b$. By the universal property, we hence obtain a unique $R$-linear map
\[\psi\,:\, \lim_{\underset{b}{\longrightarrow}} FH^b\rightarrow FH,\qquad \Big[\sum_{i=1}^n\sigma_i\Big]\mapsto\sum_{i=1}^n\iota^{b_i}(\sigma_i),\qquad \sigma_i\in FH^{b_i}.\]
\begin{theo}~\label{theolimits}
 \begin{itemize}
  \item[a)] The map $\phi$ is always surjective. If $R$ is a field, it is injective and yields for every $b\in\mathbb{Z}$ an isomorphism 
  \[FH^b\cong\lim_{\underset{a}{\longleftarrow}} FH^b_a.\]
  \item[b)] The map $\psi$ is an isomorphism for any $R$, i.e.\ $\quad\displaystyle FH\cong\lim_{\underset{b}\longrightarrow} FH^b.$\\
  If $R$ is a field, we have therefore $\quad\displaystyle FH \cong \lim_{\underset{b}\longrightarrow}\lim_{\underset{a}{\longleftarrow}} FH^b_a$.\\
  In the notation of the other sections, this reads as $\quad\displaystyle FH \cong \lim_{\underset{b}\longrightarrow}\lim_{\underset{a}{\longleftarrow}} FH^{(a,b)}$.
 \end{itemize}
\end{theo}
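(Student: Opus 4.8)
The plan is to prove (a) and (b) separately, in that order, since the second assertion of (b) quotes the isomorphism from (a). Throughout one uses that $FC^b=\varprojlim_a FC^b_a$ holds already at the chain level (by definition) and that every structure map $\pi^b_{ca}\colon FC^b_a\to FC^b_c$ is surjective, being a quotient projection.

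For part (a), surjectivity of $\phi$ (valid for any $R$) I would obtain by an explicit coherent lifting of representatives. Given a compatible family $\big([\sigma_a]^b_a\big)_{a\le b}\in\varprojlim_a FH^b_a$, pick a cycle $z_b$ representing $[\sigma_b]^b_b$; then, descending over $a=b-1,b-2,\dots$, replace a chosen representative $z_a'$ of $[\sigma_a]^b_a$ by $z_a:=z_a'-d w_a$ where $w_a\in FC^b_{a+1}\subseteq FC^b_a$ is chosen so that $\pi^b_{a+1,a}(z_a')-z_{a+1}=d w_a$ in $FC^b_{a+1}$ (possible because $\pi^b_{a+1,a}(z_a')$ and $z_{a+1}$ are homologous there, by the compatibility of the family). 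The family $(z_a)_a$ is then \emph{strictly} coherent, so it defines an element $s\in FC^b=\varprojlim_a FC^b_a$; it is a cycle since $\pi^b_a(ds)=d(\pi^b_a s)=d z_a=0$ for every $a$ (using $(\ref{eqdpiiota})$), and by construction $\phi([s]^b)=\big([\sigma_a]^b_a\big)_a$. For injectivity I would set up, by hand and using the telescope/countable-linearity apparatus of Lemma \ref{lemX}, the Milnor exact sequence for a tower of chain complexes with surjective structure maps: the short exact sequence $0\to FC^b\to\prod_a FC^b_a\xrightarrow{\ \mathrm{id}-\mathrm{shift}\ }\prod_a FC^b_a\to 0$ (the right-hand map is onto precisely because the $\pi^b_{ca}$ are) yields on homology $0\to{\varprojlim_a}^1 FH^b_{*+1,a}\to FH^b_*\xrightarrow{\ \phi\ }\varprojlim_a FH^b_{*,a}\to 0$. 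The right-exactness reproves surjectivity, and injectivity of $\phi$ is exactly the vanishing of the derived limit ${\varprojlim}^1$. Here the field hypothesis enters through finiteness: each $C^b_a$ is finite, so each $FC^b_a$ and hence each $FH^b_a$ is finite-dimensional over $R$, so the tower $\big(FH^b_{*+1,a}\big)_a$ is Mittag--Leffler (a decreasing chain of subspaces of a finite-dimensional space stabilizes), whence ${\varprojlim}^1=0$.

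For part (b), the conceptual shortcut is that $(FC^b,d)$ together with the chain maps $\iota^{b'b}$ is a direct system of chain complexes over $\mathbb{Z}$ with colimit $\varinjlim_b FC^b=FC$; since the direct limit functor is exact (Theorem \ref{theoindilimits}), $H_*$ commutes with it, giving $FH_*=H_*(\varinjlim_b FC^b)\cong\varinjlim_b H_*(FC^b)=\varinjlim_b FH^b_*$, and one checks the canonical isomorphism is $\psi$. For a self-contained treatment I would nonetheless argue $\psi$ directly: surjectivity, because any cycle $s\in FC$ has $s_a=0$ for $a$ large, hence lies in the subcomplex $FC^{b_0}$ for some $b_0$ (note $\iota^{b_0}$ is injective and a chain map, so $s$ is already a cycle in $FC^{b_0}$), and then $\psi\big(\big[[s]^{b_0}\big]\big)=[s]$; injectivity, because if $\sigma=[c]^b$ satisfies $\iota^b(c)=dt$ in $FC$, then $t\in FC^{b_0}$ for some $b_0\ge b$, so $\iota^{b_0 b}(c)=dt$ already in $FC^{b_0}$ by injectivity of $\iota^{b_0}$, hence $\iota^{b_0 b}(\sigma)=0$ in $FH^{b_0}$ and $\sigma$ dies in the colimit. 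Finally, combining (b) with the isomorphism of (a) applied to every $b$ (over a field) gives $FH\cong\varinjlim_b\varprojlim_a FH^b_a=\varinjlim_b\varprojlim_a FH^{(a,b)}$; by Theorem \ref{cofinal} the limits may be taken over any cofinal subsequences.

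The main obstacle is the injectivity half of (a): it is genuinely a derived-functor statement, and the real content is not the formal Milnor sequence (which I consider routine, to be written out by hand) but pinning down exactly which finiteness makes it work — that each $C^b_a$ is finite, so that field coefficients force the Mittag--Leffler condition — together with the bookkeeping check that the connecting map of the long exact sequence is the map $\phi$ in the appropriate degree. By contrast, surjectivity of $\phi$ and all of part (b) are elementary.
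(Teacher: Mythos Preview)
Your proof is correct. Surjectivity of $\phi$ and all of part (b) are essentially the paper's arguments, phrased in slightly different language (the paper builds the strict chain-level lift coefficient by coefficient using the long exact sequence (\ref{longexseq}), while you adjust full representatives by boundaries, but the underlying mechanism is identical).

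The genuine difference is in the injectivity of $\phi$. You invoke the Milnor exact sequence for the tower of chain complexes and kill ${\varprojlim}^1$ via Mittag--Leffler, using that each $FH^b_a$ is finite-dimensional over the field $R$. The paper instead argues by hand: given a cycle $s\in FC^b$ with $[\sum_{a=A}^b s_a]_A=0$ for every $A$, it constructs a primitive $\beta=\sum n_a$ inductively, where at each step the possible choices for the next coefficient $n_A$ form a decreasing sequence of non-empty affine subspaces $G_k\subset FC^A_A$, and the field hypothesis plus finiteness of $C^A_A$ force $\bigcap_k G_k\neq\varnothing$. Your approach is the standard homological-algebra packaging and is cleaner once one is willing to quote or derive the Milnor sequence; the paper's approach is more self-contained and makes the role of finiteness completely explicit (a K\"onig-lemma-style compactness). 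Both use exactly the same finiteness input, so neither is stronger than the other.
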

\begin{proof}~
 \begin{itemize}
  \item[(a)]
  \underline{claim}: $\phi$ is surjective.\\
  \underline{proof}: Let $\displaystyle(\sigma_a)_{a\leq b}\in\lim_{\longleftarrow} FH^b_a$ be arbitrary with $\sigma_a\in FH^b_a$ and $\pi_{ca}\sigma_a=\sigma_{c}$ for all $a\leq c\leq b$. We will construct inductively coefficients $s_a\in FC^a_a$ such that for all $A\leq b$ holds $\sigma_A=\big[\sum_{a=A}^b s_a\big]_A$. Then we set $s:=\sum_{a\leq b} s_a\in FC$ and calculate
  \[ds=d\sum_{a\leq b} s_a=\sum_{a\leq b} ds_a=\sum_{A\leq b}\sum_{a=A}^b K_A(ds_a)=\sum_{A\leq b}K_A\Big(d\sum_{a=A}^b s_a\Big)=0\]
  as $\big[\sum_{a=A}^b s_a\big]_A=\sigma_A$ and hence $\pi_A\big(d\sum_{a=A}^b s_a\big)=0$ in $FC^b_A$. Thus, we have that $s$ is a cycle and so that $[s]\in FH^b$ well-defined. We then see that $\phi$ is surjective, as
  \[\phi\big([s]\big)=\bigg(\Big[\sum_{a=A}^b s_{a}\Big]_A\bigg)_{A\leq b}=\big(\sigma_A\big)_{A\leq b}.\]
  For the construction of the $s_a$ first consider the following short exact sequence
  \[0\rightarrow FC^a_a\overset{\iota}{\longrightarrow} FC^b_a\overset{\pi}{\longrightarrow} FC^b_{a+1}\rightarrow 0,\]
  which yields in homology the long exact sequence
  \begin{align}\label{longexseq}
   \dots\rightarrow FH^b_{a+1}\overset{\delta}{\longrightarrow}FH^a_a\overset{\iota}{\longrightarrow}FH^b_a\overset{\pi}{\longrightarrow}FH^b_{a+1}\overset{\delta}{\longrightarrow}FH^a_a\rightarrow\dots
  \end{align}
Here, the connecting homomorphism $\delta$ is given by
\[\delta[s]^b_{a+1} :=\big[\pi_a ds\big]^a_a,\]
which is the class of the $a$-th coefficient of $ds$, as $ds\in FC^a$.\pagebreak\\ Here and in the following we suppress the indices and write $\iota$ or $\pi$ instead of $\iota^{ba}_a$ or $\pi^b_{a+1,a}$, whenever it is clear from the context.\\
Let $s_b\in FC^b_b$ be any representative of $\sigma_b$, i.e.\ $\sigma_b=[s_b]_b$. Now assume that $s_{b-1},...\,,s_{A+1}$ have already been constructed such that $\sigma_{A+1}=\big[\sum_{a=A+1}^b s_a\big]_{A+1}$. We apply (\ref{longexseq}) with $a=A$ and find $\sigma_{A+1}=\pi\sigma_A\in im(\pi)=\ker(\delta)$. Hence there exists a $n_A\in FC^A_A$ such that $\pi_A\big(d(\sum_{a=A+1}^b s_a)-dn_A\big)=0\in FC^b_A$. Thus, we find that $\big[\sum_{a=A+1}^b s_a - n_A\big]_A\in FH^b_A$ is well defined. As $\pi\big[\sum_{a=A+1}^b s_a - n_A\big]_A=\big[\sum_{a=A+1}^b s_a\big]_{A+1}=\sigma_{A+1}=\pi\sigma_A$ we find that $\sigma_A-\big[\sum_{a=A+1}^b s_a - n_A\big]_A\in\ker(\pi)=im(\iota)$. Hence there exists a $\tilde{s}_A\in FC^A_A$ such that $\sigma_A=\big[\sum_{a=A+1}^b s_a - n_A+\tilde{s}_A\big]_A$. With $s_A:=\tilde{s}_A-n_A$ we obtain $\sigma_A=\big[\sum_{a=A}^b s_a]_A$.\bigskip\\
\underline{claim}: $\phi$ is injective if $R$ is a field.\\
\underline{proof}: Assume that $\phi\big([s]\big)=0\in\displaystyle \lim_{\longleftarrow}FH^b_a$ for some $s=\sum_{a\leq b} s_a\in FC^b$. We will construct a sequence of coefficients $n_a\in FC^a_a$ such that for all $A\leq b$ holds
\[\pi_A\Big(s-d\textstyle\sum_{a=A}^b n_a\Big)=0\qquad\text{(in $ FC^b_A$)},\]
which is equivalent  to $s-d\sum_{a=A}^b n_a\in FC^{A-1}$.\\
Then we set $\beta:=\sum_{a\leq b} n_a$ and we find that
\[s-d\beta\in FC^a\;\forall a\leq b\quad\Rightarrow\quad s-d\beta\in \bigcap_{a\leq b}FC^a=\{ 0\}\quad\Rightarrow\quad s=d\beta.\]
It follows that $[s]=0\in FH^b$ and therefore that $\phi$ is injective.\medskip\\
To start, note that $\phi\big([s]\big)=\big(\big[\sum_{a=A}^b s_a\big]_A\big)_{A\leq b}=0$ is equivalent to
\[\begin{aligned}&&\forall A\leq b &:&\,0=\big[\textstyle\sum_{a=A}^b s_a\big]_A&\in FH^b_A\\
&\Leftrightarrow& \forall A\leq b\;\exists\,\beta_A\in FC^b_A &:&\,s-d\beta_A&\in FC^{A-1}.\end{aligned}\tag{$\ast$}\]
The sequence $n_b, n_{b-1},\dots$ we are going to construct will actually satisfy the following slightly stronger condition
\[\forall A\leq b\;\forall k\geq 0\;\exists \beta^k_A\in FC^A\;\;:\;\;s-d\Big[\Big(\sum_{a=A+1}^b n_a\Big) + \beta_A^k\Big]\in FC^{A-k}.\tag{$\ast\ast$}\]
For the empty sequence, condition $(\ast\ast)$ has to hold for $A=b$, which is true by $(\ast)$.\\
Now assume that $n_b,\dots,n_{A+1}$ have already been constructed such that $(\ast\ast)$ holds. Then let $G_k\subset FC^A_A$ denote the set of all $A$-coefficients $n_A$ such that there exists a $\beta_k\in FC^{A-1}$ with $s-d\big[\big(\sum_{a=A+1}^b n_a\big) +n_A+ \beta_k\big]\in FC^{A-1-k}$. Due to assumption $(\ast\ast)$, we find that $G_k\neq\emptyset$. Moreover, $G_k$ has the structure of an affine subspace of $FC^A_A$ and $G_{k+1}\subset G_k$. As $FC^A_A$ is a finite dimensional vector space ($R$ is a field and $C^A_A$ finite), we find that the common intersection is not empty, i.e.\
\[G:=\bigcap_{k\geq 0} G_k\neq\emptyset.\]
Choose any $n_A\in G$. The construction of $G$ shows that $(\ast\ast)$ holds for $n_b,\dots,n_A$.\pagebreak
\item[(b)]
\underline{claim}: $\psi$ is injective.\\
  \underline{proof}: Assume that $\psi\big(\big[\sum_{i=1}^n \sigma^i\,\big]\big)=0\in FH$ for some $\big[\sum_{i=1}^n \sigma^i\,\big]\in\displaystyle\lim_{\longrightarrow} FH^b,\linebreak \sigma^i\in FH^{b_i}$. Let $s^i\in FC^{b_i}$ be such that $\sigma^i=[s^i]^{b_i}$. It follows that $\psi\big(\big[\sum_{i=1}^n \sigma^i\,\big]\big)=\big[\sum_{i=1}^n s^i\big]=0.$
  Hence, there exists a $\beta\in FC$ such that $\sum_{i=1}^n s^i=d\beta$. Then there exists a $b_0\in\mathbb{Z}$ such that $b_0\geq b_i$ for all $i$ and $\beta\in FC^{b_0}\subset FC$. This implies that
  \[\sum_{i=1}^n \iota^{b_0b_i}\sigma^i=\Big[\sum_{i=1}^n s^i\Big]^{b_0}=\big[d\beta\big]^{b_0}=0\in FH^{b_0},\]
  which shows that $\big[\sum_{i=1}^n \sigma^i\,\big]=0\in\displaystyle\lim_{\longrightarrow}FH^b$.\bigskip\\
  \underline{claim}: $\psi$ is surjective.\\
  \underline{proof}: Consider any $[s]\in FH,\, s\in FC$. Then there exists a $b_0\in\mathbb{Z}$ such that $s\in FC^{b_0}$. As $ds=0$, we find that $s$ represents a well-defined class $[s]^{b_0}\in FH^{b_0}$. Let $\big[[s]^{b_0}\big]$ denote its class in $\displaystyle\lim_{\longrightarrow}FH^b$. Then we see that $\psi\big(\big[[s]^{b_0}\big]\big)=[s]$, thus showing that $\psi$ is surjective.\qedhere
 \end{itemize}
\end{proof}
\subsection{Reducing filtered complexes}\label{secRedFil}
 Recall from the previous section that abstract Floer theory is based on a set $C$ which generates (countably linear) the chain complex $FC$ and the homology $FH$. Now we are going to address the question to what extend we can reduce $C$ and still get the same homology $FH$. Under the assumption that $R$ is a field such that all $FH^b_a$ are vector spaces, we will show that we can replace for any $a$ the set $C^a_a$ by any basis of $FH^a_a$.\\
 The motivation behind this is the following. Given a Morse-Bott setup, $C^a_a$ is the set of all critical points of a Morse function on the critical manifold $\mathcal{N}^a$ on the action level $a$, while the groups $FH^a_a$ are the Morse homology of $\mathcal{N}^a$. The fact that we can built $FH$ from $FH^a_a$ then shows that for calculations of $FH$ we may always algebraically pretend that we have a perfect Morse function on $\mathcal{N}^a$, as the critical points of such a function form a basis of $FH^a_a$. This will be used in Section \ref{sec7.1} when we calculate the Rabinowitz-Floer homology for some Brieskorn manifolds, where we know the singular homology of the critical manifolds, but we do not know if we have perfect Morse functions.\\
 To be more explicit, let the reduced chain groups be
 \[F\mathscr{C}:=\lim_{\underset{b}{\longrightarrow}}\lim_{\underset{a}{\longleftarrow}} \textstyle\bigoplus_{c=a}^b FH^c_c\]
 and consider the following variation of the long exact sequence (\ref{longexseq}):
 \begin{equation}\label{SX}
   \dots\rightarrow FH^b_b\overset{\delta}{\longrightarrow}FH^{b-1}\overset{\iota}{\longrightarrow}FH^b\overset{\pi}{\longrightarrow}FH^b_b\overset{\delta}{\longrightarrow}FH^{b-1}\rightarrow\dots,
 \end{equation}
 where the map $\delta$ is induced by the boundary operator $d$ and given by $\delta[n_b]^b_b=[dn_b]^{b-1}$.\\
 We want to use $\delta$ to define a boundary operator on $F\mathscr{C}$. For that, let 
 \[K:=\delta(F\mathscr{C})\subset\lim_{\longrightarrow}\lim_{\longleftarrow}\textstyle \bigoplus_{c=a}^b FH^{c-1}.\]
 Using the sequence (\ref{SX}), we define below a (non-canonical) map $\Phi:K\rightarrow F\mathscr{C}$. Setting $\partial:=\Phi\circ \delta$, we will see that $\partial^2=0$. The Reduction Theorem  \ref{reductiontheo} then tells us that the homology of $\big(F\mathscr{C},\partial\big)$ is isomorphic to $FH$. Here, the isomorphism is given by a map $\Psi:\ker\delta\rightarrow FH$. We will use this map to show in Theorem \ref{theo31} that always (not only over field coefficients) $F\mathscr{C}$ generates $FH$, i.e\ that the whole homology cannot have more elements then all the singular homologies of the critical manifolds togehter.\\
 Note that one could take $\Phi=\pi$. However, the resulting homology $(F\mathscr{C},\partial)$ is then isomorphic to the second page of the spectral sequence of the filtration, which is in general not isomorphic to $FH$. The reason for this discrepancy is that $\pi$ discards $\ker\pi=im (\iota)$. Our $\Phi$ will also use the $(\ker\pi)$-part of $K$.
 \begin{rem}
 Let $\mathscr{C}^a_a$ denote a basis of $FH^a_a$ and set $\mathscr{C}=\bigcup \mathscr{C}^a_a$. Then we find that $\mathscr{C}$ generates $F\mathscr{C}$. The isomorphism between $(F\mathscr{C},\partial)$ and $FH$ then shows that we can replace $C$ by $\mathscr{C}$ and obtain the same homology $FH$. Moreover, as $\delta$ \textbf{\textit{decreases}} the action, so does $\partial$. Thus, we reduce the Morse-Bott situation to a Morse situation. 
\end{rem}
To start, we note that the maps in (\ref{SX}) are explicitly given by
 \begin{align*}
  \mspace{70mu}\iota\big[s\big]^{b-1}&=\big[s\big]^b& &\text{ for }& s&\in FC^{b-1}\mspace{70mu}\\
  \pi\big[s\big]^b&= \big[n_b\big]^b_b&&\text{ for }&s=\textstyle\sum_{a\leq b} n_a&\in FC^b\\
  \delta\big[n_b\big]^b_b&=\big[dn_b\big]^{b-1}&&\text{ for }&n_b&\in FC^b_b.
 \end{align*}
Let us make the following definition: $\bigg.\displaystyle\quad\aleph^b:=\ker(\delta)=im(\pi)=\pi(FH^b)\subset FH^b_b.$\\
In the following, we are interested in bounded infinite sums $\sum_{b\leq b_0} \eta_b$ of classes $\eta^b\in\aleph^b$, i.e.\ in elements of the space
\[\aleph:=\lim_{\underset{b}{\longrightarrow}}\lim_{\underset{a}{\longleftarrow}}\bigoplus_{a\leq c\leq b} \aleph^c=\bigg\{\sum_{b\leq b_0}\eta^b\;\bigg|\;b_0\in\mathbb{Z},\eta^b\in\aleph^b\bigg\}.\]
We want to define a surjective map $\Psi:\aleph\rightarrow FH$, which is $R$-linear if $R$ is a field. For this purpose choose for every $\eta\in\aleph^b$ a preimage under $\pi$, i.e.\ fix a map \begin{align}\label{defnrho}
   \rho:\aleph^b\rightarrow FH^b\qquad\text{ such that }\qquad \pi(\rho(\eta))=\eta\quad\text{ and }\quad\rho(0)=0.                                                                                                                                                                                                                                \end{align}
 Note that in general $\rho$ is neither unique nor does there exist any $R$-linear $\rho$ \textbf{!} However, one can pick an $R$-linear $\rho$ if the sequence splits at $\pi$, in particular if $R$ is a field or semi-simple.
 Now we define $\Psi$ by setting
\begin{equation}\label{defnpsi}
 \Psi\bigg(\sum_{b\leq b_0} \eta^b\bigg):=\bigg[\sum_{b\leq b_0} s^b\bigg],\qquad\text{ where }\quad [s^b]^b=\rho(\eta^b),\;s^b\in FC^b.
\end{equation}
It is easy to see that $\Psi$ is well-defined, as $\sum_{b\leq b_0} s^b$ is a cycle, whose class does not depend on the particular choice of the representative $s^b$ for $\rho(\eta^b)$. Indeed we have:\pagebreak
\begin{align*}
 d\bigg(\sum_{b\leq b_0} s^b\bigg) &= \sum_{b\leq b_0} d s^b =\sum_{b\leq b_0} 0 =0\tag{$s^b\in \ker(d)\;\;\forall b\leq b_0$}\\
 \sum_{b\leq b_0} (s^b+d\beta^b) &= \sum_{b\leq b_0} s^b + d\sum_{b\leq b_0}\beta^b.\tag{$\beta^b\in FC^b$}
\end{align*}
As $\rho(0)=0$, $\Psi$ is also not effected by leading zeros, i.e.\ $\Psi\big(\sum_{b\leq b_0}\eta^b\big)=\Psi\big(\sum_{b\leq b_0-1}\eta^b\big)$ if $\eta^{b_0}=0$. Note that $\Psi$ does depend on the choice of $\rho$ and is therefore in general not an $R$-homomorphism! However, if $\rho$ is $R$-linear, then $\Psi$ becomes $R$-linear as well.
\begin{theo}[\textbf{Representation Theorem}]\label{theo31}~\\
 The homology $FH$ is represented by $\aleph$ that is $\Psi(\aleph)=FH$, i.e.\ $\Psi$ is surjective.
\end{theo}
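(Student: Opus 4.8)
The plan is to show that every homology class $[t]\in FH$ lies in the image of $\Psi$ by exhibiting a suitable preimage in $\aleph$, built by a downward induction on the action filtration. The starting observation is Theorem \ref{theolimits}(b): since $FH\cong\lim_{\underset{b}{\longrightarrow}} FH^b$ via $\psi$, any class $[t]\in FH$ is represented by a cycle $t\in FC^{b_0}$ for some $b_0\in\mathbb{Z}$, so it suffices to realize the class $[t]^{b_0}\in FH^{b_0}$ as $\psi$ composed with something in the image of $\Psi$. More precisely, I would prove by induction on $b_0$ that for every $b_0$ and every $[t]^{b_0}\in FH^{b_0}$ there exists $\xi=\sum_{b\le b_0}\eta^b\in\aleph$ with $\iota^{b_0}\big(\Psi_{b_0}(\xi)\big)=\iota^{b_0}([t]^{b_0})$ in $FH$, where $\Psi_{b_0}$ denotes the partial construction (\ref{defnpsi}) truncated at $b_0$; passing to the limit then gives $\Psi(\xi)=[t]$.

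For the inductive step I would use the long exact sequence (\ref{SX}) at level $b_0$:
\begin{equation*}
 \dots\rightarrow FH^{b_0}_{b_0}\overset{\delta}{\longrightarrow}FH^{b_0-1}\overset{\iota}{\longrightarrow}FH^{b_0}\overset{\pi}{\longrightarrow}FH^{b_0}_{b_0}\overset{\delta}{\longrightarrow}\dots
\end{equation*}
Given $[t]^{b_0}\in FH^{b_0}$, set $\eta^{b_0}:=\pi\big([t]^{b_0}\big)\in FH^{b_0}_{b_0}$. Exactness at $FH^{b_0}_{b_0}$ forces $\delta(\eta^{b_0})=0$, i.e.\ $\eta^{b_0}\in\aleph^{b_0}$, so $\rho(\eta^{b_0})\in FH^{b_0}$ is defined and $\pi\big(\rho(\eta^{b_0})\big)=\eta^{b_0}=\pi\big([t]^{b_0}\big)$. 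Hence $[t]^{b_0}-\rho(\eta^{b_0})\in\ker\pi=\operatorname{im}\iota$, so there is a class $\sigma\in FH^{b_0-1}$ with $\iota(\sigma)=[t]^{b_0}-\rho(\eta^{b_0})$. By the induction hypothesis applied to $\sigma$ (represented by some cycle in $FC^{b_0-1}$), there is $\xi'=\sum_{b\le b_0-1}\eta^b\in\aleph$ with $\iota^{b_0-1}\big(\Psi_{b_0-1}(\xi')\big)=\iota^{b_0-1}(\sigma)$. Now set $\xi:=\eta^{b_0}+\xi'\in\aleph$. Using the definition (\ref{defnpsi}), countable associativity (Lemma \ref{lemX}(b)) and the compatibility $\iota^{b_0}=\iota^{b_0-1}$-style identities for the maps between the $FH^b$, one checks $\iota^{b_0}\big(\Psi_{b_0}(\xi)\big)=\rho(\eta^{b_0})+\iota\big(\iota^{b_0-1}\Psi_{b_0-1}(\xi')\big)=\rho(\eta^{b_0})+\iota(\sigma)=[t]^{b_0}$ inside $FH^{b_0}$, hence the same after applying $\iota^{b_0}$ into $FH$.

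The base case is $b_0$ minimal enough that $FC^{b_0-1}=0$ (or one argues directly that for sufficiently negative $b_0$ the group $FH^{b_0}$ already equals $FH^{b_0}_{b_0}=\aleph^{b_0}$ and $\rho$ is a section there), which is harmless since any fixed cycle $t$ has bounded support. The step I expect to be the main obstacle is bookkeeping the passage to the limit: the maps $\Psi$, $\rho$ are only set-theoretic (no $R$-linearity without a field), so I must be careful that the infinite sum $\xi=\sum_{b\le b_0}\eta^b$ is genuinely an element of $\aleph$ (bounded above, coefficientwise finite — which it is, since each $\eta^b\in\aleph^b$ and the support is bounded by $b_0$), and that $\Psi$ evaluated on it agrees with the telescoped inductive construction rather than some rearrangement; here Lemma \ref{lemX}(a)--(b) does the real work. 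A secondary subtlety is that $\rho(0)=0$ guarantees leading zeros do not disturb the value of $\Psi$, so truncating/extending the index range of $\xi$ is legitimate. Once these are in place the surjectivity $\Psi(\aleph)=FH$ follows, and no field hypothesis is needed — only the existence of the pointed section $\rho$, which was fixed in (\ref{defnrho}).
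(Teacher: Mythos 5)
Your descending construction of the $\eta^b$ is essentially identical to the paper's: set $\eta^b:=\pi(\sigma^b)$, use $\pi(\sigma^b-\rho(\eta^b))=0$ and exactness to push down to $FH^{b-1}$, repeat. Where the argument breaks is the framing as a terminating induction on $b_0$. You justify the base case by asserting that ``any fixed cycle $t$ has bounded support,'' but this is false: elements of $FC$ are, by construction of the Novikov-type completion, bounded only \emph{above} in the action filtration. A cycle may have nonzero coefficients $s_a$ for infinitely many $a\to-\infty$, and both your proposed fallbacks ($FC^{b_0-1}=0$ for small $b_0$, or $FH^{b_0}=\aleph^{b_0}$ for small $b_0$) fail in general for the same reason. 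Consequently the induction never reaches a base case, and the chain of appeals to the induction hypothesis is an infinite regress that proves nothing.

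The repair is to separate the construction from the verification, which is exactly what the paper does. Run the descending construction indefinitely (with no stopping point), producing the full infinite sequence $(\eta^b)_{b\le b_0}$, each $\eta^b\in\aleph^b$; then $\sum_{b\le b_0}\eta^b\in\aleph$ since it is bounded above and coefficientwise well-defined. The verification that $\Psi$ of this sum equals $[t]$ is then a single computation at the chain level, not something pushed into the inductive step: writing $\mathfrak{s}^b=s^b-s^{b-1}-d\beta^b$ with $[\mathfrak{s}^b]^b=\rho(\eta^b)$, one has
\[
\Psi\Big(\sum_{b\le b_0}\eta^b\Big)=\Big[\sum_{b\le b_0}\mathfrak{s}^b\Big]
=\Big[\sum_{b\le b_0}(s^b-s^{b-1})-d\sum_{b\le b_0}\beta^b\Big]=[s^{b_0}]=[t],
\]
where the telescoping identity from Lemma \ref{lemX}(c) absorbs the entire (possibly infinite) tail in one stroke, together with the countable linearity of $d$ from Lemma \ref{lemX}(d). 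This is the step your proposal is missing: without it, the check that the construction actually recovers $[t]$ never terminates. Once replaced, your argument becomes essentially the paper's proof; your observation that no field hypothesis is needed, only the pointed section $\rho$ with $\rho(0)=0$, is correct and matches the paper's statement.
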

\begin{rem}
 It follows from this theorem that $FH$ as a set cannot be larger then $\aleph$. Note that if we have a grading of the complex, then this theorem and all following extend  to the graded homology groups and reads as $\Psi(\aleph_k)=FH_k$.
\end{rem}
\begin{proof}~\\
 To show that $\Psi$ is surjective let $\sigma\in FH$ be arbitrary with $[s^{b_0}]=\sigma,\;s^{b_0}\in FC,$ such that the highest non-zero coefficient of $s^{b_0}$ has index $b_0$. So $s^{b_0}\in FC^{b_0}$ and the class $\sigma^{b_0}:=[s^{b_0}]^{b_0}\in FH^{b_0}$ is well-defined. We will construct for each $b\leq b_0$ a quintuple 
 \begin{gather*}
  (\sigma^b,\eta^b,s^b,\mathfrak{s}^b,\beta^b)\in FH^b\times\aleph^b\times FC^b\times FC^b\times FC^b\qquad\text{ where }\\                                                                                                                                                                                                                                                                                                   
  \eta^b=\pi(\sigma^b),\quad [s^b]^b=\sigma^b,\quad [\mathfrak{s}^b]^b=\rho(\eta^b)\quad\text{ and }\qquad \mathfrak{s}^b=s^b-s^{b-1}-d\beta^b.\tag{$\ast$}
  \end{gather*}
Then we see that $\Psi\big(\sum_{b\leq b_0}\eta^b\big)=\sigma$, i.e.\ that $\Psi$ is surjective, as
\begin{align*}
 \Psi\bigg(\sum_{b\leq b_0} \eta^b\bigg)=\bigg[\sum_{b\leq b_0}\mathfrak{s}^b\bigg]&=\bigg[\sum_{b\leq b_0} s^b-s^{b-1}-d\beta^b\bigg]\\
 &=\bigg[\sum_{b\leq b_0}(s^b-s^{b-1})-\sum_{b\leq b_0}d\beta^b\bigg]=\bigg[s^{b_0}-d\sum_{b\leq b_0}\beta^b\bigg]=[s^{b_0}]=\sigma.
\end{align*}
The first element $\sigma^{b_0}=[s^{b_0}]^{b_0}$ is already given. Now let $\sigma^b,\; b\leq b_0$ be constructed and set $\eta^b,s^b$ and $\mathfrak{s}^b$ as in $(\ast)$. Then we have
\[\pi(\sigma^b-\rho(\eta^b))=\pi(\sigma^b)-\pi(\rho(\eta^b))\overset{(\ref{defnrho})}{=}\eta^b-\eta^b=0.\]
Thus we have $\sigma^b-\rho(\eta^b)\in\ker(\pi)=im(\iota)$. Hence we can apply $j$ and define \linebreak $\sigma^{b-1}:=j(\sigma^b-\rho(\eta^b))\in FH^{b-1}$. Let $s^{b-1}$ be a representative of $\sigma^{b-1}$. The definition of $\sigma^{b-1}$ then yields
\[[s^{b-1}]^{b}=\iota\Big([s^{b-1}]^{b-1}\Big)=\iota(\sigma^{b-1})\overset{(\ref{defnj})}{=}\sigma^b-\rho(\eta^b)=[s^b-\mathfrak{s}^b]^b.\]
Hence there exists a $\beta^b\in FC^b$ such that $s^{b-1}+d\beta^b=s^b-\mathfrak{s}^b$. This gives us the quintuple $(\sigma^b,\eta^b,s^b\mathfrak{s}^b,\beta^b)$ and a new $\sigma^{b-1}$.
\end{proof}
In order to obtain more precise information, we are now going to analyze the ``kernel'' of $\Psi$, i.e.\ the preimage $\Psi^{-1}(0)$. To this purpose define
\[K^b:= im(\delta)=\delta\big(FH^{b+1}_{b+1}\big)\cong \raisebox{.2em}{$FH^b_b$}\left/\raisebox{-.2em}{$\pi(FH^b)$}\right.\qquad\text{ and }\qquad K:=\lim_{\longrightarrow}\lim_{\longleftarrow}\bigoplus_{a\leq c\leq b} K^c,\]
where $K$ is the space of bounded infinite sums $\sum_{k\leq b_0} \kappa^b,\,\kappa^b\in K^b$.\\
We want to define a map $\Phi: K\rightarrow \aleph$ with $\Phi(K)=\Psi^{-1}(0)$, such that $\Phi$ is $R$-linear if $R$ is a field. Note that this implies that if $\Phi$ and $\Psi$ are $R$-linear, then $FH\cong \raisebox{.2em}{$\aleph$}\left/\raisebox{-.2em}{$\Phi(K)$}\right.$. This will ensure that the homology of $\partial=\Phi\circ \delta$ is isomorphic to $FH$.\\
To define $\Phi$, choose for every $\sigma\in im(\iota)\subset FH^b$ a preimage under $\iota$, i.e.\ fix a map 
 \begin{align}\label{defnj}
  j:im(\iota)\rightarrow FH^{b-1}\qquad \text{ such that }\qquad\iota(j(\sigma))=\sigma\quad\text{ and }\quad j(0)=0.
 \end{align}
Again, $j$ is in general only a map, but can be chosen $R$-linear if the sequence splits at $\iota$. Then, for $\sum_{b\leq b_0}\kappa^b\in K$ set
\begin{align*}
 \sigma^{b_0}&:=\kappa^{b_0}\in FH^{b_0},& \eta^{b_0}&:=\pi(\sigma^{b_0})\in \aleph^{b_0}\\
 \sigma^{b-1}&:=j\big(\sigma^b-\rho(\eta^b)\big)+\kappa^{b-1}\in FH^{b-1}, &\eta^{b-1}&:=\pi(\sigma^{b-1})\in\aleph^{b-1}.
\end{align*}
Now, we define $\Phi$ by
\[\Phi\bigg(\sum_{b\leq b_0} \kappa^b\bigg):=\sum_{b\leq b_0} \eta^b\in \aleph.\]
As $\Phi$ does depend on $\rho$ and $j$, it is in general not $R$-linear. However, if $j$ and $\rho$ are $R$-linear, the same holds for $\Phi$. As $j(0)=0$ and $\rho(0)=0$, $\Phi$ is not effected by leading zeros, i.e.\ $\Phi\big(\sum_{b\leq b_0} \kappa^b\big)=\Phi\big(\sum_{b\leq b_0-1} \kappa^b\big)$ if $\kappa^{b_0}=0$. 
\begin{lemme}\label{lem32}
 We have indeed $\Psi^{-1}(0)=\Phi(K)$.
\end{lemme}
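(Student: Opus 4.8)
The goal is to establish the double inclusion $\Psi^{-1}(0) = \Phi(K)$, and I would prove the two inclusions separately, both times tracing through the long exact sequence (\ref{SX}) coefficient by coefficient in the same inductive style as in the Representation Theorem~\ref{theo31}. The key observation to keep in mind is that $\Psi$ sends $\sum_{b\le b_0}\eta^b$ to the class $\big[\sum_{b\le b_0}s^b\big]$ where $[s^b]^b = \rho(\eta^b)$, and that this class vanishes in $FH$ precisely when $\sum_{b\le b_0}s^b = d\beta$ for some $\beta\in FC$, which (choosing $b_0$ large enough) is the same as $\sum_{b\le b_0}\rho(\eta^b) = 0$ in $FH^{b_0}$ after applying all the inclusions $\iota$.

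\textbf{Inclusion $\Phi(K)\subseteq\Psi^{-1}(0)$.} Take $\sum_{b\le b_0}\kappa^b\in K$ with $\kappa^b\in K^b = \mathrm{im}(\delta)$, and let $(\sigma^b,\eta^b)$ be the sequence produced in the definition of $\Phi$, so $\eta^b = \pi(\sigma^b)$ and $\sigma^{b-1} = j(\sigma^b-\rho(\eta^b)) + \kappa^{b-1}$. First I would record the telescoping identity: since $\iota(\sigma^{b-1}) = \sigma^b - \rho(\eta^b) + \iota(\kappa^{b-1})$ and $\iota\circ\delta = 0$ along (\ref{SX}) forces $\iota(\kappa^{b-1}) = \iota(\delta(\cdots)) = 0$, we get $\iota(\sigma^{b-1}) = \sigma^b - \rho(\eta^b)$, i.e. $\rho(\eta^b) = \sigma^b - \iota(\sigma^{b-1})$ in $FH^b$. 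Summing over $b_0\ge b\ge B$ after pushing everything forward into $FH^{b_0}$ via iterated $\iota$, the right-hand side telescopes to $\sigma^{b_0} - \iota^{b_0\,B-1}(\sigma^{B-1})$; letting $B\to-\infty$ (the sums are finite in each coefficient) one sees $\sum_{b\le b_0}\rho(\eta^b)$, interpreted in $FH^{b_0}$, equals $\sigma^{b_0} - \lim \iota(\sigma^{B-1})$, and the latter lies in $\bigcap_b\,\mathrm{im}(\iota^{b_0\,b})$, which by the standard intersection argument (as in part (a) of Theorem~\ref{theolimits}) can be arranged to vanish, or one argues directly at the chain level that $\sum_{b\le b_0}\mathfrak s^b = d\beta$. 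Hence $\Psi(\sum\eta^b) = 0$.

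\textbf{Inclusion $\Psi^{-1}(0)\subseteq\Phi(K)$.} Conversely, suppose $\sum_{b\le b_0}\eta^b\in\aleph$ with $\Psi(\sum\eta^b) = 0$, so $\sum_{b\le b_0}s^b = d\beta$ with $[s^b]^b = \rho(\eta^b)$ and $\beta = \sum_{b\le b_0}\beta^b\in FC^{b_0}$. I would build the preimage $\sum_{b\le b_0}\kappa^b$ inductively downward, setting $\sigma^{b_0} := [s^{b_0} - d\beta^{b_0}]^{b_0}\in FH^{b_0}$, checking $\pi(\sigma^{b_0}) = \eta^{b_0}$ (since $\pi$ kills $d\beta^{b_0}$-type terms appropriately) so that $\kappa^{b_0} := \sigma^{b_0}$ is the candidate top coefficient, then at each stage using $\iota(\sigma^{b-1}) = \sigma^b - \rho(\eta^b)$ — which holds because $s^b - d\beta^b - \mathfrak s^b$ has all coefficients of index $< b$, hence represents a class in the image of $\iota$ — to peel off $\kappa^{b-1} := \sigma^{b-1} - j(\sigma^b - \rho(\eta^b))$, and verifying $\kappa^{b-1}\in\mathrm{im}(\delta)$ via exactness of (\ref{SX}) at $FH^{b-1}$, namely $\iota(\kappa^{b-1}) = \iota(\sigma^{b-1}) - \sigma^b + \rho(\eta^b) = 0$ so $\kappa^{b-1}\in\ker\iota = \mathrm{im}\,\delta = K^{b-1}$. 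By construction the $(\sigma^b,\eta^b)$ attached to $\sum\kappa^b$ by $\Phi$ reproduce the original $\eta^b$, so $\Phi(\sum\kappa^b) = \sum\eta^b$.

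\textbf{Main obstacle.} The routine part is the diagram chasing inside the exact sequence (\ref{SX}); the genuinely delicate point is the bookkeeping of the infinite (Laurent-type) sums and the passage $B\to-\infty$ — one must make sure that the identities $\rho(\eta^b) = \sigma^b - \iota(\sigma^{b-1})$ really assemble into an equality of classes in $FH$ (not merely in every $FH^{b_0}$), which is exactly where Lemma~\ref{lemX} on countable associativity and linearity, together with $\bigcap_a FC^a = \{0\}$, must be invoked carefully; and one must keep $\Phi$, $\Psi$ consistently defined through the fixed (possibly non-linear) splitting maps $\rho$ and $j$, since the claimed set-theoretic equality $\Psi^{-1}(0) = \Phi(K)$ is asserted for arbitrary $R$ where no linear splittings exist. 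I expect the cleanest write-up to mirror, essentially verbatim, the inductive construction in the proof of Theorem~\ref{theo31}, just tracking the extra $\kappa^b$-terms.
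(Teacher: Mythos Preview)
Your overall plan is the same as the paper's: prove both inclusions by an inductive descent through the long exact sequence (\ref{SX}), tracking auxiliary classes $\sigma^b\in FH^b$ that interpolate between the $\eta^b$ and the $\kappa^b$. But both halves of your sketch have concrete problems in the execution.

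\textbf{First inclusion $\Phi(K)\subseteq\Psi^{-1}(0)$.} Your homological telescoping gives, for each finite $B$, the identity $\sum_{b=B}^{b_0}\iota^{b_0,b}\rho(\eta^b)=\sigma^{b_0}-\iota^{b_0,B-1}(\sigma^{B-1})$ in $FH^{b_0}$. But the left side is a \emph{finite} sum of homology classes, whereas $\Psi(\sum\eta^b)$ is defined via the \emph{infinite} chain-level sum $\sum_{b\le b_0}s^b$; there is no general way to interpret $\sum_{b\le b_0}[s^b]^{b_0}$ as an element of $FH^{b_0}$, and the ``limit'' $\lim_{B\to-\infty}\iota^{b_0,B-1}(\sigma^{B-1})$ need not exist or vanish. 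You yourself flag this and propose the chain-level alternative---that is indeed what the paper does: choose representatives $[dn^{b+1}]^b=\kappa^b$, $[s^b]^b=\sigma^b$, $[t^b]^b=\rho(\eta^b)$, derive $s^{b-1}=s^b-t^b+dn^b+d\beta^b$ from $\iota(\sigma^{b-1})=\sigma^b-\rho(\eta^b)+\iota(\kappa^{b-1})$, and then telescope \emph{in $FC$} using Lemma~\ref{lemX}(c) to get $\sum t^b=s^{b_0}+d\sum(n^b+\beta^b)=dn^{b_0+1}+d(\cdots)$.

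\textbf{Second inclusion $\Psi^{-1}(0)\subseteq\Phi(K)$.} Here your choice $\sigma^{b_0}:=[s^{b_0}-d\beta^{b_0}]^{b_0}$ is wrong. Since $d\beta^{b_0}$ is a boundary, this class equals $[s^{b_0}]^{b_0}=\rho(\eta^{b_0})$, and there is no reason for $\iota(\rho(\eta^{b_0}))$ to vanish, so your $\kappa^{b_0}:=\sigma^{b_0}$ need not lie in $K^{b_0}=\ker\iota$. The correct definition (and the one the paper uses) is
\[
\sigma^B:=\Big[\sum_{b\le B}s^b\Big]^B\in FH^B\quad\text{for every }B\le b_0,
\]
which makes sense because each $s^b$ is a cycle. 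Then $\iota(\sigma^{b_0})=[\sum_{b\le b_0}s^b]^{b_0+1}=[d\beta]^{b_0+1}=0$ (after arranging $\beta\in FC^{b_0+1}$ by padding with zeros), so $\kappa^{b_0}:=\sigma^{b_0}\in K^{b_0}$; and the identity $\iota(\sigma^{B-1})=\sigma^B-\rho(\eta^B)$ follows from $[\sum_{b\le B-1}s^b]^B-[\sum_{b\le B}s^b]^B+[s^B]^B=0$, after which your inductive verification that $\kappa^{B-1}:=\sigma^{B-1}-j(\sigma^B-\rho(\eta^B))\in\ker\iota$ goes through exactly as you wrote. Note also that one must check $\pi(\sigma^B)=\eta^B$ for this $\sigma^B$ (so that $\Phi$ reproduces the given $\eta^b$), which holds because $\pi$ reads off only the top coefficient $K_B(s^B)$.
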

\begin{proof}~
 \begin{itemize}
  \item $\Psi^{-1}(0)\subset \Phi(K)$\medskip\\
  \underline{Proof:} Let $\sum_{b\leq b_0} \eta^b\in \Psi^{-1}(0)$ be arbitrary with representatives $[s^b]^b=\rho(\eta^b)$ and define $\sigma^B:=\big[\sum_{b\leq B} s^b\big]^B\in FH^B$. As $\Psi\big(\sum_{b\leq b_0} \eta^b\big)=\big[\sum_{b\leq b_0} s^b\big]=0$, there exists a $b_0'\in \mathbb{Z}$ and $\beta\in FC^{b_0'}$ such that $\sum_{b\leq b_0} s^b=d\beta$. By setting $s^b=0$ for $b\geq b_0$ we may without loss of generality assume that $b_0'=b_0+1$ (note that $\Psi\big(\sum_{b\leq b_0}\eta^b\big)=\Psi\big(\sum_{b\leq b_0-1}\eta^b\big)$ if $\eta^{b_0}=0$).\\
  Now, we construct a sequence $\kappa^b\in K^b, b\leq b_0$ such that
  \[\kappa^{b_0}=\sigma^{b_0}\quad\text{ and }\quad \sigma^{b-1}=j\big(\sigma^b-\rho(\eta^b)\big)+\kappa^{b-1}.\]
  By the construction of $\Phi$, it then follows that $\Phi\big(\sum_{b\leq b_0} \kappa^b\big)=\sum_{b\leq b_0}\eta^b$.\medskip\\
  We saw above that $\sum_{b\leq b_0} s^b =d\beta$ with $\beta\in FC^{b_0+1}$. It follows that
  \[\iota(\sigma^{b_0})=\iota\Big(\Big[\sum_{b\leq b_0}s^b\Big]^{b_0}\Big)=\Big[\sum_{b\leq b_0}s^b\Big]^{b_0+1}=0\]
  and hence that $\sigma^{b_0}\in\ker(\iota)=im(\delta)$. The definition $\kappa^{b_0}:=\sigma^{b_0}\in \delta\big(FH^{b_0+1}_{b_0+1}\big)=K^{b_0}$ is therefore correct.\pagebreak\\ For any $B\leq b_0$ we calculate
  \begin{align*}
   \iota\Big(\sigma^{B-1}-j\big(\sigma^B-\rho(\eta^B)\big)\Big)&\overset{(\ref{defnj})}{=}\iota\Big(\big[\sum_{b\leq B-1} s^b\big]^{B-1}\Big)-\big(\sigma^B-\rho(\eta^B)\big)\\
   &=\big[\sum_{b\leq B-1}s^b\big]^B-\big[\sum_{b\leq B}s^b\big]^B+[s^B]^B&&=0.
  \end{align*}
  Therefore, we have $\sigma^{B-1}-j(\sigma^B-\rho(\eta^B))\in\ker(\iota)=im(\delta)$ and hence there exists a $\kappa^{B-1}\in\delta(FH^{B-1}_{B-1})$ such that $\sigma^{B-1}=j\big(\sigma^B-\rho(\eta^B)\big)+\kappa^{B-1}$.
  \item $\Psi^{-1}(0)\supset \Phi(K)$\medskip\\
  \underline{Proof:} Let $\sum_{b\leq b_0} \kappa^b\in K$ be arbitrary and let $\sigma^b,\,\eta^b$ for $b\leq b_0$ be as in the definition of $\Phi\big(\sum_{b\leq b_0} \kappa^b\big)$. Let $[dn^{b+1}]^b=\kappa^b,\;[s^b]^b=\sigma^b$ and $[t^b]^b=\rho(\eta^b)$ be their representatives, where $s^b, t^b\in FC^b$ and $n^{b+1}\in FC^{b+1}_{b+1}$ for $b\leq b_0$. Then \[\Psi\Big(\Phi\bigg(\sum_{b\leq b_0}\kappa^b\bigg)\Big)=\Psi\bigg(\sum_{b\leq b_0}\eta^b\bigg)=\bigg[\sum_{b\leq b_0} t^b\bigg].\]
  It follows from the definition of $\Phi$ that $[s^{b_0}]^{b_0}=\sigma^{b_0}=\kappa^{b_0}=[dn^{b_0+1}]^{b_0}$. Hence we may without loss of generality assume that $s^{b_0}=dn^{b_0+1}$. It also follows from the definition of $\Phi$ that
  \[[s^{b-1}]^b=\iota(\sigma^{b-1})=\sigma^b-\rho(\eta^b)+\iota(\kappa^{b-1})=\big[s^b-t^b+dn^b\big]^b.\]
  Hence there exists a $\beta^b\in FC^b$ such that $s^{b-1}=s^b-t^b+dn^b+d\beta^b$. Then we have
  \[\sum_{b\leq b_0} t^b = \sum_{b\leq b_0}\left(s^b-s^{b-1}+dn^b+d\beta^b\right)=s^{b_0}+d\sum_{b\leq b_0}(n^b+\beta^b) = dn^{b_0+1}+d\sum_{b\leq b_0}(n^b+\beta^b).\]
  Thus, $\sum_{b\leq b_0} t^b$ is a boundary and hence $\Psi\circ\Phi\big(\sum_{b\leq b_0} \kappa^b\big)=\big[\sum_{b\leq b_0} t^b\big]=0\in FH$.
 \end{itemize}
\end{proof}
\begin{lemme}\label{lem33}
 $\Phi^{-1}(0)=\{0\}$, i.e.\ if $\Phi$ is $R$-linear, then it is injective.
\end{lemme}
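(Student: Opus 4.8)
The plan is to show $\Phi$ is injective by running essentially the same telescoping argument used in Lemma \ref{lem32}, but now exploiting the extra rigidity coming from the fact that $\kappa^b$ lives in $K^b=\operatorname{im}(\delta)=\ker(\iota)$. Suppose $\sum_{b\le b_0}\kappa^b\in K$ with $\Phi\big(\sum_{b\le b_0}\kappa^b\big)=0$, and let $\sigma^b,\eta^b$ for $b\le b_0$ be the associated sequences from the definition of $\Phi$, so that $\eta^b=\pi(\sigma^b)$ and $\Phi\big(\sum_{b\le b_0}\kappa^b\big)=\sum_{b\le b_0}\eta^b$. The hypothesis $\sum_{b\le b_0}\eta^b=0\in\aleph$ forces each coefficient $\eta^b=0$ (an element of a direct-then-inverse limit of the $\bigoplus_{a\le c\le b}\aleph^c$ vanishes iff all its coefficients vanish, exactly as in the Lemma preceding Theorem \ref{theolimits}). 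So $\pi(\sigma^b)=0$ for every $b\le b_0$, i.e. $\sigma^b\in\ker(\pi)=\operatorname{im}(\iota)$ for all $b$.

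Next I would peel off the top. Since $\eta^{b_0}=0$ we have $\rho(\eta^{b_0})=\rho(0)=0$ by the normalization in (\ref{defnrho}). Then from the recursion defining $\Phi$,
\[
\sigma^{b_0-1}=j\big(\sigma^{b_0}-\rho(\eta^{b_0})\big)+\kappa^{b_0-1}=j(\sigma^{b_0})+\kappa^{b_0-1}.
\]
Now $\sigma^{b_0}=\kappa^{b_0}\in K^{b_0}=\operatorname{im}(\delta)=\ker(\iota)$, and also $\sigma^{b_0}\in\operatorname{im}(\iota)$ by the previous paragraph; since the composition around the long exact sequence (\ref{SX}) gives $\operatorname{im}(\iota)\cap\ker(\iota)$ — wait, more precisely $\sigma^{b_0}\in\operatorname{im}(\iota)$ means $j(\sigma^{b_0})$ is defined with $\iota(j(\sigma^{b_0}))=\sigma^{b_0}$, but $\sigma^{b_0}\in\ker(\iota)$ as well is not what kills it. The correct move is: $\kappa^{b_0}=\sigma^{b_0}\in\operatorname{im}(\iota)=\ker(\pi)$ and simultaneously $\kappa^{b_0}\in K^{b_0}$, which is isomorphic to $FH^{b_0}_{b_0}/\pi(FH^{b_0})$; but $K^{b_0}$ is realized inside $FH^{b_0}_{b_0}$, not $FH^{b_0}$, so I should instead argue inductively that $\kappa^b=0$ for all $b$. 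Starting from $b_0$: $\sigma^{b_0}=\kappa^{b_0}$ and $\pi(\sigma^{b_0})=\eta^{b_0}=0$; but $\kappa^{b_0}=\delta(\tau)$ for some $\tau\in FH^{b_0+1}_{b_0+1}$, and by exactness at $FH^{b_0}_{b_0}$ in (\ref{SX}) one has $\operatorname{im}(\delta)=\ker(\iota)$ — this does not immediately give $\kappa^{b_0}=0$. So the genuinely load-bearing point must be the interplay between $\pi(\sigma^{b_0})=0$ and $\sigma^{b_0}=\kappa^{b_0}$ being a $\pi$-image is false; rather $\sigma^{b_0}\in FH^{b_0}$ with $\pi(\sigma^{b_0})=0$ gives $\sigma^{b_0}\in\operatorname{im}(\iota)$, and separately $\sigma^{b_0}=\kappa^{b_0}$. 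There is no contradiction yet — one must descend.

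So the real structure of the proof is a downward induction producing, from $\Phi(\sum\kappa^b)=0$, the conclusion $\sigma^b=0$ hence $\kappa^b=0$ for all $b\le b_0$. Assume inductively $\sigma^{b}=0$ for all $b$ in some upper range; then $\eta^b=0$, $\rho(\eta^b)=0$, and the recursion reads $\sigma^{b-1}=j(\sigma^b-\rho(\eta^b))+\kappa^{b-1}=j(0)+\kappa^{b-1}=\kappa^{b-1}$. Combined with $\pi(\sigma^{b-1})=\eta^{b-1}=0$, we get $\kappa^{b-1}=\sigma^{b-1}\in\ker(\pi)=\operatorname{im}(\iota)$; but $\kappa^{b-1}\in K^{b-1}\subset FH^{b-1}_{b-1}$ is honestly a class in $FH^{b-1}_{b-1}$, and the claim that $\sigma^{b-1}=\kappa^{b-1}$ identifies $\sigma^{b-1}\in FH^{b-1}$ with a class in $FH^{b-1}_{b-1}$ via $\pi$ being injective there, which is exactly $\pi(\sigma^{b-1})=\kappa^{b-1}$ under the identification — i.e. $\kappa^{b-1}=\pi(\sigma^{b-1})=0$. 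The base case $b=b_0$ is $\sigma^{b_0}=\kappa^{b_0}$, and I must get $\sigma^{b_0}=0$ from $\eta^{b_0}=\pi(\sigma^{b_0})=0$ together with $\sigma^{b_0}=\kappa^{b_0}\in K^{b_0}$; here one uses that $K^{b_0}\cong FH^{b_0}_{b_0}/\pi(FH^{b_0})$ is represented inside $FH^{b_0}_{b_0}$ and that the map $\sigma\mapsto\pi(\sigma)$ sending the $K^{b_0}$-representative back is the identity, so $\pi(\sigma^{b_0})=0$ gives $\kappa^{b_0}=0$. I would spell out this identification carefully, since it is the one subtle ingredient. The main obstacle is precisely bookkeeping the three different ambient groups ($FH^b$, $FH^{b-1}$, $FH^b_b$) and making sure the equalities $\sigma^b=\kappa^b$ are read in the right one; once that is pinned down via the exact sequence (\ref{SX}), the downward induction closes and $\Phi\big(\sum\kappa^b\big)=0\Rightarrow\kappa^b=0$ for all $b$, i.e. $\Phi^{-1}(0)=\{0\}$, and if $j,\rho$ were chosen $R$-linear this says $\Phi$ is injective.
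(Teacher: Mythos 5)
Your proposal takes a genuinely different route from the paper — you try to stay at the homology level and argue via the long exact sequence (\ref{SX}), whereas the paper descends to the chain level and re-uses the representative-level telescoping identity from Lemma~\ref{lem32} with $t^b=0$ to exhibit $dn^{b_0+1}$ as a boundary in $FC^{b_0}$. Unfortunately, your approach has a genuine gap exactly where you yourself flag the ``subtle ingredient''.

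The base case of your downward induction does not close. You need $\kappa^{b_0}=\sigma^{b_0}=0$, and you try to deduce it from $\pi(\sigma^{b_0})=\eta^{b_0}=0$ together with $\sigma^{b_0}\in K^{b_0}$. But $K^{b_0}=\ker\bigl(\iota\colon FH^{b_0}\to FH^{b_0+1}\bigr)$ and $\ker\pi=\operatorname{im}\bigl(\iota\colon FH^{b_0-1}\to FH^{b_0}\bigr)$ are kernels and images of \emph{different} arrows in the sequence, and nothing in (\ref{SX}) forces their intersection to vanish. A class that is present at level $b_0-1$, survives to $FH^{b_0}$, and then dies at level $b_0+1$ (because a new boundary appears) sits precisely in $K^{b_0}\cap\ker\pi$. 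Your last-paragraph fix (``$K^{b_0}\cong FH^{b_0}_{b_0}/\pi(FH^{b_0})$ is represented inside $FH^{b_0}_{b_0}$'') is not right: $K^{b_0}=\delta\bigl(FH^{b_0+1}_{b_0+1}\bigr)$ lives in $FH^{b_0}$, not in $FH^{b_0}_{b_0}$, so the map $\pi$ restricted to $K^{b_0}$ is not an identity and is not injective in general.

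The deeper issue is that the vanishing of the single coefficient $\eta^{b_0}$ cannot control $\kappa^{b_0}$; the lemma needs the vanishing of $\eta^b$ for \emph{all} $b\le b_0$ simultaneously, and the way the paper extracts $\kappa^{b_0}=0$ from that infinite family is through the chain-level identity $0=dn^{b_0+1}+d\sum_{b\le b_0}(n^b+\beta^b)$, whose right-hand side assembles the contributions of every level $b\le b_0$ into a single boundary in $FC^{b_0}$. Your downward induction discharges the top level first, at which point you have only used $\eta^{b_0}=0$ — that is strictly less information than the hypothesis, and it is not enough. If you want to keep a homology-only argument you would have to show something like $K^{b_0}\cap\bigcap_{b\le b_0}\operatorname{im}(\iota_{b\to b_0})=0$, and I do not see how to get that from the exact sequences alone; the paper's lift to chain representatives is precisely the extra rigidity needed.
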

\begin{proof}
 Let $\sum_{b\leq b_0} \kappa^b\in \Phi^{-1}(0)$ be arbitrary. Let $\sigma^b$ and $\eta^b$ for $b\leq b_0$ be as in the definition of $\Phi$. As $\Phi\big(\sum_{b\leq b_0}\kappa^b\big)=\sum_{b\leq b_0}\eta^b=0$, we find that $\eta^b=0$ for all $b\leq b_0$. Let $[dn^{b+1}]^b=\kappa^b$ and $[s^b]^b=\sigma^b$ be their representatives. Note that we may choose $[0]^b=\rho(\eta^b)$ as representatives, as $\eta^b=0$ and $\rho(0)=0$. The same calculations as in the second part of the proof of Lemma \ref{lem32} now show with $t^b=0$ that
 \[\textstyle 0=dn^{b_0+1}+d\sum_{b\leq b_0}(n^b+\beta^b).\]
 As $\sum_{b\leq b_0}(n^b+\beta^b)\in FC^{b_0}$, we find that $\kappa^{b_0}=[dn^{b_0+1}]^{b_0}=0$. This implies that $0=\Phi\big(\sum_{b\leq b_0} \kappa^b\big)=\Phi\big(\sum_{b\leq b_0-1}\kappa^b\big)$. Repeating the same arguments iteratively then shows that $\kappa^b=0$ for all $b\leq b_0$ and therefore that $\sum_{b\leq b_0} \kappa^b=0$.
\end{proof}
We defined above $\displaystyle F\mathscr{C}:=\lim_{\longrightarrow}\lim_{\longleftarrow} \textstyle\bigoplus_{c=a}^b FH^c_c$. Note that $\aleph=\ker\delta$ is a natural subset of $F\mathscr{C}$. The limit of the maps $\delta:FH^{b+1}_{b+1}\rightarrow im(\delta)\subset FH^b$ is an $R$-linear map $\delta : F\mathscr{C}\rightarrow K$.\\
If $R$ is a field or semi-simple, we can choose $\Phi$ to be $R$-linear. As mentioned above, we then define an $R$-linear operator $\partial$ on $F\mathscr{C}$ by \[\partial := \Phi\circ \delta.\]
Since $im(\Phi)\subset \aleph =\ker(\delta)$, we find that $\partial^2=0$, i.e.\ that $\partial$ is a boundary operator.
\begin{theo}[Reduction Theorem]\label{reductiontheo}~\\
 Let $R$ be a field (or semi-simple). Then, $\Psi$ induces a filtration preserving isomorphism between the homology of $(F\mathscr{C},\partial)$ and $FH$.\\
 As $F\mathscr{C}$ is generated by $FH^a_a,\,a\in\mathbb{Z}$, we can hence in a Morse-Bott setup always algebraically pretend that $FH$ is built from the singular homologies of the critical manifolds.
\end{theo}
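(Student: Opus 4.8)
The plan is to reduce the statement to the three results already established — the Representation Theorem \ref{theo31}, Lemma \ref{lem32} and Lemma \ref{lem33} — together with a bookkeeping argument about $\ker\partial$ and $\operatorname{im}\partial$. The first step is to fix the auxiliary choices so that everything in sight is linear: since $R$ is a field (or semisimple), the short exact sequences underlying (\ref{SX}) split, so the sections $\rho$ of $\pi$ from (\ref{defnrho}) and $j$ of $\iota$ from (\ref{defnj}) may be chosen $R$-linear. With such choices the maps $\Psi$ and $\Phi$ become $R$-linear homomorphisms (as recorded in the text after their definitions), hence $\partial=\Phi\circ\delta$ is a genuine $R$-linear endomorphism of $F\mathscr{C}$, and $\partial^2=0$ because $\operatorname{im}\Phi\subset\aleph=\ker\delta$.

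Next I would compute the homology of $(F\mathscr{C},\partial)$. Since $\Phi$ is injective by Lemma \ref{lem33}, the equation $\partial s=\Phi(\delta s)=0$ is equivalent to $\delta s=0$, so $\ker\partial=\ker\delta=\aleph$. On the other hand $\operatorname{im}\partial=\Phi\bigl(\delta(F\mathscr{C})\bigr)=\Phi(K)$ by the very definition of $K$. Therefore $H(F\mathscr{C},\partial)=\aleph/\Phi(K)$. Now Theorem \ref{theo31} gives that $\Psi\colon\aleph\to FH$ is surjective, and Lemma \ref{lem32} gives $\Psi^{-1}(0)=\Phi(K)=\operatorname{im}\partial$; hence $\Psi$ vanishes on $\operatorname{im}\partial$, descends to the quotient, and the descended map $\aleph/\Phi(K)\to FH$ is a bijection — surjective because $\Psi$ is, injective because its kernel is exactly $\Phi(K)$. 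Composing with the identification $H(F\mathscr{C},\partial)=\aleph/\Phi(K)$ yields the desired isomorphism $H(F\mathscr{C},\partial)\cong FH$.

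For the filtration-preserving claim I would observe that the constructions are level-wise: $\rho$ maps $\aleph^{c}$ into $FH^{c}$ and the bounded infinite sum in (\ref{defnpsi}) keeps all components at level $\le b$, so $\Psi$ sends the part of $\aleph$ supported in levels $\le b$ into $\operatorname{im}\bigl(\iota^{b}\colon FH^{b}\to FH\bigr)$; conversely the construction in the proof of Theorem \ref{theo31} stays inside levels $\le b_{0}$ whenever the class it represents does. Hence the isomorphism intertwines the action filtrations on the two sides. The final sentence of the theorem needs nothing more: $F\mathscr{C}$ is by definition $\varinjlim_{b}\varprojlim_{a}\bigoplus_{c=a}^{b}FH^{c}_{c}$, in a Morse--Bott setup $FH^{c}_{c}$ is the Morse homology of the slice $\mathcal{N}^{c}$, which over field coefficients is its singular homology, and $\partial$ strictly lowers the action level (as $\delta$ does, cf.\ the remark preceding the theorem), so $(F\mathscr{C},\partial)$ is an honest ``Morse'' complex computing $FH$.

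The main obstacle, I expect, is not the formal diagram chase above but making sure that \emph{all} the auxiliary choices — the sections $\rho$ and $j$, and the representatives chosen inside the definitions of $\Psi$ and $\Phi$ — can be made simultaneously $R$-linear and mutually compatible, so that $\Psi$, $\Phi$ and $\partial$ are honest homomorphisms and not merely set maps; this is precisely where the field (or semisimplicity) hypothesis is used and must be invoked carefully. A secondary point worth spelling out is that one should check the resulting map on homology does not depend on these choices up to canonical isomorphism, although this is not needed for the bare statement, which only asserts that \emph{some} $\Psi$ induces an isomorphism.
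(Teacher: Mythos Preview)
Your proposal is correct and follows essentially the same route as the paper's own proof: choose $\rho$ and $j$ linear (using the field hypothesis), deduce $\ker\partial=\aleph$ from Lemma~\ref{lem33} and $\operatorname{im}\partial=\Phi(K)$ directly, then invoke Theorem~\ref{theo31} and Lemma~\ref{lem32} to identify $\aleph/\Phi(K)$ with $FH$ via $\Psi$. The paper dispatches the filtration-preserving claim in one line by pointing to the definition~(\ref{defnpsi}), so your level-wise remark is exactly what is needed; the worries you flag about compatibility of choices are not an issue here, since linearity of $\rho$ and $j$ alone already makes $\Psi$ and $\Phi$ linear.
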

\begin{proof}
 As $R$ is a field (or semi-simple), we may choose $\Phi$ and $\Psi$ to be $R$-linear. Then it follows from Lemma \ref{lem33} that $\ker(\Phi)=\Phi^{-1}(0)=\{0\}$. This shows that $\Phi$ is injective and hence $\ker(\partial)=\ker(\Phi\circ \delta)=\ker(\delta)=\aleph$. Since $im(\partial)=\Phi(\delta(F\mathscr{C}))=\Phi(im(\delta))=\Phi(K)$, we have for the homology of $(F\mathscr{C},\partial)$ that
 \[\raisebox{.2em}{$\ker(\partial)$}\left/\raisebox{-.2em}{$im(\partial)$}\right.=\raisebox{.2em}{$\aleph$}\left/\raisebox{-.2em}{$\Phi(K)$}\right..\]
 By Lemma \ref{lem32}, we have $\Phi(K)=\ker(\Psi)$. Hence, $\Psi$ induces a well-defined injective map
 \[\Psi:\raisebox{.2em}{$\ker(\partial)$}\left/\raisebox{-.2em}{$im(\partial)$}\right.=\raisebox{.2em}{$\aleph$}\left/\raisebox{-.2em}{$\Phi(K)$}\right.\longrightarrow FH,\tag{$\ast$}\]
 which we denote by abuse of notation also $\Psi$. As $\Psi$ is by Theorem \ref{theo31} surjective, we find that $(\ast)$ is in fact an isomorphism. The action filtration preserving property is obvious by the construction of $\Psi$ (see (\ref{defnpsi})).
\end{proof}
\begin{ex}
 Let us consider 4 critical points $C=\{a_1,b_1,a_0,b_0\}$, where the index denotes the action, i.e.\ $f(a_k)=k$. We set
 \begin{align*} 
 && m(b_1,a_1)&=2,& m(b_0,a_1)&=1& \text{ and }& \qquad m(b_0,a_0)=2.\\
  &\text{Then }& da_1&=2b_1+b_0,& da_0&=2b_0&  db_1&=db_0=0.
 \end{align*}
In a graded context, one should think of the $a_k$ as having one index higher then the $b_k$. Using $\mathbb{Z}$-coefficients, we get
\begin{align*}
 FH^1_1 &= \raisebox{.2em}{$\langle b_1\rangle$}\left/\raisebox{-.2em}{$\langle 2b_1\rangle$}\right.\cong \mathbb{Z}_2 & FH^0_0 = FH^0 = \raisebox{.2em}{$\langle b_0\rangle$}\left/\raisebox{-.2em}{$\langle 2b_0\rangle$}\right.&\cong \mathbb{Z}_2\\
 FH = FH^1_0 &= \raisebox{.2em}{$\langle b_0,b_1\rangle$}\left/\raisebox{-.2em}{$\langle 2b_0,2b_1+b_0\rangle$}\right.=&\raisebox{.2em}{$\langle 2b_1+b_0,b_1\rangle$}\left/\raisebox{-.2em}{$\langle -4b_1,2b_1+b_0\rangle$}\right.&\cong \mathbb{Z}_4.
\end{align*}
This shows that there is no hope of building a chain complex from $FH_1^1$ and $FH_0^0$ whose homology is $FH$, if we do not use field coefficients.\medskip\\
If we take $\mathbb{Z}_2$-coefficients instead, we get $da_1=b_0$ and $da_0=db_1=db_0=0$. Then
\begin{align*}
 FH^1_1 &= \langle a_1,b_1\rangle\cong \big(\mathbb{Z}_2\big)^2, &
 FH^0_0 = FH^0 &= \langle a_0,b_0\rangle\cong \big(\mathbb{Z}_2\big)^2,\\
 FH = FH^1_0 &= \raisebox{.2em}{$\langle a_0,b_0,b_1\rangle$}\left/\raisebox{-.2em}{$\langle b_0\rangle$}\right.\cong \big(\mathbb{Z}_2\big)^2.
\end{align*}
The only relevant sequence here is
\[FH^0\overset{\iota}{\longrightarrow}FH^1\overset{\pi}{\longrightarrow}FH^1_1\overset{\delta}{\longrightarrow}FH^0,\]
where $\delta$ maps the class of $b_1$ to 0 and the class of $a_1$ to the class of $b_0$, as $da_1=b_0$ and $db_1=0$. As $FH^0=FH^0_0$ no further maps have to be constructed. The boundary operator $\partial$ on the complex $F\mathscr{C}=\big\langle[a_1],[a_0],[b_1],[b_0]\big\rangle$ is thus given by $\partial[a_1]=[b_0]$ and $\partial[a_0]=\partial[b_1]=\partial[b_0]=0$. The homology of this complex is therefore
\[\big(F\mathscr{C},\partial\big) = \raisebox{.2em}{$\big\langle [a_0],[b_0],[b_1]\big\rangle$}\left/\raisebox{-.2em}{$\big\langle [b_0]\big\rangle$}\right.\cong \big(\mathbb{Z}_2\big)^2,\]
which is trivially isomorphic to $FH$.
\end{ex}

\newpage\phantom{.}
\newpage

\section{Contact surgery and handle attaching}\label{secsur}
This section is mostly already included in \cite{Cie}. However, we will redo the line of arguments to fill in some delicate details left open in the original article. Thus, we hope to make the proof that symplectic (co)homology is invariant under subcritical surgery (Theorem \ref{theoinvsur}) more transparent, at least to some readers.\\
First, we describe the general construction for contact surgery, which is done by attaching a symplectic handle $H^{2n}_k$ to the symplectization of a contact manifold. Then, we describe the symplectic standard handle, which is a subset of $\mathbb{R}^{2n}$ defined as the intersection of two sublevel sets $\{\psi <-1\}\cap\{\phi>-1\}$, where $\phi$ and $\psi$ are functions on $\mathbb{R}^{2n}$. While $\phi$ is explicitly given, we describe the construction of a suitable $\psi$ in the Subsection \ref{ExPsi}. The calculation of Conley-Zehnder indices for 1-periodic Reeb orbits on $H^{2n}_k$ concludes this section.

\subsection{Surgery along isotropic spheres}
Let us briefly recall the contact surgery construction due to Weinstein, \cite{Wein}. Consider an isotropic sphere $S^{k-1}$ in a contact manifold $(N^{2n-1},\xi)$. The 2-form $\omega=d\lambda$ for a contact form $\lambda$ (with $\xi=\ker\lambda$) defines a natural conformal symplectic structure on $\xi$. Denote the $\omega$-orthogonal on $\xi$ by $\perp_\omega$. Since $S$ is isotropic, it holds that $TS\subset TS^{\perp_\omega}$. So, the normal bundle of $S$ in $N$ is given by
\[ TN/ TS= TN/ \xi \,\oplus\, \xi/(TS)^{\perp_\omega} \oplus (TS)^{\perp_\omega}/ TS.\]
The Reeb field $R_\lambda$ trivializes $TN/\xi$. The bundle $\xi/(TS)^{\perp_\omega}$ is canonically isomorphic to $T^\ast S$ via $v\mapsto \iota_v\omega$. The conformal symplectic normal bundle $CSN(S):=(TS)^{\perp_\omega}/TS$ carries a natural conformal symplectic structure induced by $\omega$. Since $S$ is a sphere, the embedding $S^{k-1}\subset\mathbb{R}^k$ provides a natural trivialization of the bundle $\mathbb{R}R_\lambda\oplus T^\ast S$. This trivialization together with a conformally symplectic trivialization of $CNS(S)$ specifies a standard framing for $S$ in $N$.\\
Note that we have to assume that $CNS(S)$ is trivializable. This holds certainly true for $S=S^0 =\{N,S\}$ (two points) or $S=S^{n-1}$. In the latter case we have $(TS)^{\perp_\omega}=TS$ and hence $CNS(S)=(0)$. Therefore, taking connected sums and surgery along Legendrian spheres is always possible.\bigskip\\ 
Following Weinstein, we define an isotropic setup as a quintuple $(P,\omega,Y,\Sigma,S)$, where $(P,\omega)$ is a symplectic manifold, $Y$ a Liouville vector field for $\omega$, $\Sigma$ a hypersurface transverse to $Y$ (so $\Sigma$ is contact) and $S$ an isotropic submanifold of $\Sigma$. In \cite{Wein}, Weinstein proves the following variant of his famous neighborhood theorem for isotropic manifolds:
\begin{prop}[\textbf{Weinstein}] \label{X}
 Let $(P_0,\omega_0,Y_0,\Sigma_0,S_0)$ and $(P_1,\omega_1,Y_1,\Sigma_1,S_1)$ be two isotropic setups. Given a diffeomorphism from $S_0$ to $S_1$ covered by an isomorphism of their symplectic subnormal bundles, there exist neighborhoods $U_j$ of $S_j$ in $P_j$ and an isomorphism of isotropic setups
 \[\phi : (U_0,\omega_0,Y_0,\Sigma_0\cap U,S_0) \rightarrow (U_1,\omega_1,Y_1,\Sigma_1\cap U_1,S_1)\]
 which restricts to the given mappings on $S_0$.\pagebreak
\end{prop}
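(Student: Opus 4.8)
The statement is Weinstein's isotropic neighborhood theorem (Proposition~\ref{X}), and the plan is to reduce it to the standard equivariant Moser/Darboux argument along the isotropic submanifold. First I would fix the diffeomorphism $\varphi_S\colon S_0\to S_1$ together with the given conformal symplectic bundle isomorphism of $CSN(S_0)\cong CSN(S_1)$. The key observation is that, near an isotropic submanifold $S$ of a contact hypersurface $\Sigma$ inside a symplectic manifold $(P,\omega)$ transverse to a Liouville field $Y$, the germ of the whole isotropic setup is determined by the \emph{same linear algebraic data} that already appeared in the splitting of the normal bundle above: namely $\mathbb{R}Y\cong\mathbb{R}R_\lambda$, the conormal direction of $S$ in $\Sigma$ (isomorphic to $T^\ast S$ via $v\mapsto\iota_v\omega$), and $CSN(S)$. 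So the first real step is to build, on a neighborhood of $S_j$, an explicit local model $P_j^{\mathrm{mod}}$ of the form (a neighborhood of the zero section of) $T^\ast S_j\times\mathbb{R}\times CSN(S_j)$ carrying a canonical symplectic form $\omega_j^{\mathrm{mod}}$, a canonical Liouville field $Y_j^{\mathrm{mod}}$ (built from the cotangent Liouville field plus the $\mathbb{R}$-direction plus a radial field on the symplectic bundle), and a canonical hypersurface $\Sigma_j^{\mathrm{mod}}$ cut out by the $\mathbb{R}$-coordinate vanishing. This is the contact analogue of the cotangent model for Lagrangian/isotropic neighborhoods.

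Next I would show that each given isotropic setup $(P_j,\omega_j,Y_j,\Sigma_j,S_j)$ is, germ-wise along $S_j$, isomorphic to its local model. Here I would choose a tubular neighborhood of $S_j$ in $P_j$ adapted to the decomposition of $TP_j|_{S_j}$ into $TS_j$, the symplectic complement $(TS_j)^{\perp_\omega}/TS_j = CSN(S_j)$, the line $\mathbb{R}Y_j$, and the remaining symplectically dual summands; then one gets a diffeomorphism $\Psi_j$ from a neighborhood of the zero section of the model to a neighborhood of $S_j$ which is the identity on $S_j$ and whose derivative along $S_j$ identifies $\omega_j$ with $\omega_j^{\mathrm{mod}}$, $Y_j$ with $Y_j^{\mathrm{mod}}$, and $\Sigma_j$ with $\Sigma_j^{\mathrm{mod}}$ \emph{to first order}. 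Then a Moser-type argument upgrades this first-order match to an actual germ isomorphism: the two-form $\omega_t = (1-t)\Psi_j^\ast\omega_j^{\mathrm{mod}} + t\,\omega_j$ is symplectic near $S_j$, agrees on $S_j$, so $\omega_j - \Psi_j^\ast\omega_j^{\mathrm{mod}} = d\beta_j$ with $\beta_j$ vanishing to second order along $S_j$ (using a Poincar\'e-lemma relative to $S_j$), and integrating the time-dependent vector field $X_t$ defined by $\iota_{X_t}\omega_t = -\beta_j$ gives an isotopy fixing $S_j$ that pulls $\omega_j$ back to the model form. One must simultaneously arrange compatibility with the Liouville field and the hypersurface; the standard trick is to first use the Liouville flows $\phi^r_{Y_j}$ to reduce everything to a collar of $\Sigma_j$ (as in Discussion~\ref{dis2}), so that after this normalization $Y_j = \partial_r$ on the nose and $\Sigma_j = \{r=0\}$, and then one only has to run Moser in the $\Sigma_j$-direction while keeping the flow tangent to the levels of $r$, which is automatic once $\beta_j$ is chosen $r$-independent and annihilating $\partial_r$.

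Finally I would compose: $\varphi := \Psi_1\circ(\text{model isomorphism induced by }\varphi_S\text{ and the bundle map})\circ\Psi_0^{-1}$, suitably corrected by the two Moser isotopies, gives the desired isomorphism of isotropic setups $\varphi\colon (U_0,\omega_0,Y_0,\Sigma_0\cap U_0,S_0)\to (U_1,\omega_1,Y_1,\Sigma_1\cap U_1,S_1)$ restricting to $\varphi_S$ on $S_0$; shrinking $U_0,U_1$ if necessary makes $\varphi$ a genuine (not merely germ-level) diffeomorphism. The main obstacle I expect is the bookkeeping in the Moser step: one needs the primitive $\beta_j$ to vanish to sufficiently high order along $S_j$ \emph{and} to be compatible with both the Liouville vector field and the hypersurface at the same time, so that the resulting isotopy preserves the triple $(\omega,Y,\Sigma)$ rather than just $\omega$. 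Handling the conformal (as opposed to strictly symplectic) nature of $CSN(S)$ is a minor extra point --- one absorbs the conformal factor into the choice of contact form $\lambda$, exactly as in Discussion~\ref{dis0} --- but it has to be tracked carefully so that the hypotheses "covered by an isomorphism of their symplectic subnormal bundles'' is exactly what the construction consumes. Everything else is the standard relative Darboux/Moser machinery.
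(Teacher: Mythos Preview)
The paper does not actually prove this proposition: it is stated with the attribution ``Weinstein'' and introduced by ``In \cite{Wein}, Weinstein proves the following variant of his famous neighborhood theorem for isotropic manifolds,'' after which the paper immediately uses the result to define contact surgery. So there is no proof in the paper to compare against.

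Your plan is the standard relative Moser/Darboux argument and is essentially how Weinstein's original proof goes: build a local model from the normal bundle decomposition, match to first order along $S_j$, then run Moser to upgrade the first-order match to an actual symplectomorphism, taking care that the Liouville field and hypersurface are preserved. The outline is correct; the only substantive caution is the one you already flag, namely that the Moser isotopy must be made compatible with $Y$ and $\Sigma$ simultaneously, which is handled (as you say) by first normalizing via the Liouville flow so that $Y=\partial_r$ and $\Sigma=\{r=0\}$, and then choosing the primitive $\beta$ so that the Moser vector field is tangent to the $r$-levels.
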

We may now define contact surgery along an isotropic sphere as follows:\\ Let $H^{2n}_k\approx D^k\times D^{2n-k}$ be a symplectic standard handle (see \ref{handle}) and let $S^{k-1}$ be an isotropic sphere in a contact manifold $(N^{2n-1},\xi)$. Then, Proposition \ref{X} allows us to glue the (lower) boundary $S^k\times D^{2n-k}$ of $H^{2n}_k$ to the symplectization $N\times[0,1]$ along the boundary part $U_1\cap N\times[0,1]$ of a tubular neighborhood $U_1$ of $S\times\{1\}$ (see Figure \ref{fig2}). We obtain an exact symplectic manifold $P:=N\times[0,1]\cup_{S}H^{2n}_k$ with a Liouville vector field $Y$ which is on $N\times[0,1]$ simply $\frac{\partial}{\partial t}$, where $t$ denotes the coordinate on $[0,1]$. The field $Y$ is inward pointing along $\partial^-P :=N\times\{0\}$ and outward pointing along the other boundary component $\partial^+P$. Both manifolds are hence contact and $\partial^+P$ is obtained from $N$ by surgery along $S$. Moreover, $P$ is an exact symplectic cobordism between $\partial^-P$ and $\partial^+P$.
\begin{figure}[h]
\centering
 \resizebox{15cm}{!}{\input{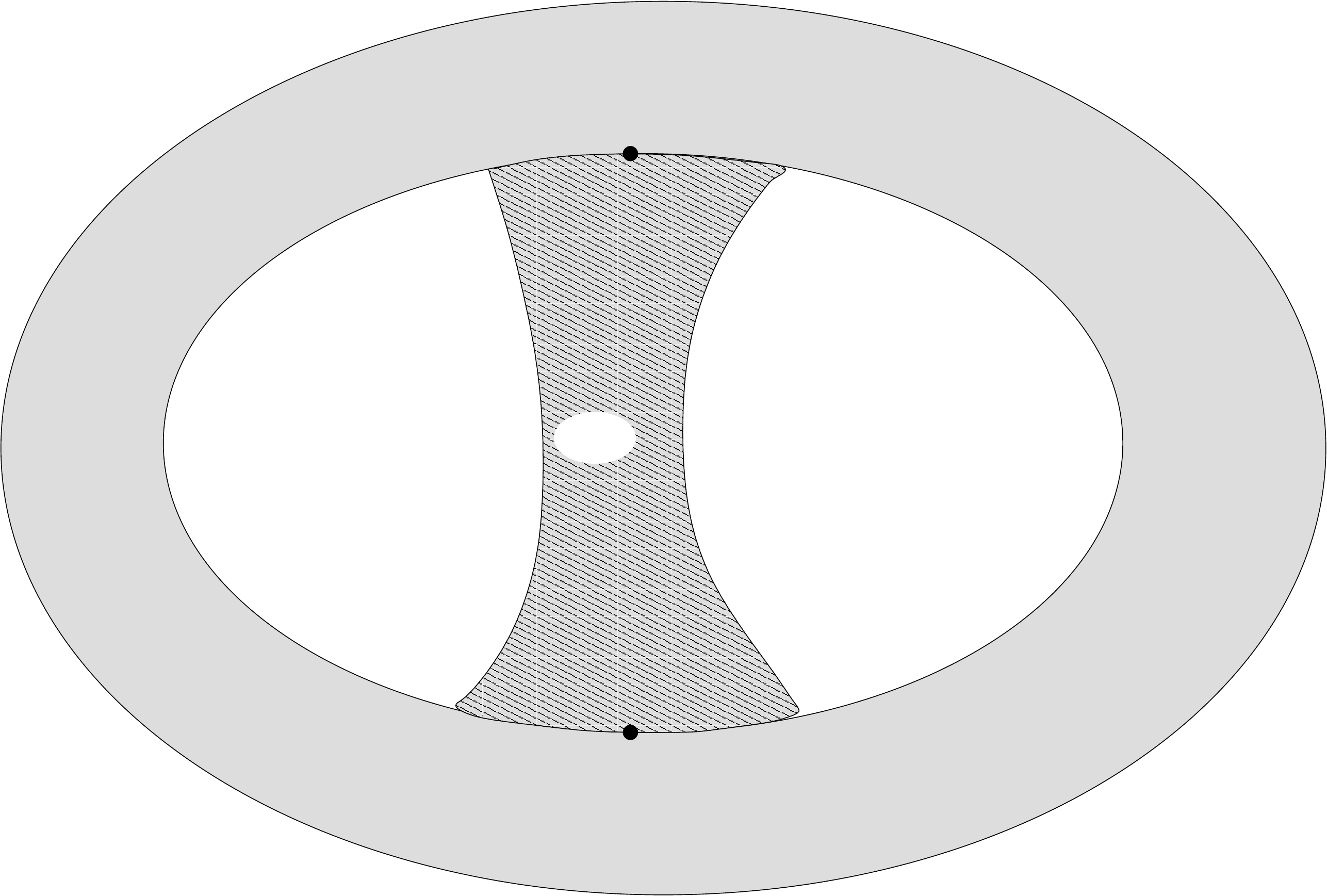_t}}
 \caption{\label{fig2} $N\times[0,1]$ with handle attached}
\end{figure}
\subsection{The handle $H^{2n}_k$}\label{handle}
In order to specify a standard handle $H^{2n}_k$, we consider $\mathbb{R}^{2n}$ with symplectic coordinates $(q,p)=(q_1,p_1,...\,,q_n,p_n)$ and the following Weinstein structure (cf. \cite{Wein}):
\begin{align*}
 \omega &:= \sum_{j=1}^n dq_j\wedge dp_j\\
 Y &:= \sum_{j=1}^k \left(2q_j\frac{\partial}{\partial q_j} - p_j\frac{\partial}{\partial p_j}\right) + \sum_{j=k+1}^n \frac{1}{2}\left(q_j\frac{\partial}{\partial q_j} + p_j\frac{\partial}{\partial p_j}\right)\\
  \phi&:=\sum_{j=1}^k\left( q_j^2-\frac{1}{2}p_j^2\right) + \sum_{j=k+1}^n \frac{A_j}{4}\left(q_j^2+p_j^2\right),  
\end{align*}
with constants $A_j>0$.\\
Observe that the Liouville vector field $Y$ and the Weinstein function $\phi$ satisfy $Y\cdot \phi >0$ away from the origin. Note that $Y$ is in fact a Liouville vector field for $\omega$, as $\mathcal{L}_X\omega = \omega$, and its associated Liouville 1-form $\lambda := \iota_X\omega$ satisfies $d\lambda = \omega$. Explicitly, $\lambda$ is given by
\[\lambda := \sum_{j=1}^k\left(2q_jdp_j + p_jdq_j\right)+\sum_{j=k+1}^n\frac{1}{2}\left(q_jdp_j -p_jdq_j\right).\]
We introduce furthermore the following three quantities:
\[ x:= \sum_{j=1}^k q_j^2 \qquad y:= \sum_{j=1}^k \frac{1}{2} p_j^2 \qquad z:= \sum_{j=k+1}^n \frac{A_j}{4}\left( q_j^2+p_j^2\right),\]
whose Hamiltonian vector fields are given by
\[ X_x = \sum_{j=1}^k 2q_j\frac{\partial}{\partial p_j}\quad
 X_y = \sum_{j=1}^k -p_j\frac{\partial}{\partial q_j}\quad
 X_z = \sum_{j=k+1}^n \frac{A_j}{2}\left(q_j\frac{\partial}{\partial p_j} - p_j\frac{\partial}{\partial q_j}\right).\]
This convention allows us to write $\phi=x-y+z$ and $X_\phi=X_x-X_y+X_z$.\\
Now, consider the level surface $\Sigma^- :=\{\phi=-1\}$ and note that $Y$ is transverse to $\Sigma^-$, as $Y\cdot\phi|_{\Sigma^-}>0$. Hence, $\lambda|_{T\Sigma^-}$ is a contact form. The set $S:=\{x=z=0,\;y=+1\}$ is an isotropic sphere in $\Sigma^-$ and the quintuple $(\mathbb{R}^{2n},\omega,Y,\Sigma^-,S)$ will be the isotropic setup where we glue $H^{2n}_k$ to a contact manifold.
To specify a handle $H^{2n}_k$, we choose a different Weinstein function $\psi(q,p)=\psi(x,y,z)$ on $\mathbb{R}^{2n}$ such that the following holds:
\begin{itemize}
 \item[$(\psi 1)$] $\displaystyle \frac{\partial \psi}{\partial x},\frac{\partial \psi}{\partial z} \geq 0,\quad \frac{\partial \psi}{\partial y}\leq 0,\quad$ and $\displaystyle\quad \frac{\partial \psi}{\partial x}=\frac{\partial \psi}{\partial y}=\frac{\partial \psi}{\partial z}=0\quad$ only at the origin.
 \item[$(\psi 2)$] $\psi=\phi\quad\text{ for }\quad y>1+\veps\quad $ with $\veps$ arbitrarily small.
 \item[$(\psi 3)$] The set $\big\{\psi< -1\big\}\cap\big\{\phi> -1\big\}$ is diffeomorphic to $D^k\times D^{2n-k}$.
\end{itemize}
\begin{figure}[ht]
\centering
 \resizebox{10cm}{!}{\input{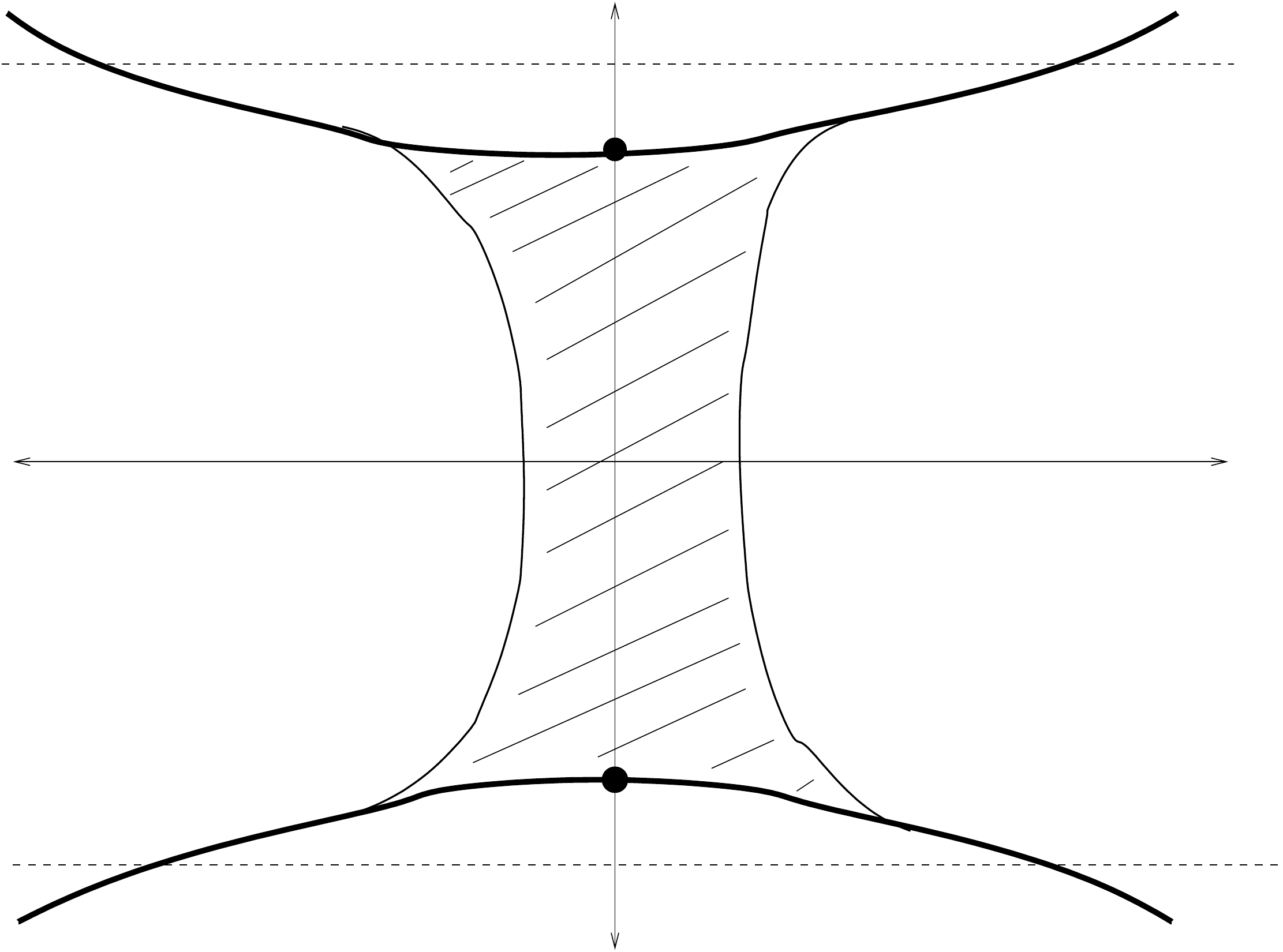_t}}
 \\\caption{\label{fig3} The handle $H^{2n}_k$}
\end{figure}
The handle is defined to be the closure $\displaystyle\quad H^{2n}_k:=\overline{\big.\{\psi< -1\}\cap\{\phi>-1\}}\;\,$ (see Figure \ref{fig3}).
\begin{proper} ~
 \begin{itemize}
  \item It follows from $(\psi 1)$ that the level sets $\Sigma^+:=\{\psi=-1\}$ and $\Sigma^-=\{\phi=-1\}$ are both contact hypersurfaces, as also $X\cdot \psi > 0$. They coincide for $y\geq 1+\veps$ due to $(\psi 2)$ and they contain the boundary of $H^{2n}_k$. Condition $(\psi 3)$ on the other hand assures that $\Sigma^+$ is obtained from $\Sigma^-$ by surgery along $S$.
  \item It follows also from $(\psi 2)$ that reducing $\veps$ lets the handle become thinner. By choosing $\veps $ sufficiently small, we can make the handle so thin that its ``lower'' boundary $\{\phi=-1\}\cap H^{2n}_k$ lies inside any prescribed neighborhood of $S$.
  \item The handle stays unchanged if we take $\phi'=\alpha\cdot \phi + \beta$ and $\psi'=\alpha\cdot\psi+\beta$, $\alpha\neq 0$, provided that we set $\bigg.\quad H^{2n}_k=\overline{\big.\{\psi'< -\alpha+\beta\}\cap\{\phi'> -\alpha+\beta\}}$.
 \end{itemize}
\end{proper}

The Hamiltonian vector field $X_{\psi'}$ of $\psi'=\alpha\cdot\psi+\beta$ is given by
\begin{align} \label{HamEq}
 &\phantom{=}\;\;X_{\psi'} = \alpha\cdot X_\psi= \alpha\cdot\left(\frac{\partial \psi}{\partial x} X_x + \frac{\partial \psi}{\partial y}X_y + \frac{\partial \psi}{\partial z}X_z\right)\notag\\
 &= \alpha\sum_{j=1}^k \left(2\frac{\partial\psi}{\partial x} q_j\frac{\partial}{\partial p_j}-\frac{\partial\psi}{\partial y} p_j\frac{\partial}{\partial q_j}\right) + \alpha\sum_{j=k+1}^n\frac{\partial \psi}{\partial z}\cdot\frac{A_j}{2}\left(q_j\frac{\partial}{\partial p_j}-p_j\frac{\partial}{\partial q_j}\right).
\end{align}
For our symplectic setting, we consider the following Lyapunov function $f:=\sum_{j=1}^k q_jp_j$. Note that $f$ satisfies $X_{\psi'}\cdot f>0$ away from the critical points of $f$. This shows that all periodic orbits of $X_{\psi'}$ are contained in the set
\[\{x=y=0\} = \{q_1=p_1=...=q_k=p_k=0\}.\]
\subsection{An explicit $\psi$ and its extension to a neighborhood of $H^{2n}_k$}\label{ExPsi}
It is not difficult to find a Weinstein function $\psi$ which satisfies $(\psi 1)$--$(\psi 3)$. Fix $\veps>0$ and choose a smooth, monotone function $g:\mathbb{R}\rightarrow [0,1]$ such that
\begin{align*}
 g(t) &= \begin{cases}0\quad \text{ for }\quad t\leq 0\\ 1\quad\text{ for }\quad t\geq 1+ \veps\end{cases}\\
 \text{and }\qquad 0\leq g'(t) &\leq \frac{1}{1+\veps}+\delta,\qquad\delta>0\quad\text{small},\quad\forall\, t.
\end{align*}
\begin{equation}\label{eqXX}
 \text{Then set }\qquad\qquad\psi:=x-y+z-(1+\veps/2)+(1+\veps/2)\cdot g(y).
\end{equation}
This satisfies $(\psi 1)$--$(\psi 3)$, provided that $\delta$ is small enough.\bigskip\\
For symplectic (co)homology, we need $\psi$ not only on $H^{2n}_k$, but we have to arrange that $\psi$ extends to a neighborhood of $H^{2n}_k$ in a specific way. In order to describe what we mean by that, we make the following observation:\\
Let $\Sigma^-=\{\phi=-1\}$ and $\Sigma^+=\{\psi =-1\}$ be as above. As both are hypersurfaces transversal to the Liouville vector field $Y$, the flow $\varphi^t$ of $Y$ provides symplectic embeddings of the symplectizations of $\Sigma^+$ resp.\ $\Sigma^-$ into $\mathbb{R}^{2n}$:
\[
 \Phi^\pm : \Sigma^\pm \times (-\infty,\infty)\rightarrow \mathbb{R}^{2n},\qquad \Phi^\pm(y,t) = \varphi^t(y) \]
On any symplectization $(\Sigma\times(-\infty,\infty),d(e^t\lambda))$ of a contact manifold and for any $\alpha,\beta\in\mathbb{R}$, we define a function $h_\Sigma$ by $\qquad\Big.h_\Sigma(s,t) := \alpha\cdot e^t+\beta.$\\
We call such a function linear on $\Sigma\times\mathbb{R}$, and in fact it is linear when using the coordinate $r:=e^t, \,r\in(0,\infty),$ instead of $t$. Observe that the Hamiltonian vector field of $h_\Sigma$ is given by $X_{h_{\Sigma}}(s,t)=\alpha\cdot R_\lambda(s)$, where $R_\lambda$ is the Reeb vector field of $\lambda$, the contact form on $\Sigma$. Let $\tilde{h}^\pm_\Sigma$ be functions of this form for $\Sigma^\pm$ with $\alpha=1,\beta=-2$, such that $\tilde{h}_{\Sigma^\pm}(\Sigma^\pm)=-1$, and let $h_\Sigma^\pm := \tilde{h}_{\Sigma^\pm}\circ (\Phi^\pm)^{-1}$ be their pushforward onto the image of $\Phi^\pm$ in $\mathbb{R}^{2n}$. Note that $h_\Sigma^+$ and $h_\Sigma^-$  coincide on $\Phi^\pm((\Sigma^+\cap\Sigma^-)\times(-\infty,\infty))$, as $\Phi^-=\Phi^+$ on $(\Sigma^-\cap\Sigma^+)\times(-\infty,\infty)$.\\
In order to compare the symplectic (co)homologies of $\Sigma^-$ and $\Sigma^+$, we need a Hamiltonian that is linear on the negative symplectization of $\Sigma^-$ and the positive symplectization of $\Sigma^+$. As $\psi$ will serve as such a Hamiltonian, we require that $\psi=h_\Sigma^+$ on $\{\psi\geq -1\}$ and $\psi=h_\Sigma^-$ on $\{\phi\leq -1\}\setminus U$, where $U$ is a compact neighborhood of $S=\{x=z=0,\,y=1\}$ (see Figure \ref{fig4}).
\newpage
\begin{figure}[h]
\centering
 \resizebox{9cm}{!}{\input{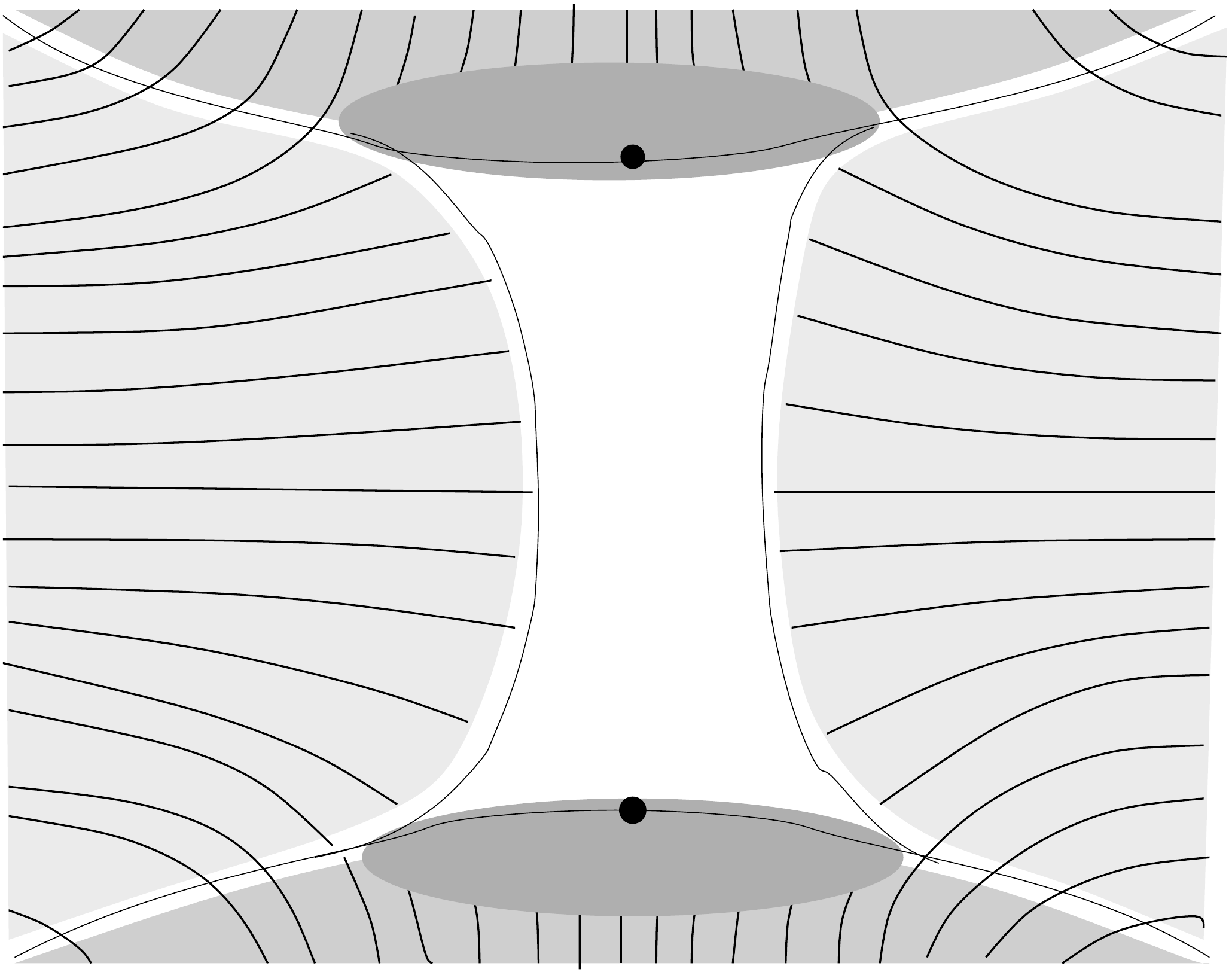_t}}
 \caption{\label{fig4}Areas, where $\psi$ is linear together with the symplectizations of $\Sigma^\pm$}
\end{figure}
\begin{dis}
 It is the extension of $\psi$ beyond the handle, that is not quite correct in \cite{Cie}: It is stated there that one can extend $\psi$ on the positive symplectization of $\Sigma^+$ such that there is only one 1-periodic orbit of $X_\psi$ on the handle. To achieve this, $\psi$ has to be linear on the symplectizations away from the handle (as already stated above), i.e.\ $\psi$ has to be of the form $\psi=\alpha\cdot e^t+\beta$ for $\alpha\not\in spec(\Sigma^+)$. Moreover, it has to be of this form on the set $\{x=y=0\}$ and it has to be increasing for $y\rightarrow 0$ on the set $\{x=z=0\}$.\\
 Let us write for the moment $\psi= \alpha^+\cdot e^t + \beta^+$ on $\Sigma^+\times\mathbb{R}^+$ and $\psi =\alpha^-\cdot e^t+\beta^-$ on $\Sigma^-\times\mathbb{R}^-$. As $\Sigma^+$ and $\Sigma^-$ coincide together with their symplectizations on an open set, we find that $\alpha^+=\alpha^-$ and $\beta^+=\beta^-$. However, following the path depicted in Figure \ref{fig5} and keeping in mind that $\psi=\alpha^+\cdot e^t+\beta^+$ on $\{x=y=0\}$ and $\partial_y \psi\leq 0$ on $\{x=z=0\}$, we find that $\beta^+>\beta^-$, a contradiction.
 \begin{figure}[h]
\centering
 \resizebox{8.5cm}{!}{\input{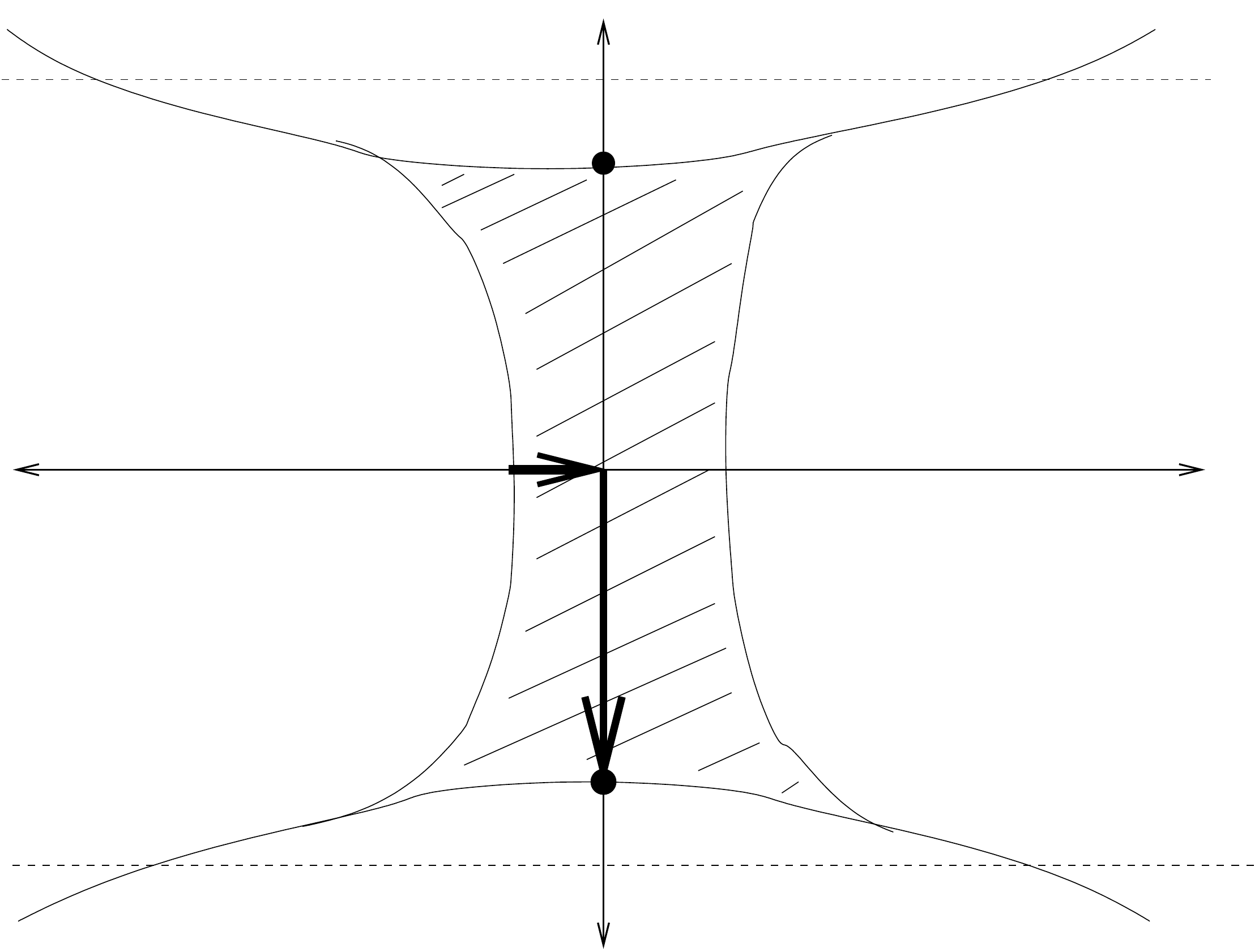_t}}
 \caption{\label{fig5}Fig. 1: The problematic path}
\end{figure}
\newpage
Our solution to this dilemma is to allow $\psi$ to have a varying slope on $\{x=y=0\}$, first letting it grow very slowly coming from the origin and increasing the slope sharply near $\Sigma^+$. Using the Lyapunov function $f$, we can then show that this construction creates 1-periodic $X_\psi$-orbits only in the set $\{x=y=0\}$. These can be explicitly described and are hence still manageable.
\end{dis}
In order to construct such a $\psi$, we need the following two technical lemma:
\begin{lemme}
 Consider $\mathbb{R}^{2n}$ with the standard symplectic structure and the Liouville vector field given in \ref{handle} and 
 \[ \textstyle x:= \sum_{j=1}^k q_j^2 \qquad y:= \sum_{j=1}^k \frac{1}{2} p_j^2 \qquad z:= \sum_{j=k+1}^n \frac{A_j}{4}\left( q_j^2+p_j^2\right).\]
 Let $\Sigma\subset\mathbb{R}^{2n}$ be a smooth hypersurface transverse to the Liouville vector field $Y$ such that its outer normal $N$ is of the form $N=c_x\cdot \nabla_x+c_y\cdot \nabla_y + c_z\cdot \nabla_z$, where $c_x,c_y,c_z : \Sigma\rightarrow \mathbb{R}$ are functions with $c_x,c_z>0, c_y < 0$ and $\nabla_x,\nabla_y,\nabla_z$ are the gradients of $x,y,z$. Let $\tilde{h}_\Sigma$ denote the function $\tilde{h}_\Sigma(s,t)=\alpha e^t+\beta$ on the symplectization of $\Sigma$ and let $h_\Sigma=\tilde{h}_\Sigma\circ \Phi^{-1}$ be its pushforward onto $\mathbb{R}^{2n}$ by the flow $\Phi$ of $Y$.\\
 Then, the Hamiltonian vector field $X_h$ of $h_\Sigma$ is of the form
 \begin{align*}
  &X_h=C_x\cdot X_x + C_y\cdot X_y + C_z\cdot X_z,
 \end{align*}
 where $C_x,C_y,C_z\in C^{\infty}(\mathbb{R}^{2n})$ are functions satisfying $C_x,C_z>0,\; C_y <0$.
\end{lemme}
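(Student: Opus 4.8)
The plan is to compute the Hamiltonian vector field of $h_\Sigma$ directly, exploiting the fact that $h_\Sigma$ is, by construction, constant along the flow lines of the Liouville vector field $Y$ up to an exponential rescaling. More precisely, since $h_\Sigma = \tilde h_\Sigma \circ \Phi^{-1}$ with $\tilde h_\Sigma(s,t) = \alpha e^t + \beta$ and $\Phi$ is the flow of $Y$ satisfying $(\varphi^t)^\ast \lambda = e^t \lambda$ (Discussion \ref{dis2}), one gets that $h_\Sigma - \beta$ scales like $e^t$ under the flow, i.e.\ $\mathcal{L}_Y(h_\Sigma - \beta) = h_\Sigma - \beta$. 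The first step is therefore to express $X_h$ in terms of data on $\Sigma$: at a point $p = \varphi^t(q)$ with $q \in \Sigma$, the level set of $h_\Sigma$ through $p$ is $\varphi^t(\Sigma)$, so $dh_\Sigma$ is proportional to the conormal of $\varphi^t(\Sigma)$, which is the pushforward under $D\varphi^t$ of the conormal $c_x\,dx + c_y\,dy + c_z\,dz$ of $\Sigma$ at $q$ (here I use that the normal $N$ being of the stated form is equivalent to the conormal 1-form being $c_x\,dx + c_y\,dy + c_z\,dz$ up to positive scaling, after identifying via the metric). The proportionality constant is positive because $Y\cdot h_\Sigma > 0$ (as $\alpha, t$-derivative of $e^t$ is positive, with the sign of $\alpha$; one should fix the convention $\alpha > 0$, consistent with $\tilde h_{\Sigma^\pm}$ having $\alpha = 1$).

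Next I would use the naturality of $X$ under symplectomorphisms: since $\varphi^t$ is conformally symplectic, $(\varphi^t)^\ast \omega = e^t \omega$, one has the transformation rule $D\varphi^t (X_{g\circ\varphi^t}) = e^{-t} X_g \circ \varphi^t$ for any function $g$. Applying this with $g = h_\Sigma$ restricted near $\varphi^t(\Sigma)$ reduces the computation to the Hamiltonian vector field of (a rescaling of) the defining function of $\Sigma$ itself, evaluated at $q \in \Sigma$, then transported by $D\varphi^t$. Concretely, at $q\in\Sigma$ the relevant Hamiltonian vector field is that of $c\cdot(\text{defining function})$ whose differential is $c_x\,dx + c_y\,dy + c_z\,dz$ for positive $c$, hence equals $c(c_x X_x + c_y X_y + c_z X_z)(q)$ with $c_x, c_z > 0$ and $c_y < 0$. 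So on $\Sigma$ the claim holds with $C_\bullet = c\cdot c_\bullet$. The remaining step is to check that this sign structure is preserved under $D\varphi^t$. For this I would note that $Y$ is diagonal in the $(q_j,p_j)$ splitting — it rescales the first $k$ coordinate planes and the last $n-k$ coordinate planes each by a (possibly different) positive factor — and that $X_x, X_y, X_z$ are each supported in, and adapted to, these respective coordinate blocks. A direct computation of $D\varphi^t$ on each block (the flow is linear in each $(q_j,p_j)$-plane since $Y$ is linear there) shows $D\varphi^t(X_x) = (\text{positive scalar})\, X_x$, similarly for $X_z$, and $D\varphi^t(X_y) = (\text{positive scalar})\,X_y$, because $X_x = \sum 2q_j \partial_{p_j}$, $X_y = -\sum p_j\partial_{q_j}$, $X_z = \sum \frac{A_j}{2}(q_j\partial_{p_j} - p_j\partial_{q_j})$ are all equivariant under the linear diagonal flow up to scalars. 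Combining, $X_h = C_x X_x + C_y X_y + C_z X_z$ at the point $\varphi^t(q)$ with $C_x, C_z > 0$ and $C_y < 0$, and since every point of $\mathbb{R}^{2n}$ (in the relevant neighborhood) is of this form, smoothness of $C_\bullet$ follows from smoothness of $h_\Sigma$.

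The main obstacle I anticipate is the bookkeeping in the second step: carefully tracking how $D\varphi^t$ acts on the differential $dh_\Sigma$ versus on the Hamiltonian vector field $X_h$, getting the conformal factors $e^{\pm t}$ on the correct side, and making sure the hypothesis on the outer normal $N$ translates cleanly into a statement about the conormal 1-form of $\Sigma$ (the identification of normal and conormal uses the Euclidean metric, which is not the symplectic data, so one has to be slightly careful that the sign conditions $c_x, c_z > 0$, $c_y < 0$ survive this identification — they do, because $\nabla_x, \nabla_y, \nabla_z$ are the Euclidean gradients of $x,y,z$ and the metric is positive definite, so the coefficients of the conormal in the basis $dx, dy, dz$ have the same signs as the coefficients of $N$ in the basis $\nabla_x, \nabla_y, \nabla_z$). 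A secondary subtlety is that $x, y, z$ do not give global coordinates, so "the coefficient functions $C_x, C_y, C_z$" are only determined where $X_x, X_y, X_z$ are linearly independent; one should either restrict to that locus or, better, define $C_\bullet$ intrinsically as the relevant directional components and verify they extend smoothly. I expect these points to be routine once the conformal scaling identities are set down correctly, and the proof should be only a few lines of honest computation after that.
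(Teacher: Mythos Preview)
Your proposal is correct and follows essentially the same route as the paper: both arguments identify $X_h|_{\varphi^t(\Sigma)}=\alpha e^t R_t$, then transport the data from $\Sigma$ along the explicit linear flow $\varphi^t$ and observe that the sign pattern of the coefficients is preserved. The only cosmetic difference is that the paper pushes forward the Euclidean normal $N$ and then applies $J$ (using $R_t = JN^t/\lambda(JN^t)$), whereas you push forward $X_x,X_y,X_z$ directly via the conformal-symplectic naturality of Hamiltonian vector fields; since $J\nabla_x=X_x$, $J\nabla_y=X_y$, $J\nabla_z=X_z$, these are the same computation.
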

\begin{rem}~
\begin{itemize}
 \item Note that $C_x,C_z>0$ and $C_y<0$ guarantees that the Lyapunov function \linebreak $f=\sum_{j=1}^k q_jp_j$ satisfies $X_h\cdot f = 0$ only on $\{x= y= 0\}$.
 \item The assumptions on $\Sigma$ are satisfied, if $\Sigma=\psi^{-1}(c)$ for a function $\psi$ on $x,y,z$ with $\frac{\partial \psi}{\partial x}\big|_\Sigma,\frac{\partial \psi}{\partial z}\big|_\Sigma>0$ and $\frac{\partial \psi}{\partial y}\big|_\Sigma<0$.
\end{itemize} 
\end{rem}
\begin{proof}
 As $X_{\tilde{h}}=\alpha\cdot R_\lambda$ on $\Sigma\times(-\infty,\infty)$, it follows that on $\mathbb{R}^{2n}$ holds $X_h|_{\varphi^t(\Sigma)}=\alpha\cdot e^t\cdot R_t$, where $R_t$ is the Reeb field of $\lambda_t:=\lambda|_{T\varphi^t(\Sigma)}$, the contact form on $\varphi^t(\Sigma)$. By assumption, a normal $N$ to $\Sigma$ satisfies
 \begin{align*}
  N &= c_x\nabla_x+c_y\nabla_y+c_z\nabla_z \\
  &= c_x\sum_{j=1}^k 2q_j\frac{\partial}{\partial q_j}+c_y\sum_{j=1}^kp_j\frac{\partial}{\partial p_j} + c_z\sum_{j=k+1}^n \frac{1}{2}\left(q_j\frac{\partial}{\partial q_j}+p_j\frac{\partial}{\partial p_j}\right).
 \end{align*}
 The flow of $\varphi^t$ of $Y$ is given by 
 \[\varphi^t=\Big(\underbrace{...\,,e^{2t}\cdot q_j,e^{-t}\cdot p_j,...}_{j=1,...\,,k}\text{\huge,}\underbrace{...\,,e^{t/2}\cdot q_j,e^{t/2}\cdot p_j,...}_{j=k+1,...\,,n}\Big).\]
 A normal $N^t$ to $\varphi^t(\Sigma)$ is hence given by
 \[N^t = e^{2t}\cdot c_x\nabla_x + e^{-t}\cdot c_y\nabla_y+ e^{t/2}\cdot c_z\nabla_z.\]
 Let $J$ denote the standard almost complex structure on $\mathbb{R}^{2n}$, i.e.\ $J(\frac{\partial}{\partial q_j})=\frac{\partial}{\partial p_j}$ and $J( \frac{\partial}{\partial p_j})= -\frac{\partial}{\partial q_j}$. Using the definition of $\lambda_t$ and $N^t$, we find that the Reeb vector field $R_t$ is given by
 \begin{align*}
  R_t &= \frac{JN^t}{\lambda(JN^t)} = \frac{1}{C}\left( e^{2t}c_xX_x + e^{-t}c_yX_y+e^{t/2}c_zX_z\right)\\
  \text{with }\; C&=\lambda(JN^t) = e^{4t}(c_x)\cdot4x + e^{-2t}c_y \cdot(-2y)+e^{t}c_z\cdot z\,>0.
 \end{align*}
Using $X_h|_{\varphi^t(\Sigma)}=\alpha\cdot e^tR_t$, we see that $X_h$ is exactly of the announced form.
\end{proof}
\begin{lemme} \label{interpolation}
 Let $\veps,\delta,c>0$ be constants. Then there exists a smooth monotone function $g:\mathbb{R}\rightarrow[0,1]$ such that 
 \[g(t)=0\quad\text{ for }\quad t\leq 0\qquad g(t)=1\quad\text{ for }\quad t\geq \veps\tag{$\ast$}\]
 and for all $C^1$-functions $\phi,\psi$ with $\phi(0)=\psi(0)$ and $|\frac{\partial}{\partial t}\phi(t)-\frac{\partial}{\partial t}\psi(t)|<c$ for all $t\in[0,\veps]$ holds that
 \[\left|\left|\frac{\partial}{\partial t}\Big(\phi+\big(\psi-\phi\big)\cdot g\Big)-\Big(\frac{\partial}{\partial t}\phi + \big(\frac{\partial}{\partial t}\psi-\frac{\partial}{\partial t}\phi\big)\cdot g\Big)\right|\right|_\infty\leq \delta.\tag{$\ast\ast$}\]
 In other words, we can interpolate between $\phi$ and $\psi$, such that the slope of the interpolation is arbitrary close to the interpolation of the slopes of $\phi$ and $\psi$.
\end{lemme}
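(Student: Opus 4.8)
Let me denote the difference operator that we need to control. Write $\Delta(\phi,\psi,g)$ for the left-hand side of $(\ast\ast)$, i.e.\ the quantity
\[\frac{\partial}{\partial t}\Big(\phi+(\psi-\phi)g\Big)-\Big(\frac{\partial}{\partial t}\phi + \big(\tfrac{\partial}{\partial t}\psi-\tfrac{\partial}{\partial t}\phi\big)g\Big).\]
The plan is to expand the first term with the product rule and cancel. Setting $u:=\psi-\phi$, we have $\phi+ug$, whose $t$-derivative is $\phi'+u'g+ug'$, while the second term is $\phi'+u'g$. Hence $\Delta(\phi,\psi,g)=u'g+ug'-u'g=u\cdot g'=(\psi-\phi)\cdot g'$. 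So the whole statement reduces to the elementary estimate: we must choose a monotone cut-off $g$ satisfying $(\ast)$ such that
\[\big\|(\psi-\phi)\cdot g'\big\|_\infty\leq \delta\]
for every admissible pair $(\phi,\psi)$.

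Next I would bound $\psi-\phi$ on $[0,\veps]$ using the two hypotheses on that interval. Since $\phi(0)=\psi(0)$, for $t\in[0,\veps]$ we have $(\psi-\phi)(t)=\int_0^t(\psi'-\phi')\,ds$, and since $|\psi'-\phi'|<c$ on $[0,\veps]$, this gives $|(\psi-\phi)(t)|\le c\,t\le c\veps$. Outside $[0,\veps]$ the function $g$ is constant so $g'\equiv 0$ there and $\Delta$ vanishes identically; thus only the interval $[0,\veps]$ matters. Therefore it suffices to produce a smooth monotone $g$ with $g(t)=0$ for $t\le 0$, $g(t)=1$ for $t\ge\veps$, and with the weighted bound $\sup_{t\in[0,\veps]} c\,t\,|g'(t)|\le\delta$. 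This is where one exploits the freedom in choosing $g$: near $t=0$ one needs $|g'|$ small but $t$ is small too, and away from $0$ one can afford a larger derivative because the weight $t$ grows. Concretely I would first pick any fixed smooth monotone bump profile $g_0:\mathbb{R}\to[0,1]$ with $g_0=0$ on $(-\infty,0]$, $g_0=1$ on $[1,\infty)$, and then rescale as $g(t):=g_0(t/\veps)$, so that $g'(t)=\veps^{-1}g_0'(t/\veps)$ and $c\,t\,|g'(t)|=c\,t\,\veps^{-1}|g_0'(t/\veps)|\le c\veps\cdot\|g_0'\|_\infty$ using $t\le\veps$. This is $\le\delta$ only when $c\veps$ is already small, so in general the naive rescaling is not enough and one must instead shape $g_0$ so that its derivative is concentrated away from the endpoint $0$: choose $g_0$ supported (in the sense of $g_0'$) in an interval $[\tau,1]$ with $\tau>0$, giving $c\,t\,|g'(t)|\le c\veps\|g_0'\|_\infty$ only for $t\ge\tau\veps$ while for $t<\tau\veps$ we have $g'(t)=0$; but $\|g_0'\|_\infty$ blows up as the support of $g_0'$ shrinks, so the weight gain must be balanced against that. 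The clean way is to choose $g$ directly by prescribing $g'(t)=\rho(t)$ where $\rho\ge 0$ is smooth, supported in $(0,\veps)$, with $\int_0^\veps\rho=1$, and with $t\,\rho(t)\le \delta/c$ pointwise; such $\rho$ exists because $\int_0^\veps \tfrac{\delta/c}{t}\,dt=+\infty$, so one can fit a mass-one bump under the hyperbola $t\mapsto (\delta/c)/t$ by spreading it toward small $t$ (e.g.\ a mollified version of $\rho(t)=\tfrac{\kappa}{t\log^2 t}$ on $(0,\veps)$ after a cut-off, suitably normalized). Then $g(t):=\int_0^t\rho$ is the required cut-off.

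The main obstacle — and the only genuinely non-formal point — is exactly this last construction: verifying that one can choose a smooth nonnegative $\rho$ on $(0,\veps)$ with unit integral and with $t\,\rho(t)\le\delta/c$ everywhere. The key observation that makes it work is the divergence of $\int_0^\veps dt/t$, which leaves infinite ``room'' under the constraint curve near $0$; once that is granted, a standard mollification produces a smooth $\rho$ with all the required properties and automatically $\rho\equiv 0$ near $0$ and near $\veps$ so that $g$ is smooth and satisfies $(\ast)$. I would then simply record $\Delta(\phi,\psi,g)=(\psi-\phi)g'$, apply $|(\psi-\phi)(t)|\le c\,t$ and $|g'(t)|=\rho(t)\le(\delta/c)/t$ to conclude $|\Delta|\le c\,t\cdot(\delta/c)/t=\delta$ on $(0,\veps)$ and $\Delta=0$ elsewhere, which is $(\ast\ast)$. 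Note that $g$ depends only on $\veps,\delta,c$ and not on the particular $\phi,\psi$, as claimed. Finally I would remark that in the application one takes $\phi,\psi$ to be (pushforwards of) the linear Hamiltonians $h_\Sigma^-,h_\Sigma^+$ restricted to a ray of the Liouville flow, so the hypotheses $\phi(0)=\psi(0)$ and the $C^1$-closeness of the derivatives on a short interval hold by construction, and the lemma yields the desired interpolating $\psi$ with controlled Hamiltonian vector field via the previous lemma.
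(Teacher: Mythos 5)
Your proposal is correct and follows essentially the same route as the paper: reduce $(\ast\ast)$ to the single term $(\psi-\phi)g'$ via the product rule, bound $|\psi-\phi|\le c\,t$ on $[0,\veps]$ using $\phi(0)=\psi(0)$ and the fundamental theorem of calculus, and then exploit the divergence of $\int_0^\veps dt/t$ to fit a smooth unit-mass density $g'$ under the constraint $t\,g'(t)\le\delta/c$. (One intermediate sentence has the monotonicity of the allowed bound $|g'|\le\delta/(ct)$ backwards --- the constraint is \emph{loosest} near $t=0$ and tightest near $t=\veps$ --- but your final construction is stated correctly and is the one the paper uses.)
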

\begin{proof}
 We calculate that
 \[\frac{\partial}{\partial t}\Big(\phi+\big(\psi-\phi\big)g\Big)=\Big(\frac{\partial}{\partial t}\phi+\big(\frac{\partial}{\partial t}\psi-\frac{\partial}{\partial t}\phi\big)g\Big)+\big(\phi-\psi)\frac{\partial}{\partial t}g.\]
Therefore, $(\ast\ast)$ translates to
\[ \left|\big(\psi-\phi\big)\frac{\partial}{\partial t}g\right| = \left|\int_0^t\left(\frac{\partial}{\partial s}\psi - \frac{\partial}{\partial s}\phi\right) ds\cdot \frac{\partial}{\partial t}g\right| \leq c\cdot t\cdot \frac{\partial}{\partial t}g \leq \delta \qquad\forall t\in[0,\veps].\]
 So, $(\ast\ast)$ is satisfied, if $0\leq\frac{\partial}{\partial t}g(t) \leq \delta/ct\;\forall\,t\in[0,\veps]$. As $\int_0^\veps \delta/ct\,dt=\infty$, we can choose a smooth function $\tilde{g}$ satisfying
 \[0\leq \tilde{g}(t)\leq \frac{\delta}{c\cdot t},\qquad \tilde{g}\equiv 0 \;\,\text{ for } \;\,t\leq 0\,\;\text{ and }\;\,t\geq \veps \qquad \text{ and }\qquad \int_0^\veps \tilde{g}(t)\,dt = 1.\]
Setting $g(t):=\int_0^{t}\tilde{g}(s)\,ds$ then gives the desired function.
\end{proof}
Now, we construct $\psi$ in two steps:
\begin{itemize}
 \item First, recall that the isotropic sphere $S\subset\Sigma^-=\{\phi=-1\}$ is given by
\[ S:=\{x=z=0,\; y=1\}.\]
Consider the function $h_\Sigma^-$. As the Reeb vector field $R_{\Sigma^-}$ of $(\Sigma^-,\lambda|_{T\Sigma^-})$ coincides with the Hamiltonian vector field $X_\phi$, we find $X_{h_\Sigma^-}=R_{\Sigma^-}=X_\phi$ and $dh_\Sigma^-=d\phi$. As also $h_\Sigma^-(\Sigma^-)=\phi(\Sigma^-)=-1$, we find that $h_\Sigma^-$ and $\phi$ coincide up to first order on $\Sigma^-$. Therefore, given any neighborhood $U$ of $S$, there exists a function $\hat{\phi}$ of $x,y,z$ and a neighborhood $\hat{U}\subset U$, such that $\hat{\phi}\equiv h_\Sigma^-$ on $\mathbb{R}^{2n}\setminus U,\; \hat{\phi}\equiv\phi$ on $\hat{U}$ and $\hat{\phi}$ is arbitrarily $C^1$-close to $h_{\Sigma}^-$. Consequently, we can arrange that
\[X_{\hat{\phi}} = C_x\cdot X_x + C_y\cdot X_y + C_z\cdot X_z\quad\text{ with }\quad C_x,C_z>0,\;C_y<0.\]
Now choose a handle $H^{2n}_k$ so thin, such that $H^{2n}_k\cap\Sigma^-\subset \hat{U}$. See Figure \ref{fig6} for the different areas. Let $H^{2n}_k$ be defined by a function $\tilde{\psi}$ as in (\ref{eqXX}) and set
\begin{align*}
 \hat{\psi}& : \{\phi\leq -1\}\cup H^{2n}_k\rightarrow \mathbb{R}\qquad\qquad \hat{\psi}=
 \begin{cases}\tilde{\psi} & \text{ on }\big(\hat{U}\cap\{\phi\leq -1\}\big)\cup H^{2n}_k\\ 
 \hat{\phi} & \text{ on }\big(U\cap\{\phi\leq -1\}\big)\setminus \hat{U}\\
 h^-_\Sigma & \text{ on }\{\phi\leq -1\}\setminus U
\end{cases}.
\end{align*}
Since $\tilde{\psi}=\hat{\phi}$ outside a small neighborhood of $H^{2n}_k$, we find that $\hat{\psi}$ is smooth.
\begin{figure}[ht]
\centering
 \resizebox{10cm}{!}{\input{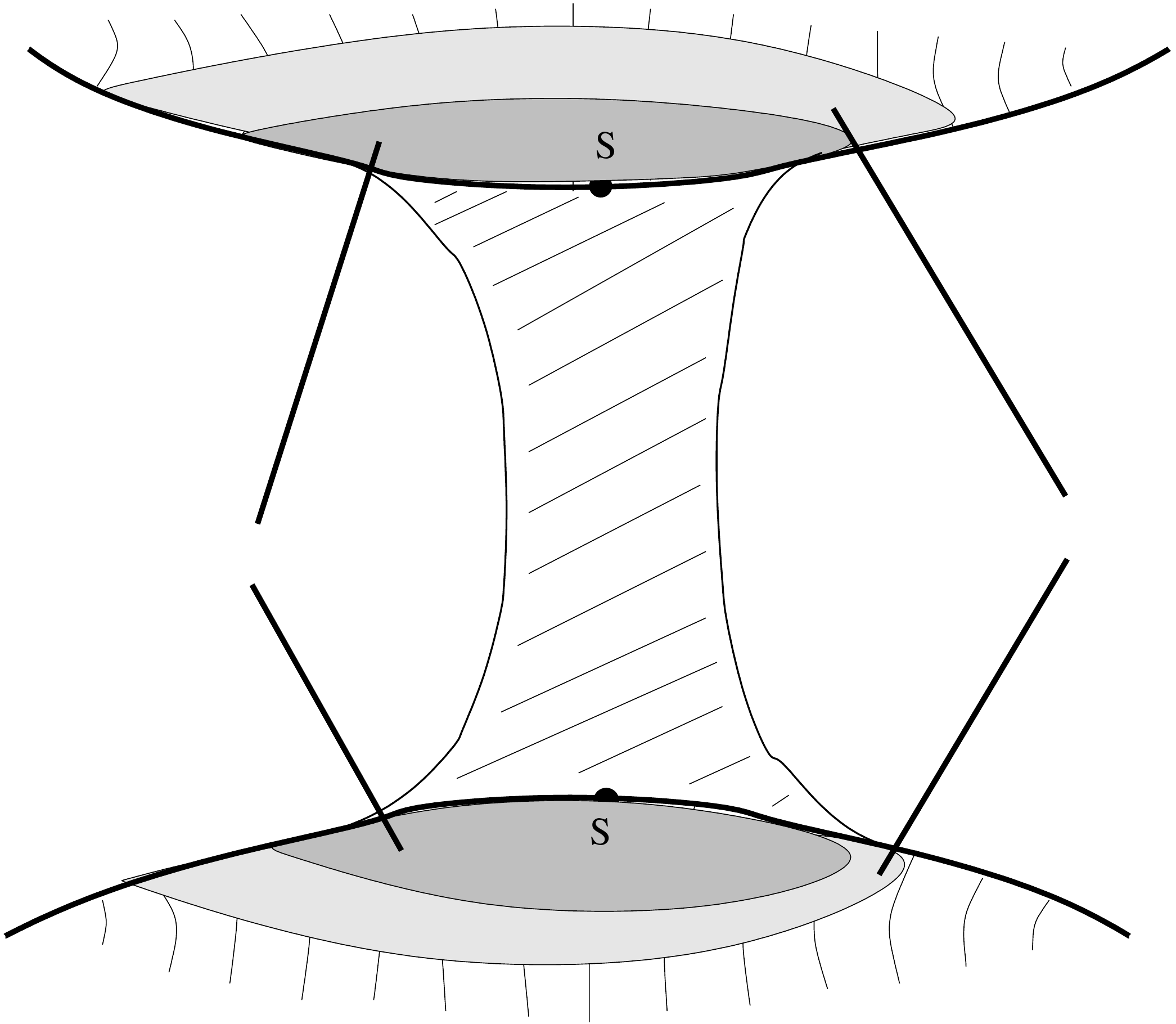_t}}
 \caption{\label{fig6}Areas, where $\hat{\psi}$ is defined}
\centering
\end{figure}
\item For the second step consider the function $h_\Sigma^+$ associated to $\Sigma^+=\{\hat{\psi}=-1\}$. Use Lemma \ref{interpolation} to construct for $\delta>0$ small a smooth monotone function $g$ with
\begin{align}\label{eqpsi}
 && g(t) &\phantom{:}= 0\quad \text{ for } \quad t\leq -1-\delta,\qquad g(t) = 1\quad \text{ for } \quad t\geq -1\notag\\
 &\text{such that}& \psi &:= \hat{\psi} + (h_\Sigma^+-\hat{\psi})\cdot g(h_\Sigma^+)\quad \text{ satisfies }\notag\\
  && \Big.X_\psi &\phantom{:}= C_x\cdot X_x+C_y\cdot X_y+ C_z\cdot X_z,\qquad C_x,C_z>0,\;C_y<0.
\end{align}
Recall that $h_\Sigma^+=h_\Sigma^-=\hat{\psi}$ outside $U$, so that we are actually interpolating along a compact set. Moreover, it follows from this, that $\psi = h_\Sigma^+$ away from the handle.\pagebreak
\end{itemize}

\subsection{Closed orbits and Conley-Zehnder indices}\label{sec5.4}
Note that (\ref{eqpsi}) implies for the Lyapunov function $f$ that $f\cdot X_\psi=0$ only on $\{x=y=0\}$. This continues to hold, when we consider $\psi':=\alpha\cdot \psi+\beta$ and it guarantees that the only periodic orbits of $X_{\psi'}$ lie in $\{x=y=0\}$. In the following we will determine the 1-periodic orbits of $X_{\psi'}$ and calculate their Conley-Zehnder indices.
By (\ref{eqpsi}), the Hamiltonian vector field $X_{\psi'}$ is given by
\[X_{\psi'}=\alpha\cdot\big(C_x X_x+C_y X_y + C_z X_z\big),\]
where $C_x,C_y,C_z\in C^\infty(\mathbb{R}^{2n})$ are functions with $C_x,C_z>0,\,C_y<0$ and $X_x,X_y$ and $X_z$ are the Hamiltonian vector fields of $x,y,z$ and given by
\[X_x=\sum_{j=1}^k 2q_j\frac{\partial}{\partial p_j},\qquad X_y=\sum_{j=1}^k-p_j\frac{\partial}{\partial q_j},\qquad X_z=\sum_{j=k+1}^n\frac{A_j}{2}\left(q_j\frac{\partial}{\partial p_j}-p_j\frac{\partial}{\partial q_j}\right).\]
Note that on $\{x=y=0\}$, we have therefore
\[X_{\psi'}=\alpha \cdot C_z \sum_{j=k+1}^n \frac{A_j}{2}\left(q_j\frac{\partial}{\partial p_j}-p_j\frac{\partial}{\partial q_j}\right).\]
On $\Sigma^+\times\mathbb{R}^+\cap\{x=y=0\}$, we have by our construction $\psi'=\alpha\cdot h_\Sigma^+$ and hence that $X_{\psi'}=\alpha\cdot R_\lambda|_{\Sigma^+}$, where the Reeb vector field on $\Sigma^+\cap\{x=y=0\}$ is given by
\[R_\lambda|_{\Sigma^+} = \frac{2}{\veps}\cdot \sum_{j=k+1}^n\frac{A_j}{2}\left(q_j\frac{\partial}{\partial p_j}-p_j\frac{\partial}{\partial q_j}\right)=\frac{2}{\veps}X_z.\]
The constant $2/\veps$ comes from the fact that $\lambda(X_z)=z$ and that the value of $z$ on $\Sigma^+\cap\{x=y=0\}=\psi^{-1}(-1)\cap\{x=y=0\}$ is given by (\ref{eqXX}) as
\[-1=0-0+z-(1+\veps/2)\qquad\Leftrightarrow\qquad z=\veps/2.\]
On the other hand, for $z<\veps/2-O(\delta)$, we have by the second step that $\psi=\hat{\psi}$. By (\ref{eqXX}) we have for $y=0$ that
$\hat{\psi}=x-y+z-(1+\veps/2)$ and hence there $X_{\psi'}=\alpha X_z$. It follows that on $\{x=y=0\}$, we have $X_{\psi'}=\alpha\cdot C_z X_z$, where $C_z$ is a $z$-dependent interpolation between the constants 1 and $2/\veps$. Now we can calculate the flow $\varphi^t$ of $X_{\psi'}$ on $\{x=y=0\}$. First we calculate for $z(\varphi^t(p,q))$
\begin{align*}
 \frac{d}{dt}z\big(\varphi^t\big)&=dz(X_{\psi'})=\alpha\cdot C_z\, dz(X_z)=0.
\end{align*}
It follows that $z$ is constant along the flow lines of $\varphi$. Now, consider for $j=k+1,...\,,n$ the complex coordinates $z_j=q_j+ip_j$. Then, we have on $\{x=y=0\}$:
\[X_{\psi'}=\alpha C_z\cdot\Bigg(\underbrace{0\,,...\,,0}_{j=1,...\,,k}\text{\Huge ,}\underbrace{...\,,i A_{j}\cdot z_j,...}_{j=k+1,...\,,n}\Bigg).\]
As $z(\varphi^t)$ is independent from $t$ and hence $\frac{d}{dt}C_z(0,0,z(\varphi^t))=0$, we obtain that the flow $\varphi^t$ of $X_{\psi'}$ on $\{x=y=0\}$ is given by
\begin{equation}\label{eqA}
 \textstyle\varphi^t(0,z_{k+1},...\,,z_n)= \Big(0,...\,,0,\exp\big(i\alpha C_z A_{k+1} t\big)\cdot z_{k+1},...\,,\exp\big(i\alpha C_z A_n t\big)\cdot z_n\Big).
\end{equation}
By choosing the constants $A_j$ linear independent over $\mathbb{Q}$, we can arrange that the 1-periodic orbits of $X_{\psi'}$ on $\{x=y=0\}$ are isolated. They are given by values of $z$ and $j$ such that
\[\alpha  A_j\cdot C_z(0,0,z)\in 2\pi\mathbb{Z},\]
except for the one constant orbit at the origin. For $\alpha$ appropriately chosen, we can assume that there are only finitely many such orbits.\\
Now, we calculate the Conley-Zehnder indices of these orbits, using the definition of $\mu_{CZ}$ for a path of symplectic matrices from Section \ref{1.4}. Let $z^0\in\{x=y=0\}$ be such that $\gamma(t):=\varphi^t(z^0),0\leq t\leq1,\,\gamma(0)=\gamma(1),$ is a 1-periodic orbit of $X_{\psi'}$. In order to calculate $\mu_{CZ}(\gamma)$, we identify $T_{\gamma(t)}\mathbb{R}^{2n}$ with $\mathbb{R}^{2n}$ in the natural way. This yields a path $\Phi_\psi$ in $Sp(2n)$ given by $\Phi_\psi(t)=D\varphi^t(z^0)$. The $t$-derivative of $\Phi_\psi$ on $\{x=y=0\}$ is given by
\begin{align*}
 \frac{d}{dt}\Phi_\psi(t)&=\frac{d}{dt}D\varphi^t(z^0)=D\left(\frac{d}{dt}\varphi^t(z^0)\right)=D X_{\psi'}\big(\varphi^t(z^0)\big)\\
 &=D\big(C_x X_x+C_y X_y+ C_z X_z\big)\big(\varphi^t(z^0)\big)\\
 &=\alpha\cdot diag\Bigg(\underbrace{...\,,\begin{pmatrix} 0 & -C_y\\2C_x& 0\end{pmatrix},...}_{j=1,...\,,k}\text{\Huge , }\underbrace{...\,,iC_z A_j,...}_{j=k+1,...\,,n}\Bigg)\circ\Phi_\psi(t).
\end{align*}
Note that no derivatives of $C_x$ or $C_y$ are involved, as $X_x=X_y=0$ on $\{x=y=0\}$. It follows that $\Phi_\psi$ is of block form $\Phi_\psi = diag\big(\Phi^1_\psi,...\,,\Phi_\psi^n\big)$, where the paths of $2\times2$ matrices $\Phi^j_\psi$ are solutions of an ordinary differential equation with initial value $\Phi^j_\psi(0)=\mathbbm{1}$ and
\begin{align*}
 {\textstyle\frac{d}{dt}}\Phi^j_\psi(t)&=\alpha\begin{pmatrix}0 & -C_y\\ 2C_x & 0\end{pmatrix}\Phi^j_\psi(t) = \begin{pmatrix} 0 & -1\\ 1& 0\end{pmatrix}\begin{pmatrix} 2\alpha C_x & 0\\0& \alpha C_y\end{pmatrix}\Phi^j_\psi(t)& j&=1,...\,,k\\
 {\textstyle\frac{d}{dt}}\Phi^j_\psi(t)&=i\alpha A_jC_z\cdot \Phi^j_\psi(t)& j&=k+1,...\,,n.
\end{align*}
As the matrix $\left(\begin{smallmatrix}2\alpha C_x & 0\\ 0 & \alpha C_y\end{smallmatrix}\right)$ has for all $t$ one positive and one negative eigenvalue, it follows that its signature is always zero and hence by the Robbin-Salamon definition of $\mu_{CZ}$ that $\mu_{CZ}(\Phi^j_\psi)=0,\;j=1,...\,,k$. For $j=k+1,...\,,n$ on the other hand, we find by Lemma \ref{expliciteCZ} that
\begin{align}\label{CZ-Index}
&&\mu_{CZ}\big(\Phi^j_\psi\big)&= \left\lfloor\frac{\alpha A_j C_z}{2\pi}\right\rfloor+\left\lceil\frac{\alpha A_j C_z}{2\pi}\right\rceil,\qquad j=k+1,...\,,n\notag\\
&\text{and therefore}& \mu_{CZ}(\gamma)=\mu_{CZ}(\Phi_\psi) &=\sum_{j=k+1}^n \left(\left\lfloor\frac{\alpha A_j C_z}{2\pi}\right\rfloor+\left\lceil\frac{\alpha A_j C_z}{2\pi}\right\rceil\right).
\end{align}
Note again that $1\leq C_z\leq 2/\veps$ is constant for each $\gamma$ with $\alpha A_j C_z(0,0,z(\gamma(0))\in 2\pi\mathbb{Z}$. We have therefore $\mu_{CZ}(\gamma)\geq \alpha\cdot\sum\left\lceil A_j/2\pi\right\rceil$. It follows for $\alpha\rightarrow \infty$ that all possible values for $\mu_{CZ}(\gamma)$ go to $\infty$.
\newpage
\phantom{..}
\newpage
\section{Symplectic (co)homology}\label{secsymhom}
In this section, we roughly introduce symplectic (co)homology -- a Floer type homology similar to Rabinowitz-Floer homology. In fact, we will see in \ref{sec6.8} that both constructions are closely related.\\
In Rabinowitz-Floer homology, the chain complex is built solely on data coming from the contact hypersurface $\Sigma$, a fact that makes this homology very suitable for calculating contact invariants. The chain complex in symplectic (co)homology uses all 1-periodic orbits of a Hamiltonian vector field $X_H$ on $V$. Moreover, the definition of the homology involves direct limits indexed over the set of all admissible $H$. This makes the construction more flexible and allows us to prove the invariance of symplectic (co)homology under subcritical handle attachment, which we then transfer to Rabinowitz-Floer homology.
\subsection{Setup}
First, we describe shortly the setup for symplectic (co)homology. It is quite similar to the one for Rabinowitz-Floer homology, but has some important differences -- in particular the use of time-dependent Hamiltonians and the absence of the parameter $\eta$.\medskip\\
In this section, let $(V,\lambda)$ be a compact Liouville domain, with exact symplectic form $\omega=d\lambda$ and convex contact boundary $(\Sigma=\partial V,\alpha=\lambda|_{T\Sigma})$. As before, let $Y$ be the Liouville vector field of $\lambda$. That $(\Sigma,\alpha)$ is convex simply means that $Y$ points out of $V$ along $\partial V=\Sigma$. The completion of $V$ is still denoted $\widehat{V}$, as well as $R$ denotes the Reeb vector field of $\alpha$. The action spectrum of $(\Sigma,\alpha)$ contains in this section only \textit{positive} periods of closed Reeb orbits, so that
\[spec(\Sigma,\alpha):=spec(\Sigma,\alpha)\cap\mathbb{R}^+.\]
Hamiltonians are smooth $S^1$-families of functions $H_t: \widehat{V}\rightarrow \mathbb{R}$ with Hamiltonian vector fields $X_H^t$ given by
\[\omega(\cdot,X^t_H)=dH_t. \tag{for $t\in S^1$ fixed}\]
The action of a loop $x: S^1\rightarrow \widehat{V}$ is defined by
\[\mathcal{A}^H(x)=\int^1_0 x^\ast\lambda - \int^1_0 H_t(x(t)) dt.\]
The critical points of this functional are exactly the one-periodic solutions of
\begin{equation}\label{eqastast}
 \dot{x}(t)=X^t_H.
\end{equation}
We denote the set of these solutions by $\mathcal{P}(H)$.\\ Almost complex structures $J_t$ come in $S^1$-families. An $\mathcal{A}^H$-gradient trajectory $u:\mathbb{R}\times S^1\rightarrow \widehat{V}$ is in this section a solution of the Floer equation:
\begin{equation}\label{eqast}
 \begin{aligned}
  &&\partial_s u-\nabla\mathcal{A}^H&=0\qquad
  &\Leftrightarrow& \qquad\, u_s + J(u_t-X_H^t)&=0.
 \end{aligned}
\end{equation}
As before, we are also interested in homotopies $H_s$ of Hamiltonians. In this setting an $\mathcal{A}^{H_s}$-gradient trajectory is again a solution of (\ref{eqast}), but with $X_H^t$ depending on $s$.\pagebreak\\
Symplectic (co)homology can also be $\mathbb{Z}$-graded by the Conley-Zehnder index. For that, we require the same assumptions as for Rabinowitz-Floer homology, namely that $\Sigma=\partial V$ is simply connected and that the evaluation of the first Chern class $c_1(TV)$ vanishes on $\pi_2(V)$. One could also define the index under more relaxed conditions. However, imposing these assumptions makes it easier to compare $SH$ and $RFH$.
\begin{rem}
Note that $\mu_{CZ}(x)$ of a 1-periodic orbit $x$ of $X_H^t$ is for $SH$ calculated with respect to a trivialization of $TV$ over a capping disc $\bar{u}$ of $x$ and not with respect to a trivialization of the contact form $\xi$ as in $RFH$. Nevertheless, our assumptions imply for a non-constant Reeb orbit $x$ that $\mu_{CZ}(x)$ is the same for $TV$ and $\xi$. This is due to the fact that over $\Sigma$, we have the splitting
\[TV=\xi\oplus \mathbb{R}R\oplus\mathbb{R}Y.\]
As the action of the linearized flow of $X_H$ on $\mathbb{R}R\oplus\mathbb{R}Y$ is trivial (as both vector fields are preserved by $X_H$), we find by the product property of $\mu_{CZ}$ that $\mu_{CZ}(x)$ actually involves only the $\xi$ part of $TV$. This phenomenon is illustrated by the calculations of $\mu_{CZ}$ on Brieskorn manifolds $\Sigma_a$ in Section \ref{sec7.2}, where by Corollary \ref{lem18a} $Y_2$ is the Reeb vector field on $\Sigma_a$ and $X_2$ the Liouville vector field on the filling $W_\veps$.
\end{rem}
For the construction of symplectic (co)homology we look at solutions of (\ref{eqast}) such that  $\displaystyle\lim_{s\rightarrow\pm\infty}=x_{\pm}(t)$ are 1-periodic solutions of (\ref{eqastast}). In general, these solutions might not stay in a compact subset of $\widehat{V}$, even for $x_\pm$ fixed. Hence, it could be that the moduli space of these solutions is not compactifiable. To avoid this problem, we make the following restrictions:
\begin{itemize}
 \item We call a Hamiltonian $H$ \emph{admissible}, writing $H\in Ad(V,\Sigma)$, if all its 1-periodic orbits are non-degenerate and if $H$ is linear at infinity, that is if there exists an $R\in\mathbb{R}$ such that on $\Sigma\times[R,\infty)\subset \widehat{V}$ the Hamiltonian is of the form
 \[ H=h(e^r)=\alpha\cdot e^r+\beta\]
 with $\alpha,\beta\in\mathbb{R},\alpha>0$ and $\alpha\not\in Spec(\Sigma,\lambda)$.
 \item We call a homotopy $H_s$ between admissible Hamiltonians $H_\pm$ admissible if there exists an $R\in\mathbb{R}$ such that on $\Sigma\times[R,\infty)$ the homotopy has the form
 \[H_s=h_s(e^r) \qquad \text{ with }\qquad \partial_s\partial_r H_s \leq 0 \qquad\text{ on }\qquad\Sigma\times[R,\infty).\]
 \item We call a Hamiltonian/homotopy $H$ \emph{weakly admissible}, writing $H\in Ad^w(V,\Sigma)$, if there exists an $R$ such that on $\Sigma\times[R,\infty)$ it has the form
 \[H=h\big(e^{r-f(y)}\big)=\alpha\cdot e^{r-f(y)}+\beta \quad\text{ resp. }\quad H_s=h_s(e^{r-f_s(y)})\]
 for a function $f:\Sigma\rightarrow \mathbb{R}$. In the homotopy case we require that
 \[(\partial_s\partial_r h_s)\big(e^{r-f_s(y)}\big)-(\partial_r h_s)\big(e^{r-f_s(y)}\big)\cdot\partial_s f_s(y)\leq 0, \quad\text{with } <0 \text{ on $supp_s\;\partial_s f$}.\]
 If $\partial_r^2 h=0$ (e.g.\ if $h$ is linear), then this is equivalent  to $\partial_s\partial_r H_s\leq 0$.
 \item We call a possibly $s$-dependent almost complex structure $J$ (weakly) admissible, if it is cylindrical and time independent at infinity, that is if 
 \[d\big(e^{r-f_s}\big)\circ J_s = -\lambda \qquad\text{ on }\Sigma\times[R,\infty)\]
 for an $R\in\mathbb{R}$. We may write this shorter as $d(e^{r_s})\circ J=\lambda$ for $r_s:=r-f_s$.
\end{itemize}
Note that in Rabinowitz-Floer homology, we required that $H$ is constant outside a compact set. This implied by a Lemma of McDuff, \cite{Duff} Lem.\ 2.4, that all solutions of the Rabinowitz-Floer equation (\ref{eq3}) stayed in this compact set. The following lemma is a generalization of this crucial fact to Hamiltonians that are linear at infinity.
\begin{lemme}[\textbf{Maximum Principle}]\label{maxprinc}~\\
 Let $H$ be a (weakly) admissible Hamiltonian/homotopy and $J$ an admissible almost complex structure. Let $x_\pm\in P(H_\pm)$, where $H_\pm$ are the ends of the possibly constant homotopy $H_s$. Then there exists a constant $\sigma\leq 1$ such that for $H_{\sigma\cdot s}$ and $J_{\sigma\cdot s}$ any solution 
 \textbf{u} of (\ref{eqast}) with $\displaystyle\lim_{s\rightarrow\pm\infty}u(s)=x_\pm$ satisfies
 \[e^{r}\circ u(s,t)\leq e^{C}\qquad\forall (s,t)\in \mathbb{R}\times S^1\]
 for some constant $C\geq R$ not depending on $u$. If $H$ is a (weak) Hamiltonian or a non-weak homotopy, then we may choose $\sigma=1$, i.e.\ the Maximum principle holds already for $H$ and $J$.
\end{lemme}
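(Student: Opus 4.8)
The plan is to establish the a priori $C^0$-bound on the "radial" coordinate $e^r\circ u$ (equivalently $e^{r_s}\circ u$ when $J$ and $H$ are only weakly admissible) by the classical convexity/maximum-principle argument of McDuff and Viterbo, adapted to Hamiltonians that are linear rather than constant at infinity. Throughout, I would work on the cylindrical end $\Sigma\times[R,\infty)$ where the relevant structures are standard, and I would set $\rho := e^{r_s}\circ u = e^{(r-f_s)\circ u}$ (with $f_s\equiv 0$ in the non-weak case), so that $\rho$ is a positive function on the region $u^{-1}(\Sigma\times[R,\infty))$. The goal is to show that $\rho$ cannot attain an interior maximum strictly above $e^R$, which combined with the asymptotic condition $\lim_{s\to\pm\infty}u(s,t)=x_\pm$ (whose images lie in a fixed compact set, hence below some level $e^C$) forces the global bound $\rho\le e^C$.

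The key computation is to show that $\rho$ is subharmonic with respect to the Laplacian coming from the domain metric on $\mathbb{R}\times S^1$, i.e. $\Delta\rho\ge 0$ (or more precisely, that $-\Delta\rho$ plus suitable first-order terms is $\le 0$ at an interior maximum). First I would recall that the admissibility condition $d(e^{r_s})\circ J=\lambda$ on the cylindrical end expresses $J$ in terms of the Reeb splitting $T(\Sigma\times[R,\infty))=\xi\oplus\mathbb{R}R\oplus\mathbb{R}\partial_r$, so that $J\partial_r = R_\lambda$ (the vector field extending the Reeb field) up to the $f_s$-twist, and the Floer equation $u_s+J(u_t-X_H^t)=0$ can be split into its $\xi$-component and its $(R,\partial_r)$-component. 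Using $H_t=h_s(e^{r_s})$ and hence $X_H^t = (\partial_r h_s)(e^{r_s})\cdot\tilde R_\lambda$ on the end (again up to the twist), the $\partial_r$- and Reeb-components of the Floer equation become a first-order elliptic system for the pair $(r_s\circ u, \vartheta)$ (the "angular" Reeb coordinate), analogous to the Cauchy–Riemann equations with a zeroth-order perturbation. Differentiating and using $\Delta e^g = e^g(\Delta g + |\nabla g|^2)$, the second-order terms organize into $|\partial_s u|^2_\xi + (\text{Cauchy-Riemann square terms})\ge 0$, and the only terms with a sign that could go the wrong way are those involving $\partial_r$ of the Hamiltonian and, in the weak case, the $f_s$-derivatives. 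This is exactly where the homotopy monotonicity hypotheses $\partial_s\partial_r H_s\le 0$ (non-weak) and $(\partial_s\partial_r h_s)(e^{r_s})-(\partial_r h_s)(e^{r_s})\partial_s f_s\le 0$ (weak), together with the convexity $\partial_r^2 h\ge 0$ and $\alpha\not\in\mathrm{spec}(\Sigma,\lambda)$, enter: for a genuine Hamiltonian ($s$-independent) or for a non-weak homotopy these give $\Delta\rho\ge 0$ directly, so one may take $\sigma=1$; for a weak homotopy the troublesome term is controlled only after reparametrizing $s\mapsto \sigma s$ with $\sigma$ small, which shrinks the $s$-derivative terms (they scale like $\sigma$) relative to the manifestly nonnegative Cauchy–Riemann terms, while leaving the asymptotics $x_\pm$ and the target manifold unchanged.

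Once subharmonicity is in hand, I would invoke the strong maximum principle: if $\rho$ attained a value $>e^R$ at an interior point of the region where $u$ lies on the end, it would have to attain it on a maximum that is either interior — contradicting the strong maximum principle for the (possibly inhomogeneous, but of the right sign) elliptic inequality — or approached only as $s\to\pm\infty$, contradicting $u(s,\cdot)\to x_\pm$ which stays below the fixed level $e^C$. Boundary points of $u^{-1}(\Sigma\times[R,\infty))$ lie on the level $\{r_s = R\}$, where $\rho=e^R$, so they cannot exceed $e^C$ either. Hence $\rho\le\max\{e^R, e^C\}=e^C$ everywhere, and pulling back through the coordinate change $r_s=r-f_s$ (with $f_s$ bounded on the compact $\Sigma$) gives the stated bound $e^r\circ u(s,t)\le e^C$ after absorbing $\|f_s\|_\infty$ into $C$. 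The main obstacle, and the only place requiring real care, is the sign bookkeeping in the weak-homotopy case — making sure that the reparametrization constant $\sigma$ can be chosen uniformly (depending only on $H_\pm$, $f$, $J$, and the asymptotics, not on the individual solution $u$), which follows because all the relevant quantities — the $C^2$-norms of $h_s$ and $f_s$, the distance of $\alpha_\pm$ to the spectrum — are fixed once $H_\pm$ and the homotopy are fixed, so the monotone term beats the $\sigma$-small error term uniformly.
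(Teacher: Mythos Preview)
Your proposal is correct and follows essentially the same route as the paper: set $\rho=e^{r_s}\circ u$, derive an elliptic inequality $\Delta\rho + g(u)\,\partial_s\rho\ge(\text{stuff})$ from the Floer equation and the cylindrical structure, use the admissibility hypotheses to make the right-hand side nonnegative for $\rho$ large (with the $\sigma$-reparametrization in the weak-homotopy case), and conclude by the strong maximum principle. The only real difference is that the paper bypasses your proposed splitting into $\xi$- and $(R,\partial_r)$-components: it computes $\partial_s\rho$ and $\partial_t\rho$ directly from the identities $d(e^{r_s})\circ J=-\lambda$ and $\lambda(X_H)=\rho\,h'(\rho)$, arriving in one stroke at
\[
\Delta\rho + g(u)\,\partial_s\rho \;=\; |u_s|^2 - \rho\,d(\partial_s f)(u_s) - \rho\bigl[(\partial_s h')(\rho)-h'(\rho)(\partial_s f)+(\partial_s^2 f)\bigr],
\]
and then handles the cross term $|u_s|^2-\sigma\rho\,d(\partial_s f)(u_s)\ge -\tfrac14 c^2\sigma^2$ by Young's inequality (so it is not that the CR square dominates, but that this combination is uniformly bounded below and then the $\rho$-linear monotone term wins for $\rho$ large). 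Your component-wise approach would reach the same endpoint with a little more bookkeeping.
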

\begin{proof} Our proof is a generalization of similar proofs by A.Oancea,\cite{Oan}, and P.Seidel, \cite{Sei}. We give the proof only for homotopies $H_s$, which includes the Hamiltonian case by constant $H_s=H$. Let us consider the function $\rho:\mathbb{R}\times S^1\rightarrow \mathbb{R}$ given by
 \[\rho:=e^{r-f_s}\circ u = e^{r_s}\circ u, \qquad\text{where }r_s:=r-f_s.\]
 To ease the notation, we will drop the index $s$, writing only $H,h,f$ and $J$. Moreover, we write $h'$ instead of $\partial_r h$. However, we keep $r_s$ and we write $u_s, u_t$ for $\partial_s u$ and $\partial_t u$.\medskip\\
 \underline{Calculation of $\Delta \rho $}
 \begin{align*}
  \partial_s \rho =d\big(e^{r_s}\big)(u_s)+\big(\partial_s e^{r-f_s}\big)(u) &=d\big(e^{r_s}\big)\big(-J(u_t-X_H)\big)+e^{r-f_s}(u)\cdot(-\partial_s f)(u)\\
  &=\lambda(u_t)+\lambda(X_H)-\rho\cdot(\partial_s f)(u)\\
  &=\lambda(u_t)-\rho\cdot h'(\rho)-\rho\cdot(\partial_s f)(u),
 \end{align*}
as $\lambda(X_H)=\omega(Y,X_H)=dH(\partial_r)=\partial_r H=\rho\cdot h'(\rho)$. Moreover, we have
\[\partial_t \rho = d\big(e^{r_s}\big)(u_t)=d\big(e^{r_s}\big)(Ju_s-X_H)=-\lambda(u_s),\]
as the orbits of $X_{H_s}$ stay in the level sets of $e^{r_s}$ and hence $d\big(e^{r_s}\big)(X_{H_s})=0$.\pagebreak\\
Therefore, we obtain for the Lapacian of $\rho$
 \begin{align*}
  \Delta \rho &=\partial_s\Big(\lambda(u_t)-\rho\cdot\big[h'(\rho)+(\partial_s f)(u)\big]\Big)-\partial_t\lambda(u_s)\\
  &=d\lambda(u_s,u_t)-\lambda(\,\underbrace{[u_s,u_t]}_{=0}\,)-\partial_s\rho\cdot(\partial_s f)(u)-\Big(-\rho(\partial_s f)(u)+\,\underbrace{d\big(e^{r_s}\big)(u_s)\Big)\cdot h'(\rho)}_{=dH(u_s)=d\lambda(u_s,X_H)}\\
  &\phantom{=\;}-\rho\cdot\Big[(\partial_s h')(\rho)+h''(\rho)\cdot\partial_s\rho+(\partial_s^2 f)(u)+d(\partial_s f)(u_s)\Big]\\
  &=\omega(u_s,\underbrace{u_t-X_H}_{=Ju_s}\,)-\partial_s\rho\cdot\big((\partial_s f)(u)+\rho\cdot h''(\rho)\big)-\rho\cdot d(\partial_s f)(u_s)\\
  &\phantom{=\;}-\rho\cdot\Big[(\partial_s h')(\rho)-h'(\rho)(\partial_s f)(u)+(\partial_s^2 f)(u)\Big]\\
  &=|u_s|^2-\partial_s\rho\cdot\big((\partial_s f)(u)+\rho\cdot h''(\rho)\big)-\rho\cdot d(\partial_s f)(u_s)\\
  &\phantom{=\;}-\rho\cdot\Big[(\partial_s h')(\rho)-h'(\rho)(\partial_s f)(u)+(\partial_s^2 f)(u)\Big].
 \end{align*}
 Abbreviating $g(u):=(\partial_s f)(u)+\rho\cdot h''(\rho)$, we find that this is equivalent to
 \[\Delta\rho+\partial_s\rho\cdot g(u)=|u_s|^2-\rho\cdot d(\partial_s f)(u_s)-\rho\cdot \Big[(\partial_s h')(\rho)-h'(\rho)(\partial_s f)(u)+(\partial_s^2 f)(u)\Big].\tag{$\ast$}\]
 Now if for $C>R$ holds  on $[C,\infty)\times\Sigma$ that the right-hand side of $(\ast)$ is non-negative, then $\rho$ satisfies on $[C,\infty)\times \Sigma$ a maximum principle and cannot have a local maximum at an interior point of $u^{-1}\big([C,\infty)\times \Sigma\big)$. As the asymptotics of $u$ lie outside of $[C,\infty)\times \Sigma$, it follows that $\rho=e^{r-f_s}\circ u\leq e^C$ everywhere.\medskip\\
 \underline{Estimate of $\kappa:=|u_s|^2-\rho\cdot d(\partial_s f)(u_s)$}\smallskip\\
 At first glance, this term might be an unbounded from below. However, as the Liouville form $\lambda=e^r\cdot\lambda_0$ grows exponentially in $r$, we will see that $\kappa$ is in fact bounded by a constant, independent of $u$. Indeed, as $d(\partial_s f)$ is an $r$-invariant 1-form, there exists a vector field $\xi_s$ on $\Sigma$, such that
 \[d(\partial_s f)(\cdot)=d\lambda({\textstyle\frac{1}{e^r}}\xi_s,\cdot)\;\Rightarrow\;\rho\cdot d(\partial_s f)(u_s)=d\lambda(\xi_s,u_s).\]
 For $c:=\sup_s |J\xi_s|$, we find that this last expression is bounded by $c\cdot |u_s|$. It will be usefull to introduce $\sigma$ at this point. Note that if we replace $f_s$ by $f_{\sigma\cdot s}$, then $\kappa$ becomes $|u_s|^2-\sigma\cdot\rho\cdot d(\partial_s f)(u_s)$. Then, we have
 \[\kappa=|u_s|^2-\sigma\cdot\rho\cdot d(\partial_s f)(u_s)\geq |u_s|^2-\sigma\cdot c\cdot |u_s|\geq -{\textstyle\frac{1}{4}}c^2\cdot \sigma^2.\tag{$\ast\ast$}\]
 Here, the last estimate is the minimum of the parabola $x^2-c\sigma x$. Finally note that outside the $s$-support of $\partial_s f$, we have $\kappa =|u_s|^2\geq 0$.\medskip\\
 \underline{Estimate of the whole right-hand side of $(\ast)$}\smallskip\\
 Let us introduce $\sigma$ everywhere in $(\ast)$. Then, we get the   following
 \begin{align*}
  \Delta\rho+\partial_s\rho\; g(u)&\,=\,|u_s|^2-\sigma \rho\; d(\partial_s f)(u_s)-\rho \Big[\sigma(\partial_s h')(\rho)-\sigma\; h'(\rho)(\partial_s f)(u)+\sigma^2(\partial_s^2 f)(u)\Big]\\
  &\overset{(\ast\ast)}{\geq}-\rho \Big[\sigma\big((\partial_s h')(\rho)-h'(\rho)(\partial_s f)(u)\big)+\sigma^2(\partial_s^2 f)(u)\Big]-{\textstyle\frac{1}{4}}c^2\cdot \sigma^2.\tag{$\ast\ast\ast$}
 \end{align*}
For weakly admissable, we assumed that $(\partial_s h')-h'(\rho)(\partial_s f)(u)\leq 0$ with $<0$ on the $s$-support of $\partial_s f$. As this support is bounded, we find for $\sigma$ sufficiently small that the expression in  the brackets is non-positive. Fixing such a $\sigma$, we find that for $\rho>R$ sufficiently large that the right-hand side is in fact non-negative. This proves the lemma.
\end{proof}
\begin{rem}~
\begin{itemize}
 \item By decreasing $\sigma$, we can in fact achieve that $C=R$.
 \item If $H$ is a Hamiltonian or a non-weak homotopy, then the term $(\partial_s^2 f)(u)$ does not exist and there is no need for a reparametrization by $\sigma$, i.e.\ we can choose $\sigma=1$.
\end{itemize}
\end{rem}

\subsection{Symplectic homology}
For a (weakly) admissible Hamiltonian $H$, we define the Floer homology $FH_\ast(H)$ as follows: The chain groups $FC_\ast(H)$ are the $\mathbb{Z}_2$-vector space generated by $\mathcal{P}(H)$. Note that due to $h'\not\in Spec(\Sigma,\alpha)$ and the non-degeneracy of the 1-periodic orbits, we find that $\mathcal{P}(H)$ is in fact a finite set. Thus, $FC_\ast(H)$ is a finite vector space of dimension $|\mathcal{P}(H)|$. For $x_\pm\in\mathcal{P}(H)$ let $\widehat{\mathcal{M}}(x_-,x_+)$ denote the space of solutions $u$ of (\ref{eqast}) with $\displaystyle\lim_{s\rightarrow\pm\infty} u = x_\pm$. There is an $\mathbb{R}$-action on this space given by time shift. The quotient under this action is called the moduli space of $\mathcal{A}^H$-gradient trajectories between $x_-$ and $x_+$ and denoted by $\mathcal{M}(x_-,x_+):=\widehat{\mathcal{M}}(x_-,x_+)/\mathbb{R}$.\\
For a generic $J$, the space $\mathcal{M}(x_-,x_+)$ is a manifold. Its zero-dimensional component $\mathcal{M}^0(x_-,x_+)$ is compact and hence a finite set. Let $\#_2\mathcal{M}^0(x_-,x_+)$ denote its cardinality modulo 2. We define the operator $\partial: FC_\ast(H)\rightarrow FC_\ast(H)$ as the linear extension of
\[\partial x:=\sum_{y\in\mathcal{P}(H)}\#_2\mathcal{M}^0(y,x)\cdot y.\]
A standard argument in Floer theory, involving the compactification of $\mathcal{M}^1(y,x)$, shows that $\partial^2=0$, so that $\partial$ is a boundary operator. We set as usual
\[FH_\ast(H):=\frac{\ker \partial}{\text{im }\partial}.\]
To a (weakly) admissible homotopy $H_s$ between admissible Hamiltonians $H_\pm$ we consider for $x_\pm\in\mathcal{P}(H_\pm)$ the moduli space of s-dependent $\mathcal{A}^{H_s}$-gradient trajectories $\mathcal{M}_s(x_-,x_+)$. Note that we have no time shift on this space, as equation (\ref{eqast}) now depends on $s$. We define the continuation map $\sigma_\ast(H_-,H_+):FC_\ast(H_+)\rightarrow FC_\ast(H_-)$ as the linear extension of 
\[\sigma_\ast(H_-,H_+)x_+=\sum_{x_-\in\mathcal{P}(H_-)}\#_2\mathcal{M}^0_s(x_-,x_+)\cdot x_-.\]
By considering homotopies of homotopies, one sees that $\sigma_\ast(H_-,H_+)$ is independent of the chosen homotopy. By considering the compactification of $\#_2\mathcal{M}^1_s(x_-,x_+)$, we obtain from Floer theory that $\partial\circ \sigma_\ast =\sigma_\ast\circ \partial$, so that $\sigma_\ast(H_-,H_+)$ is a chain map, which descends to a map $\sigma_\ast(H_-,H_+):FH(H_+)\rightarrow FH(H_-)$.\pagebreak\\
For three admissible Hamiltonians $H_1, H_2$ and $H_3$, we have the composition rule
\[\sigma_\ast(H_1,H_3)=\sigma_\ast(H_1,H_2)\circ\sigma_\ast(H_2,H_3).\]
Observe that admissibility of a homotopy $H_s$ between $H_-$ and $H_+$ implies that $H_->H_+$ on $\Sigma\times[R,\infty)$ for $R$ sufficiently large. We introduce a partial ordering $\prec$ on $Ad^w(V,\Sigma)$ by saying $H_+\prec H_-$ if and only if $H_+<H_-$ on $\Sigma\times[R,\infty)$ for a sufficiently large $R\in\mathbb{R}$. This ordering together with the maps $\sigma_\ast(H_-,H_+)$ turn $(FH(H),\sigma)$ into  a direct system over the directed set $(Ad^w(V,\Sigma),\prec)$. The symplectic homology groups $SH_\ast(V)$ are then defined to be the direct limit of this system:
\[SH_\ast(V):=\lim_{\longrightarrow}FH_\ast(H).\]
A cofinal sequence $(H_n)\subset Ad^w(V,\Sigma)$ is a sequence of Hamiltonians such that $H_n\prec H_{n+1}$ and for any $H\in Ad^w(V,\Sigma)$ there exists an $n\in\mathbb{N}$ such that $H\prec H_n$. It follows from Theorem \ref{cofinal} that we have for any cofinal sequence
\[SH_\ast(V)=\lim_{n\rightarrow \infty}FH_\ast(H_n).\]
Finally, for any cofinal sequence there exist sequences $(R_n),(\alpha_n),(\beta_n)\subset\mathbb{R}$  and $(f_n)\subset C^\infty(\Sigma)$ such that $(\alpha_n)$ and $(R_n)$ are monotone increasing and
\[H_n=\alpha_n\cdot e^{r-f_n}+\beta_n\qquad\qquad\text{ on } \Sigma\times[R_n,\infty).\]

\subsection{Truncation}\label{sectrunc}
For a (weakly) admissible Hamiltonian $H$ and $b\in\mathbb{R}$ consider the subchain groups
\[FC_\ast^{<b}(H)\subset FC_\ast(H)\]
which are generated by whose $x\in\mathcal{P}(H)$ with $\mathcal{A}^H(x)<b$. For $a<b$, we set 
\[FC^{[a,b)}_\ast(H):=\raisebox{.2em}{$FC^{<b}_\ast(H)$}\left/\raisebox{-.2em}{$FC^{<a}_\ast(H)$}\right.\]
We call $FC^{[a,b)}_\ast(H)$ truncated chain groups in the action window $[a,b)$. By setting $a=-\infty$, they include the cases $FC_\ast^{[-\infty,b)}(H)=FC_\ast^{<b}(H)$. Analogously one defines
\begin{align*}
 &FC_\ast^{\leq b}(B),\; FC^{>b}_\ast(H):=\raisebox{.2em}{$FC_\ast(H)$}\left/\raisebox{-.2em}{$FC^{\leq b}_\ast(H)$},\right.\; FC^{\geq b}_\ast(H),\\
 &FC^{(a,b]}_\ast(H),\;FC^{(a,b)}_\ast(H)\text{ and }FC^{[a,b]}_\ast(H).
\end{align*}
Note that $FC^{[a,b)}_\ast(H)=FC^{(a,b)}_\ast(H)$ if $a\not\in\mathcal{A}^H(\mathcal{P}(H))$. In the following, we restrict ourself for simplicity to $FC^{(a,b)}_\ast(H)$. However, most of the following results hold also for all other versions of action windows.\\
The following Lemma \ref{monolem} shows that the boundary operator $\partial$ reduces the action. It induces therefore a boundary operator $\partial$ on the truncated chain groups and for this $\partial$ we define
\[FH_\ast^{(a,b)}(H):=\frac{\ker \partial}{\text{im } \partial}.\]
\begin{lemme}\label{monolem}
 If $H$ is a Hamiltonian or a monotone decreasing homotopy and u a solution of (\ref{eqast}) with $\displaystyle\lim_{s\rightarrow\pm\infty}u=x_\pm\in\mathcal{P}(H)$, then $\mathcal{A}^H(x_+)\geq\mathcal{A}^H(x_-)$.
\end{lemme}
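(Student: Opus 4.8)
The plan is to differentiate the action along the Floer cylinder $u$ in the $s$-variable and show the derivative is non-negative, exactly in the spirit of the proof of Lemma \ref{lem2}. First I would record the first-variation formula: the critical-point equation of $\mathcal{A}^H$ is the Hamiltonian equation (\ref{eqastast}), and with respect to the $L^2$-metric $\omega(\cdot,J\cdot)$ the gradient is $\nabla\mathcal{A}^H = -J(\partial_t u - X_H^t)$, so the Floer equation (\ref{eqast}) reads $\partial_s u = \nabla\mathcal{A}^H$. (This is the same computation as in Subsection \ref{sec2.1}, now with the Lagrange multiplier $\eta$ absent.) Hence for a time-dependent but $s$-independent Hamiltonian $H$,
\[
\frac{d}{ds}\,\mathcal{A}^H\big(u(s,\cdot)\big) \;=\; d\mathcal{A}^H\big(u(s)\big)[\partial_s u] \;=\; \big\langle \nabla\mathcal{A}^H\big(u(s)\big),\partial_s u(s)\big\rangle_{L^2} \;=\; \|\partial_s u(s)\|_{L^2}^2 \;\geq\; 0 ,
\]
and integrating over $s\in\mathbb{R}$ (the two endpoint values being a difference of finite real numbers) gives $\mathcal{A}^H(x_+) - \mathcal{A}^H(x_-) = \int_{-\infty}^{\infty}\|\partial_s u\|_{L^2}^2\,ds \geq 0$.

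For a homotopy $H_s$ the action itself depends on $s$, so I would pick up the extra term. Writing $\mathcal{A}^{H_s}(x) = \int_0^1 x^\ast\lambda - \int_0^1 (H_s)_t(x(t))\,dt$, one has $\partial_s\big(\mathcal{A}^{H_s}\big)(x) = -\int_0^1 \partial_s(H_s)_t(x(t))\,dt$, so along $u$
\[
\frac{d}{ds}\,\mathcal{A}^{H_s}\big(u(s,\cdot)\big) \;=\; \|\partial_s u(s)\|_{L^2}^2 \;-\; \int_0^1 \partial_s(H_s)_t\big(u(s,t)\big)\,dt .
\]
Since the homotopy is constant outside a compact $s$-interval, the endpoint values are $\mathcal{A}^{H_-}(x_-)$ and $\mathcal{A}^{H_+}(x_+)$; the monotone-decreasing hypothesis means $\partial_s H_s \leq 0$ pointwise, so the integrand above is non-negative, and integrating once more yields $\mathcal{A}^{H_+}(x_+) \geq \mathcal{A}^{H_-}(x_-)$. (When $H$ is genuinely a homotopy one reads $x_\pm\in\mathcal{P}(H_\pm)$ and $\mathcal{A}^H$ as $\mathcal{A}^{H_\pm}$ at the corresponding end; when $H$ is a Hamiltonian the extra term is absent and one is back to the first case.)

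The steps needing a word of justification are: (i) differentiation under the integral sign and the identity $d\mathcal{A}^{H_s}(u(s))[\partial_s u] = \langle\nabla\mathcal{A}^{H_s},\partial_s u\rangle$, for which I would simply quote the action-functional computation of Subsection \ref{sec2.1}; and (ii) the passage to the limits $s\to\pm\infty$, which is legitimate because $u$ is asymptotic to the non-degenerate orbits $x_\pm$ and, by the Maximum Principle (Lemma \ref{maxprinc}), stays in a fixed compact region, so the standard asymptotic exponential-decay estimates apply and $\int_{\mathbb{R}}\|\partial_s u\|_{L^2}^2\,ds<\infty$. The only real ``obstacle'' is the sign bookkeeping: checking that the gradient convention in (\ref{eqast}) and the action convention $\mathcal{A}^H=\int x^\ast\lambda - \int H$ are mutually compatible so that $\frac{d}{ds}\mathcal{A}$ comes out non-negative, and that ``monotone decreasing homotopy'' is used with the correct sign $\partial_s H_s\leq 0$.
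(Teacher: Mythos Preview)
Your proof is correct and follows essentially the same approach as the paper: differentiate $\mathcal{A}^{H_s}(u(s,\cdot))$ in $s$, identify the result as $\|\partial_s u\|^2$ plus the $-\int_0^1(\partial_s H_s)(u)\,dt$ term, and use the sign hypothesis on the homotopy. The paper's proof is a two-line version of exactly this computation; your additional remarks on the asymptotics and sign conventions are reasonable but not strictly needed here.
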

\begin{proof}
\begin{align*}
 \mathcal{A}^H(x_+)-\mathcal{A}^H(x_-)&=\int^\infty_{-\infty}\frac{d}{ds}\mathcal{A}^H(u(s))ds\\
 &=\int^\infty_{-\infty} ||\nabla\mathcal{A}^H||^2ds-\int^\infty_{-\infty}\int^1_0 \left(\frac{d}{ds}H\right)(u(s))dt\,ds\geq 0.
\end{align*}
 Note that the second term is zero, if $H$ does not depend on $s$, i.e.\ if $H$ is a Hamiltonian. This shows that the monotone decreasing condition is only needed for homotopies.
\end{proof}
Let $H_-,H_+$ be two (weakly) admissible Hamiltonians such that $H_->H_+$ everywhere. Then we may choose a monotone decreasing (weakly) admissible homotopy $H_s$ between them and it follows from Lemma \ref{monolem} that the associated continuation map $\sigma_\ast(H_-,H_+)$ also decreases action. We obtain hence a well-defined map
\[\sigma_\ast(H_-,H_+): FH_\ast^{(a,b)}(H_+)\rightarrow FH_\ast^{(a,b)}(H_-).\]
The truncated symplectic homology in the action window $(a,b)$ is then defined as the direct limit under these maps:
\[SH_\ast^{(a,b)}(V):=\lim_{\longrightarrow} FH_\ast^{(a,b)}(H).\]
\underline{Attention}: Without further restrictions, we have always
\[SH^{(a,b)}(V)=0,\qquad\qquad\text{ whenever }\quad a>-\infty.\]
To see this, take any cofinal sequence of Hamiltonians $(H_n)$ and take an increasing sequence $(\beta_n)\subset\mathbb{R}$ such that $\displaystyle\beta_n>\max_{x\in\mathcal{P}(H_n)}\mathcal{A}^{H_n}(x)$. Then we find that $K_n:=H_n+\beta_n-a$ yields also a cofinal sequence, but now 
\[\max_{x\in\mathcal{P}(K_n)}\mathcal{A}^{K_n}(x)=\max_{x\in\mathcal{P}(H_n)}\mathcal{A}^{H_n}(x)-\beta_n+a<a\]
such that $FC^{(a,b)}_\ast(K_n)=FH_\ast^{(a,b)}(K_n)=0$ for all $n$ and hence $SH^{(a,b)}(V)=0$. One can overcome this obstacle by restricting further the set of admissible Hamiltonians. For us, it will be enough to require that all Hamiltonians $H$ are smaller then $0$ on a fixed contact hypersurface $M\subset V$. We write $SH^{(a,b)}(V,M)$ for the direct limit of these Hamiltonians.\\
In addition, we remark that for the definition of $FH^{(a,b)}_\ast(H)$ it suffices that only the 1-periodic orbits $x$ of $X_H$ with $\mathcal{A}^H(x)\in(a,b)$ are non-degenerate, as the others are discarded. Therefore, we call a Hamiltonian $H$ admissible for $SH^{(a,b)}_\ast(V,M)$, writing $H\in Ad^{(a,b)}(V,M)$, if it satisfies
\begin{itemize}
 \item $H|_M<0$
 \item $H|_{\Sigma\times[R,\infty)}=h(e^r)$ for $R$ large
 \item all $x\in\mathcal{P}^{(a,b)}(H)=\{x\in\mathcal{P}(H)\;|\; \mathcal{A}^H(x)\in(a,b)\}$ are non-degenerate.\pagebreak
\end{itemize}
The partial ordering on $Ad^{(a,b)}(V,M)$ is given by $H\prec K$ if $H<K$ everywhere. Similar, one defines weakly admissible Hamiltonians. Note that we are free to choose for the computation of $SH^{(a,b)}(V,M)$ cofinal sequences $(H_n)$ which are also admissible for the whole symplectic homology or cofinal sequences, where the 1-periodic orbits of $X_{H_n}$ are only non-degenerate in the action window $(a,b)$.\\
When taking a cofinal sequence $(H_n)\subset Ad(V)$, we find that the projection
\[FC_\ast(H)\rightarrow FC_\ast^{> b}(H)=\raisebox{.2em}{$FC_\ast(H)$}\left/\raisebox{-.2em}{$FC^{\leq b}_\ast(H)$}\right.\]
or the short exact sequence
\[0\rightarrow FC_\ast^{(a,b)}(H)\rightarrow FC_\ast^{(a,c)}(H)\rightarrow FC_\ast^{(b,c)}(H)\rightarrow 0\]
induce in homology the map
\[FH_\ast(H)\rightarrow FH_\ast^{\geq b}(H)\]
respectively the long exact sequence
\[\dots\rightarrow FH_\ast^{(a,b)}(H)\rightarrow FH_\ast^{(a,c)}(H)\rightarrow FH_\ast^{(b,c)}(H)\rightarrow\dots\]
Applying the direct limit then yields the map
\[SH_\ast(V)\rightarrow SH_\ast^{> b}(V,M)\]
and (as $\displaystyle\lim_{\longrightarrow}$ is an exact functor) the long exact sequence
\[\dots\rightarrow SH^{(a,b)}_\ast(V,M)\rightarrow SH^{(a,c)}_\ast(V,M)\rightarrow SH^{(b,c)}_\ast(V,M)\rightarrow\dots\]
\subsection{Symplectic cohomology}
By dualizing the constructions from the previous section, we obtain the symplectic cohomology. Explicitly, we define for a (weakly) admissible Hamiltonian $H$ the cochain groups $FC^\ast(H)$ again as the $\mathbb{Z}_2$-vector space generated by $\mathcal{P}(H)$. The coboundary operator $\delta$ is then defined as the linear extension of
\[\delta x:=\sum_{y\in\mathcal{P}(H)} \#_2 \mathcal{M}^0(x,y)\cdot y.\]
Note that the operator $\delta$ increases action. The analogue construction of chain maps $\sigma^\ast(H_-,H_+)$ associated to an admissible homotopy $H_s$ between Hamiltonians $H_-$ and $H_+$ yields hence a map in the opposite direction (compared to $\sigma_\ast(H_-,H_+)$)
\[\sigma^\ast(H_-,H_+):FH^\ast(H_-)\rightarrow FH^\ast(H_+),\]
where $H_->H_+$ on $\Sigma\times[R,\infty)$ for $R$ sufficiently large. It obeys the composition rule
\[ \sigma^\ast(H_1,H_3)=\sigma^\ast(H_2,H_3)\circ\sigma^\ast(H_1,H_2).\]
By taking the same partial ordering on $Ad^w(V)$ as for homology, we obtain hence an inverse system. The symplectic cohomology $SH^\ast(V)$ is then defined to be the inverse limit of this system
\[SH^\ast(V):=\lim_{\longleftarrow} FH^\ast(H).\]
Again, it can be calculated using cofinal sequences $(H_n)$ of admissible Hamiltonians.\\
For the truncated version of symplectic cohomology, we now have to consider 
\[FC^\ast_{>a}(H)\subset FC^\ast(H)\]
generated by those 1-periodic orbits with action greater then $a$. Then, we define
\[ FC_{(a,b]}^\ast(H):=\raisebox{.2em}{$FC^\ast_{>a}(H)$}\left/\raisebox{-.2em}{$FC_{>b}^\ast(H)$}\right.\]
and all other truncated groups accordingly. As $\delta$ increases action, it is well-defined on the truncated chain groups and yields analogously $FH^\ast_{>a}(H)$ and $FH^\ast_{(a,b)}(H)$ as cohomology groups. Then considering only (globally) monotone decreasing homotopies, the chain maps $\sigma^\ast$ are also well-defined on truncated groups and we obtain as inverse limits
\[SH^\ast_{>a}(V,M)=\lim_{\longleftarrow}FH^\ast_{>a}(H),\qquad SH^\ast_{(a,b)}(V,M)=\lim_{\longleftarrow}FH^\ast_{(a,b)}(H),\]
where we restricted again to $H\in Ad^w(V,M)$.\\
Unlike to the homology case, the long exact sequence
\[\dots\rightarrow FH^\ast_{(b,c)}(H)\rightarrow FH^\ast_{(a,c)}(H)\rightarrow FH^\ast_{(a,b)}(H)\rightarrow\dots\]
induces in general not a long exact sequence in symplectic cohomology. This is due to the fact that, in general, the inverse limit is not an exact functor, but only left exact (see Section \ref{secalg} or \cite{bourbaki2} resp.\ \cite{eilenberg}). However, the inclusion $FC^\ast_{>a}(H)\rightarrow FC^\ast(H)$ still induces a map
\[SH^\ast_{>a}(V,M)\rightarrow SH^\ast(V).\]

\subsection{Transfer morphism and handle attaching}
In the following, we construct a map $\pi_\ast(W,V):SH_\ast(V)\rightarrow SH_\ast(W)$ for an exactly embedded Liouville subdomain $W\subset V$, as first suggested by Viterbo in \cite{Vit}. Analogously, we construct a map $\pi^\ast(W,V):SH^\ast(W)\rightarrow SH^\ast(V)$ in cohomology. Then we will show that $\pi_\ast(W,V)$ and $\pi^\ast(W,V)$ are isomorphisms if $V$ is obtained from $W$ by attaching a subcritical handle $H^{2n}_k$ as described in Section \ref{secsur}.\\
As shown above, we have always maps $SH_\ast(V)\rightarrow  SH_\ast^{>0}(V,\partial W)$ and $SH^\ast_{>0}(V,\partial W)\rightarrow SH^\ast(V)$. The maps $\pi_\ast(W,V)$ and $\pi^\ast(W,V)$ are obtained by showing the identities $SH_\ast^{>0}(V,\partial W)= SH_\ast(W)$ and $SH^\ast_{>0}(V,\partial W)= SH^\ast(W)$. This is done by giving an explicit cofinal sequence $(H_n)\subset Ad(V,\partial W)$.\\
The following proposition is based on ideas by Viterbo, \cite{Vit}. Its proof is taken from McLean, \cite{McLeanDis}. We include it here for completeness and to add a missing argument for the homotopy case. See also Cieliebak, \cite{Cie}, for a slightly different approach.
\begin{prop}\label{proptrans}
 There exists a cofinal sequence $(H_n)\subset Ad(V,\partial W)$ and a sequence of monotone decreasing admissible homotopies $(H_{n,n+1})$ between them such that
 \begin{enumerate}
  \item $K_n:=H_n|_W, \; K_{n,n+1}:=H_{n,n+1}|_W$ are sequences of admissible Hamiltonians / homotopies on $(W,\omega)$.
  \item all 1-periodic orbits of $X_{H_n}$ in $W$ have positive action and all 1-periodic orbits of $X_{H_n}$ in $V\setminus W$ have negative action.
  \item all $\mathcal{A}^H$-gradient trajectories of $H_n$ or $H_{n,n+1}$ connecting 1-periodic orbits in $W$ are entirely contained in $W$.
 \end{enumerate}
\end{prop}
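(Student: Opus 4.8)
The plan is to prove the proposition by exhibiting the sequence $(H_n)$ explicitly, as a family of ``staircase'' Hamiltonians on $\widehat V$, and then reading off the three properties from their shape together with the Maximum Principle \ref{maxprinc}; this is the construction of Viterbo \cite{Vit} in the form given by McLean \cite{McLeanDis} (compare also Cieliebak \cite{Cie}). Fix a Liouville coordinate $\rho=e^r$ normalised so that $\partial W=\{\rho=1\}$, and similarly near $\partial V$. On the interior of $W$ take $H_n$ to be a $C^2$-small Morse function with values in $(-\varepsilon,0)$, so that its only $1$-periodic orbits there are the non-degenerate constants, of action $-H_n\in(0,\varepsilon)$; on a thin inner collar of $\partial W$ let $H_n=h_n(\rho)$ perform a convex ``turn'' from slope $0$ up to a slope $\alpha_n\notin spec(\partial W)$ with $\alpha_n\nearrow\infty$, placing this turn at a value of $h_n$ smaller than the shortest Reeb period, so that the orbits it produces -- one for each closed Reeb orbit of $\partial W$ of period $<\alpha_n$ -- lie in $W$. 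Outside $W$, let $H_n$ be linear of slope $\alpha_n$ on a collar of $\partial W$, then $C^2$-close to a large positive constant $D_n\nearrow\infty$ on the rest of the cobordism $\overline{V\setminus W}$, and finally let it perform on the cylindrical end of $\widehat V$ a convex turn from slope $0$ up to $\alpha'_n\notin spec(\partial V)$ with $\alpha'_n\nearrow\infty$; the parameters are coupled ($\varepsilon$ small and fixed, $D_n$ large compared with $\alpha_n,\alpha'_n$, the linear collars thin, the turns sharp) exactly as in \cite{McLeanDis}. The homotopies $H_{n,n+1}$ are obtained by interpolating all these parameters monotonically in $s$.

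Granting this shape, Property 1 is immediate: $K_n:=H_n|_W$, extended over the cylindrical end of $\widehat W$ with its slope $\alpha_n$, is admissible on $W$, and since $\alpha_n\nearrow\infty$ the sequence $(K_n)$ is cofinal; by Theorem \ref{cofinal} the direct limit over it computes $SH_\ast(W)$, and the restricted homotopies are monotone decreasing and admissible. Property 2 is action bookkeeping: a $1$-periodic orbit at level $\rho$ of a region where $H=h(\rho)$ has action $\rho\,h'(\rho)-h(\rho)$, so an orbit born in a convex turn sitting at a small value of $h$ has positive action, whereas the constants sitting at the large value $D_n$, the orbits born in the turns near $\partial W$ seen from the outside, and the orbits born in the turn near $\partial V$ all sit at values $\approx D_n$ and so have action $\approx\rho\,T-D_n<0$ once $D_n$ is large. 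Hence the positive-action orbits of $X_{H_n}$ are precisely the orbits of $X_{K_n}$ lying in $W$, and all other orbits lie in $V\setminus W$ with negative action.

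The substantial step -- and the one I expect to be the main obstacle -- is Property 3, the ``no escape'' statement. The Maximum Principle \ref{maxprinc} already gives an a priori bound $\rho\circ u\le e^C$ for every $\mathcal A^H$-gradient trajectory $u$ of $H_n$, or of the monotone homotopy $H_{n,n+1}$ (this is where the monotonicity $\partial_s\partial_\rho H_{n,n+1}\le0$ enters), so the relevant moduli spaces are compact; what must be improved is the \emph{location} of $u$. Since the asymptotics of $u$ lie in $W$, hence at levels $\rho<1$, and $H_n$ is exactly linear of slope $\alpha_n$ on the collar $\{1\le\rho\le1+\varepsilon'\}$ just outside $\partial W$, the function $\rho\circ u$ is subharmonic on $u^{-1}(\{1<\rho<1+\varepsilon'\})$; an interior maximum of $\rho\circ u$ there occurs only if $\rho\circ u$ is locally constant, which by unique continuation for (\ref{eqast}) \cite{FlHoSa} forces $u$ constant, and a ``no escape'' refinement of this argument on the annular set $u^{-1}(\{1\le\rho\le1+\varepsilon'\})$ shows that $u$ cannot cross the barrier $\{\rho=1+\varepsilon'\}$ outward at all. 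Thus $u$ remains in the region $W\cup\{\rho\le1+\varepsilon'\}$, on which $H_n\equiv K_n$; a final identical barrier (or, more cheaply, enlarging $W$ by a small Liouville collar before running the argument) identifies $u$ with a $K_n$-trajectory contained in $W$.

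The points that would need care are: (i) that the concave turns and the constant plateau, which are not compatible with the Maximum Principle, are never reached by such a $u$, precisely because of the linear barrier just outside $\partial W$; (ii) keeping the turns sharp and the linear collars thin enough that no spurious $1$-periodic orbit of the wrong action is created, in particular so that the orbits coming from the outward turns near $\partial W$ genuinely sit at values close to $D_n$; and (iii) carrying the barrier argument through in the $s$-dependent homotopy case, which is the part not present in \cite{McLeanDis} and which forces the monotone interpolation of the parameters in the definition of $H_{n,n+1}$.
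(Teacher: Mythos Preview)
Your construction of the staircase Hamiltonians and the verification of Properties~1 and~2 are essentially correct and match the paper. The gap is in Property~3.

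Your ``no escape'' argument via the Maximum Principle on the thin linear collar $\{1\le\rho\le 1+\varepsilon'\}$ does not do what you claim. The Maximum Principle (Lemma~\ref{maxprinc}) tells you that $\rho\circ u$ has no interior maximum on the open set where $H=h(\rho)$; it does \emph{not} prevent $u$ from exiting this set through the top boundary $\{\rho=1+\varepsilon'\}$. If $u$ does exit, its maximum of $\rho$ is attained on $\partial\big(u^{-1}\{1<\rho<1+\varepsilon'\}\big)$ at the level $1+\varepsilon'$, and there is no contradiction. Beyond that level, $u$ enters the bulk of the cobordism $V\setminus W$, where the collar coordinate $\rho=z_W$ is no longer defined and $H$ is no longer a function of it, so no barrier argument is available. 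The ``refinement'' you invoke would need a genuine no-escape lemma across a single contact hypersurface, but the standard versions of such lemmas require control of $H$ on the entire region $\{\rho\ge c\}$, not just on a thin neck below a concave turn.

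The paper's argument supplies exactly the missing ingredient: Gromov's Monotonicity Lemma. The collar of $\partial W$ is made very wide---up to $z_W=2A$ with $A=A(n)$ large---and $H_n$ is kept \emph{constant} on the annulus $\{A\le z_W\le 2A\}$, so that the Floer equation reduces to the $J$-holomorphic curve equation there. Monotonicity then gives a universal $K>0$ such that any curve meeting both $\{z_W=A\}$ and $\{z_W=2A\}$ has area at least $KA$. Since the maximal action difference between orbits in $W$ is bounded in terms of the fixed slope $k(n)$, choosing $A(n)$ large enough forbids crossing; only then does the Maximum Principle confine $u$ to $\{z_W\le 1\}$. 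For the homotopies $H_{n,n+1}$ a further device is needed (and is absent from \cite{McLeanDis}): a two-stage interpolation in $s$, arranged so that at every moment one of the constant plateaus $\{A(n)\le z_W\le 2A(n)\}$ or $\{A(n+1)\le z_W\le 2A(n+1)\}$ is intact, allowing the monotonicity argument to run on each half of the $s$-axis. Your monotone interpolation of all parameters simultaneously would destroy the plateau and with it the energy barrier.
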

\begin{proof}
 It will be convenient to use $z=e^r$ rather than $r$ for the second coordinate in the completion $(\widehat{W},\widehat{\omega})=\big(W\cup(\partial W\times[1,\infty),\,d(z\alpha)\big)$. Note that we can embed $\widehat{W}$ into $\widehat{V}$ using the flow of $Y_\lambda$, where $\lambda=z\alpha$. The cylindrical end $\partial W\times[1,\infty)$ is then a subset of $\widehat{V}$. The first coordinates will be denoted $z_W$ for $\partial W\times(0,\infty)$ and  $z_V$ for $\partial V\times(0,\infty)$.\medskip\\
 To begin, assume that $Spec(\partial W,\lambda)$ and $Spec(\partial V,\lambda)$ are discrete and let
 \[k:\mathbb{N}\rightarrow\mathbb{R}\setminus \Big(Spec(\partial W,\lambda)\cup 4\cdot Spec(\partial V,\lambda)\Big)\]
 be an increasing function such that $k(n)\rightarrow\infty$. Let $\mu:\mathbb{N}\rightarrow\mathbb{R}$ be defined by
 \[\mu(n)=dist\big(k(n),Spec(\partial W,\lambda)\big)=\min_{a\in Spec(\partial W,\lambda)}|k(n)-a|.\]
 Choose an increasing sequence $A=A(n)$ with
 \[A>\frac{2k}{\mu}>1\quad\text{ and }\quad A(n+1)>2A(n)\]
 which satisfies additionally the conditions $(\oplus)$ and $(\oplus\oplus)$ below. Note that we can always achieve $\frac{2k}{\mu}>1$, as we may choose $k$ arbitrarily large whilst making $\mu$ arbitrarily small. Let also $\veps(n)>0$ be a sequence tending to zero.\\
 We assume that $H_n|_W$ is a $C^2$-small negative Morse function inside $W\setminus\big(\partial W\times[1-\veps,1)\big)$ and for $1-\veps\leq z_W\leq A$ of the form $H_n=g(z)$ with $g(1)=-\veps,\;g'\geq 0$ and $g'\equiv k(n)$ for $1\leq z_W \leq A-\veps$. For $A\leq z_W\leq 2A$ we assume that $H_n\equiv B$ is constant with $B$ being arbitrarily close to $k\cdot(A-1)$.\\
 Now we describe $H_n$ on $\partial V\times[1,\infty)$: We keep $H_n$ constant until we reach $z_V=2A+P$, where $P$ is some constant such that $\{z_W\leq 1\}\subset\{z_V\leq P\}$, which implies $\{z_W\leq 2A\}\subset\{z_V\leq 2A+P\}$. Then let $H_n=f(z_V)$ for $z_V\geq 2A+P$ with $0\leq f'\leq \frac{1}{4}k(n)$ and $f'\equiv\frac{1}{4}k(n)$ for $z_V\geq 2A+P+\veps$. Figure \ref{fig7} gives a schematic illustration of $H_n$.
 \begin{figure}[ht]
\centering
 \resizebox{15cm}{!}{\input{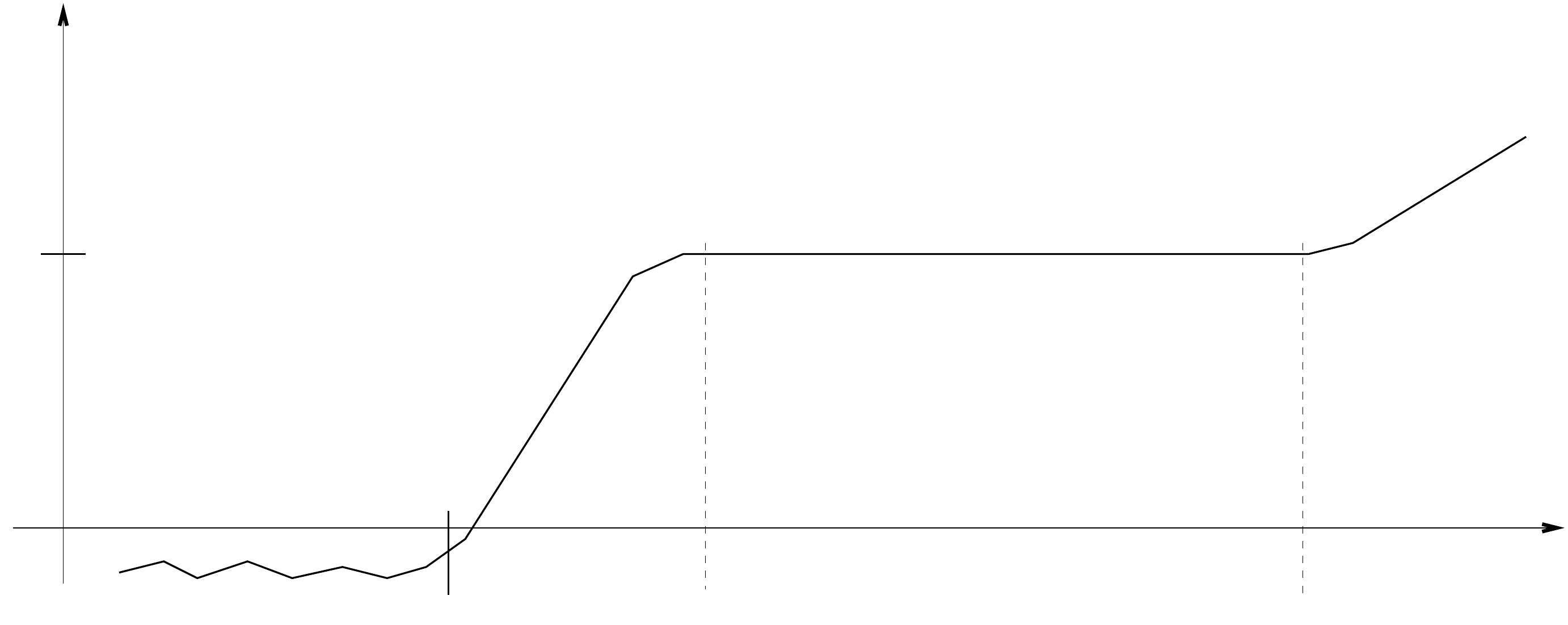_t}}
 \caption{\label{fig7}The Hamiltonian $H_n$}
\end{figure}\pagebreak\\
 As the action of an $X_H$-orbit on level $z=const.$ is $h'(z)\cdot z-h(z)$, we distinguish five types of 1-periodic orbits of $X_H$:
 \begin{itemize}
  \item critical points inside $W$ of action $>0$ (as H is negative and $C^2$-small inside $W$)
  \item non-constant orbits near $z_W=1$ of action $\approx 1\cdot g'(z)>0$
  \item non-constant orbits on $z_W=a$ for $a$ near $A$ of action $\approx g'(a)\cdot a-B<\linebreak(k-\mu)\cdot A-B\approx -\mu\cdot A+k<-k\rightarrow -\infty$
  \item critical points in $A<z_W\,;\,z_V<2A+P$ of action $-B<0$
  \item non-constant orbits on $z_V=a$ for $a$ near $2A+P$ of action $\approx f'(a)\cdot a-B\leq\frac{1}{4}k\cdot(2A+P+\veps)-B\approx-\frac{1}{2}kA+k\cdot(\frac{1}{4}P+\frac{1}{4}\veps+1)<0$ for $A$ sufficiently large (this is condition $(\oplus)$).
 \end{itemize}
Obviously, $(H_n)$ satisfies 1. and 2. of the proposition's claims. It only remains to show that $\mathcal{A}^H$-gradient trajectories connecting two orbits of non-negative action are contained entirely inside $z_W\leq 1$. By Gromov's Monotonicity Lemma (see \cite{Sik}, Prop.\ 4.3.1 and \cite{Oan2}, Lem.\ 1) there exists a $K>0$ such that any $J$-holomorphic curve which intersects $z_W=A$ and $z_W=2A$ has area greater than $KA$. Note that inside $A\leq z_W\leq 2A$ the equation (\ref{eqast}) reduces to an ordinary $J$-holomorphic curve equation, as $X_H\equiv 0$ there. Any $\mathcal{A}^H$-gradient trajectory connecting two orbits of non-negative action which intersects $z_W=A$ and $z_W=2A$ has therefore area greater than $KA$ -- in other words the action difference between its ends is greater than $KA$.\\
For $k(n)$ fixed, the maximal action difference of two 1-periodic orbits in $W$ is bounded from above. So for $A(n)$ sufficiently large (this is condition $(\oplus\oplus)$) no such $\mathcal{A}^H$-gradient trajectory can touch 
$z_W=2A$. It follows 
then from the Maximum Principle that in fact all these $\mathcal{A}^H$-gradient trajectories have to remain inside $z_W\leq 1$.\\
For the construction of the homotopies $H_{n,n+1}$ we have to sharpen this argument. As $A(n+1)>2A(n)$, we can take for $H_{n,n+1}$ the following interpolations:\pagebreak\\
At first, in time $s\in[0,1/2]$, decrease $H_{n+1}$ in the area $z_W\leq 2A(n)$ to $H_n$ and keep it unchanged in $z_W\geq A(n+1)$. Then decrease in time $s\in[1/2,1]$ the remaining part to $H_n$ (see Figures \ref{fig8} and \ref{fig9}).\\
For $s\in[-\infty,1/2]$ the homotopy $H_{n,n+1}$ is then constant $B(n+1)$ in the area $A(n+1)\leq z_W\leq 2A(n+1)$ so that no $\mathcal{A}^H$-gradient trajectory can leave $z_W\leq 1$ in this time interval. For $s\in[1/2,\infty]$ the homotopy $H_{n,n+1}$ is constant $B(n)$ in the area $A(n)\leq z_W\leq 2A(n)$ so that again no $\mathcal{A}^H$-gradient trajectory can leave $z_W\leq 1$ in this time interval.
\begin{figure}[ht]
\centering
 \resizebox{13.4cm}{!}{\input{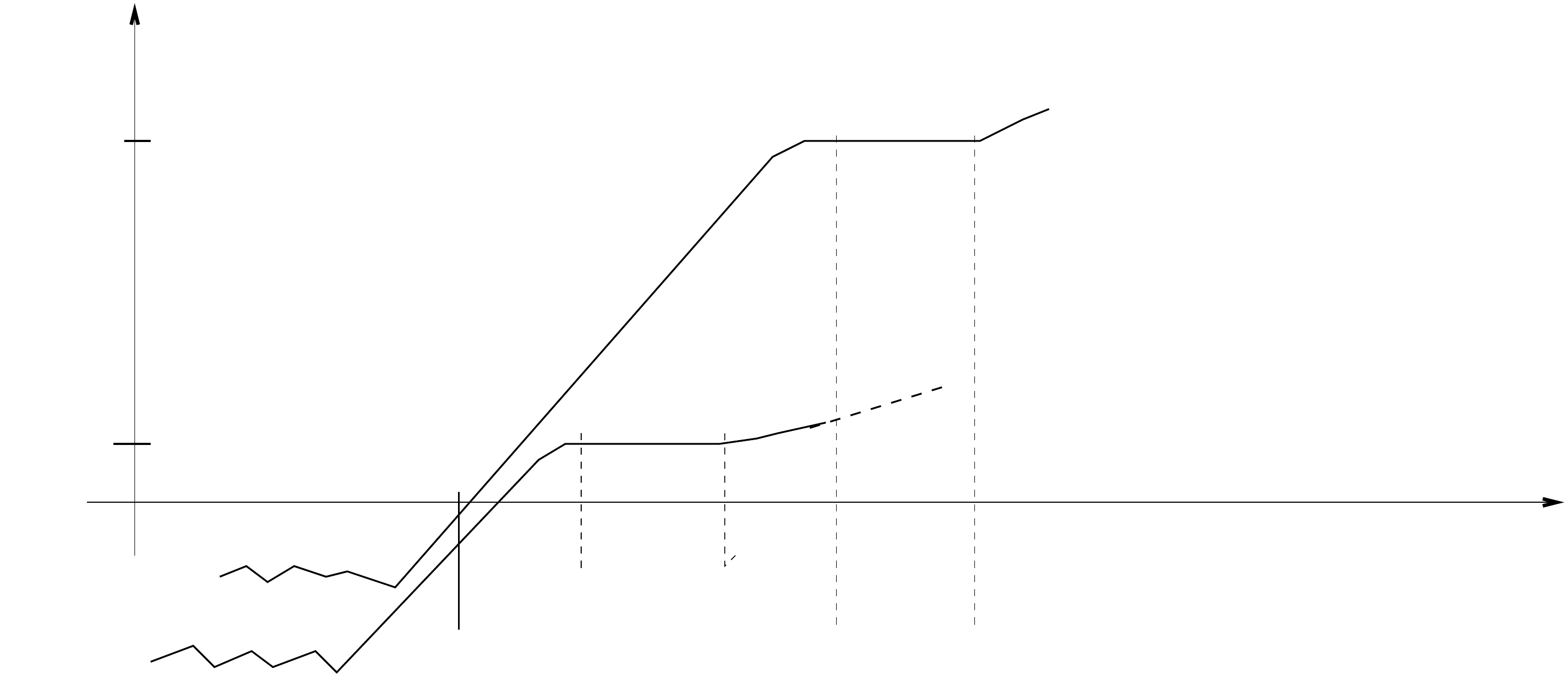_t}}
 \caption{\label{fig8}Two Hamiltonian $H_n$ and $H_{n+1}$}
\end{figure}
\begin{figure}[ht]
\centering
 \resizebox{13.4cm}{!}{\input{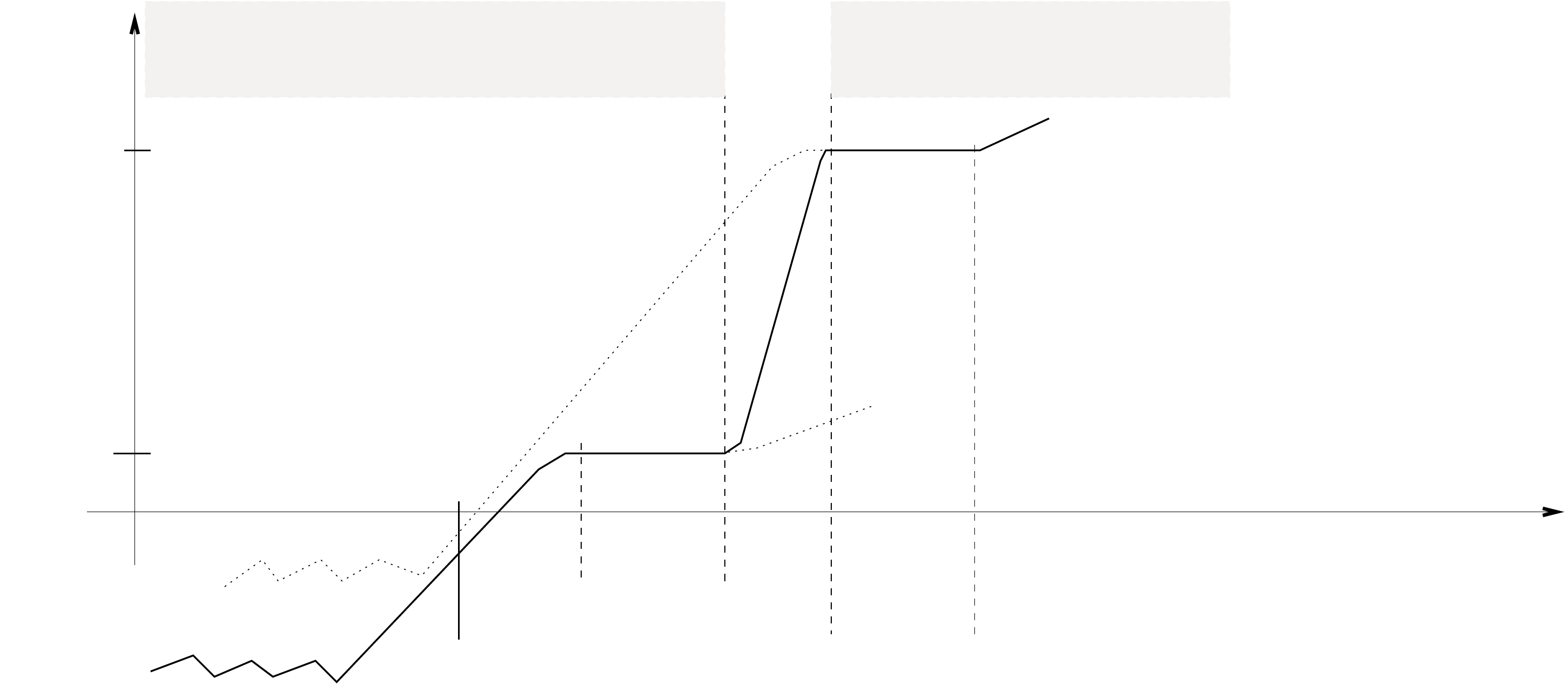_t}}
 \caption{\label{fig9}The homotopy $H_{n,n+1}$ at time $s=\frac{1}{2}$}
\end{figure}
\end{proof}
\begin{cor}\label{transfer} $\quad\displaystyle SH^{>0}_\ast(V,\partial W)\simeq SH_\ast(W)\qquad\text{ and }\qquad SH^\ast_{>0}(V,\partial W)\simeq SH^\ast(W).$
\end{cor}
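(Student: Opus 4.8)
The plan is to derive both isomorphisms directly from the cofinal sequence $(H_n)\subset Ad(V,\partial W)$ and the homotopies $(H_{n,n+1})$ produced by Proposition \ref{proptrans}, by showing that for every $n$ the Floer complex computing $FH^{>0}_\ast(H_n)$ coincides (as a chain complex, together with its continuation maps) with the Floer complex computing $FH_\ast(K_n)$, where $K_n=H_n|_W$. First I would fix the data: let $(H_n)$, $(H_{n,n+1})$ and $K_n:=H_n|_W$, $K_{n,n+1}:=H_{n,n+1}|_W$ be as in Proposition \ref{proptrans}, and choose admissible almost complex structures $J_n$ on $\widehat V$ which restrict on $\widehat W\subset \widehat V$ to admissible almost complex structures for $K_n$, and similarly for the homotopies. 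Property 2 of Proposition \ref{proptrans} gives a bijection on generators: the $1$-periodic orbits of $X_{H_n}$ with positive action are exactly those contained in $W$, and these are precisely $\mathcal{P}(K_n)$ together with the small Morse critical points inside $W$, so $FC^{>0}_\ast(H_n)\cong FC_\ast(K_n)$ as $\mathbb{Z}_2$-vector spaces. Property 3 then upgrades this to a chain isomorphism: every $\mathcal{A}^{H_n}$-gradient trajectory connecting two orbits in $W$ stays entirely in $\{z_W\le 1\}\subset W$, so $\mathcal{M}^0(y,x)$ computed in $\widehat V$ equals $\mathcal{M}^0(y,x)$ computed in $\widehat W$ for all $y,x\in\mathcal{P}(K_n)$; hence $\partial^{>0}$ on $FC^{>0}_\ast(H_n)$ agrees with $\partial$ on $FC_\ast(K_n)$, and $FH^{>0}_\ast(H_n)\cong FH_\ast(K_n)$. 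The same argument applied to the homotopies $H_{n,n+1}$ (again using property 3, now for $\mathcal{A}^{H_s}$-gradient trajectories of the homotopy, which Proposition \ref{proptrans} also covers) shows that the continuation map $\sigma_\ast(H_n,H_{n+1})$ restricted to the positive-action subquotient coincides with $\sigma_\ast(K_n,K_{n+1})$. Thus the two directed systems $\big(FH^{>0}_\ast(H_n),\sigma_\ast\big)$ and $\big(FH_\ast(K_n),\sigma_\ast\big)$ are isomorphic as directed systems.

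Next I would pass to the limit. Since $(K_n)$ is, by property 1 of Proposition \ref{proptrans}, a cofinal sequence in $Ad^w(W,\partial W)$, Theorem \ref{cofinal} gives $SH_\ast(W)=\varinjlim FH_\ast(K_n)$. On the other side, $(H_n)$ is cofinal in $Ad(V,\partial W)$, so $SH^{>0}_\ast(V,\partial W)=\varinjlim FH^{>0}_\ast(H_n)$ by the remark following the definition of $SH^{(a,b)}_\ast(V,M)$ (here the action window is $(0,\infty)$ and the reference hypersurface is $\partial W$). An isomorphism of directed systems induces an isomorphism of direct limits, whence $SH^{>0}_\ast(V,\partial W)\cong SH_\ast(W)$. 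For the cohomological statement I would run the identical argument with the coboundary operator $\delta$ in place of $\partial$: the generator bijection and the locality of trajectories (property 3) are symmetric in $\delta$, giving $FH^\ast_{>0}(H_n)\cong FH^\ast(K_n)$ and compatibility with the maps $\sigma^\ast$, hence an isomorphism of inverse systems; taking inverse limits (again using Theorem \ref{cofinal} for cofinality of $(K_n)$) yields $SH^\ast_{>0}(V,\partial W)\cong SH^\ast(W)$.

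The main obstacle is the compactness/locality input needed to make property 3 of Proposition \ref{proptrans} usable, i.e.\ the requirement that the chosen $J_n$ (and the homotopy almost complex structures) be cylindrical near $\partial W$ so that the Maximum Principle, Lemma \ref{maxprinc}, applies both in $\widehat V$ and in $\widehat W$ and confines the relevant trajectories to $\{z_W\le 1\}$; one must check that the spaces $\mathcal{J}_{reg}$ of regular admissible $J$ for the two ambient manifolds can be chosen compatibly (i.e.\ that a $J_n$ on $\widehat V$ which is generic for $H_n$ and restricts to a generic $J$ for $K_n$ exists), which is a standard but slightly delicate transversality-with-constraints argument. A secondary point to be careful about is that $SH^{>0}_\ast(V,\partial W)$ is computed over the restricted class $Ad^w(V,\partial W)$ of Hamiltonians negative on $\partial W$ (without this restriction the truncated homology vanishes, as noted in Section \ref{sectrunc}); one must verify that $(H_n)$ indeed lies in this class and is cofinal there, which is exactly property 1 together with $H_n|_{\partial W}<0$, both guaranteed by Proposition \ref{proptrans}. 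Once these points are in place the rest is a formal limit argument.
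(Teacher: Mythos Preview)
Your proposal is correct and follows essentially the same approach as the paper: use the cofinal sequence $(H_n)$ from Proposition \ref{proptrans}, identify $FC^{>0}_\ast(H_n)$ with the Floer complex of the restricted Hamiltonian on $W$ via properties 2 and 3 (for both differentials and continuation maps), and pass to direct (resp.\ inverse) limits. The only cosmetic difference is that the paper writes the restricted Hamiltonian as a linear extension $\tilde H_n\in Ad(W)$ rather than $K_n=H_n|_W$, and it does not spell out the transversality-compatibility and cofinality checks you flag as obstacles; otherwise the arguments coincide.
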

\begin{proof}
 We only prove the corollary for homology, cohomology being completely analog. Take the sequence of Hamiltonians $(H_n)$ constructed in Proposition \ref{proptrans}. Clearly it is cofinal and $(H_n)\subset Ad^{>0}(V,\partial W)$, as 1-periodic orbits with positive action are either isolated critical points inside $W$ (as $H$ is Morse and $C^2$-small there) or isolated Reeb-orbits near $z_W=1$ -- in both cases non-degenerate. Hence we have
 \[ SH^{>0}_\ast(V,\partial W)=\lim_{\longrightarrow} FH^{>0}_\ast(H_n).\pagebreak\]
 Let $\tilde{H}_n\in Ad(W)$ be the linear extension of $H_n|_W$ with slope $k(n)$. Due to \linebreak $k(n)\not\in Spec(\partial W,\lambda)$, we have obviously $FC^{>0}_\ast(H_n)=FC_\ast(\tilde{H}_n)$. As any $\mathcal{A}^H$-gradient trajectory connecting 1-periodic orbits in $W$ stays in $W$, the two boundary operators $\partial^{H_n}$ and $\partial^{\tilde{H}_n}$ coincide and we have $FH^{>0}_\ast(H_n)=FH_\ast(\tilde{H}_n)$. As the $\mathcal{A}^H$-gradient trajectories for the homotopies $H_{n,n+1}$ stay inside $W$, the continuation maps 
 \[\sigma(H_{n+1},H_n): FH^{>0}_\ast(H_n)\rightarrow FH^{>0}_\ast(H_{n+1})\] coincide with the continuation maps 
 \[\sigma(\tilde{H}_{n+1},\tilde{H}_n): FH_\ast(\tilde{H}_n)\rightarrow FH_\ast(\tilde{H}_{n+1}).\]
 Hence we have $\quad\displaystyle SH^{>0}_\ast(V,\partial W)= \lim_{\longrightarrow} FH^{>0}_\ast(H_n) = \lim_{\longrightarrow} FH_\ast(\tilde{H}_n)=SH_\ast(W)$.
\end{proof}
We finish this section with the following Invariance Theorem due to Cieliebak, \cite{Cie}.
\begin{theo}[Invariance of $SH$ under subcritical surgery]\label{theoinvsur}~\\
 Let $W$ be an exact Liouville domain and let $V$ be obtained from $W$  by attaching to $\partial W\times[0,1]$ a subcritical exact symplectic handle $H^{2n}_k,\,k<n$, as described in Section \ref{secsur}. Moreover, assume that the Conley-Zehnder index is well-defined on $W$. Then it holds that \[SH_\ast(V)\cong SH_\ast(W)\qquad\text{ and }\qquad SH^\ast(V)\cong SH^\ast(W).\]
\end{theo}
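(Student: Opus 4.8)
The plan is to use the transfer morphisms together with the explicit Hamiltonian/handle picture from Sections \ref{secsur} and \ref{secsymhom}, and show that the transfer map is an isomorphism by a computation on the handle. Recall that Corollary \ref{transfer} gives $SH^{>0}_\ast(V,\partial W)\cong SH_\ast(W)$ and $SH^\ast_{>0}(V,\partial W)\cong SH^\ast(W)$. Combined with the natural maps $SH_\ast(V)\to SH^{>0}_\ast(V,\partial W)$ and $SH^\ast_{>0}(V,\partial W)\to SH^\ast(V)$, this yields transfer morphisms $\pi_\ast(W,V):SH_\ast(V)\to SH_\ast(W)$ and $\pi^\ast(W,V):SH^\ast(W)\to SH^\ast(V)$. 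So it suffices to prove that these are isomorphisms in the subcritical case. By the long exact sequences for the action filtration (and exactness of the direct limit in homology), $\pi_\ast(W,V)$ is an isomorphism precisely when $SH^{\leq 0}_\ast(V,\partial W)=0$, i.e. when the part of the Floer homology of $V$ generated by orbits of non-positive action dies in the limit; dually for cohomology one analyzes $SH^\ast_{\leq 0}(V,\partial W)$, being careful since inverse limits are only left exact.

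First I would set up a cofinal sequence of admissible Hamiltonians on $V$ adapted to the decomposition $V=W\cup(\partial W\times[0,1])\cup H^{2n}_k$, refining the sequence $(H_n)$ of Proposition \ref{proptrans} so that on the handle $H$ is built from the functions $\phi,\psi$ of Section \ref{secsur}, with $\psi'=\alpha\cdot\psi+\beta$ and $\alpha\to\infty$ along the sequence. The 1-periodic orbits of $X_H$ split into: orbits in $W$ (positive action, contributing $SH_\ast(W)$), orbits near $\partial W$ and near the outer boundary $\partial V$, and orbits on the handle. By Section \ref{sec5.4}, all periodic orbits of $X_{\psi'}$ on the handle lie in the core subspace $\{x=y=0\}$, are isolated for generic choices of the constants $A_j$, and by the index formula (\ref{CZ-Index}) their Conley--Zehnder indices tend to $+\infty$ as $\alpha\to\infty$. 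Crucially, since $k<n$, the handle is subcritical: there are at least two transverse Reeb-like directions, and a Morse-theoretic / Bott cancellation argument (as in Cieliebak, \cite{Cie}) shows that the contribution of the handle orbits to $FH_\ast(H_n)$ cancels in pairs — each generator from the handle comes with a partner whose index differs by one and which is connected to it by a rigid Floer trajectory. Equivalently, the handle carries a contractible extra factor whose Floer homology vanishes, so that $FH_\ast(H_n)$ modulo the part coming from $W$ is acyclic.

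The next step is to organize this cancellation compatibly with the action filtration and the continuation maps, so that it survives the direct (resp. inverse) limit. I would filter $FC_\ast(H_n)$ by the three regions, observe that the boundary operator is "upper triangular" with respect to the filtration induced by the Lyapunov function $f=\sum q_jp_j$ of Section \ref{secsur} (which forces handle trajectories to stay near the core), and then use the explicit flow (\ref{eqA}) to identify the handle subcomplex with a standard Morse--Bott complex of a subcritical handle whose homology is that of a point with a shifted degree — hence, together with the degree going to infinity, it contributes nothing to the limit in any fixed degree. The action estimates already recorded in the proof of Proposition \ref{proptrans} (handle orbits near $z_W=A$ have action $\to-\infty$; the genuinely relevant handle orbits have negative action) guarantee that this cancellation takes place in the non-positive action window, so $SH^{\leq 0}_\ast(V,\partial W)=0$, giving $\pi_\ast(W,V)$ an isomorphism. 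For cohomology, the same filtration computation shows $FH^\ast_{\leq 0}(H_n)$ is generated by a complex that is acyclic at each finite stage, and since the relevant groups stabilize (the degrees of handle generators march off to infinity), the inverse limit of acyclic complexes is acyclic here despite the general failure of exactness of $\varprojlim$; this uses the Mittag-Leffler-type condition automatically satisfied because each $FH^\ast_{\leq 0}(H_n)$ is finite-dimensional in each degree. Hence $\pi^\ast(W,V)$ is also an isomorphism.

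The main obstacle is the handle cancellation argument: making precise, with full control over the Floer differential on the handle, that the subcritical handle contributes a trivial (acyclic) summand, uniformly in the cofinal sequence and respecting both the action filtration and the continuation maps. This is exactly the delicate point that the excerpt flags as "not quite correct in \cite{Cie}" — the subtlety lies in extending $\psi$ beyond the handle with a varying slope so that the only handle orbits are the explicitly describable ones of Section \ref{sec5.4}, and in checking that the rigid trajectories realizing the cancellation do not escape the handle region (which is where the Lyapunov function $f$ and the Maximum Principle, Lemma \ref{maxprinc}, do the work). Once that is in place, the rest is a formal diagram chase with the long exact sequences and limits of Section \ref{secalg}.
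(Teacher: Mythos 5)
Your overall framework matches the paper's: reduce via Corollary \ref{transfer} to the transfer morphisms, build a cofinal sequence of Hamiltonians adapted to the handle decomposition with slope $\alpha=k(n)\to\infty$, and use the long exact sequences of the action filtration together with exactness (resp. left-exactness) of direct (resp. inverse) limits. However, you misidentify the key mechanism. You single out, as the ``crucial'' step, a pairwise cancellation of handle generators by rigid Floer trajectories -- i.e.\ a computation of the differential on the handle subcomplex showing it is acyclic. That claim is not established anywhere in your argument, and establishing it would require genuine control of the Floer differential on the handle, which is exactly the kind of analysis the paper's proof is designed to avoid. The point you mention almost in passing -- that by the index formula~(\ref{CZ-Index}) the Conley--Zehnder indices of all handle orbits (and hence of all chain generators of non-positive action) tend to $+\infty$ as $\alpha\to\infty$, precisely because $k<n$ makes the sum over $j=k+1,\dots,n$ nonempty -- is in fact the whole argument. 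Fix a degree $j$; for $n$ large enough, $FH^{\leq 0}_j(H_n)=FH^{\leq 0}_{j+1}(H_n)=0$ on the nose, so the long exact sequence forces $FH_j(H_n)\to FH^{>0}_j(H_n)$ to be an isomorphism, and likewise $FH^j_{>0}(H_n)\to FH^j(H_n)$. Passing to the limit then needs nothing beyond Theorem~\ref{theoindilimits}: the direct limit is exact, and the inverse limit is left exact and takes an isomorphism at each stage to an isomorphism. In particular your appeal to a Mittag--Leffler condition is superfluous, and the claim that the handle subcomplex is ``acyclic'' is both stronger than needed and unproven; what you actually have is that the subcomplex is eventually supported in degrees above any fixed $j$.

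A second, smaller point: you should be careful about the constant orbit at the origin of the handle and about the extension of $\psi$ beyond the handle (the part the paper flags as delicate in Cieliebak's original argument, Section~\ref{ExPsi}). The Lyapunov function $f=\sum q_jp_j$ confines all periodic orbits to $\{x=y=0\}$ only once one has ensured, via the interpolation in Lemma~\ref{interpolation} and the sign conditions~(\ref{eqpsi}), that $X_{\psi'}$ has the form $C_xX_x+C_yX_y+C_zX_z$ with $C_x,C_z>0$, $C_y<0$ throughout the relevant region. Your proposal cites the Lyapunov function and the Maximum Principle, which is right, but frames them as controlling ``rigid trajectories realizing the cancellation''; in the correct argument they simply guarantee the set of orbits on the handle is the explicitly described one of Section~\ref{sec5.4}, so that the degree-escape argument applies to all non-positive-action generators.
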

\begin{proof}
The idea of the proof is to construct yet another cofinal sequence of Hamiltonians $(H_n)\subset Ad^w(V)\cap Ad(V,\partial W)$ for which we can directly show that 
\begin{align*}
 SH_\ast(W)&\overset{(\ast)}{\cong} SH^{>0}_\ast(V,\partial W)&\overset{(1)}{=}\lim_{n\rightarrow\infty} FH^{>0}_\ast(H_n)\overset{(2)}{\simeq}\lim_{n\rightarrow\infty} FH_\ast(H_n)\overset{(3)}{=}SH_\ast(V)\\
 SH^\ast(W)&\overset{(\ast)}{\cong}SH_{>0}^\ast(V,\partial W)&\overset{(4)}{=}\lim_{n\rightarrow\infty} FH_{>0}^\ast(H_n)\overset{(5)}{\simeq}\lim_{n\rightarrow\infty} FH^\ast(H_n)\overset{(6)}{=}SH^\ast(V).
\end{align*}
Note that the isomorphisms $(\ast)$ have been shown in Corollary \ref{transfer}.\medskip\\
To start, fix sequences $k(n)\not\in Spec(\partial W,\lambda),\, k(n)\rightarrow\infty$ and $\veps(n)\rightarrow 0$. Then choose an increasing sequence of non-degenerate Hamiltonians $H_n$ on $W$ that is on $\partial W\times (-\veps(n),0]$ of the form
\[H_n|_{\partial W\times(-\veps(n),0]} = k(n)\cdot e^r-\big(1+\veps(n)\big)\]
and extend $H_n$ over the handle by a function $\psi$ with $\alpha=k(n)$ and $\beta=-1-\veps(n)$ as described in Section \ref{secsur}.\\
For each $n$ choose the handle so thin such that each trajectory of $X_{H_n}$ which leaves and reenters the handle has length greater than 1. Thus we obtain a cofinal weakly admissible sequence $(H_n)$, whose 1-periodic orbits having positive action are all contained in $W$. This shows already the identities $(1),(3),(4)$ and $(6)$.
Now recall that we had the long exact sequences
\begin{align*}
 \dots\rightarrow FH^{> 0}_{j+1}(H_n)\rightarrow FH^{\leq0}_j(H_n)\rightarrow FH_j(H_n)\rightarrow FH_j^{> 0}(H_n)\rightarrow\dots\\
 \dots\rightarrow FH_{\leq 0}^{j-1}(H_n)\rightarrow FH_{>0}^j(H_n)\rightarrow FH^j(H_n)\rightarrow FH^j_{\leq 0}(H_n)\rightarrow\dots
\end{align*}
Note that $FH^{>0}_j(H_n)$ is generated by all 1-periodic orbits of $H_n$ inside $W$, while \linebreak$FH^{\leq 0}_j(H_n)$ is generated by all other orbits. These are finitely many, lying on the handle, and are explicitly given in (\ref{eqA}). Observe that $H_n$ is on the handle time-independent. The orbits there are therefore of Morse-Bott type. We can now use either the definition of $SH$ with Morse-Bott techniques, as described in \cite{BourOan1}, or perturb $H_n$ near these orbits to make it non-degenerate, as described in \cite{CiFlHoWy}. In both cases we obtain for each orbit $\gamma$ two generators in the chain complex whose indices are $\mu_{CZ}(\gamma)$ and $\mu_{CZ}(\gamma)+1$. We have shown in Section \ref{sec5.4} that the possible values of $\mu_{CZ}(\gamma)$ increase to $\infty$ as the slope $\alpha=k(n)$ tends to $\infty$. Therefore, $FH^{\leq0}_j(H_n)$ becomes eventually zero for $n$ large enough, as well as $FH^{\leq0}_{j+1}(H_n)$. This implies for $n$ large enough that
\[FH_j(H_n)\rightarrow FH^{>0}_j(H_n)\]
is an isomorphism. As the direct limit is an exact functor, these maps converge to an isomorphism in the limit, proving (2). In the cohomology case, the line of arguments is the same. Even though taking inverse limits is not exact, it still takes the isomorphism
\[FH^j_{>0}(H_n)\rightarrow FH^j(H_n)\]
to an isomorphism in the limit, as it is left exact (see Theorem \ref{theoindilimits}). This proves (5).
\end{proof}

\subsection{Rabinowitz-Floer and symplectic (co)homology}\label{sec6.8}
In \cite{FraCieOan}, Cieliebak, Frauenfelder and Oancea showed that Rabinowitz-Floer homology and symplectic (co)homology are closely related. More precisely, they showed, under the assumption that all homologies are $\mathbb{Z}$-graded by the Conley-Zehnder index, that there exists the following exact sequence:
\begin{equation}\label{longexSHRFH}
 ...\rightarrow SH^{-\ast}(V)\rightarrow SH_\ast(V)\rightarrow RFH_\ast(V)\rightarrow SH^{-(\ast-1)}(V)\rightarrow SH_{\ast-1}\rightarrow...
\end{equation}
where the map $SH^{-\ast}(V)\rightarrow SH_\ast(V)$ fits into the commutative diagram
\[\begin{xy}\xymatrix{SH^{-\ast}(V) \ar[d]_{c^\ast} \ar[r] & SH_\ast(V)\\ H^{-\ast+n}(V,\partial V)\ar[r] & H_{\ast+n}(V,\partial V).\ar[u]_{c_\ast}}\end{xy}\]
Here, the bottom arrow is the composition of the map induced by the inclusion \linebreak $i:V\hookrightarrow(V,\partial V)$ together with the Poincaré duality isomorphism
\[H^{-\ast+n}(V,\partial V)\overset{PD}{\longrightarrow}H_{\ast+n}(V)\overset{i_\ast}{\longrightarrow}H_{\ast+n}(V,\partial V).\]
Moreover, there are the following commutative diagrams of long exact sequences, where $PD$ is the Poincaré duality and the top sequence is the (co)homological long exact sequence of the pair $(V,\partial V)$:
\begin{equation}\label{longexShRFHhom}
 \begin{xy}\xymatrix{
\ar[r] &  H_{\ast+n}(V)\ar@{=}[d]^{PD} \ar[r] & H_{\ast+n}(V,\partial V)\ar[d] \ar[r] & H_{\ast-1+n}(\partial V)\ar[d] \ar[r] & H_{\ast-1+n}(V) \ar@{=}[d]^{PD} \ar[r]&\\
\ar[r] & H^{-\ast+n}(V,\partial V)\ar[r] & SH_\ast(V) \ar[r] & RFH^{\geq 0}_\ast (V) \ar[r] & H^{-(\ast-1)+n}(V,\partial V) \ar[r] &
}\end{xy}
\end{equation}
and
\begin{equation}\label{longexShRFHcohom}
 \begin{xy}\xymatrix{
\ar[r] &  H^{-\ast+n}(V,\partial V) \ar[r] & H^{-\ast+n}(V)\ar@{=}[d]^{PD} \ar[r] & H^{-\ast+n}(\partial V) \ar[r] & H^{-(\ast-1)+n}(V,\partial V)  \ar[r]&\\
\ar[r] & SH^{-\ast}(V) \ar[u] \ar[r] & H_{\ast+n}(V,\partial V) \ar[r] & RFH^{\leq 0}_\ast (V) \ar[u] \ar[r] & SH^{-(\ast-1)}(V) \ar[u] \ar[r] &.
}\end{xy}
\end{equation}
In the sequence (\ref{longexSHRFH}), we find in particular for $\ast\geq n$ or $\ast\leq -n$ that $SH^{-\ast}(V)\rightarrow SH_\ast(V)$ is zero. This implies for field coefficients (for example $\mathbb{Z}_2$) and $\ast\in\mathbb{Z}\setminus[-n+1, n]$ that
\begin{equation}\label{RFHdirectsum}
 RFH_\ast(V)\cong SH_\ast(V)\oplus SH^{-\ast+1}(V).
\end{equation}
The sequences (\ref{longexShRFHhom}) and (\ref{longexShRFHcohom}) on the other hand imply for $|\ast|\geq n+1$ that
\[SH_\ast(V)\cong RFH^{\geq 0}_\ast(V)\qquad\text{ and }\qquad SH^{-(\ast-1)}(V)\cong RFH^{\leq 0}_\ast(V).\]
In view of the Invariance Theorem \ref{theoinvsur} for symplectic (co)homology, we obtain from (\ref{RFHdirectsum}) the Invariance Theorem for Rabinowitz-Floer homology.
\begin{theo}[Invariance of $RFH$ under subcritical surgery]\label{theoRFHinvsur}~\\
 Let $W$ be a Liouville domain and let $V$ be obtained from $W$  by attaching to $\partial W\times[0,1]$ a subcritical exact symplectic handle $H^{2n}_k,\,k<n$, as described in Section \ref{secsur}. Moreover, assume that the Conley-Zehnder index is well-defined on $W$. Then it holds for field coefficients that \[RFH_\ast(V)\cong RFH_\ast(W),\]
 at least for $\ast\in\mathbb{Z}\setminus[-n, n+1]$, i.e.\ away from the singular homology of $(W,\partial W)$.
\end{theo}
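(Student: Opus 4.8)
The plan is to derive the statement directly from the invariance of symplectic (co)homology under subcritical surgery (Theorem \ref{theoinvsur}) together with the Cieliebak--Frauenfelder--Oancea relation between Rabinowitz--Floer and symplectic (co)homology recalled in Section \ref{sec6.8}. All of the analytic content has already been established, so the argument is essentially homological bookkeeping: the only place where subcriticality of the handle enters is through Theorem \ref{theoinvsur}.

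First I would make sure that the $\mathbb{Z}$-grading on $RFH$ is available on $V$ and not only on $W$. Since $V$ is obtained from $W$ by attaching the handle $H^{2n}_k$ with $k<n$, Lemmas \ref{lembehaviourA} and \ref{lembehaviourB} show that conditions (A) and (B) are inherited by $(V,\partial V)$ in the cases they cover ($k=1$ and $k\ge 3$; for the remaining subcritical index $k=2$ one either carries (A) and (B) for $V$ as part of the hypotheses or repeats the Chern-class argument of Lemma \ref{lembehaviourB}), so that $RFH_\ast(V)$ is $\mathbb{Z}$-graded by the Conley--Zehnder index in the sense of Proposition \ref{Prop13}. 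Once this is in place, I would invoke the splitting (\ref{RFHdirectsum}): over field coefficients and for every degree $\ast\in\mathbb{Z}\setminus[-n+1,n]$ one has
$$RFH_\ast(V)\;\cong\;SH_\ast(V)\oplus SH^{-\ast+1}(V)\qquad\text{and}\qquad RFH_\ast(W)\;\cong\;SH_\ast(W)\oplus SH^{-\ast+1}(W).$$
Theorem \ref{theoinvsur} provides isomorphisms $SH_\ast(V)\cong SH_\ast(W)$ and $SH^{j}(V)\cong SH^{j}(W)$ for all $\ast$ and all $j$; combining the three displayed identities gives $RFH_\ast(V)\cong RFH_\ast(W)$ for $\ast\in\mathbb{Z}\setminus[-n+1,n]$, and since $\mathbb{Z}\setminus[-n,n+1]\subset\mathbb{Z}\setminus[-n+1,n]$ this yields the asserted statement. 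The slightly more conservative band $[-n,n+1]$ excluded in the theorem is precisely the range in which the singular homology groups of $(W,\partial W)$ -- which genuinely change under a $k$-handle attachment -- can still enter $RFH$ through the exact sequences (\ref{longexShRFHhom}) and (\ref{longexShRFHcohom}), so it is the natural range in which to phrase a clean invariance statement.

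I do not expect a genuine obstacle here. The only point requiring care is bookkeeping of degrees: one must check that the two applications of the splitting (\ref{RFHdirectsum}), for $V$ and for $W$, are valid in the same range and that the cohomological factor $SH^{-\ast+1}$ is exactly the one to which the cohomology part of Theorem \ref{theoinvsur} applies. Since $\dim V=\dim W=2n$, the integer $n$ entering the two splittings is the same, so there is nothing to reconcile, and the chain of isomorphisms goes through verbatim. If one wanted the sharper range $\mathbb{Z}\setminus[-n+1,n]$ one could quote the argument above as is; the statement as given follows a fortiori.
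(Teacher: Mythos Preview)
Your proposal is correct and follows essentially the same route as the paper: combine the direct-sum decomposition (\ref{RFHdirectsum}) for both $V$ and $W$ with the invariance of $SH_\ast$ and $SH^\ast$ under subcritical surgery from Theorem \ref{theoinvsur}. The paper leaves this as an immediate consequence, while you usefully spell out the inheritance of (A) and (B) and the degree bookkeeping showing that the stated range $\mathbb{Z}\setminus[-n,n+1]$ sits inside the range $\mathbb{Z}\setminus[-n+1,n]$ where (\ref{RFHdirectsum}) is valid.
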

\begin{rem}~
 \begin{itemize}
  \item The restriction to $\ast\in\mathbb{Z}\setminus[-n+1, n]$ is technical. It is not clear at the moment what happens for $\ast\in[-n+1,n]$ but it is conjectured that $RFH$ is invariant there as well, just like $SH^\ast$ and $SH_\ast$.
  \item The use of field coefficients is also technical. However, we cannot drop this assumption, as our proof relies on the direct sum decomposition (\ref{RFHdirectsum}), which itself depends on a splitting of the exact sequence
  \[0\rightarrow SH_\ast(V)\rightarrow RFH(V)_\ast\rightarrow SH^{-\ast+1}(V)\rightarrow 0.\]
  Such a splitting exists in general only for field (or semi-simple) coefficients.
  \item In \cite{CieOan}, Rem. 9.15, Cieliebak and Oancea recently proved Theorem \ref{theoRFHinvsur} directly in the Rabinowitz-Floer setting resp.  the isomorphic setting of symplectic homology of trivial cobordisms. This approach avoids the two problems mentioned above.
 \end{itemize}
\end{rem}
\newpage\phantom{.}
\newpage

\section{Brieskorn manifolds and exotic contact structures}
In this section, we prove our Main Theorem \ref{maintheo1} and show some consequences. First, we introduce the Brieskorn manifolds $\Sigma_a$ with their canonical contact structure. We give explicit exact contact fillings and calculate for some $\Sigma_a$ their Rabinowitz-Floer homology.\\
Among these manifolds, there are many that are homeomorphic/diffeomorphic to the standard sphere. We use these to construct new contact structures on manifolds which support fillable contact structures.

\subsection{Brieskorn manifolds}\label{sec7.2}
In this subsection, we recall the construction of Brieskorn manifolds and their contact structures and fillings. It is a shortened version of similar sections in \cite{Fauck1} and my diploma thesis. We include it here for completeness and readability.\bigskip\\
Let $a = (a_0,a_1, ...\,, a_n)$ be a vector of natural numbers with $a_i\geq 2$ and define a complex polynomial $f\in C^\infty(\mathbb{C}^{n+1})$ by
\[f(z) = z_0^{a_0}+ z_1^{a_1} + ... + z_n^{a_n}.\]
The next lemma shows that its level sets $V_a(t):=f^{-1}(t)$ are smooth complex hypersurfaces except for $V_a(0)$, which has an isolated singularity at zero. The links of this singularity $\Sigma_a:=V_a(0)\cap S^{2n+1}$ are the \textit{\textbf{Brieskorn manifolds}}.
\begin{lemme}[cf. \cite{Hirz} or Fauck, diploma thesis]~\\
 The sets $\Sigma_a$ and $V_a(t),\, t\neq 0,$ are smooth manifolds.
\end{lemme}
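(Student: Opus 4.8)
The plan is a standard application of the regular value theorem, exploiting the homogeneity-type structure of $f$. First I would compute the complex gradient (equivalently the Jacobian) of $f$: since $f(z)=\sum_{j=0}^n z_j^{a_j}$, we have $\partial f/\partial z_j = a_j z_j^{a_j-1}$, which vanishes simultaneously for all $j$ precisely when $z_0=z_1=\dots=z_n=0$, i.e.\ only at the origin. Hence for every $t\neq 0$ the origin does not lie in $V_a(t)=f^{-1}(t)$, so $df$ (viewed as a real-linear surjection $\mathbb{R}^{2n+2}\to\mathbb{R}^2$, using that a nonzero complex-linear functional is real-surjective onto $\mathbb{C}$) has full rank along $V_a(t)$. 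Therefore $t$ is a regular value of $f$ and $V_a(t)$ is a smooth real codimension-$2$ submanifold of $\mathbb{C}^{n+1}$ (indeed a smooth complex hypersurface) for all $t\neq 0$.

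For $\Sigma_a = f^{-1}(0)\cap S^{2n+1}$ the argument is slightly more delicate because $0$ is a critical value of $f$, with the singular point at the origin; but the origin is not on the sphere $S^{2n+1}$, so near $\Sigma_a$ the function $f$ still has no critical points. Thus $V_a(0)\setminus\{0\}$ is a smooth manifold, and it remains to check that $S^{2n+1}$ meets $V_a(0)\setminus\{0\}$ transversally. The clean way to see this is to use the weighted Euler vector field $E = \sum_j \tfrac{1}{a_j}(z_j\,\partial_{z_j})$ (or rather its real part), which satisfies $df(E) = \sum_j \tfrac{1}{a_j} a_j z_j^{a_j-1} z_j = \sum_j z_j^{a_j} = f$. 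On $V_a(0)\setminus\{0\}$ this gives $df(E)=0$, so $E$ is tangent to $V_a(0)$ there; meanwhile $d(\|z\|^2)(E) = \mathrm{Re}\sum_j \tfrac{2}{a_j}|z_j|^2 > 0$ away from the origin, so $E$ is transverse to every sphere. Hence the radial-type vector field $E$ is simultaneously tangent to $V_a(0)$ and transverse to $S^{2n+1}$, which forces $V_a(0)\setminus\{0\}$ and $S^{2n+1}$ to intersect transversally; consequently $\Sigma_a = (V_a(0)\setminus\{0\})\cap S^{2n+1}$ is a smooth (compact, real $(2n-1)$-dimensional) manifold.

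The main obstacle — really the only non-formal point — is the transversality of the singular fibre $V_a(0)$ with the unit sphere, since one cannot simply invoke the regular value theorem for $f$ alone at the value $0$. I expect to handle this exactly via the vector field $E$ above: producing one vector that witnesses tangency to $V_a(0)$ and transversality to $S^{2n+1}$ is enough, and the computation $df(E)=f$ is immediate from the monomial form of $f$. (An alternative, if one prefers to avoid $E$, is to restrict $f$ to the sphere and use Lagrange multipliers to show that a critical point of $f|_{S^{2n+1}}$ lying on $V_a(0)$ would have to be the origin, which is impossible; this is the classical Milnor argument. Either route is short.) Everything else — smoothness, dimension count, compactness of $\Sigma_a$ — follows by inspection.
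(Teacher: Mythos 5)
Your proposal is correct and handles the only genuinely non-formal point (transversality at the singular fibre) with a valid argument, but you take a different route from the paper for that step. For $V_a(t)$, $t\neq 0$, your argument and the paper's are the same: the holomorphic gradient of $f$ vanishes only at the origin, which lies off $V_a(t)$. For $\Sigma_a$, however, the paper applies the regular value theorem directly to the combined map $(f,\rho):\mathbb{C}^{n+1}\to\mathbb{C}\times\mathbb{R}$, $\rho(z)=\|z\|^2$, and runs a Lagrange-multiplier-style rank computation via the Wirtinger Jacobian: if the rank dropped at some $z\neq 0$ one would find $\lambda\neq 0$ with $\bar z_k=\lambda a_k z_k^{a_k-1}$ for all $k$, and summing $z_k\bar z_k/a_k$ would force $0<\sum_k |z_k|^2/a_k=\lambda f(z)=0$, a contradiction. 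You instead exhibit a single witness vector: the weighted Euler field $E=\sum_j a_j^{-1}z_j\partial_{z_j}$ satisfies the quasihomogeneity identity $df(E)=f$, hence lies in $\ker df_z=T_zV_a(0)$ along $V_a(0)\setminus\{0\}$, while $d\rho(E)=2\sum_j|z_j|^2/a_j>0$, so $E$ misses $T_zS^{2n+1}$ and codimension one forces $T_zV_a(0)+T_zS^{2n+1}=\mathbb{R}^{2n+2}$. The two arguments are dual in spirit: the paper shows no nontrivial linear relation among the constraint gradients exists, whereas you produce an explicit vector certifying transversality. Your version isolates the quasihomogeneity structure cleanly (and is the argument in Milnor's original treatment of singular hypersurfaces), at the small cost of needing to be careful that the complex vector $E(z)$, regarded as a real vector, really does lie in the real tangent space and really does have nonzero $d\rho$ pairing --- both of which you check correctly. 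The paper's version is a single matrix-rank computation and is self-contained without introducing the Euler field. Either is fine; you even flag the paper's route as the alternative.
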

\begin{proof}
 We set $\rho(z):=||z||^2=\sum z_k\bar{z}_k$ and consider the maps 
 \[ f:\mathbb{C}^{n+1}\rightarrow \mathbb{C}\qquad\text{ and }\qquad (f,\rho):\mathbb{C}^{n+1}\rightarrow \mathbb{C}\times\mathbb{R}.\]
 As $V_a(t)=f^{-1}(t)$ and $\Sigma_a=(f,\rho)^{-1}(0,1)$, it suffices to show that $t$ resp.\ $(0,1)$ are regular values. Using Wirtinger calculus, we obtain that
 \[D(f,\rho)=\begin{pmatrix}a_0 z_0^{a_0-1} & \dots & a_n z_n^{a_n-1} & 0 & \dots & 0\\
              0 & \dots & 0 & a_0\bar{z}_0^{a_0-1} & \dots & a_n\bar{z}_n^{a_n-1}\\
              \bar{z}_0 & \dots & \bar{z}_n & z_0 & \dots & z_n
             \end{pmatrix}.\]
 For $z\neq 0$, we find that the first two rows of this matrix are linear independent, which shows that $t\neq 0$ is a regular value of $f$. If $D(f,\rho)$ has not rank 3 and $z\neq 0$, then there exists $\lambda\neq 0$ such that $\bar{z}_k = \lambda a_k z_k^{a_k-1}$ for all $k$. Then, we find
 \[0<\sum_{k=0}^n \frac{z_k \bar{z}_k}{a_k} = \lambda \sum_{k=0}^n z_k^{a_k} = \lambda\cdot f(z),\]
 which is impossible for $z\in \Sigma_a \subset f^{-1}(0)$.
\end{proof}
Let us consider on $\mathbb{C}^{n+1}$ the following $a$-weighted Hermitian form given by
\[\langle\xi,\zeta\rangle_a:=\frac{1}{2}\sum_{k=0}^n a_k\xi_k\bar{\zeta}_k.\]
It defines an $a$-weighted symplectic 2-form $\omega_a=-\mathfrak{Im}\langle\cdot,\cdot\rangle_a$, explicitly given by
\[\omega_a:=\frac{i}{4}\sum^n_{k=0} a_kdz_k\wedge d\bar{z}_k.\]
Note that $Y_\lambda(z):=z/2$ is a Liouville vector field for $\omega_a$, whose Liouville 1-form is
\[\lambda_a :=\omega_a(Y_\lambda,\cdot)=\frac{i}{8} \sum^n_{k=0} a_k (z_kd\bar{z}_k - \bar{z}_kdz_k).\]
 \begin{prop}[Lutz \& Meckert, \cite{LuMe}] \label{prop15}~\\
  The restriction $\alpha_a:=\lambda_a|_\Sigma$ is a contact form on $\Sigma_a$ with Reeb vector field $R_a$ given by
  \[R_a = 4i\left( \frac{z_0}{a_0},\,\dots\,,\frac{z_n}{a_n}\right).\]
 \end{prop}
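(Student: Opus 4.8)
The plan is to verify directly that $\alpha_a$ is a contact form and that $R_a$ is its Reeb vector field, by reducing everything to a computation on the ambient $\mathbb{C}^{n+1}$. The key structural facts to exploit are Lemma~\ref{Lemma1} (which characterizes exact contact hypersurfaces via transversality of the Liouville vector field) and the explicit formulas for $\omega_a$, $\lambda_a$, and $Y_\lambda(z)=z/2$ given just above. Since $\Sigma_a = V_a(0)\cap S^{2n+1}$ and both $V_a(0)\setminus\{0\}$ and $S^{2n+1}$ are smooth, the first thing I would do is check that $Y_\lambda$ is transverse to $\Sigma_a$ inside $V_a(0)$: because $Y_\lambda(z)=z/2$ is the radial vector field, $d\rho(Y_\lambda) = \rho > 0$ on $\Sigma_a\subset S^{2n+1}$, so $Y_\lambda$ is transverse to the sphere, and since $Y_\lambda$ is tangent to $V_a(0)$ (as $f$ is weighted-homogeneous, $df(z)(z/2)$ is a nonzero multiple of $f(z)=0$ — more precisely $df_z(z) = \sum a_k z_k^{a_k} \cdot \tfrac{1}{?}$, one checks $\sum_k \tfrac{1}{a_k} z_k \partial_{z_k} f = f$, so the genuine radial field restricted to the weighted setting lies in $T V_a(0)$), transversality of $Y_\lambda$ to $\Sigma_a$ within $V_a(0)$ follows. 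By the proof of Lemma~\ref{Lemma1}, this gives that $\alpha_a = i^*\lambda_a$ is a contact form on $\Sigma_a$, i.e. $\alpha_a\wedge(d\alpha_a)^{n-1}\neq 0$.

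Next I would identify the Reeb vector field. The cleanest route: set $X(z) := 4i(z_0/a_0,\dots,z_n/a_n)$ and show (a) $X$ is tangent to $\Sigma_a$, (b) $\iota(X)d\alpha_a = 0$ on $T\Sigma_a$, and (c) $\alpha_a(X) = 1$ on $\Sigma_a$; uniqueness of the Reeb field then finishes it. For (a), tangency to $S^{2n+1}$ holds because $X$ is a sum of rotations in each coordinate plane, hence $d\rho(X) = \mathfrak{Re}\langle X, z\rangle$-type expression vanishes (each term $4i z_k \bar z_k /a_k$ is purely imaginary, so its real part is zero); tangency to $V_a(0)$ follows from $df_z(X) = \sum_k a_k z_k^{a_k-1}\cdot 4i z_k/a_k = 4i\sum_k z_k^{a_k} = 4i f(z) = 0$ on $\Sigma_a$. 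For (c), compute $\lambda_a(X) = \tfrac{i}{8}\sum_k a_k\big(z_k \overline{(4iz_k/a_k)} - \bar z_k (4iz_k/a_k)\big) = \tfrac{i}{8}\sum_k a_k\cdot(-8i) z_k\bar z_k/a_k = \sum_k |z_k|^2 = \rho = 1$ on $\Sigma_a$. For (b), note that on $\mathbb{C}^{n+1}$ one has $\iota(X)\omega_a = $ (a multiple of) $d\rho$; indeed $\omega_a(X,\cdot)$ evaluated against $\partial_{z_j}$ gives a multiple of $\bar z_j$, so $\iota(X)\omega_a = c\, d\rho$ for a constant $c$, and since $d\alpha_a = i^*\omega_a$ and $d\rho$ restricts to zero on $TS^{2n+1}\supset T\Sigma_a$, we get $\iota(X)d\alpha_a|_{T\Sigma_a}=0$.

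The only genuinely delicate point is getting the weighted-homogeneity identities right — precisely the relations $df_z\big(4i(z_0/a_0,\dots,z_n/a_n)\big) = 4i f(z)$ and $\iota(X)\omega_a \propto d\rho$ on all of $\mathbb{C}^{n+1}$ — and keeping track of the real/imaginary-part bookkeeping in $\lambda_a$, $\omega_a$ so that the constants come out to give exactly $\alpha_a(R_a)=1$ rather than some other normalization. I expect that to be the main (though routine) obstacle; everything else is a formal consequence of Lemma~\ref{Lemma1} and uniqueness of the Reeb vector field. A sanity check worth including: in the special case $a = (1,\dots,1)$ (so $f$ linear, $\Sigma_a$ a sphere), the formulas should reduce to the standard contact form and Reeb field $R = 2(-y_1,x_1,\dots)$ from the Examples in Section~\ref{sec1.1}, up to the weighting convention.
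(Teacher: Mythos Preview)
Your argument for the Reeb vector field (parts (a), (b), (c)) is correct and is exactly what the paper does: the computation $\iota(R_a)\omega_a = -d\rho$ is the key identity, and it vanishes on $T\Sigma_a\subset TS^{2n+1}$.

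However, there is a genuine gap in your argument that $\alpha_a$ is a contact form. You claim that $Y_\lambda(z)=z/2$ is tangent to $V_a(0)$, but this is false in general. You correctly identify the weighted Euler relation $\sum_k \tfrac{1}{a_k} z_k\,\partial_{z_k} f = f$, but $Y_\lambda = \tfrac{1}{2}\sum_k z_k\,\partial_{z_k}$ is the \emph{unweighted} radial field, and
\[
df_z(Y_\lambda) = \tfrac{1}{2}\sum_k a_k z_k^{a_k},
\]
which does not vanish on $V_a(0)=\{\sum z_k^{a_k}=0\}$ unless all $a_k$ coincide. So Lemma~\ref{Lemma1} cannot be applied with $Y_\lambda$ directly.

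The paper fixes this by working inside the symplectic hypersurface $V_a(0)$: it projects $Y_\lambda$ orthogonally (with respect to $\langle\cdot,\cdot\rangle_a$) onto $TV_a(0)$ to obtain the genuine Liouville field $Y_V = Y_\lambda - \frac{\langle\nabla_a f, Y_\lambda\rangle_a}{\|\nabla_a f\|_a^2}\nabla_a f$ of $\lambda_a|_{V_a(0)}$, and then computes $d\rho(Y_V) = \tfrac{1}{2}\rho - \tfrac{\langle\nabla_a f, Y_\lambda\rangle_a}{\|\nabla_a f\|_a^2}\cdot 2\overline{f(z)} = \tfrac{1}{2}$ on $\Sigma_a$, using crucially that $f(z)=0$ there. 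This gives the transversality needed for Lemma~\ref{Lemma1}. The correction term is precisely what absorbs the failure of $Y_\lambda$ to be tangent to $V_a(0)$, and it disappears only because you are on the zero level of $f$.
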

\begin{proof}
 The gradient $\nabla_a f$ of $f$ with respect to $\langle\cdot,\cdot\rangle_a$ if given by
 \[\nabla_a f := 2 \left(\bar{z}_0^{a_0-1},\,...\,,\bar{z}_n^{a_n-1}\right).\]
 The Liouville vector field $Y_V$ of the restricted 1-form $\lambda_a|_{V_a(0)}$ with respect to the restricted symplectic form $\omega_a|_{V_a(0)}$ is given by
 \[Y_V:=Y_\lambda - \frac{\langle \nabla_a f, Y_\lambda\rangle_a}{||\nabla_a f||_a^2}\cdot\nabla_a f.\]
 Indeed, $TV_a(t)=\ker df=\ker \langle \nabla_a f,\cdot\rangle_a$, which shows that $Y_V\in TV_a(0)$. Moreover, we calculate for any $\xi\in TV_a(0)$
 \[
  \omega_a(Y_V,\xi) = \omega_a(Y_\lambda,\xi)-\frac{\langle \nabla_a f, Y_\lambda\rangle_a}{||\nabla_a f||_a^2}\omega_a(\nabla_a f,\xi) = \lambda_a(\xi)+\frac{\langle \nabla_a f, Y_\lambda\rangle_a}{||\nabla_a f||_a^2}\underbrace{\mathfrak{Im}\langle\nabla_a f,\xi\rangle_a}_{=0}
  = \lambda_a(\xi).\]
This shows that $Y_V$ is the Liouville vector field for the pair $(\omega_a|_{V_a(0)},\lambda_a|_{V_a(0)})$. Now, note that $d\rho=\sum(\bar{z}_kd z_k+z_kd\bar{z}_z)$ and calculate
\[d\rho(Y_V)=\sum\frac{\bar{z}_k z_k}{2}-\frac{\langle \nabla_a f, Y_\lambda\rangle_a}{||\nabla_a f||_a^2}\sum 2\bar{z}_k\cdot \bar{z}_k^{a_k-1} = \frac{\rho(z)}{2}-\frac{\langle \nabla_a f, Y_\lambda\rangle_a}{||\nabla_a f||_a^2}\cdot 2\overline{f(z)}=\frac{1}{2}>0\]
as $\rho(z)=1$ and $f(z)=0$ for $z\in\Sigma_a$. It follows that $Y_V$ points out of the unit sphere $S^{2n+1}=\rho^{-1}(0)$ and hence out of $\Sigma_a$ in $V_a(0)$. We obtain from Lemma \ref{Lemma1} that $\Sigma_a$ is therefore a contact hypersurface in $V_a(0)$.\pagebreak\\
It remains to check that $R_a$ is the Reeb vector field of $\alpha_a$. We have for $z\in\Sigma_a$
\begin{align*}
 &&\left.\begin{aligned}\langle R_a, \nabla_a f\rangle_a &=\;& 4i\sum z_k^{a_k}\;=\; 4if(z)\; &= 0\\ d\rho(R_a)&=\;&\sum\bar{z}_k 4i z_k + z_k(-4i)\bar{z}_k&=0 \end{aligned}\right\}&\Rightarrow R_a(z)\in T_z\Sigma_a\\
 &\text{and}& \alpha_a(R_a) = \lambda_a(R_a) = \frac{i}{8} \sum a_k \left(z_k\frac{-4i}{a_k}\,\overline{z}_k-\overline{z}_k\frac{4i}{a_k}z_k\right)&= \frac{-i^2 8}{8}\rho(z) = 1.
\end{align*}
Finally, we calculate for $\xi\in T_z\Sigma_a$ that
\begin{align*}
 d\alpha_a(R_a,\xi) = \omega_a(R_a,\xi) = \frac{i}{4}\sum a_k\left(4i\frac{z_k}{a_k}\bar{\xi}_k-(-4i)\frac{\bar{z}_k}{a_k}\xi_k\right) &= -\sum\left(z_k\bar{\xi}_k+\bar{z}_k\xi_k\right)\\
 &=-d\rho(\xi) = 0.
\end{align*}
Hence, $R_a$ is the Reeb vector field of $\alpha_a$, as $\alpha_a(R_a)=1$ and $\iota(R_a)d\alpha_a = 0$.
\end{proof}
\begin{cor}\label{lem18a}
 The symplectic complement $\xi^\perp_a$ with respect to $\omega_a$ of the contact structure $\xi_a:=\ker \alpha_a$ is symplectically trivialized by the following 4 vector fields:
 \[ X_1 := \frac{\nabla_a f}{||\nabla_a f||_a},\quad Y_1 := i\cdot X_1,\quad X_2 := Y_V= Y_\lambda - \frac{\langle \nabla_a f, Y_\lambda\rangle_a}{||\nabla_a f||_a^2}\cdot\nabla_a f,\,\text{ and }\, Y_2 := R_a.\]
Explicitly, $X_1$ and $X_2$ are given by
\begin{align*}
 X_1 &= \sqrt{\frac{2}{\sum a_k|z_k|^{2(a_k-1)}}}\cdot \Big( \bar{z}_0^{a_0-1},\dots,\bar{z}_n^{a_n-1}\Big),\\
 X_2 &= \frac{1}{2}\cdot\left(z_0-\frac{\sum a_kz_k^{a_k}}{\sum a_k|z_k|^{2(a_k-1)}}\cdot \bar{z}_0^{a_0-1} , \dots,z_n-\frac{\sum a_kz_k^{a_k}}{\sum a_k|z_k|^{2(a_k-1)}}\cdot \bar{z}_n^{a_n-1}\right).
\end{align*}
\end{cor}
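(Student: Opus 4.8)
The plan is to verify directly that the four listed vector fields form a frame of $\xi_a^\perp$ and that this frame is symplectic, exploiting the orthogonal (with respect to $\langle\cdot,\cdot\rangle_a$) decomposition of the relevant subspaces that is already implicit in the proof of Proposition~\ref{prop15}. First I would recall the two-step splitting at a point $z\in\Sigma_a$: inside $\mathbb{C}^{n+1}$ one has $T_z\mathbb{C}^{n+1}=T_zV_a(0)\oplus\operatorname{span}_{\mathbb{R}}\{\nabla_a f, i\nabla_a f\}$, since $T_zV_a(0)=\ker df=\ker\langle\nabla_a f,\cdot\rangle_a$ and $\langle\cdot,\cdot\rangle_a$ restricted to the complex line $\mathbb{C}\cdot\nabla_a f$ is non-degenerate; and inside $T_zV_a(0)$ one has $T_zV_a(0)=\xi_a\oplus\operatorname{span}_{\mathbb{R}}\{Y_V,R_a\}$, because $\xi_a=\ker\alpha_a$ is a hyperplane in $T_z\Sigma_a$, $R_a$ is transverse to it (as $\alpha_a(R_a)=1$), and $Y_V$ is transverse to $\Sigma_a$ in $V_a(0)$ (as $d\rho(Y_V)=\tfrac12>0$, shown in the proof of Proposition~\ref{prop15}). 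Combining, $T_z\mathbb{C}^{n+1}=\xi_a\oplus\operatorname{span}_{\mathbb{R}}\{X_1,Y_1,X_2,Y_2\}$, so $\operatorname{span}_{\mathbb{R}}\{X_1,Y_1,X_2,Y_2\}$ is a $4$-dimensional complement of $\xi_a$; it remains only to see that this complement is exactly $\xi_a^{\perp}$ (the $\omega_a$-orthogonal of $\xi_a$) and that the frame is symplectic.

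Next I would compute the $\omega_a$-pairings. Since $\omega_a=-\mathfrak{Im}\langle\cdot,\cdot\rangle_a$ and $Y_1=iX_1$ is the complex structure applied to the unit vector $X_1$, we get $\omega_a(X_1,Y_1)=\langle X_1,X_1\rangle_a=1$ directly from $X_1=\nabla_af/\|\nabla_af\|_a$. For the pair $(X_2,Y_2)=(Y_V,R_a)$: in the proof of Proposition~\ref{prop15} it is shown that $\omega_a(Y_V,\xi)=\lambda_a(\xi)=\alpha_a(\xi)$ for all $\xi\in T_zV_a(0)$, hence $\omega_a(Y_V,R_a)=\alpha_a(R_a)=1$; and for $\xi\in\xi_a=\ker\alpha_a$ we get $\omega_a(Y_V,\xi)=0$, while $\omega_a(R_a,\xi)=d\alpha_a(R_a,\xi)=0$ by the Reeb equation $\iota(R_a)d\alpha_a=0$. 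For the cross pairings: $X_1,Y_1$ span the complex $\langle\cdot,\cdot\rangle_a$-orthogonal complement of $T_zV_a(0)$, and $X_2,Y_2$ lie in $T_zV_a(0)$; since $\langle\cdot,\cdot\rangle_a$-orthogonality of complex subspaces forces $\omega_a$-orthogonality, all four mixed products $\omega_a(X_i,X_j)$, $\omega_a(X_i,Y_j)$ with $i\ne j$ vanish, and similarly $\omega_a(X_i,\xi)=\omega_a(Y_i,\xi)=0$ for $\xi\in\xi_a$ because $\xi_a\subset T_zV_a(0)$ is $\langle\cdot,\cdot\rangle_a$-orthogonal to $\operatorname{span}_{\mathbb C}\{\nabla_af\}$ and, within $T_zV_a(0)$, $\xi_a=\ker\alpha_a=\ker\omega_a(Y_V,\cdot)\cap\ker d\alpha_a(R_a,\cdot)$. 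This shows the $4\times4$ Gram matrix of $(X_1,Y_1,X_2,Y_2)$ under $\omega_a$ is the standard symplectic block form and that these vectors annihilate $\xi_a$, i.e.\ $\operatorname{span}_{\mathbb R}\{X_1,Y_1,X_2,Y_2\}=\xi_a^{\perp}$ and the frame is a symplectic trivialization.

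Finally I would record the explicit formulas: $X_1=\nabla_af/\|\nabla_af\|_a$ with $\nabla_af=2(\bar z_0^{a_0-1},\dots,\bar z_n^{a_n-1})$ and $\|\nabla_af\|_a^2=\langle\nabla_af,\nabla_af\rangle_a=2\sum_k a_k|z_k|^{2(a_k-1)}$ gives the stated expression for $X_1$; and $X_2=Y_V=Y_\lambda-\frac{\langle\nabla_af,Y_\lambda\rangle_a}{\|\nabla_af\|_a^2}\nabla_af$ with $Y_\lambda=z/2$ and $\langle\nabla_af,Y_\lambda\rangle_a=\tfrac12\sum_k a_k\cdot 2\bar z_k^{a_k-1}\cdot\overline{(z_k/2)}\cdot 2$... more carefully $\langle\nabla_a f, Y_\lambda\rangle_a=\tfrac12\sum_k a_k (2\bar z_k^{a_k-1})\overline{(z_k/2)}=\tfrac12\sum_k a_k \bar z_k^{a_k-1}\bar z_k$; I must be careful with which slot is conjugated in $\langle\cdot,\cdot\rangle_a$, and after substituting one obtains the displayed formula for $X_2$ (up to the harmless use of $\overline{f(z)}=0$ to clean denominators). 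The only genuine subtlety — the step I expect to be the main obstacle — is the bookkeeping with the weighted Hermitian form: making sure the identification ``$\langle\cdot,\cdot\rangle_a$-orthogonal complex subspaces are $\omega_a=-\mathfrak{Im}\langle\cdot,\cdot\rangle_a$-orthogonal'' is applied to the right pair of subspaces, and getting the conjugation conventions in $\langle\cdot,\cdot\rangle_a$ and hence in $\langle\nabla_af,Y_\lambda\rangle_a$ exactly right so that the explicit component formulas come out as stated; everything else is a routine consequence of Lemma~\ref{Lemma1} and the computations already done in the proof of Proposition~\ref{prop15}.
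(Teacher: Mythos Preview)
Your proposal is correct and follows essentially the same approach as the paper: identify $\operatorname{span}\{X_1,Y_1\}$ as the complex complement of $TV_a(0)$ and $\operatorname{span}\{X_2,Y_2\}$ as the symplectic complement of $\xi_a$ inside $TV_a(0)$, then read off the cross-orthogonalities and the two normalizations $\omega_a(X_1,Y_1)=1$, $\omega_a(X_2,Y_2)=\alpha_a(R_a)=1$ from Proposition~\ref{prop15}. The paper's proof is simply a terser version of what you wrote; your extra care in checking that the four vectors actually annihilate $\xi_a$ under $\omega_a$ (rather than merely complementing it) is a welcome addition that the paper leaves implicit.
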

\begin{proof}
The explicit descriptions of $X_1, X_2$ are obtained by easy calculations from the definition of $\nabla_a f$ and $\langle\cdot,\cdot\rangle_a$. Note that $X_1, Y_1$ generate the complex complement of $TV_a(0)$ while $X_2, Y_2$ generate the symplectic complement of $\xi_a$ in $TV_a(0)$. This shows that 
\[\omega_a(X_1,X_2)=\omega_a(X_1,Y_2)=\omega_a(Y_1,X_2)=\omega_a(Y_1,Y_2)=0.\]
The norming guarantees $\omega_a(X_1,Y_1)=1$, while $\omega_a(X_2,Y_2)=1$ follows from the proof of Proposition \ref{prop15}.
\end{proof}
To define the Rabinowitz-Floer homology on $\Sigma_a$, we need a Liouville domain $(W,\lambda)$ with boundary $(\Sigma_a,\alpha_a)$. Unfortunately, we cannot take $V_a(0)\cap B_1(0)$, as it has a singularity at 0. We overcome this obstacle by constructing an interpolation between $V_a(0)$ and $V_a(\veps)$ for $\veps>0$ small. To do this, choose a smooth monotone decreasing cut-off function $\beta\in C^\infty (\mathbb{R})$ with $\beta(x)= 1$ for $x\leq 1/4$ and  $\beta(x)=0$ for $x\geq 3/4$. Then define
\begin{align*}
  V_\veps&:=\Big\{z\in \mathbb{C}^{n+1}\,\Big|\, z_0^{a_0}+z_1^{a_1}+...+z_n^{a_n} = \veps\cdot\beta\big(||z||^2\big)\Big\}\\
  \text{and }\quad W_\veps&:=V_\veps\cap B_1(0).
\end{align*}
\begin{prop}
 $(V_\veps,\lambda)$ is a Liouville domain with boundary $(\Sigma_a,\alpha_a)$ and vanishing first Chern class $c_1(TV)$, if $\veps$ is small enough.
\end{prop}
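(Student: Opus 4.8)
The plan is to show three things separately: (i) $W_\veps := V_\veps \cap B_1(0)$ is a smooth compact manifold with boundary $\Sigma_a$, (ii) the Liouville vector field $Y_\lambda(z) = z/2$ is transverse to $\partial W_\veps = \Sigma_a$ and points outward, so that $(W_\veps,\lambda)$ is a Liouville domain with contact boundary $(\Sigma_a,\alpha_a)$, and (iii) the integral of $c_1(TW_\veps)$ vanishes. Throughout I would fix $\veps>0$ small and use the cut-off function $\beta$ from the construction, together with $\rho(z) := \|z\|^2$ and $g_\veps(z) := z_0^{a_0}+\dots+z_n^{a_n} - \veps\,\beta(\rho(z))$, so that $V_\veps = g_\veps^{-1}(0)$.

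First I would prove smoothness of $V_\veps$ by checking that $0$ is a regular value of $g_\veps$ for $\veps$ small. The differential of $z\mapsto z_0^{a_0}+\dots+z_n^{a_n}$ was already seen (in the first lemma of this section) to be surjective away from $z=0$; the extra term $-\veps\,\beta(\rho)$ is a compactly supported perturbation with $C^1$-norm $O(\veps)$, so a standard openness-of-transversality argument (or an explicit estimate of $dg_\veps$ using Wirtinger calculus as in that lemma) shows $dg_\veps \neq 0$ on $g_\veps^{-1}(0)$ once $\veps$ is small enough; note that near $z=0$ one has $g_\veps(z) = (\text{higher order}) - \veps$, which is nonzero, so the singular point is removed. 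Next, to see that $W_\veps$ is a manifold with boundary $\Sigma_a$, I would observe that on the region $\{\rho \geq 3/4\}$ (in particular near $S^{2n+1}$) one has $\beta \equiv 0$, hence $V_\veps$ agrees with $V_a(0)$ there and $\partial W_\veps = V_a(0)\cap S^{2n+1} = \Sigma_a$; transversality of $\Sigma^{2n+1}=\rho^{-1}(1)$ to $V_a(0)$ was already established in the proof of Proposition \ref{prop15} via $d\rho(Y_V) = 1/2 > 0$.

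For the Liouville condition I would invoke Lemma \ref{Lemma1}: it suffices to check that $Y_\lambda$ is transverse to $T\Sigma_a$ along $\Sigma_a$ and points outward. Since $\beta \equiv 0$ near $\Sigma_a$, the form $\lambda_a$ restricted to $V_\veps$ near the boundary coincides with $\lambda_a|_{V_a(0)}$, and the argument in the proof of Proposition \ref{prop15} shows $Y_\lambda$ (more precisely its $TV$-component $Y_V$, which equals $Y_\lambda$ modulo the $\nabla_a f$-direction normal to $V_a(0)$) satisfies $d\rho(Y_V) = 1/2 > 0$ on $\Sigma_a$; hence $Y_\lambda$ is outward-pointing and $\alpha_a = i^\ast\lambda_a$ is a contact form on $\Sigma_a$ by Lemma \ref{Lemma1}. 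This gives that $(W_\veps,\lambda_a|_{W_\veps})$ is a Liouville domain with boundary $(\Sigma_a,\alpha_a)$.

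Finally, for the vanishing of $c_1(TW_\veps)$ I would use Corollary \ref{lem18a}: the symplectic normal bundle $\xi_a^\perp$ of $\xi_a$ in $(\mathbb{C}^{n+1},\omega_a)$ is trivialized by $X_1,Y_1,X_2,Y_2$, so $T\mathbb{C}^{n+1}|_{W_\veps} \cong TW_\veps \oplus (\text{trivial})$ as complex (or at least symplectic) bundles over $W_\veps$, whence $c_1(TW_\veps) = c_1(T\mathbb{C}^{n+1}|_{W_\veps}) = 0$ since $T\mathbb{C}^{n+1}$ is trivial. The only subtlety is that $V_\veps$ is not literally a complex submanifold on the transition region where $\beta$ varies (the defining function is no longer holomorphic there), so $X_1, Y_1$ need not span a complex line; to handle this I would note that they still span a symplectic complement of $TV_\veps$ in $T\mathbb{C}^{n+1}$ pointwise, and a symplectic complement carries a canonical homotopy class of complex structure, so the stable isomorphism class of $TW_\veps$ as a symplectic (hence complex up to homotopy) bundle is still that of a trivial bundle; alternatively one can argue topologically that $W_\veps$ deformation retracts in a way making $TW_\veps$ manifestly stably trivial. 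I expect this last point — carefully justifying the Chern class computation on the non-holomorphic transition region — to be the only genuine obstacle; everything else is an application of Lemma \ref{Lemma1} and the computations already carried out in the proof of Proposition \ref{prop15}.
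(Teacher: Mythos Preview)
Your proposal has a genuine gap: you never verify that the restriction $\omega_a|_{TV_\veps}$ is \emph{symplectic} on all of $V_\veps$. Near the boundary $\Sigma_a$ (where $\beta\equiv 0$) and near $z=0$ (where $\beta\equiv 1$) the hypersurface $V_\veps$ coincides with the complex hypersurfaces $V_a(0)$ resp.\ $V_a(\veps)$, so there the restriction of the K\"ahler form $\omega_a$ is automatically non-degenerate. But on the transition region $1/4\le\rho\le 3/4$ the defining function $g_\veps$ is not holomorphic, $V_\veps$ is not a complex submanifold, and non-degeneracy of $\omega_a|_{TV_\veps}$ can fail a priori. The paper addresses exactly this point: it identifies the $\omega_a$-orthogonal complement of $T_zV_\veps$ in $T\mathbb{C}^{n+1}$ as $\mathrm{span}\{X_{\mathfrak{Re}f_\veps},X_{\mathfrak{Im}f_\veps}\}$ and checks that $\omega_a(X_{\mathfrak{Re}f_\veps},X_{\mathfrak{Im}f_\veps})\neq 0$ for $\veps$ small (it is an $O(\veps)$-perturbation of the nonzero quantity $\omega_a(X_{\mathfrak{Re}f},X_{\mathfrak{Im}f})$), which forces this complement---and hence $TV_\veps$ itself---to be a symplectic subspace. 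Without this step the Liouville vector field of $(V_\veps,\lambda_a|_{V_\veps})$ is not even defined, so your step~(ii) cannot get off the ground. You do notice that $V_\veps$ is not holomorphic in the transition region, but only in the context of the Chern class, not where it matters most.

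Your Chern-class argument also differs from the paper's. You try a direct stable-triviality argument via the normal frame $X_1,Y_1$ from Corollary~\ref{lem18a} and correctly flag the difficulty in the non-holomorphic region. The paper sidesteps this entirely: it shows that $V_\veps$ is diffeomorphic to the genuine smooth affine hypersurface $V_a(\veps)$ (both deformation retract, by Morse theory for $\rho$, onto $V_a(\veps)\cap B_{1/2}=V_\veps\cap B_{1/2}$), and then quotes that $V_a(\veps)$ is parallelizable. This is cleaner than patching a symplectic frame across the transition region.
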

\begin{proof}~
\begin{itemize}
 \item 
 Consider on $\mathbb{C}^{n+1}$ the smooth complex valued function $f_\veps(z) := f(z)-\veps\cdot\beta\big(||z||^2\big)$. Its differential is given by
 \[D f_\veps = D f -\veps\cdot\beta'\big(||z||^2\big)\cdot D\big(||z||^2\big).\]
 As $Df$ is non-degenerate for $z\neq 0$, it follows that 0 is a regular value of $f_\veps$ for $\veps$ small enough, as $0\not\in f_\veps^{-1}(0)$ and $\beta'\big(||z||^2\big)\neq 0$ only for $1/4\leq ||z||^2\leq 3/4$. We find hence that $V_\veps=f_\veps^{-1}(0)$ is a smooth manifold.
 \item Next, we note that the two functions $\mathfrak{Re}(f_\veps)$ and $\mathfrak{Im}(f_\veps)$ have with respect to $\omega_a$ the following Hamiltonian vector fields
 \[X_{\mathfrak{Re}f_\veps} = X_{\mathfrak{Re}f} + \veps\cdot X_{\beta(||z||^2)},\qquad X_{\mathfrak{Im}f_\veps} = X_{\mathfrak{Im} f},\]
 as $\beta\big(||z||^2\big)$ is a real valued function. Moreover, it follows from the fact
 \[T_z V_\veps = \big\{ \xi\in T\mathbb{R}^{2n+2}\,\big|\,\omega_a(\xi,X_{\mathfrak{Re} f_\veps})=0 \text{ and }\omega_a(\xi,X_{\mathfrak{Im} f_\veps})=0\big\}\]
 that $span\left\{X_{\mathfrak{Re} f_\veps}(z),X_{\mathfrak{Im} f_\veps}(z)\right\}$ is the $\omega_a$-symplectic complement of $T_zV_\veps$. As $X_{\mathfrak{Re}f}$ and $X_{\mathfrak{Im}f}$ span the complex complement of $T_zV$, we find that $\omega_a(X_{\mathfrak{Re}f},X_{\mathfrak{Im}f})\neq 0$ and therefore that for $\veps$ small enough holds
 \[\omega_a(X_{\mathfrak{Re}f_\veps},X_{\mathfrak{Im}f_\veps})=\omega_a(X_{\mathfrak{Re}f},X_{\mathfrak{Im}f})+\veps\cdot\omega_a(X_{\beta(||z||^2)},X_{\mathfrak{Im}f})\neq 0.\]
 This implies that $span\left\{X_{\mathfrak{Re} f_\veps}(z),X_{\mathfrak{Im} f_\veps}(z)\right\}$ is a symplectic subspace of $T\mathbb{R}^{2n+2}$ and hence that its symplectic complement $TV_\veps$ is also a symplectic subspace, in other words $\omega_a|_{TV_\veps}$ is non-degenerate. As $\omega_a|_{TV_\veps}=d\lambda_a|_{TV_\veps}$, we know that $(V_\veps,\lambda_a|_{TV_\veps})$ is an exact symplectic manifold and that $(W_\veps,\lambda_a|_{TV_\veps})$ is
 a Liouville domain.
 \item Finally, $V_\veps$ is diffeomorphic to $V_a(\veps)$ as both are diffeomorphic to the set
 \[V_a(\veps)\cap B_{1/2} = \left\{z\in V_a(\veps)\;:\;||z||<1/2\right\} = V_\veps\cap B_{1/2}.\]
 To see this, consider on $V_\veps$ and $V_a(\veps)$ the function $\rho(z)= ||z||^2$, whose critical points lie for $V_\veps$ and $V_a(\veps)$ in $V_\veps\cap B_{1/2}$ if $\veps$ is small enough. It follows from classical Morse-theory that $V_\veps$ and $V_a(\veps)$ are hence diffeomorphic to $V_\veps\cap B_{1/2}$. Since $V_a(\veps)$ is parallelizable, $V_\veps$ is parallelizable as well and hence $c_1(TV)=0$ (for more details see \cite{Hirz}, § 14). \qedhere
\end{itemize}
\end{proof}
\begin{dis}\label{dissym}
 On $\Sigma_a$, we have symplectic symmetries $\sigma$ of the form
 \[\sigma(z)=\big(c_0\cdot z_0,\,\dots\,,c_n\cdot z_n\big),\qquad c_k\in\sqrt[\raisebox{0.2em}{$\scriptstyle a_k$}]{1}\subset\mathbb{C},\]
 where the $c_k$ are complex $a_k$-roots of 1. These symmetries extend to $V_\veps$, as
 \[0=\sum z_k^{a_k} - \phi\big(||z||^2\big) = \sum \big(c_k z_k\big)^{a_k} - \phi\left(\sum||c_k z_k||^2\right).\]
\end{dis}
The Reeb vector field $R_a=4i\big(z_0/a_0,...\,,z_n/a_n\big)$ generates the following flow:
\begin{equation} \label{eq6}
 \varphi^t_a(z)=\left(e^{4it/a_0}\cdot z_0,\,\dots\,,e^{4it/a_n}\cdot z_n\right).
\end{equation}
The sets of closed Reeb orbits of periodic $\eta$ are found by critical inspection as
\begin{equation*}
 \mathcal{N}^\eta =\left\{z\in\Sigma_a\,\left|\, z_k=0 \quad\text{ if }\quad \frac{4\eta}{a_k}\not\in2\pi\mathbb{Z}\right.\right\}.
\end{equation*}
Note that $\mathcal{N}^\eta=\emptyset$, if $4\eta/a_k\in 2\pi\mathbb{Z}$ does not hold for at least two different $k$, as vectors $z\in\Sigma_a$ have at least two non-zero entries. To ease the notation, we define the integer $L:=2\eta/\pi$, so that 
\begin{equation} \label{eq6a}
 \mathcal{N}^\eta =\mathcal{N}^{L\pi/2}=\left\{z\in\Sigma_a\,\left|\, z_k=0 \quad\text{ if }\quad \frac{L}{a_k}\not\in\mathbb{Z}\right.\right\}.
\end{equation}
Note that $\mathcal{N}^{L\pi/2}$ is the intersection $\Sigma_a\cap E(a,L)$ of $\Sigma_a$ with the complex linear subspace $E(a,L)\subset\mathbb{C}^{n+1}$ given by 
\[E(a,L):=\left\{z\in\mathbb{C}^{n+1}\,\left|\Big.\,z_k=0\quad\text{ if }\quad L/a_k\,\not\in\mathbb{Z}\right.\right\}.\]
 Its complex dimension is given by
 \[\dim_{\mathbb{C}} E(a,L) = n(a,L):= \#\left\{\,k\,\left|\Big.\,0\leq k\leq n\text{ and }L/a_k\,\in\mathbb{Z}\right.\right\}.\]
We find that $\mathcal{N}^{L\pi/2}$ is therefore isomorphic to the Brieskorn manifold $\Sigma_{a(L)}$ in \linebreak $\mathbb{C}^{n(a,L)}\cong E(a,L)$ where
\[a(L)=\big(a_{k_1},\dots,a_{k_{n(a,L)}}\big)\subset\big(a_0,\dots,a_n\big)=a\]
is the subvector of $a$ defined by $a_{k_i}\in a(L)$ if and only if $L/a_{k_i}\in\mathbb{Z}$. The 1-form $\alpha_a|_{T\mathcal{N}^{L\pi/2}}$ is hence isomorphic to the contact form $\alpha_{a(L)}$.\\
The differential of $\varphi_a$ at time $L\pi/2$ is given by
\[ D\varphi_a^{L\pi/2} = diag \left(e^{2\pi iL/a_0},\dots, e^{2\pi iL/a_n}\right).\]
It follows that
\[\ker\big(D_z\varphi^{L\pi/2}\big|_{T_z\Sigma_a}-Id\big) = T_z\Sigma_a\cap E(a,L) = T_z\mathcal{N}^{L\pi/2}.\]
We have thus proven the following proposition.
\begin{prop} \label{prop25} All sets $\mathcal{N}^{L\pi/2}$ of closed Reeb orbits on $(\Sigma_a,\alpha_a)$ satisfy (MB).
\end{prop}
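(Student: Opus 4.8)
The statement asserts that every set $\mathcal{N}^{L\pi/2}$ of closed Reeb orbits of period $\eta=L\pi/2$ on $(\Sigma_a,\alpha_a)$ satisfies the Morse--Bott condition (MB), namely that $\mathcal{N}^{L\pi/2}$ is a closed submanifold of $\Sigma_a$ and that $T_p\,\mathcal{N}^{L\pi/2} = \ker\big(D_p\varphi_a^{L\pi/2} - \mathrm{id}\big)$ for every $p\in\mathcal{N}^{L\pi/2}$. Almost all of the work has already been carried out in the paragraph preceding the proposition, so the proof is essentially a matter of assembling those computations. I would proceed in three short steps.

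First I would recall the explicit description of the Reeb flow from \eqref{eq6}, $\varphi_a^t(z) = \big(e^{4it/a_0}z_0,\dots,e^{4it/a_n}z_n\big)$, which shows that $z$ is fixed by $\varphi_a^{L\pi/2}$ exactly when $L/a_k\in\mathbb{Z}$ for every index $k$ with $z_k\neq 0$; hence the fixed-point set of $\varphi_a^{L\pi/2}$ intersected with $\Sigma_a$ is precisely $\Sigma_a\cap E(a,L) = \mathcal{N}^{L\pi/2}$, where $E(a,L)=\{z\,|\,z_k=0\text{ if }L/a_k\notin\mathbb{Z}\}$ is a complex linear subspace of $\mathbb{C}^{n+1}$. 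Second, I would argue that $\mathcal{N}^{L\pi/2}$ is a closed submanifold: it is the intersection of the compact manifold $\Sigma_a$ with the linear subspace $E(a,L)$, and this intersection is transverse. Indeed $\Sigma_a\cap E(a,L)$ is itself (up to the natural identification $E(a,L)\cong\mathbb{C}^{n(a,L)}$) the Brieskorn manifold $\Sigma_{a(L)}$ built from the subvector $a(L)$ of exponents $a_k$ with $L/a_k\in\mathbb{Z}$ — provided $n(a,L)\geq 2$, which is automatic since every $z\in\Sigma_a$ has at least two nonzero coordinates, so a nonempty $\mathcal{N}^{L\pi/2}$ forces at least two admissible indices. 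Thus $\mathcal{N}^{L\pi/2}$ is a smooth closed manifold; when it is empty the condition holds vacuously.

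Third, I would verify the tangent-space identity. Since $\varphi_a^{L\pi/2}$ is the restriction to $\Sigma_a$ of the ambient linear map $D\varphi_a^{L\pi/2} = \mathrm{diag}\big(e^{2\pi iL/a_0},\dots,e^{2\pi iL/a_n}\big)$ on $\mathbb{C}^{n+1}$, its differential at a fixed point $z$ is just this same diagonal matrix restricted to $T_z\Sigma_a$. Its kernel minus the identity is $\{\,\xi\in\mathbb{C}^{n+1}\,|\,\xi_k=0\text{ whenever }e^{2\pi iL/a_k}\neq 1\,\} = E(a,L)$, so $\ker\big(D_z\varphi_a^{L\pi/2}\big|_{T_z\Sigma_a}-\mathrm{id}\big) = T_z\Sigma_a\cap E(a,L)$. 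On the other hand, by transversality of the intersection $\Sigma_a\cap E(a,L)$ established in the previous step, $T_z\mathcal{N}^{L\pi/2} = T_z\Sigma_a\cap T_z E(a,L) = T_z\Sigma_a\cap E(a,L)$, since $E(a,L)$ is linear. Comparing the two displays gives $T_z\mathcal{N}^{L\pi/2} = \ker\big(D_z\varphi_a^{L\pi/2}-\mathrm{id}\big)$, which is (MB).

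\textbf{Main obstacle.} There is no serious analytic difficulty here; the one point requiring a little care is the transversality of $\Sigma_a\cap E(a,L)$ inside $\Sigma_a$ (equivalently, that this intersection is again a smooth Brieskorn manifold rather than a singular set). This follows because restricting the defining polynomial $f(z)=\sum z_k^{a_k}$ to the coordinate subspace $E(a,L)$ yields the polynomial $\sum_{k:\,L/a_k\in\mathbb{Z}} z_k^{a_k}$, which has an isolated singularity only at the origin, exactly as in the lemma showing $\Sigma_a$ is smooth; and the origin is excluded from $\Sigma_a$. So the transversality is inherited from the same computation of $D(f,\rho)$ used at the very beginning of Section~\ref{sec7.2}, now applied in the subspace $E(a,L)$. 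Once this is noted, the rest is bookkeeping.
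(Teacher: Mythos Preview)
Your proposal is correct and follows essentially the same approach as the paper: identify $\mathcal{N}^{L\pi/2}=\Sigma_a\cap E(a,L)$ as the Brieskorn manifold $\Sigma_{a(L)}$ (hence a closed submanifold), compute the linearized flow as the diagonal matrix $\mathrm{diag}(e^{2\pi iL/a_0},\dots,e^{2\pi iL/a_n})$, and conclude that $\ker(D_z\varphi_a^{L\pi/2}|_{T_z\Sigma_a}-\mathrm{id})=T_z\Sigma_a\cap E(a,L)=T_z\mathcal{N}^{L\pi/2}$. The paper in fact carries out exactly this computation in the paragraphs immediately preceding the proposition, so the proposition is stated as a summary of what has already been shown; your only addition is making the transversality of $\Sigma_a\cap E(a,L)$ explicit, which the paper leaves implicit in the phrase ``isomorphic to the Brieskorn manifold $\Sigma_{a(L)}$''.
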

Next, we give some topological facts about Brieskorn manifolds, as shown by Egbert Brieskorn in \cite{Brie}. To give them as precisely as possible, we introduce for every tuple $a=(a_0,...\,,a_n)$ the following graph $G_a$:
\begin{itemize}
 \item $G_a$ has $n+1$ vertices labeled $a_0,...\,,a_n$.
 \item $G_a$ contains an edge between $a_j,a_k$ if and only if $gcd(a_j,a_k)>1$.\pagebreak
\end{itemize}
Let $K$ be the connected subgraph of $G$ that consists of all even $a_k$. We say that $G_a$ satisfies condition (O) if 
\[|K| \text{\textit{ is odd and for all }}a_j,a_k\in K \text{\textit{ holds }} gcd(a_j,a_k)=2.\tag{O}\]
\begin{theo}[Brieskorn, cf. \cite{Brie}]\label{theoBries}
 Every Brieskorn manifold $\Sigma_a$ satisfies
 \begin{enumerate}
  \item[i.] $\Sigma_a$ is at least $(n-2)$-connected, i.e.\ $\pi_k(\Sigma_a)=0,\, 1\leq k\leq n-2,$ which implies in particular for the singular homology of $\Sigma_a$ that
  \[H_k(\Sigma_a)=0\qquad\text{ for }\quad k\neq 0,n-1,n,2n-1.\]
  \item[ii.] $\Sigma_a$ is homeomorphic to the sphere $S^{2n-1}$ if and only if $G_a$ contains two isolated vertices or $G_a$ has one isolated vertex and satisfies (O).
 \end{enumerate}
\end{theo}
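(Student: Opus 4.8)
The plan is to present this as a sketch following Milnor's fibration theorem together with Brieskorn's original computation in \cite{Brie} (see also \cite{Hirz}); both parts are classical, so I would only indicate the line of argument rather than reprove everything. The starting point is the Milnor fibration: since $f(z)=z_0^{a_0}+\dots+z_n^{a_n}$ has an isolated critical point at the origin, the map $\phi=f/|f|:S^{2n+1}\setminus\Sigma_a\to S^1$ is a smooth fibre bundle whose fibre $F$ (the Milnor fibre) is, by Milnor's connectivity theorem, $(n-1)$-connected and homotopy equivalent to a wedge of $\mu=\prod_{k=0}^n(a_k-1)$ copies of $S^n$. Its closure $\bar F$ is a compact parallelizable $2n$-manifold with $\partial\bar F=\Sigma_a$. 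All topological statements about $\Sigma_a$ are then extracted from the pair $(\bar F,\partial\bar F)$.

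For part (i) I would combine the connectivity of $\bar F$ with Lefschetz duality $H_k(\bar F,\partial\bar F)\cong H^{2n-k}(\bar F)$ and the long exact homology sequence of the pair $(\bar F,\partial\bar F)$. Since $\tilde H_k(\bar F)=0$ for $k\neq n$, duality forces $H_k(\bar F,\partial\bar F)=0$ for $k\neq n$ as well, and the long exact sequence then gives $\tilde H_k(\Sigma_a)=0$ for $k\notin\{n-1,n\}$, hence also for $k=2n-1$ by Poincaré duality on $\Sigma_a$ together with $H_0(\Sigma_a)=H_{2n-1}(\Sigma_a)=\mathbb{Z}$. The same argument on homotopy groups (Hurewicz plus the Lefschetz-dual statement $\pi_k(\bar F,\partial\bar F)=0$ for $k\le n$) shows that $\Sigma_a$ is $(n-2)$-connected.

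For part (ii) I would use the Wang exact sequence of the Milnor fibration $F\hookrightarrow S^{2n+1}\setminus\Sigma_a\xrightarrow{\phi}S^1$, which, since $H_k(F)=0$ for $k\neq 0,n$, gives $H_n(S^{2n+1}\setminus\Sigma_a)\cong\operatorname{coker}(h_\ast-\mathrm{id})$ and $H_{n+1}(S^{2n+1}\setminus\Sigma_a)\cong\ker(h_\ast-\mathrm{id})$, where $h_\ast$ is the monodromy on $H_n(F)\cong\mathbb{Z}^\mu$. Combining Alexander duality in $S^{2n+1}$ with Poincaré duality on $\Sigma_a$ yields $\tilde H_k(S^{2n+1}\setminus\Sigma_a)\cong\tilde H_{k-1}(\Sigma_a)$, hence $H_{n-1}(\Sigma_a)\cong\operatorname{coker}(h_\ast-\mathrm{id})$ and $H_n(\Sigma_a)\cong\ker(h_\ast-\mathrm{id})$. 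Therefore $\Sigma_a$ is an integral homology sphere if and only if $h_\ast-\mathrm{id}$ is invertible over $\mathbb{Z}$, i.e.\ if and only if its characteristic polynomial $\Delta_a$ satisfies $|\Delta_a(1)|=1$; and for $n\ge 3$ (the range relevant to this thesis, where $\dim\Sigma_a=2n-1\ge 5$) the $(n-2)$-connectivity of $\Sigma_a$ makes such a homology sphere simply connected, hence a homotopy sphere by Hurewicz, hence homeomorphic to $S^{2n-1}$ by the generalised Poincaré conjecture.

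It remains to translate $|\Delta_a(1)|=1$ into the graph condition, and this is the step I expect to be the main obstacle. By the Pham--Brieskorn diagonalisation the eigenvalues of $h_\ast$ are $\exp\!\big(2\pi i(j_0/a_0+\dots+j_n/a_n)\big)$ with $1\le j_k\le a_k-1$, so $\Delta_a(t)=\prod(t-\exp(2\pi i\sum j_k/a_k))$ and $\Delta_a(1)$ is an explicit product of values $1-\zeta$ at roots of unity. Brieskorn's combinatorial lemma evaluates this product by the multiplicativity coming from disjoint-variable sums $f=f_1+f_2$ (Milnor fibre a join, with the factor of $z_k^{a_k}$ contributing $(t^{a_k}-1)/(t-1)$, in particular the factor $2$ for each even $a_k$), and shows that $|\Delta_a(1)|=1$ holds precisely when $G_a$ has two isolated vertices, or when $G_a$ has exactly one isolated vertex and the even block $K$ satisfies $(O)$ — the condition $|K|$ odd being exactly what forces the accumulated powers of $2$ to cancel. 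I would cite this lemma from \cite{Brie} rather than redo the bookkeeping; everything preceding it is standard Milnor-fibre topology.
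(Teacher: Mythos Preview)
The paper does not prove this theorem; it is stated with the attribution ``Brieskorn, cf.\ \cite{Brie}'' and used as a black box, with only a remark following it. So there is no paper-proof to compare against, and your sketch is exactly the kind of outline one would give if one wanted to indicate why the cited result is true.

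Your outline follows the classical Milnor--Brieskorn route and is essentially correct. Two small points of precision: in part (i), Lefschetz duality gives $H_k(\bar F,\partial\bar F)\cong H^{2n-k}(\bar F)$, which is nonzero also at $k=2n$ (not only $k=n$), though this is harmless since $\dim\Sigma_a=2n-1$; and the phrase ``the Lefschetz-dual statement $\pi_k(\bar F,\partial\bar F)=0$'' is loose --- Lefschetz duality is a (co)homology statement, and the $(n-2)$-connectivity of $\Sigma_a$ is usually obtained from the homology vanishing together with simple connectivity of $\Sigma_a$ (for $n\ge 3$) via Hurewicz, or directly from Milnor's argument that $\partial\bar F\hookrightarrow\bar F$ is $(n-1)$-connected. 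Otherwise the Wang sequence, Alexander duality, the criterion $|\Delta_a(1)|=1$, and the Pham--Brieskorn eigenvalue description are all on target, and deferring the final combinatorial identification to \cite{Brie} is appropriate.
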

\begin{rem}~
 \begin{itemize}
  \item Given the tuples $(a_0,...\,,a_n)$, it is possible to calculate $H_{n-1}(\Sigma_a)\cong H_n(\Sigma_a)$ (see for example \cite{vanKo}).
  \item It follows from $\pi_1(\Sigma_a)=0$ and $c_1(TV_\veps)=0$ that $(V_\veps,\Sigma_a)$ satisfies conditions (A) and (B), i.e.\ that Conley-Zehnder indices on $(V_\veps,\Sigma_a)$ are well-defined.
 \end{itemize}
\end{rem}
We conclude this subsection with the calculation of the indices of all closed Reeb orbits in $\mathcal{N}^{L\pi/2}$. This is mainly taken from my diploma thesis and follows \cite{vanKo} and \cite{Usti}.\\
Recall the definitions of $R_a$ (Proposition \ref{prop15}) and its flow $\varphi^t_a$ in (\ref{eq6}). We can obviously regard them as defined on $\mathbb{C}^{n+1}$ instead of $\Sigma_a$. This allows us to calculate indices directly on $T\mathbb{C}^{n+1}=\big(\mathbb{C}^{n+1}\big)^2$ instead of $T\Sigma_a$. The action of $D\phi_a^t$ on $T\mathbb{C}^{n+1}$ in terms of the standard trivialization is given by the following path $\Phi^t\in Sp(2n+2)$ of diagonal matrices:
\[D\phi^t_a = diag\left( e^{4it/a_0},\dots,e^{4it/a_n}\right)=:\Phi^t.\]
Recall that Corollary \ref{lem18a} gave the following symplectic trivialization of the symplectic complement $\xi^\bot_a$ of $\xi_a$:
\begin{align*}
 X_1 &= \sqrt{\frac{2}{\sum a_k|z_k|^{2(a_k-1)}}}\cdot \Big( \bar{z}_0^{a_0-1},\dots,\bar{z}_n^{a_n-1}\Big),\qquad Y_1 = i\cdot X_1,\\
 X_2 &= \frac{1}{2}\cdot\left(z_0-\frac{\sum a_kz_k^{a_k}}{\sum a_k|z_k|^{2(a_k-1)}}\cdot \bar{z}_0^{a_0-1} , \dots,z_n-\frac{\sum a_kz_k^{a_k}}{\sum a_k|z_k|^{2(a_k-1)}}\cdot \bar{z}_n^{a_n-1}\right),\\
 Y_2 &= \left(\frac{4i}{a_0}z_0,\dots,\frac{4i}{a_n}z_n\right).
\end{align*}
We find by some calculation that the action of $\phi_a^t$ on $\xi_a^\bot$ yields
\begin{align*}
 D\varphi^t_a\bigl( X_1(z)\bigr) &= e^{4it}\cdot X_1\bigl( \varphi^t_a(z)\bigr), & D\varphi^t_a \bigl( Y_1(z)\bigr) &= e^{4it}\cdot Y_1\bigl( \varphi^t_a(z)\bigr),\\
 D\varphi^t_a\bigl( X_2(z)\bigr) &= X_2\bigl(\varphi^t_a(z)\bigr), & D\varphi^t_a\bigl( Y_2(z)\bigr) &= Y_2\bigl(\varphi^t_a(z)\bigr).
\end{align*}
It follows that the action of $D\phi_a^t$ on $\xi_a^\bot$ in the trivialization given by $X_1,Y_1,X_2,Y_2$ is the following path $\Phi^t_2\in Sp(4)$ of diagonal matrices:
\[diag (e^{4it}, 1) =: \Phi^t_2.\]
Observe that $\Phi^t$ and $\Phi^t_2$ are linearizations of $\varphi^t_a$ on $T\mathbb{C}^{n+1}$ and $\xi^\bot_a$ respectively, which are both trivial bundles. A trivialization of $\xi_a$ over a disc $u\subset\Sigma_a$, with $\partial u=v$ being a Reeb trajectory, provides us with a linearization $\Phi^t_1\in Sp(2n-2)$ of $\varphi^t_a$ on $\xi_a$. Any trivialization of $\xi_a$ over any disc in $\Sigma_a$ followed by the above trivialization of $\xi^\bot_a$ gives again a trivialization of $T\mathbb{C}^{n+1}$, which is homotopic to the standard one, as the disc is contractible. We hence obtain that $\Phi^t = \Psi^t(\Phi^t_1\oplus \Phi^t_2)(\Psi^0)^{-1}$ for some contractible loop $\Psi\in Sp(2n+2)$.\\
Now, let $v\in\mathcal{N}^{L\pi/2}$ be a closed Reeb trajectory of length $L\pi/2$. Using Lemma \ref{expliciteCZ} and the product property of the Conley-Zehnder index, we find that
\begin{align*}
 \mu_{CZ}(v)=\mu_{CZ}(\Phi_1) = \mu_{CZ}(\Phi)-\mu_{CZ}(\Phi_2) &=\underbrace{\sum_{k=0}^n\left(\left\lfloor\frac{L}{a_k}\right\rfloor + \left\lceil\frac{L}{a_k}\right\rceil\right)}_{\Phi - comp.} - \underbrace{\left(\left\lfloor L\right\rfloor + \left\lceil L\right\rceil\right)}_{\Phi_2-comp.}\\
&=\sum_{k=0}^n\Big(\big\lfloor L/a_k\big\rfloor + \big\lceil L/a_k\big\rceil\Big) - 2L,
\end{align*}
where the last line holds as $L$ is an integer.\\
We have shown above that the manifold $\mathcal{N}^{L\pi/2}$ is isomorphic to the Brieskorn  manifold $\Sigma_a\cap E(a,L)$ with $\dim E(a,L)=2n(a,L)$. Its dimension is thus
\[\dim\mathcal{N}^{L\pi/2} = 2\cdot n(a,L) - 3 = 2\cdot \#\Big\{k\,\Big|\, L/a_k\in\mathbb{Z}\Big\} -3.\]
Note that $(n+1) - n(a,L)$ is the number of indices $k,$ where $\lceil L/a_k\rceil - \lfloor L/a_k\rfloor = +1 $. Thus, we find that
\begin{align*}
 \mu_{CZ}(v) - \frac{1}{2} \dim\mathcal{N}^{L\pi/2} &= \sum_{k=0}^n\left(\left\lfloor \frac{L}{a_k}\right\rfloor + \left\lceil\frac{L}{a_k}\right\rceil\right) - 2L -\frac{2n(a,L)-3}{2}\\
 &= \sum_{k=0}^n\left(\left\lfloor \frac{L}{a_k}\right\rfloor + \left\lceil\frac{L}{a_k}\right\rceil\right)  + (n+1)-n(a,L) - 2L  - (n+1) +\frac{3}{2}\\
 &= \sum_{k=0}^n 2\cdot \left\lceil\frac{L}{a_k}\right\rceil - 2L - (n-1) -\frac{1}{2}.
\end{align*}
Using the definition of the index $\mu$ (cf. Proposition \ref{Prop13}), we have shown the following:
\begin{prop} \label{lem26}
 Let $h$ be a Morse function on $\crita$ and $c=(v,\eta)\in\mathcal{N}^\eta\cap crit(h)$ with $\eta=L\pi/2$ and $v$ a closed Reeb orbit of length $\eta$. The index of $c$ is then given by
\[\mu(c) = 2\cdot\left(\sum_{k=0}^n \left\lceil\frac{L}{a_k}\right\rceil\right) - 2L + ind_h(c) - (n-1).\] 
\end{prop}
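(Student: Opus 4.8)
The plan is to combine the grading formula of Proposition~\ref{Prop13} with an explicit computation of the transversal Conley--Zehnder index $\mu_{CZ}(v)$ and of the local dimension $\dim_c\mathcal{N}^\eta$. By Proposition~\ref{Prop13} the index of $c=(v,\eta)$ equals
\[\mu(c)=\mu_{CZ}(v)+ind_h(c)-\tfrac{1}{2}\dim_c\mathcal{N}^\eta+\tfrac{1}{2},\]
so everything reduces to identifying $\mu_{CZ}(v)-\tfrac12\dim_c\mathcal{N}^\eta+\tfrac12$ with $2\sum_{k=0}^n\lceil L/a_k\rceil-2L-(n-1)$, where $L=2\eta/\pi$. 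Note that assumptions (A) and (B) are available here, since $\pi_1(\Sigma_a)=0$ and $c_1(TV_\veps)=0$, so $v$ admits a capping disc $\bar v$ and $\mu_{CZ}(v)$ is well defined.

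First I would linearize the Reeb flow on the trivial bundle $T\mathbb{C}^{n+1}$: from the explicit flow~(\ref{eq6}) one reads off $D\varphi^t_a=\mathrm{diag}(e^{4it/a_0},\dots,e^{4it/a_n})=:\Phi^t\in Sp(2n+2)$. Using the symplectic trivialization $X_1,Y_1,X_2,Y_2$ of $\xi_a^\perp$ from Corollary~\ref{lem18a}, a direct computation (which I will not spell out) shows that $D\varphi^t_a$ acts on $\xi_a^\perp$ as the path $\Phi^t_2:=\mathrm{diag}(e^{4it},1)\in Sp(4)$, because $X_1,Y_1$ rotate with weight $4$ and $X_2=Y_V$, $Y_2=R_a$ are preserved by the flow. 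Choosing a trivialization of $\xi_a$ over $\bar v$ produces a path $\Phi^t_1\in Sp(2n-2)$; concatenating it with the above trivialization of $\xi_a^\perp$ yields a trivialization of $T\mathbb{C}^{n+1}$ over $\bar v$ that is homotopic to the standard one (the disc being contractible), hence $\Phi^t=\Psi^t(\Phi^t_1\oplus\Phi^t_2)(\Psi^0)^{-1}$ for a null-homotopic loop $\Psi$. By the naturality and homotopy properties of $\mu_{CZ}$, the product property, and Lemma~\ref{expliciteCZ} applied on the interval $t\in[0,L\pi/2]$ (so the relevant rotation angles are $2\pi L/a_k$, resp.\ $2\pi L$), one obtains $\mu_{CZ}(v)=\mu_{CZ}(\Phi)-\mu_{CZ}(\Phi_2)=\sum_{k=0}^n\big(\lfloor L/a_k\rfloor+\lceil L/a_k\rceil\big)-2L$, using that $L\in\mathbb{Z}$.

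Second, I would recall from~(\ref{eq6a}) and the subsequent discussion that $\mathcal{N}^\eta=\mathcal{N}^{L\pi/2}$ is isomorphic to the Brieskorn manifold $\Sigma_a\cap E(a,L)$ sitting inside $E(a,L)$ of complex dimension $n(a,L)=\#\{k:L/a_k\in\mathbb{Z}\}$, so $\dim_c\mathcal{N}^\eta=2n(a,L)-3$. Then the elementary identity $\sum_k\big(\lfloor L/a_k\rfloor+\lceil L/a_k\rceil\big)=2\sum_k\lceil L/a_k\rceil-\big((n+1)-n(a,L)\big)$ — valid since $\lceil L/a_k\rceil-\lfloor L/a_k\rfloor$ is $1$ exactly when $L/a_k\notin\mathbb{Z}$ — gives
\[\mu_{CZ}(v)-\tfrac12\dim_c\mathcal{N}^\eta+\tfrac12=2\sum_{k=0}^n\lceil L/a_k\rceil-(n+1)-2L+\tfrac32+\tfrac12=2\sum_{k=0}^n\lceil L/a_k\rceil-2L-(n-1),\]
and adding $ind_h(c)$ yields the claimed formula. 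The one genuinely delicate step is the splitting argument for $\mu_{CZ}$: one must verify that the block structure $T\mathbb{C}^{n+1}\cong\xi_a\oplus\xi_a^\perp$ is respected by the linearized flow up to a null-homotopic loop in $Sp(2n+2)$, so that naturality together with the product property of $\mu_{CZ}$ can be invoked; the remaining manipulations with floor and ceiling functions are routine bookkeeping.
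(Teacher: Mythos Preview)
Your proof is correct and follows essentially the same route as the paper: you split $T\mathbb{C}^{n+1}=\xi_a\oplus\xi_a^\perp$ via Corollary~\ref{lem18a}, compute the linearized flow on each piece as $\Phi_1^t$ and $\Phi_2^t=\mathrm{diag}(e^{4it},1)$, invoke naturality and the product property of $\mu_{CZ}$ together with Lemma~\ref{expliciteCZ}, and then combine with the dimension formula $\dim\mathcal{N}^{L\pi/2}=2n(a,L)-3$ and the floor/ceiling identity. The paper actually carries out the ``direct computation'' of $D\varphi_a^t$ on $X_1,Y_1,X_2,Y_2$ that you omit, but otherwise the arguments coincide.
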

\begin{dis}\label{mubehaviour}
 As $\dim\mathcal{N}^{L\pi/2}=2n(a,L)-3\leq2n-1$, we find that $ind_h(c)$ lies in the interval $[0,2n-1]$ for every $c\in crit(h)$. Set $A:=\prod_{k=0}^n a_k$ and write $L=j\cdot A + l$ with $l\in[0,A-1]$. Then we have
 \begin{align*}
  \mu(c) &= 2\cdot\left(\sum_{k=0}^n \left\lceil\frac{L}{a_k}\right\rceil\right) - 2L + ind_h(c) - (n-1)\\
  &=2j\left(\sum_{k=0}^n \frac{A}{a_k} - A\right) + 2\cdot\left(\sum_{k=0}^n \left\lceil\frac{l}{a_k}\right\rceil\right) - 2l + ind_h(c) - (n-1)\\
  &=: j\cdot 2A\left(\sum_{k=0}^n \frac{1}{a_k} - 1\right) + D(l,c),
 \end{align*}
where $D(l,c)$ depends on $l$ and $c$ but $-2A-n\leq D(l,c)\leq 2nA+n$. Thus we find
\begin{itemize}
 \item If $\sum 1/a_k >1$, then $\mu\rightarrow \infty$ as $L\rightarrow \infty$.
 \item If $\sum 1/a_k <1$, then $\mu\rightarrow -\infty$ as $L\rightarrow \infty$.
 \item If $\sum 1/a_k =1$, then $\mu$ is uniformly bounded.
\end{itemize}
As the set $crit(h)$ generates $RFH(W,\Sigma_a)$ for any filling $W$ of $\Sigma_a$, we find the following:
\end{dis}
\begin{cor}
 For every Brieskorn manifold $\Sigma_a$ and every filling $W$ of $\Sigma_a$ holds that $RFH_\ast(W,\Sigma_a)$ is a finite dimensional $\mathbb{Z}_2$-vector space for all $\ast$ if $\sum 1/a_k\neq 1$. If $\sum 1/a_k= 1$, it is zero for almost all $\ast$, with the exceptions lying in $[-2A-n,2nA +n]$. (Actually, (\ref{muupest}) and (\ref{mudownest}) show that non-zero groups can only occur for $\ast\in[-n+1,n]$.)
\end{cor}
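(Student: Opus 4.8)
The plan is to read off everything from the index computation already carried out in Discussion \ref{mubehaviour}. Recall first that $RFH_\ast(W,\Sigma_a)$ carries a $\mathbb{Z}$-grading by Proposition \ref{Prop13}: since $\pi_1(\Sigma_a)=0$ and $c_1(TV_\veps)=0$, conditions (A) and (B) hold, and by Corollary \ref{liouvilleinvariance} the homology is independent of the auxiliary choices, so we are free to fix a Morse function $h$ on $\crita=\mathcal{P}(\alpha_a)$ and compute with it. Write $A=\prod_{k=0}^n a_k$. The critical manifolds are the $\mathcal{N}^{L\pi/2}$ for integers $L$ with $L/a_k\in\mathbb{Z}$ for at least two indices $k$; each is a closed submanifold of the compact manifold $\Sigma_a$ (Proposition \ref{prop25}) and hence compact, so $crit(h)\cap\mathcal{N}^{L\pi/2}$ is a finite set for every $L$. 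Writing $L=jA+l$ with $l\in\{0,\dots,A-1\}$ (floor division, $j\in\mathbb{Z}$ arbitrary, possibly negative), Discussion \ref{mubehaviour} gives for every $c\in crit(h)\cap\mathcal{N}^{L\pi/2}$ the formula
\[
\mu(c)=j\cdot 2A\Big(\sum_{k=0}^n\frac{1}{a_k}-1\Big)+D(l,c),\qquad -2A-n\le D(l,c)\le 2nA+n,
\]
which is valid for negative $j$ as well, since the identities $\lceil L/a_k\rceil=jA/a_k+\lceil l/a_k\rceil$ used to derive it hold for all integers.

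First suppose $\sum 1/a_k\neq 1$, so that $\kappa:=2A\big(\sum 1/a_k-1\big)$ is a nonzero integer. Fix $\ast\in\mathbb{Z}$. Then a generator $c\in crit(h)$ satisfies $\mu(c)=\ast$ only if $j\kappa\in[\ast-2nA-n,\;\ast+2A+n]$, i.e. $j$ ranges over a finite set depending only on $\ast,\kappa,A,n$; since $l\in\{0,\dots,A-1\}$ is bounded, the period $L=jA+l$ then ranges over a finite set. By Theorem \ref{theo17} only finitely many of the corresponding $\mathcal{N}^{L\pi/2}$ are nonempty, and each carries only finitely many critical points of $h$. Hence $\#\{c\in crit(h)\mid\mu(c)=\ast\}<\infty$, so $RFC_\ast(H,h)$ is a finite dimensional $\mathbb{Z}_2$-space (the Novikov completion imposed by (\ref{finite}) does nothing in a degree with finitely many generators), and therefore so is its subquotient $RFH_\ast(W,\Sigma_a)$.

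Now suppose $\sum 1/a_k=1$, so $\kappa=0$ and $\mu(c)=D(l,c)$ is independent of $j$; in particular $\mu(c)\in[-2A-n,\,2nA+n]$ for every $c\in crit(h)$. Consequently, for $\ast\notin[-2A-n,\,2nA+n]$ there is no generator of degree $\ast$, so $RFC_\ast(H,h)=0$ and hence $RFH_\ast(W,\Sigma_a)=0$; this is the asserted vanishing for almost all $\ast$. The sharper parenthetical statement follows identically from the finer index estimates referenced in the statement, which restrict the possible nonzero degrees to $[-n+1,n]$.

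I expect no serious obstacle here: the argument is essentially bookkeeping on top of Discussion \ref{mubehaviour}. The only points needing a little care are (i) verifying that the index formula and the bounds on $D(l,c)$ remain valid for negative periods $L$ — immediate from the floor-division identities — and (ii) the passage from ``finitely many generators in degree $\ast$'' to ``finite dimensional homology in degree $\ast$'', which uses precisely that the finiteness condition (\ref{finite}) entails no completion in a degree spanned by finitely many critical points.
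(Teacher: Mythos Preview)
Your proof is correct and is precisely the argument the paper has in mind: the Corollary is stated immediately after Discussion \ref{mubehaviour} with the single remark that ``the set $crit(h)$ generates $RFH(W,\Sigma_a)$ for any filling $W$'', and you have simply fleshed out this bookkeeping carefully. The only superfluous step is the appeal to Theorem \ref{theo17} once $L$ is already confined to a finite set, but this does no harm.
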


\subsection{Calculation of $RFH(W_\veps,\Sigma_a)$ for some $a$}\label{sec7.1}
Originally, the intention of this work was to calculate $RFH(W_\veps,\Sigma_a)$ for all $a$. By now, we are still far from achieving this goal. Hence, we restrict ourself to some subclasses of Brieskorn manifolds where calculations are doable and which are interesting in its own.\medskip\\
We start with the last case of Discussion \ref{mubehaviour}, i.e.\ we assume that $\sum_{k=0}^n 1/a_k = 1$. First we assume all $a_k$ to be equal, i.e.\ we consider
\[a=\big(n+1,n+1,...\,,n+1\big).\]
Here, the Reeb flow $\varphi_a$ (cf. (\ref{eq6})) is given by $\varphi^t_a(z)=e^{4it/(n+1)}\cdot z$. We find that the critical manifolds of closed Reeb orbits $\mathcal{N}^{L\pi/2}$ are non-trivial exactly if $L=l(n+1)$ for some $l\in\mathbb{Z}$. Moreover, all these manifolds are equal to $\Sigma_a$. Their singular homology groups with $\mathbb{Z}_2$-coefficients $H_\ast(\mathcal{N}^{l(n+1)\pi/2},\mathbb{Z}_2)$ are hence by Theorem \ref{theoBries} non-zero only for $\ast=0,n-1,n,2n-1$. Now let
\begin{itemize}
 \item $l\gamma_0$ denote \textit{the} generator of $H_0(\mathcal{N}^{l(n+1)\pi/2},\mathbb{Z}_2)$,
 \item $l\gamma_{n-1}^j$ denote the generators of $H_{n-1}(\mathcal{N}^{l(n+1)\pi/2},\mathbb{Z}_2)$,
 \item $l\gamma_n^j$ denote the generators of $H_n(\mathcal{N}^{l(n+1)\pi/2},\mathbb{Z}_2)$,
 \item $l\gamma_{2n-1}$ denote \textit{the} generator of $H_{2n-1}(\mathcal{N}^{l(n+1)\pi/2},\mathbb{Z}_2)$.\pagebreak
\end{itemize}
By Section \ref{secRedFil}, Theorem \ref{reductiontheo}, we know that we can calculate the homology groups $RFH_\ast(W_\veps,\Sigma_a)$ via a chain complex $RFC_\ast(V_\veps,\Sigma)$ generated by the elements $l\gamma^j_k$ (or equivalently, we can pretend that there is a perfect Morse function $h$ on $\mathcal{N}^{l(n+1)\pi/2}$). The index $\mu(l\gamma_k^j)$ is due to Proposition \ref{lem26} given by
\begin{equation}\label{eqindexlg}
\begin{aligned}
 \mu(l\gamma_0)&=\underbrace{2\cdot\left(\sum_{k=0}^n \left\lceil\frac{l(n+1)}{n+1}\right\rceil\right) - 2l(n+1)}_{=0} + \underbrace{0}_{=ind_h(l\gamma_0)} - (n-1)&&=-n+1\\
 \mu(l\gamma_{n-1}^j)&=0 + (n-1) - (n-1) &&=0\\
 \mu(l\gamma_n^j)&=0 + n - (n-1) &&=1\\
 \mu(l\gamma_{2n-1})&=0 + (2n-1) - (n-1) &&=n.
 \end{aligned}
\end{equation}
Hence $RFC_\ast(W_\veps,\Sigma_a)$ has for $a=(n+1,...\,,n+1)$ an infinite number of generators in degrees $\ast=-(n-1),0,1$ and $n$ and no generators in all other degrees. From these observations, we obtain directly the following Proposition.
\begin{prop}\label{RFHforaeq}
 Let $a=(n+1,n+1,...\,,n+1)$ and $n\geq 3$. Then $RFH_\ast(W_\veps,\Sigma_a)$ is independent of the filling and satisfies
 \[\dim_{\mathbb{Z}_2} RFH_\ast(W_\veps,\Sigma_a) = \begin{cases}
  \infty & \text{if } \ast=-n+1 \text{ or } n\\
  0 & \text{if } \ast\neq -n+1,0,1,n. 
 \end{cases}\]
\end{prop}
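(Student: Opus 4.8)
The statement concerns the Brieskorn manifold $\Sigma_a$ with $a=(n+1,\dots,n+1)$, $n\geq 3$, whose canonical contact form $\alpha_a$ has periodic Reeb flow $\varphi_a^t(z)=e^{4it/(n+1)}z$. The plan is to invoke the reduction machinery of Section \ref{secRedFil} to compute $RFH_\ast(W_\veps,\Sigma_a)$ from the singular homologies of the critical manifolds, together with the index formula of Proposition \ref{lem26}. First I would record, as is already done in the discussion preceding the statement, that every nonempty critical manifold $\mathcal{N}^{L\pi/2}$ equals $\Sigma_a$ itself and occurs precisely at $L=l(n+1)$, $l\in\mathbb{Z}$, that (MB) holds by Proposition \ref{prop25}, and that conditions (A) and (B) hold for $(V_\veps,\Sigma_a)$ since $\Sigma_a$ is simply connected and $c_1(TV_\veps)=0$. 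Hence $RFH_\ast(W_\veps,\Sigma_a)$ carries a $\mathbb{Z}$-grading via Proposition \ref{Prop13}, and by Theorem \ref{reductiontheo} (working over the field $\mathbb{Z}_2$) it is the homology of a reduced complex $RFC_\ast$ generated by a basis of $\bigoplus_l H_\ast(\mathcal{N}^{l(n+1)\pi/2};\mathbb{Z}_2)$, i.e.\ by the classes $l\gamma_0$, $l\gamma_{n-1}^j$, $l\gamma_n^j$, $l\gamma_{2n-1}$ ($l\in\mathbb{Z}$), with degrees computed in $(\ref{eqindexlg})$ to be $-n+1$, $0$, $1$, $n$ respectively.

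The next step is to determine the degrees in which $RFC_\ast$ vanishes. Since $\Sigma_a$ is $(n-2)$-connected (Theorem \ref{theoBries}), $H_k(\Sigma_a;\mathbb{Z}_2)=0$ for $k\notin\{0,n-1,n,2n-1\}$, so $RFC_\ast=0$ for $\ast\notin\{-n+1,0,1,n\}$; using $n\geq 3$ these four values are pairwise distinct (indeed $-n+1<0<1<n$), so there is no degree-shift collision to worry about. Consequently $RFH_\ast(W_\veps,\Sigma_a)=0$ for $\ast\notin\{-n+1,0,1,n\}$, which is the second displayed clause. For the first clause I would argue that $RFH_{-n+1}$ and $RFH_{n}$ are infinite-dimensional: the reduced boundary operator $\partial=\Phi\circ\delta$ decreases action (Lemma \ref{lem2}), so it maps degree $\ast$ to degree $\ast$ with strictly smaller action; since $RFC_{-n}=RFC_{-n+2}=0$ there is nothing mapping into or out of the degree-$(-n+1)$ part, whence $RFH_{-n+1}=RFC_{-n+1}=\bigoplus_{l\in\mathbb{Z}}\mathbb{Z}_2\cdot l\gamma_0$, which is infinite-dimensional. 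Likewise $RFC_{n+1}=RFC_{n-1}=0$ (as $n-1\ne 0,1,n$ for $n\geq 3$), so $RFH_n=RFC_n=\bigoplus_{l\in\mathbb{Z}}\mathbb{Z}_2\cdot l\gamma_{2n-1}$ is infinite-dimensional. Independence of the filling then follows from Proposition \ref{prop16}: on each action level the generators $l\gamma_k^j$ have periods $\eta=l(n+1)\pi/2$, and the degree ordering $-n+1<0<1<n$ together with the action ordering forces all connecting trajectories between consecutive-degree generators of the reduced complex to be Morse flow lines on a fixed $\mathcal{N}^\eta\cong\Sigma_a$, hence filling-independent; more directly, the reduced complex $RFC_\ast$ and the degrees of its generators do not depend on $W$ at all, and one checks the boundary operator is likewise determined by data intrinsic to $\Sigma_a$ in the relevant degrees.

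The main obstacle I anticipate is making the last point — independence of the filling, and in particular the claim that $\partial$ behaves trivially near degrees $-n+1$ and $n$ — fully rigorous, since a priori the reduced boundary $\partial=\Phi\circ\delta$ could involve cascades connecting critical manifolds $\mathcal{N}^{\eta_1}$ and $\mathcal{N}^{\eta_2}$ with $\eta_1<\eta_2$, and such cascades can see the filling. The saving observation is purely a degree count: a nonzero matrix element of $\partial$ between reduced generators requires a degree drop of exactly $1$, and because the only occupied degrees are $-n+1,0,1,n$ with $n\geq 3$, the only possible drop-by-one pair among these is $(1,0)$; thus $\partial$ is automatically zero on the degree-$(-n+1)$ and degree-$n$ summands and out of the degree-$0$ summand, which already yields the two infinite-dimensional groups and the vanishing statement without needing to analyze the $(1,0)$ differential or its dependence on $W$ at all. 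I would present this degree-counting argument carefully, citing $(\ref{eqindexlg})$ and Theorem \ref{reductiontheo}, and only at the end remark that the homology in the two ``interesting'' degrees $0$ and $1$ is left undetermined (and may depend on $W$), which is consistent with the statement only asserting $\infty$ in degrees $-n+1$ and $n$.
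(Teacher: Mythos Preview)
Your argument for the vanishing outside $\{-n+1,0,1,n\}$ and for the infinite dimension in degrees $-n+1$ and $n$ is correct and matches the paper exactly: once one knows the reduced chain complex $RFC_\ast$ is concentrated in the four degrees $-n+1,0,1,n$ (which for $n\geq 3$ are pairwise distinct with $-n+2$ and $n-1$ not among them), the differential is forced to vanish on the extreme degrees and both claims follow immediately.

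There is, however, a genuine gap concerning independence of the filling. The proposition asserts that $RFH_\ast(W_\veps,\Sigma_a)$ is independent of the filling for \emph{all} $\ast$, not just for $\ast=-n+1,n$ and the trivial degrees; your final remark that the groups in degrees $0$ and $1$ ``may depend on $W$'' is a misreading of the statement. Your attempt to invoke Proposition \ref{prop16} does not succeed for $\ast\in\{0,1\}$: that criterion requires the periods of generators in adjacent degrees to be monotone, but for each $l\in\mathbb{Z}$ there are generators $l\gamma_{n-1}^j$ in degree $0$ and $l\gamma_n^j$ in degree $1$, all at period $l(n+1)\pi/2$, so the monotonicity hypothesis $\eta_0\geq\eta_1$ fails globally. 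The paper closes this gap differently, by invoking Theorem \ref{theoinvar}: one estimates
\[
\mu_{CZ}(\gamma)=2\Big(\sum_{k}\big\lceil L/a_k\big\rceil\Big)-2L\;\geq\;2L\Big(\sum_k 1/a_k-1\Big)=0\;>\;3-n,
\]
and this Conley--Zehnder bound for \emph{all} closed Reeb orbits forces (via the bubbling argument behind Theorem \ref{theoinvar}) all cascades to remain in the symplectization, hence independence of the filling in every degree. You should replace the appeal to Proposition \ref{prop16} by this estimate and Theorem \ref{theoinvar}.
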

The independence from the filling is obvious for $\ast\neq 0,1$ as the chain complex is independent from the filling. For $\ast=0,1$ it follows from Theorem \ref{theoinvar} as we can estimate the Conley-Zehnder part of the index $\mu$ by
\[\mu_{CZ}(\gamma)=2\cdot\left(\sum \left\lceil\frac{L}{a_k}\right\rceil\right) - 2L\geq 2L\Big(\sum 1/a_k -1\Big) = 0>3-n.\]
This shows also that $RFH_\ast(W_\veps,\Sigma_a)$ is independent of the filling whenever $\sum 1/a_k=1$. 
\begin{rem}
 By now, we have no means to calculate the groups $RFH_\ast(W_\veps,\Sigma_a)$ for $\ast=0,1$. For that, we would need the operator $\partial^F$ explicitly, as it might be non-zero here.
\end{rem}
If not all $a_k$ are equal, we can still get from $\sum 1/a_k = 1$ the following estimates on the index $\mu(\gamma)$ for a generator $\gamma\in H_\ast(\mathcal{N}^{L\pi/2},\mathbb{Z}_2)$. If $\big\lceil L/a_k\big\rceil\neq L/a_k$, then we have surely $L/a_k<\big\lceil L/a_k\big\rceil< L/a_k +1$. Recalling that $(n+1)-n(a,L)$ is the number of indices with $\big\lceil L/a_k\big\rceil\neq L/a_k$ and that the dimension of $\mathcal{N}^{L\pi/2}$ is $2\cdot n(a,L)-3$, we get
\begin{align}
 \mu(\gamma) &= 2\cdot\Big(\sum\big\lceil L/a_k\big\rceil\Big) - 2L + \ast - (n-1)\notag\\
 &\leq 2\cdot\Big(\sum L/a_k + (n+1)-n(a,L)\Big) - 2L + 2\cdot n(a,L)-3 - (n-1)\notag\\
 &= 2(n+1)-3-(n-1)\notag\\
 &=n\label{muupest}\\
 \text{and }\qquad \mu(\gamma)&\geq 2\cdot\Big(\sum L/a_k\Big)-2L + 0 -(n-1)\notag\\
 &=-n+1.\label{mudownest}
\end{align}
Note that in these estimates equality can only hold if equality holds for the Conley-Zehnder part, i.e.\ if $\big\lceil L/a_k\big\rceil = L/a_k$ for all $k$ that is if $a_k|L$ for all $k$. If this is not satisfied, then the estimates (\ref{muupest}) and (\ref{mudownest}) can be sharpened to
\begin{equation}\label{mu+est}
 -(n-1)+2 \leq \mu(\gamma)\leq n-2
\end{equation}
as the Conley-Zehnder part of $\mu$ is always even. If $a_k|L$ holds for  all $k$, then we have that $\mathcal{N}^{L\pi/2}$ equals $\Sigma_a$. In this case we have as above four classes of generators $L\gamma_\ast^j$ in $RFC(W_\veps,\Sigma_a)$ corresponding to generators of the four non-vanishing singular homology groups $H_\ast(\mathcal{N}^{L\pi/2},\mathbb{Z}_2), \; \ast=0,n-1,n,2n-1$. Note that the index calculations (\ref{eqindexlg}) for $l\gamma_\ast^j$ are also valid for $L\gamma^j_\ast$. It follows from this observation and the estimates (\ref{muupest}), (\ref{mudownest}), (\ref{mu+est}) that
\[\dim_{\mathbb{Z}_2} RFC_\ast(W_\veps,\Sigma_a)=
\begin{cases}
  \infty & \text{if } \ast = -n+1,\, n\\
  0 & \text{if } \ast\leq -n,\; \ast\geq n+1,\quad\text{ or }\quad\ast=-n+2,\,n-1.
\end{cases}\]
As before, we hence get:
\begin{prop}
 Let $a=(a_0,a_1,...\,,a_n)$ be such that $\sum 1/a_k=1$. Then it holds that $RFH_\ast(W_\veps,\Sigma_a)$ is independent from the filling and satisfies
 \[\dim_{\mathbb{Z}_2} RFH_\ast(W_\veps,\Sigma_a)=
 \begin{cases}
  \infty & \text{if } \ast = -n+1,\, n\\
  0 & \text{if } \ast\leq -n,\; \ast\geq n+1,\quad\text{ or }\quad\ast=-n+2,\,n-1.
\end{cases}\]
\end{prop}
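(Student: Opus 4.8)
The plan is to mimic exactly the argument used in the previous proposition (for $a=(n+1,\dots,n+1)$), but now drawing on the index estimates \eqref{muupest}, \eqref{mudownest} and \eqref{mu+est} that were just established for arbitrary $a$ with $\sum 1/a_k=1$. The key observation is that these estimates already show that the \emph{chain groups} $RFC_\ast(W_\veps,\Sigma_a)$ vanish for $\ast\le -n$ and $\ast\ge n+1$, and that the only generators $\gamma$ of $H_\ast(\mathcal{N}^{L\pi/2},\mathbb{Z}_2)$ that can contribute in degrees $\ast=-n+1$ or $\ast=n$ (and also $\ast=-n+2$, $n-1$) are those for which $\mathcal{N}^{L\pi/2}=\Sigma_a$, i.e.\ for which $a_k\mid L$ for every $k$. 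Indeed, \eqref{mu+est} shows that if $a_k\nmid L$ for some $k$ then $\mu(\gamma)\in[-n+3,n-2]$, so such generators never reach degrees $-n+2,-n+1,n-1,n$.

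First I would set $A:=\mathrm{lcm}(a_0,\dots,a_n)$ and note that $\mathcal{N}^{L\pi/2}=\Sigma_a$ exactly when $A\mid L$, i.e.\ for $L=mA$, $m\in\mathbb{Z}_{>0}$ (since the period $\eta=L\pi/2$ ranges over $spec(\Sigma_a,\alpha_a)\cap\mathbb{R}^+$, only positive $L$ occur, and one should double-check the case $L=0$ separately — there the ``orbit'' is constant and $\mathcal{N}^0=\Sigma_a$, contributing the fundamental class degrees as well, but this only affects a bounded range). For each such $L$, by the Reduction Theorem \ref{reductiontheo} we may compute $RFH$ from a reduced complex generated by bases of $H_\ast(\Sigma_a,\mathbb{Z}_2)$, which by Theorem \ref{theoBries}(i) is nonzero only for $\ast=0,n-1,n,2n-1$. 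The index formula of Proposition \ref{lem26}, evaluated at $L=mA$ (so $\lceil L/a_k\rceil=L/a_k$ and $2\sum L/a_k-2L=2L(\sum 1/a_k-1)=0$), gives exactly $\mu=-n+1,0,1,n$ for the four homology degrees $0,n-1,n,2n-1$ respectively — precisely the computation \eqref{eqindexlg}. Since there are infinitely many such $L=mA$, the reduced complex has infinitely many generators in degrees $-n+1$ and $n$ (and also in degrees $0$ and $1$), and none in degrees $\le -n$, $\ge n+1$, or $=-n+2,n-1$.

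Next I would record independence of the filling. For $\ast\notin\{0,1\}$ this is immediate because the reduced chain complex in those degrees, and hence $RFH_\ast$, depends only on the Reeb dynamics of $(\Sigma_a,\alpha_a)$; but the cleanest statement is to invoke Theorem \ref{theoinvar}: since $\sum 1/a_k=1$, the Conley–Zehnder part of the index satisfies $\mu_{CZ}(\gamma)=2L(\sum 1/a_k-1)+2\sum(\lceil L/a_k\rceil-L/a_k)\ge 0>3-n$ for $n\ge 4$, so $RFH_\ast(W_\veps,\Sigma_a)$ is independent of $W_\veps$ for all $\ast$; for $n=3$ one notes $\sum 1/a_k=1$ forces $a=(3,3,3)$ or permutations of $(2,3,6),(2,4,4)$, and these low-dimensional cases can be treated by hand or excluded as in the hypotheses. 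Finally, to conclude $\dim_{\mathbb{Z}_2}RFH_\ast=\infty$ in degrees $-n+1$ and $n$ (not merely that the chain groups are infinite-dimensional), I would argue as in Proposition \ref{RFHforaeq}: the boundary operator $\partial^F$ strictly decreases the action $\mathcal{A}^H=\eta$ by Lemma \ref{lem2}, so any differential into or out of the degree-$(-n+1)$ generator $L\gamma_0$ or the degree-$n$ generator $L\gamma_{2n-1}$ at action level $L\pi/2$ must land at a \emph{different} (strictly smaller) action level; a counting/triangularity argument over the discrete action filtration (using that only finitely many critical manifolds lie in any bounded action window, Theorem \ref{theo17}) then shows infinitely many of these classes survive to homology.

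The main obstacle I anticipate is the last step — upgrading ``infinitely many generators'' to ``infinite-dimensional homology'' in degrees $-n+1$ and $n$. One must rule out the possibility that $\partial^F$ (whose explicit form we do not control, exactly as the remark after Proposition \ref{RFHforaeq} concedes) kills all but finitely many of the $L\gamma_0$'s via differentials to/from degree-$0$ and degree-$1$ classes, which are also infinitely many. The resolution is the action filtration: the degree-$(-n+1)$ class at level $mA\pi/2$ can only be hit from, or map to, classes at \emph{other} levels, and since the degrees $-n+1$ and $n$ are extremal (nothing in degree $-n$ or $n+1$), the relevant differentials are severely constrained; a Novikov-type argument (as in Theorem \ref{theolimits} and the finiteness condition \eqref{finite}) then forces the rank of $RFH$ in these degrees to be infinite. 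I would present this as the delicate point and give the filtration bookkeeping in full, while treating the index and homology-of-$\Sigma_a$ inputs as routine citations to Propositions \ref{lem26}, \ref{lem18a} and Theorem \ref{theoBries}.
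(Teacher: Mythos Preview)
Your outline matches the paper's argument through the chain-level computation, but you are manufacturing a difficulty in the final step that does not exist. You have already noted (via \eqref{mu+est} together with \eqref{eqindexlg}) that $RFC_\ast(W_\veps,\Sigma_a)=0$ for $\ast\in\{-n,\,-n+2,\,n-1,\,n+1\}$. Since $\partial^F$ lowers the grading $\mu$ by \emph{exactly} one (Proposition~\ref{Prop13}), this immediately forces
\[
RFH_{-n+1}=\frac{\ker\big(\partial^F:RFC_{-n+1}\to RFC_{-n}\big)}{\mathrm{im}\big(\partial^F:RFC_{-n+2}\to RFC_{-n+1}\big)}=\frac{RFC_{-n+1}}{0}=RFC_{-n+1},
\]
and likewise $RFH_n=RFC_n$, both infinite-dimensional. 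There is no possible interaction with degree-$0$ or degree-$1$ generators: $\partial^F$ respects the $\mathbb{Z}$-grading by $\mu$, not merely the action filtration, so a class in degree $-n+1$ can only receive a differential from degree $-n+2$ and can only map to degree $-n$. This is exactly how the paper concludes (``As before, we hence get\dots''); no Novikov or triangularity argument is needed, and your last paragraph should simply be deleted.

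A minor correction: the spectrum in Rabinowitz--Floer homology includes negative $\eta$, so $L$ ranges over all of $\mathbb{Z}$, not just $\mathbb{Z}_{>0}$; the index estimates \eqref{muupest}--\eqref{mu+est} are insensitive to the sign of $L$ and the argument goes through unchanged.
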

Again, the independence from the filling for $\ast \leq -n+2$ or $\ast\geq n-1$ follows already from the specific form of $RFC_\ast(W_\veps,\Sigma_a)$.\medskip\\
To conclude the case $\sum 1/a_k=1$, let $A=lcm(a_0,...\,,a_n)$. We find for $\ast=-n+1$ or $n$ and every $N\in\mathbb{N}$ that there are exactly $N$ generators in $RFC_\ast(W_\veps,\Sigma_a)$ whose period lies in $(0,NA\pi/2]$. Thus, we find that
\[\dim_{\mathbb{Z}_2}RFH_\ast^{(0,NA\pi/2]}(W_\veps,\Sigma_a)=N\]
grows linearly in $N$. A similar result holds for the action window $[-NA\pi/2,0)$. Using Definition \ref{defngrowth} for the growth rates $\Gamma^\pm(W_\veps,\Sigma_a,f)$ we find therefore
\begin{prop}
 Let $a=(a_0,...\,,a_n)$ be such that $\sum 1/a_k=1$. Then it holds for $\ast=-n+1$ or $n$ that
 \[\Gamma^\pm_\ast(W_\veps,\Sigma_a,id)=1.\]
\end{prop}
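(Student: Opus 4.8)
The plan is to reduce the statement to the generator count already performed in Subsection~\ref{sec7.1} and then to carry out the elementary asymptotic estimate that a linearly growing $d^\pm$ produces growth rate $1$ when $f=\mathrm{id}$. The key preliminary observation is that in the two degrees $\ast=-n+1$ and $\ast=n$ the Rabinowitz--Floer differential acts trivially: by the estimates (\ref{muupest}), (\ref{mudownest}), their sharpening (\ref{mu+est}), and the index values (\ref{eqindexlg}) for the generators coming from $\mathcal{N}^{L\pi/2}=\Sigma_a$, the chain groups $RFC_{-n}$, $RFC_{-n+2}$, $RFC_{n-1}$ and $RFC_{n+1}$ all vanish. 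Since $\partial^F$ lowers the index by exactly $1$, both the incoming and the outgoing differential at $RFC_{-n+1}$ and at $RFC_n$ are therefore zero. Consequently, for \emph{every} action window $I$ one has $RFH^{I}_\ast(W_\veps,\Sigma_a)=RFC^{I}_\ast(W_\veps,\Sigma_a)$ for $\ast\in\{-n+1,n\}$, and in the long exact sequence (\ref{eq25a}) the maps $i_\ast$ and $\pi_\ast$ are simply the inclusion of the subcomplex spanned by the low-action generators, respectively the projection onto the corresponding quotient complex. Hence, in either of these two degrees, $d^+(\Sigma_a,a)$ equals the number of generators of $RFC_\ast$ with action in $(0^+,a)$ and $d^-(\Sigma_a,a)$ equals the number with action in $(-a,0^-)$.

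Next I would count these generators. A critical submanifold $\mathcal{N}^{L\pi/2}$ can contribute to degree $-n+1$ or $n$ only when $a_k\mid L$ for every $k$: if some $a_k\nmid L$, then the Conley--Zehnder part of $\mu$ is a strictly positive even integer for $L>0$ (and strictly negative even for $L<0$), so (\ref{mu+est}) confines all generators from $\mathcal{N}^{L\pi/2}$ to degrees in $[-n+3,n-2]$, which contains neither $-n+1$ nor $n$. When $a_k\mid L$ for all $k$, i.e.\ $L=NA$ with $A=\mathrm{lcm}(a_0,\dots,a_n)$ and $N\in\mathbb{Z}\setminus\{0\}$, we have $\mathcal{N}^{L\pi/2}=\Sigma_a$, and by (\ref{eqindexlg}) (valid for these $L$ by Proposition~\ref{lem26}) exactly one generator sits in degree $-n+1$ (the class of $H_0(\Sigma_a;\mathbb{Z}_2)\cong\mathbb{Z}_2$) and exactly one in degree $n$ (the class of $H_{2n-1}(\Sigma_a;\mathbb{Z}_2)\cong\mathbb{Z}_2$), with action $L\pi/2=NA\pi/2$. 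Therefore, in each of the two relevant degrees, for $a\in\bigl(NA\pi/2,\,(N+1)A\pi/2\bigr]$ one gets $d^\pm(\Sigma_a,a)=N$; equivalently, $d^\pm(\Sigma_a,\cdot)$ is a nondecreasing step function with $d^\pm(\Sigma_a,a)\to\infty$ and
\[
\frac{2a}{A\pi}-1\;<\;d^\pm(\Sigma_a,a)\;\le\;\frac{2a}{A\pi}.
\]

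Finally, since $f=\mathrm{id}$ gives $f^{-1}\circ\log=\log$, Definition~\ref{defngrowth} reads
\[
\Gamma^\pm_\ast(W_\veps,\Sigma_a,\mathrm{id})=\varlimsup_{a\to\infty}\frac{\log\bigl(d^\pm(\Sigma_a,a)\bigr)}{\log a}.
\]
From the two-sided bound above one obtains $\log d^\pm(\Sigma_a,a)=\log a-\log\tfrac{A\pi}{2}+o(1)$ as $a\to\infty$, so the quotient converges to $1$, and in particular its upper limit equals $1$. This proves the proposition. The only genuinely delicate point is the first step, namely making sure that $\partial^F$ vanishes in degrees $-n+1$ and $n$ so that the truncated homology literally coincides with the truncated chain complex and the maps $i_\ast,\pi_\ast$ of (\ref{eq25a}) are injective, respectively surjective; this is immediate once one records that the four neighbouring chain groups are zero, and everything else is the index bookkeeping already done in Subsection~\ref{sec7.1} together with an elementary limit.
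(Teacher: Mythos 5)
Your proposal is correct and follows essentially the same route as the paper: identify that exactly one generator of degree $-n+1$ (resp.\ $n$) appears on each level $L=NA$, $A=\mathrm{lcm}(a_0,\dots,a_n)$, so the count grows linearly in the action window, and then take logarithms. The one place you are more careful than the paper's terse proof is in recording explicitly that $RFC_{-n}=RFC_{-n+2}=RFC_{n-1}=RFC_{n+1}=0$ (via (\ref{mudownest}), (\ref{muupest}), (\ref{mu+est}), (\ref{eqindexlg})), so that the boundary operator vanishes in the two relevant degrees and the truncated homology literally equals the truncated chain group; the paper silently invokes this when passing from ``$N$ generators'' to ``$\dim RFH_\ast^{(0,NA\pi/2]}=N$''.
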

\begin{rem}
 We observe that the boundedness of the Conley-Zehnder index for all closed Reeb orbits implies that the mean index (see \ref{1.4}) for all closed Reeb orbits is zero. This implies in particular that all Brieskorn manifolds $\Sigma_a$ with $\sum 1/a_k=1$ are neither Ustilovsky index positive, nor weakly index positive in the sense of Espina, \cite{Esp}, nor have convenient dynamics in the sense of Kwon and van Koert, \cite{KwKo}, Defn.\ 5.14. Using only results known until now, we can therefore not decide whether the contact homology, the $S^1$-equivariant symplectic homology or the mean Euler characteristic of these examples are invariant under subcritical surgery.
\end{rem}
As a second class of Brieskorn manifolds let us now turn to tuples $a=(2,2,2,a_3,...\,,a_n)$. In his diploma thesis (see also \cite{Fauck1}), the author considered in particular
\begin{align*}
 a&=(2,...\,,2,p\phantom{,q})\in\mathbb{N}^{n+1}, n \text{ odd, } p \text{ odd}\\
 \text{and }\qquad a&=(2,...\,,2,p,q)\in\mathbb{N}^{n+1}, n \text{ even, } p,q \text{ odd and } gcd(p,q)=1.
\end{align*}
 The corresponding Brieskorn manifolds $\Sigma_a$ are by Theorem \ref{theoBries} homeomorphic to the sphere $S^{2n-1}$ and the author calculated $RFH_\ast(W_\veps,\Sigma_a)$ for $n\geq 5$. He used that the chain complex $RFC_\ast(W_\veps,\Sigma_a)$ is zero for many values of $\ast$ and that the boundary operator $\partial^F$ reduces the action.\\
For $n=3$ however, we encounter the problem that $RFC_\ast(W_\veps,\Sigma_a)$ has generators for almost all $\ast$ so that $\partial^F$ is not trivially zero. Using a symplectic $\mathbb{Z}_2$-symmetry on the filling $V_\veps$ we will now overcome this problem.\\
For convenience, assume that for $a=(2,2,2,a_3,...\,,a_n)$ holds that $2< a_k\leq a_{k+1}$ for all $k\geq 3$. We find that for $0<L<a_3$ or $0>L>-a_3$ exactly the even $L$ give nontrivial critical manifolds $\mathcal{N}^{L\pi/2}$, all of which are diffeomorphic to $\Sigma_{(2,2,2)}$.\\ Let us have a closer look at this particular Brieskorn manifold. Writing $z_k=x_k+iy_k$, we obtain from the defining equation for $\Sigma_{(2,2,2)}=V_{(2,2,2)}(0)\cap S^{2n+1}$ that
\begin{align*}
 && 0 &= z_0^2+z_1^2+z_2^2& &\text{and}& 1&=|z_0|^2+|z_1|^2+|z_2|^2\\
 &\Leftrightarrow& 0&=\sum(x_k^2-y_k^2) + 2i\sum x_ky_k& &\text{and}& 1&=\sum(x_k^2+y_k^2)\\
 &\Leftrightarrow& \frac{1}{2}&=x_0^2+x_1^2+x_2^2=y_0^2+y_1^2+y_2^2& &\text{and}& 0&=x_0y_0+x_1y_1+x_2y_2.
\end{align*}
These equations describe the half unit tangent bundle $\frac{1}{2}S^\ast(\frac{1}{2}S^2)$ of the half unit sphere, which is of course naturally diffeomorphic to the true unit tangent bundle $S^\ast S^2$ of the unit sphere $S^2$. In Appendix \ref{app1}, we show that there is on $S^\ast S^2$ a perfect Morse function $\psi$ having exactly 4 critical points such that 
\[H_\ast(S^\ast S^2,\mathbb{Z}_2)\cong
\begin{cases}
  \mathbb{Z}_2 & \text{if } \ast=0,1,2,3\\
  0 & \text{otherwise.}
\end{cases}\]
So if $\mathcal{N}^{L\pi/2}\cong S^\ast S^2$, in particular if $|L|<a_3, L\neq 0$ and $L$ even, we get four generators for $RFC(W_\veps,\Sigma_a)$. We denote them by $L\gamma_0,L\gamma_1,L\gamma_2,L\gamma_3$, where $L\gamma_j$ is the generator of $H_j(\mathcal{N}^{L\pi/2},\mathbb{Z}_2)$. Note that these generators are exactly given by the 4 critical points of the perfect Morse function $\psi$. Hence, we do not need Theorem \ref{reductiontheo} here.\\
In order to describe $RFC_\ast(W_\veps,\Sigma_a)$ for $\ast$ around $0$, we calculate $\mu(L\gamma_j)$ for $|L|\leq a_3$ explicitly and give estimates for all other $L$. For $0<L<a_3$, the indices are given by Proposition \ref{lem26} as
\begin{align}
 \mu(L\gamma_j) &= 2\Big(\sum\big\lceil L/a_k\big\rceil\Big)-2L+ind_\psi(L\gamma_j)-(n-1)\notag\\
 &=2\Big(\underbrace{3\cdot L/2}_{a_0,a_1,a_2}+\underbrace{n-2}_{a_3,...\,,a_n}\Big)-2L+j-n+1\notag\\
 &=L+n-3+j\label{muLgammaj}.
\end{align}
For $L\leq 0$, we estimate for any critical point $c\in\mathcal{N}^{L\pi/2}$ that 
\begin{align}
 \mu(c) &= \textstyle 2\Big(\sum\big\lceil L/a_k\big\rceil\Big)-2L+ind_h(c)-(n-1)\notag\\
 &\leq  \textstyle 2\Big(\sum L/a_k+(n+1)-n(a,L)\Big)-2L+2\cdot n(a,L)-3-n+1\notag\\
 &=2 \textstyle \Big(\sum 1/a_k-1\Big)\cdot L + n\notag\\
 &\leq n \label{muLgammadownest}.
\end{align}
For $L\geq a_3$, we estimate for any critical point $c\in\mathcal{N}^{L\pi/2}$ that 
\begin{align}
 \mu(c) &= \textstyle 2\Big(\sum_{k\geq 2}\big\lceil L/a_k\big\rceil\Big)+\underbrace{4\big\lceil L/2\big\rceil}_{z_0,z_1}-2L+ind_h(c)-(n-1)\notag\\
 &\geq \textstyle 2\Big(\sum_{k\geq 2}\big\lceil a_3/a_k\big\rceil\Big)+2L-2L+0-n+1\notag\\
 &\geq2\Big(\underbrace{a_3/2}_{z_2} + \underbrace{n-2}_{z_3,...\,,z_n}\Big)-n+1\notag\\
 &=a_3+n-3.\label{muLgammaupest}
\end{align}
For $-a_3<L<0$, we find by analogue calculations for the index that 
\begin{align}
 \mu(L\gamma_j)=L-(n-1)+j.\label{muLgammaj-}
\end{align}
and for any $c\in\mathcal{N}^{L\pi/2}$ the analogue estimates
\begin{align}
 \mu(c)\leq -a_3-n+4 \quad\text{ if }L\leq - a_3\qquad\text{ and }\qquad \mu(c)\geq -(n-1)\quad\text{ if } L\geq 0.\label{muLgammaest-}
\end{align}
These calculations and estimates imply for degrees $\ast\in(n,a_3+n-3)$ respectively $\ast\in(-a_3-n+4,-n+1)$ that the only generators in $RFC_\ast(W_\veps,\Sigma_a)$ are the $L\gamma_j$ for even $L$. The distribution of these elements among the different values of $\ast$ is shown in the following table
\begin{equation}\label{tableLgen}
 \begin{array}{l|c|c|c|c|c|c|c}
   \begin{aligned}\ast& \scriptstyle-(n-3) \text{ if } L>0\\\ast&\scriptstyle+(n-1) \text{ if } L<0\end{aligned} & \vdots&  L &   L+1 &  (L+2) &  (L+2)+1 &  (L+4) &   \vdots\\\hline
  \text{generators } & \vdots & \begin{aligned}& \;\,L\gamma_0\\&(L-2)\gamma_2\end{aligned}  & \begin{aligned}& \;\,L\gamma_1\\& (L-2)\gamma_3\end{aligned}  & \begin{aligned}& \;\,L\gamma_2\\& (L+2)\gamma_0\end{aligned}  & \begin{aligned}& \;\,L\gamma_3\\& (L+2)\gamma_1\end{aligned}  & \begin{aligned}& (L+4)\gamma_0\\& (L+2)\gamma_2\end{aligned}  & \vdots 
  \end{array}  
\end{equation}
\begin{theo}\label{theobriesspheres}
 Let $a=(2,2,2,a_3,...\,,a_n)$ be as above. Then $RFH(W_\veps,\Sigma_a)$ is independent of the filling and satisfies
 \[RFH_\ast(W_\veps,\Sigma_a)\cong\big(\mathbb{Z}_2\big)^2  \phantom{\Leftrightarrow}\]
  for $n+2\leq \ast\leq a_3+n-5$ or $-a_3-n+6\leq\ast\leq -n-1$, which is equivalent to $n+1\leq|\ast-1/2|-1/2\leq a_3+n-6$.
\end{theo}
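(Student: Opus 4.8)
\textbf{Proof plan for Theorem \ref{theobriesspheres}.}
The strategy is to combine the index bookkeeping already carried out in the text with the observation that in the relevant range of degrees the chain complex $RFC_\ast(W_\veps,\Sigma_a)$ is generated precisely by the four elements $L\gamma_0,L\gamma_1,L\gamma_2,L\gamma_3$ coming from the critical manifolds $\mathcal{N}^{L\pi/2}\cong S^\ast S^2$ for even $L$ with $|L|<a_3$, together with the boundary operator $\partial^F$ being forced to vanish there. First I would fix a degree $\ast$ with $n+2\leq\ast\leq a_3+n-5$ (the negative range is entirely symmetric via the involution/sign change discussed in Discussion \ref{dis0} and Discussion \ref{dissym}, so it suffices to treat one of them). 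By the estimates \eqref{muLgammaupest} and \eqref{muLgammadownest}, no generator $c$ lying in a critical manifold $\mathcal{N}^{L\pi/2}$ with $L\leq 0$ or $L\geq a_3$ can have $\mu(c)=\ast$ once $\ast>n$ and $\ast<a_3+n-3$; hence every generator in $RFC_\ast$ is some $L\gamma_j$ with $0<L<a_3$ and $L$ even. From the explicit formula \eqref{muLgammaj}, $\mu(L\gamma_j)=L+n-3+j$, so for a fixed $\ast$ in the stated range the solutions are $(L,j)$ with $L$ even, $j\in\{0,1,2,3\}$ and $L+j=\ast-n+3$; exactly two such pairs occur (one with $j$ even, one with $j$ odd, the two values of $L$ differing by $2$), which is the content of table \eqref{tableLgen}. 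Thus $\dim_{\mathbb{Z}_2}RFC_\ast(W_\veps,\Sigma_a)=2$ in this range.

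Next I would show $\partial^F$ vanishes on $RFC_\ast$ for $\ast$ in the interior of this range. The boundary operator reduces the action by Lemma \ref{lem2}, i.e. $\#_2\mathcal{M}^0(c^-,c^+)\neq 0$ only if $\mathcal{A}^H(c^-)\leq\mathcal{A}^H(c^+)$; moreover by Proposition \ref{Prop13} it lowers the grading by exactly $1$. So $\partial^F:RFC_{\ast+1}\to RFC_\ast$ and $\partial^F:RFC_\ast\to RFC_{\ast-1}$, and by the count above all three groups are two-dimensional and each generated by two elements $L\gamma_j$ with $0<L<a_3$. The key point is that, within one critical manifold $\mathcal{N}^{L\pi/2}\cong S^\ast S^2$, the four generators are the critical points of the \emph{perfect} Morse function $\psi$ of Appendix \ref{app1}, so the internal Morse differential is zero; and between two \emph{distinct} critical manifolds $\mathcal{N}^{L_1\pi/2},\mathcal{N}^{L_2\pi/2}$ with $0<L_1<L_2<a_3$ a connecting trajectory with at least one genuine $\mathcal{A}^H$-cascade exists only if the grading drops by $1$, which by \eqref{muLgammaj} forces $L_2-L_1+(j_2-j_1)=-1$ with $L_2-L_1\geq 2$, i.e. $j_2-j_1\leq -3$, hence $(j_1,j_2)=(3,0)$ and $L_2-L_1=2$. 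I would rule this single possibility out using the symplectic $\mathbb{Z}_2$-symmetry: the filling $V_\veps$ carries the symmetry $\sigma(z)=(-z_0,-z_1,-z_2,z_3,\dots,z_n)$ (a special case of Discussion \ref{dissym}), whose fixed point set is a symplectic submanifold, and $\Sigma_a$ together with $H$, $h$, $g_h$ can be chosen $\sigma$-invariant; the generators $L\gamma_j$ all lie in the fixed point set $V_{fix}$ since $\mathcal{N}^{L\pi/2}\subset\{z_0=z_1=z_2=0\}$ for even $L$. Then Corollary \ref{symhom} forces the count $\#_2\mathcal{M}^0(L\gamma_3,(L+2)\gamma_0)$ to be even, hence zero in $\mathbb{Z}_2$, provided the relevant moduli space contains no trajectory lying entirely in $V_{fix}$; this last condition is exactly the hypothesis of Theorem \ref{theotranssym}, and it holds because the generators in $V_{fix}$ on intermediate action levels are themselves confined to $\{z_0=z_1=z_2=0\}$ and a cascade between them lowering the grading by $1$ is already excluded by the same index count one dimension down. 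Putting these together, $\partial^F=0$ in the interior range, so $RFH_\ast(W_\veps,\Sigma_a)=RFC_\ast(W_\veps,\Sigma_a)\cong(\mathbb{Z}_2)^2$.

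Finally, for the filling-independence: once $\partial^F=0$ on the relevant stretch, $RFH_\ast$ equals $RFC_\ast$, which is manifestly built only from the critical manifolds $\mathcal{N}^\eta$ of the Reeb flow on $\Sigma_a$ and the Morse data on them — none of which involves the filling. Alternatively one invokes Theorem \ref{theoinvar}: in the stated degree window the Conley–Zehnder index of every contributing closed Reeb orbit satisfies $\mu_{CZ}(\gamma)=2(\sum\lceil L/a_k\rceil)-2L=3L/2+2(n-2)-2L$ wait — more simply, $\mu_{CZ}(\gamma)=\mu(\gamma)-ind_h(\gamma)-\tfrac12+\tfrac12\dim\mathcal{N}$; since $\dim\mathcal{N}=3$ and $ind_h\in\{0,\dots,3\}$, $\mu_{CZ}\geq\mu(\gamma)-3+\tfrac32-\tfrac12=\mu(\gamma)-2\geq n$ for $\ast\geq n+2$, which is $>3-n$ for $n\geq 3$, so Theorem \ref{theoinvar} gives independence of $W_\veps$ directly. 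The restatement $n+1\leq|\ast-1/2|-1/2\leq a_3+n-6$ is just the two inequalities $n+2\leq\ast\leq a_3+n-5$ and their reflection $-a_3-n+6\leq\ast\leq -n-1$ written in one line.

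\textbf{Main obstacle.} The one genuinely delicate step is the vanishing of the single potentially nonzero matrix entry $\#_2\mathcal{M}^0(L\gamma_3,(L+2)\gamma_0)$: the index count alone does not kill it (it has the right degree drop), and I must invoke the $\mathbb{Z}_2$-symmetry together with the transversality-with-symmetry machinery (Theorem \ref{theotranssym}, Corollary \ref{symhom}) and carefully check that the fixed-locus moduli spaces appearing in the hypotheses of that theorem are empty — which in turn reduces to an index estimate on $\Sigma_{(2,2,2)}\subset V_{fix}$ one degree lower. Everything else is bookkeeping with the index formulas \eqref{muLgammaj}–\eqref{muLgammaest-} already established in the text.
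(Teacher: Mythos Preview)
Your proposal contains two serious errors that make the argument fail as written.

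\textbf{Wrong location of the critical manifolds, hence wrong symmetry.} For even $0<L<a_3$ the formula \eqref{eq6a} gives
\[
\mathcal{N}^{L\pi/2}=\Sigma_a\cap\big(\mathbb{C}^3\times\{0\}\big)=\{z\in\Sigma_a\mid z_3=\dots=z_n=0\}\cong\Sigma_{(2,2,2)},
\]
not $\{z_0=z_1=z_2=0\}$ as you assert. Consequently your symmetry $\sigma(z)=(-z_0,-z_1,-z_2,z_3,\dots,z_n)$ has fixed set $V_{fix}=\{z_0=z_1=z_2=0\}$, which meets $\mathcal{N}^{L\pi/2}$ only in the empty set; in particular none of the generators $L\gamma_j$ lie in $V_{fix}$, so Corollary~\ref{symhom} is inapplicable. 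The paper instead uses the order-$2$ symmetry $\sigma(z)=(z_0,z_1,-z_2,z_3,\dots,z_n)$, whose fixed set is $V_{fix}=\{z_2=0\}$, an exact filling of $\Sigma_{\bar a}$ with $\bar a=(2,2,a_3,\dots,a_n)$. This $\sigma$ restricts on $\mathcal{N}^{L\pi/2}\cong S^\ast S^2$ to the reflection $r$ of Appendix~\ref{app1}, and the four critical points of the $r$-invariant perfect Morse function constructed there all have $z_2=0$, so they genuinely lie in $V_{fix}$.

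\textbf{Wrong sign in the index equation, hence missing cases.} For a nontrivial entry of $\partial^F$ one needs $c^-\in\mathcal{N}^{L^-\pi/2}$, $c^+\in\mathcal{N}^{L^+\pi/2}$ with $L^-<L^+$ (Lemma~\ref{lem2}) and $\mu(c^+)-\mu(c^-)=+1$, i.e.\ by \eqref{muLgammaj}
\[
(L^+-L^-)+(j^+-j^-)=+1,
\]
not $-1$. With $L^+-L^-\in\{2,4,\dots\}$ this yields \emph{four} possibilities: $(j^-,j^+)\in\{(2,1),(1,0),(3,2)\}$ with $L^+-L^-=2$, and $(j^-,j^+)=(3,0)$ with $L^+-L^-=4$. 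Your single ``remaining case'' $(L\gamma_3,(L+2)\gamma_0)$ has $\mu(c^+)-\mu(c^-)=-1$ and is therefore automatically empty; the four genuine cases are the entries $a_{12}$ (case~i) and $a_{01},a_{23},a_{03}$ (case~ii) in the paper's proof. The paper disposes of each by computing the restricted indices $\mu(\,\cdot\,)|_{V_{fix}}$ inside the Brieskorn filling of $\Sigma_{\bar a}$ and showing that the relevant moduli spaces $\widehat{\mathcal{M}}(\cdot,\cdot)|_{V_{fix}}$ have dimension $\leq 0$ (hence are empty after dividing by the free $\mathbb{R}$-shift), so that the hypotheses of Theorem~\ref{theotranssym} are met and Corollary~\ref{symhom} forces all four counts to vanish mod~$2$. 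The case $a_{03}$ is the most delicate, as it can involve two cascades through the intermediate level $\mathcal{N}^{L\pi/2}$; there one also has to verify emptiness of $\widehat{\mathcal{M}}\big((L-2)\gamma_3,\mathcal{N}^{L\pi/2}\big)|_{V_{fix}}$ and $\widehat{\mathcal{M}}\big(\mathcal{N}^{L\pi/2},(L+2)\gamma_0\big)|_{V_{fix}}$ via \eqref{dimcN}--\eqref{dimNc}.

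For filling-independence the paper simply applies Theorem~\ref{theoinvar}: for $L>0$ one has $\mu_{CZ}(\gamma)=2\sum\lceil L/a_k\rceil-2L\geq 2L(\sum 1/a_k-1)\geq L\geq 1>3-n$, which is cleaner than your attempted computation.
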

\begin{proof}
 For Reeb orbits $\gamma$ with positive period $L\pi/2>0$, we can estimate the Conley-Zehnder index $\mu_{CZ}(\gamma)$ by
 \[\mu_{CZ}(\gamma)=2\left(\sum \big\lceil L/a_k\big\rceil\right)-2L\geq 2L\left(\sum 1/a_k-1\right)\geq 2L\big(3/2-1\big) = L >3-n.\]
 Theorem \ref{theoinvar} then implies that $RFH(W_\veps,\Sigma_a)$ is independent of the filling $V_\veps$.\pagebreak\\
 For the explicit calculations of homology groups, we first note that our calculations and estimates (\ref{muLgammaj}) - (\ref{muLgammaest-}) and the table (\ref{tableLgen}) tell us the following about the Rabinowitz-Floer chain complex. For $L>0$ and $n+1\leq \ast\leq a_3+n-4$ we have two cases depending on the parity of $\ast$
 \begin{itemize}
  \item[i.)] $\ast-(n-3)=L$ is even, then $RFC_\ast(W_\veps,\Sigma_a)$ is $\mathbb{Z}_2$-generated by
  \[\begin{aligned}RFC_\ast(W_\veps,\Sigma_a)&=\mathbb{Z}_2\langle L\gamma_0,(L-2)\gamma_2\rangle\\RFC_{\ast+1}(W_\veps,\Sigma_a)&=\mathbb{Z}_2\langle L\gamma_1,(L-2)\gamma_3\rangle\end{aligned},\text{ i.e.\ }\quad
  \begin{tabular}{l|c|c}
   $\ast$ & $\scriptstyle L+(n-3)$ & $\scriptstyle L+(n-3)+1$\\\hline
   gen. & $L\gamma_0$ & $L\gamma_1$\\
   & $(L-2)\gamma_2$ & $(L-2)\gamma_3$
  \end{tabular}\]
  \item[ii.)] $\ast-(n-3)=L+1$ is odd, then $RFC_\ast(W_\veps,\Sigma_a)$ is $\mathbb{Z}_2$-generated by
  \[\begin{aligned}RFC_\ast(W_\veps,\Sigma_a)&=\mathbb{Z}_2\langle L\gamma_1,(L-2)\gamma_3\rangle\\RFC_{\ast+1}(W_\veps,\Sigma_a)&=\mathbb{Z}_2\langle L\gamma_2,(L+2)\gamma_0\rangle\end{aligned},\text{ i.e.\ }\quad
  \begin{tabular}{l|c|c}
   $\ast$ & $\scriptstyle L+(n-3)+1$ & $\scriptstyle L+(n-3)+2$\\\hline
   gen. & $L\gamma_1$ & $L\gamma_2$\\
   & $(L-2)\gamma_3$ & $(L+2)\gamma_0$
  \end{tabular}\]
 \end{itemize}
A similar result holds for $L<0$ and $-a_3-n+5\leq \ast\leq -n$ if we replace $+(n-3)$ by $-(n+1)$. As all following arguments are completely analogue for $L<0$ or $L>0$, we restrict ourself from now on to $L>0$. To prove the theorem, it suffices to show that in both of the above cases the boundary operator $\partial^F$ is zero. We will do so with the help of the following order 2 symplectic symmetry:
\[\sigma:\mathbb{C}^{n+1}\rightarrow\mathbb{C}^{n+1},\qquad(z_0,z_1,z_2,z_3,...\,,z_n)\mapsto (z_0,z_1,-z_2,z_3,...\,,z_n).\]
As $a_2=2$, it follows from Discussion \ref{dissym} that $\sigma$ restricts to a well-defined symplectic symmetry on $V_\veps$. Its fixed point set $V_{fix}$ is given by
\[V_{fix}=\big\{z\in V_\veps\,\big|\,z_2=0\big\}=V_\veps\cap\big(\mathbb{C}^2\times\{0\}\times\mathbb{C}^{n-2}\big),\]
which is obviously a symplectic submanifold of $V_\veps$. In fact, it is an exact filling of the contact manifold
\[\Sigma_{fix}=\partial V_{fix}=\big\{z\in \Sigma_a\,\big|\,z_2=0\big\}=\Sigma_a\cap\big(\mathbb{C}^2\times\{0\}\times\mathbb{C}^{n-2}\big),\]
which is contactomorphic to the Brieskorn manifold $\Sigma_{\bar{a}}$ with $\bar{a}=(2,2,a_3,...\,,a_n)$.\\
Recall that for $0<L<a_3$ all critical manifolds $\mathcal{N}^{L\pi/2}$ are diffeomorphic to $S^\ast S^2$ and given by
\[\mathcal{N}^{L\pi/2}=\Sigma_a\cap\big(\mathbb{C}^3\times\{0\}\big)\cong\Sigma_{(2,2,2)}.\]
The symmetry $\sigma$ restricts therefore on $\mathcal{N}^{L\pi/2}\cong S^\ast S^2$ to the involution
\[r: S^\ast S^2\rightarrow S^\ast S^2,\qquad (x_0,x_1,x_2,y_0,y_1,y_2)\mapsto(x_0,x_1,-x_2,y_0,y_1,-y_2),\]
where $z_k=x_k+iy_k$. In Appendix \ref{app1}, we show that there exists on $S^\ast S^2$ a Morse-Smale pair $(\psi,g)$ of a perfect Morse function  and a metric which are invariant under $r$. Now, if the Global Transversality Theorem \ref{theotranssym} holds, then it follows from Corollary \ref{symhom} that the number of all unparametrized flow-lines with cascades between relevant generators is 0 mod 2. This would then imply that $\partial^F\equiv 0$.\medskip\\
It remains to show that Theorem \ref{theotranssym} can be applied, i.e. that we can find a $\sigma$-symmetric almost complex structure $J$ for which transversality holds for $\ast\in[n+1,a_3+n-4]$.\linebreak Note that we can restrict ourself to this degree window, as we are only interested in $RFH_\ast(W_\veps,\Sigma_a)$ for $\ast\in[n+2,a_3+n-5]$ and as the definition of the homology group $RFH_\ast(W_\veps,\Sigma_a)$ only involves the chain groups $RFC_{\ast-1}(W_\veps,\Sigma_a), RFC_\ast(W_\veps,\Sigma_a)$ and $RFC_{\ast+1}(W_\veps,\Sigma_a)$. We proceed by considering the two cases $i.)$ and $ii.)$ separately.
\begin{itemize}
 \item In $i.)$, when $\ast-(n-3)=L$ is even, the argument goes as follows. As $RFC_\ast(W_\veps,\Sigma_a)$ is freely generated by $L\gamma_0$ and $(L-2)\gamma_2$ and $RFC_{\ast+1}(W_\veps,\Sigma_a)$ is freely generated by $L\gamma_1$ and $(L-2)\gamma_3$, we may write the boundary operator $\partial^F_{\ast+1}:RFC_{\ast+1}\rightarrow RFC_\ast$ as a 2 by 2 matrix with $\mathbb{Z}_2$-entries such that
 \[\partial^F \begin{pmatrix} L\gamma_1\\(L-2)\gamma_3\end{pmatrix}=\begin{pmatrix}a_{10} & a_{12}\\ a_{30} & a_{32}\end{pmatrix}\begin{pmatrix}L\gamma_0\\ (L-2)\gamma_2\end{pmatrix}.\]
 Here, $a_{ij}$ is the $\mathbb{Z}_2$-count of flow lines with cascades (flwc) from $L\gamma_j$ to $L\gamma_i$. For $a_{10}$ and $a_{32}$ these are flow lines from $L\gamma_0$ to $L\gamma_1$ resp.\ from $(L-2)\gamma_2$ to $(L-2)\gamma_3$. As these are between orbits of the same period, they are actually Morse flow lines and the number of such flow lines is even, as we have a perfect Morse function and thus $0=a_{10}=a_{32}$. The coefficient $a_{30}$ counts flwc going from $L\gamma_0$ to $(L-2)\gamma_3$. As the period can only increase along flow lines (Lemma \ref{lem2}), we know that there are no such flow lines and hence that $a_{30}=0$ as well.\\
 So it only remains to show that Theorem \ref{theotranssym} can be applied to flow lines from $(L-2)\gamma_2$ to $L\gamma_1$ counted by $a_{12}$. Note that there is no closed Reeb orbit on $\Sigma_a$ having a period between $(L-2)\pi/2$ and $L\pi/2$. This shows that every flow line between $(L-2)\gamma_2$ and $L\gamma_1$ has exactly 1 cascade. For Theorem \ref{theotranssym}, it suffices therefore to show that $\widehat{\mathcal{M}}\big((L-2)\gamma_2,L\gamma_1,1\big)\big|_{V_{fix}}$ is empty for a generic choice of $J$ on $V_{fix}$. For that, we calculate the indices of $(L-2)\gamma_2$ and $L\gamma_1$ in $V_{fix}$. As $V_{fix}$ is $V_\veps$ with the $3^{rd}$-coordinate omitted, we have by Proposition \ref{lem26} that
 \begin{align*}
  \mu\big(L\gamma_1\big)\big|_{V_{fix}} &= 2\Big(\underset{k\neq 2}{\textstyle\sum} \big\lceil L/a_k\big\rceil\Big)-2L +\underbrace{0}_{ind_h(L\gamma_1)|_{V_{fix}}} - (n-2)\\
  &=2\big(\underbrace{2\cdot L/2}_{a_0,a_1} + \underbrace{n-2}_{a_3,...\,,a_n}\big)-2L+0-(n-2)&&=n-2\\
  \mu\big((L-2)\gamma_2\big)\big|_{V_{fix}} &= 2\cdot\Big(\underset{k\neq 2}{\textstyle\sum} \big\lceil (L-2)/a_k\big\rceil\Big)-2(L-2) +1 - (n-2)\\
  &=2\cdot\big(2\cdot (L-2)/2+n-2\big)-2(L-2)+1-(n-2)&&=n-1.
 \end{align*}
Hence we obtain from Theorem \ref{theotrans} that for a generic $J$ on $V_{fix}$ we have
\begin{align*}
 \dim \widehat{\mathcal{M}}\big((L-2)\gamma_2,L\gamma_1,1\big)\big|_{V_{fix}}&=\mu\big(L\gamma_1\big)\big|_{V_{fix}}-\mu\big((L-2)\gamma_2\big)\big|_{V_{fix}}+(1-1)\\
 &=(n-2)-(n-1)=-1,
\end{align*}
which shows that this space is generically empty.\pagebreak
\item  In the second case $ii.)$, when $\ast-(n-3)=L+1$ is odd, we argue similar. We write $\partial^F$ again as a matrix with $\mathbb{Z}_2$-coefficients such that
\[\partial^F\begin{pmatrix} L\gamma_2\\(L+2)\gamma_0\end{pmatrix} = \begin{pmatrix} a_{21}& a_{23}\\ a_{01} & a_{03}\end{pmatrix}\begin{pmatrix}L\gamma_1\\(L-2)\gamma_3\end{pmatrix}.\]
Here, $a_{21}=0$ as it counts flwc from $L\gamma_1$ to $L\gamma_2$ which are Morse flow lines whose number is again even. For the remaining three coefficients, we first calculate the restricted indices as
\begin{align*}
 \mu\big(L\gamma_1\big)\big|_{V_{fix}}&=2\cdot\Big(\underset{k\neq 2}{\textstyle\sum}\big\lceil L/a_k\big\rceil\Big) -2L + 0 -(n-2)\\
 &=2\cdot\big(2\cdot L/2 + n-2\big)-2L-(n-2)&&=n-2\\
 \mu\big(L\gamma_2\big)\big|_{V_{fix}}&=\mu\big(L\gamma_1\big)\big|_{V_{fix}}+1 &&= n-1\\
 \mu\big((L-2)\gamma_3\big)\big|_{V_{fix}}&=2\cdot\Big(\underset{k\neq 2}{\textstyle\sum}\big\lceil (L-2)/a_k\big\rceil\Big) -2(L-2) + 1 -(n-2)\\
 &=2\cdot\big(2\cdot (L-2)/2 + n-2\big)-2(L-2)+1-(n-2)&&=n-1\\
 \mu\big((L+2)\gamma_0\big)\big|_{V_{fix}}&=2\cdot\Big(\underset{k\neq 2}{\textstyle\sum}\big\lceil (L+2)/a_k\big\rceil\Big) -2(L+2) + 0 -(n-2)\\
 &=2\cdot\big(2\cdot (L+2)/2 + n-2\big)-2(L+2)-(n-2)&&=n-2.
\end{align*}
Calculating again dimensions, we find by Theorem \ref{theotrans} that
\begin{align*}
 \dim \widehat{\mathcal{M}}\big(L\gamma_1,(L+2)\gamma_0,1\big)\big|_{V_{fix}}&=(n-2)-(n-2)+0=0\\
 \dim \widehat{\mathcal{M}}\big((L-2)\gamma_3,L\gamma_2,1\big)\big|_{V_{fix}}&=(n-1)-(n-1)+0=0\\
 \dim \widehat{\mathcal{M}}\big((L-2)\gamma_3,(L+2)\gamma_0,1\big)\big|_{V_{fix}}&=(n-2)-(n-1)+0=-1.
\end{align*}
Note that $\mathbb{R}$ acts freely on $\widehat{\mathcal{M}}(\cdot,\cdot,1)$ by time shift on the non-constant cascade. This shows that all three above spaces have to be empty. As a direct consequence, we find $a_{01}=a_{23}=0$ as there is no closed Reeb orbit whose period lies strictly between $(L-2)\pi/2$ and $L\pi/2$ resp.\ $L\pi/2$ and $(L+2)\pi/2$.\\
For $a_{03}$ however, we have to exclude flow lines with two cascades between \linebreak $(L-2)\pi/2$ and $(L+2)\pi/2$ passing through the intermediate critical manifold $\mathcal{N}^{L\pi/2}$. Here, we have to show for Theorem \ref{theotranssym} that $\widehat{\mathcal{M}}\big((L-2)\gamma_3,\mathcal{N}^{L\pi/2}\big|_{V_{fix}}\big)$ and $\widehat{\mathcal{M}}\big(\mathcal{N}^{L\pi/2}\big|_{V_{fix}},(L+2)\gamma_0\big)$ are both empty. Noting that 
\[\dim \mathcal{N}^{(L-2)\pi/2}\big|_{V_{fix}}=\dim \mathcal{N}^{L\pi/2}\big|_{V_{fix}}= \dim \mathcal{N}^{(L+2)\pi/2}\big|_{V_{fix}}=1,\]
we calculate with the dimension formulas (\ref{dimcN}) and (\ref{dimNc}) that
\begin{align*}
 \dim \widehat{\mathcal{M}}\big((L-2)\gamma_3,\mathcal{N}^{L\pi/2}\big|_{V_{fix}}\big) &=\underbrace{2(n-2)}_{\mu_{CZ}(\mathcal{N}^{L\pi/2})} - \underbrace{2(n-2)}_{\mu_{CZ}((L-2)\gamma_3)} + \frac{1+1}{2} -1=0\\
 \dim \widehat{\mathcal{M}}\big(\mathcal{N}^{L\pi/2}\big|_{V_{fix}},(L+2)\gamma_0\big) &=2(n-2) - 2(n-2) + \frac{1-1}{2} +0=0.
\end{align*}
As $\mathbb{R}$ acts also freely on these spaces, we know again that they are empty. Hence there cannot be a flow line with two cascades between $(L-2)\gamma_3$ and $(L+2)\gamma_0$ with at least one cascade in $V_{fix}$. As $L\pi/2$ is the only period of a closed Reeb orbit between $(L-2)\pi/2$ and $(L+2)\pi/2$, we may hence apply Theorem \ref{theotranssym} and find also $a_{03}=0$.\qedhere
\end{itemize}
\end{proof}
\begin{rem}
 The result of Theorem \ref{theobriesspheres} (and the ideas of its proof) together with the calculations in \cite{Fauck1}, Thm.2.10, can be used to show that the Brieskorn manifolds $\Sigma_{(2,2,2,p)}$ for $p$ odd are all non-contactomorphic for different values of $p$. This improves a result shown by the author in his diploma thesis, where he could only prove that $\Sigma_{(2,2,2,p)}$ and $\Sigma_{(2,2,2,q)}$ are non-contactomorphic if $gcd(p+2,q+2)=1$.
\end{rem}
\begin{cor}
 For the unit cotangent bundle $S^\ast S^2\cong\Sigma_{(2,2,2)}$ holds that its Rabinowitz-Floer homology is independent from the filling and given by
 \[RFH_\ast(S^\ast S^2)\cong \big(\mathbb{Z}_2\big)^2\qquad\forall\;\ast\in\mathbb{Z}.\]
\end{cor}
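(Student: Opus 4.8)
The plan is to deduce the statement about $S^\ast S^2 \cong \Sigma_{(2,2,2)}$ as a limiting/degenerate case of the techniques developed for the Brieskorn spheres $\Sigma_{(2,2,2,a_3,\dots,a_n)}$ in Theorem \ref{theobriesspheres}, specialized to $n=2$. First I would set up the chain complex: the Reeb flow on $\Sigma_{(2,2,2)}$ has period spectrum $\{L\pi/2 \mid L \in 2\mathbb{Z}\setminus\{0\}\}$, and for every such $L$ the critical manifold $\mathcal{N}^{L\pi/2}$ equals all of $\Sigma_{(2,2,2)} \cong S^\ast S^2$. Using the perfect $r$-invariant Morse function $\psi$ on $S^\ast S^2$ from Appendix \ref{app1} (with exactly four critical points realizing $H_\ast(S^\ast S^2,\mathbb{Z}_2) \cong \mathbb{Z}_2$ in degrees $0,1,2,3$), each nonzero level $L$ contributes four generators $L\gamma_0, L\gamma_1, L\gamma_2, L\gamma_3$, and by the Reduction Theorem \ref{reductiontheo} (or directly, since $\psi$ is already perfect) these generate the Rabinowitz-Floer chain complex. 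Then I would compute the gradings via Proposition \ref{lem26}: for $n=2$, $a=(2,2,2)$, and $L$ even one gets $\mu(L\gamma_j) = 2(3\cdot L/2) - 2L + j - (n-1) = L - 1 + j$ for $L>0$ and analogously $\mu(L\gamma_j) = L - 1 + j$ for $L<0$ (reading off the sign conventions as in (\ref{muLgammaj}) and (\ref{muLgammaj-})). A short bookkeeping argument then shows that for each fixed target degree $\ast \in \mathbb{Z}$ there are exactly two generators in degree $\ast$ and two in degree $\ast\pm 1$, coming from consecutive even values of $L$, exactly as in the table (\ref{tableLgen}).

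Next I would show the boundary operator $\partial^F$ vanishes, by the same $\mathbb{Z}_2$-symmetry argument as in the proof of Theorem \ref{theobriesspheres}, now applied globally rather than just in a degree window. The key point is the order-2 symplectic symmetry $\sigma(z_0,z_1,z_2) = (z_0,z_1,-z_2)$, which restricts to a well-defined symmetry on the filling $V_\veps$ (via Discussion \ref{dissym}, since $a_2 = 2$) with fixed-point set $V_{fix} = V_\veps \cap (\mathbb{C}^2 \times \{0\})$, an exact filling of $\Sigma_{(2,2)} \cong \emptyset$ — and here I must be careful, because for $n=2$ the "equator" $\Sigma_{(2,2)}$ is $0$-dimensional (two points), which is a degenerate situation that needs to be checked does not break the transversality-with-symmetry machinery; the cascades lying in $V_{fix}$ would have to live over this degenerate locus, and I expect a direct dimension count (using Theorem \ref{theotranssym} and Corollary \ref{symhom}, together with the explicit indices in $V_{fix}$ computed exactly as in the proof of Theorem \ref{theobriesspheres}) to rule them out. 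Given that, each relevant matrix entry of $\partial^F$ between generators of adjacent degrees is either a count of Morse flow lines on a single $\mathcal{N}^{L\pi/2}$ (even, since $\psi$ is perfect), or a count of flow lines with one or two cascades between orbits of different (strictly ordered, by Lemma \ref{lem2}) periods whose moduli spaces are generically empty by the index computation, or is ruled out by action monotonicity. Hence $\partial^F \equiv 0$ and $RFH_\ast(W_\veps, S^\ast S^2) \cong RFC_\ast(W_\veps, S^\ast S^2) \cong (\mathbb{Z}_2)^2$ for all $\ast$.

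Finally, filling-independence: for a Reeb orbit of positive period $L\pi/2$ on $\Sigma_{(2,2,2)}$ the transverse Conley-Zehnder index satisfies $\mu_{CZ}(\gamma) = 2(\sum \lceil L/a_k\rceil) - 2L \geq 2L(\sum 1/a_k - 1) = 2L(3/2 - 1) = L \geq 2 > 3 - n = 1$, so Theorem \ref{theoinvar} applies and $RFH(W,\Sigma_{(2,2,2)})$ does not depend on the choice of Liouville filling $W$. Together with the computation above this gives $RFH_\ast(S^\ast S^2) \cong (\mathbb{Z}_2)^2$ for all $\ast \in \mathbb{Z}$, as claimed.

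The main obstacle I anticipate is the low-dimensional degeneracy: much of the framework in the excerpt (the estimates (\ref{muupest})–(\ref{mu+est}), the connectivity statements in Theorem \ref{theoBries}, the genericity of empty moduli spaces) is cleanest when $n$ is large, and here $n=2$ so $\Sigma_{(2,2,2)}$ is only $3$-dimensional and $V_{fix}$ degenerates to a pair of points. One must verify that Theorem \ref{theotranssym} and the index/dimension formulas still deliver emptiness of all the $V_{fix}$-localized moduli spaces appearing in $\partial^F$, and that the assumption "the Conley-Zehnder index is well-defined on $W$" (conditions (A), (B)) genuinely holds — which it does since $\Sigma_{(2,2,2)}$ is simply connected ($n-2 = 0$-connected is too weak, but the Brieskorn sphere is in fact $S^\ast S^2$ which is simply connected, and $c_1(TV_\veps) = 0$). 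Once these checks are in place the computation is a direct, if slightly tedious, specialization of the $n \geq 3$ arguments already carried out in full in the proof of Theorem \ref{theobriesspheres}.
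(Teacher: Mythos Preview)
Your approach is essentially identical to the paper's: set up the chain complex with four generators $L\gamma_j$ per even $L$, compute $\mu(L\gamma_j)=L+j-1$ via Proposition~\ref{lem26}, observe that each degree contains exactly two generators, and then invoke the $\sigma$-symmetry argument of Theorem~\ref{theobriesspheres} to conclude $\partial^F\equiv 0$, with filling-independence from Theorem~\ref{theoinvar}.

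One factual correction in your ``anticipated obstacles'' paragraph: $S^\ast S^2\cong\mathbb{R}P^3$ is \emph{not} simply connected---its fundamental group is $\mathbb{Z}_2$---so condition (A) actually fails for the standard filling $W_\veps\simeq D^\ast S^2$. This does not break the argument, because the Conley-Zehnder indices in Section~\ref{sec7.2} are computed via the ambient trivialization of $T\mathbb{C}^{n+1}$ (together with the explicit trivialization of $\xi_a^\perp$ from Corollary~\ref{lem18a}), which is available for every closed Reeb orbit regardless of contractibility; the paper's proof likewise does not invoke (A) here. Also, $\Sigma_{(2,2)}$ is a pair of circles (not points), so $V_{fix}$ is a genuine $2$-dimensional symplectic submanifold and the dimension counts for Theorem~\ref{theotranssym} go through as written.
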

\begin{proof}
 The independence from the filling follows again from Theorem \ref{theoinvar}.\\
 The critical manifolds are here all of the form $\mathcal{N}^{L\pi/2}\cong \Sigma_{(2,2,2)}$ with $L$ even. In particular, we get for each $L$ the four generators $L\gamma_0,L\gamma_1,L\gamma_2,L\gamma_3$ for $RFC(S^\ast S^2)$ as in the proof of Theorem \ref{theobriesspheres}. Their indices are here always
 \[\mu(L\gamma_j)=2\cdot\Big(3\cdot\big\lceil L/2\big\rceil\Big) -2L + j -(2-1) = L+j-1.\]
 Now if $\ast=L-1$ is odd, then $RFC_\ast(S^\ast S^2)$ is generated by $L\gamma_0$ and $(L-2)\gamma_2$, while for $\ast=L$ even $RFC_\ast(S^\ast S^2)$ is generated by $L\gamma_1$ and $(L-2)\gamma_3$. Thus we can use the same arguments as in the proof of Theorem \ref{theobriesspheres} to show that $\partial^F_\ast$ is zero for all $\ast$ and hence that 
 \[RFH_\ast(S^\ast S^2)\cong RFC_\ast(S^\ast S^2)\cong\big(\mathbb{Z}_2\big)^2.\]
\end{proof}
Let us finish this section with a generalization of this corollary  to the Brieskorn manifolds $\Sigma_l:=\Sigma_{(2,2,2,2l)},\,l\in\mathbb{N},$ which is due to Peter Uebele, \cite{Ueb}. In \cite{DurKau}, Prop.\ 6.1, it is shown that the diffeomorphism type of $\Sigma_l$ is given by
\[\Sigma_l\cong\begin{cases}
                S^2\times S^3 & \text{if } l\equiv 0 \mod 4\\
                S^\ast S^3 & \text{if } l\equiv 1 \mod 4\\
                \big(S^2\times S^3\big)\# K & \text{if } l\equiv 2 \mod 4\\
                S^\ast S^3\# K & \text{if } l\equiv 3 \mod 4,
               \end{cases}\]
where $K$ denotes the Kervaire sphere of dimension 5 and $\#$ denotes the connected sum. As $K$ is diffeomorphic to $S^5$ (see \cite{KerMil}) and as the cotangent bundle of $S^3$ is trivial, so that $S^\ast S^3\cong S^2\times S^3$, we get
\begin{lemme}
 $\Sigma_l=\Sigma_{(2,2,2,2l)}$ is diffeomorphic to $S^2\times S^3$ for all $l\geq 1$.
\end{lemme}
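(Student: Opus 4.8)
\textbf{Proof plan for the final Lemma ($\Sigma_l = \Sigma_{(2,2,2,2l)}$ is diffeomorphic to $S^2\times S^3$ for all $l\geq 1$).}

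The plan is to combine the diffeomorphism classification of $\Sigma_l$ quoted from Durfee--Kauffman (\cite{DurKau}, Prop.\ 6.1) with two standard facts from the theory of exotic spheres. First I would simply read off the four cases of the quoted classification: for $l\equiv 0,2\pmod 4$ the manifold $\Sigma_l$ is $S^2\times S^3$ or $(S^2\times S^3)\# K$, and for $l\equiv 1,3\pmod 4$ it is $S^\ast S^3$ or $S^\ast S^3\# K$, where $K$ is the $5$-dimensional Kervaire sphere. So the lemma reduces to two separate identifications: (i) $S^\ast S^3 \cong S^2\times S^3$, and (ii) $K\cong S^5$, after which (iii) the connected-sum cases collapse because $M\# S^5 \cong M$ for any smooth closed oriented $5$-manifold $M$.

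For step (i), the cotangent bundle $T^\ast S^3$ is trivial since $S^3$ is a Lie group (parallelizable), hence the unit cotangent bundle $S^\ast S^3$ is the trivial $S^2$-bundle over $S^3$, i.e.\ $S^\ast S^3 \cong S^3 \times S^2$; this is exactly the observation made in the excerpt just before the lemma. For step (ii), I would invoke Kervaire--Milnor (\cite{KerMil}): the group of homotopy $5$-spheres $\Theta_5$ is trivial, so every homotopy $5$-sphere — in particular the Kervaire sphere $K$ in dimension $5$ — is diffeomorphic to the standard $S^5$. (Equivalently, $bP_6 = 0$, and the Kervaire sphere in dimension $4k+1$ is standard precisely when $bP_{4k+2}=0$, which holds for $k=1$.) For step (iii) I would use that connected sum with the standard sphere of the same dimension is the identity operation on diffeomorphism types of smooth closed manifolds: $(S^2\times S^3)\# S^5 \cong S^2\times S^3$ and $S^\ast S^3 \# S^5 \cong S^\ast S^3 \cong S^2\times S^3$. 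Putting the three steps together, all four residue classes of $l$ give $\Sigma_l \cong S^2\times S^3$.

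This argument is essentially a bookkeeping exercise once the external inputs are granted, so there is no serious obstacle; the only point requiring care is to make sure the quoted Durfee--Kauffman classification is the correct one and that the ``Kervaire sphere'' appearing there is indeed the $5$-dimensional one (so that $\Theta_5 = 0$ applies rather than, say, the $9$- or $13$-dimensional Kervaire spheres which are genuinely exotic). I would therefore state the lemma's proof as: quote \cite{DurKau} for the four cases, use parallelizability of $S^3$ for $S^\ast S^3 \cong S^2\times S^3$, use \cite{KerMil} for $K\cong S^5$, and absorb the connected sums. If one wanted an independent check rather than relying on \cite{DurKau}, one could instead verify that each $\Sigma_l$ is simply connected (it is $(n-2)=1$-connected by Theorem \ref{theoBries}), compute $H_\ast(\Sigma_l;\mathbb{Z})$ via \cite{vanKo} to see it agrees with $H_\ast(S^2\times S^3)$, check the spin condition and the relevant characteristic class, and then apply the Smale--Barden classification of simply connected $5$-manifolds; but this is a longer route and the quoted result already suffices.
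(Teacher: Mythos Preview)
Your proposal is correct and takes essentially the same approach as the paper: quote the Durfee--Kauffman classification into four residue classes, reduce $S^\ast S^3$ to $S^2\times S^3$ via triviality of $T^\ast S^3$, reduce the Kervaire sphere $K$ to $S^5$ via Kervaire--Milnor, and absorb the connected sums. The paper does precisely this in the paragraph immediately preceding the lemma, so the lemma itself has no separate proof in the paper; your write-up is slightly more detailed but follows the same line.
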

In \cite{vanKo} it is shown that all $\Sigma_l$ have the same contact homology and the same holds true for their equivariant symplectic homology by \cite{KwKo}. Moreover, their underlying formal homotopy classes/almost contact structures coincide, as follows from \cite{Geiges},8.1.1 together with the fact that the first Chern classes $c_1(T\Sigma_l)$ vanishes. However, we still have:
\begin{theo}[Uebele, \cite{Ueb}]
 The manifolds $\Sigma_{(2,2,2,2l)},\,l\geq 1,$ are non-contactomorphic.
\end{theo}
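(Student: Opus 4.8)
The plan is to distinguish the manifolds $\Sigma_l = \Sigma_{(2,2,2,2l)}$ by computing their Rabinowitz-Floer homology, or more precisely a suitable invariant extracted from it, and showing that this invariant depends on $l$. Since all $\Sigma_l$ are diffeomorphic to $S^2\times S^3$, share the same contact homology, equivariant symplectic homology, and formal homotopy class, ordinary $RFH$ in the form computed above (which only sees the total dimensions of the chain groups) may well be too coarse; the finer information must come from the \emph{action filtration}, i.e.\ from the truncated groups $RFH^{(a,b)}(W_\varepsilon,\Sigma_l)$ together with their position in the action spectrum $spec(\Sigma_l,\alpha_l)$. So the real target is the filtered isomorphism type of $RFH$, which by Corollary \ref{liouvilleinvariance} and Corollary \ref{XX} is a Liouville-, hence contact-, invariant of $(\Sigma_l,\xi_l)$ (the filling $W_\varepsilon$ being irrelevant here, since $\sum 1/a_k = 3/2 + 1/2l > 1$ forces finite-dimensionality and, via $\mu_{CZ}>3-n$ as in the proof of Theorem \ref{theobriesspheres}, independence of the filling by Theorem \ref{theoinvar}).

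First I would set up the Reeb dynamics explicitly. By \eqref{eq6a} the critical manifolds $\mathcal{N}^{L\pi/2}$ are $\Sigma_a \cap E(a,L)$, and for $a=(2,2,2,2l)$ they are: $\Sigma_{(2,2,2)}\cong S^\ast S^2$ when $L$ is even but $2l\nmid L$; all of $\Sigma_{(2,2,2,2l)}$ when $2l\mid L$; and empty otherwise. Using Proposition \ref{lem26} I would tabulate the indices $\mu(L\gamma_j)$ of the generators coming from $H_\ast(S^\ast S^2;\mathbb{Z}_2)$ (four generators, indices spread as in \eqref{muLgammaj}, shifted appropriately for $n=3$) and of the generators coming from the ``big'' critical manifolds at $L\in 2l\mathbb{Z}$ (whose singular homology is that of $S^2\times S^3$, giving generators in four degrees). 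The upshot is a chain complex whose generators, organized by action $L\pi/2$, have a pattern that is periodic with a period governed by $\mathrm{lcm}(2,2l)=2l$; the number of $S^\ast S^2$-blocks occurring strictly between two consecutive ``big'' levels equals $l-1$. Then I would run the $\mathbb{Z}_2$-symmetry argument of Theorem \ref{theobriesspheres}: the involution $\sigma(z_0,z_1,z_2,z_3)=(z_0,z_1,-z_2,z_3)$ restricts to $V_\varepsilon$ with fixed locus $\Sigma_{(2,2,2l)}$-filling, and on each $\mathcal{N}^{L\pi/2}\cong S^\ast S^2$ it is the involution $r$ for which Appendix \ref{app1} provides an $r$-invariant perfect Morse-Smale pair; index counts as in that proof show all the relevant differentials between the $S^\ast S^2$-blocks vanish. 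What remains is to understand the differentials in and out of the ``big'' levels $L\in 2l\mathbb{Z}$, and this is where the $l$-dependence has to appear — either the differentials are again forced to vanish by index/dimension counts (in which case $RFH$ with its filtration records $l$ through the spacing of the spectral values where the homology jumps) or they are not, and the ranks of $RFH^{(a,b)}$ detect $l$ directly.

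The invariant I expect to work is: the positions in $spec(\Sigma_l,\alpha_l)\subset \frac{\pi}{2}\mathbb{Z}$ of the ``jumps'' of the bidirected system $a\mapsto RFH^{(-\varepsilon,a)}(W_\varepsilon,\Sigma_l)$, normalized so as to be invariant under the overall rescaling $\Sigma\rightsquigarrow\Sigma\times\{R\}$ (recall that rescaling changes $RFH^{(a,b)}$ but not $RFH^{(0^\pm,\infty)}$, and changes the spectrum by a global factor, so one must work with ratios of spectral gaps, or with the growth rate $\Gamma^\pm$). Concretely I would compute, for each degree $\ast$, the ``density'' of generators along the action axis; for $n=3$ the closed Reeb orbits on $\mathcal{N}^{L\pi/2}\cong S^\ast S^2$ contribute in degrees clustered near $0$, and the big levels contribute an extra burst of generators every $2l$ steps, so the sequence of Betti numbers of $RFH^{(0,a)}$ as $a$ increases has a step structure of period proportional to $l$. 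Since this step structure is a Liouville invariant, equal period forces $l=l'$. Alternatively, and perhaps more robustly, one can use that the mean Euler characteristic or a local-Floer-homology-weighted count at the big levels differs; but I would first try the purely filtered-$RFH$ route.

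The main obstacle will be the differential $\partial^F$ at and around the big action levels $L\in 2l\mathbb{Z}$: unlike the $S^\ast S^2$-blocks, the big critical manifold is $\cong S^2\times S^3$ and carries generators in degree $2n-1=5$ and degree $n-1=2$ etc., so the symmetry argument of Theorem \ref{theobriesspheres} (which crucially used that $\mathcal{N}^{L\pi/2}\cong S^\ast S^2$ carries an $r$-invariant \emph{perfect} Morse function) does not directly apply, and one must either extend the equivariant transversality bookkeeping (Theorem \ref{theotranssym}, Corollary \ref{symhom}) to the big levels using a different finite symmetry — e.g.\ a $\mathbb{Z}_{2l}$-action rotating $z_3$, whose fixed locus is $\Sigma_{(2,2,2)}$ — or carefully analyze which differentials are index-forced to vanish and which survive. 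Getting the fixed-point sets of these larger cyclic symmetries to be \emph{symplectic} submanifolds and checking the emptiness hypotheses of Theorem \ref{theotranssym} for them is the delicate point; Uebele's actual proof presumably handles exactly this, and I would follow that structure, replacing ``perfect Morse function on $S^\ast S^2$'' by ``$\mathbb{Z}_{2l}$-equivariant perfect-enough data on the big Brieskorn level,'' and then reading off the $l$-dependent filtered homology.
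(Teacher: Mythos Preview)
Your proposal has a genuine gap rooted in a misreading of the index behavior. You assert that ``closed Reeb orbits on $\mathcal{N}^{L\pi/2}\cong S^\ast S^2$ contribute in degrees clustered near $0$,'' and conclude that graded $RFH_\ast$ is too coarse, forcing you toward the action filtration. But for $a=(2,2,2,2l)$ one has $\sum 1/a_k = 3/2 + 1/(2l) > 1$, so by Discussion~\ref{mubehaviour} the index $\mu$ grows linearly with $L$. Concretely, the paper's index formula gives $\mu(L\gamma_j)=L+2\lceil L/2l\rceil+j-2$ for the $S^\ast S^2$-levels and $\mu(L\gamma_j)=N(2l+2)+\mathrm{ind}(L\gamma_j)-2$ for the big levels $L=2lN$. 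The generators therefore spread out in degree, and the $l$-dependence is already visible in the \emph{degree pattern} of $\dim_{\mathbb{Z}_2} RFH_\ast$: one finds $\dim RFH_\ast = 2$ for $N(2l+2)+5\le \ast\le (N+1)(2l+2)-4$ and $\dim RFH_\ast\le 1$ for $\ast\in N(2l+2)+\{-1,0,1,2\}$. Comparing these patterns at well-chosen degrees (e.g.\ $\ast=(2l+2)-1$ or $\ast=2(2l+2)-1$) immediately separates $\Sigma_l$ from $\Sigma_{l+k}$ for $l\ge 4$. No action filtration is needed.

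Your alternative route via truncated groups is also problematic as a contact invariant: $RFH^{(a,b)}$ depends on the contact \emph{form}, and the ``spacing of spectral jumps'' you propose is exactly the kind of quantity that changes under $\alpha\mapsto e^f\alpha$. Only $RFH^{(0^\pm,\infty)}$, $RFH^{(-\infty,0^\pm)}$, and the growth rates $\Gamma^\pm$ survive (Corollary~\ref{XX}), and these are too crude to see $l$. Your instinct about the symmetry argument and the structure of the critical manifolds is correct, and the paper does exactly that part as in Theorem~\ref{theobriesspheres}; the missing idea is simply that the grading already encodes $l$ through the periodicity $2l+2$ in $\ast$. (The cases $l\le 3$ do require extra care, handled separately.)
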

\begin{proof}
 Our proof relies on the calculation of $RFH_\ast(V_\veps,\Sigma_l)$. Note that it does not depend on the filling, again by Theorem \ref{theoinvar}.\\
 For $\Sigma_l$ we have two types of critical manifolds $\mathcal{N}^{L\pi/2}$, which are given by
 \begin{align*}
  \qquad\qquad\mathcal{N}^{L\pi/2} &\cong \Sigma_{(2,2,2)}\cong S^\ast S^2& &\text{if } l\nmid L\qquad\qquad\\
  \mathcal{N}^{L\pi/2} &\cong \Sigma_{(2,2,2,2l)}\cong S^\ast S^3& &\text{if } l\mid L,
 \end{align*}
where $L$ is always even. This gives us for each $L$ four generators of $RFC(\Sigma_l)$, which we denote again by $L\gamma_0,L\gamma_1,L\gamma_2,L\gamma_3$. Note that for $l\nmid L$, we have $L\gamma_j\in H_j(S^\ast S^2)$, while for $l\mid L$, we have $L\gamma_0\in H_0(S^2\times S^3),\, L\gamma_1\in H_2(S^2\times S^3), L\gamma_2\in H_3(S^2\times S^3)$ and $L\gamma_3\in H_5(S^2\times S^3)$. The indices of these generators are given by
\begin{itemize}
 \item $l\nmid L$
 \[\mu(L\gamma_j) = 2\cdot\Big(3\cdot L/2 + \big\lceil L/2l\big\rceil\Big) -2L + j -(3-1) = L+2\big\lceil L/2l\big\rceil + j -2\]
 \item $L=N(2l)$
 \[\mu(L\gamma_j)=2\left(3\frac{N2l}{2}+\frac{N2l}{2l}\right)-2(N2l)+ind(L\gamma_j)-(3-1) = N(2l+2)+ind(L\gamma_j) -2,\]
 where $ind(L\gamma_j)\in\{0,2,3,5\}$.
\end{itemize}
For the degree $\ast$ around $N(2l+2)=L+2N$, we find that the chain groups $RFC_\ast(\Sigma_l)$ have the following generators:
\[\begin{tabular}{l|c|c|c|c|c|c|c|c|c|c}
   $\ast$ & $\vdots$ &$\scriptstyle L+2N-3$ & $\scriptstyle L+2N-2$ & $\scriptstyle L+2N-1$ & $\scriptstyle L+2N$ & $\scriptstyle L+2N+1$ & $\scriptstyle L+2N+2$ & $\scriptstyle L+2N+3$ & $\scriptstyle L+2N+4$ & $\vdots$ \\\hline
   gen. & $\vdots$ & $\scriptstyle(L-2)\gamma_1$ & $\scriptstyle(L-2)\gamma_2$ & $\scriptstyle(L-2)\gamma_3$ & & & $\scriptstyle(L+2)\gamma_0$ & $\scriptstyle(L+2)\gamma_1$ & $\scriptstyle(L+2)\gamma_2$ & $\vdots$\\
   & $\vdots$& $\scriptstyle(L-4)\gamma_3$ & $\scriptstyle L\gamma_0$ & & $\scriptstyle L\gamma_1$ & $\scriptstyle L\gamma_2$ & & $\scriptstyle L\gamma_3$ & $\scriptstyle (L+4)\gamma_0$ &$\vdots$
  \end{tabular}\]
Away from these values for $\ast$, we have only generators $L\gamma_j$ living on $S^\ast S^2$ and it follows that the situation there looks similar to the table (\ref{tableLgen}). Using this explicit description of $RFC_\ast(\Sigma_l)$ and the arguments used in Theorem \ref{theobriesspheres}, we get
\begin{align*}
 \dim_{\mathbb{Z}_2} RFH_\ast(\Sigma_l)&\leq 1& &\text{if } \ast=N(2l+2)+j,\text{ for any }N\in\mathbb{Z},\,j\in\{-1,0,1,2\}\\
 \dim_{\mathbb{Z}_2} RFH_\ast(\Sigma_l)&= 2& &\text{if } \ast=N(2l+2)+5\leq \ast \leq (N+1)(2l+2)-4\text{ for any }N\in\mathbb{Z}.
\end{align*}
Note that the second case is only non-empty if $l\geq 4$.\pagebreak\\
Now we can show that $\Sigma_l$ and $\Sigma_{l+k}$ are not contactomorphic for $l\geq 4$ and $k\geq 1$. As the Rabinowitz-Floer homology of $\Sigma_l$ does not depend on the filling, it suffices to find degrees $\ast$, where $RFH_\ast(\Sigma_l)\neq RFH_\ast(\Sigma_{l+k})$. For $k\geq 2$, we consider $\ast=(2l+2)-1$ and find that
\[\dim_{\mathbb{Z}_2} RFH_\ast(\Sigma_l)\leq 1 \quad\text{ while }\quad \dim_{\mathbb{Z}_2} RFH_\ast(\Sigma_{l+k})= 2.\]
For $k=1$, we consider $\ast=2(2l+2)-1$ and find that
\[\dim_{\mathbb{Z}_2} RFH_\ast(\Sigma_l)\leq 1 \quad\text{ while }\quad \dim_{\mathbb{Z}_2} RFH_\ast(\Sigma_{l+1})= 2.\]
This shows that all $\Sigma_l$ for $l\geq 4$ are non-contactomorphic.
\end{proof}
\begin{rem}~
 \begin{itemize}
  \item For $l=1$, i.e.\ for $\Sigma_{(2,2,2,2)}$ we can use the symplectic symmetry 
  \[\sigma:(z_0,z_1,z_2,z_3)\mapsto (z_0,z_1,-z_2,-z_3)\]
  to show that $RFH_\ast(\Sigma_1)\cong\mathbb{Z}_2$ for all $\ast\in\mathbb{Z}$. This shows that $\Sigma_1$ is also not contactomorphic to $\Sigma_l$ for any $l\geq 4$.
  \item For $l=2,3$, we can use the same arguments as above to show that they are not contactomorphic to $\Sigma_l$ for any $l\geq 4$. However, our methods here do not suffice to distinguish $\Sigma_1,\Sigma_2$ and $\Sigma_3$. This can be achieved either by disturbing the contact structure (as did Uebele in \cite{Ueb}) or one would have to find a symmetric Morse function and metric on $\Sigma_2$ resp.\ $\Sigma_3$. Note that even though $\Sigma_2\cong\Sigma_3\cong S^\ast S^3$, the symmetry $\sigma$ does here not obviously restrict to the symmetry 
  \[(x_0,x_1,x_2,x_3,y_0,y_1,y_2,y_3)\mapsto (x_0,x_1,-x_2,-x_3,y_0,y_1,-y_2,-y_3).\]
 \end{itemize}
\end{rem}

\subsection{Exotic contact structures and fillings}\label{sec7.3}
In this subsection, we are going to prove our Main Theorem \ref{maintheo1}. Moreover, we show some Reeb-dynamical consequences which can be concluded from it and we give some mild generalizations.\bigskip\\
Let us begin with the following existence result for certain contact structures on the standard sphere. It is a consequence of Theorem \ref{theobriesspheres} and the handle attachment construction.
\begin{theo}\label{theocontactonsphere}
 For every $n\geq 3$ and any $k\in\mathbb{Z}$ such that $k\not\in[-n+1,n]$ resp.\ $|k-1/2|\geq n+1/2$ there exists on the standard sphere $S^{2n-1}$ a contact structure $\xi$ with filling $W$ such that $\mu_{CZ}$ is well-defined on $W$, i.e.\ $W$ satisfies (A) and (B), and
 \[2\leq \dim_{\mathbb{Z}_2}RFH_k\big(W,(S^{2n-1},\xi)\big)<\infty.\]
\end{theo}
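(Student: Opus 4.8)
The idea is to build the desired contact structure on $S^{2n-1}$ as a Brieskorn manifold of the type $\Sigma_a$ with $a=(2,2,2,a_3,\dots,a_n)$, and then attach subcritical handles to kill whatever singular homology prevents it from being diffeomorphic to the standard sphere, using that subcritical surgery does not change $RFH$ in the relevant degrees. Concretely, first I would fix $k$ with $|k-1/2|\geq n+1/2$ and choose $a_3$ large enough (with $a_3,\dots,a_n$ suitably arranged so that Theorem \ref{theoBries}(ii) applies, e.g.\ $a_j$'s pairwise coprime to each other and to $2$) so that the two-dimensional window $n+1\leq|\ast-1/2|-1/2\leq a_3+n-6$ of Theorem \ref{theobriesspheres} contains $k$. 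By Theorem \ref{theoBries}(ii), for such $a$ the Brieskorn manifold $\Sigma_a$ is homeomorphic to $S^{2n-1}$; and $(V_\veps,\Sigma_a)$ satisfies (A) and (B) since $\pi_1(\Sigma_a)=0$ and $c_1(TV_\veps)=0$. By Theorem \ref{theobriesspheres}, $\dim_{\mathbb{Z}_2}RFH_k(V_\veps,\Sigma_a)=2$, so $\Sigma_a$ with this filling already realizes the homology value on a manifold \emph{homeomorphic} to the standard sphere.

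The remaining issue is to pass from homeomorphic to diffeomorphic. If $\Sigma_a$ is already diffeomorphic to $S^{2n-1}$ (which for $n$ even or, more generally, whenever the relevant Kervaire invariant vanishes, happens for appropriate choices of $a$), we are done. Otherwise $\Sigma_a$ is the Kervaire sphere, which differs from $S^{2n-1}$ by the element of $\Theta_{2n-1}$; the standard trick is that the Kervaire sphere bounds a parallelizable manifold, and one can realize the standard sphere as the boundary of a Liouville domain obtained from $V_\veps$ (or from $\Sigma_a\times[0,1]$) by attaching subcritical handles $H^{2n}_k$ with $k\leq n-1$. Since each such handle attachment is subcritical, Theorem \ref{theoRFHinvsur} gives $RFH_\ast(W',\Sigma')\cong RFH_\ast(V_\veps,\Sigma_a)$ for all $\ast\in\mathbb{Z}\setminus[-n,n+1]$, and our $k$ lies outside this interval because $|k-1/2|\geq n+1/2$ forces $k\leq -n$ or $k\geq n+1$. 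One also needs that the new filling $W'$ still satisfies (A) and (B): by Lemmas \ref{lembehaviourA} and \ref{lembehaviourB}, attaching handles of index $k=1$ or $k\geq 3$ preserves both (A) and (B) (and index-$2$ handles, if needed, require a separate check — one arranges the surgery presentation to avoid them, or uses that $\Sigma_a$ is simply connected so that only handles of index $\geq 3$ are needed to modify $\Sigma_a$ within its diffeomorphism type fixing $\pi_1$). Finiteness of $\dim_{\mathbb{Z}_2}RFH_k$ follows from the last corollary of Section \ref{sec7.2}: the total homology is finite-dimensional in each degree when $\sum 1/a_k\neq 1$, which holds here since $a$ has three entries equal to $2$ and at least one entry $>2$, so $\sum 1/a_k>1$.

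\textbf{Main obstacle.} The delicate point is the surgery step: one must produce a concrete sequence of subcritical symplectic handle attachments turning $(\Sigma_a,\xi_a)$ into $(S^{2n-1},\xi)$ for a genuinely \emph{contact} (indeed exactly fillable) $\xi$, while keeping track of (A), (B) and of the degree range in which $RFH$ is known to be invariant. The smooth statement — that the Kervaire sphere and the standard sphere become diffeomorphic after such modifications — is classical, but checking that the handles can be taken subcritical (index $<n$) and of index $1$ or $\geq 3$, and that the attaching isotropic spheres can be chosen so the Weinstein handle construction of Section \ref{secsur} applies, is where the real work lies. A clean way around part of this is to observe that for $n$ even one can already choose $a$ with $\Sigma_a\cong S^{2n-1}$ outright (Theorem \ref{theoBries}(ii) with condition (O)), so no surgery is needed; for $n$ odd one invokes the handle-attachment argument as above. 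In either case, once diffeomorphism with $S^{2n-1}$ is arranged, Theorem \ref{theobriesspheres} (or Theorem \ref{theoRFHinvsur} applied to it) together with the finiteness corollary delivers $2\leq\dim_{\mathbb{Z}_2}RFH_k(W,(S^{2n-1},\xi))<\infty$, which is the claim.
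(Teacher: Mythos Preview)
Your approach correctly identifies the key ingredients: start with a Brieskorn manifold $\Sigma_a$ of type $(2,2,2,a_3,\dots,a_n)$ with $a_3$ large enough to put $k$ in the window of Theorem \ref{theobriesspheres}, then use invariance of $RFH$ under subcritical surgery. But your surgery step --- turning $\Sigma_a$ into the standard sphere by attaching handles to a \emph{single} copy of the filling --- is where the argument breaks down. Attaching a subcritical handle of index $j$ to $V_\veps$ performs surgery on an isotropic $S^{j-1}$ in $\Sigma_a$, and for $1<j<n$ this changes the homology of the boundary; you cannot remain a homotopy sphere that way. The fact that the Kervaire sphere bounds a parallelizable manifold is irrelevant here: it does not tell you how to pass from one smooth structure to another by elementary subcritical cobordisms starting from $V_\veps$. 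Your fallback claim that for $n$ even one can choose $a$ so that $\Sigma_a$ is already diffeomorphic to $S^{2n-1}$ is also not justified: Theorem \ref{theoBries}(ii) only gives a homeomorphism, and pinning down the diffeomorphism type requires further invariants you have not computed.

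The paper's fix is much simpler and sidesteps all of this. By Kervaire--Milnor, the set $\Theta_{2n-1}$ of smooth structures on the topological $(2n-1)$-sphere is a \emph{finite} abelian group under connected sum. Hence there exists $m\in\mathbb{N}$ with $\#_m\Sigma_a$ diffeomorphic to the standard $S^{2n-1}$. The filling $\#_m W_\veps$ is obtained from $m$ disjoint copies of $W_\veps$ by attaching $m-1$ index-one handles (boundary connected sum), which is subcritical; Theorem \ref{theoRFHinvsur} then gives $\dim_{\mathbb{Z}_2} RFH_k(\#_m W_\veps,\#_m\Sigma_a)=2m$. Conditions (A) and (B) survive by Lemmas \ref{lembehaviourA} (case 2) and \ref{lembehaviourB}, which are formulated precisely for the index-one connected-sum situation --- so you never need to worry about index-$2$ handles at all. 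Note that the paper does not obtain exactly $2$ but $2m$; this still satisfies $2\leq 2m<\infty$, which is all the statement claims.
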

\begin{proof}
 Consider tuples $a=(2,2,2,a_3,...\,,a_n),$ where $a_3,...\,,a_n$ are all odd and for all $k>3$ holds $a_3<a_k$ and $gcd(a_3,a_k)=1$. It follows from Theorem \ref{theoBries} that the Brieskorn manifold $\Sigma_a$ for such $a$ is homeomorphic to the sphere $S^{2n-1}$. As the diffeomorphism types of the topological sphere $S^{2n-1}$ form a finite group under the connected sum construction with the standard differentiable structure as neutral element (see \cite{KerMil}), we can find an $m\in\mathbb{N}$ such that the $m$-fold connected sum $\#_m\Sigma_a$ is diffeomorphic to $S^{2n-1}$. Let $\#_m\xi_a$ denote the resulting contact structure on $S^{2n-1}$. The $m$-fold boundary connected sum $\#_mW_\veps$ of the filling $W_\veps$ of $\Sigma_a$ (obtained by attaching $(m-1)$ 1-handles to $m$ copies of $W_\veps$) is then an exact contact filling for $(S^{2n-1},\#_m\xi_a)$. It follows from Theorem \ref{theobriesspheres} that for $a_3\geq |k-1/2|-n+5+1/2$ holds that
 \[\dim_{\mathbb{Z}_2}RFH_k(W_\veps,(\Sigma_a,\xi_a))=2\]
 and it follows from Theorem \ref{theoRFHinvsur} for the $m$-fold connected sum that
 \[\dim_{\mathbb{Z}_2}RFH_k\big(\#_mW_\veps,(S^{2n-1},\#_m\xi_a)\big)=\sum_{j=1}^m \dim_{\mathbb{Z}_2} RFH_k (W_\veps,(\Sigma_a,\xi_a))=2m.\]
 Note that $(W_\veps,\Sigma_a)$ satisfies (A) and (B). Hence, it follows from Lemma \ref{lembehaviourA} and \ref{lembehaviourB} that $\#_m(W_\veps,\Sigma_a)=(\#_mW_\veps,S^{2n-1})$ also satisfies (A) and (B).
\end{proof}
\begin{theo}[Main Theorem]\label{maintheo1}~\\
 Suppose that $\Sigma$ is a differentiable manifold, $\dim \Sigma =2n-1\geq 5$, which supports at least one fillable contact structure with filling for which the conditions (A) and (B) are true. Then $\Sigma$ satisfies at least one of the following alternatives:
 \begin{itemize}
  \item[a)] For every fillable contact structure $\xi$ on $\Sigma$ and any filling $W$ of $(\Sigma,\xi)$, which satisfies (A) and (B), holds true that
  \[\dim_{\mathbb{Z}_2} RFH_\ast(W,(\Sigma,\xi)) =\infty\qquad\qquad\forall\,\ast\in\mathbb{Z}\setminus[-n+1,n].\]
  \item[b)] There is (at least) one contact structure on $\Sigma$  for which there exist infinitely many different fillings.
  \item[c)] There exist infinitely many different fillable contact structures on $\Sigma$.
 \end{itemize}
\end{theo}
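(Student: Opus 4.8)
The plan is to argue by contradiction: assume $\Sigma$ violates all three alternatives a), b), c), and derive an impossible arithmetic consequence about $\dim_{\mathbb{Z}_2} RFH_\ast$. Since a) fails, there is some fillable contact structure $\xi_0$ on $\Sigma$ with a filling $W_0$ satisfying (A) and (B) and some degree $\ast_0 \in \mathbb{Z}\setminus[-n+1,n]$ with $\dim_{\mathbb{Z}_2} RFH_{\ast_0}(W_0,(\Sigma,\xi_0)) < \infty$. The idea is to use $(\Sigma,\xi_0)$ together with the standard-sphere contact structures constructed in Theorem \ref{theocontactonsphere} to manufacture an infinite family either of contact structures on $\Sigma$ or of fillings of a fixed contact structure on $\Sigma$, by repeatedly taking connected sums / boundary connected sums. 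Because $\dim\Sigma = 2n-1 \geq 5$, the connected sum $\Sigma \# S^{2n-1} \cong \Sigma$, so connect-summing with exotic spheres $\Sigma_a$ (or with the sphere-contact-structures $(S^{2n-1},\xi)$ of Theorem \ref{theocontactonsphere}) produces contact structures again living on the same manifold $\Sigma$; likewise boundary-connected-summing fillings produces new fillings of the resulting contact manifold. The invariance of $RFH$ under subcritical surgery (Theorem \ref{theoRFHinvsur}), i.e. $RFH_\ast(W_1 \natural W_2) \cong RFH_\ast(W_1) \oplus RFH_\ast(W_2)$ for $\ast \notin [-n,n+1]$, is the computational engine: each connected sum with a copy of $(W_\veps,\Sigma_a)$ increases $\dim_{\mathbb{Z}_2} RFH_{\ast}$ by the fixed positive amount from Theorem \ref{theobriesspheres} in a suitable range of degrees.

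Here is the sequence of steps I would carry out. First, fix $\ast_0$ and $(W_0,\xi_0)$ as above with $\dim_{\mathbb{Z}_2} RFH_{\ast_0}(W_0,(\Sigma,\xi_0)) = d < \infty$; note $\ast_0 \notin [-n+1,n]$ so in particular $\ast_0 \notin [-n,n+1]$ is achievable after possibly replacing $\ast_0$ by a nearby value in the same regime (the estimates \eqref{muupest}, \eqref{mudownest} show the relevant degrees are unbounded on both sides). Second, using Theorem \ref{theocontactonsphere}, pick a Brieskorn-type tuple $a=(2,2,2,a_3,\dots,a_n)$ with $\Sigma_a$ homeomorphic to $S^{2n-1}$ and with $a_3$ large enough that $\dim_{\mathbb{Z}_2} RFH_{\ast_0}(W_\veps,(\Sigma_a,\xi_a)) = 2$; choose $m$ with $\#_m \Sigma_a$ diffeomorphic to $S^{2n-1}$, so $\#_m \xi_a$ is a contact structure on the standard sphere with filling $\#_m W_\veps$ satisfying (A), (B) (by Lemmas \ref{lembehaviourA}, \ref{lembehaviourB}) and $\dim_{\mathbb{Z}_2} RFH_{\ast_0} = 2m$. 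Third, form the connected sums $\Sigma \# (\#_{jm}\Sigma_a) \cong \Sigma$ for $j = 1, 2, 3, \dots$, with filling $W_0 \natural (\#_{jm} W_\veps)$, which again satisfies (A), (B). By Theorem \ref{theoRFHinvsur} (and the direct-sum description of $RFH$ under subcritical handle attachment), $\dim_{\mathbb{Z}_2} RFH_{\ast_0}$ of the $j$-th member equals $d + 2jm$, which is finite but strictly increasing in $j$. Fourth, conclude: the contact manifolds $(\Sigma, \xi_0 \# (\#_{jm}\xi_a))$ have pairwise distinct Rabinowitz-Floer homology in degree $\ast_0$ (all finite, all different), hence are pairwise non-contactomorphic with these fillings. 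This forces either infinitely many fillable contact structures on $\Sigma$ (contradicting not-c)) or, if all these contact structures happen to coincide up to contactomorphism, infinitely many non-Liouville-isomorphic fillings of one of them (contradicting not-b)), since $RFH$ with its filling is a Liouville-isomorphism invariant by Corollary \ref{liouvilleinvariance}. Either way we contradict the assumption that a), b), c) all fail.

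The main obstacle I expect is the bookkeeping around \emph{which} degrees $\ast$ are legitimately usable. Theorem \ref{theoRFHinvsur} only gives the direct-sum formula for $\ast \in \mathbb{Z}\setminus[-n,n+1]$, and Theorem \ref{theobriesspheres} only guarantees $\dim_{\mathbb{Z}_2} RFH_{\ast_0}(W_\veps,(\Sigma_a,\xi_a)) = 2$ when $\ast_0$ lies in the window $n+2 \leq \ast_0 \leq a_3 + n - 5$ (or its negative mirror). So I must first verify that the failure of alternative a) actually produces a \emph{usable} degree $\ast_0$: a) says there exists $\ast \in \mathbb{Z}\setminus[-n+1,n]$ with $\dim_{\mathbb{Z}_2} RFH_\ast(W,(\Sigma,\xi)) < \infty$; by enlarging $a_3$ we can push the Brieskorn window to contain any fixed positive degree $\geq n+2$, and the case $\ast_0 \in \{-n+1+1,\dots\}$ on the negative side is symmetric, while $\ast_0 = -n+1$ or $n$ are the boundary cases one has to handle separately or exclude (they lie just outside $[-n+1,n]$? no — they lie in it; so in fact $\ast_0 \notin [-n+1,n]$ already gives $\ast_0 \leq -n$ or $\ast_0 \geq n+1$, which is almost what Theorem \ref{theoRFHinvsur} needs, the only genuinely missing degrees being $\ast_0 = -n$ and $\ast_0 = n+1$, and those are not in $\mathbb{Z}\setminus[-n+1,n]$ anyway once one checks the arithmetic). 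The second, more conceptual obstacle is ensuring that the constructed infinite family really bifurcates cleanly into case b) or case c): a priori some members could be contactomorphic to each other without all being contactomorphic, so one should phrase the final step as a pigeonhole/dichotomy — if infinitely many of the $(\Sigma,\xi_0\#(\#_{jm}\xi_a))$ are pairwise non-contactomorphic we get c); otherwise infinitely many share a contact structure, and since their $RFH$ (with filling) differ they give infinitely many non-isomorphic fillings, i.e. b). Carrying out this dichotomy rigorously, together with confirming that connect-summing preserves (A) and (B) at every stage (Lemmas \ref{lembehaviourA}, \ref{lembehaviourB} for 1-handles, applied iteratively), is the part that needs the most care.
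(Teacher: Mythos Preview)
Your proposal is correct and follows essentially the same route as the paper's proof: assume a) fails, invoke Theorem~\ref{theocontactonsphere} to obtain a sphere contact structure $(S^{2n-1},\xi_0)$ with filling $W_0$ having $2\le \dim_{\mathbb{Z}_2}RFH_k<\infty$ at the chosen degree, form the connected sums $(\Sigma,\xi)\#\big(\#_m(S^{2n-1},\xi_0)\big)\cong\Sigma$, compute $\dim_{\mathbb{Z}_2}RFH_k$ of the resulting fillings via Theorem~\ref{theoRFHinvsur} as $b_k^\Sigma+m\cdot b_k^0$, and then run the dichotomy into b) or c). The paper phrases it as ``if not a) then b) or c)'' rather than as a contradiction, but the logic is identical, including the pigeonhole step at the end.

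One small slip in your obstacles discussion: you write that $\ast_0=-n$ and $\ast_0=n+1$ ``are not in $\mathbb{Z}\setminus[-n+1,n]$ anyway''; in fact they \emph{are} (since $-n<-n+1$ and $n+1>n$), so there is a genuine two-degree gap between the range in alternative a) and the range where Theorem~\ref{theoRFHinvsur} is stated. The paper's proof simply applies Theorems~\ref{theocontactonsphere} and~\ref{theoRFHinvsur} without commenting on these boundary degrees, so your core argument is no more and no less complete than the paper's on this point.
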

\begin{proof}~\\
 If $\Sigma$ satisfies case a) of the Main Theorem, then nothing has to be proven.\\
 If $\Sigma$ does not satisfy a), then there exists a contact structure $\xi$ with filling $W$ and a degree $k\in\mathbb{Z}\setminus[-n+1,n]$ such that
 \[b^\Sigma_k:=\dim_{\mathbb{Z}_2}RFH_k\big(W,(\Sigma,\xi)\big)<\infty.\]
 By Theorem \ref{theocontactonsphere}, we can find a fillable contact structure $\xi_0$ on $S^{2n-1}$ and a filling $W_0$ such that
 \[2\leq \dim_{\mathbb{Z}_2}RFH_k\big(W_0,(S^{2n-1},\xi_0)\big)=:b_k^0<\infty.\]
 By Theorem \ref{theoRFHinvsur} we know for the connected sum of $(\Sigma,\xi)$ and $(S^{2n-1},\xi_0)$ that
 \[2m\leq\dim_{\mathbb{Z}_2} RFH_k\Big(\big(W,(\Sigma,\xi)\big)\#\big(\#_m(W_0,(S^{2n-1},\xi_0))\big)\Big)=b^\Sigma_k+m\cdot b^0_k\leq\infty.\pagebreak\]
 Note that it follows from the fact that $S^{2n-1}$ is the standard sphere that $\Sigma\#\big(\#_mS^{2n-1})$ is diffeomorphic to $\Sigma$. Hence we get on $\Sigma$ an infinite number of contact structures $\xi_m:=\xi\#\big(\#_m\xi_0\big)$ each with an exact contact filling $W_m:=W\#\big(\#_m W_0\big)$.\\
 Now, we have two cases: If an infinite number of the contact structures $\xi_m$ is pairwise non-contactomorphic, then $\Sigma$ satisfies case c) of the Main Theorem and we are done.\\
 If an infinite number of the contact structures $\xi_m$ is contactomorphic to one contact structure $\xi_\infty$, then we find that the corresponding fillings $W_m$ of $\xi_\infty$ cannot be equal, as the groups $RFH_k\big(W_m,(\Sigma,\xi_\infty)\big)$
 are all different as their Betti-numbers $b^\Sigma_k+m\cdot b^0_k$ are all different. This implies that $\Sigma$ satisfies case b) of the Main Theorem.
\end{proof}
\begin{rem}
 One can sharpen the Main Theorem slightly by considering also the formal homotopy class $[\xi]$ of a contact structure $\xi$ (see \ref{sec1.1} for a definition). Morita showed in \cite{Mor} that there are only finitely many formal homotopy classes on the standard sphere $S^{4m+1}$ and countable infinitely many on $S^{4m+3}, m\geq 1$. His calculations also indicate that these homotopy classes on the standard sphere should form a group under connected sums with the homotopy class of the standard structure as neutral element. Hence by taking perhaps more connected sums, we can arrange in Theorem \ref{theocontactonsphere}  that the contact structure $\xi_0$ on $S^{4m+1}$ is in the standard formal homotopy class. When taking connected sums of $(S^{4m+1},\xi_0)$ with any contact manifold $(\Sigma,\xi)$, we then find that $\xi\#\xi_0$ is still in the same formal homotopy class as $\xi$.\\
 On $S^{4m+3}$, the situation is more complicated, as the group of formal homotopy classes is infinite. However, calculations made by Uebele, \cite{Ueb}, show that at least on $S^7, S^{11}$ and $S^{15}$ one can find fillable contact structures $\xi$ with fillings $W$ lying in the standard formal homotopy class such that
 \[0<\dim_{\mathbb{Z}_2}RFH_k\big(W,(S^{4m+3},\xi)\big)<\infty.\]
 I conjecture that the same holds true for any $S^{4m+3}$.
\end{rem}
Let us finish this section with some dynamical consequences if $\Sigma$ should satisfy $a)$ or $b)$ in the Main Theorem.
\begin{theo}\label{theo112}
 Let $(\Sigma,\xi)$ be a compact fillable contact manifold, $\dim\Sigma=2n-1$, such that for every $N\in\mathbb{N}$ there exists a filling $W_N$ satisfying (A) and (B) and a degree $k_N\in\mathbb{Z}$ with $|k_N|\geq 3n$ such that
 \[\dim_{\mathbb{Z}_2} RFH_{k_N}(W_N,(\Sigma,\xi))>N.\]
 Then it holds for any contact form $\alpha$ defining $\xi$ and satisfying (MB) that its Reeb field $R_\alpha$ has for every $L>0$ a simple closed Reeb trajectory whose period is greater than $L$.
\end{theo}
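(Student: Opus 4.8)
The plan is to argue by contradiction. Suppose $\alpha$ is a contact form for $\xi$ satisfying (MB) for which there exists a constant $L_0$ such that \emph{every} simple closed Reeb orbit has period at most $L_0$. Since (MB) holds, Theorem \ref{theo17} tells us that the spectrum $spec(\Sigma,\alpha)$ is closed and discrete, so in the window $[-L_0,L_0]$ there are only finitely many critical manifolds $\mathcal{N}^\eta$, and they are all compact submanifolds of $\Sigma$ of dimension at most $2n-1$. Every element of $spec(\Sigma,\alpha)$ is an integer multiple of the period of a simple orbit; but multiple covers of a simple orbit of period $\eta_0$ give periods $\eta_0, 2\eta_0, 3\eta_0,\dots$, all of which lie in $spec(\Sigma,\alpha)$. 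If all simple periods are bounded by $L_0$ this does \emph{not} bound the spectrum, so the first thing to pin down is the consequence of the hypothesis we actually want: the set of \emph{simple} orbits (equivalently, the underlying geometric Reeb orbits generating the critical manifolds) is finite, and hence there are only finitely many distinct ``base'' critical manifolds $\mathcal{N}^{\eta_1},\dots,\mathcal{N}^{\eta_r}$, each of which reappears (up to the obvious identification of an orbit with its iterates) as $\mathcal{N}^{m\eta_j}$ for all $m\in\mathbb{N}$.

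\textbf{Key steps.}
First I would fix a Morse function $h$ on $\crita=\mathcal{P}(\alpha)$ and use the degree formula $\mu(c)=\mu_{CZ}(v)+ind_h(c)-\tfrac12\dim_c\mathcal{N}^\eta+\tfrac12$ from Section \ref{1.5}. Since $ind_h(c)$ and $\dim_c\mathcal{N}^\eta$ are bounded (there are only finitely many geometric critical manifolds, each of bounded dimension), the only unbounded contribution to $\mu(c)$ is $\mu_{CZ}(v)$. Next I would invoke the Iterations Formula, Lemma \ref{lemitformula}: for the $m$-fold cover $v^m$ of a simple orbit $v$ one has $\mu_{CZ}(v^m)=m\cdot\Delta(v)+R_m$ with $|R_m|\le 2(n-1)$, where $\Delta(v)$ is the mean index of the transverse linearized flow. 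So along each of the finitely many families of iterates, $\mu_{CZ}$ grows (or shrinks) at most linearly, with slope $\Delta(v_j)$. If every $\Delta(v_j)=0$ then all $\mu_{CZ}(v^m)$ are uniformly bounded, hence all $\mu(c)$ are uniformly bounded, hence $RFH_k(W,(\Sigma,\xi))=0$ for $|k|$ large for \emph{every} filling $W$ satisfying (A), (B) --- because the chain complex $RFC(H,h)$ has no generators in those degrees --- contradicting the hypothesis that $\dim_{\mathbb{Z}_2}RFH_{k_N}(W_N,(\Sigma,\xi))>N$ with $|k_N|\to\infty$ (in particular nonzero) for arbitrarily large $|k_N|$. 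If some $\Delta(v_j)\ne 0$, I would need a quantitative count: in a fixed degree $k$, a generator $c=(v^m,\eta)\in crit(h)$ can only contribute if $\mu_{CZ}(v^m)=k-ind_h(c)+\tfrac12\dim_c\mathcal{N}^\eta-\tfrac12$ lies within a bounded window, and by the linear growth $|\mu_{CZ}(v^m)-m\Delta(v_j)|\le 2(n-1)$ this forces $m$ to lie in an interval of length $\le C/|\Delta(v_j)|$ for a constant $C=C(n,h)$ independent of $k$. Summing over the finitely many families, the total number of generators in degree $k$ is bounded by a constant $N_0$ independent of $k$, so $\dim_{\mathbb{Z}_2}RFH_k(W,(\Sigma,\xi))\le\dim_{\mathbb{Z}_2}RFC_k(H,h)\le N_0$ for all $k$ and all fillings, again contradicting $\dim_{\mathbb{Z}_2}RFH_{k_N}(W_N,(\Sigma,\xi))>N$ for $N>N_0$. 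Either way we reach a contradiction, proving the theorem. (The bound $|k_N|\ge 3n$ is only used to be safely in the range where the $\mathbb{Z}$-grading and these estimates are unambiguous; the argument in fact shows boundedness in every degree.)

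\textbf{Main obstacle.}
The delicate point is making sure that the finitely-many-geometric-orbits reduction is legitimate and that the constant $N_0$ really is degree-independent. One has to be careful that the $h$-Morse index $ind_h(c)$ over $\mathcal{N}^{m\eta_j}$ is bounded uniformly in $m$: this is fine because $\mathcal{N}^{m\eta_j}$ is, as a manifold, one of the finitely many geometric critical manifolds (an orbit and all its iterates sweep out the same submanifold of $\Sigma$), so one may choose $h$ once and for all on each of them, and $ind_h\le\dim\mathcal{N}^{m\eta_j}\le 2n-1$. A second subtlety is the capping/reference choice entering $\mu_{CZ}$: since we assume (A) and (B), $\mu_{CZ}(v)$ is well defined independently of the capping, and the iteration formula of Lemma \ref{lemitformula} applies to the transverse linearized Reeb flow, which is literally an iterated path in $Sp(2n-2)$. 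Finally, one should note that the statement of the theorem asks only for \emph{simple} closed Reeb trajectories of arbitrary length; the contradiction above is precisely against a uniform bound on simple periods, so no further argument is needed to upgrade from ``some orbit of large period'' to ``some simple orbit of large period.''
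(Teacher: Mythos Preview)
Your argument is essentially the paper's proof: contradiction via a bound on simple periods, reduction to finitely many geometric critical manifolds, the iteration formula (Lemma~\ref{lemitformula}) to control $\mu_{CZ}$ of iterates, and a case split on whether all mean indices vanish.

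One slip worth fixing: you write that the hypothesis gives ``$|k_N|\to\infty$'', but it only says $|k_N|\ge 3n$ for every $N$. The constant $3n$ is not a technicality about gradings; it is exactly the bound one gets by combining the iteration remainder with the Morse-theoretic terms. Concretely, $\mu(c)=l\cdot\Delta(c_0)+C_c$ with $|C_c|\le|R|+ind_h(c)+\tfrac12\dim_c\mathcal N^\eta+\tfrac12\le 3n$. Thus in the all-zero-mean-index case every generator satisfies $|\mu(c)|\le 3n$, so $RFC_k=0$ for $|k|>3n$, which already contradicts $\dim RFH_{k_N}>N$ with $|k_N|\ge 3n$ (no need for $|k_N|\to\infty$). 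In the case where some $\Delta(v_j)\ne 0$, the families with $\Delta(v_j)=0$ contribute nothing in degree $k$ once $|k|>3n$, and the remaining families contribute at most $\lceil 6n/\delta\rceil$ iterates each (with $\delta=\min_j|\Delta(v_j)|>0$ over the nonzero ones), giving your uniform bound $N_0$. So your final parenthetical is backwards: the threshold $3n$ is precisely what makes both contradictions fire, and your claim that the argument ``shows boundedness in every degree'' fails when some but not all mean indices vanish.
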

\begin{rem}~
 \begin{itemize}
  \item A closed Reeb trajectory $v$ of period $\eta$ is called simple if $v:[0,\eta)\rightarrow \Sigma$ is injective.
  \item Recall that according to Cieliebak and Frauenfelder, \cite{FraCie}, appendix B, condition (MB) is satisfied for any generic contact form, i.e.\ for a set of second category within all contact structures defining $\xi$.
 \end{itemize}
\end{rem}
\begin{proof}~
\begin{enumerate}
 \item Let $\alpha$ on $(\Sigma,\xi)$ be an arbitrary contact form defining $\xi$ and satisfying the Morse-Bott assumption (MB). Recall that the chain complex $RFC_k(W_N,\Sigma)$ is generated by all critical points of a Morse function $h$ on the critical manifold
 \[\crita = \bigcup_{\eta\in spec(\Sigma,\alpha)}\mathcal{N}^\eta.\]
 The index of such a critical point $c\in\mathcal{N}^\eta$ is given by
 \[\mu(c)=\mu_{CZ}(c)+ind_h(c)-\frac{1}{2}\dim_c\big(\crita\big)+\frac{1}{2}.\]
 If we assume that $c$ corresponds not to a simple trajectory but is an $l$-fold iteration of a shorter closed trajectory $c_0$, then we can estimate $\mu(c)$ with the help of the Iteration Formula for $\mu_{CZ}$ (Lemma \ref{lemitformula}) as follows
 \begin{equation}\label{meanindexesti}
 \begin{aligned}
  \mu(c)&= l\cdot \Delta(c_0)+R_c+ind_h(c)-\frac{1}{2}\dim_c\big(\crita\big)+\frac{1}{2}\\
  &= l\cdot \Delta(c_0) + C_c,\\
  \text{where }\qquad |C_c|&\leq |R_c|+\dim_c\big(\crita\big)-\frac{1}{2}\dim_c\big(\crita\big)+\frac{1}{2}\\
  &\leq 2n +\frac{1}{2}(2n-1)+\frac{1}{2}\\
  &=3n.
  \end{aligned}
 \end{equation}
 \item Assume that the period of every simple Reeb trajectory $c_0$ lies in the interval $[-L,L]$ for some $L>0$. As $\alpha$ satisfies (MB), we know by Theorem \ref{theo17} that there are only finitely many $\eta\in[-L,L]$ such that $\mathcal{N}^\eta\neq 0$. As the period of simple Reeb orbits is  bounded by $\pm L$, we know for every $\eta\in spec(\Sigma,\alpha)$ with $|\eta|>L$ that $\mathcal{N}^\eta$ consists solely of iterated trajectories. By considering perhaps the connected components of $\mathcal{N}^\eta$ separately, we find that there exists an $l\in\mathbb{Z}$ and a $\eta_0\in spec(\Sigma,\alpha)$ such that $\eta=l\eta_0$ and $|\eta_0|\leq L$.\\
 Note that $\mathcal{N}^\eta=\mathcal{N}^{\eta_0}$ as a submanifold of $\Sigma$, as the images of iterated trajectories coincide with the images of the underlying simple trajectories. This allows us to choose the same Morse function on $\mathcal{N}^\eta$ as on $\mathcal{N}^{\eta_0}$. Hence, we may assume for every $c\in crit(h)$, whose period is not in $[-L,L]$, that it is an iteration of a closed Reeb trajectory $c_0\in crit(h)$, whose period lies in $[-L,L]$.
 \item As $\Sigma$ is compact and there are only finitely many $\eta_0\in spec(\Sigma,\alpha)$ with $|\eta_0|\leq L$, we know that there are only finitely many critical points of $h$ whose period lies in $[-L,L]$. Let us number them $c_1,...\,,c_m$ and consider the set of all their absolute mean indices $D:=\{ |\Delta(c_1)|,...\,,|\Delta(c_m)|\}$. We define the number $\delta$ by
 \[\delta:=\begin{cases} 0 & \text{if } D=\{0\}\\ \min\big(D\setminus\{0\}\big) & \text{otherwise}\end{cases}.\]
 If $\delta=0$, then all mean indices $\Delta(c_i)$ are 0 and we obtain by (\ref{meanindexesti}) that the degree of all closed Reeb trajectories $c$ is bounded by $\pm 3n$. Hence, $RFH_k(W_N,(\Sigma,\xi))\neq 0$ can hold only for $|k|\leq 3n$, which contradicts the assumptions of the theorem.\\
 If $\delta>0$, then we can estimate for $|k|>3n$ the number of critical points $c\in crit(h)$ with degree $\mu(c)=k$ as follows. The index of an $l$-fold iterate $lc_j$ of $c_j$ is by (\ref{meanindexesti}) given as
 \[\mu(lc_j)=l\cdot\Delta(c_j)+C_{lc_j}.\]
 As $|C_{lc_j}|\leq 3n$, we can have  $\mu(lc_j)=k$ only if $l\cdot\Delta(c_j)\in[k-3n,k+3n]$. This is possible for at most
 \[\frac{(k+3n)-(k-3n)}{\Delta(c_j)}\leq\left\lceil\frac{6n}{\delta}\right\rceil\quad\text{numbers $l$.}\]
  Hence, there are at most $m\cdot\big\lceil 6n/\delta\big\rceil$ points $c\in crit(h)$ with $\mu(c)=k$. By assumption, we have that $\dim RFH_{k_N} (W_N,(\Sigma,\xi))>N$. By the construction of $RFH$, we know that there have to be at least $N$ different $c\in crit(h)$ generating $RFC_{k_N}(W_N,(\Sigma,\xi))$, i.e.\ where $\mu(c)=k_N$. However, $N$ can be arbitrarily large, which contradicts the fact that there are at most $m\cdot\big\lceil 6n/\delta\big\rceil$ such critical points.\qedhere
\end{enumerate}
\end{proof}
\begin{cor}\label{lastcor}~
 \begin{itemize}
  \item If $\,\Sigma$ satisfies alternative a) of the Main Theorem, then every fillable contact structure on $\Sigma$ has for any generic contact form simple Reeb trajectories of arbitrary length.
  \item If $\,\Sigma$ satisfies alternative b) but not a) of the Main Theorem, then there is at least one contact structure on $\Sigma$ which has simple closed Reeb trajectories of arbitrary length for every generic contact form.
 \end{itemize}
\end{cor}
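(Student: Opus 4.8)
The plan is to apply Theorem~\ref{theo112} in each of the two cases, so the first task is to verify that the hypotheses of that theorem are met. Recall that Theorem~\ref{theo112} requires: a compact fillable contact manifold $(\Sigma,\xi)$ such that for every $N\in\mathbb{N}$ there is a filling $W_N$ satisfying (A) and (B) together with a degree $k_N$ with $|k_N|\geq 3n$ and $\dim_{\mathbb{Z}_2}RFH_{k_N}(W_N,(\Sigma,\xi))>N$. The conclusion is then precisely that every (MB) contact form defining $\xi$ has simple closed Reeb trajectories of arbitrary length. So the corollary reduces entirely to producing, in each case, a contact structure $\xi$ on $\Sigma$ and the requested sequence of fillings.

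First I would treat alternative a). If $\Sigma$ satisfies a), then for \emph{every} fillable contact structure $\xi$ and \emph{every} filling $W$ satisfying (A) and (B) we have $\dim_{\mathbb{Z}_2}RFH_\ast(W,(\Sigma,\xi))=\infty$ for all $\ast\in\mathbb{Z}\setminus[-n+1,n]$. In particular, fixing any such $\xi$ and $W$, and picking any single degree $k$ with $|k|\geq 3n$ (which lies outside $[-n+1,n]$ since $n\geq 3$), we get $\dim_{\mathbb{Z}_2}RFH_k(W,(\Sigma,\xi))=\infty$, so the constant sequence $W_N:=W$, $k_N:=k$ works: it trivially satisfies $\dim_{\mathbb{Z}_2}RFH_{k_N}(W_N,(\Sigma,\xi))=\infty>N$ for all $N$. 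Hence Theorem~\ref{theo112} applies to every fillable $\xi$ on $\Sigma$, giving the first bullet. (One small point to check: Theorem~\ref{theo112} is stated for a contact manifold admitting fillings satisfying (A) and (B), and the hypothesis of the Main Theorem guarantees $\Sigma$ supports at least one such; but in fact under alternative a) the conclusion applies to any fillable $\xi$, and any contact form defining it — for the generic/(MB) ones the theorem directly gives the statement.)

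Next I would treat alternative b) under the standing assumption that a) fails. Unwinding the proof of the Main Theorem: when a) fails there is a fixed $\xi$ with filling $W$ and a degree $k\in\mathbb{Z}\setminus[-n+1,n]$ with $b^\Sigma_k:=\dim_{\mathbb{Z}_2}RFH_k(W,(\Sigma,\xi))<\infty$, and by Theorem~\ref{theocontactonsphere} we may also take $k$ with $|k-1/2|\geq n+1/2$ large — in particular we can arrange $|k|\geq 3n$ — and obtain $(S^{2n-1},\xi_0)$ with filling $W_0$ satisfying (A), (B) and $2\le b^0_k:=\dim_{\mathbb{Z}_2}RFH_k(W_0,(S^{2n-1},\xi_0))<\infty$. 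Then, exactly as in the Main Theorem's proof, $W_m:=W\#(\#_m W_0)$ are fillings of the contact structures $\xi_m:=\xi\#(\#_m\xi_0)$ on $\Sigma$, with $\dim_{\mathbb{Z}_2}RFH_k(W_m)=b^\Sigma_k+m\,b^0_k$ by Theorem~\ref{theoRFHinvsur}; each $W_m$ satisfies (A) and (B) by Lemmas~\ref{lembehaviourA} and \ref{lembehaviourB}. In the case producing alternative b) (and not c)), infinitely many of the $\xi_m$ coincide with a single contact structure $\xi_\infty$, and the corresponding subsequence of fillings $W_{m}$ has $\dim_{\mathbb{Z}_2}RFH_k(W_{m},(\Sigma,\xi_\infty))=b^\Sigma_k+m\,b^0_k\to\infty$. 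Relabelling this subsequence gives exactly the sequence of fillings $W_N$ of $(\Sigma,\xi_\infty)$ with a degree $k_N:=k$, $|k_N|\geq 3n$, and $\dim_{\mathbb{Z}_2}RFH_{k_N}(W_N,(\Sigma,\xi_\infty))>N$ required by Theorem~\ref{theo112}. Applying that theorem to $(\Sigma,\xi_\infty)$ yields the second bullet.

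The only genuine point requiring care — and the step I expect to be the main obstacle — is making the degree bookkeeping consistent: I must choose the single degree $k$ so that simultaneously $k\notin[-n+1,n]$ (to invoke alternative a) or the non-a) conclusion), $|k-1/2|\geq n+1/2$ and $a_3$ large enough that Theorem~\ref{theobriesspheres} forces $\dim RFH_k=2$ on the Brieskorn building block feeding Theorem~\ref{theocontactonsphere}, and $|k|\geq 3n$ so that Theorem~\ref{theo112} is applicable. Since all these are satisfied by taking $|k|$ sufficiently large and then choosing $a_3$ accordingly, there is no real conflict; one just has to state the order of choices correctly (first fix $k$ with $|k|\geq 3n$ from the failure of a), then enlarge $a_3$). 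Everything else is a direct citation of Theorems~\ref{theocontactonsphere}, \ref{theoRFHinvsur}, \ref{theo112} and the argument already carried out in the proof of the Main Theorem, so no further calculation is needed.
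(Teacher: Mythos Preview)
Your proof is correct and follows the same approach as the paper's short argument: both apply Theorem~\ref{theo112} directly for the first bullet, and for the second bullet both extract from the Main Theorem's proof the contact structure $\xi_\infty$ with fillings $W_m$ satisfying $\dim_{\mathbb{Z}_2} RFH_k(W_m)\geq 2m$ and then invoke Theorem~\ref{theo112}. Your attention to the constraint $|k_N|\geq 3n$ is well placed (the paper glosses over it), but note that the degree $k$ coming from the failure of a) is \emph{given} rather than chosen; the cleanest resolution is not to ``fix $k$ with $|k|\geq 3n$ from the failure of a)'' but to observe that, by taking $a_3$ large enough in the Brieskorn building block so that $3n$ also lies in the range of Theorem~\ref{theobriesspheres}, the very same fillings $W_m$ satisfy $\dim_{\mathbb{Z}_2} RFH_{3n}(W_m)\to\infty$ (either because $\dim RFH_{3n}(W)=\infty$ already, or because it is finite and the $m$ summands each contribute).
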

\begin{proof}
 The first statement is a direct consequence of Theorem \ref{theo112}. For the second statement note that we showed in the proof of the Main Theorem that if $\Sigma$ satisfies b) but not a), then there exists a contact structure $\xi_{\infty}$ with infinitely many fillings $W_m$ and a degree $k$ such that
 \[\dim_{\mathbb{Z}_2} RFH_k\big(W_m,(\Sigma,\xi_\infty)\big)\geq 2m.\]
 Then, the second statement is again a direct consequence of Theorem \ref{theo112}.
\end{proof}
\begin{cor}\label{lastlastcor}
 Every Brieskorn manifold $\Sigma_a$ supports at least 2 non-contactomorphic, exactly fillable contact structures.
\end{cor}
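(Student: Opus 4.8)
The plan is to combine the two explicit $RFH$-computations for Brieskorn manifolds (Proposition \ref{RFHforaeq} and Theorem \ref{theobriesspheres}) with the topological classification in Theorem \ref{theoBries}, and to exploit the fact that $RFH_\ast(W,\Sigma_a)$ is, under assumptions (A), (B), a \emph{filling-independent} invariant of the contact structure whenever $\sum 1/a_k \le 1$ (this independence follows from Theorem \ref{theoinvar}, since the Conley--Zehnder indices of all closed Reeb orbits are then $>3-n$; see the discussion after Proposition \ref{RFHforaeq}). Concretely: given an arbitrary Brieskorn manifold $\Sigma_a$, I would produce two contact structures on it whose Rabinowitz--Floer homologies differ in at least one degree, hence which cannot be contactomorphic.

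First I would fix the dimension: write $\dim\Sigma_a = 2n-1$. The case $n=2$ (three-dimensional links) is classical and excluded by the hypothesis $\dim\Sigma\ge 5$ in the surrounding discussion, so I would assume $n\ge 3$. Next, recall that $\Sigma_a$ depends only on the vector $a=(a_0,\dots,a_n)$ up to reordering, and that its diffeomorphism type is rather rigid. The strategy is to find a \emph{second} Brieskorn-type exact filling realizing a different contact structure on the same underlying smooth manifold. The cleanest route uses the diffeomorphism classification: by Theorem \ref{theoBries}(ii), a Brieskorn manifold with two isolated vertices in its graph $G_a$ is homeomorphic (and, after correcting the differentiable structure by connected sum with standard spheres, diffeomorphic) to $S^{2n-1}$. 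Thus for the case $\Sigma_a\cong S^{2n-1}$ diffeomorphically, I would take on one hand the standard contact sphere $(S^{2n-1},\xi_{std})$ with its ball filling, whose Rabinowitz--Floer homology is concentrated in the degrees coming from $H_\ast(B^{2n},\partial B^{2n})$ and in particular vanishes in all degrees $\ast\in\mathbb Z\setminus[-n+1,n]$; and on the other hand the contact structure $\#_m\xi_a$ on $S^{2n-1}$ obtained as in the proof of Theorem \ref{theocontactonsphere} from a Brieskorn $\Sigma_a'$ with $a' = (2,2,2,a_3',\dots,a_n')$ (all $a_k'$ odd, pairwise coprime, $a_3'$ large). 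By Theorem \ref{theobriesspheres} and the subcritical-surgery invariance Theorem \ref{theoRFHinvsur}, this latter structure has $\dim_{\mathbb Z_2}RFH_k\ge 2$ for suitable $k$ outside $[-n+1,n]$. Since $RFH$ (in those degrees, being filling-independent) is a contact invariant, the two structures are non-contactomorphic. For a general $\Sigma_a$ that is \emph{not} a standard sphere, I would instead contrast $\xi_a$ (the canonical Lutz--Meckert structure, with filling $W_\veps$) against $\xi_a \# \xi_0$, where $\xi_0$ is a suitable exotic contact structure on $S^{2n-1}$ produced by Theorem \ref{theocontactonsphere}; connected sum with the standard sphere does not change the diffeomorphism type of $\Sigma_a$ but, by Theorem \ref{theoRFHinvsur}, changes the Betti numbers of $RFH$ in degree $k$, so again the two contact structures on $\Sigma_a$ are distinct. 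One must take care that both fillings satisfy (A) and (B): for $W_\veps$ this is the remark after Theorem \ref{theoBries} ($\pi_1=0$, $c_1=0$), and Lemmas \ref{lembehaviourA}, \ref{lembehaviourB} guarantee (A), (B) persist under the boundary connected sums used in building $\#_m W_0$.

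The main obstacle I expect is a bookkeeping issue rather than a conceptual one: ensuring that for \emph{every} vector $a$ (in particular those with $\sum 1/a_k = 1$ or $>1$, where $RFH$ may be finite-dimensional or even zero outside a narrow band of degrees) one can still exhibit a degree $k$ in which the two constructed contact structures have genuinely different $RFH$. When $\sum 1/a_k > 1$ the Conley--Zehnder indices of Reeb orbits of $\xi_a$ tend to $+\infty$, so $RFH_\ast(W_\veps,\Sigma_a)$ is finite-dimensional for each $\ast$ (Corollary after Discussion \ref{mubehaviour}), and one must verify that adding an exotic sphere summand $\xi_0$ with $2\le \dim RFH_k(W_0,S^{2n-1})<\infty$ in a degree $k$ outside $[-n+1,n]$ strictly increases $\dim RFH_k$ — which it does, by the direct-sum behaviour under connected sum in Theorem \ref{theoRFHinvsur}, since $\dim RFH_k(W_\veps\#W_0) = \dim RFH_k(W_\veps) + \dim RFH_k(W_0)$. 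The remaining subtlety is that $RFH$ is only known to be filling-independent, hence a clean contact invariant, in degrees where Theorem \ref{theoinvar}'s index hypothesis applies; for $\Sigma_a$ with $\sum 1/a_k\le 1$ this covers all degrees outside $[-n+1,n]$, which is exactly where our distinguishing degree $k$ lies, so the argument closes. For $\sum 1/a_k > 1$ one should instead invoke the filling-independence built into Proposition \ref{prop16} or argue directly that the relevant chain groups (generated by orbits of high index) and the relevant part of $\partial^F$ are filling-independent in the degree $k$ used. Assembling these cases into a single uniform statement — ``every $\Sigma_a$'' — is the part that requires the most care, but each case is handled by the tools already developed above.
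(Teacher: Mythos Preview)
Your approach has a real gap. The whole argument hinges on $RFH_k$ being a \emph{filling-independent} contact invariant in the degree $k$ you use, but this is not established for all Brieskorn manifolds. Your claim that Theorem \ref{theoinvar} applies whenever $\sum 1/a_k\le 1$ is incorrect: when $\sum 1/a_k<1$ the Conley--Zehnder indices of closed Reeb orbits of $\alpha_a$ tend to $-\infty$ (Discussion \ref{mubehaviour}), so the hypothesis $\mu_{CZ}(v)>3-n$ of Theorem \ref{theoinvar} fails for large periods. And for $\xi_a\#\xi_0$ you have no control over the Reeb dynamics at all, so neither Theorem \ref{theoinvar} nor Proposition \ref{prop16} is available. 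Without filling-independence, the inequality $\dim RFH_k(W_\veps\#W_0)>\dim RFH_k(W_\veps)$ only shows that the single contact manifold $(\Sigma_a,\xi_a)$ admits two non-isomorphic fillings --- precisely case b) of the Main Theorem --- not that $\xi_a$ and $\xi_a\#\xi_0$ are non-contactomorphic.

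The paper's proof avoids this issue entirely by a dynamical argument. The key observation is that the standard form $\alpha_a$ satisfies (MB) (Proposition \ref{prop25}) and every simple closed Reeb orbit for $\alpha_a$ has period at most $(\prod a_k)\cdot\pi/2$. By Corollary \ref{lastcor}, this rules out alternative a) of the Main Theorem (else $\alpha_a$ would have simple orbits of arbitrary length). Hence b) or c) holds. If c), done. If b) (and not a)), Corollary \ref{lastcor} yields a contact structure $\xi$ on $\Sigma_a$ for which \emph{every} contact form satisfying (MB) has simple closed Reeb trajectories of arbitrarily large period; since $\alpha_a$ satisfies (MB) but has bounded simple periods, $\alpha_a$ cannot define $\xi$, so $\xi\ne\xi_a$. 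This produces the second contact structure without ever needing filling-independence of $RFH$ for general $a$.
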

\begin{proof}
 Note that on $\Sigma_a$ the length of a simple closed Reeb trajectory is bounded from above by $\big(\prod a_k\big)\cdot \pi/2$ for the standard contact from $\lambda_a$. Therefore, it follows from Corollary \ref{lastcor} that $\Sigma_a$ cannot satisfy alternative a) of the Main Theorem and if for $\Sigma_a$ holds c), then there are infinitely many different contact structures and we are done. If $\Sigma_a$ satisfies b) in the Main Theorem, then we know from Corollary \ref{lastcor} that it supports at least one contact structure $\xi$ that has simple closed Reeb trajectories of arbitrary length for any contact form satisfying (MB). However, as $\lambda_a$ satisfies (MB), we know that $\lambda_a$ cannot be a contact form for $\xi$, which shows that $\xi$ and $\xi_a$ have to be different.
\end{proof}
\begin{rem}~
\begin{itemize}
 \item In \cite{McLean}, Mark McLean showed that any manifold $\Sigma$, which supports at least one fillable contact structure with trivial Chern class, admits a contact structure $\xi$ with filling $W$ such that $\dim_{\mathbb{Z}_2} SH_\ast\big(W,(\Sigma,\xi)\big)=\infty$ for all $\ast$. Together with the long exact sequence (\ref{longexSHRFH}) we find that the same holds true for Rabinowitz-Floer homology. This gives again Corollary \ref{lastlastcor}.
 \item Using local Floer homology as in \cite{McLean} or \cite{GinGuer}, it should be possible to sharpen Corollary \ref{lastcor} so that there are infinitely many closed simple Reeb trajectories for every contact form, not just the ones which satisfy (MB).
 \item Using the mean Euler characteristic for $S^1$-equivariant symplectic homology $SH^{S^1}$, it should be possible to distinguish all the contact structures on Brieskorn manifolds $\Sigma_a$ that are obtained by our connected sum construction, provided that $\Sigma_a$ satisfies any form of index positivity. This should in particular be possible if $\sum a_k>1$. Consult \cite{BourOan2} or \cite{KwKo} for the mean Euler characteristic and its behaviour under handle-attachment.
\end{itemize}
\end{rem}

\newpage
\phantom{..}
\newpage
\appendix
\section{A perfect Morse function on $S^\ast S^{n-1}$}\label{app1}
In this appendix, we show the existence of a Morse-Smale pair $(\psi,g)$ of a Morse function $\psi$ and a metric $g$ on $S^\ast S^{n-1}$ with $\psi$ having exactly four critical points and $(\psi,g)$ being invariant under the reflection of the last $4n-4$ coordinates:
\[r  : \mathbb{R}^{2n}\rightarrow\mathbb{R}^{2n},\quad (x_1,\dots x_n\;;\;y_1,\dots,y_n)\mapsto (x_1,x_2,-x_3,\dots,-x_n\;;\;y_1,y_2,-y_3,\dots,-y_n).\]
In the first part, we construct the Morse function $\psi$ and calculate its critical points with their indices. In the second part, we construct $g$ on $S^\ast S^2$ and show that $(\psi,g)$ is Morse-Smale there. The third part finally contains the generalization of $g$ to higher dimensions.

\subsection{The Morse function $\mathbf{\psi}$}
This first part was (with some minor mistakes) already included in the author's diploma thesis. We repeat it here for completeness and to give a corrected version.\bigskip\\
We consider the unit tangent bundle $S^\ast S^{n-1}\subset\mathbb{R}^{2n}$ of the unit sphere $S^{n-1}$, i.e.\ the set
\begin{equation} \label{eq7}
  S^\ast S^{n-1} := \left\{z=(x,y)\in\mathbb{R}^n\times\mathbb{R}^n\,\Big|\, ||x||^2 = 1 = ||y||^2,\, \langle x,y\rangle = 0\right\}.
\end{equation}
The tangent space $T_zS^\ast S^{n-1}\subset\mathbb{R}^{2n}$ at a point $z=(x,y)\in S^\ast S^{n-1}$ is given by
\begin{align*}
 T_zS^\ast S^{n-1} &= \left\{(\xi_x,\xi_y)\in\mathbb{R}^n\times\mathbb{R}^n\,\Big|\, \langle\xi_x,x\rangle = 0 = \langle\xi_y,y\rangle, \langle\xi_x,y\rangle + \langle x,\xi_y\rangle = 0\right\}.
\end{align*}
We choose $a\in\mathbb{R}\setminus\{-1,0,1\}$ and define the function $\psi : \mathbb{R}^{2n}\rightarrow\mathbb{R}$ as follows:
 \begin{align*}
 z_a &:= (x_a;y_a)=\big(\underbrace{a,0,...\,,0}_{x_a}\;;\,\underbrace{0,1,0,...\,,0}_{y_a}\big)\\
  \psi(z) := \frac{1}{2}||z-z_a||^2 &\phantom{:}= \frac{1}{2}\Big(||x-x_a||^2+||y-y_a||^2\Big)\\
  &\phantom{:}=\frac{1}{2}\left((x_1-a)^2+x_2^2+y_1^2+(y_2-1)^2+\sum_{k=3}^n (x_k^2+y_k^2)\right).
 \end{align*}
\begin{prop}
 $\psi$ is a Morse function with four critical points, whose Morse indices are $0, n-2, n-1$ and $2n-3$. Moreover $\psi\circ r=\psi$. 
\end{prop}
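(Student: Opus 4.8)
The plan is to compute the critical points of $\psi$ directly by Lagrange multipliers and then check non-degeneracy of the Hessian restricted to $T_zS^\ast S^{n-1}$. The function $\psi(z) = \tfrac12\|z - z_a\|^2$ is just (half) the squared Euclidean distance to the fixed point $z_a$, so its critical points on the submanifold $S^\ast S^{n-1}$ are exactly the points $z$ where $z - z_a$ is normal to $S^\ast S^{n-1}$, i.e.\ where $z-z_a$ lies in the span of the three constraint gradients $\nabla(\|x\|^2)=(2x,0)$, $\nabla(\|y\|^2)=(0,2y)$, $\nabla\langle x,y\rangle=(y,x)$. First I would set up these Lagrange equations: there exist $\lambda,\mu,\nu$ with
\begin{align*}
 x - x_a &= \lambda x + \nu y,\\
 y - y_a &= \mu y + \nu x.
\end{align*}
Since $x_a = (a,0,\dots,0)$ and $y_a=(0,1,0,\dots,0)$ are supported on the first two coordinates, the equations for coordinates $k\ge 3$ read $(1-\lambda)x_k = \nu y_k$ and $(1-\mu)y_k = \nu x_k$; combined with $\|x\|^2=\|y\|^2=1$, $\langle x,y\rangle=0$ these force, after a short case analysis, $x_k=y_k=0$ for $k\ge3$, so every critical point lies in the copy of $S^\ast S^1$ spanned by the first two coordinates. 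On that $S^\ast S^1 = \{x_1^2+x_2^2=1,\ y_1^2+y_2^2=1,\ x_1y_1+x_2y_2=0\}$ one parametrizes $x=(\cos t,\sin t)$, $y = \pm(-\sin t,\cos t)$ and solves the resulting one-variable critical point equation for $\psi$; the expected outcome is exactly four solutions, namely $z = \pm(x_a/|a|\,;\,y_a)$-type points and their partners, which I would list explicitly together with the value of $\psi$ at each.

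Next I would compute the Morse indices. Because $\psi$ is a distance-squared function, at a critical point $z$ the Hessian of $\psi$ restricted to $T_zS^\ast S^{n-1}$ is governed by the second fundamental form of $S^\ast S^{n-1}$ at $z$ in the direction of the normal vector $z - z_a$; concretely, $\mathrm{Hess}\,\psi|_{T_z} = \mathrm{Id} - (\text{shape operator term})$, and its signature can be read off from the eigenvalues. A clean way to organize this is to split $T_zS^\ast S^{n-1}$ into the part tangent to the $S^\ast S^1$ factor (first two coordinates, two-dimensional) and the part in the remaining $2n-4$ coordinates; on the latter the Hessian is diagonal with entries that are easy constants depending on $a$, each contributing $\pm1$ to the index with multiplicity $n-2$, while the $2$-dimensional block contributes the fine structure. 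Doing this at each of the four critical points should yield the indices $0$, $n-2$, $n-1$, $2n-3$ as claimed; I would present the computation as a table. The main obstacle here is bookkeeping: one must be careful that the chosen $a\in\mathbb R\setminus\{-1,0,1\}$ keeps all Hessian eigenvalues nonzero (this is precisely why $a$ is excluded from $\{-1,0,1\}$), so I would track the dependence on $a$ explicitly and point out where each forbidden value would produce degeneracy.

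Finally, the symmetry statement $\psi\circ r = \psi$ is immediate and I would just remark on it: the reflection $r$ negates coordinates $x_3,\dots,x_n,y_3,\dots,y_n$, and $\psi(z)$ depends on those coordinates only through $\sum_{k=3}^n(x_k^2+y_k^2)$, which is invariant under sign changes, while it depends on $x_1,x_2,y_1,y_2$ in a way that $r$ leaves untouched; hence $\psi(r(z)) = \psi(z)$ for all $z\in\mathbb R^{2n}$. I expect the genuine work to be entirely in the index computation rather than in locating the critical points, and in making sure the restriction to $T_zS^\ast S^{n-1}$ is handled correctly — a naive computation of the full $\mathbb R^{2n}$-Hessian (which is just $\mathrm{Id}$, hence useless) must be replaced by the tangentially restricted one, using the parametrization of a neighborhood in $S^\ast S^{n-1}$ or the bordered-Hessian / projected-Hessian formalism.
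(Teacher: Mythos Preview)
Your plan is sound and the critical-point analysis via Lagrange multipliers is essentially identical to the paper's. The one place you diverge is in the index computation: the paper builds explicit charts on $S^\ast S^{n-1}$ by composing stereographic projections $u^\pm:\mathbb{R}^{n-1}\to S^{n-1}$ with the induced frame $U^\pm(x)=\tfrac{\rho(x)}{2}D_xu^\pm$ and a second stereographic projection $v^\pm:\mathbb{R}^{n-2}\to S^{n-2}$ in the fibre, then computes $\partial^2(\psi\circ w^\pm_\ast)/\partial x_k\partial x_l$ at the origin directly, obtaining a diagonal Hessian with entries $\pm 4a$, $\pm 4a+(\ast)4$, and $\ast 4$. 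Your proposed route via the second fundamental form and the identity $\mathrm{Hess}\,\psi|_{T_z}=\mathrm{Id}-S_{z-z_a}$ is equally valid and arguably more conceptual; the splitting of $T_zS^\ast S^{n-1}$ into the $S^\ast S^1$-tangent block and the transverse $(2n-4)$-dimensional block is exactly the structure the paper's chart calculation reveals, so you should recover the same diagonal form. The chart approach has the advantage that no geometry of the normal bundle needs to be unpacked; the shape-operator approach has the advantage of explaining \emph{why} the Hessian splits so cleanly.

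One small correction: you attribute the exclusion $a\notin\{-1,0,1\}$ solely to non-degeneracy of the Hessian, but in fact these values already obstruct the critical-point count. In the Lagrange system, the case $x_k\neq 0$ or $y_k\neq 0$ for some $k\ge 3$ forces $(1-\lambda)(1-\mu)=\nu^2$, which combined with the remaining equations yields $a=0$; and on the reduced $S^\ast S^1$ the step showing $y_1=0$ uses $a\neq\pm 1$. So all three forbidden values enter before you ever look at the Hessian, and you should flag this in your case analysis rather than defer it to the index table.
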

\begin{proof} The fact that $\psi\circ r=\psi$ is obvious. The proof that $\psi$ is a Morse function is organized in two parts:\medskip\\
\textbf{$1^{\text{st}}$ claim:} $\psi$ has four critical points.
\begin{proof}
We calculate, that the gradient of $\psi$ on $\mathbb{R}^{2n}$ is given by
\[ \nabla_z \psi = (x_1-a,x_2,...\,,x_n\;\,;\;\,y_1,y_2-1,y_3,...\,,y_n)=(x,y)-(x_a,y_a).\]
Using the theorem of extrema with constraints, we find that in a critical point $(x,y)$ there exist real numbers $\alpha,\beta,\gamma\in\mathbb{R}$, such that with $k\geq 3$ holds
\begin{align*}
 x_1-a&=\alpha\cdot x_1+\gamma\cdot y_1 & y_1 &= \beta\cdot y_1+\gamma\cdot x_1\\
x_2 &= \alpha\cdot x_2 + \gamma\cdot y_2 & y_2-1 &= \beta\cdot y_2 + \gamma\cdot x_2\\
x_k &= \alpha\cdot x_k + \gamma\cdot y_k & y_k &= \beta\cdot y_k + \gamma\cdot x_k.
\end{align*}
These equations are equivalent to
\begin{align*}
 \text{I : } (1-\alpha)\cdot x_1 &= a + \gamma\cdot y_1& \text{IV : } (1-\beta)\cdot y_1 &= \gamma\cdot x_1 \\
 \text{II : } (1-\alpha)\cdot x_2 &= \gamma\cdot y_2 & \text{V : } (1-\beta)\cdot y_2 &= 1+ \gamma\cdot x_2 \\
 \text{III : } (1-\alpha)\cdot x_k &= \gamma\cdot y_k & \text{VI : } (1-\beta)\cdot y_k &= \gamma\cdot x_k.
\end{align*}
Using III and VI, we obtain
\[(1-\alpha)(1-\beta)\cdot x_k = \gamma^2\cdot x_k \quad\text{ and }\quad (1-\alpha)(1-\beta)\cdot y_k = \gamma^2\cdot y_k.\]
If $x_k\neq 0$ or $y_k\neq 0$ for any $k\geq 3$, we find that $(1-\alpha)(1-\beta)=\gamma^2$ and hence
\begin{align*}
 (1-\alpha)\cdot\text{V}-\gamma\cdot\text{II} \qquad& \Rightarrow & (1-\alpha) &= 0\hspace{4cm}\\
 \gamma\cdot\text{V}-(1-\beta)\cdot\text{II} \qquad &\Rightarrow & \gamma &= 0.
\end{align*}
Inserting this in I yields $a=0$, a contradiction. Therefore, we have $x_k=y_k = 0$ for all $k\geq 3$.
Hence, we are on the set 
\[U_0:=S^\ast S^{n-1}\cap (\mathbb{R}^2\times\mathbf{0})^2,\]
which is easily identified with $S^\ast S^1$. This manifold is the disjoint union of two circles and it follows that
\[y_2=\pm x_1,\quad x_2=\mp y_1,\quad x_1^2+y_1^2=1.\]
Inserting this in II and V yields
\begin{align*}
 \text{II}^\ast : (1-\alpha)\cdot y_1&=-\gamma\cdot x_1,& \text{V}^\ast : (1-\beta)\cdot\pm x_1&=1\mp\gamma\cdot y_1.
\end{align*}
Using these, we calculate that
\begin{align*}
 y_1\cdot\text{I\phantom{V}} + x_1\cdot\text{II}^\ast \qquad &\Rightarrow& -\gamma&=a\cdot y_1\\
 x_1\cdot\text{IV}\pm y_1\cdot\text{V}^\ast \qquad &\Rightarrow& -\gamma&=\;\;\mp y_1\hspace{4cm}.
\end{align*}
Thus $a\cdot y_1=\mp y_1$. As $a\neq \pm 1$, this implies that $y_1=x_2=0$ and hence that $x_1=\pm 1$ and $y_2=\pm 1$. Therefore, we have the following four critical points:
\begin{align*}
 z^+_+&= (+1,0,...\,,0\,;\,0,+1,0,...\,,0);\quad \psi(z^+_+)=\frac{(a-1)^2}{2}\\
 z^+_-&= (+1,0,...\,,0\,;\,0,-1,0,...\,,0);\quad \psi(z^+_-)=\frac{(a-1)^2}{2}+2\\
 z^-_+&= (-1,0,...\,,0\,;\,0,+1,0,...\,,0);\quad \psi(z^-_+)=\frac{(a+1)^2}{2}\\
 z^-_-&= (-1,0,...\,,0\,;\,0,-1,0,...\,,0);\quad \psi(z^-_-)=\frac{(a+1)^2}{2}+2.\qedhere
\end{align*}
\end{proof}
\textbf{$2^{\text{nd}}$ claim:} All four critical points are non-degenerate.
\begin{proof}
To prove this statement, we have to calculate the Hessian of $\psi$ at the 4 critical points. For this purpose, we need charts of $S^\ast S^{n-1}$. Recall that the inverse of the stereographic projection gives charts on $S^{n-1}$. These charts for the ``north-pole''$=(1,0,...\,,0)$ and the ``south-pole''$=(-1,0,...\,,0)$ are of the form:
\[u^{\pm}:\mathbb{R}^{n-1}\rightarrow S^{n-1},\quad u^{\pm}(x)=\frac{1}{1+||x||^2}\left(\pm 1\mp ||x||^2,2x_1,...\,,2x_{n-1}\right)^T.\]
Their differentials yield charts for the tangent bundle $TS^{n-1}$. Explicitly:
\[D_xu^{\pm} = \frac{1}{\rho(x)^2}
 \begin{pmatrix}
\mp 4x_1 & \mp 4x_2 & \dots &  \mp 4x_{n-1} \\
2\rho(x)-4x_1^2 & -4x_1x_2 & \dots & -4x_1x_{n-1}\\
-4x_2x_1 & 2\rho(x) - 4x_2^2 & \dots & -4x_2x_{n-1}\\
\vdots & & \ddots  & \vdots \\
-4x_{n-1}x_1 & -4x_{n-1}x_2 & \dots & 2\rho(x) - 4x_{n-1}^2                    
\end{pmatrix},\]
where $\rho(x):=1+||x||^2$. Short calculation shows $\left(Du^\pm\right)^T\left(Du^\pm\right) = \frac{4}{\rho(x)^2}\cdot Id$. This implies that the following map is an affine isometry for each $x\in\mathbb{R}^{n-1}$:
\[U^\pm(x):\mathbb{R}^{n-1}\rightarrow T_{u^\pm(x)}S^{n-1}\subset\mathbb{R}^n,\qquad U^\pm(x):=\frac{\rho(x)}{2}D_xu^\pm.\]
It follows that $U^\pm(x)\big( S^{n-2}\big)$ is the unit sphere in the tangent space $T_{u^\pm(x)}S^{n-1}$. Using the following charts given by stereographic projections:
\[v^\pm : \mathbb{R}^{n-2}\rightarrow S^{n-2}\subset\mathbb{R}^{n-1},\quad v^\pm(y)=\frac{1}{1+||y||^2}\left(\pm 1\mp||y||^2, 2y_1,...\,,2y_{n-2}\right)^T,\]
we obtain four charts around $z^+_+,z^+_-,z^-_+,z^-_-$ by
\begin{align*}
 w^\pm_\pm&:=u^\pm\times (U^\pm\cdot v^\pm) :\mathbb{R}^{n-1}\times\mathbb{R}^{n-2}\rightarrow S^\ast S^{n-1}, \text{ where}\\
w^+_+(0) &\phantom{:}= \left(u^+(0),U^+(0)\cdot v^+(0)\right) = z^+_+\\
w^+_-(0) &\phantom{:}= \left(u^+(0),U^+(0)\cdot v^-(0)\right) = z^+_-\\
w^-_+(0) &\phantom{:}= \left(u^-(0),U^-(0)\cdot v^+(0)\right) = z^-_+\\
w^-_-(0) &\phantom{:}= \left(u^-(0),U^-(0)\cdot v^-(0)\right) = z^-_-.
\end{align*}
Here, $U^\pm(x)\cdot v^\pm(y)$ denotes the matrix multiplication.\\
Now, we can express $\psi$ in these charts:
\begin{align*}
 \psi(w^\pm_\ast(x,y)) &= \frac{1}{2}\Big( ||u^\pm(x)-x_a||^2+
||U^\pm(x)\cdot v^\ast(y)-y_a||^2\Big)\\
&=\frac{1}{2}\Big(||u^\pm(x)||^2-2\left\langle u^\pm(x),x_a\right\rangle + ||x_a||^2\\
&\phantom{=}\quad+||U^\pm(x)\cdot v^\ast(y)||^2-2\left\langle U^\pm(x)\cdot v^\ast(y), y_a\right\rangle + ||y_a||^2\Big)\\
&=\frac{1}{2}\Big(1-2\left\langle u^\pm(x),x_a\right\rangle + a^2+1-2\left\langle v^\ast(y),\big( U^\pm(x)\big)^T\cdot y_a\right\rangle+1\Big)\\
&=\frac{1}{2}\Big(3+a^2+(\pm)2a\frac{||x||^2-1}{1+||x||^2}+(\ast)2\frac{||y||^2-1}{1+||y||^2}\left(1-\frac{2x_1^2}{1+||x||^2}\right)\\
&\phantom{=}\quad+\frac{8x_1}{1+||y||^2}\cdot\frac{\sum y_jx_{j+1}}{1+||x||^2}\Big).
\end{align*}
The third equation holds, since $||u^\pm(x)||=||v^\ast(y)|| = 1$. Here and above, $\ast\in\{+,-\}$ represents the choice of signs for $y$. Then, some calculation shows that
\begin{align*}
\frac{\partial^2(\psi\circ w^\pm_\ast)}{\partial x_k\,\partial y_l}(0,0) &= \quad0\qquad\qquad\quad \text{for any $k,l$}\\
\frac{\partial^2(\psi\circ w^\pm_\ast)}{\partial y_k\,\partial y_l}(0,0) &= \begin{cases}\ast \,4\phantom{a} & \qquad\quad\text{if } k=l\\ \phantom{\ast}\,0 & \qquad\quad\text{otherwise} \end{cases}\\
 \frac{\partial^2(\psi\circ w^\pm_\ast)}{\partial x_k\,\partial x_l}(0,0) &= \begin{cases}\phantom{\pm}0 & \text{if } k\neq l\\
 \pm 4a & \text{if } k=l\neq 1\\ \pm 4a+(\ast)4 & \text{if } k=l=1 \end{cases}. 
\end{align*}
 Thus we see that all four critical points $z^+_+,z^+_-,z^-_+,z^-_-$ are non-degenerate.
\end{proof}
If we assume that $a>1$, we obtain that
 \[ind_{\psi}(z^+_+)= 0,\quad ind_\psi(z^+_-)=n-2,\quad ind_\psi(z^-_+)=n-1,\quad ind_\psi(z^-_-)=2n-3.\qedhere\]
\end{proof}

\subsection{The metric $g$ on $S^\ast S^2$}
We remind the reader of the well-known fact that $S^\ast S^2$ is diffeomorphic to $\mathbb{R}P^3$. The latter space has a 2-covering by $S^3$. The corresponding covering of $S^\ast S^2$ can be explicitly obtained by restricting the following smooth map to $S^3$:
\begin{align*}
 \Phi=\begin{pmatrix}\Phi_1\\\Phi_2\end{pmatrix}: \mathbb{R}^4\rightarrow\mathbb{R}^6,\qquad
 \Phi(s)=\Phi\begin{pmatrix}s_1\\s_2\\s_3\\s_4\end{pmatrix}=\begin{pmatrix}\Phi_1(s)\\\Phi_2(s)\end{pmatrix}:=
 \begin{pmatrix} s_1^2 +s_2^2-s_3^2-s_4^2\\ \phantom{-}2(s_2s_3+s_1s_4)\\\phantom{-}2(s_1s_3-s_2s_4)\\ \phantom{-}2(s_2s_3-s_1s_4)\\ s_1^2+s_3^2-s_2^2-s_4^2\\-2(s_1s_2+s_3s_4)\end{pmatrix}.
\end{align*}
It is not difficult to see that $\Phi$ maps $S^3$ into $S^\ast S^2$. In fact, $\Phi_1$ and $\Phi_2$ restricted to $S^3$ yield both the Hopf-fibration and are orthogonal to each other. Moreover, $\Phi(-s)=\Phi(s)$.
\begin{lemme}
 $\Phi|_{S^3}$ is a differentiable 2-covering of $S^\ast S^2$.
\end{lemme}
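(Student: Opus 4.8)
The goal is to show that $\Phi|_{S^3}\colon S^3\to S^\ast S^2$ is a smooth $2$-to-$1$ covering map. The plan is to verify in turn: (i) $\Phi$ maps $S^3$ into $S^\ast S^2$; (ii) $\Phi|_{S^3}$ is surjective; (iii) $\Phi(s)=\Phi(s')$ on $S^3$ if and only if $s'=\pm s$; (iv) $\Phi|_{S^3}$ is a local diffeomorphism, i.e. its differential is everywhere an isomorphism (equivalently, since $\dim S^3=\dim S^\ast S^2=3$, the differential is injective). Once these are in place, a standard fact about smooth submersions between manifolds of equal dimension with finite, constant fibre cardinality over a connected base gives that $\Phi|_{S^3}$ is a covering map; alternatively one exhibits explicit evenly covered neighbourhoods using (iii) and (iv) together with compactness of $S^3$.

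For (i), one computes $\|\Phi_1(s)\|^2$, $\|\Phi_2(s)\|^2$ and $\langle\Phi_1(s),\Phi_2(s)\rangle$ and checks, using $\|s\|^2=1$, that the first two equal $(s_1^2+s_2^2+s_3^2+s_4^2)^2=1$ and the third equals $0$; comparing with the defining equations \eqref{eq7} of $S^\ast S^2$ (here with $n=3$) this shows $\Phi(S^3)\subset S^\ast S^2$. The identity $\Phi(-s)=\Phi(s)$ is immediate since every component of $\Phi$ is a quadratic form. For (iii), the nontrivial direction is that $\Phi(s)=\Phi(s')$ forces $s'=\pm s$: since $\Phi_1|_{S^3}$ is the Hopf fibration, $\Phi_1(s)=\Phi_1(s')$ already forces $s'=e^{i\theta}s$ under the standard identification $S^3\subset\mathbb C^2$ with $\mathbb C$ acting by the Hopf circle; then imposing in addition $\Phi_2(s)=\Phi_2(s')$ pins down $e^{i\theta}=\pm1$, because $\Phi_2$ is a \emph{different} Hopf map (orthogonal to $\Phi_1$) so it is not invariant under the full Hopf circle. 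For (ii), surjectivity, one can either count: $\Phi|_{S^3}$ is a local diffeomorphism by (iv) hence open, its image is compact hence closed, and $S^\ast S^2$ is connected, so the image is everything; or one parametrises $S^\ast S^2$ explicitly and solves $\Phi(s)=(x,y)$ directly.

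The main obstacle is step (iv), the claim that $D\Phi(s)$ restricted to $T_sS^3$ is injective for every $s\in S^3$. Concretely one computes the $6\times4$ Jacobian of $\Phi$ at $s$, restricts it to the $3$-dimensional subspace $T_sS^3=s^\perp$, and must show the resulting $6\times3$ matrix has rank $3$. The cleanest route is to use the group structure: $S^3$ acts on itself by quaternion multiplication, this action covers an isometric action on $S^\ast S^2$ compatible with $\Phi$ (since $\Phi$ is built from the two Hopf maps, which are equivariant for suitable $SU(2)$-actions), and hence it suffices to check injectivity of $D\Phi$ at the single point $s=(1,0,0,0)$, where the computation is a short explicit linear-algebra check on a concrete small matrix. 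If one prefers to avoid the equivariance argument, one can instead argue that $\ker D\Phi(s)\cap T_sS^3$ would have to lie in $\ker D\Phi_1(s)\cap T_sS^3$, which is exactly the tangent to the Hopf fibre $\mathbb R\!\cdot\! is$ (one-dimensional), and then check that $D\Phi_2(s)$ is nonzero on that line — again because $\Phi_2$ descends from a Hopf map transverse to $\Phi_1$'s. Either way the verification reduces to a finite, small computation.

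Having established (i)--(iv), I would conclude as follows: by (iv) $\Phi|_{S^3}$ is a local diffeomorphism, in particular an open map and a submersion; by (i) and (ii) it is a surjection onto the connected compact manifold $S^\ast S^2$; by (iii) every fibre has exactly two points. A local diffeomorphism from a compact space onto a connected manifold is automatically a covering map with finite fibres of constant cardinality, so $\Phi|_{S^3}$ is a $2$-fold covering. (If one wants to be fully self-contained: given $p\in S^\ast S^2$ with $\Phi^{-1}(p)=\{s,-s\}$, choose by (iv) disjoint open neighbourhoods $U_{\pm}$ of $\pm s$ on which $\Phi$ is a diffeomorphism onto its image; then $V:=\Phi(U_+)\cap\Phi(U_-)\cap\big(S^\ast S^2\setminus\Phi(S^3\setminus(U_+\cup U_-))\big)$ is an open neighbourhood of $p$, using compactness of $S^3\setminus(U_+\cup U_-)$, and $\Phi^{-1}(V)$ is the disjoint union of the two open sets $U_\pm\cap\Phi^{-1}(V)$ each mapped diffeomorphically onto $V$, so $V$ is evenly covered.) This proves the lemma.
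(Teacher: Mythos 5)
Your proposal is correct in outline and establishes the same facts the paper needs, but organises the argument somewhat differently. The paper's proof revolves around one explicit computation: it parametrises the Hopf fibre $\Phi_1^{-1}(x)$ by a rotation-path $\gamma(\alpha)$ and finds the closed form $\Phi(\gamma(\alpha)) = \bigl(\Phi_1(s),\,\cos(2\alpha)\Phi_2(s)+\sin(2\alpha)\,\Phi_1(s)\times\Phi_2(s)\bigr)$. This single formula delivers surjectivity and the exact fibre cardinality $2$ simultaneously (the second component sweeps twice through the circle of vectors orthogonal to $x$ as $\alpha$ runs once around the fibre), and is precisely the computation you would have to carry out anyway to make your step (iii) rigorous — your appeal to ``$\Phi_2$ is a different Hopf map so not invariant under the full Hopf circle'' is exactly this $\cos(2\alpha)$/$\sin(2\alpha)$ period-$\pi$ statement, so in the end the structural Hopf argument reduces to the same algebra. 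You obtain surjectivity instead by the topological route (local diffeomorphism + compact image + connected target), which is valid but needs step (iv) as input, whereas the paper's surjectivity proof is independent of the differential. For step (iv) the paper does not invoke equivariance: it restricts $D\Phi$ to the explicit orthonormal frame $v_1,v_2,v_3$ of $T_sS^3$ and shows the images $D\Phi(v_i)$ are orthogonal nonzero vectors, hence a basis of $T_{\Phi(s)}S^\ast S^2$. Your alternative argument via $\ker D\Phi|_{T_sS^3}\subset\ker D\Phi_1|_{T_sS^3}$ and nonvanishing of $D\Phi_2$ on the Hopf fibre direction also works and is slightly more conceptual. One thing the paper's explicit basis computation buys which the kernel/equivariance argument does not is the by-product that $\Phi_\ast g_{S^3}$ differs from the induced metric by a nonuniform factor (lengths $4$ on two directions, $8$ on the third), which is used in the next subsection to build the Morse--Smale metric; so the explicit route is not just verification but produces data the paper needs downstream.
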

\begin{proof}~\\
 \textbf{$1^{\text{st}}$ claim:} $\Phi: S^3\rightarrow S^\ast S^2$ is surjective and $\Phi^{-1}(x,y)$ contains two preimages for every point $(x,y)\in S^\ast S^2$.
 \begin{proof}
 Let $x\in S^2$ be an arbitrary point. Its preimage under the Hopf-fibration $\Phi_1^{-1}(x)$ is a circle in $S^3$. We parametrize this circle by the following path:
 \begin{align*}
  \gamma(\alpha)=\begin{pmatrix}\begin{matrix}\phantom{-}\cos \alpha & \sin \alpha \\ -\sin \alpha & \cos \alpha\end{matrix} & 0\\ 0 & \begin{matrix}\cos\alpha & -\sin \alpha\\ \sin\alpha & \phantom{-}\cos \alpha \end{matrix}\end{pmatrix}\begin{pmatrix}s_1\\s_2\\s_3\\s_4\end{pmatrix}.
 \end{align*}
 Here, $s=(s_1,s_2,s_3,s_4)\in S^3$ is a point with $\Phi_1(s)=x$. The image of this circle under $\Phi$ is given by
 \begin{align*}
  \Phi(\gamma(\alpha))&=\begin{pmatrix} \begin{pmatrix}s_1^2 +s_2^2-s_3^2-s_4^2\\ 2(s_2s_3+s_1s_4)\\2(s_1s_3-s_2s_4)\end{pmatrix}\\\cos(2\alpha)\begin{pmatrix}\phantom{-}2(s_2s_3-s_1s_4)\\ s_1^2+s_3^2-s_2^2-s_4^2\\-2(s_1s_2+s_3s_4)\end{pmatrix}+\sin(2\alpha)\begin{pmatrix}-2(s_2s_4+s_1s_3)\\\phantom{-}2(s_1s_2-s_3s_4)\\s_1^2+s_4^2-s_2^2-s_3^2\end{pmatrix}\end{pmatrix}\\
  &=\Bigg.\begin{pmatrix}\Phi_1(s)\\\cos(2\alpha)\cdot\Phi_2(s)+\sin(2\alpha)\cdot\big(\Phi_1(s)\times\Phi_2(s)\big)\end{pmatrix}.
 \end{align*}
Here, $\times$ denotes as usual the cross-product in $\mathbb{R}^3$. Note that while the first part of $\Phi$ maps $\gamma(\alpha)$ to $x=\Phi_1(s)$, the second part of $\Phi$ runs twice through the unit circle of vectors orthogonal to $x$. Thus, we see that $\Phi$ is surjective and the preimage $\Phi^{-1}(x,y)$ for $(x,y)\in S^\ast S^2$ contains exactly two points $s$ and $-s$.
 \end{proof}
 \textbf{$2^{\text{nd}}$ claim:} $\Phi$ is a local diffeomorphism.
 \begin{proof}
  The differential of $\Phi$ is given by
  \[ D\Phi = 2\cdot\begin{pmatrix}s_1& s_2 & -s_3 & -s_4\\s_4 &  s_3 & s_2 & s_1\\s_3 & -s_4 & s_1 & -s_2\\ -s_4 & s_3 & s_2 & -s_1\\ s_1 & -s_2 & s_3 & -s_4\\ -s_2 & -s_1 & -s_4 & -s_3 \end{pmatrix}.\]
  Recall that an orthonormal basis of $T_sS^3$ at a point $s=(s_1,s_2,s_3,s_4)\in S^3$ is given by
  \begin{align*}
   v_1(s)&:=(s_2,-s_1,-s_4,s_3)^T\\
   v_2(s)&:=(-s_3,-s_4,s_1,s_2)^T\\
   v_3(s)&:=(-s_4,s_3,-s_2,s_1)^T.
  \end{align*}
An easy calculation shows that
\begin{equation}\label{Dphibasis}\begin{aligned}
 D\Phi(v_1)=2\cdot\begin{pmatrix}0\\\Phi_1(s)\times\Phi_2(s)\end{pmatrix}&,\qquad\qquad
 D\Phi(v_2)=2\cdot\begin{pmatrix}\Phi_1(s)\times\Phi_2(s)\\0\end{pmatrix},\\
 D\Phi(v_3)&=2\cdot\begin{pmatrix}\phantom{-}\Phi_2(s)\\-\Phi_1(s)\end{pmatrix}. \end{aligned}
\end{equation}
The image of the orthonormal basis $\{v_1,v_2,v_3\}$ of $T_sS^3$ under $D\Phi$ is hence an orthogonal basis of $T_{\Phi(s)}S^\ast S^2$ and $D\Phi_{TS^3}$ is therefore pointwise an isomorphism. It follows that $\Phi$ is a local diffeomorphism. These calculations also show that the pushforward $\Phi_\ast g_{S^3}$ of the standard metric $g_{S^3}$ on $S^3$ is not a multiple of the standard metric on $S^\ast S^2$ coming from $\mathbb{R}^{2n}$ (as $||D\Phi(v_3)||^2=8$, while $||D\Phi(v_1)||^2=4$).
 \end{proof}
 It follows from claim 1 and 2 and the fact that $\Phi(-s)=\Phi(s)$ that $\Phi$ is a 2-covering of $S^\ast S^2$. Moreover, it induces a diffeomorphism between $\mathbb{R}P^3=\raisebox{.2em}{$S^3$}\hspace{-.2em}\Big/\hspace{-.2em}\raisebox{-.2em}{$\scriptstyle\sim$}$ and $S^\ast S^2$.
\end{proof}
The metric $g$ on $S^\ast S^2$ mentioned in the introduction is the pushforward $\Phi_\ast g_{S^3}$. In order to see that $(\psi,g)$ satisfies the Morse-Smale condition, we first consider on $S^3$ the function
\[f:\mathbb{R}^4\supset S^3\rightarrow\mathbb{R},\qquad f(s)=A_1\cdot s_1^2+A_2\cdot s_2^2+A_3\cdot s_3^2 + A_4\cdot s_4^2,\]
where $A_i>0$ are pairwise different positive real numbers. Note that $f(-s)=f(s)$, so that $f$ induces a well-defined function $\Phi_\ast f$ on $S^\ast S^2$. Easy calculations show that for
\begin{equation}\label{valuesofAi}
  A_1=\frac{(a-1)^2}{2},\quad A_2=\frac{(a-1)^2}{2}+2,\quad A_3=\frac{(a+1)^2}{2},\quad A_4=\frac{(a+1)^2}{2}+2
\end{equation}
the functions $\psi$ and $\Phi_\ast f$ do coincide on $S^\ast S^2$.
\begin{lemme}
 For $A_i$ pairwise different positive real numbers, the function $f$ is a Morse function on $S^3$ having the following 8 critical point
 \[c_1^\pm=(\pm1,0,0,0),\;c_2^\pm=(0,\pm1,0,0),\;c_3^\pm=(0,0,\pm1,0),\;c_4^\pm=(0,0,0,\pm1).\]
 Moreover, $f$ and the standard metric $g_{S^3}$ on $S^3$ are Morse-Smale, i.e.\ the stable and unstable manifolds of the critical points intersect transversally.
\end{lemme}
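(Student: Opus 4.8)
The plan is to realize $S^3$ as the unit sphere in $\mathbb{R}^4$ and to verify the Morse condition by an explicit local computation, exactly as was done above for $\psi$ on $S^\ast S^{n-1}$, but much shorter because $f$ is a diagonal quadratic form. First I would compute the gradient of $f$ on $\mathbb{R}^4$, namely $\nabla f = 2(A_1 s_1, A_2 s_2, A_3 s_3, A_4 s_4)$, and apply the Lagrange multiplier method with the single constraint $\sum s_i^2 = 1$: a critical point $s$ must satisfy $A_i s_i = \lambda s_i$ for all $i$ and some $\lambda \in \mathbb{R}$. Since the $A_i$ are pairwise distinct, for each $i$ this forces $s_j = 0$ for all $j \neq i$, hence $s = c_i^\pm = \pm e_i$ with $\lambda = A_i$; these are exactly the eight listed points. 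For non-degeneracy, I would use on $S^3$ near $c_i^\pm$ the obvious chart obtained by solving the constraint for the $i$-th coordinate (say $s_i = +\sqrt{1 - \sum_{j\neq i} s_j^2}$ near $c_i^+$); in these coordinates $f$ becomes $A_i(1 - \sum_{j\neq i} s_j^2) + \sum_{j\neq i} A_j s_j^2$, so the Hessian at the origin of the chart is the diagonal matrix $2\,\mathrm{diag}(A_j - A_i)_{j \neq i}$, which is non-degenerate precisely because the $A_i$ are pairwise distinct. The Morse index of $c_i^\pm$ is the number of indices $j \neq i$ with $A_j < A_i$; ordering $A_1 < A_2 < A_3 < A_4$ this gives indices $0,1,2,3$ for the pairs $c_1^\pm, c_2^\pm, c_3^\pm, c_4^\pm$ respectively.

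For the Morse--Smale property with the round metric $g_{S^3}$, I would exhibit the negative gradient flow explicitly. The key observation is that the functions $s_i^2$ (restricted to $S^3$) and the coordinate hyperplane intersections $\{s_{i_1} = \cdots = s_{i_r} = 0\} \cap S^3$ are invariant under the flow: the negative gradient flow of $f$ on $S^3$ is (up to reparametrization) the gradient flow of a diagonal quadratic form, which decouples and preserves every coordinate subsphere. Concretely, if $s(t)$ is a flow line then $\frac{d}{dt}\big(s_i(t)^2\big) = -2 s_i \cdot \big(\nabla_{S^3} f\big)_i = -2 s_i^2\big(A_i - \langle \nabla f, s\rangle/2 \,\text{-type term}\big)$, so $s_i(t) \equiv 0$ whenever $s_i(0) = 0$. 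Hence the unstable manifold $W^u(c_i^\pm)$ is contained in the coordinate subsphere spanned by the $e_j$ with $A_j \le A_i$ together with $e_i$, and the stable manifold $W^s(c_k^\pm)$ is contained in the span of $e_k$ and the $e_j$ with $A_j \ge A_k$. Tracking dimensions, $W^u(c_i^\pm) \cup \{0\text{-level directions}\}$ sits inside an $(\mathrm{ind}\,c_i^\pm)$-dimensional sphere and $W^s(c_k^\pm)$ inside a complementary one; I would then check transversality of these nested sub-spheres directly: for $i \neq k$ the intersection $W^u(c_i^\pm) \cap W^s(c_k^\pm)$ lies in an intermediate coordinate subsphere on which $f$ is again a non-degenerate diagonal form with the critical points in the right index order, and one verifies the stable/unstable manifolds meet transversally there by the standard fact that the gradient flow of a Morse function in ``general position'' quadratic form on a sphere is Morse--Smale. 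Equivalently, one can invoke the well-known description of the gradient flow of $\sum A_i s_i^2$ on $S^{n}$ as the standard handle-decomposition flow, which is manifestly Morse--Smale.

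The main obstacle I anticipate is not the existence of critical points or their indices — those are immediate — but writing the transversality verification cleanly: one must be careful that the stable and unstable manifolds, while contained in coordinate subspheres, actually meet transversally \emph{inside} $S^3$, i.e. that the sum of tangent spaces fills $T_p S^3$ at every intersection point $p$. The cleanest route is to use the decoupled form of the flow to show $W^u(c_i^\pm)$ and $W^s(c_k^\pm)$ are themselves unions of coordinate ``open cells'' and that their closures intersect in lower cells, so transversality reduces to a $2$-dimensional or $3$-dimensional model computation which can be done by hand. Once $(f, g_{S^3})$ is Morse--Smale on $S^3$, the final step is to push everything down through the double cover $\Phi \colon S^3 \to S^\ast S^2$: since $f(-s) = f(s)$ and $g_{S^3}$ is invariant under the antipodal map, both descend to $\mathbb{R}P^3 \cong S^\ast S^2$; the induced pair is $(\Phi_\ast f, g) = (\psi, g)$ by the choice \eqref{valuesofAi} of the $A_i$, and being a local diffeomorphism $\Phi$ carries the Morse--Smale property downstairs, while the eight critical points $c_i^\pm$ project two-to-one onto the four critical points $z^\pm_\pm$ of $\psi$ with the same indices. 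Finally I would note that the reflection $r$ corresponds under $\Phi$ to a linear isometry of $S^3$ permuting the coordinates that leaves $f$ invariant, so $(\psi, g)$ is $r$-invariant, completing the proof.
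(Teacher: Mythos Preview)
Your arguments for the critical points and their non-degeneracy are correct and in fact cleaner than the paper's: the paper uses stereographic projection charts, where the Hessian computation is somewhat longer, while your graph charts $s_i = \pm\sqrt{1 - \sum_{j\neq i} s_j^2}$ yield the diagonal Hessian $2\,\mathrm{diag}(A_j - A_i)_{j\neq i}$ immediately.

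The Morse--Smale part, however, has a real gap. You correctly observe that the coordinate hyperplanes $\{s_i = 0\}$ are invariant under the gradient flow, and this is indeed the key structural fact. But you then defer to a ``standard fact'' and a ``well-known description'' without actually verifying transversality --- and you yourself flag this as the ``main obstacle''. The paper closes this gap concretely: it writes the gradient ODE on $S^3$ as $\dot\gamma_i = 2s_i\big(A_i - \sum_k A_k s_k^2\big)$, analyzes the sign of each factor to determine the asymptotics of every trajectory, and then lists all sixteen (un)stable manifolds explicitly as coordinate half-subspheres, e.g.\ $W^s(c_3^\pm)=\{s_4=0,\ \operatorname{sign}(s_3)=\pm1\}$ and $W^u(c_2^\pm)=\{s_1=0,\ \operatorname{sign}(s_2)=\pm1\}$. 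With these explicit descriptions in hand, transversality is genuinely immediate, since at any intersection point the two tangent spaces are complementary coordinate subspaces of $T_sS^3$. This explicit listing is exactly the ``clean verification'' you say you are missing; you should carry it out rather than invoke it.

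Two minor remarks: you work with the \emph{negative} gradient flow while the paper (consistent with its global conventions) uses the \emph{positive} one, so your stable/unstable labels are swapped relative to the paper's; and your final paragraph (pushing through $\Phi$, $r$-invariance) goes beyond the present lemma and is handled separately afterwards in the text.
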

\begin{proof}~
 \begin{itemize}
  \item \underline{critical points}\\
  The gradient $\nabla^\mathbb{R}$ of $f$ on $\mathbb{R}^4$ is given by $\;\, \nabla^\mathbb{R} f=2\cdot\big(A_1s_1,A_2s_2,A_3s_3,A_4s_4\big)^T$.\\
  The tangent space $T_sS^3$ at $s\in S^3$ is given by $\;\, T_sS^3=\big\{\xi\in\mathbb{R}^4\,\big|\,\big\langle\xi,s^T\rangle=0\big\}$.\\
  The gradient $\nabla f:=\nabla^{S^3}f$ of $f$ on $S^3$ is therefore given by
  \begin{align*}
   \nabla f&=2\begin{pmatrix}A_1s_1\\A_2s_2\\A_3s_3\\A_4s_4\end{pmatrix}-\left\langle2\begin{pmatrix}A_1s_1\\A_2s_2\\A_3s_3\\A_4s_4\end{pmatrix}\,,\,\begin{pmatrix}s_1\\s_2\\s_3\\s_4\end{pmatrix}\right\rangle\cdot\begin{pmatrix}s_1\\s_2\\s_3\\s_4\end{pmatrix}=
   \end{align*}
   \begin{align*}
   &=2\begin{pmatrix}\Big.s_1\cdot\big(A_1-(A_1s_1^2+A_2s_2^2+A_3s_3^2+A_4s_4^2)\big)\\\Big.s_2\cdot\big(A_2-(A_1s_1^2+A_2s_2^2+A_3s_3^2+A_4s_4^2)\big)\\\Big.s_3\cdot\big(A_3-(A_1s_1^2+A_2s_2^2+A_3s_3^2+A_4s_4^2)\big)\\\Big.s_4\cdot\big(A_4-(A_1s_1^2+A_2s_2^2+A_3s_3^2+A_4s_4^2)\big)\end{pmatrix}.
  \end{align*}
Assuming for example $0<A_1<A_2<A_3<A_4$, it is not difficult to see that $\nabla f=0$ only in the 8 points $c_1^\pm, c_2^\pm, c_3^\pm, c_4^\pm$ (Hint: Use that $s_1^2+s_2^2+s_3^2+s_4^2=1$).
\item \underline{non-degeneracy}\medskip\\
Around the critical points $c_1^\pm, c_2^\pm, c_3^\pm, c_4^\pm$ we have the following charts in $S^3$ coming from stereographic projection:
\begin{align*}
 u_1^\pm:\mathbb{R}^3&\rightarrow S^3,& u_1^\pm(x)&=\frac{1}{1+||x||^2}\big(\pm1\mp||x||^2, 2x_1, 2x_2, 2x_3\big)\\
 u_2^\pm:\mathbb{R}^3&\rightarrow S^3,& u_2^\pm(x)&=\frac{1}{1+||x||^2}\big(2x_1, \pm1\mp||x||^2, 2x_2, 2x_3\big)\\
 u_3^\pm:\mathbb{R}^3&\rightarrow S^3,& u_3^\pm(x)&=\frac{1}{1+||x||^2}\big(2x_1, 2x_2, \pm1\mp||x||^2, 2x_3\big)\\
 u_4^\pm:\mathbb{R}^3&\rightarrow S^3,& u_4^\pm(x)&=\frac{1}{1+||x||^2}\big(2x_1, 2x_2, 2x_3, \pm1\mp||x||^2 \big)
\end{align*}
Using these charts, easy calculations show that
\[\frac{\partial ^2 f(u^\pm_i)}{\partial x_j \partial x_k}(0)=\begin{cases}0 & \text{if } j\neq k\\ 8A_j-8A_i & \text{if } j=k<i\\8A_{j+1}-8A_i & \text{if } j=k\geq i\end{cases}.\]
Hence, if the $A_i$ are pairwise different, we see that all critical points are non-degenerate.
\item \underline{Morse-Smale}\medskip\\
As above, we consider $S^3$ equipped with the standard metric coming from $\mathbb{R}^4$. The ordinary differential equation $\dot{\gamma}=\nabla f$ for a gradient trajectory $\gamma$ reads as
\begin{equation}\label{gradientODE}\begin{aligned}
 \dot{\gamma_1} &= 2s_1\cdot\big(A_1-(A_1s_1^2+A_2s_2^2+A_3s_3^2+A_4s_4^2)\big)=:2s_1\cdot B_1\\
 \dot{\gamma_2} &= 2s_2\cdot\big(A_2-(A_1s_1^2+A_2s_2^2+A_3s_3^2+A_4s_4^2)\big)=:2s_2\cdot B_2\\
 \dot{\gamma_3} &= 2s_3\cdot\big(A_3-(A_1s_1^2+A_2s_2^2+A_3s_3^2+A_4s_4^2)\big)=:2s_3\cdot B_3\\
 \dot{\gamma_4} &= 2s_4\cdot\big(A_4-(A_1s_1^2+A_2s_2^2+A_3s_3^2+A_4s_4^2)\big)=:2s_4\cdot B_4. \end{aligned}
\end{equation}
Let $\gamma$ be a solution of (\ref{gradientODE}). As $S^3$ is compact without boundary, we know that $\gamma$ converges asymptotically at both ends to a critical point of $f$. Assuming $A_4>A_3>A_2>A_1>0$ as in (\ref{valuesofAi}), we find with $s_1^2+s_2^2+s_3^2+s_4^2=1$ that $B_4\geq 0$, where $B_4=0$ exactly for $s=(0,0,0,\pm 1)$. So if for a time $t_0$ holds that $\gamma_4(t_0)\neq 0$, then $\gamma_4(t)$ strictly increases/decreases in $t$ to $\pm 1=sign\,\gamma_4(t_0)$ and hence $\displaystyle\lim_{t\rightarrow\infty}\gamma(t)=(0,0,0,\pm1)=(0,0,0,sign\,\gamma_4(t_0))$.\pagebreak\\
If $s_4=0$, then $B_3\geq 0$, where $B_3=0$ exactly for $s=(0,0,\pm1,0)$. So if $\gamma_4(t)=0$ for all $t$ and if $\gamma_3(t_0)\neq 0$ for one $t_0$, then the same reasoning shows that $\displaystyle\lim_{t\rightarrow\infty}\gamma(t)=(0,0,\pm1,0)=(0,0,sign\,\gamma_3(t_0),0)$.\\
Analogously, we find that if $\gamma_4(t)=\gamma_3(t)=0$ for all $t$ and if $\gamma_2(t_0)\neq0$ for one $t_0$, then $\displaystyle\lim_{t\rightarrow\infty}=(0,\pm1,0,0)=(0,sign\,\gamma_2(t_0),0,0)$.\\
In complete analogy, we show that if $\gamma_1(t_0)\neq 0$ for one $t_0$, then $\displaystyle\lim_{t\rightarrow -\infty}\gamma(t)=(\pm1,0,0,0)=(sign\,\gamma_1(t_0),0,0,0)$. The same, if $\gamma_1(t)=0$ for all $t$, but $\gamma_2(t_0)\neq 0$, then $\displaystyle\lim_{t\rightarrow -\infty}\gamma(t)=(0,\pm1,0,0)=(0,sign\,\gamma_2(t_0),0,0)$ and finally if $\gamma_1(t)=\gamma_2(t)=0$ for all $t$, but $\gamma_3(t_0)\neq 0$, then $\displaystyle\lim_{t\rightarrow-\infty}\gamma(t)=(0,0,\pm1,0)=(0,0,sign\,\gamma_3(t_0),0)$.\\
This allows us to read off the stable and unstable manifolds as follows
\begin{align*}
 W^s(c_1^\pm)&=\left\{s\in S^3\,\left|\, s=(\pm1,0,0,0)\right.\right\},\\
 W^s(c_2^\pm)&=\left\{s\in S^3\,\left|\, s_4=s_3=0, sign(s_2)=\pm1\right.\right\},\\
 W^s(c_3^\pm)&=\left\{s\in S^3\,\left|\, s_4=0, sign(s_3)=\pm1\right.\right\},\\
 W^s(c_4^\pm)&=\left\{s\in S^3\,\left|\, sign(s_4)=\pm1\right.\right\},\\
 W^u(c_1^\pm)&=\left\{s\in S^3\,\left|\,sign(s_1)=\pm 1\right.\right\},\\ 
 W^u(c_2^\pm)&=\left\{s\in S^3\,\left|\,s_1=0, sign(s_2)=\pm 1\right.\right\},\\
 W^u(c_3^\pm)&=\left\{s\in S^3\,\left|\,s_1=s_2=0, sign(s_3)=\pm 1\right.\right\},\\
 W^u(c_4^\pm)&=\left\{s\in S^3\,\left|\,s=(0,0,0,\pm1)\right.\right\}.
\end{align*}
It is now obvious, that all stable and unstable manifolds intersect transversally.\qedhere
 \end{itemize}
\end{proof}
We have seen that $(f,g_{S^3})$ is a Morse-Smale pair on $S^3$ and that $\Phi : S^3\rightarrow S^\ast S^2$ is a local diffeomorphism. This implies that $(\Phi_\ast f, \Phi_\ast g_{S^3})$ is also a Morse-Smale pair on $S^\ast S^2$.\\
To conclude this section, we note that $(f,g_{S^3})$ is invariant under the reflection
\[\mathfrak{r}:\mathbb{R}^4\rightarrow\mathbb{R}^4, (s_1,s_2,s_3,s_4)\mapsto (s_1,-s_2,-s_3,s_4).\]
Note that $\mathfrak{r}$ on $S^3$ is conjugate via $\Phi$ to the reflection $r$ on $S^\ast S^2$, as defined in the introduction of this appendix. It folows that $(\Phi_\ast f, \Phi_\ast g_{S^3})$ is invariant under $r$. Moreover, for the $A_i$ chosen as in (\ref{valuesofAi}) such that $\Phi_\ast f=\psi$, we find that with respect to the metric $\Phi_\ast g_{S^3}$ there are exactly two gradient trajectories between each pair $z_-^-$ and $z_+^-$, $z_+^-$ and $z_-^+$ and $z_-^+$ and $z_+^+$ corresponding to the four gradient trajectories between $(0,0,0,\pm1)$ and $(0,0,\pm1,0)$, $(0,0,\pm1,0)$ and $(\pm1,0,0,0)$ and $(\pm1,0,0,0)$ and $(0,\pm1,0,0)$ respectively. These latter gradient trajectories can be read off as the 1-dimensional intersections of the stable and unstable manifolds given above.

\subsection{The metric $g$ on $S^\ast S^{n-1}$}
Recall from the calculations in (\ref{Dphibasis}) that $D\Phi$ maps an orthonormal basis of $T_sS^3$ to an orthogonal basis of $T_{\Phi(s)}S^\ast S^2$, where two vectors have length 2 and one length $2\sqrt{2}$. At a point $(x,y)\in S^\ast S^2$, the latter vector, $D\Phi(v_3)$, is given by $2\cdot(y,-x)^T$.\pagebreak\\
Note that we can define a global vector field $X$ on $\mathbb{R}^{2n}$ by
\[X(x,y):=(y,-x)^T.\]
Let $\big(\mathbb{R}X\big)^\perp$ denote the orthogonal complement of $\mathbb{R}X$ with respect to the standard metric $g_{std}$ on $\mathbb{R}^{2n}$. Then, we have for $(x,y)\neq0$ the splitting
\[\mathbb{R}^{2n}=\mathbb{R}\oplus\big(\mathbb{R}X\big)^\perp.\]
We define a Riemannian metric $g$ on $\mathbb{R}^{2n}\setminus\{0\}$ by requiring that 
\begin{equation}\label{metriconSS}
 g(X,X)=\frac{1}{8}, \quad g\big|_{(\mathbb{R}X)^\perp}=\frac{1}{4}g_{std}\big|_{(\mathbb{R}X)^\perp}\quad\text{and}\quad g(X,Y)=0\quad\forall Y\in\big(\mathbb{R}X\big)^\perp.
\end{equation}
Then, $g$ coincides with $\Phi_\ast g_{S^3}$ in the following sense: Consider for $i\geq 3$ the sets
\begin{align*}
 S^\ast S^2_i&:=S^\ast S^{n-1}\cap(\mathbb{R}^2\times\mathbf{0}\times\underbrace{\mathbb{R}}_{i^{\text{th}}-coord.}\times\mathbf{0})^2\\
 &\phantom{:}=\left\{(x,y)\in S^\ast S^{n-1}\,\left|\,x_j=y_j=0 \text{ for } j\geq 3, j\neq i\right.\right\}.
\end{align*}
They are easily identified with $S^\ast S^2$. Let us denote by $\Phi_i:S^3\rightarrow S^\ast S^2_i$ the maps given by $\Phi$ composed with this identification. Then we have that $(\Phi_i)_\ast (g_{S^3})=g|_{TS^\ast S^2_i}$.\medskip\\
In the remainder of this section we show that $(\psi,g)$ is Morse-Smale on $S^\ast S^{n-1}$. First, we calculate the gradient $\nabla_g^\mathbb{R} \psi$ of $\psi$ on $\mathbb{R}^{2n}$ with respect to $g$. Let $\xi\in T_z\big(\mathbb{R}^{2n}\setminus\{0\}\big)$ be any tangent vector and let $\xi=\xi_X+\xi_\perp$ be its  decomposition with respect to the splitting $\mathbb{R}X\oplus\big(\mathbb{R}X\big)^\perp$. Let $\nabla^\mathbb{R} \psi$ be the gradient of $\psi$ with respect to $g_{std}$ and let $\nabla^\mathbb{R} \psi=\nabla^\mathbb{R} \psi_X + \nabla^\mathbb{R} \psi_\perp$ be its decomposition. Then
\begin{align*}
 d\psi(\xi)\overset{\phantom{(58)}}{=}g_{std}(\nabla^\mathbb{R} \psi,\xi) &\overset{\phantom{(58)}}{=}g_{std}(\nabla^\mathbb{R} \psi_X,\xi_X) + g_{std}(\nabla^\mathbb{R} \psi_\perp,\xi_\perp)\\
 &\overset{(\ref{metriconSS})}{=} 8 g(\nabla^\mathbb{R} \psi_X,\xi_X) + 4 g(\nabla^\mathbb{R} \psi_\perp,\xi_\perp)\\
 &\overset{(\ref{metriconSS})}{=} 4 g(2\nabla^\mathbb{R} \psi_X + \nabla^\mathbb{R}\psi_\perp,\xi_X+\xi_\perp)\\
 &\overset{\phantom{(58)}}{=} 4 g(\nabla^\mathbb{R}\psi + \nabla^\mathbb{R}\psi_X,\xi).
\end{align*}
Thus, we find that $\nabla_g^\mathbb{R}\psi = 4(\nabla^\mathbb{R} \psi +\nabla^\mathbb{R} \psi_X)$. To get the gradient $\nabla_g \psi$ of $\psi$ on $S^\ast S^{n-1}$, we now have to project $\nabla^\mathbb{R}_g\psi$ orthogonally (with respect to $g$) to $TS^\ast S^{n-1}$. In other words, we calculate
\begin{align*}
 \nabla_g \psi = \nabla_g^\mathbb{R}\psi 
 - g\left(\Big.\nabla_g^\mathbb{R}\psi,{\textstyle\binom{x}{0}}\right)\frac{\binom{x}{0}}{\left|\left|\binom{x}{0}\right|\right|^2_g}
 - g\left(\nabla_g^\mathbb{R}\psi,{\textstyle\binom{0}{y}}\right)\frac{\binom{0}{y}}{\left|\left|\binom{x}{0}\right|\right|^2_g}
 - g\left(\Big.\nabla_g^\mathbb{R}\psi,{\textstyle\binom{y}{x}}\right)\frac{\binom{y}{x}}{\left|\left|\binom{y}{x}\right|\right|^2_g}.
\end{align*}
As $(x,0)^T,\, (0,y)^T$ and $(y,x)^T$ are all orthogonal to $X=(y,-x)^T$ for $(x,y)\in S^\ast S^{n-1}$, we can replace in the above equation $g$ by $\frac{1}{4}g_{std}$. Recalling that $\nabla^\mathbb{R}\psi$ was given by
\[\nabla^\mathbb{R}\psi=(x_1-a,x_2,\dots,x_n\;;\;y_1,y_2-1,y_3,\dots,y_n)^T=(x,y)^T-(x_a,y_a)^T,\]
we then calculate
\begin{align*}
 \nabla_g\psi=4\left({\textstyle\binom{x}{y}}-{\textstyle\binom{x_a}{y_a}} + \frac{x_2-ay_1}{2}{\textstyle\binom{y}{-x}}-(1-ax_1){\textstyle\binom{x}{0}} -(1-y_2) {\textstyle\binom{0}{y}}-\frac{-ay_1-x_2}{2}{\textstyle\binom{y}{x}}\right).
\end{align*}
Next, we show that all solutions of $\dot{\gamma}=\nabla_g\psi$ with $\displaystyle\lim_{t\rightarrow\infty}\gamma(t)=z^-_+$ and $\displaystyle\lim_{t\rightarrow-\infty}\gamma(t)=z^+_-$ lie in the region $U_0$, where $x_k=y_k=0$ for $k\geq 3$. To this purpose, consider the Lyapunov function
\[F(x,y):=(x_1-1)^2+\sum_{i\neq 1} {x_i}^2 + (y_2-1)^2+\sum_{i\neq 2}{y_i}^2.\]
We shall see that $F$ increases along $\gamma$, unless the trajectory lies entirely in $U_0$. As $F(z^+_-)=F(z^-_+)=4$, this proves the statement. For the derivative of $F$ along $\gamma$, we have
\begin{align*}
\nabla F&=(x,y)^T-(1,0,\dots,0\;;\;0,1,0,\dots,0)^T\\
 \frac{d}{dt}F(\gamma(t))=dF(\dot{\gamma}(t))&=g_{std}(\nabla F,\nabla_g \psi)\\
 &= 8\cdot(a-ax_1^2-x_2y_1+1-y_2^2-ay_1x_2)\\
 &= 8a(1-x_1^2-y_1x_2)+8(1-y_2^2-y_1x_2).
\end{align*}
This is positive (so that $F$ increases along $\gamma$) if $h_1(x,y):=1-x_1^2-y_1x_2\geq 0$ and $h_2(x,y):=1-y_2^2-y_1x_2\geq 0$ and at least one of them is truly positive. By the theorem of extrema with constrains, we find that $h_1$ has a critical point on $S^\ast S^{n-1}$ if there exist real numbers $\alpha, \beta,\gamma$ such that
\begin{align*}
 &\text{\phantom{II}I} :&  -2x_1 &= \alpha \cdot x_1+\gamma\cdot y_1& &\text{IV} :&  -x_2 &= \beta\cdot y_1+\gamma\cdot x_1\\
 &\text{\phantom{I}II} :&  -y_1 &= \alpha\cdot x_2 + \gamma\cdot y_2& &\text{\phantom{I}V} :&  0 &= \beta\cdot y_2 + \gamma \cdot x_2\\
 &\text{III} :&  0 &= \alpha\cdot x_k + \gamma\cdot y_k& 
 &\text{VI} :&  0 &= \beta\cdot y_k + \gamma\cdot x_k,
\end{align*}
where $k\geq 3$. From III and VI follows
\[\alpha\beta\cdot x_k=-\beta\gamma\cdot y_k=\gamma^2\cdot x_k,\qquad \alpha\beta \cdot y_k=-\alpha\gamma\cdot x_k=\gamma^2\cdot y_k.\]
Hence, whenever $x_k\neq 0$ or $y_k\neq 0$ for at least one $k\geq 3$, i.e.\ if we are not on $U_0$, we have $\alpha\beta=\gamma^2$. Then II and V yield
\[-\beta\cdot y_1=\alpha\beta\cdot x_1+\beta\gamma y_2 = \gamma^2 x_2-\gamma^2 x_2=0.\]
So if we are not on $U_0$, then either $y_1=0$ or $(\beta=0)\,\Rightarrow\, (\gamma=0)\,\Rightarrow\, (x_2=0)$. But for these points we have $h_1=1 - x_1^2\geq 0$ with equality holding for $x_1=\pm 1$. If we are on $U_0$, then $x_2=\pm y_1$ and $x_1^2+y_1^2=0$. Then we also have $h_1\geq 0$ with equality holding for $y_1=x_2$. As $h_1\geq 0$ holds for its critical points and $S^\ast S^{n-1}$ is compact, we find that $h_1\geq 0$ everywhere with equality for $x_1=\pm 1$ or on $U_0$ with $y_1=x_2$.\\
Analog calculations show that $h_2\geq 0$ everywhere on $S^\ast S^{n-1}$ with equality holding for $y_2=\pm1$ or on $U_0$ with $y_1=x_2$. Hence $\frac{d}{dt}F(\gamma(t))\geq 0$ with equality holding only if $\gamma(t)\in U_0$.\medskip\\
Recall that $U_0$ is diffeomorphic to $S^\ast S^1$ -- the disjoint union of two circles, one of them containing $z^+_-$ and $z^-_+$, the other containing $z^+_+$ and $z^-_-$. Moreover, $U_0\subset S^\ast S^2_i$ for all $i$. Hence, we know that there are exactly two gradient trajectories between $z^+_-$ and $z^-_+$ in $U_0$, corresponding to the four gradient trajectories between $(\pm1,0,0,0)$ and $(0,0,\pm1,0)$ on $S^3$ via the maps $\Phi_i$. As there are no gradient trajectories connecting $z^+_-$ and $z^-_+$ outside $U_0$, we know that these two are the only ones.\\
Therefore, we know that $W^u(z^+_-)$ and $W^s(z^-_+)$ intersect only in $U_0$. In order to show that $(\psi,g)$ is Morse-Smale, it suffices to show that these spaces intersect transversally there, as all other intersections of stable/unstable manifolds are trivially transversal due to dimensions. Let $\gamma$ be one of the two trajectories in $U_0$. Then the discussion in the previous part shows that $T_{\gamma(t)}W^u(z^+_-)$ and $T_{\gamma(t)}W^s(z^-_+)$ do span the subspaces $T_{\gamma(t)}S^\ast S^2_i$ for every $3\leq i\leq n$. As these subspaces span the whole tangent space $T_{\gamma(t)}S^\ast S^{n-1}$, we know that $T_{\gamma(t)}W^u(z^+_-)$ and $T_{\gamma(t)}W^s(z^-_+)$ do span the whole space as well. Hence $W^u(z^+_-)\pitchfork W^s(z^-_+)$.\bigskip\\
As a nice bonus we get the Morse-homology of $S^\ast S^{n-1}$ with $\mathbb{Z}_2$-coefficients.
\begin{cor}
\begin{align*}
 &\text{If } n\geq 3, \text{ then:}& H_k(S^\ast S^{n-1},\mathbb{Z}_2) &= \begin{cases} \mathbb{Z}_2 & \quad\text{ if } k\in\{0,n-2,n-1,2n-3\}\\ 0 & \quad\text{ otherwise }\end{cases}.\\
 &\text{If } n= 2, \text{ then:}& H_k(S^\ast S^1,\mathbb{Z}_2) &= \begin{cases} \left(\mathbb{Z}_2\right)^2 & \text{ if } k\in\{0,1\}\\ 0 & \text{ otherwise }\end{cases}.
\end{align*}
\end{cor}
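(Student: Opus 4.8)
The plan is to read off the Morse homology directly from the Morse-Smale pair $(\psi,g)$ constructed in the three parts of this appendix, exactly as one does in finite-dimensional Morse theory with $\mathbb{Z}_2$-coefficients. First I would recall that $\psi$ has precisely four critical points $z^+_+,z^+_-,z^-_+,z^-_-$ with Morse indices $0,\,n-2,\,n-1,\,2n-3$ respectively (for $a>1$), as established in the first subsection. For $n\geq 3$ these four indices are pairwise distinct, so the Morse chain complex $C_k = \mathbb{Z}_2\langle\text{critical points of index }k\rangle$ is one-dimensional in degrees $0,n-2,n-1,2n-3$ and zero otherwise. Since between any two critical points the difference of indices is never $1$ (the gaps are $n-2$, $1$ only when $n=3$ — wait, I must be careful here), the boundary operator could be nonzero only between consecutive indices.

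Let me restructure: the key point is that when $n\geq 4$ the four indices $0,n-2,n-1,2n-3$ satisfy $n-2>1$, so only the pair $(n-2,n-1)$ is a candidate for a nonzero differential, and $\partial_{n-1}:C_{n-1}\to C_{n-2}$ counts mod $2$ the isolated gradient trajectories from $z^-_+$ to $z^+_-$. Here I would invoke the computation in the third subsection: all gradient trajectories between $z^+_-$ and $z^-_+$ lie in the set $U_0\cong S^\ast S^1$, and there are exactly two of them (coming in a symmetric pair). Hence $\partial_{n-1}=2=0$ in $\mathbb{Z}_2$, so all differentials vanish and $H_k(S^\ast S^{n-1},\mathbb{Z}_2)\cong C_k$, giving the stated answer. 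For $n=3$ the indices are $0,1,2,3$ and one must check each of the three consecutive differentials $\partial_1,\partial_2,\partial_3$; but the same analysis shows each counts an even number of trajectories — between $z^+_-$ and $z^-_+$ (index $1\to2$, here $\partial_2$) it is the two trajectories in $U_0$, and between $z^-_-$ and $z^-_+$ (index $3$ down to $2$) and between $z^+_-$ and $z^+_+$ (index $1$ down to $0$) the same symmetric-pair count applies, as described at the end of the second subsection where the four trajectories on $S^3$ between the relevant pairs project two-to-one. So again $\partial\equiv 0$.

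For the case $n=2$ I would treat $S^\ast S^1$ separately: it is the disjoint union of two circles, and $\psi$ restricted to it has the four critical points distributed as two per circle with indices $0$ and $1$ on each (from the index formula $ind_\psi(z^+_+)=0$, $ind_\psi(z^+_-)=0$, $ind_\psi(z^-_+)=1$, $ind_\psi(z^-_-)=1$ when $n=2$, since $n-2=0$ and $2n-3=1$). Thus $C_0\cong C_1\cong(\mathbb{Z}_2)^2$; on each circle the differential from the index-$1$ point to the index-$0$ point counts two trajectories mod $2$, hence vanishes, giving $H_0\cong H_1\cong(\mathbb{Z}_2)^2$. Alternatively, and more cheaply, one just notes $S^\ast S^1\cong S^1\sqcup S^1$ and quotes the standard homology of a circle.

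The main obstacle is not any of the routine bookkeeping but making rigorous the claim that the count of gradient trajectories between the index-consecutive pairs is even. This rests entirely on the symmetry argument of the appendix: the Morse-Smale pair $(\psi,g)$ is invariant under the reflection $r$ (equivalently $\mathfrak r$ on $S^3$), $r$ has no fixed points among the relevant flow lines, and hence trajectories come in $r$-orbits of size $2$. I would need to verify that $r$ indeed acts freely on the one-dimensional intersections $W^u\cap W^s$ in question — this follows because the two trajectories in $U_0$ are interchanged by $r$ (they correspond to the $\pm$ choices swapped by $\mathfrak r$ on $S^3$), and none of them is $r$-invariant. Once that freeness is in hand, the divisibility-by-$2$ conclusion (a baby case of Corollary \ref{symhom}) finishes the computation. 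I expect this symmetry/freeness verification to be the only genuinely delicate point; everything else is immediate from the explicit description of stable and unstable manifolds already given.
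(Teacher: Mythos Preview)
Your proposal is correct and follows precisely the approach the appendix sets up; the paper in fact states this corollary without proof (calling it ``a nice bonus''), so you are supplying the argument the author left implicit. One small simplification: you do not need the freeness-of-$r$ argument at all, since the appendix already \emph{explicitly counts} the trajectories --- the third subsection shows that all trajectories between $z^+_-$ and $z^-_+$ lie in $U_0$ and that there are exactly two of them (and the end of the second subsection gives the analogous count for the remaining pairs when $n=3$), so the mod-$2$ vanishing is immediate without invoking Corollary~\ref{symhom}.
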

\newpage
\phantom{..}
\newpage
\section{Some properties of convolutions}\label{appB}
If $f, g$ are functions on $\mathbb{R}$, then their convolution $f\ast g$, if existent, is defined by
\[(f\ast g)(x):=\int_{-\infty}^{\infty}f(x-y)\cdot g(y) \,dy = \int_{-\infty}^{\infty}f(y)\cdot g(x-y) \,dy=(g\ast f)(x).\]
We fix a smooth non-negative function $\rho:\mathbb{R}\rightarrow\mathbb{R}$ such that
\[\text{supp}\, \rho\subset B_1(0),\quad \rho(x)=\rho(-x)\qquad\text{ and }\qquad \int_{-\infty}^\infty \rho(x) \,dx = 1\]
and define for any $\delta>0$ the function $\rho_\delta$  by $\quad\rho_\delta(x):= \frac{1}{\delta}\rho\left(\frac{1}{\delta}x\right)$.\medskip\\
For any $g\in L^p$ the convolution $\rho_\delta\ast g$ is well-defined and smooth, since
\begin{align*}
 && \int_{-\infty}^\infty \rho_\delta(x-y)\cdot g(y)\,dy &= \int_{x-\delta}^{x+\delta}\rho_\delta(x-y)\cdot g(y)\,dy\\
 &\text{and}& \frac{d}{dx}\big(\rho_\delta\ast g\big)(x) &= \int_{-\infty}^\infty\left(\frac{d}{dx}\rho_\delta(x-y)\right)\cdot g(y)\,dy = \Big(\big(\frac{d}{dx}\rho_\delta\big)\ast g\Big)(x).
\end{align*}
Note that if $\text{supp}\,g\subset[a,b]$ then $\text{supp}(\rho_\delta\ast g)\subset[a-\delta,b+\delta]$.
\begin{lemme}[cf. \cite{Dieu}\footnote{The proof in \cite{Dieu} is done only for $p=1,2$. However, we apply this lemma solely for $p=2$ anyway.}, 14.10.6, or \cite{Adams}, Lem.\ 2.18]\label{lemconvconv}~\\
 For any $g\in L^p$ holds that $\rho_\delta\ast g$ converges in $L^p$ to $g$ as $\delta\rightarrow 0$.
\end{lemme}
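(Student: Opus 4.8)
The plan is to follow the standard three-move argument for approximate identities, using density of nice functions in $L^p$ together with the uniform-continuity estimate for translations, and to quote Young's inequality (or a direct Minkowski-type bound) to control the convolution operator. I will write $\tau_h g(x) := g(x-h)$ for the translation operator and use the normalization $\int \rho_\delta = 1$, $\rho_\delta \geq 0$, $\operatorname{supp}\rho_\delta \subset B_\delta(0)$ that is recorded just above the statement.

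\textbf{Step 1 (uniform bound on the operator).} First I would note that for $g \in L^p$ and any $\delta > 0$ one has $\|\rho_\delta \ast g\|_p \leq \|g\|_p$. This follows from Minkowski's integral inequality: writing $(\rho_\delta \ast g)(x) = \int \rho_\delta(h)\, g(x-h)\, dh$ and using $\int \rho_\delta(h)\,dh = 1$ with $\rho_\delta \geq 0$, one estimates $\|\rho_\delta \ast g\|_p \leq \int \rho_\delta(h)\,\|\tau_h g\|_p\, dh = \|g\|_p$, since translation is an isometry of $L^p$. This uniform bound is what lets us reduce the general case to a dense subclass.

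\textbf{Step 2 (continuity of translation in $L^p$).} Next I would invoke (or prove by the same density trick) the fact that $h \mapsto \tau_h g$ is continuous from $\mathbb{R}$ to $L^p$ for $p < \infty$; in particular $\|\tau_h g - g\|_p \to 0$ as $h \to 0$. For $g$ continuous with compact support this is immediate from uniform continuity and dominated convergence; the general case follows because $C_c$ is dense in $L^p$ and $\tau_h$ is an isometry. This is really the only analytic input beyond bookkeeping.

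\textbf{Step 3 (the estimate).} Then I would combine the two: again by Minkowski's integral inequality and $\int \rho_\delta = 1$,
\[
\|\rho_\delta \ast g - g\|_p = \left\| \int \rho_\delta(h)\big(\tau_h g - g\big)\, dh \right\|_p \leq \int_{|h| \leq \delta} \rho_\delta(h)\, \|\tau_h g - g\|_p\, dh \leq \sup_{|h|\leq \delta} \|\tau_h g - g\|_p,
\]
and the right-hand side tends to $0$ as $\delta \to 0$ by Step 2. This gives $\rho_\delta \ast g \to g$ in $L^p$.

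\textbf{Main obstacle.} There is no serious obstacle here — the statement is classical and the reference to \cite{Dieu} or \cite{Adams} already covers it. If one wants a self-contained proof, the only slightly delicate point is Step 2 (continuity of translation), which genuinely uses density of $C_c$ in $L^p$ and hence the definition of the Lebesgue integral; everything else is Minkowski's inequality plus the normalization of $\rho_\delta$. So in the write-up I would state Steps 1–3 cleanly and relegate the density argument for Step 2 to a one-line remark or a citation, since the paper only ever applies this lemma with $p = 2$ (in the proof of Lemma \ref{lemcoker}), where continuity of translation in $L^2$ is completely standard.
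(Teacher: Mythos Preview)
Your proof is correct and is exactly the standard approximate-identity argument. The paper does not actually give its own proof of this lemma: it merely cites \cite{Dieu}, 14.10.6 and \cite{Adams}, Lem.\ 2.18, so there is nothing to compare against beyond noting that your three-step argument (uniform $L^p$-bound via Minkowski, continuity of translation in $L^p$, and the resulting estimate) is precisely what those references contain.
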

Observe that if $f$ is a $T$-periodic function, then $g\ast f$ is also $T$-periodic for any $g$, since
\[(f\ast g)(x+T)=\int_{-\infty}^\infty f(x+T-y)\cdot g(y)\,dy= \int_{-\infty}^\infty f(x-y)\cdot g(y)\,dy= (f\ast g)(x).\]
\begin{lemme}\label{lemconvolution}
 Let $f,g$ be two 1-periodic functions in $L^2$ and let $r_\delta\in L^2$ be such that $r_\delta(x)=r_\delta(-x)$ and $\text{\emph{supp}}\,r_\delta\subset [-\delta,\delta]$ for $\delta<1$. Then it holds that
 \[\int_0^1(r_\delta\ast f)\cdot g \,dx = \int_0^1 f\cdot(r_\delta\ast g)\,dx.\]
\end{lemme}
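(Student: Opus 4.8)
The plan is to reduce the identity to a double integral over $\mathbb{R}^2$ by unfolding $f$ and $g$ from the circle to the line. First I would write out the left-hand side using the definition of convolution: $\int_0^1 (r_\delta\ast f)(x)\,g(x)\,dx = \int_0^1\int_{-\infty}^\infty r_\delta(x-y)f(y)\,dy\,g(x)\,dx$. Since $r_\delta$ is supported in $[-\delta,\delta]$ with $\delta<1$ and $f,g\in L^2$ are $1$-periodic (hence locally integrable and bounded in the relevant sense), Fubini's theorem applies and lets me interchange the order of integration: the inner integral over $y$ is effectively over the compact set $[x-\delta,x+\delta]$, so there is no integrability obstruction. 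This gives $\int_{-\infty}^\infty\int_0^1 r_\delta(x-y)\,g(x)\,dx\,f(y)\,dy$.

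The next step is to recognize that the remaining inner integral, $\int_0^1 r_\delta(x-y)g(x)\,dx$, is \emph{not} quite $(r_\delta\ast g)(y)$ because the $x$-integration runs only over $[0,1]$ rather than all of $\mathbb{R}$. Here I would use the symmetry $r_\delta(x-y)=r_\delta(y-x)$ together with the $1$-periodicity of $g$: a change of variables $x\mapsto x+k$ for integer $k$ shows that $\int_0^1 r_\delta(x-y)g(x)\,dx$, when $y$ ranges over $[0,1]$ and the integrand is summed over the periodic copies, reassembles into $\int_{-\infty}^\infty r_\delta(y-x)g(x)\,dx = (r_\delta\ast g)(y)$. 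More carefully: because $r_\delta$ has support in $[-\delta,\delta]\subset(-1,1)$, for fixed $y\in[0,1]$ the function $x\mapsto r_\delta(x-y)$ is supported in $[y-\delta,y+\delta]\subset(-1,2)$, so $\int_{-\infty}^\infty r_\delta(x-y)g(x)\,dx = \int_{-1}^{2} r_\delta(x-y)g(x)\,dx$, and splitting this into $[-1,0]\cup[0,1]\cup[1,2]$ and shifting the outer two intervals back to $[0,1]$ via periodicity of $g$ gives exactly $\int_0^1 \big(r_\delta(x-1-y)+r_\delta(x-y)+r_\delta(x+1-y)\big)g(x)\,dx$. The same unfolding applied to the original left-hand side (using periodicity of $f$ this time) shows both sides equal $\int_0^1\int_0^1 \big(\sum_{k\in\{-1,0,1\}} r_\delta(x-y-k)\big) f(y)g(x)\,dy\,dx$, which is manifestly symmetric in the roles of $f$ and $g$ since $\sum_k r_\delta(x-y-k) = \sum_k r_\delta(y-x+k)$ by the evenness of $r_\delta$.

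Assembling these steps: the left-hand side equals $\int_0^1\int_0^1 K_\delta(x,y) f(y) g(x)\,dy\,dx$ where $K_\delta(x,y):=\sum_{k\in\{-1,0,1\}} r_\delta(x-y-k)$ is the periodized kernel, and $K_\delta(x,y)=K_\delta(y,x)$ because $r_\delta$ is even. Relabeling the dummy variables $x\leftrightarrow y$ in the right-hand side $\int_0^1 f\cdot(r_\delta\ast g)\,dx$ and running the identical unfolding produces the same symmetric double integral, so the two sides coincide. The only genuine technical point is the justification of Fubini and the bookkeeping with the periodic copies of the kernel; everything else is direct substitution. I expect the main obstacle — really just a point requiring care rather than a deep difficulty — to be making the ``reassembly into a full convolution on $\mathbb{R}$'' argument clean, i.e. being precise that the truncation of the $x$-integral to $[0,1]$ combined with periodicity exactly recovers the line convolution; this is where an off-by-one in the support interval or a sign in the evenness of $r_\delta$ could creep in, so I would state the support inclusion $[-\delta,\delta]\subset(-1,1)$ explicitly and keep $\delta<1$ in force throughout.
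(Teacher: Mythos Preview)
Your proof is correct and uses the same ingredients as the paper's proof — Fubini, the compact support of $r_\delta$, the $1$-periodicity of $f$ and $g$, and the evenness of $r_\delta$ — but organizes them differently. The paper carries out Fubini explicitly on the region $\{(x,y):0\le x\le 1,\ |x-y|\le\delta\}$, which produces five boundary integrals that are then shifted by $\pm 1$ using periodicity and reassembled into $\int_0^1\int_{y-\delta}^{y+\delta}r_\delta(x-y)f(y)g(x)\,dx\,dy$. Your periodized-kernel formulation $K_\delta(x,y)=\sum_{k\in\{-1,0,1\}}r_\delta(x-y-k)$ absorbs exactly that boundary bookkeeping into a single symmetric object, so that the identity reduces to the manifest symmetry $K_\delta(x,y)=K_\delta(y,x)$. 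This buys you a cleaner conceptual picture (both sides are the same bilinear form with a symmetric kernel on $[0,1]^2$), while the paper's version makes the role of the support condition $\delta<1$ more visibly explicit in the region-splitting. Neither approach requires anything the other does not; they are two packagings of the same computation.
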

\begin{proof}
 As $\text{supp}\,r_\delta\subset[-\delta,\delta]$, we have
 \[(r_\delta\ast f)(x)=\int_{x-\delta}^{x+\delta} r_\delta(x-y)\cdot f(y)dy\qquad\text{ and }\qquad (r_\delta\ast g)(x)=\int_{x-\delta}^{x+\delta} r_\delta(x-y)\cdot g(y)dy.\]
 We calculate
 \[\int_0^1(r_\delta\ast f)(x)\cdot g(x)dx=\int_0^1\int_{x-\delta}^{x+\delta}r_\delta(x-y)\cdot f(y)\cdot g(x) dydx = (\ast).\]
 Abbreviate $\Box:=r_\delta(x-y)\cdot f(y)\cdot g(x) dxdy$. Then we get from Fubini's Theorem
 \begin{align*}
  (\ast)&=\int_\delta^{1-\delta}\int_{y-\delta}^{y+\delta}\Box + \int_{-\delta}^0\int_0^{y+\delta}\Box + \int_0^\delta\int_0^{y+\delta}\Box + \int_{1-\delta}^1\int_{y-\delta}^1\Box + \int_1^{1+\delta}\int_{y-\delta}^1\Box\\
  &= \int_\delta^{1-\delta}\int_{y-\delta}^{y+\delta}\Box + \int_{1-\delta}^1\int_1^{y+\delta}\Box + \int_0^\delta\int_0^{y+\delta}\Box + \int_{1-\delta}^1\int_{y-\delta}^1\Box + \int_0^{\delta}\int_{y-\delta}^0\Box\\ &=\int_0^1\int_{y-\delta}^{y+\delta}\Box.
 \end{align*}
Here, we added 1 to $x$ and $y$ in the $2^{\text{nd}}$ term and subtracted $1$ from $x$ and $y$ in the last term. Equality holds, as $f$ and $g$ are $1$-periodic and $r_\delta(x-y)=r_\delta\big((x+1)-(y+1)\big)=r_\delta\big((x-1)-(y-1)\big)$. Using $r_\delta(x-y)=r_\delta(y-x)$, we conclude
\begin{align*}
 (\ast)&=\int_0^1\int_{y-\delta}^{y+\delta} r_\delta(x-y)\cdot f(y)\cdot g(x) dxdy\\
 &=\int_0^1\int_{y-\delta}^{y+\delta} f(y)\cdot r_\delta(y-x)\cdot g(x) dxdy &&=\int_0^1 f(y)\cdot\big(r_\delta\ast g\big)(y)dy.\qedhere
\end{align*}
\end{proof}
If $f:\mathbb{R}\rightarrow\mathbb{R}^n$ is an $L^p$-function with components $f=(f^1,...\,,f^n)$, we define the convolution $r_\delta\ast f$ componentwise via $\big(r_\delta\ast f)^i:=r_\delta \ast f^i$.
\begin{cor}\label{cor121}
 Let $A=(a_{ij})$ be an $n\times n$-matrix and $f,g,r_\delta$ as in Lemma \ref{lemconvolution}. Then 
 \[\int_0^1\big(r_\delta\ast f\big)^T A\, g\, dt = \int_0^1 f^T A \big(r_\delta\ast g\big)\,dt.\]
\end{cor}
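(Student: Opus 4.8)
\textbf{Proof proposal for Corollary \ref{cor121}.}

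The plan is to reduce the matrix-valued statement to the scalar statement of Lemma \ref{lemconvolution} by expanding everything in coordinates and using bilinearity of the integral. First I would write out the integrand on the left-hand side componentwise: since $(r_\delta\ast f)^T A\, g = \sum_{i,j} (r_\delta\ast f)^i\, a_{ij}\, g^j = \sum_{i,j} a_{ij}\,(r_\delta\ast f^i)\, g^j$, the integral $\int_0^1 (r_\delta\ast f)^T A\, g\, dt$ becomes the finite sum $\sum_{i,j} a_{ij}\int_0^1 (r_\delta\ast f^i)\cdot g^j\, dt$. The coefficients $a_{ij}$ are constants, so they pull out of each integral with no difficulty.

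Next I would apply Lemma \ref{lemconvolution} to each scalar pair $(f^i, g^j)$: each $f^i$ and $g^j$ is a $1$-periodic $L^2$-function (being a component of the $\mathbb{R}^n$-valued $L^2$, $1$-periodic maps $f$ and $g$), and $r_\delta$ satisfies the required symmetry $r_\delta(x)=r_\delta(-x)$ and support condition $\mathrm{supp}\,r_\delta\subset[-\delta,\delta]$ with $\delta<1$. Hence $\int_0^1 (r_\delta\ast f^i)\cdot g^j\, dt = \int_0^1 f^i\cdot(r_\delta\ast g^j)\, dt$ for every pair $(i,j)$. Substituting this back into the sum gives $\sum_{i,j} a_{ij}\int_0^1 f^i\cdot(r_\delta\ast g^j)\, dt$, and then reassembling the sum in matrix form — using $\sum_{i,j} a_{ij}\, f^i\,(r_\delta\ast g^j) = f^T A\,(r_\delta\ast g)$ and $(r_\delta\ast g)^j = r_\delta\ast g^j$ by the componentwise definition of convolution of a vector-valued function — yields $\int_0^1 f^T A\,(r_\delta\ast g)\, dt$, which is exactly the claimed right-hand side.

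There is essentially no obstacle here: the only things to check are the measurability/integrability of the products (immediate, since convolution with $r_\delta\in L^2$ of an $L^2$ function is continuous hence bounded on the compact interval $[0,1]$, and the other factor is $L^2$, so by Cauchy--Schwarz the products are in $L^1$), that the finite sums may be freely interchanged with the integral (finite linear combinations, so trivial), and that the hypotheses of Lemma \ref{lemconvolution} transfer to each component (which they do verbatim). So the proof is a one-line reduction once the coordinate expansion is written down; I would present it in precisely that order — expand, apply the scalar lemma termwise, reassemble — without belaboring the integrability bookkeeping.
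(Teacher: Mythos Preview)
Your proposal is correct and follows exactly the same approach as the paper: expand the matrix product in coordinates, pull out the constant entries $a_{ij}$, apply Lemma \ref{lemconvolution} to each scalar pair $(f^i,g^j)$, and reassemble. The paper's proof is essentially your argument without the integrability bookkeeping.
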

\begin{proof}
 This is a consequence of Lemma \ref{lemconvolution} and linear algebra:
 \begin{align*}
  \int_0^1\big(r_\delta\ast f\big)^T A\, g\, dt &=&\int_0^1\sum_{i,j}\big(r_\delta\ast f\big)^i\cdot a_{ij}\cdot g^j dt &= \sum_{i,j}a_{ij}\cdot\int_0^1\big(r_\delta\ast f^i\big)\cdot  g^j dt\\
  &=& \sum_{i,j}a_{ij}\cdot\int_0^1 f^i\cdot\big(r_\delta\ast g\big)^j dt &=
  \int_0^1\sum_{i,j} f^i\cdot a_{ij}\cdot\big(r_\delta\ast g\big)^j dt\\
  && &= \int_0^1 f^T A \big(r_\delta\ast g\big)\,dt.\qedhere
 \end{align*}
\end{proof}
\begin{cor}\label{corconvolution}
 If $\omega$ is a time independent 2-form and $f,g$ and $r_\delta$ are as above, then
 \[\int_0^1\omega(r_\delta \ast f,g)dt=\int_0^1\omega( f,r_\delta \ast g)dt.\]
\end{cor}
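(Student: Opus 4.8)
The statement \texttt{corconvolution} is a direct specialization of \texttt{cor121}: a time-independent $2$-form $\omega$ on $\mathbb{R}^n$ is represented, after choosing the standard coordinates, by a (constant, skew-symmetric) matrix $A$ via $\omega(v,w)=v^T A w$. Since $f,g\in L^2$ and $r_\delta$ satisfies $r_\delta(x)=r_\delta(-x)$ with $\mathrm{supp}\,r_\delta\subset[-\delta,\delta]$ for $\delta<1$, Corollary \ref{cor121} applies verbatim with this matrix $A$ and yields
\[
\int_0^1\omega(r_\delta\ast f,g)\,dt=\int_0^1 (r_\delta\ast f)^T A\, g\,dt=\int_0^1 f^T A\,(r_\delta\ast g)\,dt=\int_0^1\omega(f,r_\delta\ast g)\,dt.
\]
So the plan is simply: first fix the identification of $\omega$ with a constant matrix; second invoke \texttt{cor121}; third translate back. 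No skew-symmetry of $A$ is actually needed — the argument works for any constant bilinear form — but it costs nothing to note that in the intended application $A$ is skew.

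The only point that requires a word of care is the phrase ``$f,g$ and $r_\delta$ are as above'': one must make sure the hypotheses of \texttt{cor121} (equivalently of Lemma \ref{lemconvolution}) are in force, namely that $f$ and $g$ are $1$-periodic $L^2$-functions $\mathbb{R}\to\mathbb{R}^n$, that $r_\delta\in L^2$, that $r_\delta$ is even, and that $\mathrm{supp}\,r_\delta\subset[-\delta,\delta]$ with $\delta<1$. These are exactly the standing assumptions carried over from the preceding two statements, so nothing new has to be checked; one just records them explicitly at the start of the proof. I would also remark, for the reader, that the $2$-form being \emph{time-independent} is what makes the matrix $A$ constant and hence lets it be pulled out of the $t$-integral in the style of the computation in \texttt{cor121}; a $t$-dependent $\omega$ would not be covered by this lemma as stated.

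Since the whole content is a one-line reduction, there is no real obstacle; the ``hard part'' is merely bookkeeping — being sure that the coordinate representation of $\omega$ is legitimate (it is, because $\mathbb{R}^n$ is being used with its standard basis throughout this appendix and convolution was defined componentwise just before \texttt{cor121}) and that the integrand $\omega(r_\delta\ast f,g)$ is integrable on $[0,1]$, which follows from $r_\delta\ast f\in L^2$ (indeed smooth) and $g\in L^2$ via Cauchy–Schwarz. I would therefore present the proof as: ``Write $\omega(v,w)=v^TAw$ for the constant matrix $A$ associated to $\omega$; the claim is then Corollary \ref{cor121}.\qedhere''
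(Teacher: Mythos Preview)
Your proof is correct and follows exactly the same approach as the paper: represent the time-independent $2$-form by a constant (antisymmetric) matrix $A$ via $\omega(x,y)=x^T A y$, then apply Corollary~\ref{cor121}. Your additional remarks on why time-independence is essential and on integrability are sound but go beyond what the paper records.
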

\begin{proof}
 As $\omega$ is time independent, we can write $\omega(x,y)=x^T A y$ for a time independent antisymmetric matrix $A$. Then apply Corollary \ref{cor121}.
\end{proof}

\newpage
 \section{Automatic transversality}\label{auttrans}
 In the proof of the Local Transversality Theorem \ref{theoloctrans}, we always assumed that the solution $(v,\eta)$ of the Rabinowitz-Floer equation (\ref{eq3}) is non-constant. When defining the boundary operator $\partial^F$, this assumption is satisfied, as each $\mathcal{A}^H$-gradient trajectory reduces the action which excludes constant solutions. However, if we consider homotopies $H_s$, we cannot avoid stationary trajectories.\\
 Note that the proof of the transversality fails in this situation, as for solutions $(v,\eta)$ of (\ref{eq3}) which are constant in $s$, we find that $\partial_t v-\eta X_H=0$ for all $s$. This forces the second equation in $(\ast)$ (on page \pageref{eqasttrans}) to be zero. As a consequence, the choice of $Y$ has no influence on the surjectivity of $D_{J,(v,\eta)}$.\bigskip\\
 The purpose of this appendix is to show that we have always transversality along constant $(v,\eta)$. For that, it suffices to show that the vertical differential $D_{J,(v,\eta)}$ of $\mathcal{F}$ is always surjective. The domain and range of $D_{J,(v,\eta)}$ are given by
 \begin{align*}
  D_{J,(v,\eta)}&\,:\, T_J\mathcal{J}^\ell\times T_{(v,\eta)}\mathcal{B}\rightarrow \mathcal{E}_{(v,\eta)},\\
  \text{where }\quad T_{(v,\eta)}\mathcal{B}&=W^{1,p}_\delta(\mathbb{R}\times S^1,\mathbb{R}^{2n})\oplus W^{1,p}_\delta(\mathbb{R},\mathbb{R}) \oplus T_{(v^-,\eta^-)}C^-\oplus T_{(v^+,\eta^+)}C^+,\\
  \mathcal{E}_{(v,\eta)}&=L^p_\delta(\mathbb{R}\times S^1,\mathbb{R}^{2n})\oplus L^p_\delta(\mathbb{R},\mathbb{R}).
 \end{align*}
 In fact, we show that already the restriction $D_{(v,\eta)}$ of $D_{J,(v,\eta)}$ to $T_{(v,\eta)}\mathcal{B}$ is surjective. We prove this by following closely a similar proof by Salamon, \cite{Sal}, Lem.\ 2.4. We consider only the case $p=2$, the first step in \cite{Sal}. For the transition to general $p$ see there.\bigskip\\
 To start, choose with Lemma \ref{lemcoord} a tubular neighborhood around $(v,\eta)$. This fixes a trivialization of $v^\ast TV$, so that we may write $\mathbb{R}^{2n}$ instead of $v^\ast TV$ in the expressions for $T_{(v,\eta)}\mathcal{B}$ and $\mathcal{E}_{(v,\eta)}$.\\
 Note that we have $(v^-,\eta^-)=(v^+,\eta^+)$, as $(v,\eta)$ is constant in $s$. This implies in particular that $C^\pm=C$. However, the two spaces $T_{(v^-,\eta^-)}C$ and $T_{(v^+,\eta^+)}C$ are not the same (recall that the identification of a complement of $\ker d(ev^+)\cap\ker d(ev^-)$ with these spaces is not natural, cf. page \pageref{nonnatural}). In fact, we should think of $T_{(v^\pm,\eta^\pm)}C$ as being generated by maps $\xi^\pm$ of the form
 \[\xi^-(s):=(1-\beta(s))\cdot\xi_0\qquad\text{ and }\qquad \xi^+(s)=\beta(s)\cdot \xi_0,\]
 where $\xi_0: S^1\rightarrow TC$ is a tangent vector to $C\subset\mathscr{L}$ and $\beta$ is a fixed monotone cut-off function such that $\beta(s)=1$ for $s\geq 0$ and $\beta(s)=0$ for $s\leq -1$.\\
 Now, recall that the operator $D_{(v,\eta)}$ can be written as
 \[D_{(v,\eta)}=\partial_s + A,\]
 where $A$ is a self-adjoint operator on the Hilbert space $H:=L^2(S^1,\mathbb{R}^{2n})\times\mathbb{R}$ with domain $W:=W^{1,2}(S^1,\mathbb{R}^{2n})\times\mathbb{R}$. Note that $A$ is $s$-independent, as $(v,\eta)$ is constant in $s$ and that the  kernel of $A$ consists of constant maps $\mathfrak{v}:S^1\rightarrow \mathbb{R}^{k+1}\times\{0\}$. This holds, as $\mathbb{R}^{k+1}\times\{0\}$ is identified with $TC$ in our chosen trivialization. Hence there exists a splitting
 \[H=E^+\oplus E^-\oplus\ker A\]
 into the positive, negative and zero eigenspaces of $A$.\pagebreak\\ Write $A^\pm:=A|_{E^\pm}$ and denote by $P^\pm: H\rightarrow E^\pm$ the orthogonal projections. The operator $-A^+$ generates a strongly continuous semigroup of operators on $E^+$ and $A^-$ generates a strongly semigroup of operators on $E^-$. Let us denote these semigroups by $s\mapsto e^{-A^+s}$ and $s\mapsto e^{A^-s}$, where both are defined for $s\geq 0$. Let $P^0: H\rightarrow \ker A$ denote the orthogonal projection to the kernel of $A$. Now define $K:\mathbb{R}\rightarrow\mathcal{L}(H)$ by
 \[K(s):=\begin{cases} \phantom{-}e^{-A^+s}P^+ + P^0& \text{for } s\geq 0\\ -e^{-A^-s}P^- &\text{for } s<0.\end{cases}\]
 This function is discontinuous at $s=0$, strongly continuous for $s\neq 0$ and satisfies
 \[\Big|\Big|K(s)\big|_{E^-\oplus E^+}\Big|\Big|\leq e^{-\delta|s|}\tag{$\ast$}.\]
 Consider the operator $Q:L^2_\delta(\mathbb{R},H)\rightarrow \big(W^{1,2}_\delta(\mathbb{R},H)\cap L^2_\delta(\mathbb{R},W)\big)\oplus T_{(v^-,\eta^-)}C\oplus T_{(v^+,\eta^+)}C$, which is defined for $\mu\in L^2_\delta(\mathbb{R},H)$ by
 \[Q\mu(s):=\int_{-\infty}^\infty K(s-\tau)\mu(\tau)d\tau.\]
 We claim that $Q$ is a right inverse for $D_{(v,\eta)}$, thus showing that $D_{(v,\eta)}$ is surjective. To see this, note that $\xi=Q\mu=\xi^++\xi^-+\xi^0$ with respect to the splitting of $H$, where
 \begin{align*}
  \xi^+(s)&=\phantom{-}\int_{-\infty}^s e^{-A^+(s-\tau)}\mu^+(\tau)d\tau\\
  \xi^-(s)&=-\int_s^\infty e^{-A^-(s-\tau)}\mu^-(\tau)d\tau\\
  \xi^0(s)&=\phantom{-}\int_{-\infty}^s\mu^0(\tau)d\tau.
 \end{align*}
A simple calculation shows that $\dot{\xi}^\pm+A^\pm\xi^\pm=\mu^\pm$ and $\dot{\xi}^0+A\xi^0=\dot{\xi}^0=\mu^0$, as $\xi^0(s)\in\ker A$ for all $s$. Hence $\dot{\xi}+A\xi=\mu$. It only remains to show that $\xi$ is actually in the right weighted space. The exponential convergence $\xi^\pm\overset{s\rightarrow \pm\infty}{\longrightarrow} 0$ follows from the exponential convergence of $\mu$ and $(\ast)$. Similarly, it follows that $\xi^0\overset{s\rightarrow -\infty}{\longrightarrow} 0$ exponentially. For $s\rightarrow \infty$ however, it might happen that $\xi^0$ does not even converge to 0. Nevertheless, as $\mu^0\overset{s\rightarrow \infty}{\longrightarrow} 0$ exponentially, we certainly have that $\xi^0\rightarrow c$ exponentially for some $c\in TC$. This allows us to write
\[\xi^0(s)=\beta(s)\cdot\xi_c + (\xi^0(s)-\beta(s)\cdot\xi_c),\]
where $\xi_c:S^1\rightarrow TC,\,t\mapsto c$ is a constant map. It follows that $\beta\cdot\xi_c\in T_{(v^+,\eta^+)}C$ and that $(\xi^0-\beta\cdot\xi_c)$ converges at both ends exponentially to zero.\\
Finally note that the space $W^{1,2}_\delta(\mathbb{R},H)\cap L^2_\delta(\mathbb{R},W)$ agrees with $W^{1,2}_\delta(\mathbb{R}\times S^1,\mathbb{R}^{2n})\oplus W^{1,2}_\delta(\mathbb{R},\mathbb{R})$.
\begin{rem}
 Note that $T_{(v^-,\eta^-)}C$ lies not in the image of $Q$. This is due to our specific construction of $Q$. In general, we cannot construct a surjective $Q$, as $D_{(v,\eta)}$ has a kernel consisting of $\xi$ which are constant in $s$ with $\xi(s)\in \ker A$.
\end{rem}
 \newpage
\section{Conventions}\label{conventions}
\begin{itemize}
 \item[Setup:] $(V,\lambda)$ is the completion of a Liouville domain $\tilde{V}$ with contact boundary $M=\partial\tilde{V}$ and symplectic form $\omega=d\lambda$. $(\Sigma,\alpha)\subset V$ is an exact contact hypersurface bounding a Liouville domain $W\subset V$. $H\in C^\infty(V)$ is a defining Hamiltonian for $\Sigma$, i.e.\ $H^{-1}(0)=\Sigma$, $H$ constant outside a compact set and $X_H|_\Sigma=R_\alpha$.
 \item[$X_H$:] Hamiltonian vector fields are defined by $dH=\omega(\cdot,X_H)=-\omega(X_H,\cdot)$.
 \item[$J$:] Almost complex structures depend on $(t,n)\in S^1\times\mathbb{R}$ with $\sup_n ||J_t(\cdot,n)||_{C^\ell}<\infty$. They are $\omega$-compatible and $t$-, $n$-independent on $M\times[R,\infty)\subset V$ for $R\gg0$ with  
  \[J|_{\xi_M}=J_0\qquad\text{ and }\qquad J\frac{\partial}{\partial r}=R_\lambda.\]
  \item[$\mathcal{A}^H$:] The action functional on the free loop space $\mathscr{L}$ times $\mathbb{R}$ is defined by
  \[\mathcal{A}^H(v,\eta)=\int_0^1\Big(\lambda(\big(\dot{v}(t)\big)-\eta H\big(v(t)\big)\Big)dt.\]
  \item[$\mathcal{N}^\eta$:] Denotes the set of all closed $\eta$-periodic Reeb orbits on $\Sigma$.
  \item[$\nabla h$:] We consider for the Morse complex \textit{positive} gradient flow lines $\dot{y}=\nabla h(y)$ of a Morse function $h$ on $\crita$ with respect to a Riemannian metric $g_h$ on $\crita$.
  \item[$g$:] The Riemannian metric on $\mathscr{L}\times\mathbb{R}$ is given by
  \[g\left(\begin{pmatrix}\mathfrak{v}_1\\ \hat{\eta}_1 \end{pmatrix}\,,\,\begin{pmatrix}\mathfrak{v}_2\\\hat{\eta}_2\end{pmatrix}\right):=\int^1_0\omega\Big(\mathfrak{v}_1(t)\,,\,J_t\big(v(t),\eta\big)\mathfrak{v}_2(t)\Big)dt+\hat{\eta}_1\cdot\hat{\eta}_2.\]
  \item[(\ref{eq3}):] The Rabinowitz-Floer equation (\ref{eq3}) is the \textit{positive} gradient equation $\partial_s(v,\eta)=\nabla\mathcal{A}^H(v,\eta)$. It has (\ref{eqRabFlHom}) as a counterpart for homotopies, which is not a gradient equation. Explicitly, they read as
  \begin{align*} 
  \partial_s v +J_t (v,\eta)\bigl(\partial_t v-\eta X_H(v)\bigr) &=0 \quad\text{ and }&  \partial_s\eta \;+\; \int^1_0 H\bigl(v(s,t)\bigr) dt &=0,\tag{\ref{eq3}}\\
  \partial_s v +J_t (v,\eta)\bigl(\partial_t v-\eta X_{H_s}(v)\bigr) &=0 \quad\text{ and }&  \partial_s\eta \;+\; \int^1_0 H_s\bigl(v(s,t)\bigr) dt &=0.\tag{\ref{eqRabFlHom}}
\end{align*}
  \item[(MB):] The Morse-Bott assumption is\bigskip\\
(MB)\hfill\begin{minipage}{13.5cm}
 \textit{The set $\mathcal{N}^\eta\subset\Sigma$ formed by the $\eta$-periodic Reeb orbits is a closed sub-manifold for each $\eta\in\mathbb{R}$ and $T_p\,\mathcal{N}^\eta = \ker \left(D_p\phi^\eta - id\right)$ holds for all $p\in\mathcal{N}^\eta$.}
\end{minipage}
  \item[(A)\&(B):] The grading assumptions are
  \begin{enumerate}
 \item[(A)] \textit{The map $i_\ast : \pi_1(\Sigma)\rightarrow\pi_1(V)$ induced by the inclusion is injective.}
 \item[(B)] \textit{The integral $I_{c_1}: \pi_2(V)\rightarrow \mathbb{Z}$ of $c_1(TV)$ vanishes on spheres.}
\end{enumerate}
  \item[$\mu$:] The grading of the Rabinowitz-Floer homology is defined by
  \[\mu(c)=\mu_{CZ}(v)+ind_h(c)- \frac{1}{2} dim_c\, \mathcal{N}^\eta +\frac{1}{2}.\pagebreak\]
\end{itemize}

\newpage
\phantom{..}
\newpage
\bibliographystyle{plain}
\bibliography{References.bib}

\end{document}